\newtheorem{theoreme}{Theorem}[section]
\newtheorem{lemme}[theoreme]{Lemma}
\newtheorem{proposition}[theoreme]{Proposition}
\newtheorem{definition}[theoreme]{Definition}
\newtheorem{hypothese}[theoreme]{Assumption}
\newtheorem{remarque}[theoreme]{Remark}
\newtheorem{corollaire}[theoreme]{Corollary}
\newtheorem{example}[theoreme]{Example}
\newcommand{\ba}{\begin{aligned}}
\newcommand{\ea}{\end{aligned}}
\newcommand{\reeq}[1]{Equation~\eqref{eq:#1}}
\newcommand{\resec}[1]{Section~\ref{sec:#1}}
\newcommand{\redef}[1]{Definition~\ref{def:#1}}
\newcommand{\rehyp}[1]{Assumption~\ref{hyp:#1}}
\newcommand{\rerem}[1]{Remark~\ref{rem:#1}}
\newcommand{\relem}[1]{Lemma~\ref{lem:#1}}
\newcommand{\recor}[1]{Corollary~\ref{cor:#1}}
\newcommand{\reprop}[1]{Proposition~\ref{prop:#1}}
\newcommand{\rethm}[1]{Theorem~\ref{thm:#1}}
\newcommand{\relemdeux}[2]{Lemmas~\ref{lem:#1} and~\ref{lem:#2}}
\newcommand{\rehypdeux}[2]{Assumptions~\ref{hyp:#1} and~\ref{hyp:#2}}
\newcommand{\resecdeux}[2]{Sections~\ref{sec:#1} and~\ref{sec:#2}}
\newcommand{\remarq}[1]{%
\begin{remarque}%
#1
\end{remarque}%
}%
\DeclareMathOperator{\id}{Id}
\newcommand{\eps}{\varepsilon}
 \newcommand{\go}[1]{O\left( #1 \right)}
 \newcommand{\po}[1]{o\left( #1 \right)}
\newcommand{\paren}[1]{\left( #1 \right)}
\newcommand{\acco}[1]{\left\lbrace #1 \right\rbrace}
\newcommand{\abs}[1]{\left\vert #1 \right\vert}
\newcommand{\prob}[1]{P \paren{#1}}
\newcommand{\espe}[1]{\sbesperance \left[ #1 \right]}
\newcommand{\psv}[2]{\langle #1, #2 \rangle}
\newcommand{\inv}[1]{\frac{1}{#1}}
\newcommand{\nrm}[1]{\left\Vert #1 \right\Vert}
\newcommand{\nrmop}[1]{\left\Vert #1 \right\Vert_{\mathrm{op}}}
\newcommand{\tens}[2]{#1 \, \otimes \, #2}
\newcommand{\dpartf}[2]{\frac{\partial #2}{\partial #1}}
\newcommand{\ddpartf}[2]{\frac{\partial^2  #2}{\partial #1^2}}
\newcommand{\dtpartf}[2]{\frac{\partial^3  #2}{\partial #1^3}}
\newcommand{\bo}{B}
\newcommand{\som}[2]{\sum_{#1}^{#2}}	
\newcommand{\enstq}[2]{\left\lbrace #1 \left\vert \, #2 \right. \right\rbrace}	
\newcommand{\cpl}[2]{\paren{#1,\,#2}}	
\newcommand{\tpl}[3]{\paren{#1,\,#2,\,#3}}	
\newcommand{\sbesperance}{\mathbb{E}} 
\newcommand{\econd}[2]{\sbesperance\,\left[ \left. #1 \right\vert #2 \right]}
\newcommand{\nbt}{NoBackTrack}
\newcommand{\tribo}{\mathcal{F}_0}
\newcommand{\trib}[1]{\mathcal{F}_{#1}}
\newcommand{\tribd}[1]{\mathcal{F}'_{#1}}
\newcommand{\tpsk}[1]{{T_{#1}}}
\newcommand{\sm}[1]{\boldsymbol{{#1}}}
\newcommand{\prmsyst}{e}
\newcommand{\prmctrl}{\theta}
\newcommand{\opevol}{\mathbf{T}}
\newcommand{\epctrl}{\Theta}
\newcommand{\sbepsyst}{\State}
\newcommand{\epsyst}{\sbepsyst}
\newcommand{\epsystins}[1]{\sbepsyst_{#1}}	
\newcommand{\pas}{\eta}
\newcommand{\pctrlopt}{\prmctrl^*}
\newcommand{\boct}{\bo_{\epctrl}}
\newcommand{\mvpmin}{\lambda_{\min}}
\newcommand{\cpliprmup}{c_{\paramupdate}} 
\newcommand{\clipalgoprm}{\kappa_{\mathrm{lip}\param}}	
\newcommand{\cliprtrlprm}{\clipalgoprm}
\newcommand{\sbeg}{\phi}	
\newcommand{\feg}[2]{\sbeg\cpl{#1}{#2}}	
\newcommand{\hsr}{h}	
\newcommand{\olr}{\bar{\eta}}	
\newcommand{\transp}[1]{#1^{\!\top}\!}
\newcommand{\state}{s}
\newcommand{\stateopt}{\state^*}
\newcommand{\State}{\mathcal{S}}
\newcommand{\sbtube}{\mathbb{T}}
\newcommand{\tube}{\sbtube}
\newcommand{\metric}{P}
\newcommand{\jope}{J}			
\newcommand{\jopeopt}{\jope^*}
\newcommand{\tubejope}{\sbtube^{\jope}}
\newcommand{\opevolt}{\opevol_t} 	
\newcommand{\sbperte}{\mathcal{L}}	
\newcommand{\avloss}{\bar\sbperte}
\newcommand{\perte}{\sbperte}			
\newcommand{\sbfsyst}{\mathbf{\state}} 	
\newcommand{\fstate}{\sbfsyst}
\newcommand{\sblin}{\mathrm{L}}  
\newcommand{\epjopeins}[1]{\sblin\paren{\epctrl,\,\epsystins{#1}}}	
\newcommand{\cst}{\kappa}		
\newcommand{\boctopt}{\boct^*}		
\newcommand{\rayopt}{r^*}		
\newcommand{\rayoptct}{{\rayopt_\epctrl}}	
\newcommand{\sbray}{r}	
\newcommand{\raysy}{\sbray_\epsyst}	
\newcommand{\bosyinsopt}[1]{\bo_{\epsystins{#1}}^*}	
\newcommand{\sbdist}{d}	
\newcommand{\dist}[2]{\sbdist\paren{#1,\,#2}}	
\newcommand{\sbopdepct}{\Phi}	
\newcommand{\opdepct}[2]{\sbopdepct\paren{#1,\,#2}}	
\newcommand{\vtanc}{v}	
\newcommand{\tinit}{{t_0}}	
\newcommand{\thoriz}[1]{T^{\rayoptct}_{#1}}	
\newcommand{\thorizo}{\thoriz{0}}	
\newcommand{\espaux}{\mathcal{E}}	
\newcommand{\majmpc}{\sbmaj_4}		
\newcommand{\sbexpas}{b}	
\newcommand{\expas}{\sbexpas}	
\newcommand{\exmp}{\gamma}	
\newcommand{\scsp}{u}	
\newcommand{\cdvp}{\overline{\eta}}	
\newcommand{\sched}{\rho}
\newcommand{\brnp}{\cdvp_{\Tangent}}
\newcommand{\cdvpop}{\cdvp_{\mathrm{op}}}	
\newcommand{\cdvpnoise}{\cdvp_\mathrm{noise}}
\newcommand{\pgs}{r}	
\newcommand{\tasg}{b}	
\newcommand{\sbpermc}[1]{\perte_{\leadsto #1}}
\newcommand{\permctt}[2]{\perte_{#1\leadsto #2}}
\newcommand{\sbfmdper}{m} 
\newcommand{\fmdper}[1]{\sbfmdper(#1)}	
\newcommand{\mlipgrad}[1]{m_\mathrm{H}(#1)}
\newcommand{\prn}{\tilde{\pas}}	
\newcommand{\itv}{I}
\newcommand{\lng}{L}	
\newcommand{\sbflng}{\lng}	
\newcommand{\flng}[1]{\sbflng(#1)}	
\newcommand{\sbflngd}{e}	
\newcommand{\lngav}{\sbflngd_{0}}
\newcommand{\smi}[1]{\sum_{#1}}	
\newcommand{\sbfech}{f}	
\newcommand{\fech}[1]{\sbfech\paren{#1}}
\newcommand{\prdope}[3]{\Pi^{#1}_{#2,\,#3}}	
\newcommand{\rtrl}{RTRL\xspace}
\newcommand{\sbcfc}{r}	
\newcommand{\scfc}{\sbcfc}	
\newcommand{\sbcfa}{b}	
\newcommand{\scfa}{\sbcfa}	
\newcommand{\cdvpmaxconv}{\cdvp_\mathrm{conv}} 
\newcommand{\expem}{a}	
\newcommand{\exli}{A}	
\newcommand{\fur}{\mathcal{U}}	
\newcommand{\furt}[3]{\fur_t\tpl{#1}{#2}{#3}}	
\newcommand{\sbfh}{\mathcal{H}}	
\newcommand{\fht}[1]{\sbfh_t\paren{#1}}	
\newcommand{\sbmodct}{\rho}	
\newcommand{\modct}[1]{\sbmodct\paren{#1}}	
\newcommand{\epmaint}{\mathcal{M}}	
\newcommand{\maint}{\mathfrak{m}}	
\newcommand{\mntopt}{\maint^*}		
\newcommand{\epvtg}{\mathcal{V}}	
\newcommand{\Tangent}{\epvtg}		
\newcommand{\bmaint}[1]{\tube_{\epmaint_{#1}}}	
\newcommand{\bvtg}[1]{\bo_{\epvtg_{#1}}}	
\newcommand{\sbruit}{D}	
\newcommand{\bruitif}[4]{\sbruit_{#1:#2}\tpl{#3}{#4}{\sm{\pas}}}	
\newcommand{\bruit}[3]{\bruitif{0}{#1}{#2}{#3}}	
\newcommand{\bruitifcdvp}[5]{\sbruit_{#1:#2}\tpl{#3}{#4}{#5}}	
\newcommand{\sepjope}[1]{\mathrm{Rk_1}\cpl{\epctrl}{\epsyst_{#1}}}
\newcommand{\sbopnbt}{\mathcal{T}}
\newcommand{\opnbt}[4]{\sbopnbt_{#4}\tpl{#1}{#2}{#3}}	
\newcommand{\opnbtit}[3]{\opnbt{#1}{#2}{#3}{t}}	
\newcommand{\vsyst}{v^{\epsyst}}	
\newcommand{\vctrl}{v^{\epctrl}}	
\newcommand{\sbopred}{\mathcal{R}}	
\newcommand{\opredt}[4]{\sbopred_t\paren{#1,\,#2,\,#3,\,#4}}	
\newcommand{\sbopmulb}{a}	
\newcommand{\opmulb}[1]{\sbopmulb\paren{#1}}	
\newcommand{\sbopaddb}{b}	
\newcommand{\lopaddb}[1]{\sbopaddb_{#1}}	
\newcommand{\vcans}{\mathfrak{\prmsyst}}	
\newcommand{\sber}{\eps}	
\newcommand{\ber}[2]{\sber_{#1}\paren{#2}}			
\newcommand{\beri}[1]{\ber{i}{#1}}	
\newcommand{\berit}{\ber{i}{t}}	
\newcommand{\veber}[1]{\sber(#1)}	
\newcommand{\sbsignes}{\eps}	
\newcommand{\sigi}[1]{\sbsignes_{#1}}	
\newcommand{\sbopegn}{\odot}	
\newcommand{\opegn}[2]{{#1 \, \sbopegn \, #2}}	
\newcommand{\vopegn}[2]{\paren{\fegn{#1}{#2} \, #1,\,\fegn{#2}{#1} \, #2}}	
\newcommand{\fegn}[2]{\sqrt{\frac{\nrm{#2}}{\nrm{#1}}}}	
\newcommand{\vesp}{v}	
\newcommand{\dopevols}[3]{\dpartf{\prmsyst}{\opevol_{#1}}\paren{#2,\,#3}}	
\newcommand{\dopevolst}[2]{\dopevols{t}{#1}{#2}}	
\newcommand{\cvnbtaxsi}{\cpl{\vsystax}{\vctrlax}} 
\newcommand{\cvnbtsi}{\cpl{\vsyst}{\vctrl}}	
\newcommand{\vsystax}{w^{\epsyst}}	
\newcommand{\vctrlax}{w^{\epctrl}}	
\newcommand{\mesl}[1]{$\trib{#1}$--measurable} 
\newcommand{\gu}[1]{``#1''}	
\newcommand{\perteiid}{\ell}
\renewcommand{\majmpc}{\mathrm{M}}
\newcommand{\E}{\mathbb{E}}
\def\d{\operatorname{d}\!{}}
\newcommand{\stat}{\Psi}
\newcommand{\Stat}{\psi}
\DeclareMathOperator{\diag}{diag}
\newcommand{\momen}{c}
\newcommand{\ff}{F} 
\newcommand{\linalgmatrix}{\Lambda} 
\newcommand{\linform}{\sblin(\Param,\mathbb{R})} 
\newcommand{\todo}[1]{{\textcolor{blue}{#1}}\xspace}
\renewcommand{\todo}[1]{}
\begin{document}
\title{Convergence of Online Adaptive and Recurrent Optimization
Algorithms}
\author{Pierre-Yves Massé \footnote{Czech Institute of Informatics, Robotics and Cybernetics, Czech Technical University in Prague} \and Yann
Ollivier \footnote{Facebook Artificial Intelligence Research, Paris}}

\newcommand{\Param}{\epctrl}
\newcommand{\param}{\prmctrl}
\newcommand{\paramopt}{\pctrlopt}
\newcommand{\Algo}{\mathcal{A}}
\newcommand{\from}{\colon}
\newcommand{\Mem}{\epmaint}
\newcommand{\mem}{\maint}
\newcommand{\memopt}{\mntopt}
\newcommand{\gradop}{\sm{V}}  
\newcommand{\deq}{\mathrel{\mathop{:}}=}
\newcommand{\paramupdate}{\sbopdepct}

\def\R{{\mathbb{R}}}
\newcommand{\eqd}{=\mathrel{\mathop{:}}}
\newcommand{\norm}[1]{\left\lVert#1\right\rVert}

\newcommand{\algonoisy}{imperfect\xspace}
\newcommand{\Algonoisy}{Imperfect\xspace}

\newcommand{\inputsyst}{u}
\newcommand{\nrmHilb}[1]{{\left\lVert#1\right\rVert}_{\mathrm{Hilbert}}}

\maketitle

\begin{abstract}
\todo{Titres possibles:
\\Learning Dynamical Systems Online by Gradient
Descent
\\Convergence of
Not-Quite-SGD Algorithms via Dynamical Systems
\\Convergence of Online Optimization Algorithms [with time-correlated data] via Dynamical Systems (merci Léonard)
\\Convergence of Online Optimization Algorithms via Dynamical Systems
[bien mais trop general: optim peut vouloir dire algos genetiques, methodes
Newton/BFGS/...]
\\Convergence of Online Gradient Descent Algorithms via Dynamical Systems
\\Convergence of Online [Machine] Learning Algorithms via Dynamical Systems
\\Convergence of Online Deep Learning Algorithms via Dynamical Systems
\\Convergence of Online Adaptive and Recurrent Optimization Algorithms
\\Convergence of Online Recurrent and Adaptive Optimization Algorithms
}
We prove local convergence of several notable gradient descent
algorithms used in
machine learning, for which standard stochastic gradient descent theory
does not apply directly. This includes, first, online algorithms for recurrent models and dynamical
systems, such as \emph{Real-time recurrent learning} (RTRL)
\citep{jaeg,pearl} and its computationally lighter approximations 
NoBackTrack \citep{oll16} and UORO \citep{uoro}; second,
several adaptive algorithms such as RMSProp, online natural gradient, and Adam with $\beta^2\to 1$.

Despite local convergence being a relatively weak requirement for a new
optimization algorithm, no local analysis was available for these algorithms, as far as
we knew. Analysis of these algorithms does not immediately follow
from standard stochastic gradient (SGD) theory.  In fact, Adam has been proved
to lack local convergence in some simple situations \citep{j.2018on}. For recurrent models, online algorithms modify the parameter
while the model is running, which further complicates the analysis with
respect to simple SGD.

Local convergence for these various algorithms results from a single,
more general set of assumptions, in the setup of learning dynamical
systems online. Thus, these results can cover other variants of
the algorithms considered.

We adopt an ``ergodic'' rather than probabilistic viewpoint, working with
empirical time averages instead of probability distributions. This is
more data-agnostic and
creates differences with respect to standard SGD theory,
especially for the range of possible learning rates. For instance, with
cycling or per-epoch reshuffling over a finite dataset instead of pure
i.i.d.\ sampling with replacement, empirical
averages of gradients converge at rate $1/T$ instead
of $1/\sqrt{T}$ (cycling acts as a variance reduction method),
theoretically allowing
for larger learning rates than in SGD.

\end{abstract}

\tableofcontents

\todo{stuff currently inconsistent in the text:}

\todo{vérifier cohérence dans étiquetage des énoncés Proposition ou Théorème vs Lemme/Corollaire}

\todo{\\Possible extensions (not urgent):}

\todo{Ajout de termes d'erreur en $O(\pas_t)$ dans l'algo de base}

\todo{Check if we can obtain, for free, that the learning trajectory stays
$O(\pas)$-close to the ODE trajectory. Could be useful later. Or if we
can obtain that $\param-\paramopt=O(e^{-\lambda t})+O(\pas_t)$ or
something like that. People have asked me whether changing the exponent
$a$ (of the ergodic assumption) changes the rate of convergence;
important for the cycling example.}

\todo{I think I can treat Adam with both $\beta^1$ and $\beta^2$ tending
to $1$, but this is quite complicated and requires an auxiliary lemma to
prove that average of losses weighted by $\pas_t$ tend to $0$ at
$\paramopt$. Feasible but not urgent, I think.}

\todo{online version of SAG/SAGA?}

\todo{HMMs}

\todo{j'ai commenté des "todos" du style "on pourrait ajouter des termes
d'erreur en $\pas_t$", comme on pensait soumettre en l'état.
NDY: je prefere ne pas commenter les todos de ce genre (les
parties commentees du .tex sont essentiellement perdues a jamais). Pour
la soumission, on redefinit juste la commande "todo" pour qu'elle
n'affiche rien...}


\section{Introduction}


We consider, from a machine learning perspective, the problem of
optimizing in real time the parameters of a dynamical system so that its
behavior optimizes some criterion over time. This problem has a
longstanding history, especially for linear systems of small to moderate
dimension \citep{ljung84}, encompassing many classical recursive
control problems like the steering of a ship, the short-term prediction
of power demand or the transmission of speech through limited capacity
transmission channels \citep{ljung84}.
Examples that have attracted more recent
attention include recurrent models in machine learning (recurrent neural
networks), used to represent time-structured or sequentially-structured
data.  Even when the data have no time structure, a dynamical system can
also represent the internal state of a machine learning algorithm, such
as momentum variables in extensions of stochastic gradient descent.

We focus on \emph{online} (or \emph{real-time}) algorithms, that are able
to update their state or predictions as each new observation arrives, at
an algorithmic and memory cost that does not grow with the amount of data
processed. Quoting \citet{pearl}, ``An online, exact, and stable, but
computationally expensive, procedure for determining the derivatives of
functions of the states of a dynamic system with respect to that system's internal parameters has been discovered and applied to recurrent
neural networks a number of times [...], \emph{real time recurrent
learning},
RTRL. Like BPTT, the technique was known and applied to other sorts of
systems since the 1950s''.

Thus, RTRL is the algorithm that adapts the parameters of a dynamical system
by gradient descent over some criterion at each time step, in real time
while the system is running.
RTRL has both practical and theoretical shortcomings: First, its
computational burden is prohibitive even for moderately-dimensioned
systems. This has led to several lightweight approximations based on
stochastic approximation, such as \emph{NoBackTrack} and its extensions
\emph{UORO} and \emph{Kronecker-factored RTRL} \citep{oll16, uoro, NIPS2018_7894}. For
relatively short data sequences (such as sentences in natural language
processing), non-online algorithms such as backpropagation through time
(BPTT) are usually preferred.  \emph{Truncated BPTT} is an approximation
of BPTT that works online by maintaining a fixed-length memory of recent
data.

Second, as far as we know, no proof of convergence, even local, has been
given for these algorithms. A key feature of online algorithms is that
the parameters of the dynamical system are updated while the system is
running. Intuitively this is only a second-order phenomenon if learning
rates are small; but this still
complicates the analysis substantially.

We provide such a proof of local convergence for RTRL, and for some of
its variants.  Moreover, the results carry over to other non-recurrent
machine learning algorithms, such as RMSProp, Adam, or online natural
gradient.  The dynamical system viewpoint is used to handle the internal
state of these algorithms.

More precisely, we prove local convergence of various algorithms for
recurrent and non-recurrent systems:
\begin{enumerate}
\item Real-time recurrent learning (RTRL)
(Theorem~\ref{thm:cvapproxrtrlalgo});
\item Truncated backpropagation through time (TBPTT), provided the
truncation length is slowly increased at a rate related to the main
learning rate (\rethm{cvtbtt});
\item Unbiased stochastic approximations to RTRL: NoBackTrack and UORO
(Corollary~\ref{cor:cvnbtuoro});
\item Stochastic gradient descent with momentum
and any parameter-dependent or adaptive
preconditioning, where a definite positive preconditioning matrix is estimated online
from the data
(Corollary~\ref{cor:adam}). This covers algorithms such as
RMSProp and Adam with the preconditioner updated at the same rate as the
main learning rate (Corollaries~\ref{cor:adap}, \ref{cor:adam}),
a natural gradient descent with the Fisher matrix estimated online
at the same rate as the main learning rate
(Corollary~\ref{cor:adap}), or the extended Kalman filter in the static
case (for estimating the state of a fixed system via nonlinear noisy
measurements). Results for RMSProp and Adam are
known (e.g., \cite{zou2019sufficient});
our result  is less precise but more
general as it covers any kind of adaptive preconditioning rather
than a specific algorithm.
\end{enumerate}

We give a more precise overview of results in
Section~\ref{sec:presentation}.

Although local convergence is a relatively weak property for an algorithm
(compared to global convergence results obtained in convex situations),
no local analysis was available for these algorithms apart from RMSProp and
Adam, as far as we know.
Our original project was to prove local convergence for NoBackTrack and
UORO based on a convergence proof for RTRL, but we could locate no such
existing proof.
Convergence of these algorithms does not immediately follow from standard
stochastic gradient (SGD) theory.  In fact, Adam has been proved to lack
local convergence if its hyperparameter $\beta^2$ is fixed
\citep{j.2018on} (convergence occurs with a time-dependent $\beta^2\to
1$ so that the preconditioner is averaged over more and more samples).

Importantly, we prove local convergence under \emph{local} assumptions:
we do not assume that the model or system is well-behaved out of some
ball of finite radius. We believe this reflects problems encountered in
practice, when large steps can be difficult to recover from if the system
parameters reach an unsafe zone. Thus, local convergence under local
assumptions can be harder to prove than global convergence under global
assumptions. 

Most data to which recurrent models are applied cannot reasonably be
assumed to be fully Markovian (natural text has arbitrary long-term
dependencies, time series may be non-time-homogeneous). So we adopt a more
data-agnostic viewpoint, reasoning on ergodic properties of an individual
data sequence rather than on expectations. A local minimum is defined as
a parameter value that achieves locally best loss on average over time
(Assumption~\ref{hyp:opt_simple}). Ergodic properties of gradients,
averaged over time, replace expectations, and the standard stochastic
case is recovered by proving that the assumptions hold with probability
one.
This
per-trajectory viewpoint with local assumptions leads to several differences with respect to
standard SGD theory, mostly relating to learning rates:


\begin{itemize}
\item
When dealing with finite datasets, the per-trajectory
viewpoint emphasizes specific properties of cycling through
the data samples or reshuffling at every epoch, as opposed to the pure
SGD method of selecting a sample at random at every step: cycling acts as
a variance reduction method (ensuring each sample is selected exactly
once within $N$ steps, where $N$ is the size of the dataset). This results in larger possible learning
rates: with cycling or random reshuffling, learning rates $\pas_t\propto
1/t^\expas$ with any $0<\expas\leq 1$ are
suitable, as opposed to $1/2<\expas\leq 1$ in classical Robbins--Monro
theory (Corollary~\ref{cor:sgdrates}). This opens the door to more
elaborate variance reduction methods in SGD.
\item On the contrary, in a non-recurrent, online i.i.d.\ setting with an
infinite dataset, our results are sometimes suboptimal: 
depending on which moments of the noise are finite, 
we may get more
constraints on the learning rate (Section~\ref{sec:pureonline}).
This is presumably
because the ergodic Assumption~\ref{hyp:opt_simple} does not capture the full randomness of an
i.i.d.\ sequence of samples.

\item
In a dynamical system setting, the stepsizes
$\pas_t$ for the gradient descent must vary smoothly in time,
to avoid spurious correlations between the stepsize and
the state of the system, which would bias the gradient descent.
This is stricter than the classical Robbins--Monro criterion
\citep{rm}.
(For instance, if a dynamical system
exhibits periodic
phenomena of period 2, and if
$\pas_t$ vanishes for even values of $t$, the gradient descent using
$\pas_t$ may be strongly biased and diverge.)
We avoid this issue
by assuming the learning rates behave like $1/t^b$ for some $b>0$. (A
more general homogeneity condition on the learning rates is given in \rehyp{spdes}.)
\end{itemize}

Finally, we treat adaptive preconditioning (RMSProp, Adam, online natural
gradient...) by viewing the preconditioner as part of the parameter to be
estimated. The corresponding update does not follow the gradient of a
loss function; indeed, unlike a Hessian, the Jacobian $\linalgmatrix$ of the expected
update is not a symmetric, definite positive matrix. But its eigenvalues
still have positive real part (Sections~\ref{sec:adap}--\ref{sec:adam}),
which is sufficient to apply the standard Lyapunov theory for stable
matrices (Appendix~\ref{sec:positivestable}), and prove local
convergence.
A reminder on positive-stable matrices is included in
Appendix~\ref{sec:positivestable}. Focusing on positive-stable matrices
instead of positive-definite Hessians is not new in machine learning: see
for instance the classical paper \citet{polyak-judi1992} on averaged
stochastic gradient descent.

\paragraph{Some related work.} Learning of recurrent models and dynamical
systems is not a new topic (see historical references in \citet{pearl,
ljung84}), and it is impossible to be exhaustive.  For dynamical systems,
an in-depth reference is \cite{ljung84}, which discusses algorithms for
learning a dynamical system online, largely focusing on the linear case.
For \emph{linear} dynamical systems, more precise results are available.
For instance,
\citet{hardt2016gradient} prove global convergence of
non-online stochastic gradient descent on linear systems, provided the
matrix defining the system is parameterized in a particular way based on
its characteristic polynomial. For nonlinear systems,
\citet{ben-met-pri1990} present results for stochastic gradient
descent in very general time-dependent systems under strong Markovian
assumptions, but it is not clear how to cast the algorithms studied here
in their framework and how to check the technical assumptions.

Our overall approach to the proofs follows the classical ODE method for
the analysis of SGD around a local optimum
\citep{ljung77,ben-met-pri1990,borkar2000ode,kyi2003,borkar2009stochastic}. The ODE approach views the
optimization process on the parameter as an approximation of a
continuous-time, noise-free ``ideal'' gradient descent, whose timescale is defined by the
step sizes of the algorithm.
Thus, our analysis is based on bounding the difference between
the true system and an idealized system, linearized close to the optimum
and with the noise averaged out. A central role is played by the Jacobian
$\linalgmatrix$  of the optimization algorithm around the local optimum: this
is the Hessian of the loss for simple SGD, but is a more complicated,
non-symmetric matrix in adaptive algorithms such as Adam
(Sections~\ref{sec:adap} and~\ref{sec:adam}). Following the
standard theory of dynamical systems, the idealized system on the
parameter will converge when all eigenvalues of this matrix have positive
real part (namely, in the simplest case, when the Hessian of the loss is
positive definite).

For simple, non-recurrent SGD on general (non-convex) loss functions, one
of the cleanest results is probably still \cite{bertsekastsitsiklis2000},
which proves convergence to a local minimum (which may
be at infinity) under mild global assumptions
(globally Lipschitz gradients, noise bounded by the gradient norm):
namely, the loss converges and the gradient of the
loss converges to $0$. This does not cover either dynamical systems or
algorithms other than simple SGD.  Moreover, contrary to this work, we
only make local assumptions.

For adaptive gradient descent algorithms such as Adam and RMSProp,
convergence results already exist. Our result (Corollary~\ref{cor:adam}) is less precise but more
general, in that it covers any kind of adaptive preconditioning rather
than specific algorithms, also covering
the online natural gradient, for example. Among others,
\citet[Corollary 10]{zou2019sufficient}
prove a convergence result for Adam and RMSProp
over a wide range of hyperparameters,
together with finite-time bounds in expectation. We refer to
\citet{defossez2020convergence}
for more up-to-date finite-time bounds for Adam, and for additional
references. These results, and ours,
use a time-dependent Adam hyperparameter $\beta^2\to 1$ so that square gradients are averaged
over more and more samples. On the
other hand, \citet{j.2018on} show divergence of Adam with fixed hyperparameters
$\beta^1$ and $\beta^2$ when cycling over a finite dataset, contradicting
an earlier convergence claim in \citet{Adam}. 

Convergence of algorithms with adaptive preconditioners (RMSProp, Adam,
online natural gradient) with $\beta^2\to 1$ could also probably be
proved using two-timescale methods (see for instance
\citet{tadic2004}). However, two-timescale methods, as the name
suggests, require different timescales for the learning rate and the
adaptive preconditioner: the main learning rate should be smaller than the
rate at which the preconditioner is updated (which itself should tend to
$0$).  Our result (Corollary~\ref{cor:adap}, Corollary~\ref{cor:adam})
lifts this restriction by letting the main learning rate be as large as
the update rate of the preconditioner.

Finally, empirical differences between cycling over a dataset or random
per-epoch reshuffling as opposed to pure i.i.d.\ sampling from the
dataset have been observed for some time \citep{bottou2009curiously}.
Some quantitative results for convex functions are available
\citep{gurbuzbalaban2015random}, showing improved convergence
for random reshuffling compared to SGD. But these results still
require learning rates smaller than $1/\sqrt{t}$, contrary to ours.

\paragraph{Structure of the text.} In Section~\ref{sec:presentation}, we
present an overview of the results,
introduce the notation for dynamical systems, and present the standard
RTRL algorithm as well as several generalizations that will encompass
more algorithms. We then state the local convergence theorem for these
extended RTRL algorithms, after discussing the technical assumptions.
Section~\ref{sec:examples} contains several examples and applications,
both recurrent and non-recurrent: simple SGD and the influence on
learning rates of cycling over a dataset versus pure i.i.d.\ sampling,
SGD with adaptive preconditioning and with momentum (including Adam), the
original RTRL algorithm, truncated backpropagation through time with
increasing truncation, and the NoBackTrack and UORO algorithms. We then
proceed to the proof: in Sections~\ref{sec:absontadsys}
and~\ref{sec:procvabstalgo} we go to a more abstract setting using an
extended dynamical system that contains all the variables maintained by an
algorithm; in this more abstract setting, we use the ODE method to
quantify the discrepancy between the ideal continuous-time, noise-free
gradient descent and the actual online gradient descent for the dynamical
system. In Sections~\ref{sec:contrtrlappxalgoart}
and~\ref{sec:prcvalappxalgo} we bridge the abstract setting and the
concrete algorithms; especially, we check that all properties needed for
Section~\ref{sec:absontadsys} are indeed satisfied for the practical
algorithms.

\paragraph{Acknowledgements.} The authors would like to thank Léon
Bottou, Joan Bruna, and Aaron Defazio for pointing us to relevant
references.
The work of the first author was partially supported by the
European Regional Development Fund under the project IMPACT (reg.\ no.\
CZ.02.1.01/0.0/0.0/15\_003/0000468).

\section{Recurrent Models and the RTRL Algorithm}
\label{sec:presentation}

\subsection{Overview of RTRL}
\label{sec:rtrloverview}

We consider a dynamical system parameterized by $\param\in\Param$, whose state
$\state_t\in\State_t$ at time $t\geq 1$ is subjected to
the evolution equation
\begin{equation}
\label{eq:basicsystem}
\state_{t}=\opevol_{t}(\state_{t-1},\param),
\end{equation}
with some transition operator $\opevol_t$.
At each time, we are given a
loss function $\perte_t(\state_t)$, and our objective is to optimize the
parameter $\param$ as to minimize the average loss function
$\frac{1}{T}\sum_{t=1}^T\perte_t(\state_t)$ over some large time interval
$T\to\infty$, in an online manner. Formal definitions are given in
Section~\ref{sec:formaldefs} below.

This formalism encompasses non-recurrent situations, by letting
$\opevol_t$ be independent of $\state_{t-1}$. For instance, consider a regression
problem $y=\ff_\theta(x)$, with training dataset
$(x_t,y_t)_{t\in[1;T]}$, and a loss function $\perteiid(y,y_t)$ such as
$\perteiid(y,y_t)=\norm{y-y_t}^2$.
This can be represented by identifying the state
$\state$ with $y$, namely, setting
\begin{equation}
\label{eq:nonrec}
\opevol_{t}(\state_{t-1},\param)\deq
\ff_\param(x_t),\qquad\perte_t(\state)\deq \perteiid(\state,y_t).
\end{equation}
The operators $\opevol_{t}$ and $\perte_t$ depend on the data.
In this non-recurrent case, the RTRL algorithm will reduce to standard stochastic gradient descent.

Another typical system we have in mind is a recurrent model with internal
state $\state_t$, where the
time-dependent transition operator
\begin{equation}
\label{eq:rec}
\opevol_t(\state_{t-1},\param)\deq \ff_\param(\state_{t-1},x_t)
\end{equation}
is defined via a time-independent function $\ff$ with some input $x_t$ 
as an argument.\footnote{This describes an online system with unbounded time.
Finite-length training sequences are covered
by separating them by 
end-of-sentence input symbol $x_t^\dag$ and defining
$\ff_\param(\state_{t-1},x_t^\dag)\deq\stateopt_0$ to reset the system to state
$\stateopt_0$ after each sequence, with notation as in \eqref{eq:rec}. This preserves all our
assumptions below.}
Once more, we define a loss function $\perte_t(\state)\deq
\perte(\state,y_t)$ where $\perte(\state,y_t)$ typically measures the loss between a
value $y_t$ to be predicted, and some part of the state $s$ that encodes
the prediction on $y_t$. 

Thus, the data $(x_t,y_t)$ is encoded in \eqref{eq:basicsystem} via the
time dependency of $\opevol_t$ and $\perte_t$.
Recurrent neural networks (RNNs) fit this framework; for instance, a
simple RNN model is
\begin{equation}
\label{eq:rnn}
\state_t=\text{sigmoid}\paren{W\state_{t-1}+W' x_t+B},
\end{equation}
where $W$, $W'$ and $B$ are matrices or vectors of suitable dimensions,
and where $\param=(W,W',B)$.

Thus, when $\opevol_t$ is defined this way, we assume the sequence of
inputs to be fixed once and for all,\footnote{This means in particular that the system is
non-adversarial: the inputs and targets do not change based on the
behavior of the algorithm.}
and make no direct assumption on its
nature. In particular, we do not make explicit stochastic assumptions on
the data, but we 
assume they satisfy ergodic-like properties, expressed as empirical
averages over time (see \rehyp{opt_simple}).

\bigskip

Jaeger's tutorial \citep{jaeg} presents several classical recurrent
training algorithms. The most widely used is backpropagation through
time. One of its important drawbacks is the need to store and pass
through the complete sequence of past observations every time a new
observation $(x_{t+1},y_{t+1})$ becomes available: it is not possible to
process online newly arrived inputs coming from a stream of data.
On the other hand, the RTRL algorithm may be used online, but has much heavier
computational and memory requirements. Let us now describe it.

%

The \rtrl algorithm conducts an approximate gradient descent on the parameter of the
dynamical system to be trained.
%
The state $\state_t$ of the system at each time depends on the parameter
used and on the initial state.
By composition, 
the loss above on $\state_t$ may thus be viewed as a loss on the parameter
and the initial state. (We will omit the initial state for now.) We write
$\perte_t(\state_t)$ for the original loss on the state of the system at time $t$, and
$\sbpermc{t}(\param)$ for the resulting loss at time $t$, seen as a function of
the parameter via running the system up to time $t$ with parameter
$\param$
(Definition~\ref{def:fcpletaprm}). In computational terms, $\sbpermc{t}$
corresponds to
the loss of the whole computational graph leading to $\perte_t$.

The derivative of $\sbpermc{t}$ with respect to the parameter can be
computed by induction, by direct differentiation of the recurrent
equation \eqref{eq:basicsystem} that defines the system. Informally, by the
chain rule,\footnote{For Jacobians, we use the standard convention from
differential geometry, namely, if $x$ and $y$ are multidimensional
variables then $\frac{\partial y}{\partial x}$ is the matrix with entries
$\frac{\partial y_i}{\partial x_j}$. With this convention the chain rules
writes $\frac{\partial z}{\partial x}=\frac{\partial z}{\partial
y}\frac{\partial y}{\partial x}$.
This makes $\frac{\partial \perte_t}{\partial \state_t}$ a \emph{row}
vector. When working with standard RTRL, we abuse notations by omitting the transpose around $\frac{\partial \perte_t}{\partial \state}\cdot\frac{\partial \state_t}{\partial \param}$ in expressions of the form $\param \leftarrow \param - \frac{\partial \perte_t}{\partial \state}\cdot\frac{\partial \state_t}{\partial \param}$.
}
\begin{equation}
\label{eq:basicrtrlloss}
\frac{\partial
\sbpermc{t}}{\partial\param}=\frac{\partial\perte_t}{\partial\state_t}\cdot
\frac{\partial \state_t}{\partial\param}
\end{equation}
where $\frac{\partial \state_t}{\partial\param}$ is the Jacobian matrix
of the state $\state_t$ as a function of $\param$.
Then by differentiating the evolution equation \eqref{eq:basicsystem},
\begin{equation}
\label{eq:basicrtrl}
\frac{\partial\state_t}{\partial\param}=\frac{\partial\opevol_{t}}{\partial
\state_{t-1}}\cdot \frac{\partial \state_{t-1}}{\partial
\param}+\frac{\partial\opevol_t}{\partial \param}.
\end{equation}
This allows for
computing $\frac{\partial\state_t}{\partial\param}$ by
induction in an online manner: store the value of the Jacobian
$\frac{\partial\state_t}{\partial\param}$ in a variable $\jope_t$, and
update $\jope_t$ via
\eqref{eq:basicrtrl} at each time step, namely,
\begin{equation*}
\label{eq:basicrtrlJ}
\jope_t=\frac{\partial\opevol_{t}}{\partial
\state_{t-1}}\cdot \jope_{t-1}
+\frac{\partial\opevol_t}{\partial \param}
\end{equation*}
after which the stored value $\jope_{t-1}$ can be discarded.
This is the core of the RTRL
algorithm. The derivative \eqref{eq:basicrtrlloss} is then used to
obtain the parameter via a gradient descent step
\begin{equation*}
\param \gets \param -\pas_t
\left(\frac{\partial\perte_t}{\partial\state_t}\cdot\jope_t\right)
\end{equation*}
with learning rate $\pas_t$. In the non-recurrent case \eqref{eq:nonrec}, $\opevol_t$ does
not depend on $\state_{t-1}$, and RTRL reduces to standard online gradient
descent on $\perte_t$.

However, updating the parameter at every step breaks the validity of the computations
\eqref{eq:basicrtrlloss}--\eqref{eq:basicrtrl}, because RTRL will use
values of $\jope_{t-1}$ stored and computed on previous values of the parameter
$\param$, thus mixing partial derivatives taken at different parameter
values. The magnitude of the error at each step is $O(\pas_t)$ (since the
parameter changes only by $O(\eta_t)$), so
intuitively this should not matter too much for small learning rates.
But
this is a core difficulty in the analysis of RTRL.

The RTRL algorithm is computationally heavy for large-dimensional
systems, since the Jacobian $\jope_t$ is an element of the space
$\epjopeins{t}$, so that even storing it requires memory $\dim(\param)\times
\dim(\state_t)$, not to mention performing the multiplication
$\dpartf{\state}{\opevolt}\jope_{t-1}$. This justifies the practical
preference for backpropagation through time in non-online setups, and
the introduction of approximations such as UORO and NoBackTrack in online
setups.

\subsection{Overview of Results}

We provide here a semi-technical account of the main results of the
text; the full definitions and statements appear in the next sections. We
start with the most general statements covering RTRL, then provide some
corollaries for local convergence of various recurrent and non-recurrent
existing algorithms: stochastic gradient descent with adaptive
preconditioning (RMSProp, Adam, online natural gradient...), truncated
backpropagation through time, and RTRL approximations such as UORO and
NoBackTrack.

These results take the general form: if the parameter is initialized
close enough to some local optimum, then the learning algorithm converges
to that optimum.  Such a local convergence property is relatively weak,
but for most algorithms considered, we could not locate a proof of local
convergence.\footnote{
Although this is not treated in this work, we believe that convergence to each
local optimum
$\paramopt$ can be extended to the whole basin of attraction of
$\paramopt$ for the ``ideal'' infinitesimal-learning-rate gradient
descent $\d
\param_t/\d t=-\partial_\param \sbpermc{t}(\param_t)$ using the same
proof technique, assuming the learning rates are small enough. Indeed, our
whole analysis is based on deviations from this
infinitesimal-learning-rate setting, using a suitable Lyapunov function
for convergence. We give a more precise argument in
Section~\ref{sec:lyapunov}.
}
 Moreover, we only rely on local assumptions. We do not
explicitly assume a random data model. For randomized algorithms,
the assumptions are satisfied with probability $1$; this
results in convergence with probability tending to $1$ as the overall
learning rate tends to $0$ (Section~\ref{sec:mainthm}).
%

\paragraph{General results: RTRL and extended RTRL algorithms.} The
general setting is a dynamical system parameterized
by $\param\in \Param=\R^{\dim(\param)}$, whose state $\state_t\in
\State_t=\R^{\dim(\state_t)}$ at time $t\geq
1$ is subjected to the evolution equation
\begin{equation*}
\state_t=\opevol_t(\state_{t-1},\param)
\end{equation*}
given some time-dependent, $C^2$ transition operator $\opevol_t$. An
important example is
$\opevol_t(\state_{t-1},\param)=\ff_{\param}(\state_{t-1},x_t)$ using a
time-independent function $\ff$ and a sequence of external inputs
$(x_t)$: this covers, for instance, recurrent neural networks or general
dynamical systems with inputs $x_t$. A further example is the
\emph{non-recurrent} case where $s_{t-1}$ is discarded, namely,
$\opevol_t(\state_{t-1},\param)=\ff_\param(x_t)$, where again $\opevol_t$
depends on $t$ via $x_t$. In this latter case one has
$\state_t=\ff_\param(x_t)$: thus, this covers standard parametric interpolation problems, such as
feedforward neural networks.

Denote $\fstate_t(\param)$ the state obtained at time $t$ by running the
system from time $0$ to $t$ with parameter $\param$. (In this overview, we assume
$s_0$ is fixed for simplicity, and omit it.) We assume that we are given
a $C^2$ loss
function $\perte_t\from \State_t \to \R$ for each time $t$.
The goal
is to minimize the average loss
\begin{equation*}
\frac1T \sum_{t=1}^T \sbpermc{t}(\theta),\qquad \sbpermc{t}(\param)\deq
\perte_t(\fstate_t(\theta))
\end{equation*}
as a function of $\theta$, when $T\to \infty$. A typical loss function
would be $\perte_t(\state_t)=\perteiid(\state_t,y_t)$ where $\ell$ is a
fixed loss function between $\state_t$ and a desired output $y_t$ at time
$t$.

In the non-recurrent case with a random i.i.d.\ sample taken at each
time, there is no difference between minimizing the expected loss and
minimizing the temporal average of the loss (thanks to the law of large
numbers). However, with a dynamical system and with no random data model,
we define an optimum based on such temporal averages instead of
expectations, thus relying on stationarity or ergodicity
properties.

Thus, we define a local optimum for this problem as a 
parameter value $\paramopt\in \Param$ such that the average derivative of
the loss with respect to $\paramopt$ vanishes, and such that the average
Hessian of the loss at $\paramopt$ is
positive definite. For a given local optimum, the rate at which these
averages converge will affect possible learning rates for convergence
towards that optimum. (This is useful for improving learning rates when
cycling over a dataset, for instance.) Therefore, more precisely, we say
(Assumption~\ref{hyp:opt_simple})
that $\paramopt$ is a local optimum with
exponent $0 < \expem < 1$ if
gradients of the loss at $\paramopt$ average to $0$ at rate
$t^\expem/t$:
\begin{equation*}
\frac{1}{T}\sum_{t=1}^T
\frac{\partial}{\partial \param}
\sbpermc{t}(\paramopt)=O(T^\expem/T),
\end{equation*}
and if
on average, Hessians of the loss at $\paramopt$ converge to a
positive definite matrix, at rate $t^\expem/t$: there is a positive definite
matrix $H$ such that
\begin{equation*}
\frac{1}{T}\sum_{t=1}^T
\frac{\partial^2}{\partial \param^2}
\sbpermc{t}(\paramopt)=H+O(T^\expem/T).
\end{equation*}
For instance, consider a non-recurrent linear regression problem with bounded or Gaussian
centered noise $\eps_t$, namely, input $x_t$, prediction model $s_t=\param\cdot x_t$,
observations $y_t=\paramopt\cdot x_t+\eps_t$, and quadratic losses
$\ell_t(\state_t)=(\state_t-y_t)^2$. Then the derivatives of the loss at
$\paramopt$ are equal to $-2\eps_t x_t$. So with bounded $x_t$, the
assumption on gradients is satisfied
for any $a>1/2$ by the law of the iterated logarithm (and likewise for
deviations from the average Hessian
$H=\lim \frac{1}{T} \sum_{t=1}^T x_t\transp{x_t}$ assuming this limit
exists). This can be improved if cycling over a finite dataset instead of
picking i.i.d.\ samples: then empirical averages converge at rate $1/T$,
so the assumption is satisfied for any $a>0$ instead of just $a>1/2$.

We assume (\rehyp{specrad}) that the dynamical system is stable at first
order around $\paramopt$. Remember that a \emph{linear} dynamical system
$\state_t=A\state_{t-1}+B\theta+C x_t$ is stable if and only if $A$ has
spectral radius less than $1$, namely, if and only if $A^k$ is
contracting for some $k\geq 1$. Here the system may be nonlinear. 
Define $A_t\deq \frac{\partial \opevol_t}{\partial
\state}(\fstate_{t-1}(\paramopt),\paramopt)$: intuitively this
represents the value of $\partial \state_t/\partial \state_{t-1}$  along
the trajectory defined by $\paramopt$. We assume that the product of a
sufficiently large number of consecutive $A_t$ is contracting
(\rehyp{specrad}). For a
linear system, this is equivalent to standard stability. Note that this
is assumed only at $\paramopt$. If this assumption is not satisfied, then
even \emph{running} the system with fixed parameter $\paramopt$ is numerically
unstable, so there is little interest in trying to learn $\paramopt$.
In the non-recurrent case,
$\opevol_t$ does not depend on $\state_{t-1}$ so that $A_t=0$ and the
assumption is automatically satisfied.

Finally, we have a series of more ``technical'' assumptions (technical in
the sense that they are always satisfied over a finite dataset for a
smooth feedforward model): the transition functions $\opevol_T$  are
uniformly $C^2$ around the trajectory defined by $\paramopt$
(\rehyp{regftransetats}), the first and second derivatives of the loss
function with respect to $s_t$ grow at most like $t^\exmp$ for some $0
\leq \exmp < 1$ along the trajectory defined by $\paramopt$
(\rehyp{regpertes}), and the Hessians of the loss with respect to
$\param$ are uniformly
continuous in time around $\paramopt$ (\rehyp{equicontH_simple}, always
satisfied if all the functions involved are $C^3$ with uniformly bounded
first, second and third derivatives). For instance, if gradients and
Hessians of the loss are bounded over time close to $\paramopt$ (e.g., if
working with a
finite dataset), then $\gamma=0$.

Our first result is local convergence of the RTRL algorithm under these
assumptions: if the parameter is initialized close enough to the local
optimum, then RTRL converges to that optimum. The possible range of
learning rates depends on the various exponents in the assumptions,
allowing for a larger range than the classical Robbins--Monro criterion
when cycling over a finite dataset, for instance.

\begin{theoreme}[informal, see Theorem~\ref{thm:cvapproxrtrlalgo}]
\label{thm:overview}
Consider a parameterized dynamical system
$\state_t=\opevol_t(\state_{t-1},\param)$ with loss function $\perte_t$ as above, satisfying all the
assumptions above. Let $\paramopt$ be a local optimum of the empirical
loss, in the sense above.

Let $(\pas_t)_{t\geq 0}$ be a non-increasing stepsize sequence satisfying
$\pas_t=\cdvp\, t^{-\expas}\,\paren{1+\po{1/t^\exmp}}$, where $\cdvp>0$ is
the \emph{overall learning rate}, and
$b$ is any exponent such that $\max(\expem,\exmp)+2\exmp<\expas\leq 1$,
where $\expem$ and $\exmp$ are the exponents from the assumptions above,
respectively about convergence of time averages and growth of losses.

Then there exists a neighborhood $\mathcal{N}_{\paramopt}$ of
$\paramopt$, 
a neighborhood $\mathcal{N}^\jope_{0}$ of $0$, and an overall learning rate
$\cdvpmaxconv>0$ such that 
for any overall learning rate $\cdvp< \cdvpmaxconv$, the following
convergence holds.

For any initial parameter $\param_0\in
\mathcal{N}_{\paramopt}$
and any initial differential $\jope_0\in
\mathcal{N}^\jope_{0}$,
the RTRL learning trajectory
\begin{equation*}
\left\lbrace
\ba
\state_{t} &= \opevolt\paren{\state_{t-1},\,\param_{t-1}}, \\
\jope_t &= \frac{\partial \opevol_t(\state_{t-1},\param_{t-1})}{\partial
\state} \,\jope_{t-1}+\frac{\partial
\opevol_t(\state_{t-1},\param_{t-1})}{\partial
\param}
,\\
\param_t &= \param_{t-1}-\pas_t\left(
\frac{\partial \perte_t(\state_t)}{\partial \state}\cdot \jope_t
\right)
\ea \right.
\end{equation*}
satisfies $\param_t\to \paramopt$ as $t\to\infty$.
\end{theoreme}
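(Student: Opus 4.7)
My plan is to follow the classical ODE method for stochastic approximation, showing that the RTRL update is a biased, noisy discretization of the ideal gradient flow $\dot\param=-\partial\avloss(\param)/\partial\param$ near $\paramopt$. The first step is to introduce \emph{reference trajectories}: let $\state^*_t=\opevol_t(\state^*_{t-1},\paramopt)$ be the state produced by running the system with frozen parameter $\paramopt$, and let $\jopeopt_t$ be the associated ideal RTRL Jacobian, satisfying $\jopeopt_t=A_t\,\jopeopt_{t-1}+B_t$ where $A_t=\partial_\state\opevol_t(\state^*_{t-1},\paramopt)$ and $B_t=\partial_\param\opevol_t(\state^*_{t-1},\paramopt)$. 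The stability assumption \rehyp{specrad} says that products of the $A_t$ are contracting, so $\jopeopt_t$ is bounded and the reference dynamics is exponentially attracting.

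The second step is to control the deviations of the running $(\state_t,\jope_t)$ from $(\state^*_t,\jopeopt_t)$. Using the uniform $C^2$ regularity of $\opevol_t$ (\rehyp{regftransetats}) and a discrete Grönwall/contraction argument driven by the spectral-radius assumption, I would prove inductively that, as long as $\param_t$ stays in a small neighborhood of $\paramopt$ and $\param_t-\param_{t-1}=O(\pas_t)$ with $\pas_t$ slowly varying (\rehyp{spdes}),
\begin{equation*}
\nrm{\state_t-\state^*_t}+\nrm{\jope_t-\jopeopt_t}=O\paren{\nrm{\param_t-\paramopt}+\pas_t}.
\end{equation*}
This is the quantitative statement that the RTRL bias from updating $\param$ mid-computation is one order smaller than the step size itself; it requires geometric decay from \rehyp{specrad} to dominate the $O(\pas_s)$ shocks injected at every past step.

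The third, main step is to rewrite the update. Writing
\begin{equation*}
-\pas_t\paren{\frac{\partial\perte_t(\state_t)}{\partial\state}\cdot\jope_t}=-\pas_t\frac{\partial\sbpermc{t}(\paramopt)}{\partial\param}-\pas_t\,H_t\,(\param_{t-1}-\paramopt)+\pas_t\,\zeta_t,
\end{equation*}
where $H_t$ approximates $\partial^2\sbpermc{t}/\partial\param^2$ at $\paramopt$ and $\zeta_t$ collects higher-order residuals of size $O\paren{\nrm{\param_{t-1}-\paramopt}^2+\pas_t+t^\exmp(\nrm{\state_t-\state^*_t}+\nrm{\jope_t-\jopeopt_t})}$. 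The gradient term is handled by Abel summation against \rehyp{opt_simple}: its partial sums are $O(T^\expem)$, and when combined with $\pas_t\sim\cdvp\,t^{-\expas}$ they contribute cumulatively at order $T^{\expem-\expas}$, which vanishes since $\expas>\expem$. The Hessian term averages to a positive definite matrix $H$ at rate $O(T^{\expem-\expas})$ by the same argument and by \rehyp{equicontH_simple}. The explicit margin $\expas>\max(\expem,\exmp)+2\exmp$ is precisely what is needed to absorb, through Abel summation as well, the $t^\exmp$ growth of $\partial\perte_t/\partial\state$ from \rehyp{regpertes} as it multiplies both the $\pas_t$-sized RTRL bias and its own square in the residual $\zeta_t$.

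Finally, I would conclude by a Lyapunov argument on $V_t=\nrm{\param_t-\paramopt}^2$. The drift, after substituting the decomposition above and using positive definiteness of $H$, yields $V_t\leq(1-c\,\pas_t)\,V_{t-1}+o(\pas_t)\,V_{t-1}+r_t$ for some $c>0$, where $\sum_t r_t<\infty$ in a neighborhood of $\paramopt$; standard stochastic approximation arguments then give $\param_t\to\paramopt$ using $\sum\pas_t=\infty$. Choosing the overall learning rate $\cdvp<\cdvpmaxconv$ small enough ensures that all the deviation estimates above remain inside the neighborhoods where the local hypotheses are valid, closing the inductive argument. The main obstacle, as I see it, is exactly this simultaneous coupling of three moving pieces: $\param_t$ drives $\state_t$ and $\jope_t$, which in turn drive the update of $\param_t$ through a biased estimate of the gradient. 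The smoothness of $\pas_t\sim\cdvp\,t^{-\expas}$ (\rehyp{spdes}) is what decouples them to leading order and licenses the quasi-static linearization around the reference trajectory, and is also the reason why the admissible range of $\expas$ depends on $\expem$ and $\exmp$ rather than on the classical Robbins--Monro threshold $\expas>1/2$.
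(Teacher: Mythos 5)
Your general roadmap (ODE method: freeze a reference trajectory, bound deviations, Taylor-expand the update, then run a Lyapunov argument) is the right family of techniques, but the decisive step — the per-step Lyapunov drift $V_t\leq(1-c\,\pas_t)V_{t-1}+o(\pas_t)V_{t-1}+r_t$ — does not hold as written, and this is where the proposal breaks down.

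The problem is that the instantaneous Hessian $H_t=\partial^2_\param\sbpermc{t}(\paramopt)$ is in general \emph{not} positive definite; \rehyp{opt_simple} only asserts that the \emph{time average} $\frac1T\sum_{t\leq T}H_t$ converges to a positive definite $H$ at rate $T^{\expem-1}$. Per step, $H_t-H=O(1)$ (not $o(1)$), so $(\param_{t-1}-\paramopt)^\top H_t(\param_{t-1}-\paramopt)$ can be negative and the factor you get is not $(1-c\,\pas_t)$. You cannot push the deviation $H_t-H$ into the $o(\pas_t)V_{t-1}$ term. An Abel-summation fix does not close this either, because $H_t-H$ multiplies $(\param_{t-1}-\paramopt)$, which itself depends on all earlier $H_s$'s, so the resulting term is not one that standard Abel or martingale arguments handle directly. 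This is precisely why the paper does \emph{not} run a per-step contraction: it fixes an exponent $\exli$ with $\max(\expem,\exmp)<\exli<\expas-2\exmp$, builds a coarser timescale $\tpsk{k+1}=\tpsk{k}+\tpsk{k}^\exli$, and proves a \emph{per-interval} contraction $\dist{\param_{\tpsk{k+1}}}{\paramopt}\leq(1-\mvpmin\,\pas_{\tpsk{k}}\flng{\tpsk{k}})\dist{\param_{\tpsk{k}}}{\paramopt}+\po{\pas_{\tpsk{k}}\flng{\tpsk{k}}}$ by first comparing the closed-loop trajectory to an \emph{open-loop} trajectory with frozen parameter on each interval, and only then invoking positivity of the window-averaged Hessian. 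The inequality $\exli>\expem$ is exactly what makes the window long enough for $H_t$ to average to $H$, and $\exli<\expas-2\exmp$ is what keeps the parameter displacement over the window small enough to make the linearization valid. Your exponent condition $\max(\expem,\exmp)+2\exmp<\expas$ should be read as guaranteeing that such an $\exli$ exists, not as a property of a per-step bound. You would also need a slight refinement of the final step: the paper's arithmetico-geometric lemma requires only $b_k=o(\mvpmin\,\pas_{\tpsk{k}}\flng{\tpsk{k}})$, which is weaker than $\sum r_t<\infty$; the latter is not in general available here.

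A secondary remark: your deviation bound $\nrm{\state_t-\state^*_t}+\nrm{\jope_t-\jopeopt_t}=O(\nrm{\param_t-\paramopt}+\pas_t)$ is in the right spirit but the reference trajectory you need for the Taylor expansion is the \emph{open-loop} RTRL trajectory frozen at $\param_{\tpsk{k}}$ on each window, not the one frozen at $\paramopt$; the passage from one to the other is its own $O(\nrm{\param_{\tpsk{k}}-\paramopt})$ correction which must be folded into the contraction factor, not into a residual. Once the interval structure is in place, your estimates in steps 1–2 carry over essentially as stated.
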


As far as we know, this is the first general convergence result for RTRL.

\paragraph{Extended RTRL algorithms.}
This theorem generalizes to more complex update rules for $\param$: these
can
cover, for instance, adaptive preconditioners such as Adam or online
natural gradient (Section~\ref{sec:examples}), by considering the
preconditioner as part of the parameter $\param$ to be estimated.

In that case, the passage from $\param_{t-1}$ to $\param_t$ is not
necessarily a gradient step for some loss, so we will consider more
general update rules.
Assume that the update of $\param_t$
Theorem~\ref{thm:overview} is replaced with
\begin{equation*}
\param_{t} = 
\paramupdate(\param_{t-1},\pas_t\,\vtanc_t),\qquad
\vtanc_t=\furt{\frac{\partial \perte_t(\state_t)}{\partial \state}\cdot
\jope_t}{\state_t}{\param_{t-1}}
\end{equation*}
for some operators $\fur_t$ and $\paramupdate$. Namely, $\fur_t$ computes an
update direction from the RTRL gradients and the current state and
parameter, then $\paramupdate$ applies the update with stepsize $\pas_t$.

A typical example for
$\fur_t$ is preconditioning:
$\furt{\vtanc}{\state}{\param}=P(\param)\,\vtanc$ for some
matrix-valued $P$. For preconditioners $P$ estimated online by
collecting some statistics, the quantities used to
estimate $P$ can be treated as part of $\param$ (see examples in
Section~\ref{sec:examples}).

We assume that
$\paramupdate(\param,\vtanc)$ coincides with $\param-\vtanc$ at first
order in $\norm{\vtanc}$ (\rehyp{updateop_first}): this covers for instance
capped gradient steps such as
$\param-\frac{\vtanc}{\max(1,\norm{\vtanc})}$, or Riemannian exponentials
$\exp_{\param}(\vtanc)$ expressed in a coordinate system.

We do not make assumptions on the general form of $\fur_t$ except for
technical assumptions (Assumption~\ref{hyp:fupdrl}): $\fur_t$ is $C^1$, at
most linear with respect to its first argument, and with bounded
derivatives close to the optimal parameter $\paramopt$. This covers
preconditioned updates 
$\furt{\vtanc}{\state}{\param}=P(\param)\,\vtanc$
(Remark~\ref{rem:affineU}).

However, changing the update rule for $\param$ changes the definition of
a local optimum: a local optimum becomes a value $\paramopt$ such that
the average update $\fur_t$ is $0$. This is a joint property of the
dynamical system and the update rule $\fur_t$. More precisely
(Assumption~\ref{hyp:critoptrtrlnbt}), we define
a ``local optimum'' for such extended update rules, as a value
$\paramopt$ such that the average update computed at $\paramopt$ tends to $0$ at rate $T^a/T$:
\begin{equation*}
\frac{1}{T}\sum_{t=1}^T
\fur_t\left(
\frac{\partial}{\partial \param}
\sbpermc{t}(\paramopt),\fstate_t(\paramopt),\paramopt
\right)
=O(T^\expem/T).
\end{equation*}
The second-order condition for a local optimum (positivity of the Hessian) involves the
``extended Hessians'', defined as the Jacobian of the update direction
with respect to $\param$. Setting
\begin{equation*}
\sbfh_t(\param)\deq \frac{\partial}{\partial \param}\left(\param\mapsto
\fur_t\left(
\frac{\partial}{\partial \param}
\sbpermc{t}(\param),\fstate_t(\param),\param
\right)\right),
\end{equation*}
the assumption states that the average extended Hessian at $\paramopt$ converges at rate
$T^a/T$,
\begin{equation*}
\frac{1}{T}\sum_{t=1}^T \sbfh_t(\paramopt)
=\linalgmatrix+O(T^\expem/T)
\end{equation*}
to some matrix $\linalgmatrix$ all of whose eigenvalues have positive real part.
The standard case is $\fur_t(\vtanc,\state,\param)=\vtanc$: then
these conditions reduce to the average gradient being $0$ and the
average Hessian being positive definite. For a preconditioning 
$\furt{\vtanc}{\state}{\param}=P(\param)\,\vtanc$ with known
(non-adaptive) matrix $P(\param)$, these conditions hold
if the average gradient is $0$, the average Hessian at $\paramopt$ is positive definite,
and $P(\paramopt)+\transp{P(\paramopt)}$ is positive definite
(Section~\ref{sec:precond}).

For adaptive algorithms, we include other quantities as part of
the parameter $\param$ to be estimated (such as the average square gradients
in Adam). Then the update of $\param$ is not a gradient udpate anymore,
and the ``extended Hessian'' is not a symmetric matrix anymore. 
Considering the analogous continuous-time 
 dynamical system $\param'=-\fur(\param)$, it is known
that stability of a fixed point $\paramopt$ does not require the Jacobian
$\partial_\param \,\fur(\paramopt)$ of the
update to be symmetric definite positive, only for its eigenvalues to
have positive real part, and this is what we will use.

Under these assumptions on $\fur_t$ and $\paramupdate$, and under the
same conditions as in Theorem~\ref{thm:overview}, the learning
trajectories of the extended RTRL algorithm
\begin{equation*}
\left\lbrace
\ba
\state_{t} &= \opevolt\paren{\state_{t-1},\,\param_{t-1}}, \\
 \jope_t &= \frac{\partial \opevol_t(\state_{t-1},\param_{t-1})}{\partial
\state} \, \jope_{t-1}+\frac{\partial
\opevol_t(\state_{t-1},\param_{t-1})}{\partial
\param}
,\\
\vtanc_t &= \furt{\frac{\partial \perte_t(\state_t)}{\partial \state}\cdot  \jope_t
}{\state_t}{\param_{t-1}},\\
\param_{t} &= 
\paramupdate(\param_{t-1},\pas_t\,\vtanc_t),
\ea \right.
\end{equation*}
satisfy $\param_t\to \paramopt$ as $t\to\infty$
(Theorem~\ref{thm:cvapproxrtrlalgo}).

\paragraph{Corollaries for non-recurrent situations: cycling over
samples, adaptive preconditioning...} Here we present some consequences of
these results for non-recurrent models, in a standard setting for machine
learning applications. Namely, we consider a finite dataset
$D=(x_n,y_n)_{n\in[1;N]}$ of
inputs and labels (with values in any sets), together with a loss
function $\perteiid(x,y,\param)$ for an input-label pair $(x,y)$,
depending on a parameter $\param$. We assume $\perteiid$ is $C^3$ with
respect to $\param$.

A 
\emph{strict local optimum} for this problem is a local optimum of the average loss
with positive definite Hessian, namely, a parameter $\paramopt$ such that
\begin{equation*}
\frac{1}{N}\sum_{n=1}^N \partial_\param \perteiid(x_n,y_n,\paramopt)=0
,\qquad
\frac{1}{N}\sum_{n=1}^N \partial^2_\param
\perteiid(x_n,y_n,\paramopt)\succ 0.
\end{equation*}
We say that an algorithm to learn $\paramopt$ \emph{converges locally} if
there is a neighborhood of $\paramopt$ and a maximal overall learning
rate $\cdvp_\mathrm{max}$ (with learning rates as in Theorem~\ref{thm:overview})
such
that, if the parameter is initialized in this neighborhood and the
overall learning rate $\cdvp$ is smaller than $\cdvp_\mathrm{max}$, then the
sequence of parameters produced by the algorithm converges to
$\paramopt$.

First, the ``ergodic'' viewpoint used to define local optima in the
recurrent case illustrates the different behavior of different data
sampling strategies for stochastic gradient descent in the non-recurrent case. In pure i.i.d.\ sampling,
at each step a sample from the dataset is selected at random with
replacement. In that case, an empirical average of some quantity over
$T$ samples converges
to the average over the dataset at rate $1/\sqrt{T}$ (variance $1/T$),
so that the ergodic assumption above is satisfied with exponent $a>1/2$.
On the other hand, if cycling over all examples in the dataset, or if
randomly reshuffling the dataset before each pass on the dataset, then
empirical averages over $T$ samples converge to the dataset average at
rate $1/T$, so the ergodic assumption is satisfied with any exponent
$a>0$. Cycling or reshuffling acts as a variance reduction method.

Since admissible learning rates in Theorem~\ref{thm:overview}
depend on $a$, this leads to the following.

\begin{corollaire}[informal; see Section~\ref{sec:sgd}]
\label{cor:sgdrates_overview}
Consider ordinary stochastic gradient descent
\begin{equation*}
\param_{t}=\param_{t-1}-\pas_t\,\partial_{\param}\perteiid(x_{i_t},y_{i_t},\param)
\end{equation*}
over a finite dataset $D$ with loss $\perteiid$, with $i_t$ the sample
selected at step $t$. Assume the
learning rates satisfy $\pas_t\propto t^{-\expas}$ with
\begin{equation*}
\begin{cases}
0<\expas\leq 1 &\text{for cycling over $D$ or random reshuffling;}
\\
1/2<\expas\leq 1 &\text{for i.i.d.\ sampling of $i_t$.}
\end{cases}
\end{equation*}
Then this algorithm is locally convergent.
\end{corollaire}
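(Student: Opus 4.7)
The strategy is to cast ordinary SGD as a special case of the extended RTRL framework and then invoke Theorem~\ref{thm:cvapproxrtrlalgo}. Concretely, I embed SGD in the non-recurrent setting by taking $\opevol_t(\state_{t-1},\param) \deq \param$ (so $\fstate_t(\param) = \param$) and per-step loss $\perte_t(\state) \deq \perteiid(x_{i_t},y_{i_t},\state)$; this yields $\sbpermc{t}(\param) = \perteiid(x_{i_t},y_{i_t},\param)$ and $\jope_t = \idth$, so that the RTRL gradient collapses to $\partial_\param \perteiid(x_{i_t},y_{i_t},\param_{t-1})$. Choosing the identity update $\fur_t(\vtanc,\state,\param) = \vtanc$ and $\paramupdate(\param,\vtanc) = \param - \vtanc$ then recovers the SGD iteration exactly.

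I next verify the other hypotheses of Theorem~\ref{thm:cvapproxrtrlalgo} at a given strict local optimum $\paramopt$. The spectral-radius assumption is trivial because $\partial_\state \opevol_t \equiv 0$. Smoothness of $\opevol_t$, $\fur_t$ and $\paramupdate$ is immediate. Since $D$ is finite and $\perteiid$ is $C^3$, the gradients $\partial_\param \perteiid(x_n,y_n,\cdot)$ and Hessians $\partial^2_\param \perteiid(x_n,y_n,\cdot)$ are uniformly bounded in $n$ on any compact neighborhood of $\paramopt$, so the regularity and loss-growth assumptions hold with $\exmp = 0$, and the extended Hessian at $\paramopt$ is simply $\sbfh_t(\paramopt) = \partial^2_\param \perteiid(x_{i_t},y_{i_t},\paramopt)$.

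The core of the proof is the verification of Assumption~\ref{hyp:opt_simple} with the exponent $\expem$ matching each sampling scheme. Write $g_t \deq \partial_\param \perteiid(x_{i_t},y_{i_t},\paramopt)$ and $H_t \deq \partial^2_\param \perteiid(x_{i_t},y_{i_t},\paramopt)$; by definition of a strict local optimum,
\[
\tfrac{1}{N}\sum_{n=1}^N \partial_\param \perteiid(x_n,y_n,\paramopt) = 0,\qquad \tfrac{1}{N}\sum_{n=1}^N \partial^2_\param \perteiid(x_n,y_n,\paramopt) \eqd \linalgmatrix \succ 0.
\]
In the cycling or random per-epoch reshuffling case, after any integer number of complete epochs the partial sum $\sum_t g_t$ is \emph{exactly} zero, because each index $1,\ldots,N$ appears exactly once per epoch. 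For $T = kN + r$ with $0 \leq r < N$, the remainder is bounded by a fixed constant, so $\frac{1}{T}\sum_{t=1}^T g_t = O(1/T)$ and similarly $\frac{1}{T}\sum_{t=1}^T H_t = \linalgmatrix + O(1/T)$, so Assumption~\ref{hyp:opt_simple} holds with any $\expem > 0$. In the i.i.d.\ case, the $g_t$ (resp.\ $H_t - \linalgmatrix$) are bounded i.i.d.\ centered random vectors (resp.\ matrices), so by the law of the iterated logarithm, almost surely $\frac{1}{T}\sum_{t=1}^T g_t = O(\sqrt{(\log\log T)/T})$, and analogously for the Hessians; these are $o(T^{\expem-1})$ for any $\expem > 1/2$, so Assumption~\ref{hyp:opt_simple} holds almost surely with any such $\expem$.

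Theorem~\ref{thm:cvapproxrtrlalgo} then applies under the constraint $\max(\expem,\exmp) + 2\exmp < \expas \leq 1$, which with $\exmp = 0$ reduces to $\expem < \expas \leq 1$. In the cycling/reshuffling case $\expem$ can be taken arbitrarily small, so every $\expas \in (0,1]$ is admissible; in the i.i.d.\ case the constraint becomes $\expas \in (1/2,1]$. The main subtlety is conceptual rather than technical: in the random-reshuffling and i.i.d.\ cases the verification of Assumption~\ref{hyp:opt_simple} holds only on a probability-one event, so the resulting local convergence must be read in the almost-sure sense mentioned after Theorem~\ref{thm:overview}.
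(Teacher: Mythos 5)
Your proposal is correct and follows essentially the same route as the paper: the same non-recurrent cast $\opevol_t(\state,\param)=\param$, $\perte_t=\perteiid(x_{i_t},y_{i_t},\cdot)$ (Example~\ref{ex:nonrec}), the same verification of Assumption~\ref{hyp:opt_simple} via exact per-epoch cancellation for cycling/reshuffling ($O(1/T)$, any $\expem>0$) and the law of the iterated logarithm for i.i.d.\ sampling ($\expem>1/2$) as in Proposition~\ref{prop:SGDhyp}, followed by the application of Theorem~\ref{thm:cvapproxrtrlalgo} with $\exmp=0$. One small imprecision: for random reshuffling the $O(1/T)$ cancellation is deterministic (it does not depend on which permutations are drawn), so that case does not actually require a probability-one caveat — only the i.i.d.\ case does.
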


Thus, cycling or reshuffling allows for larger learning rates than the
classical Robbins--Monro criterion.  However, it is unclear if such a
variance reduction is desirable from a statistical learning perspective:
the variance introduced by i.i.d.\ resampling is a form of bootstrap and may
be helpful to represent the inherent variance from a finite dataset.

Next,
adaptive preconditioning can be treated via the ``extended'' RTRL
algorithm using $\fur_t$ above.
We give several examples in
Sections~\ref{sec:precond}--\ref{sec:adam} (RMSProp, Adam with $\beta^2\to 1$, natural
gradient, online natural gradient). In fact, Corollary~\ref{cor:adam}
proves local convergence for stochastic gradient descent with momentum
and any parameter-dependent adaptive
preconditioning matrix $P$ estimated online
from the data,
provided $P+\transp{P}$ is positive definite when computed at
$\paramopt$ and on average over the dataset. 

Let us give the example of Adam here. Removing momentum and replacing the
entrywise square with a tensor square produces a similar result for the
online natural gradient (Section~\ref{sec:adap}).  The extended Kalman filter in the ``static''
case (for estimating a fixed state via noisy nonlinear measurements) is
strictly equivalent to a particular case of online natural gradient via a
nontrivial correspondence
\citep{ollivier2018online}, and is covered as a consequence.

\begin{corollaire}[informal, see \recor{adam}]
Consider a finite dataset $D=(x_n,y_n)$ as above. Take learning rates
$\pas_t$ as in
Corollary~\ref{cor:sgdrates_overview} depending on the sample selection
scheme.

Consider a preconditioned gradient descent algorithm with momentum, that maintains a momentum variable $J$ together
with square gradient statistics $\Stat$ updated via moving averages:
\begin{align*}
J_t&=\beta^1 J_{t-1}+(1-\beta^1)\,
\partial_\param \perteiid(x_{i_t},y_{i_t},\param_{t-1})
\\
\Stat_t&=\beta^2_t\,
\Stat_{t-1}+(1-\beta^2_t)\,(\partial_\param
\perteiid(x_{i_t},y_{i_t},\param_{t-1}))^{\odot 2}
\\
\metric_t&= \diag(\Stat_t+\eps)^{-1}
\\\param_t&=\param_{t-1}-\pas_t \metric_t J_t \qquad\text{ or }\qquad
\param_t=\param_{t-1}-\pas_t \metric_{t-1} J_t
\end{align*}
where $i_t$ is the data sampled at step $t$,
where $0\leq \beta^1<1$, where $\beta^2_t=1-\momen \eta_t$ for
some $\momen>0$, where $\odot 2$ denotes entrywise squaring of a vector, and where
$\eps>0$ is some regularizing constant.

Then this algorithm is locally convergent.
\end{corollaire}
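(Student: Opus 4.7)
The plan is to cast the Adam-like iteration as an extended RTRL algorithm in the sense of Section~\ref{sec:presentation} and then apply Theorem~\ref{thm:cvapproxrtrlalgo}.

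Since the data model is non-recurrent, I would place the momentum $J$ (updated at the constant rate $1-\beta^1$, independent of $\pas_t$) inside the state $\state_t$, and place the square-gradient statistics $\Stat$ together with $\param$ inside an augmented parameter $\tilde\param = (\param,\Stat)$. Indeed, $\Stat$ obeys $\Stat_t = \Stat_{t-1} + \pas_t \momen (g_t^{\odot 2} - \Stat_{t-1})$, a slow update at rate $\pas_t$. The state evolution
\begin{equation*}
\opevol_t(\state_{t-1},\tilde\param) = \beta^1 \state_{t-1} + (1-\beta^1)\,\partial_\param \perteiid(x_{i_t},y_{i_t},\param)
\end{equation*}
uses only the $\param$-component of $\tilde\param$, the update operator $\paramupdate(\tilde\param,v) = \tilde\param - v$ trivially satisfies \rehyp{updateop_first}, and the direction $\fur_t$ outputs $(\metric(\Stat)\,\state_t,\,\momen(\Stat - g_t^{\odot 2}))$ (or the variant using $\metric(\Stat_{t-1})$), which is $C^1$ with bounded derivatives near the optimum as required by \rehyp{fupdrl}.

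Next I would verify the hypotheses of Theorem~\ref{thm:cvapproxrtrlalgo} at the augmented optimum $\tilde\paramopt = (\paramopt,\Stat^*)$, where $\Stat^* \deq \frac{1}{N}\sum_n (\partial_\param \perteiid(x_n,y_n,\paramopt))^{\odot 2}$. The stability assumption \rehyp{specrad} holds because the state Jacobian reduces to $\beta^1 I$, a strict contraction. The regularity assumptions \rehyp{regftransetats}--\rehyp{equicontH_simple} follow from $\perteiid \in C^3$ and boundedness on the finite dataset, giving $\exmp = 0$. The ergodic assumption \rehyp{opt_simple} at $\tilde\paramopt$ reduces to the corresponding assumption at $\paramopt$: the averaged update for $\param$ at $\tilde\paramopt$ involves the averaged momentum, which equals the average gradient at $\paramopt$ (zero by hypothesis), and the averaged update for $\Stat$ vanishes by definition of $\Stat^*$. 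Cycling or reshuffling gives any $\expem > 0$, while i.i.d.\ sampling gives any $\expem > 1/2$ by the law of the iterated logarithm applied to per-sample gradients and Hessians.

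The crux is verifying positive stability of the extended Jacobian $\linalgmatrix$ at $\tilde\paramopt$ (the spectral part of \rehyp{critoptrtrlnbt}), which is where the non-gradient nature of the algorithm matters. A direct chain-rule computation, using that in equilibrium the time-averaged Jacobian of $J_t$ with respect to $\param$ equals the average Hessian $H = \frac{1}{N}\sum_n \partial^2_\param \perteiid(x_n,y_n,\paramopt) \succ 0$, yields a block-diagonal $\linalgmatrix$ modulo off-diagonal blocks that vanish on average at $\tilde\paramopt$ (because both $g_t$ and $J_t$ average to $0$ there):
\begin{equation*}
\linalgmatrix \;=\; \begin{pmatrix} \metric^* H & 0 \\ 0 & \momen \, I \end{pmatrix}, \qquad \metric^* \deq \diag(\Stat^* + \eps)^{-1}.
\end{equation*}
The lower block $\momen \, I$ is manifestly positive stable. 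The upper block $\metric^* H$ is similar via $\metric^{*1/2}$ to $\metric^{*1/2} H \metric^{*1/2} \succ 0$, hence has positive real eigenvalues. This is precisely the setting of the Lyapunov theory of Appendix~\ref{sec:positivestable}, so $\linalgmatrix$ is positive stable. Together with the stepsize hypothesis $\pas_t \propto t^{-\expas}$ in the prescribed range for \rehyp{spdes}, this completes the verification of all hypotheses of Theorem~\ref{thm:cvapproxrtrlalgo}, and local convergence follows for any overall learning rate $\cdvp < \cdvpmaxconv$. The main obstacle, which the extended-RTRL framework is tailored to handle, is precisely the asymmetry of $\linalgmatrix$: the algorithm is not a gradient flow, and only the positive-stable (rather than positive-definite) structure survives.
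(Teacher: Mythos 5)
Your overall strategy is the right one — augment the parameter with $\Stat$, move the momentum into the recurrent state so that the RTRL machinery takes care of it, and invoke Theorem~\ref{thm:cvapproxrtrlalgo} via Assumption~\ref{hyp:critoptrtrlnbt} rather than positive definiteness — and your final claim holds. Two points, though, deserve comment.

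First, your choice of encoding differs from the paper's. You place the momentum vector $J_t$ itself as the state, and have $\fur_t$ read it off via its \emph{second} argument (the state) while ignoring the RTRL gradient in the first slot. This is formally admissible under \rehyp{fupdrl}, and the resulting parameter trajectory coincides with Adam's, but the RTRL framework then also carries a spurious Jacobian $\jope_t=\partial J_t/\partial\param$ of size $(\dim\param)^2$ that is computed and never used. The paper instead takes the state to be the \emph{scalar} exponentially–averaged loss $\state_t = (1-\beta^1)\sum_{j\le t}\beta^{t-j}\perteiid(x_{i_j},y_{i_j},\param)$ with $\perte_t(\state)=\state$; differentiating the state recursion shows that the RTRL Jacobian $\jope_t=\partial\state_t/\partial\param$ \emph{is} the momentum $J_t$, so the RTRL gradient fed to $\fur_t$ is $J_t$ itself. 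This is the more economical cast — the RTRL variable does actual work — but both constructions produce the same Jacobian $\sbfh_t$ and hence the same $\linalgmatrix$, so the difference is cosmetic for the purposes of this corollary.

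Second, and more substantively, your claim that \emph{both} off-diagonal blocks of $\linalgmatrix$ vanish at $(\paramopt,\Stat^*)$ is incorrect. The upper-right block is $(\partial_\Stat\metric)(\paramopt,\Stat^*)\cdot \bar J_t$, which averages to $0$ because $\bar J_t$ does. But the lower-left block is $-\momen\,\partial_\param\stat_t = -2\momen\,\diag(g_t)\,\partial_\param g_t$, and this is a \emph{product} of a zero-mean factor $g_t$ with a nonzero-mean factor $\partial_\param g_t$; such a product need not average to zero. (A one-dimensional example: $\ell=\tfrac12(\param^2-y)^2$ with $\E y = (\paramopt)^2 \neq 0$ gives $\E[\partial_\param(g^2)] = 8\paramopt\,\operatorname{Var}(y)\neq 0$.) The paper's computation, consistently with this, produces a block-\emph{triangular} matrix
\begin{equation*}
\linalgmatrix=\begin{pmatrix} \metric(\paramopt,\Stat^*)\,H & 0\\ -\momen\,C & \momen\,\id\end{pmatrix}
\end{equation*}
with $C$ the average of $\partial_\param\stat_t$ at $\paramopt$, in general nonzero. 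Your conclusion survives because the eigenvalues of a block-triangular matrix are still those of its diagonal blocks, so you did not actually need the off-diagonal to vanish; but as stated the reasoning is wrong and should be replaced by the triangularity argument.
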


To obtain this result, the square gradient statistics $\Stat$ collected
to compute the adaptive preconditioner are treated as a part of the
parameter to be estimated: namely, the general convergence result is
applied to $\param^+\deq(\param,\psi)$. At each step, $\psi$ is updated
by incorporating a value observed on the current sample. The update of
$\param^+$ is not a gradient step of a loss function, hence the interest
of considering the generalized update operators $\fur_t$ and the
non-symmetric generalized Hessians. Momentum is incorporated by treating
it as part of the state $\state_t$ of the dynamical system; then the
momentum variable $J_t$ coincides with the RTRL Jacobian $J_t$
(Section~\ref{sec:mom}).

\paragraph{Recurrent models: backpropagation through time, RTRL
approximations.} RTRL cannot be used
directly with large-dimensional recurrent systems due to the
impossibility to store $J_t$, whose size is $(\dim \state_t)\times (\dim
\param)$. For such systems, \emph{backpropagation through time} on time
intervals $[\tpsk{k};\tpsk{k+1}]$ allows for gradients to be computed
efficiently on each such interval \citep{jaeg,pearl}. Alternatively, low-dimensional
approximations of RTRL have been introduced, such as NoBackTrack, UORO,
or Kronecker-factored RTRL \citep{oll16, uoro, NIPS2018_7894}. We now
describe results for these situations.

Truncated backpropagation through time using time intervals
$[\tpsk{k};\tpsk{k+1}]$ of fixed length $\tpsk{k+1}-\tpsk{k}$ produces a biased algorithm:
dynamical effects exceeding the length of these intervals are ignored (see, e.g.,
the simple ``influence balancing'' example of divergence in \citet{uoro}). Thus we
let the
truncation length $\lng(T)$ grow to $\infty$ at a slow rate $t^\exli$ for some
exponent
$\exli<1$. There is a sweet spot for $\exli$, related to the learning
rates.
If $\lng(T)$ is too small, gradients are biased.
If $\flng{T}$ is too large, then the gradients computed on the time
interval $[T;T+\flng{T}]$ will be large, and the gradient step on
$\param$ at the end of each interval will be too large for convergence.
This is described by the relationship between the various exponents in
the following result; remember that $\exmp=0$ if gradients and Hessians of losses are
bounded over time close to $\paramopt$, and that $\expem$ encodes the speed at which empirical averages
along the trajectory converge to their limit over time.

\begin{theoreme}[informal, see Definition~\ref{def:tbtt} and Theorem~\ref{thm:cvtbtt}]
Consider a parameterized dynamical system
$\state_t=\opevol_t(\state_{t-1},\param)$ with loss function $\perte_t$
as above, satisfying all the
assumptions above. Let $\paramopt$ be a local optimum of the empirical
loss, in the sense above.

Let $(\pas_t)_{t\geq 0}$ be a non-increasing stepsize sequence satisfying
$\pas_t=\cdvp\, t^{-\expas}\,\paren{1+\po{1/t^\exmp}}$ where $\cdvp>0$ is
the overall learning rate and $b$ is any exponent
such that $\max(\expem,\exmp)+2\exmp<\expas\leq 1$, with 
$\expem$ and $\exmp$ the exponents in the
technical assumptions above.

Consider the truncated backpropation through time algorithm
using a sequence of time intervals
$[\tpsk{k};\tpsk{k+1}]$: the system is run with a constant parameter
during each such interval, and at time $\tpsk{k+1}$ the cumulated gradient of all
losses on $[\tpsk{k};\tpsk{k+1})$ is computed via backpropagation through
time, and the parameter $\param$ is updated by a gradient step with stepsize
$\pas_{\tpsk{k+1}}$ (Definition~\ref{def:tbtt}).

Assume $\tpsk{k+1}-\tpsk{k}$ grows like $\tpsk{k}^\exli$ for some
$\max(\expem,\exmp) < \exli < \expas-2\exmp$.

Then truncated backpropagation through time on the intervals
$[\tpsk{k};\tpsk{k+1}]$ converges locally to $\paramopt$.
\end{theoreme}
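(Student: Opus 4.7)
The plan is to reduce TBPTT to the extended RTRL framework of \rethm{cvapproxrtrlalgo} by working at the coarse time scale indexed by $k$ (the interval counter) rather than $t$. At each boundary time $\tpsk{k+1}$, the cumulative gradient produced by backpropagation on $[\tpsk{k};\tpsk{k+1})$ can be written as a sum $\sum_{t \in [\tpsk{k};\tpsk{k+1})} \frac{\partial \perte_t}{\partial \state}\cdot \jope_t^{\mathrm{trunc}}$, where $\jope_t^{\mathrm{trunc}}$ satisfies the same recursion as the RTRL Jacobian but is \emph{reset to $0$} at time $\tpsk{k}$. Because of the spectral stability assumption \rehyp{specrad}, the difference $\jope_t^{\mathrm{trunc}} - \jope_t$ (the truncation bias) is controlled by a product of $A_t$ matrices over a window of length growing like $\tpsk{k}^\exli$, hence decays \emph{geometrically} in $\tpsk{k}^\exli$, faster than any polynomial rate.

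First, I would recast the algorithm as an extended RTRL scheme at the $k$-scale by bundling one window's worth of computation into a single ``step'': the state is $\state_{\tpsk{k}}$, the Jacobian variable is $\jope_k^{\mathrm{trunc}}$ (reset at each boundary, hence strictly bounded and thus trivially satisfying the regularity hypotheses), and the update direction is $\vtanc_k \deq \sum_{t \in [\tpsk{k};\tpsk{k+1})} \frac{\partial \perte_t}{\partial \state}\cdot \jope_t^{\mathrm{trunc}}$ with effective stepsize $\tilde\pas_k \deq \pas_{\tpsk{k+1}}$. The update operator $\paramupdate$ is simply the identity shift, so \rehyp{updateop_first} is trivially satisfied, and the operator playing the role of $\fur_k$ is linear in $\vtanc$, fulfilling \rehyp{fupdrl}. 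The resulting coarse-scale scheme exactly matches the form covered by \rethm{cvapproxrtrlalgo}.

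Next, I would verify the ergodic-type assumption \rehyp{opt_simple} (and its extended-Hessian counterpart \rehyp{critoptrtrlnbt}) at $\paramopt$ on the coarse scale. The exponential decay of the truncation bias implies that, up to a negligible remainder, the per-window aggregated gradient equals the sum of true gradients, so the coarse-scale empirical averages inherit the $T^\expem/T$ rate from the $t$-scale. The per-window quantities $\vtanc_k$ are of size $\tpsk{k}^\exli \cdot t^\exmp$ (number of summands times loss growth), and after rescaling by $\tilde\pas_k \sim \tpsk{k+1}^{-\expas}$ the product $\tilde\pas_k \vtanc_k$ is of order $\tpsk{k}^{\exli+\exmp-\expas}$; the double inequality $\max(\expem,\exmp) < \exli < \expas - 2\exmp$ ensures that, once transferred to the $k$-scale via $\tpsk{k} \sim k^{1/(1-\exli)}$, the effective exponents $(a', b', \gamma')$ still satisfy the RTRL condition $\max(a',\gamma')+2\gamma' < b' \leq 1$, which is exactly the hypothesis of \rethm{cvapproxrtrlalgo}.

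The main obstacle will be the bookkeeping of these three competing scales: (i) the geometrically small truncation bias, (ii) the polynomially growing per-window gradient magnitude, and (iii) the polynomially decaying learning rate. Showing that $\max(\expem,\exmp) < \exli < \expas-2\exmp$ is precisely the window where the three balance, and producing a clean transfer of all technical assumptions (\rehyp{regftransetats}, \rehyp{regpertes}, \rehyp{equicontH_simple}) from the $t$-scale to the $k$-scale, is the delicate part. Once this is done, \rethm{cvapproxrtrlalgo} delivers $\param_{\tpsk{k}} \to \paramopt$, and convergence of the full sequence $\param_t \to \paramopt$ follows because $\param$ is held constant on each window $[\tpsk{k};\tpsk{k+1})$.
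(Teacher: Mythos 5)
Your approach is genuinely different from the paper's, and as written it has a quantitative gap that the paper's route avoids. The paper does \emph{not} coarse-grain to the $k$-scale: it stays at the fine time scale $t$ throughout, makes the step-size sequence piecewise constant on the intervals $(\tpsk{k};\tpsk{k+1}]$ (and checks in \relem{pascstmorceaux} that this still satisfies \rehyp{spdes}), shows via \reprop{tbpttasrtrl} that TBPTT is exactly RTRL with $\jope$ reset to $0$ at the start of each interval and the parameter frozen within each interval, and then invokes a separate \emph{open-loop} abstract convergence theorem (\rethm{cvalgoboucleouverte}) that was designed precisely for this reset-and-batch-update pattern. All the RTRL assumption checks already done for \rethm{cvapproxrtrlalgo} are reused unchanged.

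The gap in your coarse-graining plan is the exponent bookkeeping you acknowledge as ``delicate.'' If you relabel time by $k$, then $\tpsk{k}\sim c\,k^{1/(1-\exli)}$ (cf.\ \relem{tmsexrtrlalgo}), so the effective step size $\tilde\pas_k=\pas_{\tpsk{k+1}}\sim k^{-\expas/(1-\exli)}$, the aggregated-gradient growth becomes $\fmdper{k}'\sim k^{(\exli+\exmp)/(1-\exli)}$, and the ergodic exponent becomes $a'=\expem/(1-\exli)$. Feeding these into the step-size constraint of \rehyp{scfcstepsize} at the $k$-scale gives $\max(a',\gamma')+2\gamma'<b'\leq 1$ with $b'=\expas/(1-\exli)$. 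Two concrete failures result. First, $b'\leq 1$ requires $\expas\leq 1-\exli$, which is \emph{not} implied by the theorem's hypothesis $\exli<\expas-2\exmp\leq 1$; for instance $\exmp=\expem=0$, $\exli=1/2$, $\expas=3/5$ satisfies the theorem's hypotheses but $b'=6/5>1$, so the coarse-scale theorem simply does not apply. Second, since $\exli>\max(\expem,\exmp)$ forces $\max(a',\gamma')=\gamma'$, the constraint reduces to $3(\exli+\exmp)<\expas$, which is strictly stronger than the stated $\exli<\expas-2\exmp$, so even in the regime where $b'\leq 1$ you would prove a weaker theorem. Also, a minor imprecision: the accumulated truncation bias $\sum_{t\in[\tpsk{k};\tpsk{k+1})}(\jope_t^{\mathrm{trunc}}-\jope_t)$ is $O(1)$ (a convergent geometric sum), not geometrically small in $\tpsk{k}^\exli$ --- what is geometrically small is the bias at the \emph{end} of a window, not its total. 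Your broader intuition that the $O(1)$ bias is negligible against the $O(\tpsk{k}^\exli)$-sized aggregated gradient is what actually matters, and the paper exploits exactly this in \relem{extloc}/\relem{contractitvk} via \relem{somneglint}, but at the $t$-scale where the learning-rate exponent $\expas$ is left untouched.
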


As far as we know, this type of result for truncated backpropagation
through time is new.

\bigskip

Finally, let us turn to RTRL approximations such as UORO and NoBackTrack.
In such ``imperfect'' RTRL algorithms (Definition~\ref{def:approxrtrl}),
instead of maintaining the Jacobian $\jope_t$, a smaller-dimensional
approximation $\tilde \jope_t$ is used. The computation of $\tilde
\jope_t$ follows the RTRL equation, but an additional error $E_t$ is
incurred at each time step:
\begin{equation*}
\tilde \jope_t = \frac{\partial
\opevol_t(\state_{t-1},\param_{t-1})}{\partial
\state} \,\tilde \jope_{t-1}+\frac{\partial
\opevol_t(\state_{t-1},\param_{t-1})}{\partial
\param}+E_t.
\end{equation*}
In NoBackTrack and UORO,\footnote{Although not formally covered in this
text, we believe our
results also hold for the more recently introduced
Kronecker-factored RTRL \citep{NIPS2018_7894}, which is derived from UORO. Indeed it is enough to
check that the assumptions on $E_t$ hold, in a way similar to
Section~\ref{sec:nbtuoroasapproxrtrlalgo}.} the approximation $\jope_t$
is built in a random way so that the expectation of $E_t$ is $0$ at every
step. Since the equation on $\jope_t$ is affine, all subsequent gradients are
unbiased, which allows us to prove convergence.

We give a formal mathematical description of NoBackTrack and UORO in
Section~\ref{sec:defnbtuoro}. Convergence is proved by a single result
for imperfect RTRL algorithms (Theorem~\ref{thm:cvapproxrtrlalgo}) via
general properties of the error $E_t$. Namely, convergence holds as soon as the
expectation of $E_t$ knowning previous errors $(E_s)_{s\leq t}$ is $0$ at
every step (\rehyp{wcorrnoiseapprtrl}), and that the size of the error
$E_t$ is sublinear in $\tilde \jope_t$ at every step
(\rehyp{errorgauge}). In NoBackTrack and UORO, the latter property is
ensured by the ``variance-reduction'' factors originally introduced in
the algorithm \citep{oll16, uoro} (see Section~\ref{sec:defnbtuoro}
for details): they play a major role for convergence by ensuring that the
error $E_t$ scales at most like $\sqrt{\norm{\tilde \jope_t}}$ at each
step.

For this situation we obtain a convergence result similar to RTRL, but
with stricter constraints on the learning rates, and with probability
tending to $1$ as the overall learning rate tends to $0$.

\begin{theoreme}[informal, see Theorem~\ref{thm:cvapproxrtrlalgo}]
\label{thm:overview_tbptt}
Consider a parameterized dynamical system
$\state_t=\opevol_t(\state_{t-1},\param)$ with loss function $\perte_t$ as above, satisfying all the
assumptions above. Let $\paramopt$ be a local optimum of the empirical
loss, in the sense above.

Consider an imperfect RTRL algorithm with random errors $E_t$ satisfying the
unbiasedness and sublinearity assumptions above (which hold for NoBackTrack and for UORO).

Let $(\pas_t)_{t\geq 0}$ be a non-increasing stepsize sequence satisfying
$\pas_t=\cdvp\, t^{-\expas}\,\paren{1+\po{1/t^\exmp}}$ where $\cdvp>0$ is
the overall learning rate and
$b$ is any exponent such that $\max(\expem,1/2+\exmp)+2\exmp<\expas\leq 1$,
where $\expem$ and $\exmp$ are the exponents from the assumptions above,
respectively about convergence of time averages and growth of losses.

Then there exists a neighborhood $\mathcal{N}_{\paramopt}$ of
$\paramopt$ and
a neighborhood $\mathcal{N}^\jope_{0}$ of $0$
such that
for any $\eps>0$, there exists
$\cdvpmaxconv>0$ such that for any overall learning rate $\cdvp<
\cdvpmaxconv$, with probability greater than $1-\eps$,
the following
convergence holds:

For any initial parameter $\param_0\in
\mathcal{N}_{\paramopt}$
and any initial differential $\jope_0\in
\mathcal{N}^\jope_{0}$,
the imperfect RTRL learning trajectory
\begin{equation*}
\left\lbrace
\ba
\state_{t} &= \opevolt\paren{\state_{t-1},\,\param_{t-1}}, \\
\tilde \jope_t &= \frac{\partial \opevol_t(\state_{t-1},\param_{t-1})}{\partial
\state} \,\tilde \jope_{t-1}+\frac{\partial
\opevol_t(\state_{t-1},\param_{t-1})}{\partial
\param}+E_t
,\\
\param_t &= \param_{t-1}-\pas_t\left(
\frac{\partial \perte_t(\state_t)}{\partial \state}\cdot \tilde \jope_t
\right)
\ea \right.
\end{equation*}
satisfies $\param_t\to \paramopt$ as $t\to\infty$.
\end{theoreme}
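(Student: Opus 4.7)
The plan is to recast the imperfect RTRL iteration as an instance of the extended RTRL setup already covered by \rethm{cvapproxrtrlalgo}, and to verify its hypotheses on a high-probability event using the martingale structure of $E_t$ (\rehyp{wcorrnoiseapprtrl}) together with the sublinear size bound (\rehyp{errorgauge}). Concretely, I would treat $(\state_t,\tilde{\jope}_t)$ as an augmented state; with $\param$ held fixed at $\paramopt$, the recursion for $\tilde{\jope}_t$ becomes an affine contracting system (by \rehyp{specrad}) perturbed by the martingale increment $E_t$, and the extended-RTRL machinery of Sections~\ref{sec:absontadsys}--\ref{sec:prcvalappxalgo} can be applied once the ergodic assumption is checked in this noisy setting.

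First, I would verify the ergodic \rehyp{opt_simple} along the trajectory at $\paramopt$. Writing $\tilde{\jope}_t=\jopeopt_t+\Delta_t$, the correction $\Delta_t$ solves a stable linear recursion driven by $E_t$; the size bound $\nrm{E_t}=O(\sqrt{\nrm{\tilde{\jope}_t}})$ gives a deterministic envelope on $\nrm{\Delta_t}$, and applying a Burkholder--Davis--Gundy inequality to the martingale $\sum_{t\leq T}(\partial\perte_t/\partial\state)\,\Delta_t$ yields a contribution to the time-averaged gradient of order $O(T^{-1/2+\exmp})$ with arbitrarily high probability. Combined with the deterministic RTRL contribution bounded by $O(T^{\expem-1})$ from \rehyp{opt_simple}, this produces an effective exponent $\max(\expem,1/2+\exmp)$, which is exactly why the admissible range for $b$ becomes $\max(\expem,1/2+\exmp)+2\exmp<b\leq 1$ instead of the exact-RTRL condition. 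The averaged extended Hessian is unaffected at leading order: since $\econdi{E_t}{t-1}=0$, its time-average agrees in conditional mean with the deterministic RTRL Hessian up to a martingale of the same order, so \rehyp{equicontH_simple} and the positive-stability of $\linalgmatrix$ transfer verbatim.

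Once these ergodic-type inputs hold on a high-probability event, \rethm{cvapproxrtrlalgo} delivers $\param_t\to\paramopt$. I expect the main obstacle to be handling the feedback loop between $\tilde{\jope}_t$ and $E_t$ \emph{along the actual learning trajectory}, not merely at $\paramopt$: because $\nrm{E_t}$ is only controlled by $\sqrt{\nrm{\tilde{\jope}_t}}$, a large excursion of $\tilde{\jope}_t$ could trigger a large noise increment that in turn destabilizes the parameter update and enlarges the subsequent $\tilde{\jope}_{t+1}$. The argument would combine a Doob-type maximal inequality on a Lyapunov functional of the augmented state $(\param_t,\state_t,\tilde{\jope}_t)$ with the uniform contractivity of the $A_t$ products from \rehyp{specrad}; the square-root scaling in \rehyp{errorgauge} is precisely what makes this bootstrap close, and for $\cdvp$ small enough it confines the trajectory to a controlled tube around $(\paramopt,\stateopt_t,\jopeopt_t)$ with probability at least $1-\eps$. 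This probabilistic confinement is what degrades the almost-sure convergence available in the deterministic RTRL case to convergence with probability $1-\eps$, and it is the reason the neighborhoods $\mathcal{N}_{\paramopt}$ and $\mathcal{N}^{\jope}_0$ can be chosen uniformly in $\eps$ while $\cdvpmaxconv$ must shrink as $\eps\to 0$.
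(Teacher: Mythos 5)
Your proposal takes a genuinely different route from the paper's, and leaves a gap at exactly the place you yourself flag. You propose to absorb the algorithmic noise into a strengthened ergodic assumption --- via a BDG estimate for the open-loop martingale $\sum_t \partial_\state\perte_t\cdot\Delta_t$ at $\paramopt$, yielding an effective exponent $\max(\expem,1/2+\exmp)$ --- and then feed the result into the deterministic extended-RTRL machinery. The paper keeps \rehypdeux{opt_simple}{critoptrtrlnbt} unchanged and instead invokes an explicitly probabilistic framework: random trajectories respecting the stable tube (\redef{randomtraj}), a quantitative ``negligible noise'' condition per interval (\redef{negnoise}), and the noisy convergence theorem (\rethm{cvalgopti_noisy}). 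The noise control is then a local, per-interval statement: over $(\tpsk{k},\tpsk{k+1}]$, the deviation of the imperfect trajectory from a ``regularized'' companion (same parameter updates, exact $\jope$-recursion) is decomposed in \relem{expbruit} into a sum of forward-propagated errors $\sum_s c_s E_s$ plus a deterministic $O\paren{(\sum\pas_s\fmdper{s})^2}$ remainder; the propagation kernels $c_s$ are $\trib{\tpsk{k}}$-measurable (\recor{measapproxtraj}) with geometric decay (\relem{Pinorm}), so conditionally on $\trib{\tpsk{k}}$ the sum is a martingale with second moment $O(\pas_{\tpsk{k}}^2\fmdper{\tpsk{k}}^2\flng{\tpsk{k}})$; Chebyshev plus Borel--Cantelli over $k$ then give the $1-\eps$ event (\relem{nsectrlapprtrl}).

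The gap in your plan is precisely the one you call ``the main obstacle'': your BDG estimate controls the noise along the \emph{open-loop} trajectory with $\param$ frozen at $\paramopt$, whereas convergence must be established along the \emph{closed-loop} learning trajectory, where $E_t$ is coupled through the error gauge (\rehyp{errorgauge}) to $\tilde\jope_{t-1}$, hence to all past noise and parameter choices. A modified ergodic exponent does not by itself license an appeal to \rethm{cvalgopti}, whose transition operators are deterministic; the noise must be treated as a random perturbation of the deterministic algorithm, which is exactly what \redef{negnoise} encodes. A global BDG bound over $[1,T]$ also does not give uniform control at all $\tpsk{k}$ needed to keep the trajectory in the stable tube --- the paper works interval by interval so the failure probabilities are summable. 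Your one-sentence ``Doob-type maximal inequality on a Lyapunov functional of the augmented state'' is the step that needs to become the content of \relemdeux{expbruit}{nsectrlapprtrl}, and as written it is not clear it closes the loop. That said, your identification of the effective exponent $\max(\expem,1/2+\exmp)$ matches the constraint in \rehyp{scfcstepsize}, and your observations that $\cdvpmaxconv$ must shrink with $\eps$ while the neighborhoods can be chosen uniformly, and that the square-root scaling of the gauge is what makes the bootstrap close, are correct.
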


This result can also be combined with the extended update
operators $\fur_t$ and $\paramupdate$, see
Theorem~\ref{thm:cvapproxrtrlalgo}.
As far as we know, this is the first theoretical analysis of NoBackTrack
and UORO.
%

\subsection{Formal Definitions: Parameterized Dynamical System, RTRL,
Extended RTRL Algorithms}

\label{sec:formaldefs}

We now turn to fully formal definitions and technical assumptions for the
convergence theorem. Alternatively, the reader may
go directly to the applications and corollaries presented in
Section~\ref{sec:examples}.

\paragraph{Linear algebra notation.} We have tried to make the text
readable under two alternative conventions for linear algebra, with
minimal notational fuss. With the programmer's notation, the parameter
and state $\param$ and $\state$ are tuples of real numbers. This
convention makes no difference between row or column tuples or
vectors, so that we write simple stochastic gradient descent as
$\param\gets \param-\pas \,\partial \perte/\partial \param$, ignoring the
fact that the tuple $\partial \perte/\partial \param$ is formally a linear form
(row vector), that is mapped to a vector using the canonical quadratic
form on $\mathbb{R}^{\dim(\param)}$. This is the convention most relevant
for the applications (Section~\ref{sec:examples}).

For the bulk of the mathematical proof, we treat the state and parameter
of the system as elements of some finite-dimensional vector spaces and
use standard differential geometry notation. Given vector spaces $E$ and
$F$, we denote $\sblin(E,F)$ the set of linear maps from $E$ to $F$.
Given a smooth map $f\colon E \to F$ and $x \in E$, the differential
$\frac{\partial f}{\partial x}(x)$ of $f$ at $x$ is an element of
$\sblin(E,F)$, which can be represented by the Jacobian matrix $\partial
f_i(x)/\partial x_j$ in a basis. 
We write indifferently $\partial_x f$
or $\frac{\partial f}{\partial x}$, depending on typography.

In particular, derivatives of the loss
are linear forms $\partial_\param \perte\in \sblin(\Param,\mathbb{R})$,
not vectors:
this is necessary for consistency of the chain rule. This double convention occasionally leads to a few inconsistencies:
notably, the Hessian $\partial^2_\param \perte(\param)\in
\sblin(\sblin(\Param,\mathbb{R}),\mathbb{R})$ is formally a
$(0,2)$-tensor (a row vector of row vectors), but we sometimes abuse notation and
treat it as a matrix. The same occurs for the Lyapunov matrix $B$ of
Sections~\ref{sec:rtrlasalgo} and \ref{sec:contaroundparamopt}.

If the vector spaces $E$ and $F$ are equipped with some
norms, we always equip $\sblin(E,F)$ with the operator norm
$\nrmop{f}\deq \sup_{x\neq 0} \norm{f(x)}/\norm{x}$. We follow this
convention for compound spaces: for example, the proofs involve spaces of
the type $\sblin(\sblin(E,F),G)$, which is equipped with the operator
norm coming from the operator norm on $\sblin(E,F)$ and
the norm of $G$.

For pairs, such as the state-parameter pair $(\state,\param)$ appearing
in some assumptions below,
we
use the supremum norm; for instance, 
$\norm{(\state,\param)}\deq
\max(\norm{\state},\norm{\param})$.

\paragraph{Parameterized dynamical systems, RTRL.} We now provide the
formal definitions for RTRL on a parameterized dynamical system.

\begin{definition}[Parameterized dynamical system]
\label{def:prmdynsys}
We consider a dynamical system parameterized by $\param\in\Param$, whose state
$\state_t\in\State_t$ at time $t\geq 1$ is subjected to
the evolution equation
\begin{equation*}
\state_{t}=\opevol_{t}(\state_{t-1},\param),
\end{equation*}
where $\Param\simeq \R^{\dim(\param)}$ and $\State_t\simeq
\R^{\dim(\state_t)}$ are some finite-dimensional
Euclidean
vector
spaces (not necessarily of constant dimension with time $t$), and, for
each $t\geq 1$,
\begin{equation*}
\opevol_{t}\from \State_{t-1}\times \Param \to \State_{t}
\end{equation*}
is a (time-dependent) $C^2$ map, the \emph{transition operator}.

Such data will be called a \emph{parameterized dynamical system}. A
sequence of states $(\state_t)_{t\geq 0}$ satisfying the evolution
equation will be called a \emph{trajectory} with parameter $\param$.

We denote by $\fstate_t\from \State_0\times \Param \to \State_t$ the
function that to $\state_0\in \State_0$
and $\param\in \Param$, associates the value $\state_t$ at time $t$ of the
trajectory starting at $\state_0$ with parameter $\param$.
\end{definition}


As usual in statistical learning, the quality of the parameter is assessed through loss functions.
\begin{definition}[Loss function]
\label{def:fcpletaprm}
A \emph{loss function} along a parameterized dynamical system, is a
family of functions
\begin{equation*}
\perte_t\from \State_t\to \R
\end{equation*}
for each integer $t\geq 1$. We assume that $\perte_t$ is $C^2$ for all
$t$. 
Given $t\geq 1$, $\param\in \Param$ and $\state_0\in\State_0$ we
denote
\begin{equation*}
\sbpermc{t}(\state_0,\param)\deq
\perte_t(\fstate_t(\state_0,\param))
\end{equation*}
the loss function at the state obtained at time $t$ from $\param\in
\Param$ and $\state_0\in\State_0$.
\end{definition}
Our smoothness assumptions on the $\opevol_t$'s and the $\perte_t$'s
imply that $\sbpermc{t}$ is $C^2$ for all $t\geq 1$.

Here, we have assumed that the state $\state_t$ of the system at time $t$
contains all the information necessary to compute the loss. In some
applications, the loss has an additional explicit dependency on $\param$;
this can be dealt with by including the current parameter as part of the
state, namely, working on the augmented state $\state_t^+\deq
(\state_t,\param)$.\footnote{More precisely, use the extended state
space $\State_t^+\deq
\State_t\times \Param$ together with the extended transition operators
$\opevol_t^+((\state_{t-1},\param_{t-1}),\param_t)\deq
(\opevol_T(\state_{t-1},\param_t),\param_t)$, thus, always storing the
latest parameter value in the state. Notably, this does not affect the spectral
radius of the operators in Definition~\ref{def:specrad}, so that the
stability assumption~\ref{hyp:specrad} is satisfied for the extended
system if and only if it is satisfied for the basic system.}

The goal of training is to find a parameter $\paramopt$ such that the
asymptotic average loss
\begin{equation*}
\lim_{T\to \infty} \frac1T \sum_{t=1}^T \sbpermc{t}(\state_0,\paramopt)
=\lim_{T\to \infty} \frac1T \sum_{t=1}^T \perte_t(\state_t,\paramopt)
\end{equation*}
is as small as possible, where $(\state_t)$ is the trajectory defined by
$\paramopt$ and $\state_0$.\footnote{A priori this may depend on $\state_0$. We
can either decide that $\state_0$ is fixed once and for all by the
algorithm, or formally let $\state_0$ be part of the parameter to be
optimized. But in the end, under our ergodicity assumptions, the state
$\state_0$ will be forgotten and the asymptotic average loss will not
depend on $\state_0$.}


Let us now define the RTRL algorithm presented informally in
Section~\ref{sec:rtrloverview}.

\begin{definition}[RTRL algorithm]
\label{def:simplertrl}
The RTRL algorithm with step sizes $(\pas_t)_{t \geq 1}$, starting at
$\state_0\in \State_0$ and $\param_0\in \Param$, maintains a
state $\state_t\in\State_t$, a parameter $\param_t\in \Param$, and a
Jacobian estimate $\jope_t\in \epjopeins{t}$, subjected to the evolution
equations
\begin{equation*}
\left\lbrace
\ba
\state_{t} &= \opevolt\paren{\state_{t-1},\,\param_{t-1}}, \\
\jope_t &= \frac{\partial \opevol_t(\state_{t-1},\param_{t-1})}{\partial
\state} \,\jope_{t-1}+\frac{\partial
\opevol_t(\state_{t-1},\param_{t-1})}{\partial
\param}%
, \qquad \jope_0=0,\\
\vtanc_t &= \frac{\partial \perte_t(\state_t)}{\partial
\state}\cdot
\jope_t
,\\
\param_{t} &= 
\param_{t-1}-\pas_t\,\vtanc_t  
\ea \right.
\end{equation*}
for $t\geq 1$.
\end{definition} 

We will also deal with more general algorithms that perform more
complicated updates on the parameter: preconditioning, adaptive
per-parameter learning rates, additional error terms... These will be obtained by applying
transformations $\fur_t$ and $\Phi_t$ to the gradient directions computed
by RTRL; we will specify assumptions on $\fur_t$ and $\paramupdate_t$
later (Section~\ref{sec:furhyp}).

\begin{definition}[Extended RTRL algorithm]
\label{def:algortrl}
An extended RTRL algorithm with step sizes $(\pas_t)_{t \geq 1}$, starting at
$\state_0\in \State_0$ and $\param_0\in \Param$, maintains a
state $\state_t\in\State_t$, a parameter $\param_t\in \Param$, and a
Jacobian estimate $J_t\in \epjopeins{t}$, subjected to the evolution
equations
\begin{equation*}
\left\lbrace
\ba
\state_{t} &= \opevolt\paren{\state_{t-1},\,\param_{t-1}}, \\
\jope_t &= \frac{\partial \opevol_t(\state_{t-1},\param_{t-1})}{\partial
\state} \,\jope_{t-1}+\frac{\partial
\opevol_t(\state_{t-1},\param_{t-1})}{\partial
\param}%
, \qquad \jope_0=0,\\
\vtanc_t &= \furt{\frac{\partial \perte_t(\state_t)}{\partial
\state}\cdot
\jope_t
}{\state_t}{\param_{t-1}
}
,\\
\param_{t} &= 
\paramupdate_t(\param_{t-1},\pas_t\,\vtanc_t)
\ea \right.
\end{equation*}
for $t\geq 1$, for some choice of update functions $\fur_t$ and
$\paramupdate_t$.
\end{definition} 

\paragraph{\Algonoisy RTRL Algorithms.}
The RTRL algorithm is unreasonably heavy in most situations, because
$\jope_t$ is an object of dimension $\dim(\state_t)\times \dim(\param)$.
Several approximation algorithms are in existence, such as \nbt{} or UORO. They
usually store a smaller-dimensional approximation $\tilde \jope_t$ of
$\jope_t$ (such as a small rank approximation). Since computing $\jope_t$
from $\jope_{t-1}$ tends to break this smaller-dimensional structure, the approximation
has to be performed after every step. Thus, these algorithms introduce an
additional error $E_t$ at each step in the computation of $\jope_t$:
\begin{equation*}
\tilde \jope_{t}
=\dpartf{\state}{\opevolt}\paren{\state_{t-1},\,\param_{t-1}}
\cdot \tilde \jope_{t-1} +
\dpartf{\prmctrl}{\opevolt}\paren{\state_{t-1},\,\param_{t-1}}+E_t.
\end{equation*}
In \nbt, UORO, and Kronecker-factored RTRL, these errors are built in a random way to be centered on
average.

For now, we just define an \algonoisy RTRL algorithm to be one
that incurs some error $E_t$ on $\jope_t$;
Assumptions~\ref{hyp:wcorrnoiseapprtrl} and~\ref{hyp:errorgauge} below
will require this noise to be not too large (sublinear in
$\jope_t$) and centered on average.

\begin{definition}[\Algonoisy RTRL algorithm]
\label{def:approxrtrl}
An \emph{\algonoisy RTRL algorithm} with step sizes $(\pas_t)_{t \geq 1}$, starting at
$\state_0\in \State_0$ and $\param_0\in \Param$, is any algorithm that maintains a
state $\state_t\in\State_t$, a parameter $\param_t\in \Param$, and a
Jacobian estimate $\tilde J_t\in \epjopeins{t}$, subjected to the evolution
equations for $t\geq 1$
\begin{equation*}
\left\lbrace
\ba
\state_{t} &= \opevolt\paren{\state_{t-1},\,\param_{t-1}}, \\
\tilde \jope_t &= \frac{\partial \opevol_t(\state_{t-1},\param_{t-1})}{\partial
\state} \,\tilde \jope_{t-1}+\frac{\partial
\opevol_t(\state_{t-1},\param_{t-1})}{\partial
\param}+E_t
,\qquad \tilde\jope_0=0,\\
\vtanc_t &= \furt{\frac{\partial \perte_t(\state_t)}{\partial
\state}\cdot
\tilde \jope_t
}{\state_t}{\param_{t-1}}
,\\
\param_{t} &= 
\paramupdate_t(\param_{t-1},\pas_t\,\vtanc_t)
\ea
\right.
\end{equation*}
for some error term $E_t\in \epjopeins{t}$.
\end{definition}

The errors $E_t$ can be seen as noise on the computation of $\jope$
performed by the RTRL algorithm. They play a somewhat different role
from usual SGD gradient noise (which is encoded by the dependency on
$t$ in $\perte_t$, usually depending on output data $y_t$ at time
$t$): first, $E_t$ is transmitted from one step to the
next in the recurrent computation of $\jope_t$; second, these
errors are introduced by the optimization algorithm while $\perte_t$ is
part of the specification of the initial problem.

\subsection{Assumptions for Local Convergence}
\label{sec:syshyp}

We will prove local convergence of RTRL and extended RTRL algorithms
under several assumptions (\rethm{cvapproxrtrlalgo}). The various other algorithms described in the
introduction are obtained as corollaries by a suitable choice of the
update operator $\fur_t$ and a suitable definition of the system state
$\state_t$ encompassing the internal state of an algorithm; this is done
in Section~\ref{sec:examples}.

We subdivide the assumptions of our local convergence theorem into
``non-technical'' assumptions (properties of a strict local optimum, stability of
the target dynamical system, centered errors $E_t$ for \algonoisy RTRL),
and ``technical'' assumptions (grouped in Section~\ref{sec:techass}). The latter are
``technical'' in the sense that they would always be satisfied on a
finite dataset if every function involved is smooth, for the standard
parameter update operators $\fur_t$ and $\paramupdate_t$.

Let us start with the non-technical assumptions.
To prove convergence of the learning algorithm towards a local optimum
$\paramopt$, we need two key assumptions: first, that $\paramopt$ is
indeed a local optimum of the loss function.
Second, that the
system with fixed parameter $\paramopt$ is \emph{stable} in the classical
sense of dynamical systems, namely: if the parameter is fixed to
$\param=\paramopt$, and if the inputs of the system are fixed (here the
inputs are implicit in the definition of the transition operators
$\opevol_t$), then the system eventually forgets its initial state.

Moreover, for extended RTRL algorithms, we assume that applying $\fur_t$ and
$\Phi_t$ behaves reasonably like a gradient step. For
\algonoisy RTRL algorithms ($E_t\neq 0$), we will assume
that the errors are centered and sublinear in $\jope_t$.

So, let $\paramopt\in \Param$ and $\stateopt_0\in\State_0$. Let
$(\stateopt_t)_{t\geq 0}$ be the trajectory starting at $\stateopt_0$
with parameter $\paramopt$. Provided the assumptions below are satisfied, 
we will refer to $\paramopt$ as the \emph{local optimum}, and to the
trajectory $\stateopt_t\deq\fstate_t(\paramopt,\stateopt_0)$ obtained
from $\paramopt$ as the \emph{target trajectory}.

The assumptions below are all required to hold locally: either at the target
trajectory itself, or only in some
neighborhood of the target trajectory. Thus, we fix some radii
$r_\Param>0$ and $\raysy>0$, and we will require these assumptions to
hold in the balls
$B_\Param(\paramopt,r_\Param)$ in $\Param$ and
$B_{\State_{t}}(\stateopt_{t},\raysy)$ in $\State_t$.

\subsubsection{Local Optima of the Loss for a Dynamical System}

Defining a local optimum notion for a dynamical system is not
straightfoward. In stochastic optimization, the global loss $\avloss$
associated to the parameter is the expectation, over some random variable
$i$, of a loss $\perte_i$ which depends on $i$. Often, $i$ is the random
choice of a training sample among a set of data, and $\perte_i$ is the
loss computed on this sample.
In this setting, a local extremum is a parameter $\paramopt$ such that, 
the derivative of the loss evaluated at this parameter vanishes on
average:
\begin{equation*}
\mathbb{E} \left[ \dpartf{\param}{\perte_i}(\paramopt)\right] = 0.
\end{equation*}
For a dynamical system, we will replace the expectation with respect to $i$ by a temporal average, and we define a local extremum as a point where the temporal averages of gradients converge to $0$:
\begin{equation*}
\inv{T} \, \sum_{t=0}^T \dpartf{\param}{\sbpermc{t}}\cpl{\stateopt_0}{\paramopt} \to 0, 
\end{equation*}
as $T$ tends to infinity. Note that the gradient is computed through the
whole dynamics, thanks to the use of $\sbpermc{t}$, which encodes the
dependency of $\state_t$ on $\param$. Here no probabilistic assumption is made
on the inputs or outputs to the system: instead we work under this
``ergodic'' assumption of time averages. The classical case
(\reeq{nonrec}) of a
non-recurrent situation corresponds to i.i.d.\ losses $\perte_t$, so that
the ergodic assumption is satisfied with probability $1$ by the
law of large numbers.

For the extremum $\paramopt$ to be a minimum, and in order to guarantee
the convergence of the gradient descent, we also need a second order
condition. We will assume that temporal averages of the Hessians,
evaluated at the local extremum, end up being positive definite: the
smallest eigenvalue of
\begin{equation*}
\inv{T} \, \sum_{t=0}^T \ddpartf{\param}{\sbpermc{t}}\cpl{\stateopt_0}{\paramopt} 
\end{equation*}
when $T\to \infty$,
should be positive.

Actually, the rate of convergence (with respect to $T$) of these limits
will affect the range of possible learning rates. For instance, in the
non-recurrent case, cycling over a finite dataset results in a
convergence $O(1/T)$ of gradients to their average, as opposed to the usual statistical rate
$O(1/\sqrt{T})$. This will allow for learning rates $\pas_t=t^{-b}$ with any
$b>0$, instead of the classical $b>1/2$ for i.i.d.\ samples. Cycling over
samples acts as a variance reduction method; the ergodic
viewpoint makes these distinctions clear.

This is why we introduce an exponent $\expem$ in the next assumption,
controlling the rate at which the gradients at $\paramopt$ tend to their
average.


For a simpler exposition, we first state a version of the
assumption corresponding to non-extended algorithms,
$\furt{\vtanc}{\state}{\param}=\vtanc$. Then the assumption corresponds
to $\paramopt$ being a strict local minimum in the traditional
sense: gradients at $\paramopt$ average to $0$, and the Hessian of the
expected loss at $\paramopt$ is definite positive.

Remember that the loss function $\sbpermc{t}$ (Def.~\ref{def:fcpletaprm})
encodes the loss at time $t$ of a parameter $\param$ when the system is
run with that parameter
from time $0$ to time $t$, and is $C^2$ for all $t$. 

\newcounter{subhyp}
\renewcommand{\thetheoreme}{\thesection.\arabic{theoreme}.\alph{subhyp}}

\refstepcounter{subhyp}
\begin{hypothese}[$\paramopt$ is a local optimum of the average loss
function]
\label{hyp:opt_simple}
We assume the existence of a parameter $\paramopt\in \Param$, an initial state
$\stateopt_0\in \State_0$, and an exponent $0 < \expem < 1$ such that: 
\begin{enumerate}
\item Gradients of the loss at $\paramopt$ average to $0$, at rate
$t^\expem/t$:
\begin{equation*}
\frac{1}{T}\sum_{t=1}^T
\frac{\partial}{\partial \param}
\sbpermc{t}(\stateopt_0,\paramopt)=O(T^\expem/T).
\end{equation*}
\item On average, Hessians of the loss at $\paramopt$ converge to a
positive definite matrix, at rate $t^\expem/t$: there is a positive definite
matrix $H$ such that
\begin{equation*}
\frac{1}{T}\sum_{t=1}^T
\frac{\partial^2}{\partial \param^2}
\sbpermc{t}(\stateopt_0,\paramopt)=H+O(T^\expem/T).
\end{equation*}
\end{enumerate}
\end{hypothese}

Next we express the
corresponding assumption for
extended algorithms $\fur$: Assumption~\ref{hyp:critoptrtrlnbt} reduces to
Assumption~\ref{hyp:opt_simple} when
$\furt{\vtanc}{\state}{\param}=\vtanc$.
With extended algorithms (non-trivial $\fur$), the optimality assumption
works out as follows. Note that it becomes a joint property of the dynamical
system and the optimization algorithm: this expresses a condition on
$\paramopt$ to be a fixed point of the algorithm\footnote{The examples
of $\fur_t$
in Section~\ref{sec:examples} will still converge to the same local optima. But
for instance,
with one-dimensional data,
by letting $\fur_t(\vtanc,\state,\param)$ interpolate between $\vtanc$
and $\mathrm{sign}(\vtanc)$, we could interpolate between computing a mean
or a median, so that $\paramopt$ depends on the algorithm.}.

\addtocounter{theoreme}{-1}
\refstepcounter{subhyp}
\begin{hypothese}[$\paramopt$ is a local optimum of the extended algorithm]
\label{hyp:critoptrtrlnbt}
We assume the existence of a parameter $\paramopt\in \Param$, an initial state
$\stateopt_0\in \State_0$, and an exponent $0 < \expem < 1$ such that: 
\begin{enumerate}
\item Updates of the open-loop algorithm at $\paramopt$ average to $0$,
at rate $t^\expem/t$:
\begin{equation*}
\frac{1}{T}\sum_{t=1}^T
\fur_t\left(
\frac{\partial}{\partial \param}
\sbpermc{t}(\stateopt_0,\paramopt),\fstate_t(\stateopt_0,\paramopt),\paramopt
\right)
=O(T^\expem/T).
\end{equation*}
\item On average, Jacobians of the update at $\paramopt$ converge to a
positive-stable matrix, at rate $t^a/t$. Namely, denoting 
\begin{equation}
\label{eq:jacobians}
\sbfh_t(\param)\deq \frac{\partial}{\partial \param}\left(\param\mapsto \fur_t\left(
\frac{\partial}{\partial \param}
\sbpermc{t}(\stateopt_0,\param),\fstate_t(\stateopt_0,\param),\param
\right)\right),
\end{equation}
we assume
there is a 
matrix $\linalgmatrix\in \sblin(\Param,\Param)$ such that 
\begin{equation*}
\frac{1}{T}\sum_{t=1}^T \sbfh_t(\paramopt)
=\linalgmatrix+O(T^\expem/T)
\end{equation*}
and all eigenvalues of $\linalgmatrix$ have positive real part.
\end{enumerate}
\end{hypothese}

\renewcommand{\thetheoreme}{\thesection.\arabic{theoreme}}

Dealing with positive-stable matrices, instead of just symmetric definite
positive matrices, is crucially needed for adaptive algorithms such as
RMSProp and Adam (see Section~\ref{sec:examples}): the associated updates
do not correspond to a gradient direction. Technically this does not pose
added difficulties. A reminder on positive-stable matrices is included in
Appendix~\ref{sec:positivestable}.

\subsubsection{Stability of the Target Trajectory}

The next non-technical assumption deals with stability: if the
target trajectory is numerically unstable as a dynamical system, then it is unlikely that
an RTRL-like algorithm could learn it online. (Besides, the interest of
learning unstable models is debatable.) Thus, we will assume that the
target trajectory defined by the local optimum $\paramopt$ is stable.

Linear systems
$\state_t=A(\param)\state_{t-1}+B(\param)+C(\param)x_t$ with inputs $x_t$ are stable if the spectral
radius of $A$ is less than $1$ \citep{willems1970stability}, namely, if there exists $k\geq
1$ such that $\nrmop{A^k}<1$. Since we are going to consider
time-inhomogeneous, nonlinear systems we need a slightly extended
definition.

\begin{definition}[Spectral radius of a sequence of linear operators]
\label{def:specrad}
A sequence of linear operators $(A_t)_{t\geq 0}$ on a normed vector space
is said to have \emph{spectral radius less than $1$} if there exists
$\alpha>0$ and an integer $\hsr\geq 1$ (called \emph{horizon}) such that for any $t$, the product
$A_{t+\hsr-1}\ldots...A_{t+1}A_t$ has operator norm less than
$1-\alpha$.
\end{definition}

For a constant sequence $A_t\equiv A$ on a finite-dimensional space, this
is equivalent to $A$ having spectral radius less than $1$.

\begin{hypothese}[The system with parameter $\paramopt$ is stable around $\stateopt$]
\label{hyp:specrad}
Let
\begin{equation*}
A_t\deq \frac{\partial \opevol_t}{\partial
\state}(\stateopt_{t-1},\paramopt).
\end{equation*}
Then the sequence $(A_t)_{t \geq 1}$ has spectral radius less than $1$.
\end{hypothese}

For a linear system $\state_t=A(\param)\state_{t-1}+B(\param)$ this
boils down to classical stability for the parameter $\param=\paramopt$,
namely, $A(\paramopt)$ has spectral radius less than $1$.

In the non-recurrent case \eqref{eq:nonrec}, this is always satisfied,
since $\frac{\partial \opevol_t}{\partial
\state}=0$.

A sufficient condition for this criterion is that every $A_t$ has
operator norm less than $1-\alpha$. So, for a simple RNN given by
\eqref{eq:rnn}, a sufficient condition would be that the matrix $W$ has
operator norm less than $4(1-\alpha)$ (because the sigmoid is
$1/4$-Lipschitz). But this sufficient condition is far from necessary.
Stability can also be checked empirically on a learned model by adding small
perturbations.

For advanced recurrent models such as LSTMs, this criterion might
be too restrictive because it imposes a time horizon $k$ for
contractivity, thus requiring the trained model to have finite effective memory, while
LSTMs are specifically designed to have arbitrarily long memory. Allowing
the learned model to have infinite memory
in our framework would require allowing the spectral
radius of the sequence to tend to $1$ over time, but this is beyond the
scope of the present work.

\subsubsection{Extended RTRL Algorithms: Assumptions on $\fur_t$ and
$\paramupdate_t$}
\label{sec:furhyp}

The next assumptions deal with with extended RTRL algorithms, namely,
with the kind of update operators $\fur_t$ and $\paramupdate_t$ that can be used instead of directly
adding the gradient, $\param\gets \param-\pas \,\partial_\param\perte$ as
in simple SGD.

The standard RTRL
algorithm corresponds to $\fur_t(\vtanc,\state,\param)=\vtanc$. The
assumption on $\fur_t$ states that 
$\fur_t$ is smooth and behaves (sub)linearly
with respect to its first argument $\vtanc$. In extended RTRL algorithms, 
$\fur_t$ is applied to $\vtanc=\partial_\state \sbperte_t\cdot J$ which is a
linear form $\vtanc\in \linform$ that
encodes the
RTRL estimated gradient $\partial_\param \sbpermc{t}$.


\begin{hypothese}[Extended update rules $\fur_t$]
\label{hyp:fupdrl}
The extended update rules used in the extended RTRL algorithm
are $C^1$ functions $\fur_t \colon \linform \times \State_t \times \Param \to
\Param$.

We assume that, in a neighborhood of the target
trajectory $(\stateopt_t,\paramopt)$, the first derivative of
$\fur_t$ with
respect to $v$ is bounded, and its first derivative with respect to
$\cpl{\state}{\param}$
is at most linear in $\vtanc$. Namely, we assume that 
there exists a constant $\kappa_\fur>0$ such that, for any $t\geq 1$, for
any $v\in\linform$, $\state\in B_{\State_t}(\stateopt_{t},\raysy)$
and $\param\in B_\Param(\paramopt,r_\Param)$, one has
\begin{equation*}
\nrmop{\dpartf{\vtanc}{\fur_t}\tpl{\vtanc}{\state}{\param}} <
\kappa_\fur
\qquad\text{and}\qquad
\nrmop{\dpartf{\cpl{\state}{\param}}{\fur_t}\tpl{\vtanc}{\state}{\param}}
\leq \kappa_\fur\,(1+\nrm{\vtanc}).
\end{equation*}
%
Finally, we assume that
\begin{equation*}
\furt{0}{\stateopt_t}{\paramopt}=\go{t^\exmp}
\end{equation*}
when $t\to \infty$,
for some exponent
$0\leq \exmp<1$ (also used in
Assumptions~\ref{hyp:regpertes}
and~\ref{hyp:scfcstepsize}).
\end{hypothese}
  
\begin{remarque}
\label{rem:affineU}
This covers notably the case of preconditioned SGD or RTRL, namely,
$\furt{\vtanc}{\state}{\param}=P(\param)\,\vtanc$ for any smooth
matrix-valued $P$. More generally this covers the case where $\fur_t$
depends on $\vtanc$ in an affine way, namely,
$\furt{\vtanc}{\state}{\param}=P_t(\param)\,\vtanc+Q_t(\param)$ where
$P_t$
and $Q_t$ are bounded and $C^1$ close to $\paramopt$, uniformly in time.
\end{remarque}

\begin{remarque}
\label{rem:errorU}
$\fur_t$ can also be used to encode small error terms in the
algorithm. For instance, 
$\fur_t(\vtanc,\state,\param)=\vtanc+\eps_t$ where $\eps_t$ is some
time-dependent error term, corresponds to a perturbed RTRL update
$\param_t=\param_{t-1}-\pas_t\,\vtanc_t -\pas_t \,\eps_t$. The size of $\eps_t$ is limited by
\rehyp{critoptrtrlnbt} which requires that $\frac1T \sum_{t\leq T}
\,\eps_t=O(T^\expem/T)$.
\end{remarque}

Finally, we assume that the update operators
$\paramupdate_t(\param,\vtanc)$ are equal to $\param-\vtanc$ up to a
second-order error in $\norm{\vtanc}$. This covers simple SGD (no
second-order term), as well as, for instance, the exponential map
$\exp_\param(-\vtanc)$ in a
Riemannian manifold, when expressed in coordinates, and
clipped updates
such as
$\param-\frac{\vtanc}{1+\norm{\vtanc}}$
(since the algorithm applies $\paramupdate$ to $\pas_t\, v$ not $v$, this amounts to
clipping the update $\pas_t\, v$, not the gradient direction $v$).

%

\begin{hypothese}[Parameter update operators]
\label{hyp:updateop_first}
We assume that the parameter update operators $\paramupdate_t\colon
\Param\times \Param\to \Param$ can be
written as
\begin{equation*}
\paramupdate_t(\param,\vtanc)=\param-\vtanc+\norm{\vtanc}^2\paramupdate_t^{(2)}(\param,\vtanc)
\end{equation*}
where
the second-order term $\paramupdate_t^{(2)}(\param,\vtanc)$ is
bounded and Lipschitz with respect to $(\param,\vtanc)$ in some ball
$B_\Param(\paramopt,r_\Param)\times
B_{\Param}(0,\tilde r_\Tangent)$, for some $\tilde r_\Tangent > 0$, uniformly in $t$. 
\end{hypothese}

\subsubsection{Assumptions on Errors for \Algonoisy RTRL Algorithms}
\label{sec:approxrtrl}

\Algonoisy RTRL algorithms such as NoBackTrack, UORO and
Kronecker-factored RTRL introduce an additional error $E_t$ in the
definition of $\jope_t$ (Def.~\ref{def:approxrtrl}). This error has been
built to be centered on average; since the evolution equation for
$\jope_t$ is affine, this property is preserved through time, a key point
in the theoretical analysis.

Thus, we will assume that the errors $E_t$ are random, and centered on
average, knowing everything that has happened up to time $t$.

\begin{hypothese}[Unbiased errors $E_t$ for \algonoisy RTRL]
\label{hyp:wcorrnoiseapprtrl}
We assume that the errors $E_t$ are random variables that satisfy, for every $t\geq 1$,
\begin{equation*}
\econd{E_t}{E_1,\ldots,E_{t-1},\tribo}=0,
\end{equation*}
where 
$\tribo$ is the $\sigma$--algebra generated by the initial parameter
$\param_0$, the initial state $\state_0$, the initial Jacobian
estimate $\tilde \jope_0$, and all the algorithm operators,
namely $\paren{\opevol_t}_{t\geq 1}$, $\paren{\perte_t}_{t\geq 1}$,
$\paren{\fur_t}_{t\geq 1}$ and $\paren{\paramupdate_t}_{t\geq 1}$.
\end{hypothese}

Note that $\tribo$ contains all future algorithm \emph{operators} (the
mathematical operations defining the transitions of the dynamical system), not
the values of the states themselves.

In RTRL approximations such as NoBackTrack or UORO, the noise $E_t$ is
not imposed by the problem, but
user-chosen to simplify computation of $\tilde J$.
The assumption states that this noise should be uncorrelated
from all other sources of randomness of the problem, past and future,
contained in $\tribo$.
Notably, the data samples are implicitly contained in $\tribo$ via the algorithm operators
$\paren{\opevol_t}_{t\geq 1}$ and $\paren{\perte_t}_{t\geq 1}$ (see
\resec{rtrloverview}), though the states themselves are not. Thus, this
assumption precludes using a recurrent noise $E_t$ that
would be correlated to future random choices of data samples (this would
obviously produce biases).  Since
$E_t$ is user-chosen in NoBackTrack or UORO, this is not a problem: just
build $E_t$ from random numbers independent from other random choices
made by the user.  This assumption also precludes ``adversarial''
recurrent settings in which the universe would send future data that are
correlated to the user-chosen noise $E_t$.

Moreover, we assume that the error $E_t$ is almost surely sublinear with
respect to $\jope$.
For this, let us first define \emph{error gauge} functions which capture this
sublinearity.

\begin{definition}[Gauge for the error]
\label{def:errorgauge}
We call \emph{error gauge} a function $\sbeg\colon \mathbb{R}_+^2
\to\mathbb{R}_+$ such that
\begin{enumerate}
\item $\sbeg$ is bounded on any compact;
\item $\feg{x}{y}$ is negligible in front of $x$, when $x$ goes to
infinity, uniformly for $y$ in compact sets: for any compact set
$\mathcal{K}\subset \mathbb{R}_+$,
when $x\to \infty$ we have
\begin{equation*}
\sup_{y\in\mathcal{K}}\,\feg{x}{y}=\po{x}.
\end{equation*}
\end{enumerate}
\end{definition}





\begin{remarque}[Example of error gauge]
For instance, any function
$\feg{x}{y}=C\,(1+x^\beta)(1+y)$
with $C>0$ and $\beta < 1$, is an error gauge. With $\beta=1/2$, this is the error gauge for the NoBackTrack and UORO algorithms (see \resec{nbtuoroasapproxrtrlalgo}).
\end{remarque}

\begin{hypothese}[Control of the error by the gauge]
\label{hyp:errorgauge}
We assume that there exists a gauge function $\sbeg$ such that the error
$E_t$ of the \algonoisy RTRL algorithm satisfies, 
for all $t \geq 1$,
\begin{equation*}
\nrmop{E_t}\leq \feg{\nrmop{\tilde J_{t-1}}}{\nrmop{\frac{\partial \opevol_t(\state_{t-1},\param_{t-1})}{\partial(\state,\param)}}}.
\end{equation*}
\end{hypothese}

Different \algonoisy RTRL algorithms may admit the same error gauge.
This way, the bounds developed below will be satisfied for all these algorithms simultaneously.

Finally, the noise $E_t$ on $\tilde J$ needs to stay
centered after computing the update direction via $\fur(\partial_\state
\perte\cdot \tilde J,\state,\param)$, so we assume that $\fur$ is linear
with respect to its first argument.

\begin{hypothese}[Linearity of the extended updates with respect to the first argument for \algonoisy RTRL algorithms]
\label{hyp:linearfur}
For \algonoisy RTRL algorithms, we assume that the functions $\fur_t$
are linear with respect to their first argument. Namely, we assume that for each $t\geq
1$, for each $\state\in \State_t$ and $\param\in \Param$, 
there exists a linear operator $P_t(\state,\param)\colon
\sblin(\Param,\mathbb{R})\to \Param$ such that for any $\vtanc\in
\sblin(\Param,\mathbb{R})$ one has
\begin{equation*}
\fur_t(\vtanc,\state,\param)=P_t(\state,\param)\cdot \vtanc
\end{equation*}
in addition to \rehyp{fupdrl}.
\end{hypothese}

This covers, notably, preconditioned SGD algorithms such as those in
Section~\ref{sec:examples}.

\subsubsection{Technical Assumptions}
\label{sec:techass}

The following three assumptions are ``technical'' in the sense that they would
be automatically satisfied for smooth functions in the non-recurrent case
with a finite dataset (because the sup over $t$ would become a max over
the dataset). However, they still encode important properties:
\begin{itemize}
\item
Uniformity of the dynamical system around the target trajectory,
\item The output noise should not grow too fast. In the i.i.d.\ case, this
corresponds to a property of moments of the output noise, see
Section~\ref{sec:pureonline}.
\item The (extended) Hessians of the loss should be uniformly continuous
over time in some neighborhood of $\paramopt$.
\end{itemize}

\begin{hypothese}[The transition functions are uniformly smooth around the
target trajectory]
\label{hyp:regftransetats}
We assume that
the
derivatives of $\opevol_t$ are uniformly bounded over time around the
target trajectory, namely:
\begin{align*}
\sup_{t \geq 1} \, \nrmop{\frac{\partial
\opevol_t}{\partial(\state,\param)}(\stateopt_{t-1},\paramopt)} < \infty,
\end{align*}
and that 
the second derivatives of $\opevol_t$ are bounded 
around $\paramopt$ and $\stateopt_t$:
\begin{equation*}
\sup_{t\geq 1} \,\sup_{
\begin{subarray}{c}
\param\in B_\Param(\paramopt,r_\Param)\\
\state\in B_{\State_{t-1}}(\stateopt_{t-1},\raysy)
\end{subarray}
}
\nrmop{\frac{\partial^2 \opevol_t}{\partial
(\state,\param)^2}(\state,\param)}
<\infty.
\end{equation*}
\end{hypothese}


The next assumption deals with the growth derivatives of the loss
$\perte_t$ along the
trajectory. In the simplest, non-recurrent case \eqref{eq:nonrec},
$\perte_t$ encodes the error between the predicted value and the actual
observation; therefore, at $\paramopt$, the difference is equal to the
output noise of the model. So in that case, the assumption on derivatives
of $\perte_t$ implicitly encodes an assumption on the law of the output noise of the
model. This point is developed in Section~\ref{sec:pureonline}.

\begin{hypothese}[Derivatives of the loss functions have controlled
growth along
the target trajectory]
\label{hyp:regpertes}
We assume that
the
derivatives of $\perte_t$ along the 
target trajectory grow at most in a controlled way over time, namely,
that there exists an exponent $0 \leq \exmp < 1$ such that 
\begin{align*}
\nrmop{\frac{\partial
\perte_t}{\partial \state}(\stateopt_{t})}
=O(t^\exmp).
\end{align*}
Moreover we assume that 
the second
derivative of $\perte_t$ is controlled
around 
$\stateopt_t$: 
\begin{equation*}
\sup_{
\state\in B_{\State_{t}}(\stateopt_{t},\raysy)
}
\nrmop{\frac{\partial^2 \perte_t}{\partial
\state^2}(\state)}
=\go{t^\exmp}.
\end{equation*}
\end{hypothese}

\renewcommand{\thetheoreme}{\thesection.\arabic{theoreme}.\alph{subhyp}}
\setcounter{subhyp}{0}

The final technical assumption requires the Hessians (or extended
Hessians) of the loss to be uniformly continuous in time in some
neighborhood of the local optimum. We first state it for the simple
algorithm without $\fur_t$ or $\paramupdate_t$.

\refstepcounter{subhyp}
\begin{hypothese}[The Hessians of the loss are uniformly continous close to
$\paramopt$.]
\label{hyp:equicontH_simple}
We assume that Hessians of the loss are continuous close to $\paramopt$, uniformly in $t$:
there exists a continuous function $\rho\from \R^+\to \R^+$ with
$\rho(0)=0$ such that for all $t$, for all $\param\in
B_\Param(\paramopt,r_\Param)$,
\begin{equation*}
\nrmop{\frac{\partial^2}{\partial \param^2}
\sbpermc{t}(\stateopt_0,\param)-\frac{\partial^2}{\partial \param^2}
\sbpermc{t}(\stateopt_0,\paramopt)}\leq \rho(\norm{\param-\paramopt}).
\end{equation*}
\end{hypothese}

For extended algorithms with non-trivial $\fur_t$, this rewrites as follows using the
Jacobians $\sbfh_t$ of the update direction, defined by
\eqref{eq:jacobians}: these play the role of the Hessian of the loss when
the update $\fur_t$ is not the gradient of a loss. By construction,
Assumption~\ref{hyp:equicontH} reduces to
Assumption~\ref{hyp:equicontH_simple} in the basic case
$\fur_t(\vtanc,\state,\param)=\vtanc$.

\addtocounter{theoreme}{-1}
\refstepcounter{subhyp}
\begin{hypothese}[Jacobians of the updates at $\paramopt$ are uniformly
continous close to $\paramopt$.]
\label{hyp:equicontH}
We assume that the Jacobians $\fht{\param}$ of the updates at
$\paramopt$, defined by \eqref{eq:jacobians}, are continuous
close to $\paramopt$, uniformly in $t$:
there exists a continuous function $\rho\from \R^+\to \R^+$ with
$\rho(0)=0$ such that for all $t$, for all $\param\in
B_\Param(\paramopt,r_\Param)$,
\begin{equation*}
\nrmop{\fht{\param}-\fht{\paramopt}}\leq \rho(\norm{\param-\paramopt}).
\end{equation*}
\end{hypothese}

This equicontinuity assumption is arguably the most technical. However,
we prove in Appendix~\ref{sec:eqehbcase} that \rehyp{equicontH_simple} is
automatically satisfied if
the transition and loss operators are $C^3$ with uniformly bounded
first, second and third derivatives. For non-trivial $\fur_t$, \rehyp{equicontH} is satisfied if
in addition, 
$\fur_t$ is $C^2$ with second
derivatives controlled in a certain way (satisfied notably when
$\fur_t(\vtanc,\state,\param)=P_t(\state,\param)\cdot \vtanc$ with $P_t$
regular enough).

\renewcommand{\thetheoreme}{\thesection.\arabic{theoreme}}

\subsection{A Convergence Theorem for Extended RTRL Algorithms}
\label{sec:mainthm}



We now introduce constraints on the stepsize sequence $(\pas_t)_{t\geq
1}$.
As explained in the introduction, in non-i.i.d.\ settings, the step sizes
of the gradient descent must satisfy time-homogeneity conditions stricter
than the classical Robbins--Monro criterion, in order to avoid
correlations between the step sizes and the internal state of the
dynamical system. Otherwise, this could bias the gradient descent: for instance,
having a step size $0$ at every odd step will produce bad results if the
underlying recurrent system exhibits period-$2$ phenomena.
In i.i.d.\
settings, this is not necessary. Besides, in some applications, we
will need the step sizes to be constant for a few steps (for instance,
truncated BPTT corresponds to a constant learning rate on each truncation
interval): for this we introduce a ``wiggle room'' factor
$1+o(1/t^\exmp)$.

\begin{hypothese}[Stepsize sequence]
\label{hyp:scfcstepsize}
We assume the stepsize sequence $\pas_t\geq 0$ is non-increasing and
satisfies
\begin{equation*}
\pas_t=\cdvp\, t^{-\expas}\,\paren{1+\po{1/t^\exmp}}
\end{equation*}
for some $b>0$, where $\cdvp\geq 0$ is the \emph{overall learning
rate}, and where $\exmp$ is the exponent in \rehypdeux{fupdrl}{regpertes}.

For simple or extended RTRL algorithms
(Defs.~\ref{def:simplertrl}--\ref{def:algortrl}), we assume that
$\max\paren{\expem,\exmp} + 2 \exmp < \expas\leq 1$, where $\expem$ is the
exponent
in \rehypdeux{opt_simple}{critoptrtrlnbt}.\todo{old: $\expem+2\exmp<\expas\leq 1$ and $\exmp<\expem$}

For \algonoisy RTRL algorithms (Def.~\ref{def:approxrtrl}),
we assume that $\max(\expem,1/2+\exmp)+2\exmp<\expas \leq 1$.\todo{old:
we furthermore assume that $\expas>1/2+3\,\exmp$.}
\end{hypothese}


In the sequel, we will
prove convergence provided the overall learning rate $\cdvp$ is small enough.
Thus, in the whole text, $\pas_t$ is implicitly a function of
$\cdvp$.

The conditions on the exponent $b$ deserve some comment. The exponent $\expem$
encodes the speed at which empirical averages of gradients at $\paramopt$
converge to $0$, and likewise for Hessians. In typical situations, this
holds for any $a>1/2$ (the standard statistical rate for empirical
averages). But on a finite
dataset, cycling over the samples in the dataset makes it possible to go
down to $a=0$ (see Section~\ref{sec:examples}). Meanwhile, on a finite
dataset, $\exmp=0$; in general, as discussed above, $\exmp$ encodes a
bound on the
growth of
the gradients at $\paramopt$ over time (see also
Section~\ref{sec:pureonline}). Thus, for simple and extended RTRL algorithms, it
is possible for $b$ to range from $0$ to $1$ in some cases. But for
\algonoisy RTRL algorithms (NoBackTrack, UORO), the conditions impose
$b>1/2$ as in the standard Robbins--Monro criterion: this is due to
inherent added stochasticity in \algonoisy RTRL algorithms.

\bigskip

We now state the local convergence result for RTRL and extended and
\algonoisy RTRL algorithms.

\begin{definition}[Local convergence]
\label{def:introlocalcv}
Given a parameterized dynamical system as above,
we say that an algorithm producing a sequence $(\param_t)$
\emph{converges locally} around $\paramopt$ if the following holds: 
There exists an overall learning rate $\cdvpmaxconv>0$ such that,
if 
the parameter $\param_0$
is initialized close enough to $\paramopt$ and the initial state
$\state_0$ is close
enough to $\stateopt_0$, then 
for any overall learning $\cdvp<\cdvpmaxconv$,
the sequence $\param_t$
computed by the algorithm converges to $\paramopt$. For \algonoisy RTRL
algorithms (NoBackTrack, UORO...), which make random choices for $E_t$,
convergence is meant with probability tending to $1$ as the overall
learning rate tends to $0$.
\end{definition}


Local convergence is a relatively weak requirement for an algorithm;
still, as far as we know, no such statement was available for any of the
algorithms considered here. Local convergence rules out the kind of bad
surprise identified for Adam in \cite{j.2018on}.

The assumptions
themselves are only local: we consider that one step in a ``wrong'' zone
of the parameter space may be impossible to recover from.
With only local assumptions, the maximal learning rate $\cdvpmaxconv$
will usually depend on the data: in the non-recurrent case, this means
that $\cdvpmaxconv$ may depend on the random choice of input-output pairs
$(x_t,y_t)$, namely, on the dataset and SGD choices. This is different
from global convergence under global convexity assumptions. This is
unavoidable with only local assumptions: if noise is unbounded, one
single random large step could take the algorithm out of the safe zone
where the assumptions hold. So with only local assumptions, the quantifiers need to be reversed: given
the dataset (or the sequence of operators $\opevol_t$ and $\perte_t$,
encoding a sequence of observations $(x_t,y_t)$), some learning rate will
work. If noise is bounded (e.g., if the dataset is finite) there is no
such problem.

For \algonoisy RTRL algorithms, convergence occurs only with
probability close to $1$ if the learning rate is small enough. Indeed,
these algorithms introduce added stochasticity in the gradient
computation. The same remark applies to stochasticity coming from the
data (which
we consider fixed in the whole text): the assumptions will be satisfied
with probability $1$, but there will be a data-dependent maximal learning
rate. This implies convergence with probability tending to $1$ as the
overall learning rate $\cdvp$ tends to $0$.

Indeed, with only local assumptions, depending on the noise, the computed
gradients may deviate from the true gradient during an arbitrarily long
time; this is only compensated if the learning rate is small enough
compared to the deviation produced by the noise, while larger learning
rates may bring the trajectory outside of the safe zone on which the
assumptions hold. Thus, in general, with stochasticity and without any
assumptions outside of a safe zone, convergence with only occur with
probability tending to $1$ as $\cdvp$ tends to $0$.  This contrasts with
global convexity assumptions.

\begin{theoreme}[Local convergence of RTRL, extended RTRL, and \algonoisy \rtrl algorithms]
\label{thm:cvapproxrtrlalgo}
%
Let $\paren{\opevol_t}$ be a parameterized dynamical sytem
(Def.~\ref{def:prmdynsys}) with loss functions $\perte_t$
(Def.~\ref{def:fcpletaprm}). Consider an extended or \algonoisy RTRL algorithm
(Defs.~\ref{def:simplertrl} or \ref{def:algortrl} or
\ref{def:approxrtrl}) on this system.

Let $\paramopt$ be a local optimum for this system (Assumption~\ref{hyp:critoptrtrlnbt},
which reduces to Assumption~\ref{hyp:opt_simple} if $\fur_t$ and $\paramupdate_t$ are
not used), with initial state $\stateopt_0$. Assume that the system
with parameter $\paramopt$ starting at $\stateopt_0$ is
stable (Assumption~\ref{hyp:specrad}).

For extended RTRL algorithms, assume the update operators $\fur_t$ and
$\paramupdate_t$ satisfy Assumptions~\ref{hyp:fupdrl} and
\ref{hyp:updateop_first}.

For \algonoisy RTRL algorithms, assume moreover that the random RTRL errors $E_t$ are
unbiased and controlled by some error gauge (Assumptions~\ref{hyp:wcorrnoiseapprtrl} and
\ref{hyp:errorgauge}), and that the update operators $\fur_t$ are linear
with respect to the estimated gradient direction
(Assumption~\ref{hyp:linearfur}).

Assume that the first and second derivatives of $\opevol_t$ and
$\perte_t$ are controlled around the target trajectory
(Assumptions~\ref{hyp:regftransetats} and \ref{hyp:regpertes}).
Assume that the extended Hessians or Jacobians of $\fur_t$ are uniformly
continuous close to $\paramopt$ (\rehyp{equicontH}).






Let $\sm{\pas}=\paren{\pas_t}$ be a stepsize sequence satisfying
\rehyp{scfcstepsize}, with overall learning rate $\cdvp$. 

Then the algorithm converges locally around $\paramopt$; for \algonoisy
RTRL algorithms, this convergence occurs with probability tending to $1$
as the overall learning rate tends to $0$.

More precisely,

\begin{itemize}
\item For RTRL or an extended RTRL algorithm, 
there exists a neighborhood $\mathcal{N}_{\paramopt}$ of
$\paramopt$, a neighbourhood $\mathcal{N}_{\stateopt_0}$ of
$\stateopt_0$, a neighborhood $\mathcal{N}^\jope_{0}$ of $0$ in
$\epjopeins{0}$, and an overall learning rate
$\cdvpmaxconv>0$ such that 
for any overall learning rate $\cdvp< \cdvpmaxconv$, the following
convergence holds:
\item For an \algonoisy RTRL algorithm, 
there exists a neighborhood $\mathcal{N}_{\paramopt}$ of
$\paramopt$, a neighbourhood $\mathcal{N}_{\stateopt_0}$ of
$\stateopt_0$, a neighborhood $\mathcal{N}^\jope_{0}$ of $0$ in
$\epjopeins{0}$ such that
for any $\eps>0$, there exists
$\cdvpmaxconv>0$ such that for any overall learning rate $\cdvp<
\cdvpmaxconv$, with probability greater than $1-\eps$,
the following
convergence holds:
\end{itemize}

For any initial parameter $\param_0\in
\mathcal{N}_{\paramopt}$, any initial state $\state_0\in
\mathcal{N}_{\stateopt_0}$ and any initial differential $\tilde\jope_0\in
\mathcal{N}^\jope_{0}$,
the trajectory given by
\begin{equation*}
\left\lbrace
\ba
\state_{t} &= \opevolt\paren{\state_{t-1},\,\param_{t-1}}, \\
\tilde \jope_t &= \frac{\partial \opevol_t(\state_{t-1},\param_{t-1})}{\partial
\state} \,\tilde \jope_{t-1}+\frac{\partial
\opevol_t(\state_{t-1},\param_{t-1})}{\partial
\param}+E_t
,\\
\vtanc_t &= \furt{\frac{\partial \perte_t(\state_t)}{\partial \state}\cdot \tilde \jope_t
}{\state_t}{\param_{t-1}},\\
\param_{t} &= 
\paramupdate(\param_{t-1},\pas_t\,\vtanc_t),
\ea \right.
\end{equation*}
(with $E_t=0$ for non-\algonoisy RTRL algorithms)
satisfies $\param_t\to \paramopt$ as $t\to\infty$.
\end{theoreme}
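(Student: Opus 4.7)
The plan is to adapt the classical ODE method for stochastic approximation to this setting, organized as three layers of analysis. First, I would reframe the algorithm as a coupled dynamical system on the extended variable $z_t=(\state_t,\tilde\jope_t,\param_t)$, with target trajectory $(\stateopt_t,\jopeopt_t,\paramopt)$, where $\jopeopt_t=\partial_\param\fstate_t(\stateopt_0,\paramopt)$ is the exact parameter-Jacobian along the optimal orbit. The stability hypothesis \rehyp{specrad}---that the sequence $A_t=\partial_\state \opevol_t(\stateopt_{t-1},\paramopt)$ has spectral radius less than one in the sense of \redef{specrad}---controls the propagation of first-order perturbations of $z_t$ as long as the parameter stays near $\paramopt$. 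Combined with the uniform smoothness of $\opevol_t$ from \rehyp{regftransetats}, it yields a uniform bound on $\jopeopt_t$ and shows that an $O(\pas_t)$ perturbation of $\param$ produces only an $O(\pas_t)$ excursion of $(\state_t,\jope_t)$ from $(\stateopt_t,\jopeopt_t)$.

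Second, I would run a Lyapunov analysis on $\param_t-\paramopt$. A one-step Taylor expansion of the update, using \rehyp{updateop_first}, gives
\begin{equation*}
\param_t-\paramopt=(\param_{t-1}-\paramopt)-\pas_t\,\fht{\param_{t-1}}\,(\param_{t-1}-\paramopt)-\pas_t\,\xi_t+\go{\pas_t^2\,\nrm{\vtanc_t}^2},
\end{equation*}
where $\xi_t$ aggregates (i) the residual $\fur_t(\partial_\param\sbpermc{t}(\stateopt_0,\paramopt),\stateopt_t,\paramopt)$ whose Ces\`aro average is controlled by \rehyp{critoptrtrlnbt}, (ii) the discrepancy between $\tilde\jope_t$ and $\jopeopt_t$ from Step~1, and (iii) in the imperfect case, the contribution of $E_t$ propagated through $\fur_t$. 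Since $\linalgmatrix=\lim_T \frac{1}{T}\sum_{t\le T}\fht{\paramopt}$ is positive-stable by \rehyp{critoptrtrlnbt}, the Lyapunov equation $\transp{\linalgmatrix}\,B+B\,\linalgmatrix=\mathrm{Id}$ (cf.\ Appendix~\ref{sec:positivestable}) admits a symmetric positive-definite solution $B$, and I would use $L(\param)=\transp{(\param-\paramopt)}\,B\,(\param-\paramopt)$ as the contraction witness. The equicontinuity \rehyp{equicontH} lets me replace $\fht{\param_{t-1}}$ by $\fht{\paramopt}$ up to $\rho(\nrm{\param_{t-1}-\paramopt})$, and summation by parts against the Ces\`aro averages in \rehyp{critoptrtrlnbt} converts $\fht{\paramopt}$ into $\linalgmatrix$.

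Third, I would close the argument by a bootstrap. Assuming the trajectory has stayed in a neighborhood where every local assumption applies, the error in the Lyapunov recursion decomposes into a deterministic drift of order $t^{\expem-1}$ from \rehyp{critoptrtrlnbt}, a quadratic remainder of order $\pas_t\,t^{2\exmp}$ from \rehyp{regpertes} and \rehyp{updateop_first}, and, in the imperfect case, a martingale increment whose conditional variance is controlled by the gauge \rehyp{errorgauge} together with the linearity \rehyp{linearfur}. The stepsize condition \rehyp{scfcstepsize}, namely $\max(\expem,\exmp)+2\exmp<\expas$, respectively $\max(\expem,1/2+\exmp)+2\exmp<\expas$ in the imperfect case (where the extra $1/2$ is the price of the martingale variance), is precisely what makes these three pieces summable against $\pas_t$, so that the Lyapunov recursion forces $L(\param_t)\to 0$. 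For the imperfect variant, Doob's maximal inequality applied to the martingale of noise increments keeps the trajectory inside the safe neighborhood with probability at least $1-\eps$ once $\cdvp$ is small enough, upgrading almost-sure convergence into the ``probability tending to $1$ as $\cdvp\to 0$'' statement of the theorem.

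The main obstacle will be Step~1: the online Jacobian $\tilde\jope_t$ is assembled from partial derivatives $\partial_\state\opevol_s(\state_{s-1},\param_{s-1})$ taken at \emph{different} parameters $\param_s$, so it is never literally the Jacobian of a single map, yet the whole Lyapunov step relies on $\tilde\jope_t$ tracking $\jopeopt_t$ uniformly in $t$ with error $O(\pas_t)$. Proving this online consistency requires iterating the geometric contraction from \rehyp{specrad} along the evolution of $\tilde\jope_t$ while absorbing perturbations of size $\pas_s\,\nrmop{\tilde\jope_{s-1}}$ without triggering the $t^\exmp$ growth of \rehyp{regpertes}; the additive noise $E_t$ must simultaneously be controlled via \rehyp{errorgauge} and kept centered through \rehyp{wcorrnoiseapprtrl}. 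Once this consistency lemma is in hand, the Lyapunov and bootstrap steps are essentially standard stochastic approximation.
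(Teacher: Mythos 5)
You correctly identify the Lyapunov function $L(\param)=\transp{(\param-\paramopt)}B(\param-\paramopt)$ with $B$ solving the Lyapunov equation for $\linalgmatrix$, and you correctly see that \rehyp{specrad} plus \rehyp{regftransetats} produce the bounded stable tube for $(\state_t,\jope_t)$. But there are two interlocking gaps that would make the proof collapse as written.

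First, your Step~1 target --- that $\tilde\jope_t$ tracks $\jopeopt_t$ ``uniformly in $t$ with error $O(\pas_t)$'' --- is not provable and is not needed. The closed-loop Jacobian $\tilde\jope_t$ reflects the entire history of parameters $\param_0,\ldots,\param_{t-1}$, and the best one can hope for by unwinding the linear recursion against the exponential forgetting in \rehyp{specrad} is a bound of the form $\nrm{\tilde\jope_t-\jopeopt_t}=\go{\sup_{s\lesssim t}\nrm{\param_s-\paramopt}}$, which is \emph{not} $O(\pas_t)$ (the distance to $\paramopt$ at initialization is a fixed constant, not vanishing with $\pas_t$). The paper never proves anything like this pointwise statement. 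What it actually controls is the difference between the closed-loop trajectory and an \emph{open-loop} trajectory run with the parameter frozen at the start of a finite window (Lemma~\ref{lem:noisyopenloopdiv}); that gap is of order $\bigl(\sum_{\text{window}}\pas_s\fmdper{s}\bigr)^2$, which is small only after length renormalization, not at the per-step level. You should replace the goal ``track $\jopeopt_t$ to $O(\pas_t)$'' by ``compare closed-loop to open-loop over a window, and compare open-loop at $\param$ to open-loop at $\paramopt$ via exponential forgetting''.

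Second, and more fundamentally, your one-step Lyapunov recursion has no source of contraction, because the only hypothesis that relates the instantaneous Jacobians $\fht{\cdot}$ to the positive-stable matrix $\linalgmatrix$ is the Ces\`aro convergence in \rehyp{critoptrtrlnbt}. At a single time $t$, $\fht{\param_{t-1}}$ is arbitrary; $(\id-\pas_t\fht{\param_{t-1}})$ need not be a contraction for the $B$-norm. You gesture at Abel summation (``summation by parts against the Ces\`aro averages''), but this does not produce the needed $(1-\mu\pas_t)L$ per-step bound. The paper's mechanism is a change of timescale: it groups steps into windows $(\tpsk{k};\tpsk{k+1}]$ of slowly growing length $\flng{\tpsk{k}}=\tpsk{k}^\exli$, so that over one window (i) the step size is essentially constant, (ii) the Ces\`aro assumptions of \rehyp{critoptrtrlnbt} already have time to average out, and (iii) the contraction factor over one window is $(1-\mvpmin\,\pas_{\tpsk{k}}\flng{\tpsk{k}})$, while the closed-loop/open-loop error and the noise are each $\po{\pas_{\tpsk{k}}\flng{\tpsk{k}}}$. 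The convergence then follows from an arithmetic-geometric sequence lemma applied on the index $k$, not $t$. The interval batching is precisely what the exponent condition $\max(\expem,\exmp)+2\exmp<\expas$ (respectively with the extra $1/2$ in the imperfect case) is calibrated to: it guarantees that a valid exponent $\exli$ for the window growth exists between $\max(\expem,\exmp)$ (so Ces\`aro averages converge within a window) and $\expas-2\exmp$ (so the quadratic remainder and martingale variance over a window stay subdominant). Without explicitly introducing this growing timescale, your Step~3 cannot make the three error pieces ``summable against $\pas_t$'' in a way that actually delivers the contraction. Your reference to Doob's maximal inequality has the same issue: the paper controls noise window-by-window via Bienaym\'e--Chebyshev and a Borel--Cantelli argument over $k$, and the condition $\exli>\exmp+1/2$ is tuned so that the window noise is dominated by $\pas_{\tpsk{k}}\flng{\tpsk{k}}$ with high probability.
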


\subsection{Discussion: How Local is Local Convergence?}
\label{sec:lyapunov}


The convergence in Theorem~\ref{thm:cvapproxrtrlalgo} assumes that $\param_0$ is initialized close enough
to the optimal parameter $\paramopt$. We believe this is not a
fundamental limitation of the approach, and that similar results can be
extended to initializations $\param_0$ in the whole basin of attraction
$\paramopt$ under the ``idealized'' (non-noisy,
infinitesimal-learning-rate) dynamics of the underlying ODE.

Indeed, convergence is obtained by proving a contractivity property for
some well-chosen distance (Assumption~\ref{hyp:optimprm}.2, proven via
\relem{contractpctrlopt}).
In practice, a suitable distance function is obtained by expanding the dynamics at second
order around $\paramopt$, so that locally the idealized dynamics behaves
like the ODE $\param'=-\linalgmatrix\param$ for some matrix
$\linalgmatrix$ whose eigenvalues have positive real part
($\linalgmatrix$ being the Hessian of the loss in the simplest case).
This holds in the zone where the dynamics is close enough to its Taylor
expansion.

By defining the distance via a suitable Lyapunov function, a similar
contractivity property can be obtained over the whole basin of attraction
of a stable fixed point $\paramopt$ of an \emph{arbitrary} ODE
$\param'=-U(\param)$, not only in a neighborhood of $\paramopt$. Indeed,
consider an ODE
\begin{equation*}
\param'=-U(\param)
\end{equation*}
and assume that $\paramopt$ is a stable fixed
point of the ODE. (In our setting, 
$U$ is the update operator $\fur_t$ of the RTRL gradient
descent, averaged over $t$, and the ODE represents the behavior of the
system when the learning rates tend to $0$, so that noise averages out.)

\newcommand{\basin}{\mathcal{B}}
Let $\basin$ be basin of attraction of $\paramopt$: the set
of those $\param_0$ such that the ODE starting at $\param_0$ converges to
$\paramopt$. On $\basin$, define the \emph{Lyapunov distance} by
\begin{equation*}
d_L(\param,\param')\deq \sqrt{\int_{t=0}^\infty
\norm{\param_t-\param'_t}^2}
\end{equation*}
where $\param_t$ and $\param'_t$ are the value at time $t$ of the ODE
starting at $\param$ and $\param'$, respectively.
This distance is finite on $\basin$ because both trajectories converge exponentially to
$\paramopt$.
This is indeed a distance, because it is the $L^2$ distance between the
trajectories $(\param_t)_{t\geq 0}$ defined by $\param$ and $\param'$. In the linear case
$U(\param)=\linalgmatrix\param$ with positive-stable $\linalgmatrix$, this distance $d_L$ is
exactly the Euclidean metric associated to the positive-definite matrix $B$
providing the Lyapunov function in our proof
(Lemma~\ref{lem:suitposdefm} and Appendix~\ref{sec:positivestable}).

By construction,
the Lyapunov distance decreases along the flow. More precisely, if
$(\param_t)$ and $(\param'_t)$ are trajectories of the ODE starting
at $\param_0$ and $\param'_0$, respectively, then
$d_L(\param_t,\param'_t)\leq
d_L(\param_0,\param'_0)$. 
This is because the integral defining $d_L(\param_t,\param'_t)^2$ is the
same as the integral defining $d_L(\param_0,\param'_0)^2$, minus the time
segment $[0;t]$.
A more precise
short-time contractivity is given by
\begin{equation*}
\frac{\d}{\d t}_{|t=0}
d_L(\param_t,\param'_t)^2=-\norm{\param_0-\param'_0}^2
\end{equation*}
by construction of $d_L$.

In particular
\begin{equation*}
d_L(\param_t,\paramopt)\leq d_L(\param_0,\paramopt)
\end{equation*}
On compact subsets of the basin of attraction $\basin$, by bounding $d_L$
as a function of $\norm{\cdot}$ (i.e., writing
$\norm{\param-\paramopt}^2\geq \mu \, d_L(\param,\paramopt)^2$ for some constant
$\mu>0$, obtained by compactness), this can be
strengthened to strict contractivity $\frac{\d}{\d t}_{|t=0}
d_L(\param_t,\paramopt)^2\leq -\mu \, d_L(\param_0,\paramopt)^2$, resulting in
strict contractivity $d_L(\param_t,\paramopt)\leq e^{-\mu t/2}
d_L(\param_0,\paramopt)$.

Then one can use the Lyapunov distance $d_L$ as the distance that gets
contracted for Assumption~\ref{hyp:optimprm} below. This way, proving
contractivity of the idealized dynamics would not require comparing the
dynamics to its Taylor expansion in a small neighborhood of $\paramopt$.

At the same time, such an approach would require all the other
assumptions above to hold in the whole basin of attraction $\basin$, not only
at or in a neighborhood of $\paramopt$. Notably, for
Assumption~\ref{hyp:opt_simple} we would have to assume
that for \emph{any} parameter $\param$ in $\basin$, the average gradients 
$\frac{1}{T}\sum_{t=1}^T
\fur_t\left(
\frac{\partial}{\partial \param}
\sbpermc{t}(\stateopt_0,\paramopt),\fstate_t(\stateopt_0,\paramopt),\paramopt
\right)$ converge to an asymptotic dynamics $U(\param)$
at rate $O(T^\expem/T)$, uniformly in (compact subsets of) $\basin$. Likewise,
Assumption~\ref{hyp:specrad} (that the dynamical system with a fixed
parameter $\param$ is stable) would have to hold not only at $\paramopt$, but over
all the zone in which we expect to prove convergence.

We believe these would be interesting topics for future research.
In any case, the existence of the Lyapunov distance $d_L$ proves that the ODE
method is not intrinsically limited to convergence results in a small
neighborhood of $\paramopt$.

\section{Examples and Applications}
\label{sec:examples}

The theorem above establishes local convergence of the basic RTRL
algorithm. (Local convergence is defined in Def.~\ref{def:introlocalcv}.) We
now show how other algorithms can be obtained as particular cases of
extended RTRL algorithms for suitable systems.

\subsection{Non-Recurrent Situations}
\label{sec:nonrec}

We first illustrate how these results play out in the ordinary, non-recurrent
case.

In this whole section we
consider a finite dataset $D=(x_n,y_n)_{n\in[1;N]}$ of
inputs and labels (with values in any sets).
The ``streaming'' setting in which infinitely many independent samples are
available and pure online SGD is performed, leads to a different analysis
(Section~\ref{sec:pureonline}).

We consider a loss function
$\perteiid(x,y,\param)$ depending on $\param$. We assume that $\perteiid$ is
$C^3$ 
with respect to
$\param$ for each pair $(x,y)$ in the dataset (so all ``technical''
assumptions of Section~\ref{sec:techass} are automatically satisfied, notably the equicontinuity \rehyp{equicontH}). We call
\emph{strict local optimum} a local optimum of the average loss
with positive definite Hessian, namely, a parameter $\paramopt$ such that
\begin{equation*}
\frac{1}{N}\sum_{n=1}^N \partial_\param \perteiid(x_n,y_n,\paramopt)=0
\end{equation*}
and
\begin{equation*}
H\deq \frac{1}{N}\sum_{n=1}^N \partial^2_\param
\perteiid(x_n,y_n,\paramopt)\succ 0.
\end{equation*}

We consider three variants of SGD depending on how samples are selected
at each step. These will lead to different possible learning rates.

\begin{definition}[I.i.d.\ sampling, cycling, random reshuffling]
We call respectively \emph{cycling, i.i.d.\ sampling, and random
reshuffling},
a sequence of integers $i_t\in[1;N]$ 
where for each time $t$, $i_t$ is chosen by
\begin{equation*}
i_t=\begin{cases}
t\mod N & \text{(cycling over $D$);}
\\
\mathrm{Unif}([1;N]) & \text{(i.i.d.\ sampling);}
\\
\pi_k(t\mod N) & \text{(random reshuffling)}
\end{cases}
\end{equation*}
where $k=\lceil t/N\rceil$ and for each $k$, $\pi_k$ is a random
permutation of $[1;N]$, and where for convenience purposes, we define
$t\mod N$ to take values in $[1;N]$ instead of $[0;N-1]$.

We will abbreviate
\begin{equation*}
\perteiid_t(\param)\deq \perteiid(x_{i_t},y_{i_t},\param).
\end{equation*}
\end{definition}

In this setting, the ``technical'' assumptions of
Section~\ref{sec:techass} are automatically satisfied, with exponent
$\exmp=0$, because the
dataset is finite (so a supremum over $t$ becomes a maximum over $D$) and
the functions involved are smooth. (Contrast with
Section~\ref{sec:pureonline} on infinite datasets.) But the non-technical
assumptions still have to be checked and lead to interesting phenomena.

\subsubsection{Ordinary SGD on a Finite Dataset}
\label{sec:sgd}

Ordinary SGD is cast as follows in our formalism; we consider two cases
depending on whether a random sample is taken from the dataset at each
step, or whether we cycle through all samples.

\begin{example}[Non-recurrent case]
\label{ex:nonrec}
We call \emph{non-recurrent case} the following choice of evolution
operators $\opevol_t$ and loss functions $\perte_t$: the state is just the parameter $\param$ itself, namely 
\begin{equation*}
\opevol_t(\state_{t-1},\param)\deq \param, \qquad \state_t=\param
\end{equation*}
and the loss is
\begin{equation*}
\perte_t(\state)\deq \perteiid(x_{i_t},y_{i_t},\state)
\end{equation*}
where for each time $t$, the sample $i_t$ is chosen by i.i.d.\ sampling,
random reshuffling, or cycling over the dataset.

Then the RTRL algorithm for this non-recurrent case corresponds to
ordinary stochastic gradient descent.
\end{example}

The choice of i.i.d.\ sampling for $i_t$, versus cycling over $D$ or random reshuffling,
influences the speed at which $\frac{1}{T}\sum_{t=1}^T \partial_\param
\perteiid_t(\paramopt)$ converges to $0$ (Assumption~\ref{hyp:opt_simple}). Indeed, for cycling and random
reshuffling, each sample in the dataset is sampled exactly once in every
interval $(kN;(k+1)N]$. Therefore, when summing $\partial_\param
\perteiid_t(\paramopt)$ from time $1$ to $T$, full swipes over the dataset (``epochs'') exactly
cancel to $0$, and the average $\frac{1}{T}\sum_{t=1}^T \partial_\param
\perteiid_t(\paramopt)$ tends to $0$ at rate $O(1/T)$.

On the other hand, with i.i.d.\ sampling, the averages
$\frac{1}{T}\sum_{t=1}^T \partial_\param
\perteiid_t(\paramopt)$ converge to $0$ almost surely at a rate
$O(\sqrt{(\ln\ln T)/T})$ by the law of the iterated logarithm. The same
applies to the Hessians. Therefore, we find:

\begin{proposition}
\label{prop:SGDhyp}
Assume that $\paramopt$ is a strict local optimum of the average
loss over the dataset $D$.
Then Assumption~\ref{hyp:opt_simple} is
satisfied
\begin{itemize}
\item for any $\expem\geq 0$ for the case of cycling over $D$ or random
reshuffling;
\item for any $\expem>1/2$ for the i.i.d.\ case, with probability $1$
over the choice of random samples $i_t$.
\end{itemize}
\end{proposition}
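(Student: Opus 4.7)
The plan is to unwind the definition of $\sbpermc{t}$ in the non-recurrent case and then handle the three sampling schemes separately, using cancellation over epochs for cycling/reshuffling and a law-of-the-iterated-logarithm argument for i.i.d.\ sampling. First I would observe that in Example~\ref{ex:nonrec} we have $\opevol_t(\state_{t-1},\param)=\param$, so $\fstate_t(\stateopt_0,\param)=\param$ and therefore $\sbpermc{t}(\stateopt_0,\param)=\perteiid_t(\param)=\perteiid(x_{i_t},y_{i_t},\param)$. Hence the quantities to control in \rehyp{opt_simple} are simply $\frac{1}{T}\sum_{t=1}^T \partial_\param\perteiid_t(\paramopt)$ and $\frac{1}{T}\sum_{t=1}^T \partial^2_\param\perteiid_t(\paramopt)$. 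Since $\perteiid$ is $C^3$ in $\param$ and the dataset $D$ is finite, the family $\{\partial_\param\perteiid(x_n,y_n,\paramopt),\,\partial^2_\param\perteiid(x_n,y_n,\paramopt)\}_{n\in[1;N]}$ is uniformly bounded.

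For the cycling case, I would partition $[1;T]$ into consecutive blocks of length $N$. By definition, each such block visits every sample exactly once, so within a single complete epoch
\begin{equation*}
\sum_{t=kN+1}^{(k+1)N} \partial_\param\perteiid_t(\paramopt)
=\sum_{n=1}^N \partial_\param\perteiid(x_n,y_n,\paramopt)=0
\end{equation*}
by the strict-local-optimum hypothesis. Thus $\sum_{t=1}^T \partial_\param\perteiid_t(\paramopt)$ reduces to the incomplete-epoch residual, which involves at most $N$ uniformly bounded terms and is therefore $O(1)$. Dividing by $T$ gives $O(1/T)$, which is $O(T^\expem/T)$ for every $\expem>0$. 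The same cancellation works for random reshuffling, since any permutation $\pi_k$ of $[1;N]$ still sweeps the full dataset within a single epoch. The identical argument applied to $\partial^2_\param\perteiid_t(\paramopt)$ shows that the empirical average of Hessians equals $H$ up to an $O(1/T)$ residual.

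For i.i.d.\ sampling, the random variables $X_t\deq\partial_\param\perteiid_t(\paramopt)$ are i.i.d., uniformly bounded (by the $C^3$ assumption on a finite dataset), and centered because $\E[X_t]=\frac{1}{N}\sum_{n=1}^N\partial_\param\perteiid(x_n,y_n,\paramopt)=0$. By Hartman--Wintner's law of the iterated logarithm applied coordinate-wise,
\begin{equation*}
\limsup_{T\to\infty}\frac{\nrm{\sum_{t=1}^T X_t}}{\sqrt{2T\log\log T}}<\infty\quad\text{almost surely},
\end{equation*}
so $\frac{1}{T}\sum_{t=1}^T X_t=O(\sqrt{\log\log T/T})$ almost surely. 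For any $\expem>1/2$, $\sqrt{\log\log T}=\po{T^{\expem-1/2}}$, so the bound is $O(T^\expem/T)$. The centered i.i.d.\ Hessian increments $Y_t\deq\partial^2_\param\perteiid_t(\paramopt)-H$ satisfy the same hypotheses, yielding $\frac{1}{T}\sum_{t=1}^T \partial^2_\param\perteiid_t(\paramopt)=H+O(\sqrt{\log\log T/T})=H+O(T^\expem/T)$ almost surely.

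No step looks genuinely hard. The only thing to be a bit careful about is the order of quantifiers in the i.i.d.\ case: since the LIL is stated almost surely, we obtain that on an event of probability one both the gradient and Hessian averages satisfy the required $O(T^\expem/T)$ bound for every $\expem>1/2$ simultaneously, which is exactly the statement of the proposition.
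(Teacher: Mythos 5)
Your proof is correct and takes essentially the same approach as the paper: the paper's argument for this proposition appears informally in the paragraph just preceding the statement, where it argues exactly as you do that full epochs cancel to give $O(1/T)$ for cycling/reshuffling, and invokes the law of the iterated logarithm for i.i.d.\ sampling to get $O(\sqrt{(\log\log T)/T})$ almost surely, with the same argument applying to the Hessians. Your write-up just makes those steps explicit (in particular checking boundedness and centering for the LIL), which is a faithful formalization of the paper's sketch.
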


As a consequence, larger learning rates can be used when cycling over the
data than
when using i.i.d.\ samples. Indeed,
remember that the exponent $\expem$ 
directly constrains the set of possible learning
rates (Assumption~\ref{hyp:scfcstepsize}) in the convergence theorem. Here we have $\exmp=0$ so
Assumption~\ref{hyp:scfcstepsize} is satisfied with
rates $\pas_t=t^{-\expas}$ for any $\expem<\expas\leq
1$. Also note that the spectral radius assumption \ref{hyp:specrad} is
trivially satisfied because $\partial_\state \opevol_t=0$. Therefore,
Theorem~\ref{thm:cvapproxrtrlalgo} yields the following:

\begin{corollaire}
\label{cor:sgdrates}
Consider ordinary stochastic gradient descent
\begin{equation*}
\param_{t}=\param_{t-1}-\pas_t\,\partial_{\param}\perteiid(x_{i_t},y_{i_t},\param)
\end{equation*}
over a finite dataset $D$ with loss $\perteiid$ as above. Assume the
learning rates satisfy $\pas_t\propto t^{-\expas}$ with
\begin{equation*}
\begin{cases}
0<\expas\leq 1 &\text{for cycling over $D$ or random reshuffling;}
\\
1/2<\expas\leq 1 &\text{for i.i.d.\ sampling of $i_t$.}
\end{cases}
\end{equation*}
Then this algorithm is locally convergent.
\end{corollaire}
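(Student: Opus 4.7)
The plan is to recognize this as a direct instantiation of the main convergence theorem (\rethm{cvapproxrtrlalgo}) applied to the non-recurrent cast of SGD given by \refrmq{nonrec}, combined with \reprop{SGDhyp} for the rate exponent $\expem$. First, I would cast SGD as an RTRL algorithm on the trivial dynamical system $\opevol_t(\state_{t-1},\param) = \param$ with loss $\perte_t(\state) = \perteiid(x_{i_t}, y_{i_t}, \state)$ (\refrmq{nonrec}); unwinding the RTRL recursion gives $\jope_t = \idth$ and $\vtanc_t = \partial_\param \perteiid_t(\param_{t-1})$, which is exactly the SGD update.

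Next I would verify every hypothesis of \rethm{cvapproxrtrlalgo} in this setting. \rehyp{opt_simple} follows from \reprop{SGDhyp}: deterministic cycling and random reshuffling give perfect cancellation of full-epoch sums, yielding $O(1/T)$ convergence of both the averaged gradient and the averaged Hessian to $0$ and $H$ respectively, so any $\expem>0$ works; for i.i.d.\ sampling, the classical strong law (law of the iterated logarithm, applied termwise for gradients and entrywise for Hessians) supplies any $\expem>1/2$ almost surely. \rehyp{specrad} is trivial since $\partial_\state \opevol_t \equiv 0$. The stability-of-updates assumptions (\rehypdeux{fupdrl}{updateop_first}) hold with $\exmp=0$ because $\fur_t$ is the identity and $\paramupdate_t(\param,\vtanc) = \param - \vtanc$. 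The technical assumptions \rehyp{regftransetats}, \rehyp{regpertes} and \rehyp{equicontH_simple} are all automatic: the dataset is finite, $\perteiid$ is $C^3$ in $\param$, so first, second and (for equicontinuity) third derivatives of $\perte_t$ and $\opevol_t$ are bounded uniformly in $t$ by a maximum over $D$; in particular $\exmp = 0$.

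With $\exmp=0$, \rehyp{scfcstepsize} becomes $\expem < \expas \leq 1$ with $\pas_t = \cdvp \, t^{-\expas}(1+o(1))$. Substituting the $\expem$ from \reprop{SGDhyp} gives the two regimes: for cycling or reshuffling, any $\expas\in(0,1]$ is admissible (pick $\expem < \expas$); for i.i.d.\ sampling, any $\expas\in(1/2,1]$ is admissible (pick $\expem \in (1/2,\expas)$). \rethm{cvapproxrtrlalgo} then yields a neighborhood of $\paramopt$ and a maximum learning rate $\cdvpmaxconv$ such that $\param_t \to \paramopt$, which is precisely the stated local convergence in the sense of \redef{introlocalcv}.

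The only nontrivial step is justifying \rehyp{opt_simple} for random reshuffling: unlike cycling, the per-epoch partial sums are random, but within each block of $N$ consecutive indices every sample appears exactly once, so each full epoch still contributes exactly $0$ to the gradient sum and exactly $H$ (scaled) to the Hessian sum; the only residue is a partial epoch of length at most $N$, bounded by a constant, yielding the same $O(1/T)$ control as deterministic cycling with probability $1$. No other step requires real work; everything else is a translation into the non-recurrent framework plus bookkeeping of the exponent $\expas$.
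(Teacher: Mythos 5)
Your proposal is correct and follows essentially the same route as the paper: cast SGD on the trivial system of Example~\ref{ex:nonrec} (giving $\jope_t=\idth$ and $\vtanc_t=\partial_\param\perteiid_t$), note $\exmp=0$ and the trivial spectral-radius condition, plug the exponent $\expem$ from Proposition~\ref{prop:SGDhyp} into Assumption~\ref{hyp:scfcstepsize}, and invoke Theorem~\ref{thm:cvapproxrtrlalgo}. Your extra paragraph on random reshuffling is a reasonable elaboration of the argument the paper gives in the prose preceding Proposition~\ref{prop:SGDhyp}, but is not doing anything the paper does not already do.
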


Thus, cycling over $D$ or random reshuffling allow for larger
learning rates than 
the traditional range
of exponents $\expas\in (1/2;1]$ for i.i.d.\ sampling.
Cycling acts as a basic form of
variance reduction in SGD, ensuring that every data is sampled exactly
once within each cycle of $N$ steps. (See discussion in the introduction
and related work section.)

The case of a genuine online SGD with infinitely many distinct samples
is different and is treated in Section~\ref{sec:pureonline}.

Thus, for the rest of Section~\ref{sec:nonrec} we set learning rates
$\pas_t=\cdvp\,t^{-\expas}$
with $0<\expas\leq 1$ for random reshuffling or cycling over $D$, or $1/2<\expas\leq 1$ for i.i.d.\
sampling.

\subsubsection{SGD with Known Preconditioning Matrix}
\label{sec:precond}

Let us now illustrate the case of gradient descent preconditioned by a
matrix $\metric(\param)$, of the form
\begin{equation*}
\param \gets \param-\pas_t \metric(\param)\,\partial_\param \perteiid_t
\end{equation*}
thus using a non-trivial update operator $\fur_t$. This illustrates how
Assumption~\ref{hyp:critoptrtrlnbt} plays out with the extended Hessians
$\sbfh_t$.

We first assume that we can compute $\metric(\param)$ explicitly given
$\param$. (The case where $\metric$ is estimated online is treated below,
in the section on adaptive algorithms.) This covers, for instance, the
natural gradient with $\metric(\param)$ the inverse of the Fisher matrix
at $\param$.

(For the case of Riemannian metrics, $\param \gets \param-\pas_t
\metric(\param)\,\partial_\param \perteiid_t$ directly applies the
update by addition in some coordinate system. For
true ``manifold'' Riemannian gradients with an added exponential map,
the exponential map can be put in the update operator
$\paramupdate_t$.)

\begin{example}[Preconditioned SGD]
Consider again the non-recurrent setting of Example~\ref{ex:nonrec}.
Let $\param\mapsto\metric(\param)$ be a $C^1$ map from $\Param$ to the
set of square matrices of size $\dim(\Param)$. 
We call
\emph{preconditioned SGD} the RTRL algorithm resulting from the update
operator
$
\furt{\vtanc}{\state}{\param}\deq \metric(\param)\vtanc
$.
\end{example}

By Remark~\ref{rem:affineU}, this choice of $\fur_t$ is covered by our assumptions.

Thus, a corollary of our main theorem is the following.

\begin{corollaire}[Convergence of preconditioned SGD]
\label{cor:precondSGD}
Assume that $\paramopt$ is a strict local optimum of the average
loss over the dataset $D$. Assume
moreover that $\metric(\paramopt)+\transp{\metric(\paramopt)}$ is
positive definite. Take learning rates as in
Corollary~\ref{cor:sgdrates}.

Then Assumption~\ref{hyp:critoptrtrlnbt} is satisfied.
Therefore, preconditioned SGD converges locally.

Moreover, the matrix $\linalgmatrix$
in Assumption~\ref{hyp:critoptrtrlnbt} is
\begin{equation*}
\linalgmatrix=\metric(\paramopt)H
\end{equation*}
with $H= \frac{1}{N}\sum_{n=1}^N \partial^2_\param
\perteiid(x_n,y_n,\paramopt)$ the Hessian of the average loss at
$\paramopt$.
\end{corollaire}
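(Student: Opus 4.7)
The plan is to check the hypotheses of Theorem~\ref{thm:cvapproxrtrlalgo} in the non-recurrent setting of Example~\ref{ex:nonrec} with update operator $\furt{\vtanc}{\state}{\param} = \metric(\param)\vtanc$, and identify the resulting matrix $\linalgmatrix$. The technical assumptions of Section~\ref{sec:techass} and the stability Assumption~\ref{hyp:specrad} are immediate: $\partial_\state \opevol_t = 0$ in the non-recurrent case, $\exmp = 0$ since the dataset is finite, and smoothness of $\perteiid$ and $\metric$ gives all the required boundedness and equicontinuity (as already noted at the start of Section~\ref{sec:nonrec}). Assumption~\ref{hyp:fupdrl} on $\fur_t$ follows from Remark~\ref{rem:affineU}, and Assumption~\ref{hyp:updateop_first} is trivial for the additive parameter update $\paramupdate_t(\param,\vtanc) = \param - \vtanc$. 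The only substantive work is verifying the optimality Assumption~\ref{hyp:critoptrtrlnbt} and the positive-stability of $\linalgmatrix$.

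For the first part of Assumption~\ref{hyp:critoptrtrlnbt}, I would use the fact that $\fstate_t(\paramopt) = \paramopt$ and $\sbpermc{t}(\paramopt) = \perteiid_t(\paramopt)$, so that
\begin{equation*}
\frac{1}{T}\sum_{t=1}^T \fur_t\paren{\partial_\param \sbpermc{t}(\paramopt), \fstate_t(\paramopt), \paramopt} = \metric(\paramopt) \cdot \frac{1}{T}\sum_{t=1}^T \partial_\param \perteiid_t(\paramopt) = O(T^\expem/T)
\end{equation*}
by Proposition~\ref{prop:SGDhyp}. For the second part, I would differentiate $\param \mapsto \metric(\param)\,\partial_\param \perteiid_t(\param)$ to obtain
\begin{equation*}
\sbfh_t(\paramopt) = \partial_\param \metric(\paramopt) \cdot \partial_\param \perteiid_t(\paramopt) + \metric(\paramopt)\,\partial^2_\param \perteiid_t(\paramopt),
\end{equation*}
then average: the first term averages to $O(T^\expem/T)$ because averages of $\partial_\param \perteiid_t(\paramopt)$ do (again by Proposition~\ref{prop:SGDhyp}), and the second term averages to $\metric(\paramopt)\,H$ at rate $O(T^\expem/T)$ by the Hessian statement of Proposition~\ref{prop:SGDhyp}. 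This yields $\linalgmatrix = \metric(\paramopt)\,H$.

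The last ingredient, which is the only slightly non-mechanical step, is to check that all eigenvalues of $\linalgmatrix = \metric(\paramopt)\,H$ have positive real part. Because $H \succ 0$, the matrix $B \deq H^{1/2}\metric(\paramopt)\,H^{1/2}$ is similar to $\metric(\paramopt)\,H$ (via conjugation by $H^{1/2}$), hence shares its eigenvalues. But $B + \transp{B} = H^{1/2}\paren{\metric(\paramopt) + \transp{\metric(\paramopt)}}H^{1/2} \succ 0$ by assumption, so for any complex eigenvector $u$ of $B$ with eigenvalue $\lambda$, one has $\mathrm{Re}(\lambda)\nrm{u}^2 = \tfrac{1}{2}\bar{u}^{\!\top}(B + \transp{B})u > 0$, giving $\mathrm{Re}(\lambda) > 0$.

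Having verified all the assumptions of Theorem~\ref{thm:cvapproxrtrlalgo}, local convergence follows, and the expression for $\linalgmatrix$ has been identified along the way. The main obstacle, if any, is the eigenvalue argument, but it is a classical consequence of the Lyapunov-style similarity trick once one notices that $H \succ 0$ lets one conjugate into a matrix whose Hermitian part inherits positive definiteness from $\metric(\paramopt) + \transp{\metric(\paramopt)}$; everything else is essentially bookkeeping enabled by the non-recurrent structure.
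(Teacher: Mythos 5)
Your proposal is correct and follows essentially the same route as the paper's proof: compute $\sbfh_t$ by the product rule, observe that the $\partial_\param\metric$ term averages to $0$ because gradients at $\paramopt$ do, identify $\linalgmatrix=\metric(\paramopt)H$, and conclude positive-stability from $\metric(\paramopt)+\transp{\metric(\paramopt)}\succ 0$ and $H\succ 0$. The only cosmetic difference is that you re-derive the positive-stability of $\metric(\paramopt)H$ by conjugation with $H^{1/2}$ instead of citing Proposition~\ref{prop:positivestablecriteria}(2) (Lyapunov with $B=H$); these are the same standard argument in slightly different clothing.
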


\begin{proof}
Let us check the first point of the assumption, namely, that the average of the updates is $0$. Indeed, we
have
\begin{equation*}
\fur_t\left(
\partial_\param
\sbpermc{t},\fstate_t,\param
\right)
=\metric(\param)\partial_\param
\sbpermc{t}=\metric(\param)\partial_\param\perteiid_t
\end{equation*}
since $\sbpermc{t}=\perteiid_t$ in the non-recurrent case.
We have to take the average over time at $\param=\paramopt$. Since
$\metric(\paramopt)$ does not depend on $t$, we just have to check that
the average of $\partial_\param
\perteiid_t$ at $\param=\paramopt$ vanishes, which is the case by
assumption.

The next point of the assumption deals with the average extended Hessians
\begin{equation*}
\sbfh_t(\param)= \partial_\param\,\fur_t\left(
\partial_\param
\sbpermc{t},\fstate_t,\param
\right)
\end{equation*}
and here with $\furt{\vtanc}{\state}{\param}= \metric(\param)\vtanc$, and
using again that $\sbpermc{t}=\perteiid_t$, we find
\begin{equation*}
\sbfh_t(\param)=\partial_\param\left(\metric(\param)\partial_\param\perteiid_t\right)
=
(\partial_\param P(\param))\,\partial_\param
\perteiid_t+P(\param)\partial^2_\param \perteiid_t
\end{equation*}
which we have to average at $\param=\paramopt$. Since $\partial_\param
\perteiid_t$ averages to $0$ at $\paramopt$, the first term averages to $0$
and we find
\begin{equation*}
\frac{1}T \sum_{t=1}^T \sbfh_t(\paramopt) \to P(\paramopt) H
\end{equation*}
where $H$ is the (ordinary) Hessian at $\paramopt$ of the average loss
over the dataset. So the matrix $\linalgmatrix$ is
\begin{equation*}
\linalgmatrix=\metric(\paramopt)H
\end{equation*}
By the assumptions on
$P(\paramopt)$ and $H$, and by one of the criteria for
positive-stability (Proposition~\ref{prop:positivestablecriteria}), this is positive-stable.
\end{proof}

\subsubsection{Adding Momentum}
\label{sec:mom}

SGD with momentum appears naturally as an RTRL algorithm with a suitable
recurrent state, as follows.

\begin{corollaire}[SGD with momentum] 
\label{cor:mom}
Consider a recurrent system with
real-valued state $\state$ subject to the evolution equation
\begin{equation*}
\state_t=\opevol_t(\state_{t-1},\param)\deq
\beta \state_{t-1}+(1-\beta)\perteiid(x_{i_t},y_{i_t},\param)
\end{equation*}
for some $0\leq \beta<1$,
where each sample index $i_t$ is chosen as in Example~\ref{ex:nonrec}.
Define the loss functions $\perte_t(\state_t)\deq \state_t$.

Then RTRL on this recurrent system is equivalent to SGD with momentum:
\begin{equation}
\label{eq:sgd-momentum}
\param_t=\param_{t-1}-\pas_t J_t,\qquad J_t=\beta J_{t-1}+(1-\beta)
\partial_\param \perteiid(x_{i_t},y_{i_t},\param_{t-1})
\end{equation}

Moreover, Assumption~\ref{hyp:opt_simple} is satisfied, with the same
exponents as in Proposition~\ref{prop:SGDhyp}. Therefore, SGD with
momentum $\beta$ converges locally.
\end{corollaire}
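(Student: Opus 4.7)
The plan is in two steps: first, verify by direct computation that RTRL on this recurrent system reproduces the momentum update~\eqref{eq:sgd-momentum}; second, check every hypothesis of Theorem~\ref{thm:cvapproxrtrlalgo} under the exponents provided by Proposition~\ref{prop:SGDhyp}.

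For the equivalence, note that $\partial_\state\opevol_t=\beta$ and $\partial_\param\opevol_t=(1-\beta)\,\partial_\param\perteiid(x_{i_t},y_{i_t},\param)$, while $\perte_t=\mathrm{id}$ gives $\partial_\state\perte_t=1$. Writing out Definition~\ref{def:simplertrl} one finds $\jope_t=\beta\jope_{t-1}+(1-\beta)\,\partial_\param\perteiid(x_{i_t},y_{i_t},\param_{t-1})$, $\vtanc_t=\jope_t$, and $\param_t=\param_{t-1}-\pas_t\jope_t$, which is exactly~\eqref{eq:sgd-momentum} after identifying $J_t=\jope_t$.

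For the convergence, the main point to check is Assumption~\ref{hyp:opt_simple}: the other hypotheses follow quickly (spectral radius \rehyp{specrad} is trivial since the linearization at $\paramopt$ is multiplication by $\beta<1$; the technical hypotheses~\ref{hyp:regftransetats}, \ref{hyp:regpertes} and \ref{hyp:equicontH_simple} follow from $\perteiid$ being $C^3$ on the finite dataset, with $\exmp=0$, as noted at the start of Section~\ref{sec:nonrec}). Unfolding the recursion gives
\begin{equation*}
\sbpermc{t}(\stateopt_0,\param)=\beta^t\stateopt_0+(1-\beta)\sum_{s=1}^t\beta^{t-s}\,\perteiid(x_{i_s},y_{i_s},\param).
\end{equation*}
Differentiating in $\param$ and swapping the order of summation yields, for $g_s\deq\partial_\param\perteiid(x_{i_s},y_{i_s},\paramopt)$,
\begin{equation*}
\sum_{t=1}^T \partial_\param\sbpermc{t}(\stateopt_0,\paramopt)=\sum_{s=1}^T g_s\,(1-\beta^{T-s+1})=\sum_{s=1}^T g_s-\sum_{s=1}^T g_s\,\beta^{T-s+1}.
\end{equation*}
The first sum is $O(T^\expem)$ by Proposition~\ref{prop:SGDhyp}, and the second is bounded independently of $T$ since the $g_s$ are bounded (finite dataset) and $\sum\beta^{T-s+1}\leq\beta/(1-\beta)$; dividing by $T$ gives $O(T^\expem/T)$. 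The same argument applied to the Hessians $h_s\deq\partial^2_\param\perteiid(x_{i_s},y_{i_s},\paramopt)$ gives $\tfrac1T\sum_{t=1}^T\partial^2_\param\sbpermc{t}(\stateopt_0,\paramopt)=H+O(T^\expem/T)$ with $H$ the average Hessian from the definition of a strict local optimum, which is positive definite by assumption.

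With all hypotheses satisfied (for the same $\expem$ as in Proposition~\ref{prop:SGDhyp} and with $\exmp=0$), Theorem~\ref{thm:cvapproxrtrlalgo} applies and yields local convergence for any learning rate satisfying Assumption~\ref{hyp:scfcstepsize}, i.e.\ the range stated in Corollary~\ref{cor:sgdrates}. The main (mild) obstacle is checking that the geometric convolution with weights $\beta^{T-s}$ in the averaging does not worsen the exponent $\expem$; this is the Abel-type summation shown above, the key observation being that the transient $\sum_s g_s\beta^{T-s+1}$ is only $O(1)$ and therefore absorbed into the $O(T^\expem/T)$ remainder.
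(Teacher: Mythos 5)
Your proof is correct and takes essentially the same route as the paper: verify the RTRL recursion reduces to the momentum update, then show the geometric averaging $\beta^{t-s}$ only perturbs the time averages of gradients and Hessians by an $O(1)$ transient, absorbed into the $O(T^\expem/T)$ remainder. The only (cosmetic) difference is that you carry out the Abel-type summation directly on the gradients $g_s$ and Hessians $h_s$, whereas the paper performs it on the losses $\sbpermc{t}$ and then asserts the same holds for their derivatives; your version is arguably slightly more direct but the argument is identical.
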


\begin{proof}
First, the evolution operators $\opevol_t$ are obviously
$\beta$-contracting on $\state$. Since $\beta<1$, the stability
assumption \ref{hyp:specrad} on the system is satisfied.

Second, by Definition~\ref{def:simplertrl}, the variable $\jope_t$ of RTRL
for this system exactly follows the right-hand side of \eqref{eq:sgd-momentum}. This proves that RTRL
for this system is equivalent to SGD with momentum.

Finally, let us check Assumption~\ref{hyp:opt_simple}: we have to compute
the time averages of gradients and Hessians of $\sbpermc{t}$ with respect
to $\param$. Let us prove that those behave asymptotically as in the
momentum-less case. Indeed, for this system we have (dropping $\state_0$
for simplicity)
\begin{equation*}
\sbpermc{t}(\param)=(1-\beta) \sum_{j\leq t} \beta^{t-j}
\perteiid(x_{i_j},y_{i_j},\param)
\end{equation*}
by induction. Therefore,
\begin{align*}
\sum_{t=1}^T\sbpermc{t}(\param)&=\sum_{j\leq T}
\perteiid(x_{i_j},y_{i_j},\param) (1-\beta)\sum_{t=j}^T \beta^{t-j}
\\&=\sum_{j\leq T}
\perteiid(x_{i_j},y_{i_j},\param) (1-\beta^{T-j})
\\&=\sum_{j\leq T}
\perteiid(x_{i_j},y_{i_j},\param) -\sum_{j\leq
T}\beta^{T-j}\perteiid(x_{i_j},y_{i_j},\param)
\\&=\sum_{j\leq T}
\perteiid(x_{i_j},y_{i_j},\param)+O(1)
\end{align*}
as $\sum_{j\leq T} \beta^{T-j}$ is finite and
$\perteiid(x_{i_j},y_{i_j},\param)$ is bounded (since we deal with a finite
dataset). Therefore, the time averages $\frac{1}{T}
\sum_{t=1}^T\sbpermc{t}(\param)$ coincide up to $O(1/T)$ with the
momentum-less case. The same argument applies to gradients
$\partial_\param \sbpermc{t}$ and Hessians $\partial^2_\param
\sbpermc{t}$. Therefore, at $\paramopt$, we have $\frac{1}{T}
\sum_{t=1}^T \partial_\param \sbpermc{t}(\paramopt)\to 0$ and
$\frac{1}{T}
\sum_{t=1}^T \partial^2_\param \sbpermc{t}(\paramopt)\to H$ with the same
rates as in the momentum-less case.
\end{proof}

\subsubsection{Adaptive Algorithms: Collecting Statistics Online}
\label{sec:adap}

Adaptive algorithms such as RMSProp, Adam, or the online natural
gradient, estimate a preconditioning matrix $\metric(\param)$ via a
moving average along the optimization trajectory. Thus it is not possible
to apply Corollary~\ref{cor:precondSGD}, since the latter assumes
direct access to $\metric(\param)$ for each $\param$.

So let us consider an algorithm that maintains an auxiliary
variable $\Stat$, computed by aggregating past values of some statistic
$\stat(x_t,y_t,\param_{t-1})$ depending on past observations. Namely,
consider an algorithm of the type
\begin{align*}
\param_t&=\param_{t-1}-\pas_t \metric(\param_{t-1},\Stat_{t-1})\,\partial_\param
\perteiid_t
\\
\Stat_t&=\beta_t
\Stat_{t-1}+(1-\beta_t)\,\stat(x_t,y_t,\param_{t-1})
\end{align*}
where we let the ``inertia'' parameter $\beta_t$ tend to $1$ at the same
rate as the learning rates, namely,
\begin{equation*}
\beta_t=1-\momen\,\pas_t
\end{equation*}
for some constant $\momen>0$. (This choice will be discussed later.)

For example, RMSProp and Adam are based on collecting statistics about
square gradients, namely, letting $\stat_t$ be the vector
\begin{equation*}
\stat_t(\param)\deq\left(
\partial_\param \perteiid_t
\right)^{\odot 2}
\end{equation*}
and then RMSProp uses the preconditioner
\begin{equation*}
\metric(\param,\Stat)\deq\diag(\Stat+\eps)^{-1}
\end{equation*}
for some regularizing constant $\eps$. Adam uses
momentum in addition and is treated in Section~\ref{sec:adam} below.

The
online natural gradient corresponds to a full matrix-valued $\Stat$
with\footnote{This corresponds to the ``Gauss--Newton'' or ``outer product''
version of the natural gradient
\citep{martens2014new,oll15r1}. The
other version has an expectation over predicted values of $y_t$ instead
of the actual data $y_t$, corresponding to a choice of $\stat_t$ that depends
only on $x_t$ and $\param_{t-1}$, and can be treated similarly.}
\begin{equation*}
\stat_t(\param)\deq (\partial_\param\perteiid_t)^{\otimes 2},\qquad
\metric(\param,\Stat)\deq\Stat^{-1}.
\end{equation*}
The extended Kalman filter in the ``static'' case (for estimating the
state of a fixed system from noisy nonlinear measurements) has been shown to be
equivalent to a particular case of the online natural gradient
\citep{ollivier2018online}: it is an online natural gradient with
stepsize $\pas_t=1/(t+1)$ and Gaussian noise model. Therefore, the
results here apply to the static extended Kalman filter as well.

A key idea to treat such algorithms is to view $\Stat$ as part of the
parameter to be estimated. Indeed, the update of $\Stat$ can be seen as a
gradient descent for the loss $\norm{\Stat-\stat_t}^2$. However, this
idea does not work directly: incorporating $\norm{\Stat-\stat_t}^2$ into
the loss changes the gradients for $\param$, because $\stat_t$ typically
depends on $\param$, so that extraneous gradient terms on $\param$
appear.

Instead, here we will take full advantage of the generalized Hessians
$\sbfh_t$ for non-gradient updates, and of the fact that the matrix
$\linalgmatrix$ in Assumption~\ref{hyp:critoptrtrlnbt} does not need to be
positive definite, only to have eigenvalues with positive real part. This
works out as follows.

\begin{corollaire}[Local convergence of adaptive preconditioning]
\label{cor:adap}
Consider a finite dataset $D=(x_n,y_n)$ as above. Take learning rates as in
Corollary~\ref{cor:sgdrates}.

Let
$(x,y,\param)\mapsto \stat(x,y,\param)\in \R^{\dim(\stat)}$ be any $C^1$
map. 
Let $\metric$ be any $C^1$ map
sending $(\param,\Stat\!\in \!\R^{\dim(\stat)})$ to a square matrix of size $\dim(\param)$.
Consider the following algorithm: the average of $\stat$ is estimated
online via
\begin{equation}
\label{eq:adap1}
\Stat_t=\beta_t
\Stat_{t-1}+(1-\beta_t)\,\stat(x_t,y_t,\param_{t-1})
\end{equation}
with
$\beta_t=1-\momen\,\pas_t$ for some $\momen>0$,
and the parameter is updated using the preconditioning matrix $P$ computed either
from $\Stat_t$ or $\Stat_{t-1}$,
\begin{equation}
\label{eq:adap2}
\param_t=\param_{t-1}-\pas_t \metric(\param_{t-1},\Stat_{t-1})\,\partial_\param
\perteiid_t
\end{equation}
or
\begin{equation}
\label{eq:adap3}
\param_t=\param_{t-1}-\pas_t \metric(\param_{t-1},\Stat_t)\,\partial_\param
\perteiid_t.
\end{equation}

Let $\paramopt$ be a strict local optimum for the dataset. Let
\begin{equation*}
\Stat^*\deq \frac{1}{N} \sum_{n=1}^N \stat(x_n,y_n,\paramopt)
\end{equation*}
be the average value of the statistic at $\paramopt$.
Assume that
$\metric(\paramopt,\Stat^*)+\transp{\metric(\paramopt,\Stat^*)}$ is
positive definite.

Then the adaptive algorithms \eqref{eq:adap1}--\eqref{eq:adap2} and
\eqref{eq:adap1}--\eqref{eq:adap3} converge locally.
\end{corollaire}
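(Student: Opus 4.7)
The plan is to cast the algorithm as an extended RTRL algorithm in the non-recurrent setting of Example~\ref{ex:nonrec}, on an augmented parameter $\param^+ \deq (\param, \Stat)$, and then apply Theorem~\ref{thm:cvapproxrtrlalgo}. Concretely, I take $\State_t^+ = \Param\times\R^{\dim \Stat}$, evolution $\opevol_t^+(\state^+,\param^+) = \param^+$ (so $\state_t^+ = \param_{t-1}^+$ and $\jope_t = I$), loss $\perte_t^+(\state^+) \deq \perteiid(x_{i_t},y_{i_t},\state^+_\param)$, and extended update
\begin{equation*}
\fur_t^+\bigl((v_\param,v_\Stat),\state^+,\param^+\bigr) \deq \bigl(\metric(\param,\Stat)\,v_\param,\ \momen(\Stat - \stat(x_{i_t},y_{i_t},\param))\bigr).
\end{equation*}
With $\paramupdate(\param^+,w) = \param^+ - w$ one recovers exactly the algorithm~\eqref{eq:adap1}--\eqref{eq:adap2}; variant~\eqref{eq:adap3} differs by expanding $\metric(\param_{t-1},\Stat_t) = \metric(\param_{t-1},\Stat_{t-1}) + O(\pas_t)$, so the discrepancy is $O(\pas_t^2)$ and can be absorbed into the second-order term of $\paramupdate$ allowed by Assumption~\ref{hyp:updateop_first}. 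The candidate local optimum is $\param^{+*}\deq(\paramopt,\Stat^*)$.

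The central step is to verify Assumption~\ref{hyp:critoptrtrlnbt} at $\param^{+*}$. The first block of $\fur_t^+$ averages to $\metric(\paramopt,\Stat^*)\cdot\frac{1}{T}\sum_t \partial_\param \perteiid_t(\paramopt) \to 0$ because $\paramopt$ is a strict local optimum of the empirical loss, and the second block averages to $\momen(\Stat^* - \frac{1}{T}\sum_t \stat(x_{i_t},y_{i_t},\paramopt)) \to 0$ by definition of $\Stat^*$; the rate $O(T^\expem/T)$ is the one given by Proposition~\ref{prop:SGDhyp} according to the sampling scheme. For the Jacobian $\sbfh_t(\param^{+*})$, I compute the four blocks: the upper-left equals $(\partial_\param \metric)\,\partial_\param \perteiid_t + \metric\,\partial_\param^2 \perteiid_t$, whose first summand averages to $0$ (since $\partial_\param \perteiid_t(\paramopt)$ does), leaving the limit $\metric(\paramopt,\Stat^*)\,H$; the upper-right block $(\partial_\Stat \metric)\,\partial_\param\perteiid_t$ likewise averages to $0$; the lower-left block tends to $-\momen\,\overline{\partial_\param \stat}$ for some finite limit; the lower-right block is the constant $\momen\,I$. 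Hence
\begin{equation*}
\linalgmatrix = \begin{pmatrix} \metric(\paramopt,\Stat^*)\,H & 0 \\ -\momen\,\overline{\partial_\param \stat} & \momen\,I \end{pmatrix}.
\end{equation*}
Being block lower-triangular, its spectrum is the union of the spectra of its diagonal blocks. The block $\momen\,I$ has the sole eigenvalue $\momen>0$; the block $\metric(\paramopt,\Stat^*)\,H$ is positive-stable by the product criterion recalled in Appendix~\ref{sec:positivestable} (Proposition~\ref{prop:positivestablecriteria}), using the hypotheses that $\metric(\paramopt,\Stat^*)+\transp{\metric(\paramopt,\Stat^*)}\succ 0$ and $H\succ 0$. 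This gives positive-stability of $\linalgmatrix$.

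The remaining assumptions are routine in this non-recurrent, finite-dataset setting. Stability \rehyp{specrad} is automatic since $\partial_\state \opevol_t^+ = 0$, and \rehypdeux{regftransetats}{regpertes} hold with $\exmp = 0$ because the dataset is finite and all quantities depend smoothly on bounded data. Assumption~\ref{hyp:fupdrl} for $\fur_t^+$ is checked directly: $\partial_v \fur_t^+$ acts as the bounded matrix $\metric$ in the first component and vanishes in the second; $\partial_{(\state,\param^+)} \fur_t^+$ depends on $v$ only through $(\partial \metric)\,v_\param$ and so is at most linear in $v$; and $\fur_t^+(0,\stateopt_t,\paramopt^+) = (0,\momen(\Stat^*-\stat_t(\paramopt))) = O(1)$. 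Assumption~\ref{hyp:updateop_first} holds by construction, and \rehyp{equicontH} follows from the uniform smoothness of $\metric$, $\stat$ and $\perteiid$ on the finite dataset near $\param^{+*}$. The learning-rate assumption \rehyp{scfcstepsize} with the $\expas$ of Corollary~\ref{cor:sgdrates} is compatible with the exponent $\expem$ in Proposition~\ref{prop:SGDhyp}. Theorem~\ref{thm:cvapproxrtrlalgo} then yields local convergence of $\param_t^+$ to $\param^{+*}$, and in particular $\param_t \to \paramopt$. The only non-routine subtlety, which I expect to require a clean bookkeeping argument, is the block-triangular reduction of $\linalgmatrix$ via the vanishing of the cross-term $(\partial_\Stat \metric)\,\partial_\param \perteiid_t$ in time average, since this is what allows us to invoke the product criterion instead of trying to verify positive-stability of a full non-symmetric matrix.
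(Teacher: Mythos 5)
Your reduction to the extended RTRL framework, the choice of augmented parameter $\param^+=(\param,\Stat)$ and update operator $\fur_t^+$, the computation of the open-loop averages and the block lower-triangular Jacobian $\linalgmatrix$, and the appeal to the product criterion in Proposition~\ref{prop:positivestablecriteria} all match the paper's proof of variant \eqref{eq:adap1}--\eqref{eq:adap2}. That part is correct.

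There is, however, a genuine gap in your handling of variant \eqref{eq:adap3}. You propose to fold the discrepancy between $\metric(\param_{t-1},\Stat_t)$ and $\metric(\param_{t-1},\Stat_{t-1})$ into the second-order remainder $\norm{\vtanc}^2\paramupdate_t^{(2)}(\param^+,\vtanc)$ allowed by \rehyp{updateop_first}. But \rehyp{updateop_first} requires $\paramupdate_t^{(2)}$ itself to be bounded \emph{and Lipschitz} in $(\param^+,\vtanc)$ in a ball containing $\vtanc=0$. After rewriting $\pas_t\,\partial_\param\perteiid_t=\metric(\param,\Stat)^{-1}\vtanc_\param$ and $\Stat_t-\Stat_{t-1}=-\vtanc_\Stat$, the remainder is $-\bigl[\metric(\param,\Stat-\vtanc_\Stat)-\metric(\param,\Stat)\bigr]\metric(\param,\Stat)^{-1}\vtanc_\param$, so $\paramupdate_t^{(2)}$ must be this expression divided by $\norm{\vtanc}^2$. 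To leading order this is a bilinear form in $(\vtanc_\Stat,\vtanc_\param)$ divided by $\norm{\vtanc}^2$, i.e.\ a nonconstant degree-$0$ homogeneous function of $\vtanc$; such a function cannot be Lipschitz at $\vtanc=0$ (e.g.\ in one dimension $\vtanc_\Stat\vtanc_\param/\max(|\vtanc_\param|,|\vtanc_\Stat|)^2$ jumps from $1$ to $-1$ between $(\eps,\eps)$ and $(-\eps,\eps)$, at distance $2\eps$). So the absorption step does not verify \rehyp{updateop_first}, and the argument does not go through as written. The paper circumvents this by an odd/even splitting: define $\fur_{2t-1}$ to update only $\Stat$ and $\fur_{2t}$ to update only $\param$, redefine $\pas_t$ and the losses accordingly, and observe that with $\paramupdate_t(\param^+,\vtanc)=\param^+-\vtanc$ this reproduces \eqref{eq:adap3} exactly while the time averages (hence $\linalgmatrix$, up to a factor $1/2$) are unchanged. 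That is a cleaner route, and you should adopt some version of it for \eqref{eq:adap3}.
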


\begin{proof}
Define the augmented parameter
$\param^+\deq(\param,\Stat)\in \Param\times \R^{\dim(\stat)}$. Define a
system via
\begin{equation*}
\state_t=\opevol_t(\state_{t-1},\param^+)\deq \param^+
\end{equation*}
with the loss $\perteiid_t$ on $\state_t=\param^+=(\param,\Stat)$ given by
\begin{equation*}
\perteiid_t(\param,\Stat)\deq \perteiid(x_{i_t},y_{i_t},\param)
\end{equation*}
as in Example~\ref{ex:nonrec}. Thus $\partial_{\param^+}
\sbpermc{t}(\param^+)=\partial_{(\param,\Stat)}\perteiid(x_{i_t},y_{i_t},\param)=\left(\partial_\param
\perteiid(x_{i_t},y_{i_t},\param)
,0
\right)$.

Define the extended RTRL algorithm on
$\param^+=(\param,\Stat)$ with
update
\begin{equation*}
\fur_t((v_\param,v_\Stat),s,(\param,\Stat))\deq \left(
\begin{array}{c}
P(\param,\Stat)v_\param
\\
\momen \Stat-\momen\stat(x_{i_t},y_{i_t},\param)
\end{array}
\right)
\end{equation*}
where the first row describes the update of $\param$ and the second row,
the update of $\Stat$.

Then by construction,
the extended RTRL update on $\param^+=(\param,\Stat)$ coincides with
\eqref{eq:adap1}--\eqref{eq:adap2} with $\beta_t=1-\momen\pas_t$. (The
case of \eqref{eq:adap3} is treated later.)

This choice of $\fur_t$ is affine in $\vtanc$, so by
Remark~\ref{rem:affineU} it is covered by \rehyp{fupdrl} on $\fur_t$.

Let us check Assumption~\ref{hyp:critoptrtrlnbt}. For this system, the
initial state $s_0$ plays no role, so for simplicity we drop it from the
notation.
The gradients computed by this algorithm are
\begin{align*}
u_t(\param,\Stat)
&\deq \fur_t\left(\partial_{\param^+}
\sbpermc{t}(\param^+),\fstate_t(\param^+),\param^+\right)
\\&=\left(
\begin{array}{c}
P(\param,\Stat)\,\partial_\param \perteiid(x_{i_t},y_{i_t},\param)
\\
\momen \Stat-\momen\stat(x_{i_t},y_{i_t},\param)
\end{array}
\right)
\end{align*}
and the extended Hessians are $\sbfh_t=\partial_{(\param,\Stat)}u_t$. We find
\begin{equation*}
\sbfh_t=\left(
\begin{array}{cc}
P\, \partial^2_\param \perteiid_{t}+(\partial_\param P)
\partial_\param\perteiid_t
&
(\partial_{\Stat} P) \partial_\param\perteiid_{t}
\\
-\momen\, \partial_\param \stat_t
& \momen\id
\end{array}
\right)
\end{equation*}
where we have abbreviated $\perteiid_t$ for $\perteiid(x_{i_t},y_{i_t},\param)$
and likewise for $\stat_t$.

We have to prove that the average of $u_t(\paramopt,\Stat^*)$ over time
is $0$, and that the average of $\sbfh_t(\paramopt,\Stat^*)$ over time is a positive-stable
matrix $\linalgmatrix$.

We have $u_t(\paramopt,\Stat^*)=\left(
P(\paramopt,\Stat^*)\,\partial_\param \perteiid_t,\,c\Stat^*-c\stat_t
\right)$. By definition of $\paramopt$, the average of $\partial_\param
\perteiid_t$ over time is $0$. Since $P(\paramopt,\Stat^*)$ does not depend
on $t$, this proves that the first component of $u_t(\paramopt,\Stat^*)$ averages to $0$.
The second component of $u_t$ averages to $0$ by definition of $\Stat^*$.

The rates of this convergence are $O(1/t)$ if cycling over the data, or
$O(\sqrt{(\log\log t)/t})$ for the i.i.d.\ case, as in
Proposition~\ref{prop:SGDhyp}.

To compute the matrix $\linalgmatrix$, let us average $\sbfh_t(\paramopt,\Stat^*)$ over
time.
The quantities $\partial_\param P(\paramopt,\Stat^*)$ and $\partial_\Stat
P(\paramopt,\Stat^*)$ do not depend
on time, and $\partial_\param \perteiid_{t}$ averages to $0$ at $\paramopt$,
so both terms involving $\partial_\param \perteiid_{t}$ average to $0$. By the
assumption at the start of Section~\ref{sec:nonrec}, the Hessians $\partial^2_\param \perteiid_{t}$ at
$\paramopt$
average to some positive definite average Hessian $H$. Moreover, the time
averages of $\partial_\param \stat_t$ also converge to their average over the
dataset, which is some matrix $C$. Therefore we find
\begin{equation*}
\linalgmatrix=\left(
\begin{array}{cc}
P(\paramopt,\Stat^*) H
&
0
\\
-c\,C
& \momen\id
\end{array}
\right)
\end{equation*}
for some matrix $C$.

Since $\linalgmatrix$ is block-triangular, its eigenvalues are those of $PH$
and of $\momen \id$. As in Section~\ref{sec:precond}, $PH$ is positive-stable, and so is
$\momen\id$, so the matrix $\linalgmatrix$ is positive-stable.

This proves that Assumption~\ref{hyp:critoptrtrlnbt} is satisfied, for
the variant \eqref{eq:adap2} where $\param$ and $\Stat$ are updated
simultaneously.

The variant \eqref{eq:adap3} where $\Stat$ is updated
before $\param$ can be treated via a simple trick: update $\Stat$ at odd
steps and update $\param$ at even steps. More precisely, for $t\geq 1$
define
\begin{equation*}
\fur_{2t-1}((v_\param,v_\Stat),s,(\param,\Stat))\deq \left(
\begin{array}{c}
0
\\
\momen \Stat-\momen\stat(x_{i_t},y_{i_t},\param)
\end{array}
\right)
\end{equation*}
at odd times, and
\begin{equation*}
\fur_{2t}((v_\param,v_\Stat),s,(\param,\Stat))\deq \left(
\begin{array}{c}
P(\param,\Stat)v_\param
\\
0
\end{array}
\right)
\end{equation*}
at even times. Redefine the step sizes $\pas_t$ accordingly, and the
losses via $\perte_{2t-1}\deq 0$ and $\perte_{2t}\deq 
\perte(x_{i_t},y_{i_t},\param)$. Then the RTRL algorithm for this choice
of $\fur_t$ coincides with \eqref{eq:adap1}--\eqref{eq:adap3}.
Crucially, the time averages
of the new $\fur_t$ (and thus of $\sbfh_t$) from time $1$ to $2t$ at $\paramopt$
are exactly half the time averages from $1$ to $t$ in the
previous case. So $u_t$ still averages to $0$, and
$\sbfh_t$ averages to
$\linalgmatrix/2$ for the same matrix $\linalgmatrix$. This deals with the case of the update
\eqref{eq:adap3}.
\end{proof}

\begin{remarque}[Splitting an update into sub-updates]
The trick above,
of updating part of the parameter at even steps and
the rest at odd steps, works more generally:
it is always
possible to split an update $\fur_t$ into as many sub-updates as needed,
in any order. 
Indeed, our assumptions only
deal with an \gu{open-loop} system using a fixed parameter ($\pas_t=0$), 
and such split updates do not change the dynamics of the
fixed-parameter system, so the
assumptions and temporal averages will be unchanged.
\end{remarque}


%
%
%
%

\subsubsection{Adam with Fixed $\beta^1$ and $\beta^2\to 1$}
\label{sec:adam}

Algorithms like Adam can be treated by a direct combination of the arguments of
Section~\ref{sec:mom} (for momentum) and Section~\ref{sec:adap} (adaptive
preconditioning)\footnote{We ignore the so-called bias correction
factors of Adam, which tend to $1$ exponentially fast and modify the learning
rates in the first few iterations, but do not affect asymptotic
convergence.}.

\begin{corollaire}[Local convergence of adaptive preconditioning with
momentum]
\label{cor:adam}
Consider a finite dataset $D=(x_n,y_n)$ as above. Take learning rates as in
Corollary~\ref{cor:sgdrates}.

Consider an algorithm that maintains a momentum variable $J$ together
with statistics $\stat$, updated via
\begin{align}
\label{eq:adamJ}
J_t&=\beta^1 J_{t-1}+(1-\beta^1)
\partial_\param \perteiid(x_{i_t},y_{i_t},\param_{t-1})
\\
\Stat_t&=\beta^2_t\,
\Stat_{t-1}+(1-\beta^2_t)\,\stat(x_t,y_t,\param_{t-1})
\end{align}
with fixed $0\leq \beta^1<1$ and with $\beta^2_t=1-\momen \eta_t$. Here
$\Stat\colon(x,y,\param)\mapsto \stat(x,y,\param)\in \R^{\dim(\stat)}$ is any $C^1$
map.

Let the algorithm update the parameter $\param$ via
\begin{equation*}
\param_t=\param_{t-1}-\pas_t \metric(\param_t,\Stat_{t-1})J_t
\qquad\text{or}\qquad
\param_t=\param_{t-1}-\pas_t \metric(\param_t,\Stat_t)J_t
\end{equation*}
where $\metric$ is any $C^1$ map
sending $(\param,\Stat\!\in \!\R^{\dim(\stat)})$ to a square matrix of
size $\dim(\param)$.

Let $\paramopt$ be a strict local optimum for the dataset. Let
\begin{equation*}
\Stat^*\deq \frac{1}{N} \sum_{n=1}^N \stat(x_n,y_n,\paramopt)
\end{equation*}
be the average value of the statistic at $\paramopt$.
Assume that
$\metric(\paramopt,\Stat^*)+\transp{\metric(\paramopt,\Stat^*)}$ is
positive definite.

Then this algorithm converges locally.
\end{corollaire}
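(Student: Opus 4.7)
The plan is to realize this Adam-like algorithm as an extended RTRL algorithm that combines the two encoding tricks already used in the proofs of \recor{mom} (momentum as an RTRL Jacobian on an auxiliary state) and \recor{adap} (running statistics absorbed into an enlarged parameter). I would take a scalar state $\state_t\in\R$ with evolution
\begin{equation*}
\state_t=\opevol_t(\state_{t-1},\param^+)\deq\beta^1\,\state_{t-1}+(1-\beta^1)\,\perteiid(x_{i_t},y_{i_t},\param),
\end{equation*}
extended parameter $\param^+\deq(\param,\Stat)$, and loss $\perte_t(\state)\deq\state$. Exactly as in \recor{mom}, the RTRL Jacobian then reads $\jope_t=(\partial\state_t/\partial\param,\,0)$, and its nontrivial block satisfies the momentum recursion~\eqref{eq:adamJ}, so that $\partial_\state\perte_t\cdot\jope_t=\jope_t$ is the Adam momentum vector. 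I would then pick the extended update
\begin{equation*}
\fur_t\bigl((v_\param,v_\Stat),\state,(\param,\Stat)\bigr)\deq\bigl(\metric(\param,\Stat)\,v_\param,\;\momen\,\Stat-\momen\,\stat(x_{i_t},y_{i_t},\param)\bigr),
\end{equation*}
together with $\paramupdate_t(\param^+,v)=\param^+-v$. This $\fur_t$ is affine in $v$, so it fits \rehyp{fupdrl} by \rerem{affineU}, and the corresponding RTRL iteration reproduces \eqref{eq:adamJ}--\eqref{eq:adap2} with the preconditioner evaluated at $\Stat_{t-1}$; the variant using $\Stat_t$ instead is handled by the even/odd splitting trick at the end of the proof of \recor{adap}.

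I would then check the non-technical assumptions. \rehyp{specrad} is immediate since $\partial_\state\opevol_t=\beta^1\,\id$ with $\beta^1<1$. For the first part of \rehyp{critoptrtrlnbt}, the $\Stat$-block of $\fur_t(\partial_{\param^+}\sbpermc{t},\fstate_t,\param^+)$ at $(\paramopt,\Stat^*)$ equals $\momen\Stat^*-\momen\,\stat(x_t,y_t,\paramopt)$, whose temporal average vanishes by definition of $\Stat^*$; the $\param$-block equals $\metric(\paramopt,\Stat^*)\,\partial_\param\sbpermc{t}(\paramopt)$, whose temporal average tends to zero by the momentum-telescoping computation of \recor{mom}. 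The convergence rates match those of \reprop{SGDhyp} (namely $O(1/T)$ for cycling or reshuffling, $O(\sqrt{\log\log T/T})$ almost surely for i.i.d.\ sampling), which supplies an admissible exponent $\expem$ for \rehyp{scfcstepsize}.

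The main computation is the second part of \rehyp{critoptrtrlnbt}. Differentiating
$\fur_t\bigl(\partial_{\param^+}\sbpermc{t}(\param^+),\,\fstate_t(\param^+),\,\param^+\bigr)$ with respect to $\param^+=(\param,\Stat)$, every term containing $\partial_\param\metric$ or $\partial_\Stat\metric$ is multiplied by $\partial_\param\sbpermc{t}(\paramopt)$ and therefore averages to zero. Invoking the momentum-telescoping argument once more to identify the temporal average of $\partial^2_\param\sbpermc{t}(\paramopt)$ with the dataset Hessian $H\succ 0$, and setting $C\deq\tfrac1N\sum_n\partial_\param\stat(x_n,y_n,\paramopt)$, I expect the block-triangular matrix
\begin{equation*}
\linalgmatrix=\begin{pmatrix}\metric(\paramopt,\Stat^*)\,H & 0 \\ -\,\momen\,C & \momen\,\id\end{pmatrix}.
\end{equation*}
Its spectrum is $\mathrm{Sp}\bigl(\metric(\paramopt,\Stat^*)\,H\bigr)\cup\{\momen\}$; by \reprop{positivestablecriteria}, the assumption $\metric+\transp{\metric}\succ 0$ at $(\paramopt,\Stat^*)$ combined with $H\succ 0$ puts $\mathrm{Sp}(\metric H)$ in the open right half-plane, and $\momen>0$ handles the diagonal block, so $\linalgmatrix$ is positive-stable.

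The remaining assumptions \rehyp{regftransetats}, \rehyp{regpertes} and \rehyp{equicontH} hold with $\exmp=0$: finiteness of $D$ and $C^3$-smoothness of $\perteiid$, $\stat$ and $\metric$ make every derivative of $\opevol_t$, $\perte_t$ and $\fur_t$ uniformly bounded in $t$, and \rehyp{equicontH} follows as in Appendix~\ref{sec:eqehbcase}. Applying \rethm{cvapproxrtrlalgo} then yields local convergence of the algorithm. The hardest step is the bookkeeping for the coupled momentum/adaptive Hessian: one must confirm that the zero $\Stat$-block of $\jope_t$ does not prevent the off-diagonal $\Stat$-row of $\linalgmatrix$ from stabilising, and that the odd/even splitting for the $\Stat_t$ variant rescales both the averaged update and $\linalgmatrix$ by $1/2$ without breaking positive-stability or \rehyp{scfcstepsize}.
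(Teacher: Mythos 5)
Your proof is correct and follows essentially the same route as the paper: you combine the momentum encoding from Corollary~\ref{cor:mom} (scalar $\beta^1$-state whose RTRL Jacobian is the momentum variable) with the augmented-parameter trick from Corollary~\ref{cor:adap}, obtain the identical $\fur_t$ and the identical block-triangular matrix $\linalgmatrix$, and close via Proposition~\ref{prop:positivestablecriteria} and the even/odd splitting for the $\Stat_t$ variant. The two worries you flag at the end are both harmless, just as the paper's proof of Corollary~\ref{cor:adap} notes: the off-diagonal block $-\momen\,C$ arises from $\partial_\param$ of the $\Stat$-component of $\fur_t$ (not from $\jope_t$, so the zero $\Stat$-block is irrelevant), and the splitting merely replaces $\linalgmatrix$ by $\linalgmatrix/2$, which preserves positive-stability and is compatible with the stepsize assumptions after the obvious time reparameterisation.
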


As in the previous section, 
Adam is recovered by
letting the statistic $\stat$ and preconditioner $\metric$ be
\begin{equation*}
\stat(x,y,\param)\deq\left(
\partial_\param \perteiid(x,y,\param)
\right)^{\odot 2}
,\qquad
\metric(\param,\Stat)\deq\diag(\Stat+\eps)^{-1}
\end{equation*}
for some regularizing constant $\eps$. 

The main difference with the counterexample in \citet{j.2018on} is
that we take $\beta^2\to1$ while the counterexample uses a fixed
$\beta^2$. Fundamentally, with a fixed $\beta^2$, Adam introduces a
non-vanishing correlation between the gradient and the preconditioner
applied to this gradient;
thus the step size becomes correlated with the gradients,
which introduces a bias in the average step. With $\beta^2\to 1$, the
preconditioner is computed from an average over more and more past
gradients, so this
bias disappears asymptotically.

\begin{proof}
As in Section~\ref{sec:adap}, define the augmented parameter
$\param^+\deq(\param,\Stat)\in \Param\times \R^{\dim(\stat)}$. As in Section~\ref{sec:mom}, consider a recurrent system with real-valued
state $\state$ subject to the evolution equation
\begin{equation*}
\state_t=\opevol_t(\state_{t-1},(\param,\Stat))\deq
\beta^1 \state_{t-1}+(1-\beta^1)\perteiid(x_{i_t},y_{i_t},\param)
\end{equation*}
and to the loss function
$\perte_t(\state_t)\deq \state_t$. By the same computation as in
Section~\ref{sec:mom}, we have (dropping $\state_0$ again for simplicity)
\begin{equation*}
\sbpermc{t}(\param,\Stat)=(1-\beta) \sum_{j\leq t} \beta^{t-j}
\perteiid(x_{i_j},y_{i_j},\param)
\end{equation*}
and in particular $\partial_\Stat\sbpermc{t}(\param,\Stat)=0$.

By Definition~\ref{def:algortrl}, the variable $\jope_t$ of RTRL
for this system exactly follows \eqref{eq:adamJ}.

Define the same update operator as in Section~\ref{sec:adap},
\begin{equation*}
\fur_t((v_\param,v_\Stat),s,(\param,\Stat))\deq \left(
\begin{array}{c}
P(\param,\Stat)v_\param
\\
\momen \Stat-\momen\stat(x_{i_t},y_{i_t},\param)
\end{array}
\right)
\end{equation*}
and let us check Assumption~\ref{hyp:critoptrtrlnbt}.

The gradients computed by this algorithm are
\begin{align*}
u_t(\param,\Stat)
&\deq \fur_t\left(\partial_{\param^+}
\sbpermc{t}(\param^+),\fstate_t(\param^+),\param^+\right)
\\&=\left(
\begin{array}{c}
P(\param,\Stat)\,\partial_\param \sbpermc{t}(\param,\Stat)
\\
\momen \Stat-\momen\stat_t
\end{array}
\right)
\end{align*}
where we have abbreviated $\stat_t$ for $\stat(x_{i_t},y_{i_t},\param)$.
The extended Hessians are $\sbfh_t=\partial_{(\param,\Stat)}u_t$. We find
\begin{equation*}
\sbfh_t=\left(
\begin{array}{cc}
P\, \partial^2_\param \sbpermc{t}+(\partial_\param P)
\partial_\param\sbpermc{t}
&
(\partial_{\Stat} P) \partial_\param\sbpermc{t}
\\
-\momen\, \partial_\param \stat_t
& \momen\id
\end{array}
\right).
\end{equation*}

The difference with Section~\ref{sec:adap} is that we get the recurrent
loss $\sbpermc{t}$ instead of the instantaneous loss
$\perteiid_t$. However, as in the proof of
Corollary~\ref{cor:mom}, this does not change temporal averages: indeed
the system is the same as in Corollary~\ref{cor:mom} and by the same
computation we have
\begin{equation*}
\sum_{t=1}^{T} \partial_\param \sbpermc{t}=
\sum_{t=1}^{T} \partial_\param\perteiid(x_{i_t},y_{i_t},\param)+O(1)
\end{equation*}
and likewise for the Hessians. Consequently, time averages of the
gradients and Hessians coincide up to $O(1/T)$ with the momentum-less
case, so the time averages are the same as in Section~\ref{sec:adap}, and
we can conclude in the same way.
In
particular we still have
\begin{equation*}
\linalgmatrix=\left(
\begin{array}{cc}
P(\paramopt,\Stat^*) H
&
0
\\
-c\,C
& \momen\id
\end{array}
\right)
\end{equation*}
for some matrix $C$.
\end{proof}


The theory of two-timescale algorithms (e.g., \cite{tadic2004}) can also
be used to deal with preconditioned SGD, where one timescale is used to
estimate the preconditioner at the current value $\param_t$, and the
parameter is updated using this preconditioner.
However, with such two-timescale algorithms, it is necessary to
update the main parameter at a slower rate than the auxiliary statistic
$\Stat$, so that $\Stat_t$ has time to converge to its average value at
$\param_t$ before $\param_t$ is updated.

Our treatment here allows the parameter $\param_t$ to be updated as fast
as the statistic $\Stat_t$. This results in the off-diagonal block in the
$\linalgmatrix$ matrix of the system; but this block has no influence on
its eigenvalues hence no bearing on local convergence.


\subsubsection{Non-Recurrent Case, Online Stochastic Setting with
Infinite Dataset}
\label{sec:pureonline}

Here we assume a pure ``online SGD'' setting with an infinite dataset
$(x_t,y_t)$ obtained from some unknown probability distribution, and
where each sample is used for exactly one gradient step. One difference
with the standard treatment of SGD on convex functions is that we work
only under local assumptions: nothing is assumed outside of some ball
around $\paramopt$, so the assumptions have to ensure that the learning
trajectory never ventures there. Another difference is that we get
more constraints on possible learning rates, depending on which
moments of the noise are finite (Proposition~\ref{prop:moments}). We
believe this may be because the empirical law of large numbers
(Assumption~\ref{hyp:opt_simple}) does not capture all properties of a
random i.i.d.\ sequence.

Thus, in this section we assume that $(x_t,y_t)_{t\geq 1}$ is a sequence
of i.i.d.\ samples from some probability distribution over some set of
input-output pairs.
We consider a loss function
$\perteiid(x,y,\param)$ depending on $\param$. We assume that $\perteiid$ is
$C^3$ with respect to
$\param$ for each pair $(x,y)$ in the dataset
(as before, this guarantees that the smoothness and
equicontinuity assumptions are satisfied). We will consider ordinary
stochastic gradient descent
\begin{equation*}
\param_t=\param_{t-1}-\pas_t \, \partial_\param \perteiid_t
\end{equation*}
where as before we abbreviate $\perteiid_t(\param)\deq
\perteiid(x_t,y_t,\param)$.

We call
\emph{strict local optimum} a local optimum of the average loss
with positive definite Hessian, namely, a parameter $\paramopt$ such that
\begin{equation*}
\E_{(x,y)} \partial_\param \perteiid(x,y,\paramopt)=0
\end{equation*}
and
\begin{equation*}
H\deq \E_{(x,y)} \partial^2_\param
\perteiid(x,y,\paramopt)\succ 0.
\end{equation*}

These assumptions have no consequences on what happens far from
$\paramopt$, where the loss function could be badly
behaved. This impacts the behavior of stochastic
gradient descent: to ensure local convergence to $\paramopt$, the
consecutive steps should never venture out of some ball around
$\paramopt$. This implies that the steps $\pas_t
\partial_\param\perteiid_t$ should be bounded. This has consequences for
the possible learning rates depending on the noise on the gradients
$\partial_\param \perteiid_t$.

In this situation, the ``technical'' assumptions from
Section~\ref{sec:techass} are not automatically satisfied. Let us examine
all assumptions in turn.

We define the dynamical system as 
in Example~\ref{ex:nonrec}, by identifying the state $\state$
with $\theta$; namely, $\opevol_t(\state,\param)\deq \param$, and
$\perte_t(\state)\deq\perteiid(x_t,y_t,\state)$. In particular,
$\sbpermc{t}(\param)=\perteiid(x_t,y_t,\param)$.

We consider the simple
RTRL algorithm (no $\fur_t$ and $E_t=0$).
Assumption~\ref{hyp:specrad} is satisfied because
$\partial_\state\opevol_t=0$. We have to check
Assumptions~\ref{hyp:opt_simple}, 
\ref{hyp:regftransetats}, \ref{hyp:regpertes} and
\ref{hyp:equicontH_simple}.

Assumption~\ref{hyp:opt_simple} encodes the speed at which empirical
averages of gradients and Hessians converge to their expectation. It is
satisfied with an exponent $a$ depending on the moments of the noise, as
follows.

\begin{proposition}
\label{prop:moments}
Given a random sample $(x,y)$, assume the random variables
$\partial_\param \perteiid(x,y,\paramopt)$ and $\partial^2_\param
\perteiid(x,y,\paramopt)$ have finite moments of order $h$ for some
$2\leq h<4$. Then with probability $1$ over the choice of samples
$(x_t,y_t)$, Assumption~\ref{hyp:opt_simple} is satisfied, with
exponent $\expem=2/h$.

In particular, if the gradients and Hessians of the loss at $\paramopt$
have moments of order $4$, then Assumption~\ref{hyp:opt_simple} is
satisfied for any exponent $\expem>1/2$.
\end{proposition}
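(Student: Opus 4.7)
The plan is to reduce the proposition to a scalar almost-sure rate statement for partial sums of i.i.d.\ centered random variables, then prove that scalar statement using Rosenthal's inequality combined with a dyadic Borel--Cantelli argument and Kolmogorov's maximal inequality to fill in the gaps between dyadic times.

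First, I would unpack the assumption in the pure online setting. Since $\opevol_t(\state,\param) = \param$, one has $\sbpermc{t}(\stateopt_0, \param) = \perteiid(x_t, y_t, \param)$, so $X_t \deq \partial_\param \sbpermc{t}(\paramopt)$ are i.i.d.\ copies of $\partial_\param \perteiid(x,y,\paramopt)$, centered at $0$ by the strict optimality of $\paramopt$, with $\E\nrm{X_1}^h<\infty$. Likewise $Y_t \deq \partial^2_\param \sbpermc{t}(\paramopt)$ are i.i.d., centered at $H$, with finite $h$-th moment. Working entrywise in a fixed basis of $\Param$, it suffices to prove: for i.i.d.\ real centered $Z_t$ with $\E|Z_1|^h<\infty$ and $2\leq h<4$, one has $S_T \deq \sum_{t=1}^T Z_t = O(T^{2/h})$ almost surely. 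Applying this to each coordinate of $X_t$ yields the gradient bound of Assumption~\ref{hyp:opt_simple}, and applying it to each coordinate of $Y_t-H$ yields the Hessian bound.

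Second, the scalar estimate. By Rosenthal's inequality (the Marcinkiewicz--Zygmund bound for $h\geq 2$), there is a constant $C_h$ such that
\begin{equation*}
\E|S_n|^h \leq C_h\, n^{h/2}\, \E|Z_1|^h.
\end{equation*}
Along the dyadic subsequence $T_k \deq 2^k$, Markov's inequality yields
\begin{equation*}
P\bigl(|S_{T_k}| > T_k^{2/h}\bigr) \leq \frac{\E|S_{T_k}|^h}{T_k^{2}} \leq C_h\, \E|Z_1|^h\, T_k^{h/2-2}.
\end{equation*}
Since $h<4$, the exponent $h/2-2$ is strictly negative, so the right-hand side is geometrically summable in $k$. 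By Borel--Cantelli, $|S_{T_k}| = O(T_k^{2/h})$ almost surely.

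Third, to fill in the intermediate times, I would apply Kolmogorov's maximal inequality to the martingale $(S_n - S_{T_k})_{T_k<n\leq T_{k+1}}$:
\begin{equation*}
P\Bigl(\max_{T_k<n\leq T_{k+1}} |S_n-S_{T_k}| > T_k^{2/h}\Bigr) \leq \frac{\E|S_{T_{k+1}}-S_{T_k}|^h}{T_k^{2}} \leq C_h\, \E|Z_1|^h\, T_k^{h/2-2},
\end{equation*}
using $T_{k+1}-T_k=T_k$ and the same Rosenthal bound. This is again summable, so Borel--Cantelli gives $\max_{T_k<n\leq T_{k+1}} |S_n-S_{T_k}| = O(T_k^{2/h})$ almost surely. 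Combining the two dyadic estimates and noting that any $n$ satisfies $T_k\leq n\leq 2T_k$ for some $k$, we conclude $|S_n|=O(n^{2/h})$ a.s., i.e.\ $\frac{1}{T}\sum_{t=1}^T X_t = O(T^{2/h-1}) = O(T^a/T)$ with $a=2/h$. The ``in particular'' statement for $h\geq 4$ follows by applying the result with any $h'\in(2,4)$ arbitrarily close to $4$, which gives any exponent $a=2/h'>1/2$.

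The main obstacle is not conceptual but bookkeeping: one must verify that the Rosenthal constant depends only on $h$ (standard, via e.g.\ Burkholder's inequality), and one must be slightly careful with the vector/matrix character of $X_t$ and $Y_t$, which is handled by equivalence of norms in finite dimension. No step is delicate on its own; the interesting feature is the precise scaling $2/h$ emerging from matching the Rosenthal exponent $h/2$ against the dyadic summability threshold $h/2-ah<-1$ with equality replaced by $<0$ in the geometric sum along $T_k=2^k$.
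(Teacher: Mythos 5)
Your argument is correct, but it takes a genuinely different route from the paper. The paper invokes a Baum--Katz complete-convergence theorem (Theorem 3.b of Baum and Katz, 1965, applied with $r=2$): for i.i.d.\ centered variables, finiteness of the $h$-th moment with $2\leq h<4$ is \emph{equivalent} to summability of the tail probabilities $\sum_t \Pr\bigl(|\sum_{i\leq t}X_i|>\eps\,t^{2/h}\bigr)$ for every $\eps>0$, and Borel--Cantelli then immediately gives $\sum_{i\leq t}X_i=o(t^{2/h})$ almost surely. You instead rebuild this Marcinkiewicz--Zygmund-type almost-sure rate from scratch: Rosenthal's inequality to get $\E|S_n|^h=O(n^{h/2})$, a dyadic Borel--Cantelli step exploiting $h/2-2<0$, and a maximal inequality to interpolate between dyadic times. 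Both are correct and the matching $2/h$ scaling arises in the same way (balancing the $h/2$ moment growth against summability in $k$). Your approach is more self-contained, avoiding an appeal to a black-box complete-convergence theorem; the paper's is a one-line citation and delivers the $o$-rate directly rather than the $O$-rate, though your argument also yields $o$ if you insert an arbitrary $\eps>0$ into every threshold and note that summability persists. One small nit on attribution: the inequality you quote for the in-between times,
\begin{equation*}
P\Bigl(\max_{T_k<n\leq T_{k+1}} |S_n-S_{T_k}| > \lambda\Bigr) \leq \frac{\E|S_{T_{k+1}}-S_{T_k}|^h}{\lambda^h},
\end{equation*}
is Doob's weak maximal inequality applied to the nonnegative submartingale $|S_n-S_{T_k}|^h$; Kolmogorov's maximal inequality proper is the $L^2$ special case. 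The inequality is correct and the argument goes through as you wrote it.
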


Due to the statistical fluctuations in $1/\sqrt{t}$,
Assumption~\ref{hyp:opt_simple} is never satisfied for $a<1/2$ unless
gradients and Hessians are independent of $(x,y)$.

\begin{proof}
By Theorem 3.b in \citet{baumkatz65_lln} applied with $r=2$, we know
that i.i.d.\ centered variables $X_t$ have a moment of
order $2\leq h<4$ if and only if for every $\eps>0$, the series
$\sum_{t=1}^\infty \Pr\left(
\abs{\sum_{i=1}^t X_i}>t^{2/h}\eps
\right)$  is finite. By Borel--Cantelli, this implies that $\sum_{i=1}^t
X_i$ is $o(t^{2/h})$ with probability $1$. This is what we need for
Assumption~\ref{hyp:opt_simple} with exponent $2/h$.
\end{proof}

Assumption~\ref{hyp:regftransetats} is trivially satisfied for our choice
of $\opevol_t$.

The strongest assumption is Assumption~\ref{hyp:equicontH_simple}: it
states that $\partial^2_\param \ell(x,y,\param)$ is equicontinuous in
$\param$, uniformly in $(x,y)$ for $\param$ in some neighborhood of
$\paramopt$. This happens, for instance, if there
exists a constant $k$ such that the third derivatives of
$\ell(x,y,\param)$ are bounded by $k$ for any $(x,y)$ in a neighborhood
of $\paramopt$.

Assumption~\ref{hyp:regpertes} amounts to an almost sure bound on the
growth of $\partial_\param \perte_t$ and $\partial^2_\param \perte_t$.
For $\partial^2_\param \perte_t$ this has to be uniform in a neighborhood of $\paramopt$. If
Assumption~\ref{hyp:equicontH_simple} is satisfied, it is enough to check
this for a dense (denumerable) set of values of $\param$ in that
neighborhood. By the Markov inequality and the Borell--Cantelli lemma, if a sequence of i.i.d.\ random
variables $(X_t)$ has finite moments of order $h$, then for any
$\exmp>1/h$, we have
$X_t=O(t^\exmp)$ with probability $1$. Therefore, under
Assumption~\ref{hyp:equicontH_simple}, Assumption~\ref{hyp:regpertes} is
again
an assumption on moments of gradients and Hessians.

Putting everything together, we obtain the following.

\begin{corollaire}[SGD under local assumptions]
\label{cor:pureonline}
Assume that $\partial_\param \perteiid(x,y,\paramopt)$ and $\partial^2_\param
\perteiid(x,y,\param)$ have moments of order $h\geq 2$.
Assume the third derivatives of $\perteiid(x,y,\param)$ with respect to
$\param$ are bounded in a neighborhood of $\paramopt$, uniformly over
$(x,y)$. Then the assumptions of Theorem~\ref{thm:cvapproxrtrlalgo}
are satisfied for $a>\max(1/2,2/h)$ and $\gamma>1/h$.

Consequently, for learning rates 
$\pas_t=\cdvp\, t^{-\expas}$ with
$\max(1/2,2/h)+2/h<b\leq 1$, the stochastic gradient descent
$\param_t=\param_{t-1}-\pas_t\, \partial_\param
\perteiid(x_t,y_t,\param_{t-1})$ converges locally to $\paramopt$.
\end{corollaire}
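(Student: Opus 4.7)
The plan is to instantiate the parameterized dynamical system of Example~\ref{ex:nonrec} with $\opevol_t(\state,\param)\deq\param$ and $\perte_t(\state)\deq\perteiid(x_t,y_t,\state)$, so that $\sbpermc{t}(\param)=\perteiid(x_t,y_t,\param)$, and then check each of the hypotheses of Theorem~\ref{thm:cvapproxrtrlalgo} in turn. Since we use plain SGD, $\fur_t$ and $\paramupdate_t$ are trivial and Assumptions~\ref{hyp:fupdrl}, \ref{hyp:updateop_first}, \ref{hyp:wcorrnoiseapprtrl}, \ref{hyp:errorgauge}, \ref{hyp:linearfur} do not apply; the spectral radius Assumption~\ref{hyp:specrad} is immediate because $\partial_\state\opevol_t=0$, as is the smoothness Assumption~\ref{hyp:regftransetats}.

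The statistical content of the statement reduces to checking Assumptions~\ref{hyp:opt_simple}, \ref{hyp:regpertes} and \ref{hyp:equicontH_simple}. For Assumption~\ref{hyp:opt_simple} we invoke Proposition~\ref{prop:moments} directly: the moment-$h$ assumption gives almost sure validity with any exponent $\expem>2/h$, and combined with the trivial lower bound $\expem>1/2$ coming from i.i.d.\ fluctuations (law of the iterated logarithm) we may take any $\expem>\max(1/2,2/h)$. For Assumption~\ref{hyp:regpertes} we first handle the first derivative: the random variables $\partial_\param\perteiid(x_t,y_t,\paramopt)$ are i.i.d.\ with moment of order $h$, so by Markov's inequality and Borel--Cantelli, $\norm{\partial_\param\perteiid(x_t,y_t,\paramopt)}=\go{t^{1/h+\varepsilon}}$ almost surely for any $\varepsilon>0$, giving the bound with any $\exmp>1/h$.

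For the second-derivative part of Assumption~\ref{hyp:regpertes}, we need a uniform bound on a neighborhood of $\paramopt$, not only a pointwise bound at $\paramopt$. This is where the uniform bound on third derivatives enters: if third derivatives of $\perteiid(x,y,\cdot)$ are bounded by some constant $k$ on $\bo_\Param(\paramopt,\raysy)$ uniformly in $(x,y)$, then
\begin{equation*}
\sup_{\param\in\bo_\Param(\paramopt,\raysy)}\nrmop{\partial^2_\param\perteiid(x_t,y_t,\param)}
\leq \nrmop{\partial^2_\param\perteiid(x_t,y_t,\paramopt)}+k\,\raysy,
\end{equation*}
and one concludes again by Markov+Borel--Cantelli on the $h$-th moment of the right-hand side. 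The same uniform Lipschitz-in-$\param$ bound on $\partial^2_\param\perteiid(x,y,\cdot)$ yields Assumption~\ref{hyp:equicontH_simple} with $\rho(r)=kr$, since $\sbpermc{t}=\perteiid_t$ in the non-recurrent setting.

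With all assumptions verified, Theorem~\ref{thm:cvapproxrtrlalgo} applies as soon as the stepsize Assumption~\ref{hyp:scfcstepsize} holds, which here reads $\max(\expem,\exmp)+2\exmp<\expas\leq 1$; taking $\expem$ arbitrarily close to $\max(1/2,2/h)$ and $\exmp$ arbitrarily close to $1/h$, this becomes exactly the stated condition $\max(1/2,2/h)+2/h<\expas\leq 1$. The only real subtlety is the passage from pointwise moment estimates at $\paramopt$ to a uniform control over a small ball; I expect this to be the main technical point, and the third-derivative boundedness hypothesis is precisely what makes it immediate, so no genuine obstacle is anticipated.
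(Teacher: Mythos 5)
Your proof follows essentially the same route the paper takes in Section~\ref{sec:pureonline}: cast SGD in the non-recurrent framework of Example~\ref{ex:nonrec}, get Assumption~\ref{hyp:opt_simple} from Proposition~\ref{prop:moments}, get Assumption~\ref{hyp:regpertes} from Markov plus Borel--Cantelli, and get Assumption~\ref{hyp:equicontH_simple} from the uniform third-derivative bound. The only (minor) deviation is how you obtain the required \emph{uniform} control of $\partial^2_\param\perteiid$ in a neighborhood of $\paramopt$: the paper checks the bound on a dense countable set and then upgrades via the equicontinuity Assumption~\ref{hyp:equicontH_simple}, whereas you bound the supremum directly by $\nrmop{\partial^2_\param\perteiid(x_t,y_t,\paramopt)}+k\,r$, using the third-derivative bound as a Lipschitz constant, and apply Markov/Borel--Cantelli to this pointwise dominating sequence; both arguments are valid and yield the same exponent $\exmp>1/h$, so there is no gap.
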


For instance, for linear regression $y_t=x_t\cdot \param+\eps_t$
with noise $\eps_t$ and bounded $x_t$, these constraints encode the
moments of $\eps_t$.

For $h\to \infty$, we recover the classical constraint $1/2<b\leq 1$.
However, when not all moments of the gradients and Hessians are finite,
the learning rates are more constrained. This is due to working only
under local assumptions, and reflects the need for the gradient descent to
stay in a finite ball a priori.

We do not know if these bounds are optimal: the constraint
$\expas>\max(1/2,2/h)+2/h$  may be
spurious and due to analyzing the non-recurrent case from a recurrent
viewpoint.

\subsection{Truncated Backpropagation Through Time}
\label{sec:tbptt}

We now consider the truncated backpropagation through time (TBPTT) algorithm,
We refer to
\citet{jaeg,pearl} for a discussion of TBPTT. 
We assume the truncation length slowly grows to $\infty$: with fixed truncation length,
the gradient computation is biased and there is no reason for the
algorithm to converge to a local minimum (see, e.g., 
the simple ``influence balancing'' example of divergence in \citet{uoro}). Thus, we let the
truncation length grow to $\infty$ at a slow rate $t^\exli$ for some exponent
$\exli<1$, related to the learning rate.

Truncated backpropagation through time comes in several variants
\citep{williams1990efficient,williams1995gradient}: an
``overlapping'' variant in which, at each step $t$, backpropagation is
run for $L$ backwards step and the parameter is updated using the
approximate gradient of $\perte_t$ (running time $O(Lt)$: every state is
visited once forward and $L$ times backward); and a
``non-overlapping'' or ``epochwise'' variant in which the input sequence is split into
segments of size $L$ and the parameter is updated every $L$ steps using
the gradients of losses $\perte_{t-L+1},\ldots,\perte_t$ computed on 
this interval (running time $O(t)$: every state is visited once forward
and once backward). As a compromise, partly overlapping settings exist
\citep{williams1990efficient,williams1995gradient}. We treat the
non-overlapping variant here.

We denote $\fstate_{t_0:t_1}(\state_{t_0},\param)$ the state at time
$t_1$ of the dynamical system starting at time $t_0$ in state
$\state_{t_0}$, with constant parameter $\param$. We denote
$\permctt{t_0}{t_1}(\state_{t_0},\param)\deq
\perte_{t_1}(\fstate_{t_0:t_1}(\state_{t_0},\param))$ the
resulting loss at time $t_1$ as a
function of $\param$.

\begin{definition}[Truncated Backpropagation Through Time algorithm]
\label{def:tbtt}
The Truncated Backpropagation Through Time algorithm (TBPTT), with step sizes
$(\pas_t)_{t \geq 1}$, truncation times $\tpsk{k}$ (an increasing integer
sequence starting at $\tpsk{0}=0$),
starting at
$\state_0\in \State_0$ and $\param_0\in \Param$, maintains a
state $\state_t\in\State_t$, and a parameter $\param_t\in \Param$,
subjected to the following evolution
equations. For every $k\geq 0$ and every $\tpsk{k}\leq t\leq \tpsk{k+1}$,
the states are computed using parameter $\param_{\tpsk{k}}$, and the
parameter is updated using the gradient of the loss on that time
interval; more precisely,
\begin{equation*}
\left\lbrace\ba
\state_{t} &= \opevol_t(\state_{t-1},\param_{\tpsk{k}}),
\qquad  \tpsk{k}< t\leq \tpsk{k+1}
\\
\param_{\tpsk{k+1}}&=\param_{\tpsk{k}}-\pas_{\tpsk{k+1}}\sum_{t=\tpsk{k}+1}^{\tpsk{k+1}}
\frac{\partial
\permctt{\tpsk{k}}{t}\left(\state_{\tpsk{k}},\param_{\tpsk{k}}\right)}{\partial
\param}.
\ea\right.
\end{equation*}
\end{definition}

Of course, we could also use update functions $\fur_t$ and
$\paramupdate_t$ as before.

It is well-known \citep{pearl} that on a fixed time interval
$(\tpsk{k};\tpsk{k+1}]$, backpropagation through time computes the same
quantity as RTRL with a fixed parameter (``open-loop'' RTRL), namely,
both compute the gradient of the total loss over the time interval. Thus,
we can see TBPTT as a form of RTRL which resets the Jacobian $\jope$ to
$0$ at the start of every time interval, and updates the parameter only
once at the end (Proposition~\ref{prop:tbpttasrtrl}).

Thus, by a slight change of our analysis of RTRL, we obtain the following
result, proved in Section~\ref{sec:proofscvrtrlapprox}.

We find that there is a ``sweet spot'' for the growth of truncation
length: with short intervals, gradient are biased due to truncation. With
too long intervals, the parameter is updated very rarely using a large
number of gradients, and the steps may be large and diverge. This is the
meaning of the constraint ${\expem} < \exli <
b-2\exmp$ relating truncation length $t^\exli$ to the stepsize sequence exponents of
Assumption~\ref{hyp:scfcstepsize}.

\begin{theoreme}[Convergence of TBPTT]
\label{thm:cvtbtt}
Let $\paren{\opevol_t}$ be a parameterized dynamical sytem
(Def.~\ref{def:prmdynsys}) with loss functions $\perte_t$
(Def.~\ref{def:fcpletaprm}).

Let $\paramopt$ be a local optimum for this system
(Assumption~\ref{hyp:opt_simple}),
with initial state $\stateopt_0$. Assume that the system
with parameter $\paramopt$ starting at $\stateopt_0$ is
stable (Assumption~\ref{hyp:specrad}).

Assume that the first and second derivatives of $\opevol_t$ and
$\perte_t$ are controlled around the target trajectory
(Assumptions~\ref{hyp:regftransetats} and \ref{hyp:regpertes}).
Assume that the Hessians of the losses are uniformly
continuous close to $\paramopt$ (\rehyp{equicontH_simple}).

Let $\sm{\pas}=\paren{\pas_t}$ be a stepsize sequence with overall
learning rate $\cdvp$ and exponents $\expas$, $\expem$, $\exmp$
satisfying \rehyp{scfcstepsize}.

We assume that the truncation intervals for TBPTT have lengths
$\tpsk{k+1}-\tpsk{k}=\tpsk{k}^\exli$, for some $\max\paren{\expem,\exmp} < \exli <
b-2\exmp$.

Then truncated backpropagation through time on the intervals
$[\tpsk{k};\tpsk{k+1}]$ converges locally around $\paramopt$.

Explicitly, there exists $\cdvpmaxconv>0$ such that for any
$\cdvp<\cdvpmaxconv$, the following holds:
There is a neighborhood $\mathcal{N}_{\paramopt}$ of $\paramopt$ and a
neighborhood $\mathcal{N}_{\stateopt_0}$ of $\stateopt_0$ such that, for
any initial parameter $\param_0\in \mathcal{N}_{\paramopt}$ and any
initial state $\state_0\in \mathcal{N}_{\stateopt_0}$
the TBPTT trajectory given by, for every $k\geq 0$
\begin{equation*}
\left\lbrace\ba
\state_{t} &= \opevol_t(\state_{t-1},\param_{\tpsk{k}}),
\qquad  \tpsk{k}< t\leq \tpsk{k+1}
\\
\param_{\tpsk{k+1}}&=\param_{\tpsk{k}}-\pas_{\tpsk{k+1}}\sum_{t=\tpsk{k}+1}^{\tpsk{k+1}}
\frac{\partial
\permctt{\tpsk{k}}{t}\left(\state_{\tpsk{k}},\param_{\tpsk{k}}\right)}{\partial
\param},
\ea\right.
\end{equation*}
produces a sequence $\param_{\tpsk{k}}\to \paramopt$ as $k\to\infty$.
\end{theoreme}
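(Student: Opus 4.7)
The strategy is to reduce Theorem~\ref{thm:cvtbtt} to the general convergence Theorem~\ref{thm:cvapproxrtrlalgo} by recasting TBPTT as (a close variant of) an extended RTRL algorithm on the target system, and then verifying that each of the main assumptions carries over to this setting at the prescribed rates. I would begin by applying Proposition~\ref{prop:tbpttasrtrl} to rewrite TBPTT as an algorithm that maintains an auxiliary Jacobian $\tilde\jope_t$ which is reset to $0$ at each truncation time $\tpsk{k}$ and then evolves via the usual RTRL recursion with the parameter frozen at $\param_{\tpsk{k}}$ on the interval $I_k=(\tpsk{k},\tpsk{k+1}]$. The cumulative gradient $G_k=\sum_{t\in I_k}\partial_\state \perte_t\cdot\tilde\jope_t$ is then applied to the parameter only at time $\tpsk{k+1}$, with learning rate $\pas_{\tpsk{k+1}}$. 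Using the ``splitting'' device of Section~\ref{sec:adap}, the intra-interval sub-updates (which do not move $\param$) are compatible with the extended RTRL framework, so the analysis reduces to the macro-scale iteration $\param_{\tpsk{k+1}}=\param_{\tpsk{k}}-\pas_{\tpsk{k+1}}G_k$.

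Next, I would quantify the bias introduced by the periodic reset. Comparing $\tilde\jope_t$ to the non-reset Jacobian $\jope^{\mathrm{full}}_t$ one gets $\jope^{\mathrm{full}}_t-\tilde\jope_t=\bigl(\prod_{u=\tpsk{k}+1}^{t}A_u\bigr)\jope^{\mathrm{full}}_{\tpsk{k}}$ with $A_u=\partial_\state\opevol_u$ evaluated along the TBPTT trajectory. By the stability Assumption~\ref{hyp:specrad} and the uniform smoothness Assumption~\ref{hyp:regftransetats}, these products are contracting on blocks of length $h$ in a neighbourhood of the target trajectory, so the reset bias is exponentially small in $t-\tpsk{k}$; since $\tpsk{k+1}-\tpsk{k}=\tpsk{k}^\exli\to\infty$, the total contribution to $G_k$ is negligible compared to any polynomial term and can be absorbed either as the small error term allowed by Remark~\ref{rem:errorU} or as a deterministic perturbation of $\fur_t$ dominated by the $O(T^\expem/T)$ budget of Assumption~\ref{hyp:opt_simple}.

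Then I would verify the local-optimum and stepsize assumptions at the macro scale. By Abel summation and Assumption~\ref{hyp:opt_simple}, $\sum_{t\in I_k}\partial_\param\sbpermc{t}(\paramopt)=O(\tpsk{k}^\expem)$, so the macro-averaged gradient is $O(\tpsk{k}^{\expem-\exli})$, vanishing because $\exli>\expem$; the same Abel argument applied to Hessians shows that the macro-averaged Hessian converges to $H$ at rate $O(\tpsk{k}^{\expem-\exli})$, as required. The ``wiggle room'' factor $1+\po{1/t^\exmp}$ in Assumption~\ref{hyp:scfcstepsize} accommodates the piecewise-constant learning rate of TBPTT because on $I_k$ one has $\pas_{\tpsk{k+1}}/\pas_t-1=O(\tpsk{k}^{\exli-1})=\po{1/\tpsk{k}^\exmp}$ as soon as $\exli+\exmp<1$, which follows from $\exli<\expas-2\exmp\leq 1-2\exmp$.

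The main obstacle will be controlling the effective macro-step size and closing the induction on stability of the trajectory. The loss-derivative growth bound of Assumption~\ref{hyp:regpertes} together with the Jacobian bound derived from Assumption~\ref{hyp:specrad} gives $\nrmop{G_k}=O(\tpsk{k}^{\exli+2\exmp})$ in the worst case, hence $\pas_{\tpsk{k+1}}\nrmop{G_k}=O(\tpsk{k}^{\exli+2\exmp-\expas})$; the hypothesis $\exli<\expas-2\exmp$ is precisely what keeps these steps summable, so that the Lyapunov-based induction used in the proof of Theorem~\ref{thm:cvapproxrtrlalgo} (bounding the deviation between the true trajectory and the linearised, noise-free one) can be reused almost verbatim to conclude that $\param_{\tpsk{k}}\to\paramopt$. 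The delicate bookkeeping will be to track the three exponents $\expem,\exmp,\exli$ simultaneously under the change of timescale $t\mapsto k$, and to match the $O(T^\expem/T)$ rate required by Assumption~\ref{hyp:critoptrtrlnbt} after re-indexing macro-steps by the original time $t=\tpsk{k}$.
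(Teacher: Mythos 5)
Your high-level intuition is right (TBPTT is RTRL with a periodic reset of $\jope$, the reset bias dies exponentially by stability, and the constraint $\exmp<\exli<\expas-2\exmp$ is what keeps the macro-steps both informative and summable), but the route you propose — casting TBPTT as an \emph{extended RTRL algorithm} and invoking Theorem~\ref{thm:cvapproxrtrlalgo} directly — does not go through, and the paper in fact builds a separate open-loop convergence theorem precisely to avoid the problem you would hit.

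The obstacle is structural. Theorem~\ref{thm:cvapproxrtrlalgo} and its abstract backbone Theorem~\ref{thm:cvalgopti} are for closed-loop algorithms that update $\param$ at every step, with the instantaneous update direction $\vtanc_t$ bounded by $O(\fmdper{t})$ (Assumption~\ref{hyp:boundedgrads}/\ref{hyp:fupdrl}). The ``splitting'' device of Section~\ref{sec:adap} splits one update into a fixed number of synchronous sub-updates; it does not let you accumulate gradients over a growing window $I_k$ of length $\flng{\tpsk{k}}\to\infty$ and apply them in one shot. If you try to encode TBPTT as $\fur_t\equiv 0$ for $t\notin\{\tpsk{k}\}$ and $\fur_{\tpsk{k+1}}=$ the cumulated gradient $G_k$, then $\nrm{G_k}=O(\flng{\tpsk{k}}\,\fmdper{\tpsk{k}})$, which is not $O(\fmdper{t})$ and violates the boundedness and Lipschitz assumptions that Theorem~\ref{thm:cvapproxrtrlalgo} needs. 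Moreover the states $\state_t$ inside $I_k$ are computed with the \emph{frozen} parameter $\param_{\tpsk{k}}$, not with $\param_{t-1}$, so they do not satisfy the evolution equation of a closed-loop trajectory (Def.~\ref{def:algortrl}). What the paper does instead is prove Theorem~\ref{thm:cvalgoboucleouverte}, an open-loop analogue of Theorem~\ref{thm:cvalgopti} in which the parameter is held constant on $(\tpsk{k},\tpsk{k+1}]$ and the state $\mem$ is reset to an arbitrary $\mem'_{\tpsk{k}}\in\bmaint{\tpsk{k}}$ at the start of each interval. That theorem is proved via its own lemmas (Lemma~\ref{lem:pascstmorceaux} for piecewise-constant step sizes satisfying Assumption~\ref{hyp:spdes}, Lemma~\ref{lem:propcentechtpstpskopenloop} for the per-interval contraction, Lemma~\ref{lem:cvoploopalgo} for the final induction). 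Theorem~\ref{thm:cvtbtt} is then two short steps: Proposition~\ref{prop:tbpttasrtrl} identifies TBPTT with the open-loop RTRL algorithm with reset $\mem'_{\tpsk{k}}=(\state_{\tpsk{k}},0)$, which lies in the stable tube by Corollary~\ref{cor:rtrlstabletubesJ}, and Theorem~\ref{thm:cvalgoboucleouverte} gives the conclusion because all its assumptions were already verified for RTRL in the proof of Corollary~\ref{cor:cvrtrlapproxrtrlpreuve}.

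Two further details in your sketch are slightly off. First, you propose to absorb the reset bias ``as the small error term allowed by Remark~\ref{rem:errorU}''; that remark only allows errors $\eps_t$ whose average is $O(T^\expem/T)$, and the bias here is better handled structurally: in the paper the reset enters only through the freedom to choose $\mem'_{\tpsk{k}}$, and the discrepancy from $\memopt_{\tpsk{k}}$ is absorbed by Lemma~\ref{lem:contecarttrajmprmqidiff} into the $\po{\pas_{\tpsk{k}}\flng{\tpsk{k}}}$ contraction budget, not into the optimality budget. Second, your estimate $\nrmop{G_k}=O(\tpsk{k}^{\exli+2\exmp})$ overcounts: the per-step gradients are $O(\fmdper{t})$, so $\nrmop{G_k}=O(\tpsk{k}^{\exli+\exmp})$; the extra $\exmp$ in the constraint $\exli<\expas-2\exmp$ actually originates from the second-order parameter-update remainder $\pas_T^2\flng{T}\fmdper{T}^2$ needing to be $\po{\pas_T\flng{T}}$ in Lemma~\ref{lem:firstorderupdate}, not from the gradient magnitude.
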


\subsection{Approximations of RTRL: NoBackTrack and UORO}

RTRL is computationally too heavy for
large-dimensional systems, hence the need for approximations such as
NoBackTrack and UORO (see above).

Here we explain how the NoBackTrack and UORO algorithms fit into the
framework for \algonoisy RTRL algorithms (Def.~\ref{def:approxrtrl}).
These algorithms maintain a rank-one approximation $\tilde \jope_t$ of
$\jope_t$ at each step. Their key feature is that this approximation is unbiased:
namely, Assumption~\ref{hyp:wcorrnoiseapprtrl} is satisfied.

As corollaries, we will obtain local convergence of NoBackTrack and UORO
under the same assumptions as RTRL (\recor{cvnbtuoro}); note however that the learning rates
are more constrained if the output noise exponent $\exmp$ is nonzero. 

We believe that an identical argument also covers more recent extensions
of UORO, such as Kronecker-factored RTRL \citep{NIPS2018_7894},
although we do not include it explicitly. Indeed, the only properties
needed are the assumptions from Section~\ref{sec:approxrtrl}, namely,
that the noise $E_t$ with respect to true RTRL is unbiased and almost
surely bounded by some sublinear function of $\jope_t$.

Our analysis also emphasizes the crucial role of the variance reduction
scaling factors used in NoBackTrack and UORO (via norm equalization).
These ensure that the error is sublinear in $\jope$ at each step, namely,
Assumption~\ref{hyp:errorgauge} is satisfied.  In the convergence proof,
this assumption ensures that errors do not accumulate over time.

\subsubsection{The NoBackTrack and UORO Algorithms}
\label{sec:defnbtuoro}

For completeness, we include a formal definition of the NoBackTrack and
UORO algorithms. We refer to the original publications \cite{oll16,uoro} for ready-to-use
formulas.

In NoBackTrack and UORO, the unbiasedness property is achieved by a
``rank-one trick'' \citep{oll16} involving random signs at every step,
which randomly reduces $\jope_t$ to a rank-one approximation $\tilde
\jope_t$ with the
correct expectation. A variance reduction step is performed thanks to a
careful rescaling (the norm equalizing operator), which guarantees that
the approximation error on $\tilde \jope_t$ scales like $\sqrt{\nrm{\tilde \jope_t}}$.

\begin{definition}[Norm equalizing operator]
Given two normed vector spaces $\espaux_1$ and $\espaux_2$, we define the
operator $\sbopegn\colon \espaux_1 \times \espaux_2 \to \espaux_1 \times
\espaux_2$ by
\begin{equation*}
\opegn{v_1}{v_2}\deq
\left\lbrace
\ba
\vopegn{v_1}{v_2} \quad &\text{if} \quad \vesp_1 \neq 0 \quad \text{and} \quad \vesp_2 \neq 0, \\
\paren{0,\,0} \quad &\text{otherwise}.
\ea \right.
\end{equation*}
\end{definition}

Note that $\opegn{v_1}{v_2}$ is invariant by multiplying $v_1$ and
dividing $v_2$ by the same factor $\lambda>0$.

\begin{definition}[Random signs]
\label{def:signesaleat} 
We consider independent, identically distributed Bernoulli random variables
\begin{equation*}
\berit, \quad t \geq 1, \quad 1 \leq i \leq \dim \epsystins{t},
\end{equation*}
which equal $1$ or $-1$ both with probability $1/2$.

For every $t \geq 1$, we write $\veber{t}$ the vector of the $\berit$'s,
for $1 \leq i \leq \dim \epsystins{t}$.
\end{definition}

The NoBackTrack and UORO reduction operators $\sbopred_t$ are defined so that, if
$\tilde\jope_t=\vsyst_t\otimes \vctrl_t$ is a rank-one approximation of
$\jope_t$ at time $t$, then $\sbopred_t$ returns a pair
$(\vsyst_{t+1},\vctrl_{t+1})$ such that, on average over $\veber{t}$,
\begin{equation*}
\E_{\veber{t}}\left[ \vsyst_{t+1}\otimes \vctrl_{t+1}\right]=\frac{\partial
\opevol_{t+1}}{\partial s}\tilde\jope_t+\frac{\partial
\opevol_{t+1}}{\partial \param}
\end{equation*}
namely, on average, the RTRL equation is satisfied.

\begin{definition}[NoBackTrack reduction operators]
\label{def:opredinstantt}
Let $t \geq 1$. We define the NoBackTrack reduction operator at time $t$, 
\begin{equation*}
\ba
\sbopred_t & : &{\epsystins{t-1}} \times \sblin(\Param,\mathbb{R}) \times \epsystins{t-1} \times \epctrl  &\to {\epsystins{t}} \times \sblin(\Param,\mathbb{R}) \\
& & \vsyst,\,\vctrl,\,\state,\,\prmctrl & \mapsto \opredt{\vsyst}{\vctrl}{\state}{\prmctrl},
\ea
\end{equation*}
by
\begin{equation*}
\opredt{\vsyst}{\vctrl}{\state}{\prmctrl}\deq
\opegn{\paren{\dopevolst{\state}{\prmctrl} \, \vsyst}}{\vctrl} +
\som{i=1}{\dim \epsystins{t}} \beri{t} \,
\paren{\opegn{\vcans_i}{\dpartf{\prmctrl}{\opevol_t^i}\paren{\state,\,\prmctrl}}},
\end{equation*}
where at each step $t$, the $\vcans_i$'s are a (deterministic) orthonormal basis of
vectors of the state space $\epsystins{t}$ (for brevity we omit the time dependency in
the notation $\vcans_i$).
\end{definition}

Note that this operator is invariant by multiplying $\vsyst$ and
dividing $\vctrl$ by the same factor $\lambda>0$.

\begin{definition}[UORO reduction operators]
\label{def:uoroopredinstantt}
Let $t \geq 1$. We define the UORO reduction operator at time $t$, 
\begin{equation*}
\ba
\sbopred_t & : &{\epsystins{t-1}} \times \sblin(\Param,\mathbb{R}) \times \epsystins{t-1} \times \epctrl  &\to {\epsystins{t}} \times \sblin(\Param,\mathbb{R}) \\
& & \vsyst,\,\vctrl,\,\state,\,\prmctrl & \mapsto \opredt{\vsyst}{\vctrl}{\state}{\prmctrl},
\ea
\end{equation*}
by
\begin{equation*}
\ba
\opredt{\vsyst}{\vctrl}{\state}{\prmctrl}&\deq
\opegn{\paren{\dopevolst{\state}{\prmctrl} \, \vsyst}}{\vctrl} \\
&+ \opegn{\paren{\som{i=1}{\dim \epsystins{t}} \beri{t} \,
{\vcans_i}}}{\paren{\som{i=1}{\dim \epsystins{t}}\,\beri{t}\,{\dpartf{\prmctrl}{\opevol_t^i}\paren{\state,\,\prmctrl}}}},
\ea
\end{equation*}
where the $\vcans_i$'s form an orthonormal basis of vectors of $\epsystins{t}$.
\end{definition}

The difference between NoBackTrack and UORO lies in the order of the sum
and norm equalization in the second part. Notably, UORO only has two norm
equalization operations, which leads to substantial algorithmic gains
\citep{uoro}.

These operators lead to the formal definition of the
NoBackTrack and UORO algorithms, as follows.

\begin{definition}[NoBackTrack and UORO operators]
\label{def:operateurnbt}
Let $t \geq 1$. Let $\sepjope{t}$ denote the space of rank-one linear
operators from $\epctrl$ to $\epsystins{t}$. The NoBackTrack
(respectively UORO) operator
\begin{equation*}
\ba
\sbopnbt_{t} & : &\epctrl \times \epsystins{t-1} \times \sepjope{t-1} &\to \sepjope{t}\\
& & \param, \, \state,\,\jope & \mapsto \opnbtit{\prmctrl}{\state}{\jope}
\ea
\end{equation*}
is defined as follows.

\begin{enumerate}
\item Write $\jope=\tens{\vsyst}{\vctrl}$, for some pair $\cvnbtsi$
belonging to $\epsystins{t-1} \times \sblin(\Param,\mathbb{R})$ (uniquely defined up to
multiplying $\vsyst$ and dividing $\vctrl$ by some factor $\lambda\neq
0$).
\item Apply the reduction step, that is, set
\begin{equation*}
\cvnbtaxsi=\opredt{\vsyst}{\vctrl}{\state}{\prmctrl},
\end{equation*}
with $\sbopred_t$ the NoBackTrack (respectively UORO) reduction operator.
\item Finally, define
$
\opnbtit{\prmctrl}{\state}{\jope}\deq\tens{\vsystax}{\vctrlax}.
$
\end{enumerate}
\end{definition}

This defines a random operator depending on the choice of the random
signs $\veber{t}$. In law, the value of
$\opnbtit{\prmctrl}{\state}{\jope}$ is independent of the choice of
decomposition $\jope=\tens{\vsyst}{\vctrl}$: indeed, the operator
$\sbopred_t$ is invariant by multiplying $\vsyst$ and dividing $\vctrl$
by some factor $\lambda>0$, so we just have to compare the decompositions $\tens{\vsyst}{\vctrl}$
and $\tens{(-\vsyst)}{(-\vctrl)}$. Thanks to the random signs, these two
choices lead to the same law for $\opnbtit{\prmctrl}{\state}{\jope}$.

\begin{definition}[NoBackTrack and UORO]
\label{def:nbtuoro}
We define 
NoBackTrack and UORO as \algonoisy RTRL algorithms
(Def.~\ref{def:approxrtrl}) by setting 
\begin{equation*}
\tilde \jope_t\deq \opnbtit{\prmctrl_{t-1}}{\state_{t-1}}{\tilde\jope_{t-1}}
\end{equation*}
and tautologically defining
$E_t$ as the difference with respect to RTRL,
\begin{equation*}
E_t\deq \tilde \jope_t-\frac{\partial
\opevol_t(\state_{t-1},\param_{t-1})}{\partial
\state} \,\tilde \jope_{t-1}-\frac{\partial
\opevol_t(\state_{t-1},\param_{t-1})}{\partial
\param}.
\end{equation*}
\end{definition}

\subsubsection{Convergence of NoBackTrack and UORO}

We are now ready to state convergence of NoBackTrack and UORO. Compared
to RTRL, the only thing to check is that the assumptions of
Section~\ref{sec:approxrtrl} for \algonoisy RTRL algorithms are
satisfied, namely, unbiasedness and boundedness of $E_t$. For this, we
have to add the assumption that the dimension of states $\State_t$ does
not increase to infinity over time.

\begin{hypothese}[Bounded state space dimension.]
\label{hyp:bndimstatespaces}
$
\sup_{t \geq 0} \, \dim \State_t < \infty.
$
\end{hypothese}
\begin{lemme}[NoBackTrack and UORO as \algonoisy RTRL algorithms]
\label{lem:nbtuoroapproxrtrl}
Under \rehyp{bndimstatespaces} of bounded state space dimension, 
the errors $E_t$ of NoBackTrack and UORO satisfy
Assumptions~\ref{hyp:wcorrnoiseapprtrl} (unbiasedness)
and~\ref{hyp:errorgauge} (sublinearity in $\jope$).
\end{lemme}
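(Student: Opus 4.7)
My plan is to handle the two assumptions separately, since they involve essentially different structure of the reduction operator: Assumption~\ref{hyp:wcorrnoiseapprtrl} is about the expectation under the random signs $\veber{t}$, while Assumption~\ref{hyp:errorgauge} is about an almost-sure operator-norm bound.

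For unbiasedness, the first observation is that the random signs $\veber{t}$ of Definition~\ref{def:signesaleat} are i.i.d.\ and, by construction of $\tribo$ in Assumption~\ref{hyp:wcorrnoiseapprtrl}, independent of $\tribo$ and of all $\veber{s}$ for $s < t$. Conditional on $\tribo$ and on $\veber{1},\dots,\veber{t-1}$ (which determines $E_1,\ldots,E_{t-1}$ as well as $\state_{t-1}, \param_{t-1}, \tilde\jope_{t-1}$ via the \algonoisy RTRL recursion of Def.~\ref{def:approxrtrl}), the error $E_t$ is a deterministic function of $\veber{t}$ alone. Writing $\tilde\jope_{t-1}=\vsyst\otimes\vctrl$ and setting $A\deq\dopevolst{\state_{t-1}}{\param_{t-1}}$, I expand the pair $(\vsystax,\vctrlax)$ returned by $\sbopred_t$ and then the outer product $\vsystax\otimes\vctrlax$ into four groups of terms: a deterministic term $A\vsyst\otimes\vctrl$ (the equalization factors on the first summand cancel because $\alpha\beta=1$), two families of cross terms linear in $\sbsignes$, and a double sum with coefficients $\eps_i\eps_j$. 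Using $\E[\eps_i]=0$ and $\E[\eps_i\eps_j]=\delta_{ij}$, the linear cross terms vanish, and the diagonal $i=j$ in the double sum sums to $\sum_i e_i\otimes\dpartf{\param}{\opevol_t^i}=\dpartf{\param}{\opevol_t}$ (again using $\gamma_i\delta_i=1$). This gives $\E[\tilde\jope_t\mid\trib{t-1}]=A\tilde\jope_{t-1}+\dpartf{\param}{\opevol_t}$, hence $\E[E_t\mid E_1,\dots,E_{t-1},\tribo]=0$. The UORO case is essentially the same: the only change is that there is one outer norm equalization in the second summand instead of one per coordinate, but the $\gamma\delta$ factors still cancel in $\vsystax\otimes\vctrlax$ and the same algebraic identity $\E[\eps_i\eps_j]=\delta_{ij}$ produces $\dpartf{\param}{\opevol_t}$.

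For the gauge bound, I would work from the explicit expansion of $E_t$ obtained during the unbiasedness computation: after the deterministic terms cancel, $E_t$ is a sum of three random pieces, namely $\alpha\sum_i\eps_i\delta_i\,A\vsyst\otimes\dpartf{\param}{\opevol_t^i}$, $\beta\sum_i\eps_i\gamma_i\,e_i\otimes\vctrl$, and the off-diagonal part $\sum_{i\neq j}\eps_i\eps_j\gamma_i\delta_j\,e_i\otimes\dpartf{\param}{\opevol_t^j}$. Using $|\eps_i|=1$, triangle inequality and the identity $\sqrt{\|A\vsyst\|\|\vctrl\|}\leq \sqrt{\nrmop{A}\,\nrmop{\tilde\jope_{t-1}}}$, and letting $M\deq\nrmop{\dpartf{(\state,\param)}{\opevol_t}(\state_{t-1},\param_{t-1})}$, the first two pieces are each bounded by a constant multiple of $\sqrt{\nrmop{\tilde\jope_{t-1}}}\cdot\sum_i\sqrt{\|\dpartf{\param}{\opevol_t^i}\|}$, and the third by $(\sum_i\sqrt{\|\dpartf{\param}{\opevol_t^i}\|})^2$. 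Assumption~\ref{hyp:bndimstatespaces} bounds $\dim\State_t\leq N$ uniformly, so $\sum_i\sqrt{\|\dpartf{\param}{\opevol_t^i}\|}\leq N\sqrt{M}$. Putting the pieces together yields $\nrmop{E_t}\leq C\,(1+\nrmop{\tilde\jope_{t-1}}^{1/2})(1+M)$ for a universal constant $C$, i.e.\ the gauge $\feg{x}{y}=C(1+x^{1/2})(1+y)$ noted in the remark after Def.~\ref{def:errorgauge}. For UORO the expansion has fewer terms but the same structure; the identical bound applies.

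The main obstacle I anticipate is the bookkeeping of the expansion of $\vsystax\otimes\vctrlax$: one has to keep track of the norm-equalization factors $\alpha,\beta,\gamma_i,\delta_i$, verify that they cancel in all the deterministic terms, and carefully identify which residual pieces remain after subtracting $A\tilde\jope_{t-1}+\dpartf{\param}{\opevol_t}$. Once the expansion is set up, the expectation computation is immediate, and the operator-norm bound reduces to elementary estimates on sums of rank-one tensors plus the use of the bounded state dimension. Neither Assumption~\ref{hyp:wcorrnoiseapprtrl} nor Assumption~\ref{hyp:errorgauge} requires further hypotheses on the dynamical system, so these two bounds are all that is needed.
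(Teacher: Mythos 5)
Your proposal is correct and follows essentially the same route as the paper's proof in Section~\ref{sec:nbtuoroasapproxrtrlalgo}: expand $\tilde\jope_t=\vsystax\otimes\vctrlax$ using cancellation of the norm-equalization factors, identify the deterministic part as the RTRL update, and get unbiasedness by conditional expectation of the $\sigi{i}$-linear and $\sigi{i}\sigi{j}$ ($i\neq j$) cross terms, then bound each random piece in operator norm using $\norm{\vcans_i}=1$, $\norm{\lopaddb{i}}\le\nrmop{\partial_{(\state,\param)}\opevol_t}$, and \rehyp{bndimstatespaces}, yielding the gauge $\feg{x}{y}=C(1+x^{1/2})(1+y)$. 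The one thing the paper makes explicit that you elide is that the estimate $\nrmop{E_t}\lesssim\sqrt{\nrmop{\tilde\jope_{t-1}}}$ uses the rank-one decomposition $\tilde\jope_{t-1}=\vsyst\otimes\vctrl$, which holds along NoBackTrack/UORO trajectories by construction and is all that \rehyp{errorgauge} requires.
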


This lemma is proved in \resec{nbtuoroasapproxrtrlalgo}. Then
Theorem~\ref{thm:cvapproxrtrlalgo} provides the following conclusion.

\begin{corollaire}[Convergence of NoBackTrack and UORO]
\label{cor:cvnbtuoro}
Under \rehyp{bndimstatespaces} of bounded state space dimension,
and under the additional constraint on stepsizes for imperfect
RTRL algorithms (Assumption~\ref{hyp:scfcstepsize}),
NoBackTrack and UORO converge locally under the same general assumptions as
RTRL,
with probability tending to $1$ when the overall learning rate tends to $0$.
\end{corollaire}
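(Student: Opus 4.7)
The plan is to derive the corollary as a direct application of Theorem~\ref{thm:cvapproxrtrlalgo} to the \algonoisy RTRL algorithms constructed in Definition~\ref{def:nbtuoro}, so the work reduces to checking the hypotheses of that theorem. First I would observe that, by Definition~\ref{def:nbtuoro}, both NoBackTrack and UORO are by construction \algonoisy RTRL algorithms in the sense of Definition~\ref{def:approxrtrl}: the rank-one estimate $\tilde \jope_t$ is produced by the reduction operators of Definitions~\ref{def:opredinstantt} or~\ref{def:uoroopredinstantt}, and the error $E_t$ is tautologically defined as the discrepancy with respect to the exact RTRL update of $\jope_t$.

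Second, the noise hypotheses on $E_t$ required by Theorem~\ref{thm:cvapproxrtrlalgo}, namely \rehyp{wcorrnoiseapprtrl} (unbiasedness conditional on past errors and initial data) and \rehyp{errorgauge} (control by an error gauge), are exactly the content of \relem{nbtuoroapproxrtrl}, which holds under \rehyp{bndimstatespaces} of bounded state-space dimension. The unbiasedness follows from the use of independent $\pm 1$ Bernoulli signs $\veber{t}$ at every step together with the sign-flip invariance of the norm-equalization operator $\sbopegn$; the gauge bound comes from the norm-equalization rescaling, which forces the approximation error at each step to scale like $\sqrt{\nrm{\tilde \jope_{t-1}}}$ times a factor controlled by the operator norm of $\partial\opevol_t/\partial(\state,\param)$, and is precisely the gauge $\feg{x}{y} = C(1+x^{1/2})(1+y)$ mentioned in \resec{approxrtrl}. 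Third, for the basic NoBackTrack and UORO algorithms, $\furt{\vtanc}{\state}{\param}=\vtanc$ and $\paramupdate_t(\param,\vtanc)=\param-\vtanc$, so the linearity \rehyp{linearfur}, the regularity \rehyp{fupdrl}, and the form \rehyp{updateop_first} of the update operator are all immediate (with $\exmp$ inherited from \rehyp{regpertes} on $\perte_t$); the same holds with non-trivial $\fur_t$ provided $\fur_t$ is linear in its first argument.

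Fourth, the remaining assumptions of Theorem~\ref{thm:cvapproxrtrlalgo} — namely \rehyp{opt_simple} (or \rehyp{critoptrtrlnbt}), \rehyp{specrad}, \rehyp{regftransetats}, \rehyp{regpertes}, and \rehyp{equicontH_simple} (or \rehyp{equicontH}) — are exactly what the statement refers to as \emph{the same general assumptions as RTRL}, and they depend only on the system $(\opevol_t,\perte_t)$ and the target trajectory, not on the choice of Jacobian approximation. The stepsize constraint $\max(\expem,1/2+\exmp)+2\exmp<\expas\leq 1$ required for the \algonoisy branch of \rehyp{scfcstepsize} is imposed by hypothesis. With all hypotheses in place, the \algonoisy conclusion of Theorem~\ref{thm:cvapproxrtrlalgo} delivers the desired local convergence with probability tending to $1$ as the overall learning rate $\cdvp$ tends to $0$. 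There is no genuine obstacle in this proof: it is essentially a plug-in, and the nontrivial work has already been done in \relem{nbtuoroapproxrtrl} (to be proved in \resec{nbtuoroasapproxrtrlalgo}) and in Theorem~\ref{thm:cvapproxrtrlalgo} itself.
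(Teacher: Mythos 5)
Your argument is correct and is precisely the route the paper takes: Corollary~\ref{cor:cvnbtuoro} is obtained by applying the \algonoisy branch of Theorem~\ref{thm:cvapproxrtrlalgo}, with \relem{nbtuoroapproxrtrl} supplying \rehypdeux{wcorrnoiseapprtrl}{errorgauge} and all remaining hypotheses carried over unchanged from the RTRL case. The only thing the paper adds beyond your write-up is a brief informal remark (immediately after the corollary) explaining why convergence is only with probability tending to $1$, which you have implicitly via the theorem's statement.
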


We refer to Section~\ref{sec:mainthm} for a discussion of why the convergence
only occurs with probability tending to $1$ as the learning rate tends to
$0$. In short, with only local instead of global assumptions, with some
positive probability, the noise introduced by NoBackTrack and UORO may
bring the trajectory outside of the safe zone where our assumptions
apply. This gets less and less likely as the learning rate decreases,
because the noise is more averaged out.



\section{Abstract Online Training Algorithm for Dynamical Systems}
\label{sec:absontadsys}


In this section, we define an abstract model of a learning algorithm
applied to a dynamical system, and formulate assumptions about its
behaviour. Notably, for RTRL, the abstract state $\mem_t$ will encompass
both the state $\state_t$ of the dynamical system, and the internal
variable $\jope_t$ maintained by RTRL.

\subsection{Model}

\label{sec:abstractalgos}
We start by defining the spaces the quantities used by the algorithm live on.
\begin{definition}[Parameter, maintained quantities and tangent vectors spaces]
Let $\Param$ be some metric space, the \emph{parameter space}. Let
$(\Mem_t)_{t\geq 0}$ be a sequence of metric spaces, which represents the objects
maintained in memory by an algorithm at time $t$. Let
$(\Tangent_t)_{t\geq 0}$ be a sequence of normed vector spaces, containing the
gradients computed at time $t$.
\end{definition}



The transition operator of the algorithm which we introduce below symbolises all the updates performed on the objects the algorithm maintains in memory.

\begin{definition}[Transition operator]
\label{def:transop}
We call \emph{transition operator} a family $(\Algo)_{t\geq 1}$ of
functions
\begin{equation*}
\ba
\Algo_t &\from & \Param \times \Mem_{t-1} & \to \Mem_{t}\\
 & & \param, \mem & \mapsto \Algo_t(\param,\mem).
 \ea
\end{equation*}

Given such a transition operator, we call \emph{open-loop system} 
the family of operators $\Algo_{T_i:T_f}$ for $T_i< T_f$, 
\begin{equation*}
\ba
\Algo_{T_i:T_f} & : &\Param \times \Mem_{T_i} &\to \Mem_{T_f}\\
	& & \param, \mem & \mapsto \mem_{T_f},
\ea
\end{equation*}
where the sequence $(\mem_t)$ is defined inductively by $\mem_{T_i}\deq
\mem$ and, for $T_i+1\leq t \leq T_f$,
\begin{equation*}
\mem_{t} = \Algo_t\paren{\param,\,\mem_{t-1}}.
\end{equation*}
\end{definition}

Informations about how to modify the parameter are computed thanks to the following gradient computation operators.

\begin{definition}[Gradient computation operators]
\label{def:gradcomputop}
We call \emph{gradient computation operators} a sequence
$(\gradop_t)_{t\geq 1}$ where each $\gradop_t$ is a function from $\Param\times \Mem_t$ to $\Tangent_t$.

\end{definition}
We call trajectory the sequence of objects maintained in memory, together with the gradients collected along it.
\begin{definition}[Trajectories]
Let $(\param_t)_{t\geq 0}$ be a sequence of elements of $\Param$, and let
$\mem_0\in\Mem_0$. We call \emph{trajectory with parameter $(\param_t)$
starting at $\mem_0$}, the sequence $(\mem_t)$ defined inductively by
\begin{equation*}
\mem_t=\Algo_t(\param_{t-1},\mem_{t-1})
\end{equation*}
together with the sequence $(\vtanc_t)_{t\geq 1}$ of gradients
\begin{equation*}
\vtanc_t=\gradop_t(\param_{t-1},\mem_t).
\end{equation*}

If $\param$ is a single parameter value, we extend this definition to the
constant sequence $\param_t\equiv\param$.

Later, we will call \emph{optimal trajectory} the trajectory associated to the optimal
parameter $\paramopt$ starting at $\memopt_0$, once these are introduced.
\end{definition}

Finally, we call parameter update operator the update rule of the parameter.

\begin{definition}[Parameter update operators]
\label{def:paramupdate}
We call \emph{parameter update operators} a sequence
$(\paramupdate_t)_{t\geq 1}$ where each $\paramupdate_t$ is a function from
$\Param\times \Tangent_t$ to $\Param$.

Let $(\paramupdate_t)_{t\geq 1}$ be parameter update operators.
Given a sequence of gradients $\vtanc_t\in \Tangent_t$ and two integers 
$0\leq t_1 \leq t_2$, we
denote $\paramupdate_{t_1:t_2}(\param,(\vtanc_t))$ the consecutive
application of $\paramupdate_t(\cdot,\vtanc_t)$ to $\param$ from time
$t_1+1$ to time $t_2$, namely
\begin{equation*}
\paramupdate_{t_1:t_2}(\param,(\vtanc_t))\deq \param_{t_2},
\end{equation*}
where $\param_{t}$ is defined inductively via $\param_{t_1}\deq
\param$ and 
$
\param_{t} = \paramupdate_t(\param_{t-1},\vtanc_t)
$ for $t_1<t\leq t_2$.
\end{definition}
Thanks to the operators we have just defined, we model a gradient descent trajectory by the update equations, for $t\geq 1$,
\begin{equation*}
\left\lbrace \ba 
\mem_t&=\Algo_{t}\paren{\param_{t-1},\,\mem_{t-1}}\\
\vtanc_t &=\sm{V}_{t}\paren{\param_{t-1},\,\mem_t} \\ 
\param_{t} &= \paramupdate_t\paren{\param_{t-1},\,\pas_t\,\vtanc_t},
\ea \right.
\end{equation*}
for some initial parameter $\param_0$, some initial $\mem_0\in\Mem_{0}$
and some sequence of step sizes $\paren{\pas_t}$. Our formalism also
encompasses algorithms updating the parameter only after a batch of
consecutive steps, as can be seen in \rethm{cvalgoboucleouverte}. We prove convergence of these procedures in \rethm{cvalgopti} and \rethm{cvalgoboucleouverte}.

The next definition will be used instead of \gu{the average of gradients
at $\param\in \Param$}. It replaces the average with the sum of
gradients over a number of steps, with $\param$ kept fixed. 

\begin{definition}[Open-loop update from $t_1$ to $t_2$]
\label{def:olupdate}
Let $\param\in\Param$, let $0\leq t_1\leq t_2$, let $\mem_{t_1}\in
\Mem_{t_1}$, and let $(\pas_t)_{t\geq 1}$ be a sequence of step sizes.

We call \emph{open-loop update} of $\param\in\Param$ from time $t_1$ to
time $t_2$, denoted
$\paramupdate_{t_1:t_2}(\param,\mem_{t_1},(\pas_t))$, the value of the
parameter obtained by first computing the trajectory with fixed parameter
$\param$ starting at $\mem_{t_1}$, then collecting the gradients along
this trajectory and applying them to $\param$. More precisely, define by
induction for $t>t_1$
\begin{equation*}
\mem_t=\Algo_t\paren{\param,\,\mem_{t-1}}=\Algo_{t_1:t}(\param,\,\mem_{t_1}),\qquad
\vtanc_t =\sm{V}_{t}\paren{\param,\,\mem_t} 
\end{equation*}
and then set
\begin{equation*}
\paramupdate_{t_1:t_2}(\param,\mem_{t_1},(\pas_t))\deq
\paramupdate_{t_1:t_2}(\param,(\pas_t\, \vtanc_t)).
\end{equation*}

If $\pas_t\equiv \pas$ is a constant sequence, we will abbreviate this to
$\paramupdate_{t_1:t_2}(\param,\mem_{t_1},\pas)$.
\end{definition}
\begin{definition}[Stepsize sequence: overall learning rate and stepsize schedule]
\label{def:stepseqovlrstpsch}
The sequence of step sizes $\sm{\pas}=(\pas_t)_{t\geq 1}$ of the
algorithm will be parameterized as
\begin{equation*}
\pas_t\deq \cdvp\,\sched_t,
\end{equation*}
where $\cdvp\geq 0$ is the \emph{overall learning rate} and $(\sched_t)_{t\geq 0}$
is a sequence with values in $[0;1]$, the \emph{stepsize schedule}.
\end{definition}

In the sequel we will assume that a stepsize schedule is given, and
prove convergence provided the overall learning rate is small enough.
Thus, in the whole text \emph{$\pas_t$ is implicitly a function of
$\cdvp$}. The assumptions on $\pas_t$ below are actually
assumptions on $\sched_t$.

The following definition will be useful to keep track of orders of
magnitude of the quantities involved.

\begin{definition}[Scale function]
\label{def:fechelle}
We call scale function a non-negative function $\sbfech$, defined on the non-negative real axis, which satisfies the following properties.
\begin{enumerate}
\item $\fech{t}$ tends to infinity, when $t$ tends to infinity.
\item $\sbfech$ preserves asymptotic equivalence at infinity: if $x_t\to
\infty$ and $y_t\sim x_t$ then $\fech{y_t}\sim \fech{x_t}$ as
$t\to\infty$.
\item $\sbfech$ is non-decreasing, and $\fech{1} \geq 1$.
\end{enumerate}
\end{definition}

\remarq{For instance, for every $a > 0$, $t \mapsto t^a$ is a scale function.}

\subsection{Assumptions on the Model}
\subsubsection{Assumptions about the Transition Operators}

Let $\paramopt\in \Param$  be a target parameter and let
$(\memopt_t)_{t\geq 0}$ be the corresponding trajectory, initialized at
$\memopt_0\in\Mem_0$.
In order to control the trajectory of the quantities maintained by the algorithm, we assume they are contained in some stable tube, defined below. Under some conditions of contractivity for the transition operator, the tubes reduce to sequence of balls of common radius. However, when the transition operator satisfies weak contractivity assumptions, which is our working assumption in the second part, we must use the more general definition below.

\begin{definition}[Stable tube]
\label{def:stabletube}


A \emph{stable tube} around a target trajectory $(\memopt_t)$ with
parameter $\paramopt$, is a ball $\boctopt\subset \Param$ of positive
radius $\rayoptct$ centered
\footnote{We have to assume that $\boctopt$ is centered at $\paramopt$, for
the proof of \relem{maintientpsfinibocont} (we control
$\dist{\param_t}{\paramopt}$ to show that $\param_t\in \boctopt$).} at $\paramopt$, together with sets
$\bmaint{t}\subset\Mem_t$ for each $t\geq 0$, such that:
\begin{enumerate}
\item Stability: If $t\geq 1$, $\param\in \boctopt$ and $\mem_{t-1}\in \bmaint{t-1}$, then
$\Algo_{t}(\param,\mem_{t-1})\in \bmaint{t}$;
\item Upper and lower boundedness: There exist $0<r\leq R$ such that, for all $t\geq
0$,
\begin{equation*}
B_{\Mem_t}(\memopt_t,r)\subset \bmaint{t}\subset
B_{\Mem_t}(\memopt_t,R);
\end{equation*}
\item Initial value: $\memopt_0\in\bmaint{0}$.
\end{enumerate}
\end{definition}

\begin{hypothese}[Stable tube around the target trajectory]
\label{hyp:stableballs}
There exists a stable tube around the target trajectory $(\memopt_t)$
defined by $\paramopt$.
\end{hypothese}

This assumption concerns the fixed-parameter system we start with, not
the system where the parameter is learned via the algorithm.  A priori,
learning might make the trajectories diverge.

\begin{remarque}
We can always decrease the size of $\boctopt$ while preserving the stable
tube condition. This will be useful when introducing additional
assumptions below, which may hold over a smaller set of parameters. For
simplicity we will just express these assumptions for
$\param\in\boctopt$, implicitly taking a smaller $\boctopt$ if necessary.

On the other hand, the stable tube condition is generally not stable by
reducing $\bmaint{t}$, because $\bmaint{t}$ must contain
$\Algo_t(\boctopt, \bmaint{t-1})$.
\end{remarque}




Finally, we assume the following behaviour for the transition operator on the stable tube.

\begin{hypothese}[The state update is Lipschitz w.r.t.\ the parameter.]
\label{hyp:paramlip}
We assume that $\Algo_t$ is Lipschitz over $\param$ in
$\boctopt\times \bmaint{t-1}$, uniformly in $t$. Namely, there exists a constant $\clipalgoprm$
such that for all $t\geq 1$,
for all $\param,\param'\in \boctopt$ and $\mem\in \bmaint{t-1}$,
\begin{equation*}
\dist{\Algo_t(\param,\mem)}{\Algo_t(\param',\mem)}\leq
\clipalgoprm\,\dist{\param}{\param'}.
\end{equation*}
\end{hypothese}

\begin{hypothese}[Exponential forgetting of initialization for fixed
$\param$]
\label{hyp:expforgetinit}
We assume that there exist
$0 < \alpha \leq 1$ and a constant $\cst_1$ such that, 
for any parameter $\param\in\boctopt$, for any $\tinit\geq 0$, for any states
$\mem_\tinit,\mem_\tinit'\in \bmaint{\tinit}$, the trajectories $(\mem_t)$ and
$(\mem_t')$ with parameter $\param$ starting at $\mem_\tinit$ and
$\mem'_\tinit$ at time $\tinit$,
respectively, satisfy
\begin{equation*}
\dist{\mem_t}{\mem_t'} \leq \cst_1 (1-\alpha)^{t-\tinit} \,
\dist{\mem_\tinit}{\mem'_\tinit}
\end{equation*}
for all $t \geq \tinit$.
\end{hypothese}

Note that these trajectories stay in $\bmaint{t}$, so this is a ``local''
assumption.

\subsubsection{Assumptions on Gradients}


The following assumption aims at controlling the magnitude of the gradients.
\begin{hypothese}[Locally bounded instantaneous gradients]
\label{hyp:boundedgrads}
We suppose that there exists a function $\fmdper{t}$ which is either
a scale function $\fmdper{t}\ll t$ or $\fmdper{t}\equiv 1$, such that,
when $t\to\infty$,
\begin{equation*}
\sup_{\param\in \boctopt}\,\sup_{\mem\in \bmaint{t}}
\norm{\sm{V}_t(\param,\mem)}=O(\fmdper{t}).
\end{equation*}

We denote $\bvtg{t}$ the ball of $\Tangent_t$ with radius $\sup_{\param\in
\boctopt}\,\sup_{\mem\in \bmaint{t}}
\norm{\sm{V}_t(\param,\mem)}$.
\end{hypothese}

The gradients are Lipschitz with respect to the parameter, and the quantities in memory.
\begin{hypothese}[Instantaneous gradients are locally Lipschitz.]
\label{hyp:contgrad}
We assume that there exist $\cst_5$ and a scale function $\mlipgrad{t}\ll t$ 
(or $\mlipgrad{t}\equiv 1$) such that for any parameters
$\param, \param'\in \boctopt$, for any $t\geq 1$, for any $\mem,\mem'\in
\bmaint{t}$, the instantaneous gradients satisfy
\begin{equation*}
\dist{\sm{V}_{t}\paren{\param,\,\mem}}{\sm{V}_{t}\paren{\param',\,\mem'}}
\leq \cst_5 \, (\dist{\param}{\param'}+\dist{\mem}{\mem'})\,\mlipgrad{t}.
\end{equation*}
\end{hypothese}

The reason we put a separate scale function $\mlipgrad{t}$ instead of
reusing $\fmdper{t}$ is because these can be different even in very basic
examples. Indeed, $\fmdper{t}$ controls the size of gradients whereas
$\mlipgrad{t}$ controls the Lipschitz constant of gradients, i.e.,
essentially the Hessian. For instance, for a linear model with loss
$\frac12 (y_t-\param \cdot x)^2$, the gradients are $\param\cdot x-y_t$ and if $y_t$
is unbounded, then $\fmdper{t}$ might be large. But the
\emph{differences} of gradients are $\param\cdot x-\param'\cdot x'$ which do not
depend on the norm of $y_t$: in this example the Hessian is bounded so
that $\mlipgrad{t}$ is constant. 

\subsubsection{Parameter Updates}

We would like the update operators to cover parameter-dependent updates
such as
$\paramupdate(\param,\vtanc)=\param-\metric(\param)\,\vtanc+O(\norm{\vtanc}^2)$
where $\metric$ is a
$\param$-dependent linear operator. This is covered by the following
assumptions, which basically state that $\paramupdate(\param,\vtanc)-\param$
is Lipschitz w.r.t. both $\param$ and $\vtanc$, and the Lipschitz
constant w.r.t.\ $\param$ is controlled by $\norm{\vtanc}$. 

\begin{hypothese}[Parameter update operators]
\label{hyp:updateop}
We assume that the parameter update operators $\paramupdate_t$ satisfy:
\begin{enumerate}
\item for any $\param \in \Param$ and $t\geq 1$,
$\paramupdate_t(\param,0)=\param$;
\item there exists $r_\Tangent>0$ and $\cpliprmup>0$ such that for any parameters
$\param,\param'\in \boctopt$, for any $t\geq 1$, for any
$\vtanc,\vtanc'\in \Tangent_t$ with $\norm{\vtanc}\leq r_\Tangent$ and
$\norm{\vtanc'}\leq r_\Tangent$, then
\begin{equation*}
\dist{\paramupdate_t\paren{\param,\vtanc}}{\paramupdate_t\paren{\param',\vtanc'}}
\leq \dist{\param}{\param'} +\cpliprmup
\left(\dist{\vtanc}{\vtanc'}+\left(\norm{\vtanc}+\norm{\vtanc'}\right)\dist{\param}{\param'}
\right).
\end{equation*}
%
\end{enumerate}
\end{hypothese}

For instance, the assumption is satisfied with $\param-\metric(\param)\,\vtanc+\norm{\vtanc}^2
f_2(\param,v)$ with locally Lipschitz second-order term $f_2$.
It also works with
$\paramupdate_t(\param,v)=
\arg\min_{\param'} \{v\cdot(\param'-\param)+\frac12
D_t(\param|\param')\}$ where $D_t$ is a suitable second-order penalty
between $\param$ and $\param'$, such as a KL
divergence (the trivial case being $D=\norm{\param-\param'}^2$).

%
%

\subsubsection{Local Optimality of $\paramopt$}
We now formulate the crucial assumption of local optimality for a parameter: it mimicks classical second-order optimality conditions, in a way adapted to our sequential setting.

\begin{hypothese}[Local optimality of $\paramopt$]
\label{hyp:optimprm}
We assume there exists a scale function $\lng$, negligible with respect to the identity function near infinity, and $\cdvpop > 0$ such that the following properties are satisfied.
\begin{enumerate}
\item First-order stability condition. For
$\cdvp\leq \cdvpop$,
\begin{equation*}
\dist{\paramupdate_{t:t+\flng{t}}\paren{\pctrlopt,\memopt_t,\pas_t}
}{\pctrlopt} = 
\po{\pas_t\,\flng{t}}
\end{equation*}
when $t\to\infty$, uniformly in $\cdvp\leq \cdvpop$.

\item Second-order or contractivity condition.
\label{hyp:optimprm2}
There exists $\mvpmin>0$ with the following property.
For any $\cdvp\leq \cdvpop$, for any parameter $\param\in \boctopt$
then
\begin{equation*}
\dist{\paramupdate_{t:t+\flng{t}}\paren{\param,\memopt_t,\pas_t}
}{\paramupdate_{t:t+\flng{t}}\paren{\paramopt,\memopt_t,\pas_t}}
\end{equation*}
is at most
\begin{equation*}
\paren{1-\mvpmin\,\pas_t\,\flng{t}}\,\dist{\prmctrl}{\paramopt} +
\po{\pas_t\,\flng{t}}
\end{equation*}
when $t\to\infty$.
Moreover, this $\po{}$ is uniform over $0\leq
\cdvp\leq \cdvpop$ and over $\param\in\boctopt$.
\end{enumerate}
\end{hypothese}

\subsubsection{Constraints on the Stepsize Sequence}

The various assumptions above constrain the admissible step size sequences we may use for the descent.
Remember (Def.~\ref{def:stepseqovlrstpsch}) that the step sizes $\pas_t$
are defined via
$\pas_t\deq \cdvp\,\sched_t$ with overall learning rate $\cdvp$ and
stepsize schedule $(\sched_t)$.

\begin{hypothese}[Stepsize sequence: behaviour of the stepsize schedule]
\label{hyp:spdes}
The stepsize sequence
$\sm{\pas}=\paren{\pas_t}$ and the scale functions satisfy:
\begin{enumerate}
\item $\pas_t$ is positive, and $\sum_{t=1}^{\infty}\pas_t=+\infty$.
\item
$\pas_t\,\flng{t}\,\fmdper{t}\,\mlipgrad{t}\to 0$
when $t\to\infty$.
\item $\mlipgrad{t}\ll \flng{t}$ when $t\to\infty$.
\item
When $t\to\infty$,
\begin{equation*}
\frac{\sup_{t< s\leq t+\flng{t}} \pas_s
}{
\inf_{t< s\leq t+\flng{t}} \pas_s
}
=1+o(1/\fmdper{t}).
\end{equation*}
\end{enumerate}
\end{hypothese}

\begin{remarque}
\label{rem:renormspas}
\label{rem:contsuitepasdes}
These assumptions are invariant by scaling all $\pas_s$ by the same
factor. So they are a property of the stepsize schedule and not of
$\cdvp$.
\end{remarque}

The sup/inf assumption is automatically satisfied if $\pas_t$ is the
inverse of a scale function and $\fmdper{t}\equiv 1$ (because \rehyp{optimprm} asks that we have $\flng{t}=\po{t}$, as $t\to\infty$, so that $t+\flng{t}\sim t$).

\subsubsection{Timescale Adapted to the Optimality Criterion}
To study the algorithm, we consider it in the following time-scale, which is adapted to the dynamics of the optimality assumptions.

\begin{definition}[Timescale associated with a scale function.]
\label{def:echetps}
Let $\flng{}$ be the scale function appearing in \rehyp{optimprm}. We define the
integer sequence 
$\paren{\tpsk{k}}_{k \geq 0}$
by induction via $\tpsk{0}=0$, $\tpsk{1}=1$ and, for $k\geq 1$, 
\begin{equation*}
\tpsk{k+1}=\tpsk{k}+\flng{\tpsk{k}}.
\end{equation*}
We denote the integer intervals defined by this timescale by
\begin{equation*}
\itv_k\deq(\tpsk{k};\tpsk{k+1}].
\end{equation*}
\end{definition}

\begin{lemme}[Asymptotic behavior of $\paren{\tpsk{k}}$]
\label{lem:pechtps}
$\paren{\tpsk{k}}$ is strictly increasing, and tends to infinity.
Moreover, 
$\tpsk{k+1} \sim \tpsk{k}$ when $k\to\infty$.
\end{lemme}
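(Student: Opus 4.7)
The plan is to establish the three claims of the lemma in turn, each following almost directly from the defining properties of a scale function (Definition 4.1) together with the negligibility condition $L(t)=o(t)$ imposed in Assumption 4.5.

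First, I would check strict monotonicity by induction. Since $L$ is non-decreasing and $L(1)\geq 1$, and since $T_1=1$, induction gives $T_k\geq 1$ for all $k\geq 1$. Hence $L(T_k)\geq L(1)\geq 1>0$, so $T_{k+1}=T_k+L(T_k)>T_k$.

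Second, the same bound $L(T_k)\geq 1$ for $k\geq 1$ shows $T_{k+1}\geq T_k+1$, so $T_k\geq k$ for $k\geq 1$; in particular $T_k\to\infty$.

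Third, for the asymptotic equivalence, I would write
\begin{equation*}
\frac{T_{k+1}}{T_k}=1+\frac{L(T_k)}{T_k}.
\end{equation*}
Since $T_k\to\infty$ by the previous step and $L(t)=o(t)$ as $t\to\infty$ (by the negligibility requirement on $L$ in Assumption~\ref{hyp:optimprm}), the ratio $L(T_k)/T_k$ tends to $0$, so $T_{k+1}/T_k\to 1$, i.e.\ $T_{k+1}\sim T_k$.

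There is no real obstacle here; the argument is a routine verification. The only point requiring attention is that the relation $L(t)=o(t)$ must indeed be taken from Assumption~\ref{hyp:optimprm}, where the scale function $L$ is explicitly required to be negligible with respect to the identity near infinity—this condition is not part of the bare definition of a scale function (Definition~\ref{def:fechelle}), so one must invoke the assumption that ties $L$ to the optimality criterion.
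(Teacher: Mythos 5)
Your proof is correct and follows essentially the same route the paper takes: strict monotonicity and divergence from $\flng{T}\geq 1$ for $T\geq 1$ (via $\flng{1}\geq 1$ and monotonicity of $\flng{}$), and $\tpsk{k+1}\sim\tpsk{k}$ from $\flng{T}=o(T)$; you've just spelled out the inductive verification more fully, and your remark about where $\flng{T}=o(T)$ actually comes from (Assumption~\ref{hyp:optimprm}, not Definition~\ref{def:fechelle}) is accurate and worth noting.
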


\begin{proof}
The first statements follow from $\flng{T}\geq 1$ for $T\geq 1$.
The last statement follows from $\flng{T}=o(T)$ when $T\to\infty$.
\end{proof}

\subsection{Noisy Updates}

We now introduce definitions to deal with sequences of states $(\mem_t)$
that do not follow the algorithm $\Algo_t$, but nevertheless stay close
to it in some sense. We refer to this as the \emph{noisy} case. This will be useful to deal with stochastic
approximations of RTRL such as NoBackTrack or UORO.
Thus, the states $\mem_t$ may be
random, but the parameter updates $\param_t$ are still computed normally from
$\mem_t$ via $\vtanc_t$.

\begin{definition}[Random trajectory]
\label{def:randomtraj}
A \emph{random trajectory} is, for every
stepsize sequence $(\pas_t)$, a probability distribution over
trajectories
$(\mem_t,\vtanc_t,\param_t)_{t\geq 0}$, such that
\begin{equation*}
\left\lbrace \ba 
\vtanc_t &=\sm{V}_{t}\paren{\param_{t-1},\,\mem_t} \\ 
\param_{t} &= \paramupdate_t\paren{\param_{t-1},\,\pas_t\,\vtanc_t}
\ea \right.
\end{equation*}
holds for all $t \geq 1$, with probability $1$.

We say that a random trajectory \emph{respects the stable tube} if,
for
any stepsize sequence, for
every $t\geq 1$, $\mem_{t-1}\in \bmaint{t-1}$
and $\param_{t-1}\in\boctopt$ imply
$\mem_t\in\bmaint{t}$ with probability $1$.
\end{definition}

The next notion captures the noise produced on $\param_t$ by a
sequence of states $(\mem_t)$ that does not follow the algorithm
$\Algo_t$. This definition compares the sequence $(\mem_t)$ to a sequence
$(\bar\mem_t)$ starting at the same state but that follows $\Algo_t$.
This extends to our setting the noise on $\jope_t$ in \algonoisy RTRL
algorithms compared to exact RTRL.

\begin{definition}[Deviation of a random trajectory from an algorithm]
\label{def:algobruite}
Let $(\mem_t)$ be any sequence of states. 
Let $\tinit\geq 0$ and let $\param_{\tinit}$ be any parameter. 
Consider the following two sequences $(\param_t)$ and $(\bar\param_t)$
defined from $(\mem_t)$ and $\param_{\tinit}$ as follows.

Define $(\param_t)$ as the
parameter trajectory defined from the sequence of states $(\mem_t)$
starting at $\param_\tinit$ at time $t_0$, namely,
\begin{equation*}
\vtanc_t =\sm{V}_{t}\paren{\param_{t-1},\,\mem_t},\qquad
\param_t=\paramupdate_t(\param_{t-1},\,\pas_t\vtanc_t)
\end{equation*}
for $t>\tinit$.

Let $(\bar \mem_t)$ be the algorithm trajectory starting at $\mem_\tinit$ with
parameters $(\param_t)$ (``regularized'' trajectory), and $\paren{\bar{\param}_t}$ be the sequence of parameters obtained with it. Namely, define
$\bar\mem_\tinit=\mem_\tinit$, $\bar\param_\tinit=\param_\tinit$ and
\begin{equation*}
\bar\mem_t=\Algo_t(\param_{t-1},\bar \mem_{t-1}),\qquad
\bar\vtanc_t =\sm{V}_{t}\paren{\param_{t-1},\,\bar\mem_t},\qquad
\bar\param_t=\paramupdate_t(\bar\param_{t-1},\,\pas_t\bar\vtanc_t).
\end{equation*}
for $t>\tinit$. (Beware $\bar\mem_t$ and $\bar\vtanc_t$ use
$\param_{t-1}$ while $\bar\param_t$ uses $\bar\param_{t-1}$.)

We call \emph{deviation of $(\mem_t)$ from $(\Algo_t)$ at
time $t_1$} the quantity
\begin{equation*}
\bruitif{\tinit}{t_1}{\param_\tinit}{(\mem_t)} \deq
\dist{\param_{t_1}}{\bar\param_{t_1}}.
\end{equation*}
\end{definition}

\begin{definition}[Negligible noise]
\label{def:negnoise}
Let $K\geq 0$, and let $\paren{\delta_k}$ be a nonnegative sequence which tends towards $0$.  

We say that a trajectory $(\param_t)$, $(\mem_t)$ has \emph{negligible
noise} in the timescale $\paren{\tpsk{k}}$, from time $K$ onwards, at speed $\paren{\delta_k}$ if, for all $k\geq K$, we have
\begin{equation*}
\param_{\tpsk{k}}\in\boctopt, \dist{\param_{\tpsk{k}}}{\paramopt}\leq\frac{\rayoptct}{3},\mem_{\tpsk{k}}\in \bmaint{\tpsk{k}}
\Rightarrow
\bruitifcdvp{\tpsk{k}}{\tpsk{k+1}}{\param_{\tpsk{k}}}{(\mem_t)}{\sm{\pas}}\leq \delta_k\,
\pas_{\tpsk{k}} \,\flng{\tpsk{k}}.
\end{equation*}



\end{definition}


\subsection{Convergence Theorems}

We now formulate the convergence theorem for our abstract model, in three
cases: non-noisy, noisy (with random trajectories), and open-loop on
intervals (which models truncated backpropagation through time).
For the
noisy case, this works under the assumption that the deviation
from the non-noisy case is negligible in the sense above, with high probability.

We first gather all assumptions on the system.

\newcommand{\mainass}{Assumption~\textbf{A}}

\begin{definition}
We call \mainass{} the following setting.

Let $\Param$ be some metric space, the \emph{parameter space}. Let
$(\Mem_t)_{t\geq 0}$ be a sequence of metric spaces, which represents the objects
maintained in memory by an algorithm at time $t$. Let
$(\Tangent_t)_{t\geq 1}$ be a sequence of normed vector spaces, containing the
gradients computed at time $t$.

Let $\paren{\mathcal{A}_t}$ be a family of transition operators (\redef{transop}) admitting a stable tube (\rehyp{stableballs}). Assume the state updates are Lipschitz with respect to the parameter (\rehyp{paramlip}), and the family forgets exponentially fast the initalisation when the parameter is fixed (\rehyp{expforgetinit}).

Let $\paren{\sm{V}_t}$ be gradient computation operators (\redef{gradcomputop}), which are locally bounded (\rehyp{boundedgrads}) and Lipschitz (\rehyp{contgrad}).

Let $\paren{\paramupdate_t}$ be parameter update operators (\redef{paramupdate}), which are Lipschitz with respect to the parameter and the tangent vectors (\rehyp{updateop}).  

Let $\sm{\pas}=\paren{\pas_t}_{t \geq 1}$ be a stepsize sequence
with overall learning rate $\cdvp$ (Def.~\ref{def:stepseqovlrstpsch}),
whose asymptotic behavior satisfies \rehyp{spdes}.

Let $\pctrlopt$ be a parameter satisfying the local optimality conditions of \rehyp{optimprm}.
\end{definition}

\begin{theoreme}[Convergence of the gradient descent algorithm]
\label{thm:cvalgopti}
Consider a system satisfying \mainass.

Then there exists $\cdvpmaxconv>0$ such that, for any overall learning
rate $\cdvp<\cdvpmaxconv$, the following convergence
holds. For any parameter $\param_0$ and maintained quantity $\mem_0$ satisfying 
\begin{equation*}
\cpl{\param_0}{\mem_0} \in \enstq{\prmctrl \in \epctrl}{\dist{\prmctrl}{\pctrlopt} \leq \frac{\rayoptct}{4}} \times \bmaint{0},
\end{equation*}
consider the gradient descent trajectory given by
\begin{equation*}
\left\lbrace \ba 
\mem_t&=\Algo_{t}\paren{\param_{t-1},\,\mem_{t-1}}\\
\vtanc_t &=\sm{V}_{t}\paren{\param_{t-1},\,\mem_t} \\ 
\param_{t} &= \paramupdate_t\paren{\param_{t-1},\,\pas_t\,\vtanc_t},
\ea \right.
\end{equation*}
for $t \geq 1$. 
Then $\param_t$ tends to $\pctrlopt$ as $t\to\infty$.
\end{theoreme}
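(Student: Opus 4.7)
The approach is the standard ODE method specialized to the timescale $(\tpsk{k})$ of Definition~\ref{def:echetps}. Over each interval $\itv_k=(\tpsk{k};\tpsk{k+1}]$, the step count $\flng{\tpsk{k}}$ is long enough for the second-order contractivity of \rehyp{optimprm}.2 to kick in, but short enough, thanks to \rehyp{spdes}, that the parameter changes very little during the interval. We therefore study the dynamics at the coarse time indices $\param_{\tpsk{k}}$, comparing the true trajectory to a reference trajectory with piecewise-constant parameter equal to $\param_{\tpsk{k}}$ on $\itv_k$, started from the idealized memory $\memopt_{\tpsk{k}}$.

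The core estimate I would establish is a recursive inequality of the form
\begin{equation*}
\dist{\param_{\tpsk{k+1}}}{\paramopt} \leq \paren{1-\mvpmin\,\pas_{\tpsk{k}}\,\flng{\tpsk{k}}}\,\dist{\param_{\tpsk{k}}}{\paramopt} + \po{\pas_{\tpsk{k}}\,\flng{\tpsk{k}}},
\end{equation*}
obtained by splitting $\param_{\tpsk{k+1}}-\paramopt$ into three pieces: (i) the open-loop update $\paramupdate_{\tpsk{k}:\tpsk{k+1}}(\paramopt,\memopt_{\tpsk{k}},\pas_t)-\paramopt$, controlled by \rehyp{optimprm}.1; (ii) the difference $\paramupdate_{\tpsk{k}:\tpsk{k+1}}(\param_{\tpsk{k}},\memopt_{\tpsk{k}},\pas_t)-\paramupdate_{\tpsk{k}:\tpsk{k+1}}(\paramopt,\memopt_{\tpsk{k}},\pas_t)$, controlled by the contractivity \rehyp{optimprm}.2; and (iii) a ``closed-loop versus open-loop'' discrepancy term, measuring the deviation between the true sequence $(\mem_t,\param_t)_{t\in\itv_k}$ and the open-loop sequence obtained by freezing the parameter at $\param_{\tpsk{k}}$ and starting from $\memopt_{\tpsk{k}}$. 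Term (iii) decomposes further into (a) forgetting of the initial memory discrepancy $\dist{\mem_{\tpsk{k}}}{\memopt_{\tpsk{k}}}$ via \rehyp{expforgetinit} (which becomes small because $\flng{\tpsk{k}}\to\infty$ while $(1-\alpha)^{\flng{\tpsk{k}}}\to 0$), and (b) the bias created by moving $\param$ during the interval, which I would bound by combining \rehyp{paramlip}, \rehyp{boundedgrads}, \rehyp{contgrad}, \rehyp{updateop}, and the quasi-constant stepsize condition in \rehyp{spdes}.4. A careful bookkeeping, using $\mlipgrad{t}\ll\flng{t}$ and $\pas_t\,\flng{t}\,\fmdper{t}\,\mlipgrad{t}\to 0$, will show that all terms in (iii) are $\po{\pas_{\tpsk{k}}\,\flng{\tpsk{k}}}$, so that they are absorbed by the $\po{}$ on the right of the recursion.

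Once the recursion is in hand, I would close the argument in two steps. First, using $\mvpmin>0$ and the $\po{}$ terms being uniform for $\cdvp\leq\cdvpop$, I choose $\cdvpmaxconv\leq\cdvpop$ small enough and $K$ large enough so that for all $k\geq K$ the coefficient $1-\mvpmin\pas_{\tpsk{k}}\flng{\tpsk{k}}$ dominates, and, by a Lyapunov-style induction, the iterates $\param_{\tpsk{k}}$ remain inside $\acco{\dist{\prmctrl}{\pctrlopt}\leq \rayoptct/3}\subset\boctopt$; combined with \rehyp{stableballs}, this certifies that $(\param_t,\mem_t)$ stays inside $\boctopt\times\bmaint{t}$ for all $t$ (the ``maintenance'' step, which is why the initialization is taken at distance $\leq\rayoptct/4$: it leaves room for the finitely many large steps before $k=K$, bounded via $\pas_t\flng{t}\fmdper{t}\to 0$ and \rehyp{updateop}). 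Second, iterating the recursion and using $\sum_{k}\pas_{\tpsk{k}}\flng{\tpsk{k}}=\sum_t\pas_t=+\infty$ (via \rehyp{spdes}.1 and the quasi-constancy of $\pas_t$ on each $\itv_k$) yields $\dist{\param_{\tpsk{k}}}{\paramopt}\to 0$ by a standard Robbins--Siegmund-type argument, and the quasi-constancy of $\pas_t$ also transfers this to $\param_t\to\paramopt$ along the original timescale.

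\textbf{Main obstacle.} The subtle point is the simultaneous control of closed-loop versus open-loop deviation (term (iii)), while ensuring it is \emph{strictly smaller} than the contractive gain $\mvpmin\pas_{\tpsk{k}}\flng{\tpsk{k}}$. The parameter moves by $O(\pas_t\fmdper{t})$ per step, which, after $\flng{t}$ steps and propagation through gradients of local Lipschitz size $\mlipgrad{t}$, produces a parameter bias of order $\pas_t^2\,\flng{t}^2\,\fmdper{t}\,\mlipgrad{t}$; the whole machinery of exponents and scale functions in \rehyp{spdes} is precisely what makes this smaller than $\pas_t\,\flng{t}$. Verifying this quantitatively, and handling the stable-tube maintenance in the finite pre-asymptotic regime $k<K$, is where most of the careful work lies.
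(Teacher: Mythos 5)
Your proposal is correct and follows essentially the same route as the paper: you establish the one-interval contraction inequality
\begin{equation*}
\dist{\param_{\tpsk{k+1}}}{\paramopt}\leq\paren{1-\mvpmin\,\pas_{\tpsk{k}}\,\flng{\tpsk{k}}}\,\dist{\param_{\tpsk{k}}}{\paramopt}+\po{\pas_{\tpsk{k}}\,\flng{\tpsk{k}}}
\end{equation*}
by splitting $\param_{\tpsk{k+1}}$ via the open-loop updates started at $\memopt_{\tpsk{k}}$ (the paper's \relem{propcentalgoptim}, split via Lemmas~\ref{lem:extloc}, \ref{lem:contractitvk}, \ref{lem:contecarttrajmprmqidiff}, \ref{lem:noisyopenloopdiv}), then close by a Robbins--Siegmund-type iteration plus a pre-asymptotic maintenance step from the $\rayoptct/4$ ball (the paper's \relem{arithgeomconv}, \relem{contempsfiniprmprodalgortrl}, and \relem{pcvrtrlbruit}). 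The only minor imprecision is in your explanation of term (iii)(a): the point is not that $\dist{\mem_{\tpsk{k}}}{\memopt_{\tpsk{k}}}$ itself becomes small by the end of the interval because $(1-\alpha)^{\flng{\tpsk{k}}}\to 0$ (it is $O(1)$ at the start, where most of its effect is incurred), but rather that the cumulative effect on $\param$ over the interval is $O(\sup_s\pas_s\,\mlipgrad{s}/\alpha)$, which is $\po{\pas_{\tpsk{k}}\,\flng{\tpsk{k}}}$ precisely because $\mlipgrad{t}\ll\flng{t}$ -- a point you do invoke correctly in the subsequent bookkeeping, so this does not create a gap.
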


\begin{theoreme}[Convergence of the gradient descent algorithm, noisy
case]
\label{thm:cvalgopti_noisy}
Consider a system satisfying \mainass.

Assume that we are given a
random trajectory $(\mem_t,\vtanc_t,\param_t)_{t\geq 0}$ which respects the stable tube $\paren{\bmaint{t}}_{t\geq 0}$, in the sense of \redef{randomtraj}
(namely, $\mem_t$ is random but lies in $\bmaint{t}$ provided $\mem_{t-1}\in
\bmaint{t-1}\text{ and
}\param_{t-1}\in\boctopt$, while $\vtanc_t$ and $\param_t$ are updated as
in the non-noisy case).

Assume that
$\mem_0\in\bmaint{0}$ and
$
\dist{\param_0}{\pctrlopt} \leq \frac{\rayoptct}{4}$.

Assume there exists $\eps>0$, $K\geq 0$, a non-negative
sequence $\paren{\delta_k}$ which tends to $0$, and $\cdvpnoise >0$
such that, for all $\olr\leq\cdvpnoise$, with probability greater than
$1-\eps$, the random trajectory $\cpl{\mem_t}{\param_t}$ has negligible
noise starting at $K$ at speed $\paren{\delta_k}$ (\redef{negnoise}).

Then
there exists $\cdvpmaxconv>0$ such that for any $\cdvp<\cdvpmaxconv$,
with probability at least $1-\eps$,
$\param_t$ tends to $\pctrlopt$ as $t\to\infty$.
\end{theoreme}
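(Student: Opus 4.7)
}
The strategy is to reduce to the non-noisy Theorem~\ref{thm:cvalgopti} by working interval-wise along the timescale $\paren{\tpsk{k}}$ from Definition~\ref{def:echetps}, and to absorb the stochastic perturbation into the $o(\pas_{\tpsk{k}}\flng{\tpsk{k}})$ slack provided by the second-order condition in Assumption~\ref{hyp:optimprm}. I fix $\eps>0$ and condition throughout on the probability-$(1-\eps)$ event $\Omega_\eps$ on which the random trajectory $(\mem_t,\param_t)$ has negligible noise from index $K$ onwards at speed $(\delta_k)$; all estimates below are deterministic on $\Omega_\eps$.

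The key step is an induction over $k\geq K$ showing that (a) $\mem_{\tpsk{k}}\in\bmaint{\tpsk{k}}$ and $\dist{\param_{\tpsk{k}}}{\paramopt}\leq \rayoptct/3$, and (b) the sequence $d_k\deq\dist{\param_{\tpsk{k}}}{\paramopt}$ satisfies a recursion of Robbins--Siegmund type. Given $\mem_{\tpsk{k}}\in\bmaint{\tpsk{k}}$ and $\param_{\tpsk{k}}\in\boctopt$, I will compare the parameter $\param_{\tpsk{k+1}}$ produced by the noisy trajectory to two successive approximations. First, the negligible-noise hypothesis directly gives
\begin{equation*}
\dist{\param_{\tpsk{k+1}}}{\bar\param_{\tpsk{k+1}}}\leq \delta_k\,\pas_{\tpsk{k}}\,\flng{\tpsk{k}},
\end{equation*}
where $(\bar\param_t)$ is the algorithm-driven companion trajectory of Definition~\ref{def:algobruite}, in which the state follows $\Algo_t$ from $\mem_{\tpsk{k}}$. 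Next, I compare $\bar\param_{\tpsk{k+1}}$ to the pure open-loop update $\paramupdate_{\tpsk{k}:\tpsk{k+1}}(\param_{\tpsk{k}},\mem_{\tpsk{k}},\pas_{\tpsk{k}})$ by the same closed-loop vs.\ open-loop analysis used to prove Theorem~\ref{thm:cvalgopti}: the difference between these two is $o(\pas_{\tpsk{k}}\flng{\tpsk{k}})$ thanks to Assumptions~\ref{hyp:paramlip},~\ref{hyp:contgrad},~\ref{hyp:updateop}, the stability of $\Algo_t$, and the step-size regularity of Assumption~\ref{hyp:spdes} (which ensures stepsizes are nearly constant over the interval $\itv_k$). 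Finally, Assumption~\ref{hyp:optimprm}.\ref{hyp:optimprm2} applied at $\param=\param_{\tpsk{k}}$, together with Assumption~\ref{hyp:optimprm}.1 applied at $\paramopt$ and the exponential forgetting of Assumption~\ref{hyp:expforgetinit} (to swap $\memopt_{\tpsk{k}}$ for $\mem_{\tpsk{k}}$ in the open-loop update), yields
\begin{equation*}
\dist{\paramupdate_{\tpsk{k}:\tpsk{k+1}}(\param_{\tpsk{k}},\mem_{\tpsk{k}},\pas_{\tpsk{k}})}{\paramopt}\leq (1-\mvpmin\,\pas_{\tpsk{k}}\,\flng{\tpsk{k}})\,d_k + o(\pas_{\tpsk{k}}\,\flng{\tpsk{k}}).
\end{equation*}
Chaining the three estimates via the triangle inequality produces
\begin{equation*}
d_{k+1}\leq (1-\mvpmin\,\pas_{\tpsk{k}}\,\flng{\tpsk{k}})\,d_k + (\delta_k+o(1))\,\pas_{\tpsk{k}}\,\flng{\tpsk{k}}.
\end{equation*}

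To close the induction I must keep $\param_{\tpsk{k+1}}$ inside the ball of radius $\rayoptct/3$ around $\paramopt$; this is achieved by choosing the overall learning rate $\cdvpmaxconv$ small enough that both the initial gap $\rayoptct/4$ and the cumulative $o(\cdot)$ perturbations at intermediate times $t\in\itv_k$ (controlled by Assumption~\ref{hyp:boundedgrads} and~\ref{hyp:updateop}) stay below $\rayoptct/3$. Once the ball constraint is preserved, Assumption~\ref{hyp:stableballs} propagates the stable-tube property to $\mem_{\tpsk{k+1}}$, closing the induction. The recursion on $d_k$ is then a standard Robbins--Siegmund / Chung lemma situation: since $\sum_k \pas_{\tpsk{k}}\flng{\tpsk{k}}=+\infty$ (this follows from $\sum_t \pas_t=+\infty$ in Assumption~\ref{hyp:spdes}.1 and the fact that $\pas_t$ is nearly constant on $\itv_k$) and the additive error is $o(\pas_{\tpsk{k}}\flng{\tpsk{k}})$ because $\delta_k\to 0$, one concludes $d_k\to 0$, hence $\param_{\tpsk{k}}\to \paramopt$; convergence of the full sequence $\param_t\to\paramopt$ then follows from the uniform smallness of one-step updates $\dist{\param_t}{\param_{t-1}}=O(\pas_t\fmdper{t})=o(1)$ (Assumptions~\ref{hyp:boundedgrads},~\ref{hyp:updateop} and~\ref{hyp:spdes}).

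The main obstacle is the combined bookkeeping in the second bullet: I must show, within a single interval $\itv_k$, that the three sources of discrepancy — noise of size $\delta_k\pas_{\tpsk{k}}\flng{\tpsk{k}}$, closed-loop vs.\ open-loop drift, and the open-loop contraction itself — are compatible with the tube constraint $\param_t\in\boctopt$ for \emph{every} intermediate $t$, not only at the endpoints $\tpsk{k}$. This forces choosing $\cdvpmaxconv$ uniformly and carefully, and is exactly where the requirement $\pas_t\flng{t}\fmdper{t}\mlipgrad{t}\to 0$ in Assumption~\ref{hyp:spdes} plays its role. Apart from this, everything follows the same blueprint as the non-noisy Theorem~\ref{thm:cvalgopti}, with the extra input of negligible noise acting as a benign additive perturbation.
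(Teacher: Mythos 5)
Your plan is correct and follows essentially the same route as the paper's proof (Lemma~\ref{lem:pcvrtrlbruitnoisy}): condition on the negligible-noise event, apply the interval-wise contraction estimate of Lemma~\ref{lem:propcentechtpstpsk} (which bundles the noise term, the open/closed-loop drift, and the open-loop contractivity into exactly the recursion you write down), pick $\cdvpmaxconv$ small enough via Lemma~\ref{lem:contempsfiniprmprodalgortrlrandom} to reach the first safe index inside the $\rayoptct/3$-ball, then iterate the tube-preservation and invoke Lemma~\ref{lem:arithgeomconv}. Your one imprecision—phrasing the induction as starting "at $K$" rather than at an index $k_1\geq\max(K,k_0)$ where the $o(\pas_{\tpsk{k}}\flng{\tpsk{k}})$ terms are already smaller than $(\rayoptct/3)\mvpmin\pas_{\tpsk{k}}\flng{\tpsk{k}}$—is precisely the bookkeeping you flag as the main obstacle, and your remedy (choosing $\cdvpmaxconv$ to bridge the gap) is the one the paper uses.
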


We now define the open-loop algorithm, which models truncated
backpropagation through time (in the ``non-overlapping'' variant, see
Section~\ref{sec:tbptt}): the parameter is updated only once at the
end of each time interval $(\tpsk{k},\,\tpsk{k+1}]$, by collecting all
gradients computed during that interval. In TBPTT using time intervals
$(\tpsk{k},\,\tpsk{k+1}]$, 
 whenever the parameter
is updated, gradients are not backpropagated through the
boundary, but reset to $0$: this corresponds to resetting the RTRL state derivative
$\jope$ to $0$. Moreover, the state may or may not be reset to some
default value at the start of each new TBPTT interval.
So here, at the end of every time interval $(\tpsk{k},\,\tpsk{k+1}]$, the running quantity $\mem_{\tpsk{k}}$ is
discarded and reset to some $\mem'_{\tpsk{k}}\in\bmaint{\tpsk{k}}$
(typically,
$\mem'_{\tpsk{k}}=0$, which belongs to $\bmaint{\tpsk{k}}$ as shown in
\recor{rtrlstabletubesJ}). Note that 
$\mem_{\tpsk{k}}$ is still used to compute the gradient update for the
last step of the
previous time interval, hence our use of a substitution $\mem_{\tpsk{k}}
\leftarrow \mem'_{\tpsk{k}}$ at the beginning of each new interval.

\begin{theoreme}[Convergence of the open-loop gradient descent algorithm]
\label{thm:cvalgoboucleouverte}
%
Consider a system satisfying \mainass.

Let $\paren{\tpsk{k}}$ be the time-scale $\paren{\tpsk{k}}$ of
\redef{echetps}, namely, $\tpsk{k+1}=\tpsk{k}+\flng{\tpsk{k}}$ where
$\flng{}$ is from \rehyp{optimprm}.

There exists $\cdvpmaxconv>0$ such that, for any overall learning
rate $\cdvp<\cdvpmaxconv$, the following convergence holds. 
For any parameter $\param_0$ and maintained quantity $\mem_0$ satisfying 
\begin{equation*}
\cpl{\param_0}{\mem_0} \in \enstq{\prmctrl \in \epctrl}{\dist{\prmctrl}{\pctrlopt} \leq \frac{\rayoptct}{4}} \times \bmaint{0},
\end{equation*}
for any sequence of reset states $(\mem'_{\tpsk{k}})_{k\geq 0}$ with
$\mem'_{\tpsk{k}}\in\bmaint{\tpsk{k}}$, 
consider the open-loop gradient descent trajectory which resets the state
$\mem$ to $\mem'$ at the start of every time interval
$(\tpsk{k},\,\tpsk{k+1}]$, and updates the parameter at the end of every
time interval; namely, for each $k\geq 0$, the computation performed in
the interval $(\tpsk{k},\,\tpsk{k+1}]$ is
\begin{equation*}
\mem_{\tpsk{k}} \leftarrow \mem'_{\tpsk{k}}\in\bmaint{\tpsk{k}}
\end{equation*}
and, for $\tpsk{k} < t \leq \tpsk{k+1}$, 
\begin{equation*}
\left\lbrace \ba 
\mem_t&=\Algo_{t}\paren{\param_{\tpsk{k}},\,\mem_{t-1}} \\
\vtanc_t &=\sm{V}_{t}\paren{\param_{\tpsk{k}},\,\mem_t} \\ 
\param_{t} &= \paramupdate_t\paren{\param_{t-1},\,\pas_{\tpsk{k+1}}\,\vtanc_t}.
\ea \right.
\end{equation*}
Then $\param_t$ tends to $\pctrlopt$ as $t\to\infty$.
\end{theoreme}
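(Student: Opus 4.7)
The plan is to track the algorithm at the discrete times $\paren{\tpsk{k}}$ and prove a contractive recursion on $\dist{\param_{\tpsk{k}}}{\paramopt}$. By the very definition of the algorithm, the map from $\param_{\tpsk{k}}$ to $\param_{\tpsk{k+1}}$ is an open-loop update in the sense of \redef{olupdate}, namely
\begin{equation*}
\param_{\tpsk{k+1}}=\paramupdate_{\tpsk{k}:\tpsk{k+1}}\paren{\param_{\tpsk{k}},\,\mem'_{\tpsk{k}},\,\pas_{\tpsk{k+1}}},
\end{equation*}
over exactly the interval length $\flng{\tpsk{k}}$ that appears in the local optimality assumption \rehyp{optimprm}. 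The only discrepancies with the form directly covered by \rehyp{optimprm} are that the starting state is the (arbitrary) reset $\mem'_{\tpsk{k}}$ rather than the target $\memopt_{\tpsk{k}}$, and that the constant step size used on the interval is $\pas_{\tpsk{k+1}}$ rather than $\pas_{\tpsk{k}}$.

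First I would set up an induction over $k$ showing that for all overall learning rates $\cdvp$ below a suitable $\cdvpmaxconv$, the parameter $\param_{\tpsk{k}}$ stays in the safety zone $\{\dist{\cdot}{\paramopt}\leq\rayoptct/3\}$; by \rehyp{stableballs} and \rehyp{paramlip}, this forces every $\mem_t$ to remain in the stable tube $\bmaint{t}$, using that $\mem'_{\tpsk{k}}\in\bmaint{\tpsk{k}}$ by hypothesis. Second, I would reconcile the actual open-loop update with the reference one. Comparing the two state trajectories starting at $\mem'_{\tpsk{k}}$ and $\memopt_{\tpsk{k}}$ respectively, both run with the \emph{same} constant parameter $\param_{\tpsk{k}}$, the exponential forgetting \rehyp{expforgetinit} yields a gap bounded by $\cst_1(1-\alpha)^{t-\tpsk{k}}\,\dist{\mem'_{\tpsk{k}}}{\memopt_{\tpsk{k}}}$, with the latter distance bounded by the diameter of $\bmaint{\tpsk{k}}$. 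Combining this geometric decay with the Lipschitz gradient bound \rehyp{contgrad} of slope $\cst_5\,\mlipgrad{t}$ gives a summable series of per-step gradient discrepancies of total order $\mlipgrad{\tpsk{k}}$. Iterating \rehyp{updateop} then turns this into a parameter discrepancy of order $\cpliprmup\,\pas_{\tpsk{k}}\,\mlipgrad{\tpsk{k}}$, which is $\po{\pas_{\tpsk{k}}\flng{\tpsk{k}}}$ by the condition $\mlipgrad{t}\ll\flng{t}$ in \rehyp{spdes}. The step-size mismatch $\pas_{\tpsk{k+1}}/\pas_{\tpsk{k}}=1+\po{1/\fmdper{\tpsk{k}}}$, combined with the gradient magnitude bound $\fmdper{t}$ from \rehyp{boundedgrads}, contributes another $\po{\pas_{\tpsk{k}}\flng{\tpsk{k}}}$ error.

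Plugging these reconciliation estimates into the two parts of \rehyp{optimprm} and applying the triangle inequality yields the recursion
\begin{equation*}
\dist{\param_{\tpsk{k+1}}}{\paramopt}\leq\paren{1-\mvpmin\,\pas_{\tpsk{k}}\,\flng{\tpsk{k}}}\,\dist{\param_{\tpsk{k}}}{\paramopt}+\po{\pas_{\tpsk{k}}\,\flng{\tpsk{k}}},
\end{equation*}
valid uniformly for $\cdvp\in(0,\cdvpmaxconv]$ and large enough $k$. The divergence of $\sum_t\pas_t$ from \rehyp{spdes} regroups telescopically into divergence of $\sum_k\pas_{\tpsk{k}}\flng{\tpsk{k}}$, and a standard lemma on scalar recursions of the form $d_{k+1}\leq(1-\beta_k)d_k+\po{\beta_k}$ with $\beta_k\to 0$ and $\sum_k\beta_k=+\infty$ then yields $d_k\to 0$. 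Convergence of the full sequence $\param_t$, not just the subsequence $\param_{\tpsk{k}}$, follows from the per-step update magnitude $\cpliprmup\,\pas_{\tpsk{k+1}}\,\fmdper{t}\to 0$ granted by \rehyp{spdes}. The hard part is step two: certifying that all reconciliation errors are $\po{\pas_{\tpsk{k}}\flng{\tpsk{k}}}$ \emph{uniformly} over $\mem'_{\tpsk{k}}\in\bmaint{\tpsk{k}}$ and $\cdvp\in(0,\cdvpmaxconv]$, and in particular closing the induction so a single $\cdvpmaxconv$ keeps $\param_{\tpsk{k}}$ forever inside the safety radius $\rayoptct/3$ against the cumulative drift of the $\po{\cdot}$ terms.
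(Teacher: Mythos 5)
Your proposal follows essentially the same route as the paper's proof: track $\param_{\tpsk{k}}$, certify the induction that keeps it in the safety zone $\{\dist{\cdot}{\paramopt}\leq\rayoptct/3\}$ via a small-$\cdvp$ transient argument (the paper's \relem{contempsfiniprmprodalgortrl}), reconcile the reset state $\mem'_{\tpsk{k}}$ against $\memopt_{\tpsk{k}}$ via exponential forgetting plus the Lipschitz gradient bound (wrapped in the paper as \relem{contecarttrajmprmqidiff}, yielding $O(\mlipgrad{\tpsk{k}}\,\pas_{\tpsk{k}})=\po{\pas_{\tpsk{k}}\flng{\tpsk{k}}}$), invoke the two parts of \rehyp{optimprm} and the triangle inequality to get the contraction (the paper's \relem{propcentechtpstpskopenloop}), and close with the scalar recursion lemma \relem{arithgeomconv} and the vanishing per-step update size. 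The one organizational difference is your treatment of the step-size mismatch: you bound it as an additional $\po{\pas_{\tpsk{k}}\flng{\tpsk{k}}}$ reconciliation error using the sup/inf control from \rehyp{spdes}, whereas the paper first proves that the piecewise-constant schedule $\prn_t\deq\pas_{\tpsk{k+1}}$ for $t\in(\tpsk{k};\tpsk{k+1}]$ itself satisfies \rehyp{spdes} (\relem{pascstmorceaux}), so every earlier lemma automatically carries over; the latter also takes care, in one stroke, of re-checking auxiliary facts such as the stable-tube retention time $\thorizo$ that your direct error estimate must re-verify by hand.
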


The open-loop algorithm models TBPTT when
$\paramupdate_t\cpl{\param}{\vtanc}=\param-\vtanc$. For more complicated
$\paramupdate$, we would also have the option to update the parameter
once with the sum of gradients via
$\param_{\tpsk{k+1}}=\paramupdate_{\tpsk{k+1}}(\param_{\tpsk{k}},\pas_{\tpsk{k+1}}\,\sum_{\tpsk{k}+1}^{\tpsk{k+1}}\,
\vtanc_t)$ instead of applying $\paramupdate$ at every step. In
general, the difference between these two options is of second-order, as shown in the course of the proofs.

\section{Proof of Convergence for the Abstract Algorithm}
\label{sec:procvabstalgo}

We now turn to the proof of Theorems~\ref{thm:cvalgopti}, \ref{thm:cvalgopti_noisy}
and~\ref{thm:cvalgoboucleouverte}. The notation and assumptions are as in
Section~\ref{sec:absontadsys}.

The proof proceeds in three main stages. First, we derive a priori
bounds on the trajectories. Then, we quantify the amount by which
trajectories diverge from each other as time goes on. This divergence
will be negated by the contractivity properties satisfied around the
local optimum. Finally, we are able to prove convergence.

\begin{remarque}
\label{rem:timeshift}
All statements in the first two subsections  can be made to start at an arbitrary time
$\tinit\geq 0$ rather than at time $0$, by applying them to the
operators $\Algo_{t+\tinit}$, $\sm{V}_{t+\tinit}$,
$\paramupdate_{t+\tinit}$, stepsizes $\pas_{t+\tinit}$, etc., which
satisfy the same assumptions as those using $\tinit=0$.
\end{remarque}

\begin{remarque}
\label{rem:O}
In all proofs in the text, when we write $O()$, the constants
implied in the $O()$ notation depend only on the constants
explicitly appearing in the
assumptions and on the constants implied in those $O()$ appearing in the
assumptions. In particular, the $O()$ notation is always uniform over other quantities of
interest such as $\param$, $\mem$, $\jope$, $\cdvp$, etc.
\end{remarque}

\subsection{\emph{A Priori} Bounds on Trajectories}

\subsubsection{Admissible Learning Rates}

First, we define the maximum overall learning rate we will consider in
the proofs. The bound depends on the magnitude of the gradientsi in the
assumptions. (This is not yet the maximum learning rate allowed for the
final convergence theorem, for which further constraints will be needed.)

\begin{definition}[Bound on the learning rate]
\label{def:bornepas}
We define
\begin{equation*}
	\brnp\deq 
	\min\paren{1,\frac{1}{\sup_{s\geq 1}\,\sched_s\,\fmdper{s}}} \min\left(1,\frac{r_\Tangent}
{
\sup_{t\geq 1}\,\sup_{\param\in \boctopt}\,\sup_{\mem\in \bmaint{t}}
	\norm{\sm{V}_t(\param,\mem)}\fmdper{t}^{-1}
}\right)
\end{equation*}
where $r_\Tangent$ is the value from \rehyp{updateop}.
\end{definition}

\begin{remarque}
	By the second point of \rehyp{spdes}, the sequence
	$\paren{\sched_s\,\fmdper{s}}$ is bounded, so that the supremum
	of the $\sched_s\,\fmdper{s}$'s is well-defined. By
	\rehyp{boundedgrads}, the supremum of
	$\norm{\sm{V}_t(\param,\mem)}\fmdper{t}^{-1}$ is finite.
Therefore, $\brnp>0$.
\end{remarque}

\begin{corollaire}
\label{cor:normpasv}
Let $t\geq 1$, and $\vtanc_t\in \bvtg{t}$. Then $\norm{\vtanc_t}\leq \fmdper{t}
\,r_\Tangent/\brnp$.

Moreover, for
any $0\leq \cdvp\leq \brnp$, for any $t\geq 1$, for any $\vtanc \in
\bvtg{t}$ we have $\pas_t\, \vtanc\in \bvtg{t}$ and $\norm{\pas_t \,\vtanc}\leq
r_\Tangent$.
\end{corollaire}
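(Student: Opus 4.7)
The plan is to prove this corollary by directly unpacking the definition of $\brnp$ (Def.~\ref{def:bornepas}), which is a product of two factors, each designed to control one of the two claimed bounds. Both claims follow from elementary algebraic manipulation; there is no substantial obstacle here, but some care is needed to track which factor of $\brnp$ controls which quantity.

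For the first claim, I would begin by noting that $\vtanc_t \in \bvtg{t}$ implies $\norm{\vtanc_t} \leq \sup_{\param\in\boctopt,\,\mem\in\bmaint{t}} \norm{\sm{V}_t(\param,\mem)}$ by the very definition of the ball $\bvtg{t}$ given in \rehyp{boundedgrads}. Setting $M \deq \sup_{t\geq 1}\sup_{\param,\mem} \norm{\sm{V}_t(\param,\mem)}\fmdper{t}^{-1}$, the factor $\min(1,r_\Tangent/M)$ appearing in Def.~\ref{def:bornepas} gives $\brnp \leq r_\Tangent/M$, hence $M \leq r_\Tangent/\brnp$. Multiplying by $\fmdper{t}$ yields $\norm{\vtanc_t} \leq \fmdper{t}\,M \leq \fmdper{t}\,r_\Tangent/\brnp$, which is the desired bound.

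For the second claim, I would write $\pas_t = \cdvp\,\sched_t$ with $\sched_t \in [0,1]$ and $\cdvp \leq \brnp \leq 1$ (since each factor in the definition of $\brnp$ is at most $1$). Thus $\pas_t \leq 1$, and since $\bvtg{t}$ is a ball centered at the origin, $\norm{\pas_t\,\vtanc} = \pas_t\,\norm{\vtanc} \leq \norm{\vtanc}$ implies $\pas_t\,\vtanc \in \bvtg{t}$. For the second norm bound, setting $N \deq \sup_{s\geq 1} \sched_s\,\fmdper{s}$ and letting $a \deq \min(1,1/N)$, $b \deq \min(1,r_\Tangent/M)$ so that $\brnp = ab$, I observe that $aN \leq 1$ and $bM \leq r_\Tangent$ hold by inspection of both branches of each minimum. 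Then
\begin{equation*}
\norm{\pas_t\,\vtanc} \leq \cdvp\,\sched_t\,\norm{\vtanc} \leq \cdvp\,\sched_t\,\fmdper{t}\,M \leq \cdvp\,N\,M \leq \brnp\,NM = (aN)(bM) \leq r_\Tangent,
\end{equation*}
which is the claimed bound.

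The argument is essentially bookkeeping, so no step is genuinely hard; the only mild pitfall would be trying to prove $\norm{\pas_t\,\vtanc} \leq r_\Tangent$ using just one of the two factors of $\brnp$ (which would give only $r_\Tangent/\brnp$). Both factors of the product definition are truly needed, one to neutralize $N$ coming from the stepsize schedule and one to neutralize $M$ coming from the gradient magnitude.
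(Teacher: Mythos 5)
Your proof is correct and follows essentially the same route as the paper's: unpack the two factors in the definition of $\brnp$, use the second factor to bound $\norm{\vtanc_t}$, and chain $\cdvp\leq\brnp$, $\sched_t\fmdper{t}\leq\sup_s\sched_s\fmdper{s}$, and $\norm{\vtanc}\leq\fmdper{t}\,M$ for the final bound. The only cosmetic difference is in showing $\pas_t\leq 1$: you use $\brnp\leq 1$ together with $\sched_t\leq 1$, while the paper argues $\brnp\leq 1/(\sched_t\fmdper{t})\leq 1/\sched_t$ via $\fmdper{t}\geq 1$; both are equally valid.
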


\begin{proof}
The first assertion is true by definition of $\brnp$: indeed $\brnp\leq
r_\Tangent/(\norm{\vtanc_t}\fmdper{t}^{-1})$ for any $\vtanc_t\in
\bvtg{t}$, by definition of $\bvtg{t}$.

If $\cdvp\leq \brnp$, then $\pas_t\leq 1$. Indeed, $\pas_t=\cdvp\sched_t$
and $\brnp\leq 1/(\sched_t \fmdper{t})\leq 1/\sched_t$ because
$\fmdper{t}\geq 1$ as a scale function. This proves that $\pas_t\,
\vtanc\in \bvtg{t}$ if $\vtanc \in
\bvtg{t}$.

For the last assertion, for any $t\geq 1$, we have
\begin{equation*}
\ba
\nrm{\pas_t\,\vtanc}&\leq \brnp\,\sched_t\,\nrm{\vtanc}\\
	&\leq \paren{\sup_{s\geq 1}\,\sched_s\,\fmdper{s}}^{-1} \sched_t\,\frac{r_\Tangent\,\nrm{\vtanc}}{\sup_{s\geq 1}\,\sup_{\param\in \boctopt}\,\sup_{\mem\in \bmaint{s}}\norm{\sm{V}_s(\param,\mem)}\fmdper{s}^{-1}}\\
	&\leq \paren{\sup_{s\geq 1}\,\sched_s\,\fmdper{s}}^{-1}\,\sched_t\,\frac{r_\Tangent\,\nrm{\vtanc}}{\sup_{\param\in \boctopt}\,\sup_{\mem\in \bmaint{t}}\norm{\sm{V}_t(\param,\mem)}\fmdper{t}^{-1}}\\
	&\leq \paren{\sup_{s\geq 1}\,\sched_s\,\fmdper{s}}^{-1}\,\sched_t\,r_\Tangent\,\fmdper{t}\\
	&\leq r_\Tangent.  \ea
\end{equation*}\end{proof}



\subsubsection{Short-Time Stability}

In this subsection, we do not assume that \rehyp{paramlip},
\rehyp{expforgetinit} or \rehyp{optimprm}
hold.

\begin{corollaire}[Stability of states]
\label{cor:stablestates}
Let $(\param_t)$ be a sequence of parameters in $\boctopt$, and let
$\mem_0\in \bmaint{0}$. Let $(\mem_t)$ be the trajectory associated with
$(\param_t)$ starting at $\mem_0$. Then
for all $t \geq 0$,
$\mem_t \in \bmaint{t}$.
\end{corollaire}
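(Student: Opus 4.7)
The plan is to prove this by a direct induction on $t$, invoking only the stability clause of the stable tube definition (Definition~\ref{def:stabletube}, item~1). No contractivity, no Lipschitz bound, and no local optimality assumption are required, which matches the remark that \rehyp{paramlip}, \rehyp{expforgetinit}, and \rehyp{optimprm} are not in force for this subsection.

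For the base case, $\mem_0 \in \bmaint{0}$ holds by hypothesis. For the inductive step, suppose $\mem_{t-1}\in \bmaint{t-1}$ for some $t\geq 1$. By definition of the trajectory associated with $(\param_t)$, we have
\begin{equation*}
\mem_t = \Algo_t(\param_{t-1},\mem_{t-1}).
\end{equation*}
Since $\param_{t-1}\in \boctopt$ by hypothesis and $\mem_{t-1}\in\bmaint{t-1}$ by the induction hypothesis, the stability property of the stable tube yields $\Algo_t(\param_{t-1},\mem_{t-1}) \in \bmaint{t}$, hence $\mem_t\in \bmaint{t}$.

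There is no main obstacle here: the statement is essentially a restatement of the stability clause of \rehyp{stableballs}, iterated along a parameter sequence $(\param_t)\subset \boctopt$. The only subtlety is the time-shift remark (\rerem{timeshift}): the analogous corollary will be used later starting from an arbitrary time $\tinit\geq 0$ with $\mem_{\tinit}\in\bmaint{\tinit}$, and the same one-line induction applies, since the assumptions are invariant under the shift $t\mapsto t+\tinit$.
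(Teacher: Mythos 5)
Your proof is correct and matches the paper's argument exactly: the paper's proof is literally ``By induction from Assumption~\ref{hyp:stableballs},'' and you have simply spelled out the base case and the inductive step. The note about the time-shift (\rerem{timeshift}) is a sensible observation but not needed for the statement as written.
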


\begin{proof}
By induction from Assumption~\ref{hyp:stableballs}.
\end{proof}

\begin{corollaire}[Smoothness of parameter updates]
\label{cor:regupdate2}
\label{cor:majecartsmaj2}
There exists $\majmpc > 0$ such that,
for any $t\geq 1$, for any parameters $\param,\param'\in \boctopt$, for
any $\vtanc,\vtanc'\in \bvtg{t}$, for any $0 \leq \cdvp \leq \brnp$,
\begin{equation*}
\dist{\paramupdate_t\paren{\param,\pas_t\,\vtanc}}{\paramupdate_t\paren{\param',\pas_t\,\vtanc'}}
\leq \dist{\prmctrl}{\param'} + \majmpc\,\pas_t\,\fmdper{t}.
\end{equation*}
\end{corollaire}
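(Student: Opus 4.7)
The plan is to deduce the corollary directly from \rehyp{updateop} applied to the arguments $\pas_t\vtanc$ and $\pas_t\vtanc'$, after verifying that the norm bound needed by that assumption holds.

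First I would observe that by \recor{normpasv}, for any $\cdvp\leq \brnp$, both $\pas_t\vtanc$ and $\pas_t\vtanc'$ lie in $\bvtg{t}$ and satisfy $\nrm{\pas_t\vtanc},\nrm{\pas_t\vtanc'}\leq r_\Tangent$. This is precisely what is needed to apply the second item of \rehyp{updateop} with tangent arguments $\pas_t\vtanc$ and $\pas_t\vtanc'$, which yields
\begin{equation*}
\dist{\paramupdate_t\paren{\param,\pas_t\vtanc}}{\paramupdate_t\paren{\param',\pas_t\vtanc'}}
\leq \dist{\param}{\param'} + \cpliprmup\paren{\pas_t\dist{\vtanc}{\vtanc'}+\pas_t(\nrm{\vtanc}+\nrm{\vtanc'})\dist{\param}{\param'}}.
\end{equation*}

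The second step is to control the extra terms by $O(\pas_t\,\fmdper{t})$. By \rehyp{boundedgrads} and the definition of $\bvtg{t}$, there exists a constant $C>0$ such that $\nrm{\vtanc}\leq C\,\fmdper{t}$ for any $\vtanc\in\bvtg{t}$, uniformly in $t$. Hence $\nrm{\vtanc}+\nrm{\vtanc'}\leq 2C\,\fmdper{t}$ and $\dist{\vtanc}{\vtanc'}\leq 2C\,\fmdper{t}$. Moreover, $\param,\param'\in\boctopt$ implies $\dist{\param}{\param'}\leq 2\rayoptct$. Substituting these bounds gives
\begin{equation*}
\cpliprmup\paren{\pas_t\dist{\vtanc}{\vtanc'}+\pas_t(\nrm{\vtanc}+\nrm{\vtanc'})\dist{\param}{\param'}} \leq 2\cpliprmup C\,(1+2\rayoptct)\,\pas_t\,\fmdper{t},
\end{equation*}
so we may set $\majmpc\deq 2\cpliprmup C(1+2\rayoptct)$, which is a constant independent of $t$, $\param$, $\param'$, $\vtanc$, $\vtanc'$ and $\cdvp$.

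There is no serious obstacle here; the whole argument is a direct substitution into the Lipschitz bound of \rehyp{updateop} followed by the uniform gradient bound of \rehyp{boundedgrads} and the trivial bound $\dist{\param}{\param'}\leq\diam(\boctopt)$. The only mild subtlety is remembering to invoke \recor{normpasv} so that the ``small tangent'' hypothesis of \rehyp{updateop} is legitimately satisfied when $\cdvp\leq\brnp$; this is exactly why $\brnp$ was defined using $r_\Tangent$ in \redef{bornepas}.
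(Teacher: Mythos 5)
Your proof is correct and follows essentially the same path as the paper's: verify via \recor{normpasv} that $\pas_t\vtanc,\pas_t\vtanc'$ fall within the radius $r_\Tangent$ required by \rehyp{updateop}, apply that assumption with arguments $\pas_t\vtanc$ and $\pas_t\vtanc'$, then bound the residual terms using the $\go{\fmdper{t}}$ gradient bound and $\diam(\boctopt)\leq 2\rayoptct$. The only cosmetic difference is that the paper takes the specific constant $C=r_\Tangent/\brnp$ directly from \recor{normpasv} (yielding $\majmpc=\cpliprmup(2r_\Tangent+4r_\Tangent\rayoptct)/\brnp$), whereas you name a generic $C$ from \rehyp{boundedgrads}; the two constants agree once unraveled.
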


\begin{proof}
By \recor{normpasv}, for $\vtanc\in \bvtg{t}$, we have $\norm{\pas_t
\vtanc} \leq \pas_t\, \fmdper{t} \,r_\Tangent/\brnp$, 
and likewise for $\vtanc'$.
Then the assertion
follows from \rehyp{updateop} by setting $\majmpc\deq
\cpliprmup(2r_\Tangent+4r_\Tangent \rayoptct 
)/\brnp$.
\end{proof}

\begin{definition}[Safe time horizon for staying in $\boctopt$]
\label{def:tvalpc}
Let $\sm{\pas}=\paren{\pas_s}$ be a stepsize sequence, and let
$\tinit\geq 0$. We define
\begin{equation*}
\thoriz{\tinit}\paren{\sm{\pas}}=\inf \enstq{t \geq \tinit+1}{
\som{s=\tinit+1}{t} \pas_s \,\fmdper{s}> \frac{\rayoptct}{3\majmpc}},
\end{equation*}
or $\thoriz{\tinit}=\infty$ if this set is empty.
\end{definition}

The next lemma shows that a parameter trajectory $\param_t =
\paramupdate_t\paren{\param_{t-1},\,\pas_t\,\vtanc_t}$ stays in the
stable tube for a time at least $\thorizo(\sm{\pas})$, provided it is
computed from states $\mem_t$ and parameters $\bar \param_t$ within the stable tube.

\begin{lemme}[Parameter trajectories stay in the stable tube for a
time $\thorizo(\sm{\pas})$.]
\label{lem:ecartdeuxtraj}
\label{lem:maintientpsfinibocont}
Let $\sm{\pas}=(\pas_t)_{t\geq 1}$ be a stepsize sequence with $\cdvp\leq \brnp$.

Let $(\param_t)$, $(\bar \param_t)$ be sequences of parameters, $(\mem_t)$
a sequence of states, and $(v_t)$ a sequence of gradients, such that 
$\param_0\in\boctopt$ with $\dist{\param_0}{\paramopt}\leq \rayoptct/3$,
$\bar\param_0\in \boctopt$,
$\mem_0\in \bmaint{0}$, and for any $t\geq 1$, 
\begin{equation*}
\begin{cases}
\mem_t = \Algo_t(\bar \param_{t-1},\mem_{t-1}) &\text{ or } \mem_t\in
\bmaint{t} \\
\vtanc_t =\sm{V}_{t}\paren{\bar \param_{t-1},\,\mem_t} &\text{ or } \vtanc_t \in
\bvtg{t}\\ 
\param_t = \paramupdate_t\paren{\param_{t-1},\,\pas_t\,\vtanc_t}\\
\bar \param_t = \param_s \text{ for some $s\leq t$,}&\text{ or } \bar\param_t \in \boctopt.
\end{cases}
\end{equation*}

Then for all $1 \leq t < \thorizo(\sm{\pas})$, the trajectory lies in the
stable tube: $\param_t, \bar\param_t\in \boctopt$,
$\vtanc_t\in
\bvtg{t}$, and $\mem_t\in \bmaint{t}$.
\end{lemme}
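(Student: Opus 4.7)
The plan is to proceed by strong induction on $t$, proving simultaneously that $\param_t, \bar\param_t \in \boctopt$, $\mem_t \in \bmaint{t}$, and $\vtanc_t \in \bvtg{t}$. The base case $t=0$ is given by the hypotheses on $\param_0$, $\bar\param_0$, and $\mem_0$ (and there is no gradient at $t=0$). For the inductive step at time $t$, assuming all conclusions hold for times $0,\ldots,t-1$ and that $t-1 < \thorizo(\sm{\pas})$, I would verify the four memberships in order: first $\mem_t \in \bmaint{t}$, then $\vtanc_t \in \bvtg{t}$, then $\bar\param_t \in \boctopt$, and finally $\param_t \in \boctopt$ (with the sharper estimate $\dist{\param_t}{\paramopt} \leq 2\rayoptct/3$, which is actually what drives the induction).

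For $\mem_t$: either $\mem_t\in \bmaint{t}$ by assumption, or $\mem_t = \Algo_t(\bar\param_{t-1}, \mem_{t-1})$ with $\bar\param_{t-1}\in\boctopt$ and $\mem_{t-1}\in\bmaint{t-1}$ by induction, so the stability part of \rehyp{stableballs} (or equivalently \recor{stablestates}) yields $\mem_t\in\bmaint{t}$. For $\vtanc_t$: either directly by assumption, or $\vtanc_t = \sm{V}_t(\bar\param_{t-1},\mem_t)$ with $\bar\param_{t-1}\in\boctopt$ and $\mem_t\in\bmaint{t}$, so $\vtanc_t\in\bvtg{t}$ by definition of $\bvtg{t}$ in \rehyp{boundedgrads}. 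For $\bar\param_t$: either $\bar\param_t\in\boctopt$ by assumption, or $\bar\param_t = \param_s$ for some $s\leq t$, which belongs to $\boctopt$ by induction (the only subtlety is that at time $t$ itself this requires $\param_t\in\boctopt$, so $\bar\param_t$ should be handled after $\param_t$, or we note that the sharper bound $\dist{\param_t}{\paramopt}\leq 2\rayoptct/3$ that we prove below applies to $s\leq t$ uniformly).

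The main step is controlling $\param_t$. Here I would apply \recor{regupdate2} with $\param = \param_{t-1}$, $\param'=\paramopt$, $\vtanc=\vtanc_t$, and $\vtanc'=0$; the latter lies in $\bvtg{t}$ since the ball is centered, and $\paramupdate_t(\paramopt,0)=\paramopt$ by the first point of \rehyp{updateop}. This uses $\cdvp\leq\brnp$ to apply the corollary. This yields
\begin{equation*}
\dist{\param_t}{\paramopt} \leq \dist{\param_{t-1}}{\paramopt} + \majmpc\,\pas_t\,\fmdper{t}.
\end{equation*}
Iterating from $t=0$, since $\dist{\param_0}{\paramopt}\leq \rayoptct/3$, gives
\begin{equation*}
\dist{\param_t}{\paramopt} \leq \tfrac{\rayoptct}{3} + \majmpc\som{s=1}{t} \pas_s\,\fmdper{s}.
\end{equation*}
Now the definition of $\thorizo(\sm{\pas})$ (\redef{tvalpc}) as the first $t$ for which $\sum_{s=1}^t \pas_s\fmdper{s}$ exceeds $\rayoptct/(3\majmpc)$ means that for $t<\thorizo(\sm{\pas})$ the sum is bounded by $\rayoptct/(3\majmpc)$, so $\dist{\param_t}{\paramopt}\leq 2\rayoptct/3 < \rayoptct$, establishing $\param_t\in\boctopt$ and simultaneously keeping us in the regime $\dist{\param_t}{\paramopt}\leq\rayoptct$ needed to invoke \recor{regupdate2} at the next step.

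There is no real obstacle here; the argument is a bookkeeping induction. The only delicate points are: (i) making sure that at each step the hypotheses of \recor{regupdate2} (namely $\param,\param'\in\boctopt$ and $\pas_t\vtanc,\pas_t\vtanc'$ satisfying the norm constraint from \rehyp{updateop}) are all in force, which is handled by the induction and by \recor{normpasv} guaranteeing $\pas_t\vtanc\in\bvtg{t}$ under $\cdvp\leq\brnp$; and (ii) being careful about the case $\bar\param_t=\param_t$ (i.e.\ $s=t$), which is why the bound on $\param_t$ must be established before concluding on $\bar\param_t$ at the current index.
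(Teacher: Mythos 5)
Your proof matches the paper's own argument: the same induction on $t$ maintaining the sharper bound $\dist{\param_t}{\paramopt}\leq\frac{\rayoptct}{3}+\majmpc\sum_{s=1}^t\pas_s\fmdper{s}$, the same use of \recor{regupdate2} applied to the pairs $(\param_{t-1},\pas_t\vtanc_t)$ and $(\paramopt,0)$, and the same appeal to the definition of $\thorizo$ to keep the cumulative drift below $\rayoptct/3$. The only cosmetic difference is that you spell out the ordering of the four memberships and the $s=t$ edge case for $\bar\param_t$ more explicitly than the paper does; this is correct and does not alter the argument.
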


\begin{proof}
Let us prove by induction on $t$ that the conclusion holds and that moreover, for $t \geq 1$, we have
\begin{equation*}
\dist{\param_t}{\paramopt}\leq \frac{\rayoptct}{3}+\majmpc \,
\sum_{s=1}^{t} \pas_s\,\fmdper{s}.
\end{equation*}

This holds at time $t=0$ by assumption.

By 
\rehyp{stableballs},
$\mem_t=\Algo_{t}\paren{\bar\param_{t-1},\,\mem_{t-1}}$ belongs to $\bmaint{t}$ provided the
conclusion holds at time $t-1$.
By \rehyp{boundedgrads}, $\vtanc_t$ belongs to $\bvtg{t}$ provided the
conclusion holds at time $t-1$.

Then from \recor{regupdate2} applied to $(\param_{t-1},\pas_t\vtanc_t)$ and
$(\paramopt,0)$, we find
\begin{equation*}
\ba
\dist{\param_{t}}{\pctrlopt} &= \dist{\paramupdate_t\cpl{\param_{t-1}}{\pas_t\,\vtanc_t}}{\pctrlopt} \\
&\leq \dist{\param_{t-1}}{\pctrlopt} + \majmpc \, \pas_t\,\fmdper{t}
\\
&\leq \frac{\rayoptct}{3}+\majmpc \,
\sum_{s=1}^{t} \pas_s\,\fmdper{s}
\ea
\end{equation*}
by the induction hypothesis at time $t-1$. For $t< \thorizo$, by
definition of $\thorizo$, this is at most 
$\frac{2 \, \rayoptct}{3}$. So $\param_t$ belongs to $\boctopt$ and the
induction hypothesis holds at time $t$.
\end{proof}

\begin{lemme}[Parameter trajectories stay in the stable tube for a
time $\thorizo(\sm{\pas})$ for trajectories which respect the stable tube]
\label{lem:maintientpsfinibocontrandomtraj}
Let $\sm{\pas}=(\pas_t)_{t\geq 1}$ be a stepsize sequence with $\cdvp\leq \brnp$.

Let $(\mem_t,\,\vtanc_t,\,\param_t)_{t\geq 0}$ be
a random trajectory which respects the stable tube, in the sense of
\redef{randomtraj}. Assume that
$\param_0\in\boctopt$ with $\dist{\param_0}{\paramopt}\leq \rayoptct/3$
and $\mem_0\in \bmaint{0}$.
Then for all $1 \leq t < \thorizo(\sm{\pas})$, the trajectory lies in the
stable tube: with probability $1$, $\param_t \in \boctopt$,
$\vtanc_t\in
\bvtg{t}$, and $\mem_t\in \bmaint{t}$.
\end{lemme}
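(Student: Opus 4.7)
The plan is to mirror the proof of \relem{maintientpsfinibocont}, doing an induction on $t$, but using the ``respects the stable tube'' property of the random trajectory (\redef{randomtraj}) to handle the state update step, which is the only place where the previous deterministic proof used the fact that $\mem_t = \Algo_t(\bar\param_{t-1}, \mem_{t-1})$. Since the conclusion is asserted only with probability $1$ and involves a countable union (over $t < \thorizo$) of almost-sure events, the union will remain of full measure.

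More precisely, I would prove by induction on $1\leq t < \thorizo(\sm{\pas})$ the joint statement that, with probability $1$,
\begin{equation*}
\mem_t \in \bmaint{t},\qquad \vtanc_t \in \bvtg{t},\qquad \param_t \in \boctopt,\qquad \dist{\param_t}{\paramopt} \leq \frac{\rayoptct}{3} + \majmpc \sum_{s=1}^t \pas_s\,\fmdper{s}.
\end{equation*}
The base case at $t=0$ holds by assumption. For the induction step, assume the conclusion holds at time $t-1$ with probability $1$; in particular $\param_{t-1}\in\boctopt$ and $\mem_{t-1}\in\bmaint{t-1}$ a.s. The stable-tube-respecting property of the random trajectory (\redef{randomtraj}) then yields $\mem_t\in\bmaint{t}$ with probability $1$. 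With $\param_{t-1}\in\boctopt$ and $\mem_t\in\bmaint{t}$, \rehyp{boundedgrads} forces $\vtanc_t = \sm{V}_t(\param_{t-1},\mem_t)\in\bvtg{t}$ almost surely.

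For the parameter update, \recor{regupdate2} applied to $(\param_{t-1},\pas_t\vtanc_t)$ and $(\paramopt,0)$ gives
\begin{equation*}
\dist{\param_t}{\paramopt} = \dist{\paramupdate_t\paren{\param_{t-1},\pas_t\vtanc_t}}{\paramopt} \leq \dist{\param_{t-1}}{\paramopt} + \majmpc\,\pas_t\,\fmdper{t},
\end{equation*}
which by the induction hypothesis at $t-1$ is bounded by $\rayoptct/3 + \majmpc\sum_{s=1}^t \pas_s\,\fmdper{s}$. For $t < \thorizo(\sm{\pas})$, the definition of $\thorizo$ (\redef{tvalpc}) bounds this by $2\rayoptct/3 < \rayoptct$, so $\param_t\in\boctopt$ almost surely, completing the induction step.

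There is no real obstacle here: the only subtlety is keeping track of the almost-sure qualifier through the induction. Since we only iterate over $t < \thorizo(\sm{\pas})$ (a deterministic bound depending only on $\sm{\pas}$, not on the random trajectory), we take a countable intersection of events of probability $1$, which remains of probability $1$; the joint conclusion then holds with probability $1$ for all such $t$ simultaneously.
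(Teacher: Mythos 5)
Your proposal is correct and takes exactly the approach the paper intends: the paper's proof of this lemma simply says ``The proof is identical to that of \relem{maintientpsfinibocont},'' and your write-up carries out precisely that induction, substituting the ``respects the stable tube'' property of \redef{randomtraj} for the deterministic state update. Your care with the countable intersection of probability-$1$ events is a reasonable sharpening of the detail that the paper leaves implicit.
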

\begin{proof}
The proof is identical to that of \relem{maintientpsfinibocont}.
\end{proof}

Note that by taking the overall learning rate
$\cdvp$ small enough, we can ensure that $\thorizo(\sm{\pas})$ is arbitrarily
large. We shall need this later, in case the contractivity property of
gradient descent kicks in late, so we state the following.

\begin{lemme}[Small learning rates for arbitrary control horizon]
\label{lem:contempsfiniprmprodalgortrl}
Let $T > 0$. Then there exists $\cdvp^T > 0$ with the following property:

For any $0\leq \cdvp\leq \cdvp^T$,
for any $\param_0\in \boctopt$ such that $\dist{\param_0}{\paramopt}\leq
\frac{\rayoptct}{4}$ and any $\mem_0\in \bmaint{0}$,
consider a trajectory $(\param_t)$, $(\mem_t)$, $(\vtanc_t)$ such
that, for all $t\geq 1$,
\begin{equation*}
\left\lbrace \ba 
{\mem_t}&=\Algo_{t}\paren{\param_{s},\,\mem_{t-1}} \text{ for some } s\leq t-1, \qquad \text{ or }\mem_t\in \bmaint{t},\\
\vtanc_t &=\sm{V}_{t}\paren{\param_{s},\,\mem_t} \text{ for some } s\leq t-1, \qquad \text{ or }\vtanc_t\in B_{\Tangent_t},\\ 
\param_{t} &= \paramupdate_t\paren{\param_{t-1}, \pas_t\,\vtanc_t}.
\ea \right.
\end{equation*}

Then $\param_T\in\boctopt$, $\dist{\param_T}{\paramopt}\leq
\frac{\rayoptct}{3}$, and $\mem_T\in\bmaint{T}$.
\end{lemme}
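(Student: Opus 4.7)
The plan is to reduce this statement to the short-time stability lemma (Lemma~\ref{lem:maintientpsfinibocont}) and exploit the fact that $\pas_t = \cdvp \, \sched_t$ depends linearly on the overall learning rate $\cdvp$. First I would note that the finite sum $\Sigma_T \deq \sum_{s=1}^{T} \sched_s \, \fmdper{s}$ is a fixed positive real number depending only on $T$ (and on the fixed schedule $(\sched_s)$ and scale function $\fmdper{}$). Setting
\begin{equation*}
\cdvp^T \deq \min\!\left(\brnp,\ \frac{\rayoptct}{12\,\majmpc\,\Sigma_T}\right),
\end{equation*}
the choice $\cdvp\leq\cdvp^T$ gives $\cdvp\,\majmpc\,\Sigma_T \leq \rayoptct/12$, which in particular implies $\thorizo(\sm{\pas}) > T$ in the sense of Definition~\ref{def:tvalpc}. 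The stronger factor $1/12$ (versus the $1/3$ appearing in the definition of $\thorizo$) leaves room for the sharper conclusion $\dist{\param_T}{\paramopt}\leq \rayoptct/3$ demanded here.

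Then I would proceed by induction on $t\in\{0,1,\dots,T\}$, mirroring the proof of Lemma~\ref{lem:maintientpsfinibocont}, to establish simultaneously
\begin{equation*}
\param_t\in\boctopt,\quad \mem_t\in\bmaint{t},\quad \vtanc_t\in\bvtg{t},\quad \dist{\param_t}{\paramopt}\leq \tfrac{\rayoptct}{4}+\majmpc \sum_{s=1}^{t} \pas_s\,\fmdper{s}.
\end{equation*}
The base case $t=0$ is by assumption. For the inductive step, whichever alternative is used to produce $\mem_t$ (direct membership in $\bmaint{t}$, or application of $\Algo_t$ to some earlier $\param_s\in\boctopt$ and $\mem_{t-1}\in\bmaint{t-1}$), Assumption~\ref{hyp:stableballs} gives $\mem_t\in\bmaint{t}$; similarly Assumption~\ref{hyp:boundedgrads} gives $\vtanc_t\in\bvtg{t}$. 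Then Corollary~\ref{cor:regupdate2} applied to the pairs $(\param_{t-1},\pas_t\vtanc_t)$ and $(\param_{t-1},0)$ yields $\dist{\param_t}{\param_{t-1}}\leq \majmpc\,\pas_t\,\fmdper{t}$, so summing and using the induction hypothesis gives the displayed recursive bound. By our choice of $\cdvp^T$, this bound is at most $\rayoptct/4 + \rayoptct/12 = \rayoptct/3 < \rayoptct$, confirming $\param_t\in\boctopt$ and closing the induction. The conclusion at $t=T$ yields exactly what is required.

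There is essentially no obstacle here: the whole statement is a quantitative finite-horizon restatement of the a priori stability machinery already developed, the only new ingredient being the elementary observation that the cumulative drift $\sum_{s=1}^T \pas_s\,\fmdper{s}$ is proportional to $\cdvp$ and hence can be made arbitrarily small on any fixed horizon by shrinking $\cdvp$. The one subtlety worth flagging is that the hypotheses allow $\mem_t$ and $\vtanc_t$ to be generated from \emph{any} earlier parameter $\param_s$ with $s\leq t-1$ rather than specifically $\param_{t-1}$; this matches the flexibility already built into Lemma~\ref{lem:maintientpsfinibocont} (where $\bar\param_t=\param_s$ for some $s\leq t$ is allowed) and causes no additional difficulty, since the induction hypothesis automatically provides $\param_s\in\boctopt$ for every $s\leq t-1$.
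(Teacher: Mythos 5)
Your proof is correct and follows essentially the same route as the paper: the paper also chooses $\cdvp^T$ so that $\cdvp^T\sum_{t=1}^T\sched_t\,\fmdper{t}\leq\rayoptct/(12\majmpc)$ and runs the identical induction, invoking Assumptions~\ref{hyp:stableballs} and~\ref{hyp:boundedgrads} and Corollary~\ref{cor:regupdate2}. The only cosmetic difference is that you apply Corollary~\ref{cor:regupdate2} to the pair $(\param_{t-1},\pas_t\vtanc_t)$ and $(\param_{t-1},0)$ and then use the triangle inequality, whereas the paper applies it directly to $(\param_{t-1},\pas_t\vtanc_t)$ and $(\paramopt,0)$; both yield the same bound.
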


\begin{proof}
Remember that $\pas_t=\cdvp\sched_t$ where $\sched_t$ is the stepsize schedule.
Define $\cdvp^T$ such that $\cdvp^T \sum_{t=1}^T \sched_t\,\fmdper{t}\leq
\frac{\rayoptct}{12\majmpc}$. Then proceed similarly to
\relem{ecartdeuxtraj}, as follows.

Let us prove by induction on $t$ that, for $t \geq 0$, we have
that $\mem_t\in\bmaint{t}$, that $\vtanc_t\in B_{\Tangent_t} $, that
$\param_s\in\boctopt$ for every $s\leq t-1$, and that, for $t\geq 1$,
\begin{equation*}
\dist{\param_t}{\paramopt}\leq \frac{\rayoptct}{4}+\majmpc \,
\sum_{s=1}^{t} \pas_s\,\fmdper{s}.
\end{equation*}

This holds at time $t=0$ by assumption.

By 
\rehyp{stableballs},
$\mem_t=\Algo_{t}\paren{\param_{s-1},\,\mem_{t-1}}$ for some $s\leq t-1$ belongs to $\bmaint{t}$ provided the
conclusion holds at time $t-1$.
By \rehyp{boundedgrads}, $\vtanc_t=\sm{V}_{t}\paren{\param_{s},\,\mem_t}$ for some $s\leq t-1$ belongs to $\bvtg{t}$ provided the
conclusion holds at time $t-1$.

Then from \recor{regupdate2} applied to $(\param_{t-1},\pas_t\vtanc_t)$ and
$(\paramopt,0)$ we find
\begin{equation*}
\ba
\dist{\param_{t}}{\pctrlopt} &= \dist{\paramupdate_t\cpl{\param_{t-1}}{\pas_t\,\vtanc_t}}{\pctrlopt} \\
&\leq \dist{\param_{t-1}}{\pctrlopt} + \majmpc \, \pas_t\,\fmdper{t}
\\
&\leq \frac{\rayoptct}{4}+\majmpc \,
\sum_{s=1}^{t} \pas_s\,\fmdper{s}
\ea
\end{equation*}
by the induction hypothesis at time $t-1$. For $t\leq T$, by
definition of $\cdvp^T$ this is at most 
$\frac{\rayoptct}{4}+\frac{\rayoptct}{12}=\frac{\rayoptct}{3}$. So $\param_t$ belongs to $\boctopt$ and the
induction hypothesis holds at time $t$.
\end{proof}

We now state a version of this lemma for random trajectories in the sense
of Definition~\ref{def:randomtraj}.

\begin{lemme}[Small learning rates for arbitrary control horizon, for random trajectories]
\label{lem:contempsfiniprmprodalgortrlrandom}
Let $T > 0$. Then there exists $\cdvp^T > 0$ with the following property. 

For any $0\leq \cdvp\leq \cdvp^T$,
for any $\param_0\in \boctopt$ such that $\dist{\param_0}{\paramopt}\leq
\frac{\rayoptct}{4}$ and any $\mem_0\in \bmaint{0}$,
for any random trajectory $(\param_t)$, $(\mem_t)$, $(\vtanc_t)$ which
starts at $(\param_0,\mem_0)$ and respects the stable tube
(Def.~\ref{def:randomtraj}),
 with probability $1$
it holds that $\param_T\in\boctopt$, $\dist{\param_T}{\paramopt}\leq
\frac{\rayoptct}{3}$, and $\mem_T\in\bmaint{T}$.
\end{lemme}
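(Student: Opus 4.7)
My plan is to mimic the proof of Lemma~\ref{lem:contempsfiniprmprodalgortrl} almost verbatim, replacing the deterministic stability argument with the ``respects the stable tube'' clause of Definition~\ref{def:randomtraj}. The essential difference between the two lemmas is that in the random setting, $\mem_t$ is not produced by a deterministic application of $\Algo_t$, but is a random variable; however, the hypothesis guarantees that $\mem_t \in \bmaint{t}$ with probability $1$ as soon as $\mem_{t-1} \in \bmaint{t-1}$ and $\param_{t-1} \in \boctopt$, which is exactly what the induction step of the deterministic proof uses.

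Concretely, I would choose $\cdvp^T$ so that $\cdvp^T \leq \brnp$ and $\cdvp^T \sum_{t=1}^{T} \sched_t\,\fmdper{t} \leq \frac{\rayoptct}{12\,\majmpc}$, with $\majmpc$ as in Corollary~\ref{cor:majecartsmaj2}. Then I would perform an induction on $t \in \{0,\ldots,T\}$ on the event
\begin{equation*}
\mathcal{E}_t \deq \left\{\mem_t \in \bmaint{t},\ \vtanc_t \in \bvtg{t},\ \param_t \in \boctopt,\ \dist{\param_t}{\paramopt}\leq \tfrac{\rayoptct}{4} + \majmpc \sum_{s=1}^{t} \pas_s\,\fmdper{s}\right\},
\end{equation*}
showing that $\Pr(\mathcal{E}_t)=1$ for every $t \leq T$. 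The base case $t=0$ holds by hypothesis on $(\param_0,\mem_0)$. For the inductive step, assuming $\mathcal{E}_{t-1}$ holds almost surely, the property ``respects the stable tube'' from Definition~\ref{def:randomtraj} gives $\mem_t \in \bmaint{t}$ almost surely; then Assumption~\ref{hyp:boundedgrads} places $\vtanc_t$ in $\bvtg{t}$ almost surely; and Corollary~\ref{cor:majecartsmaj2} applied to $(\param_{t-1},\pas_t\vtanc_t)$ and $(\paramopt,0)$ yields the parameter bound
\begin{equation*}
\dist{\param_t}{\paramopt} \leq \dist{\param_{t-1}}{\paramopt} + \majmpc\,\pas_t\,\fmdper{t} \leq \tfrac{\rayoptct}{4} + \majmpc \sum_{s=1}^{t} \pas_s\,\fmdper{s},
\end{equation*}
which by the choice of $\cdvp^T$ is bounded by $\tfrac{\rayoptct}{3} < \rayoptct$, so $\param_t \in \boctopt$.

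Taking the intersection of the finitely many almost sure events $\mathcal{E}_0,\ldots,\mathcal{E}_T$, I conclude that with probability $1$ one has simultaneously $\mem_T \in \bmaint{T}$, $\param_T \in \boctopt$ and $\dist{\param_T}{\paramopt} \leq \tfrac{\rayoptct}{3}$. There is no real obstacle here: the only subtlety is to be careful that the random-trajectory hypothesis is formulated pointwise in $t$, so its repeated application up to the fixed horizon $T$ requires only a finite union bound of null events, avoiding any delicate measurability or uniform-in-$t$ issue.
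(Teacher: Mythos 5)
Your proposal follows the same route as the paper: define $\cdvp^T$ via the same finite sum condition, and run the same induction on $t\leq T$, replacing the deterministic stability step by the ``respects the stable tube'' clause of \redef{randomtraj} to keep $\mem_t\in\bmaint{t}$ almost surely, then apply \recor{majecartsmaj2} to bound $\dist{\param_t}{\paramopt}$. Your additional remark that everything reduces to a finite intersection of almost-sure events is correct and makes the measurability harmless, as the paper implicitly assumes.
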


\begin{proof}
By Definition~\ref{def:randomtraj}, a random trajectory which respects
the stable tube satisfies, for all $t\geq 1$
\begin{equation*}
\left\lbrace \ba 
&{\mem_{t-1}}\in\bmaint{t-1}\text{ and } \param_{t-1}\in\boctopt\implies \mem_t\in\bmaint{t} \qquad \text{w.p.}\,1\\
&\vtanc_t =\sm{V}_{t}\paren{\param_{t-1},\,\mem_{t}}, \\
&\param_{t} = \paramupdate_t\paren{\param_{t-1}, \pas_t\,\vtanc_t}.
\ea \right.
\end{equation*}

Remember that $\pas_t=\cdvp\sched_t$ where $\sched_t$ is the stepsize schedule.
Define $\cdvp^T$ such that $\cdvp^T \sum_{t=1}^T \sched_t\,\fmdper{t}\leq
\frac{\rayoptct}{12\majmpc}$. Then proceed similarly to
\relem{ecartdeuxtraj}, as follows.

Let us prove by induction on $t$ that, with probability $1$, for $t \geq 0$, we have
that $\mem_t\in\bmaint{t}$, that $\vtanc_t\in B_{\Tangent_t} $, that
$\param_t\in\boctopt$ and that, for $t\geq 1$,
\begin{equation*}
\dist{\param_t}{\paramopt}\leq \frac{\rayoptct}{4}+\majmpc \,
\sum_{s=1}^{t} \pas_s\,\fmdper{s}.
\end{equation*}

This holds at time $t=0$ by assumption.

Since we assume that ${\mem_{t-1}}\in\bmaint{t-1}\text{ and }
\param_{t-1}\in\boctopt\implies \mem_t\in\bmaint{t}$, if the conclusion
holds at time $t-1$, then
$\mem_t$ belongs to $\bmaint{t}$ with probability one.
Then, by \rehyp{boundedgrads}, $\vtanc_t=\sm{V}_{t}\paren{\param_{t-1},\,\mem_{t}}$ belongs to $\bvtg{t}$ provided the
conclusion holds at time $t-1$.

Then from \recor{regupdate2} applied to $(\param_{t-1},\pas_t\vtanc_t)$ and
$(\paramopt,0)$ we find
\begin{equation*}
\ba
\dist{\param_{t}}{\pctrlopt} &= \dist{\paramupdate_t\cpl{\param_{t-1}}{\pas_t\,\vtanc_t}}{\pctrlopt} \\
&\leq \dist{\param_{t-1}}{\pctrlopt} + \majmpc \, \pas_t\,\fmdper{t}
\\
&\leq \frac{\rayoptct}{4}+\majmpc \,
\sum_{s=1}^{t} \pas_s\,\fmdper{s}
\ea
\end{equation*}
with probability $1$, by the induction hypothesis at time $t-1$. For $t\leq T$, by
definition of $\cdvp^T$ this is at most 
$\frac{\rayoptct}{4}+\frac{\rayoptct}{12}=\frac{\rayoptct}{3}$. So $\param_t$ belongs to $\boctopt$ with probability $1$ and the
induction hypothesis holds at time $t$.
\end{proof}


The proof of the next result, which controls the finite-time divergence between two sequences of parameters, is analogous to the control of the fixed point iterates in the proof of the Cauchy--Lipschitz theorem.

\begin{lemme}[Parameter updates at first order in $\pas$]
\label{lem:contordredeuxitprm2}
There exists a constant $\cst_6>0$ with the following property.

Let $\param_0,\param_0'\in \boctopt$ with $\dist{\param_0}{\paramopt}$
and $\dist{\param'_0}{\paramopt}$ at most $\rayoptct/3$, and let $\paren{\vtanc_t}$,
$\paren{\vtanc_t'}$ be two gradient sequences with $\vtanc_t,\vtanc'_t\in
\bvtg{t}$ for all $t\geq 1$. Let $\paren{\pas_t}$ be a stepsize sequence
with $\cdvp\leq \brnp$. Define by induction for $t\geq 1$,
\begin{equation*}
\param_{t} = \paramupdate_t\paren{\param_{t-1},\,\pas_t\,\vtanc_t}, \qquad
\param_{t}' = \paramupdate_t\paren{\param_{t-1}',\,\pas_t\,\vtanc_t'}.
\end{equation*}
Then for any $0 \leq t < \thorizo\paren{\sm{\pas}}$, 
\begin{equation*}
\dist{\param_t}{\param_t'} \leq 2
\dist{\param_0}{\param_0'} + \cst_6
\sum_{1\leq s \leq t} \pas_s \, \dist{\vtanc_s}{\vtanc_s'}
\end{equation*}
and
\begin{equation*}
\dist{\param_t}{\param_t'} \leq 2 \dist{\param_0}{\param_0'}
+\cst_6
\sum_{1\leq s \leq t} \pas_s\,\fmdper{s}.
\end{equation*}

In particular, taking $\vtanc'_t=0$ and $\param'_0=\param_0$, we have
\begin{equation*}
\dist{\param_t}{\param_0} \leq \cst_6  \sum_{1\leq s \leq t}
\pas_s\,\fmdper{s}.
\end{equation*}
\end{lemme}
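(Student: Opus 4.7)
The plan is to iterate the two–variable Lipschitz bound supplied by \rehyp{updateop} at each step, collect the resulting multiplicative factor via a discrete Grönwall estimate, and control the total amplification through the defining bound of $\thorizo(\sm{\pas})$.

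First, for any $t$ with $1\leq t<\thorizo(\sm{\pas})$, \relem{maintientpsfinibocont} applied separately to each trajectory (once with $\bar\param_s=\param_s$, once with $\bar\param_s=\param'_s$) ensures $\param_{t-1},\param'_{t-1}\in\boctopt$. Moreover, since $\vtanc_t,\vtanc'_t\in\bvtg{t}$ and $\cdvp\leq\brnp$, \recor{normpasv} gives $\nrm{\pas_t\vtanc_t},\nrm{\pas_t\vtanc'_t}\leq r_\Tangent$, so \rehyp{updateop} applies to the pairs $(\param_{t-1},\pas_t\vtanc_t)$ and $(\param'_{t-1},\pas_t\vtanc'_t)$, yielding
\begin{equation*}
\dist{\param_t}{\param_t'}\leq \dist{\param_{t-1}}{\param_{t-1}'}+\cpliprmup\,\pas_t\dist{\vtanc_t}{\vtanc_t'}+\cpliprmup\,\pas_t\paren{\nrm{\vtanc_t}+\nrm{\vtanc_t'}}\dist{\param_{t-1}}{\param_{t-1}'}.
\end{equation*}
By \rehyp{boundedgrads} and the definition of $\bvtg{t}$, there exists a constant $C_0$ such that $\nrm{\vtanc_t}+\nrm{\vtanc_t'}\leq C_0\fmdper{t}$ uniformly in $t$, hence
\begin{equation*}
\dist{\param_t}{\param_t'}\leq (1+C_1\,\pas_t\fmdper{t})\,\dist{\param_{t-1}}{\param_{t-1}'}+\cpliprmup\,\pas_t\dist{\vtanc_t}{\vtanc_t'},\qquad C_1\deq\cpliprmup\,C_0.
\end{equation*}

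Next, iterating this recurrence from $s=1$ to $s=t$ and using $1+x\leq e^x$ gives
\begin{equation*}
\dist{\param_t}{\param_t'}\leq \exp\!\paren{C_1\sum_{s=1}^{t}\pas_s\fmdper{s}}\paren{\dist{\param_0}{\param_0'}+\cpliprmup\sum_{s=1}^{t}\pas_s\dist{\vtanc_s}{\vtanc_s'}}.
\end{equation*}
For $t<\thorizo(\sm{\pas})$, \redef{tvalpc} yields $\sum_{s=1}^{t}\pas_s\fmdper{s}\leq \rayoptct/(3\majmpc)$. Shrinking $\rayoptct$ if necessary---which is legitimate by the remark following \redef{stabletube}, and which leaves $\majmpc$ bounded below thanks to its explicit form in \recor{regupdate2}---we may assume $C_1\,\rayoptct/(3\majmpc)\leq\ln 2$, so the prefactor is at most $2$. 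This yields the first inequality with $\cst_6\deq 2\cpliprmup$.

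Finally, the second inequality follows from the first by the crude bound $\dist{\vtanc_s}{\vtanc_s'}\leq \nrm{\vtanc_s}+\nrm{\vtanc'_s}\leq C_0\fmdper{s}$, possibly enlarging $\cst_6$. The ``in particular'' clause is the second inequality specialized to $\vtanc'_s\equiv 0$ and $\param_0'=\param_0$. The only delicate point, and the main (mild) obstacle, is the absorption of the Grönwall exponential into the factor $2$: this rests on the monotonicity of \rehyp{stableballs} under shrinking of $\boctopt$ together with the fact that $\majmpc$ stays bounded below as $\rayoptct\to 0$, so that $\rayoptct/\majmpc$ can indeed be made arbitrarily small without spoiling the earlier assumptions.
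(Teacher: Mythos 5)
Your argument is correct and follows the same Gr\"onwall-type iteration as the paper, but you invoke an unnecessary shrinking of $\rayoptct$ where no shrinking is needed. The key observation the paper makes, and which you miss, is that $\majmpc$ is not an arbitrary constant: it was \emph{defined} in \recor{regupdate2} as $\majmpc = \cpliprmup(2r_\Tangent + 4r_\Tangent\rayoptct)/\brnp$, hence $\majmpc \geq 4\cpliprmup r_\Tangent\rayoptct/\brnp$, and substituting this into $\sum_{s\leq t}2\cpliprmup\tilde\pas_s \leq \frac{2\cpliprmup r_\Tangent}{\brnp}\cdot\frac{\rayoptct}{3\majmpc}$ yields the numerical bound $1/6$ outright, without any further adjustment. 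In other words, the safe horizon $\thorizo$ was tailored, through the definition of $\majmpc$, precisely so that the Gr\"onwall exponential stays below $e^{1/6}\leq 2$. Your ``shrink $\rayoptct$ until $C_1\rayoptct/(3\majmpc)\leq\ln 2$'' device is legitimate under the paper's convention that $\boctopt$ may always be shrunk, and you are right that $\majmpc$ stays bounded below as $\rayoptct\to 0$; but it silently reduces $\thorizo$ (which scales with $\rayoptct/\majmpc$), so you prove a nominally weaker statement than the one actually needed and used downstream, and you add a dependence between constants that the paper deliberately avoids. The rest — applying \relem{maintientpsfinibocont} and \recor{normpasv} to stay in the control balls, invoking \rehyp{updateop} for the per-step estimate, and deducing the second and third inequalities from the first — matches the paper exactly.
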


\begin{proof}
First, by \relem{maintientpsfinibocont}, the trajectories stay in the
stable tube for $t<\thorizo$, and so the various bounds and assumptions
apply.

The second and third statements follow from the first up to increasing
$\cst_6$. Indeed, $\vtanc_s$
and $\vtanc'_s$ are bounded by $\fmdper{s}\,r_\Tangent/\brnp$ by
\recor{normpasv}. So we only have to prove the first statement.

By \recor{normpasv}, we have $
\norm{\pas_s \vtanc_s}\leq \pas_s \,\fmdper{s}\,
r_\Tangent/\brnp$  and likewise for $\vtanc'_s$.
Let us denote this bound by $\tilde\pas_s$, namely
\begin{equation*}
\tilde\pas_s\deq \pas_s \,\fmdper{s}\,r_\Tangent/\brnp.
\end{equation*}

By \rehyp{updateop},
for $t\geq 1$ we have
\begin{equation*}
\ba
\dist{\param_t}{\param'_t} & \leq
\dist{\param_{t-1}}{\param'_{t-1}} +\cpliprmup\left(
\dist{\pas_t \vtanc_t}{\pas_t \vtanc'_t}+2\tilde\pas_t\,
\dist{\param_{t-1}}{\param'_{t-1}}
\right)\\
&=\left(1+2\cpliprmup\,\tilde\pas_t\right)\dist{\param_{t-1}}{\param'_{t-1}}
+
\cpliprmup \,\pas_t\, 
\dist{\vtanc_t}{\vtanc'_t}.
\ea
\end{equation*}

Set $p_{s,t}\deq \prod_{j=s+1}^{t} (1+2\cpliprmup\,\tilde\pas_j)$.
By induction we obtain
\begin{equation*}
\dist{\param_t}{\param'_t}\leq
p_{0,t}\,\dist{\param_0}{\param'_0}+
\sum_{s=1}^t p_{s,t}\,\cpliprmup \,\pas_s \,\dist{\vtanc_s}{\vtanc'_s}
\end{equation*}
and the conclusion will follow if we prove that the various factors $p_{s,t}$ are
bounded.

Since $1+2\cpliprmup\,\tilde \pas_j\leq
\exp(2\cpliprmup\,\tilde \pas_j)$ we have
$p_{s,t}\leq \exp\left(
\sum_{j=1}^t 2\cpliprmup\,\tilde \pas_j 
\right)$. But by definition of $\thorizo$, for $t< \thorizo$ we have
\begin{equation}
\label{eq:bargpst}
\ba
\sum_{j=1}^t 2\cpliprmup\,\tilde \pas_j=\frac{2\cpliprmup\,r_\Tangent}{\brnp}\sum_{j=1}^t
\pas_j\,\fmdper{j}
\leq \frac{2\cpliprmup\,r_\Tangent\,\rayoptct}{3\brnp\,\majmpc}.
\ea
\end{equation}
The value of
$\majmpc$ from \recor{regupdate2} satisfies $\majmpc \geq 4 \cpliprmup
r_\Tangent \rayoptct/\brnp$, so the right-hand side of \reeq{bargpst} is bounded by $1/6$.
(This happens precisely
because we have taken $\thorizo$ small enough to
avoid exponential divergence of trajectories in time $t<\thorizo$.)

Therefore, for $t< \thorizo$ we have $p_{s,t}\leq \exp(1/6)\leq 2$.
This ends the proof.
\end{proof}

\subsubsection{Forgetting of Initial Conditions}

Here, we investigate the consequences of \rehyp{paramlip} and \rehyp{expforgetinit}.

\begin{corollaire}[Exponential forgetting of instantaneous gradients]
\label{cor:expforgetgrad}
Let $\param$ be a parameter in $\boctopt$, and let $\mem_0,\mem_0'\in
\bmaint{0}$. Let $(\mem_t)$ and $(\mem_t')$ be the trajectories
associated with $\param$ starting at $\mem_0$ and $\mem_0'$,
respectively. Then for all $t\geq 0$,
\begin{equation*}
\dist{\sm{V}_t\paren{\param,\,\mem_t}}{\sm{V}_{t}\paren{\param,\,\mem_t'}}
\leq \cst_1\cst_5 \,\mlipgrad{t}\,\paren{1-\alpha}^t \, \dist{\mem_0}{\mem_0'}.
\end{equation*}
\end{corollaire}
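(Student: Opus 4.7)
The plan is to chain the two relevant assumptions together: Assumption~\ref{hyp:expforgetinit} controls how fast the two state trajectories $(\mem_t)$ and $(\mem_t')$ coalesce under a fixed parameter $\param$, and Assumption~\ref{hyp:contgrad} converts a bound on state distances into a bound on the distances between the gradients computed at those states.

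First, I would observe that both trajectories start in $\bmaint{0}$ and are driven by the same constant parameter $\param\in\boctopt$, so by Corollary~\ref{cor:stablestates} we have $\mem_t,\mem_t'\in\bmaint{t}$ for all $t\geq 0$. This places us within the domain on which both assumptions apply. Applying \rehyp{expforgetinit} with $\tinit=0$ then yields
\begin{equation*}
\dist{\mem_t}{\mem_t'} \leq \cst_1 (1-\alpha)^{t}\, \dist{\mem_0}{\mem_0'}.
\end{equation*}

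Next, since $\mem_t,\mem_t'\in\bmaint{t}$ and $\param=\param$, \rehyp{contgrad} applied with the same parameter for both arguments gives
\begin{equation*}
\dist{\sm{V}_{t}\paren{\param,\mem_t}}{\sm{V}_{t}\paren{\param,\mem_t'}}
\leq \cst_5 \, \mlipgrad{t}\, \dist{\mem_t}{\mem_t'}.
\end{equation*}
Substituting the first inequality into this one yields the desired bound with constant $\cst_1\cst_5$.

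There is essentially no obstacle here; the corollary is a direct composition of the two structural assumptions on the algorithm. The only minor point to be careful about is confirming that the Lipschitz estimate of \rehyp{contgrad} is legitimately available, which requires checking that the trajectories remain inside $\bmaint{t}$ at every time $t$, guaranteed by the stability of the tube under a fixed parameter in $\boctopt$.
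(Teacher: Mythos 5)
Your proof is correct and is exactly the approach the paper uses: chain \rehyp{expforgetinit} (to contract the state distance) with \rehyp{contgrad} applied with equal parameters (to transfer the bound to gradients). The paper states this as "a direct consequence" of the two assumptions, while you fill in the one implicit detail—that both trajectories remain in $\bmaint{t}$ via \recor{stablestates}—which is a welcome addition.
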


\begin{proof}
This is a direct consequence of Assumption~\ref{hyp:expforgetinit} and
Assumption~\ref{hyp:contgrad}.
\end{proof}

\begin{lemme}[Lipschitz continuity of trajectories]
\label{lem:newparamcont}
For any $\bar\param\in \boctopt$ and any
sequence of parameters $(\param_t)$ included in
$\boctopt$, for any initialization $\mem_0\in \bmaint{0}$, 
the trajectories $(\bar \mem_t)$ and $(\mem_t)$, starting at $\mem_0$ with
parameters $\bar \param$ and $(\param_t)$ respectively, satisfy
\begin{equation*}
\dist{\bar \mem_t}{\mem_t} \leq \frac{\cst_1
\clipalgoprm}{\alpha} \sup_{s \leq t-1} \, \dist{\bar \param}{\param_s},
\end{equation*}
for all $t\geq 0$.
\end{lemme}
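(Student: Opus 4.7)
The plan is to use a standard telescoping/hybrid argument, interpolating between the two trajectories by switching the parameter one step at a time and bounding each incremental change by combining the parameter-Lipschitzness of $\Algo_t$ (\rehyp{paramlip}) with the exponential forgetting of initial conditions (\rehyp{expforgetinit}).

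Concretely, for each $0 \leq s \leq t$, I would introduce a hybrid trajectory $m^{(s)}_u$ that uses the parameters $\param_0,\ldots,\param_{s-1}$ for the first $s$ transitions and then switches to the constant parameter $\bar\param$ thereafter. By construction $m^{(0)}_t = \bar\mem_t$ and $m^{(t)}_t = \mem_t$, while consecutive hybrids $m^{(s-1)}$ and $m^{(s)}$ coincide up to time $s-1$ (both equal to $\mem_{s-1}$) and differ at step $s$ only in the parameter used, $\bar\param$ versus $\param_{s-1}$. Assumption~\ref{hyp:paramlip} yields
\begin{equation*}
\dist{m^{(s-1)}_s}{m^{(s)}_s} \leq \clipalgoprm\,\dist{\bar\param}{\param_{s-1}}.
\end{equation*}
From time $s$ onwards, both $m^{(s-1)}$ and $m^{(s)}$ are trajectories with the \emph{same} constant parameter $\bar\param$, only starting from different states at time $s$, so \rehyp{expforgetinit} gives
\begin{equation*}
\dist{m^{(s-1)}_t}{m^{(s)}_t} \leq \cst_1\,(1-\alpha)^{t-s}\,\clipalgoprm\,\dist{\bar\param}{\param_{s-1}}.
\end{equation*}
Summing via the triangle inequality and factoring out $\sup_{s\leq t-1}\dist{\bar\param}{\param_s}$ yields the claimed bound, since the geometric series $\sum_{s=1}^t (1-\alpha)^{t-s}$ is bounded by $1/\alpha$.

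Before applying the two assumptions one must verify that all the hybrid states $m^{(s)}_u$ lie in the stable tube $\bmaint{u}$, so that \rehyp{paramlip} and \rehyp{expforgetinit} are in force. This is immediate: each hybrid uses, at every step, a parameter drawn from $\{\bar\param\}\cup\{\param_0,\ldots,\param_{t-1}\}\subset\boctopt$, and starts from $\mem_0\in\bmaint{0}$, so stability of the tube (\redef{stabletube}, item 1) guarantees $m^{(s)}_u\in\bmaint{u}$ for all $s,u$. I do not anticipate a serious obstacle: once the hybrid decomposition is set up, the estimate is a clean composition of the two hypotheses, with $\clipalgoprm$ controlling the one-step sensitivity to the parameter and $\cst_1\,(1-\alpha)^{t-s}$ absorbing that perturbation as time passes.
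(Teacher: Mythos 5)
Your proof is correct and follows essentially the same approach as the paper: interpolate between the two trajectories by a family of hybrids that switch from $(\param_t)$ to the constant $\bar\param$ at successive cut times, bound the single-step discrepancy via \rehyp{paramlip}, propagate it forward with \rehyp{expforgetinit}, and telescope to a geometric series summing to $1/\alpha$. The only difference from the paper's proof is a harmless index shift (your $m^{(s)}$ is the paper's $\mem^{t_c}$ with $t_c=s$, summed over $s$ from $1$ to $t$ rather than $0$ to $t-1$).
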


\begin{proof}
Let us define a family of trajectories that interpolate between $(\bar \mem_t)$ and
$(\mem_t)$, by using parameters $(\param_t)$ for the first $t_c$ steps,
then parameter $\bar \param$. More precisely, given $t_c\geq 0$, define
\begin{equation*}
\mem^{t_c}_{t}=\begin{cases}
\Algo_t(\param_{t-1},\mem^{t_c}_{t-1}) &\text{if }t \leq t_c,
\\
\Algo_t(\bar \param,\mem^{t_c}_{t-1}) &\text{otherwise}
\end{cases}
\end{equation*}
so that $\bar \mem_t=\mem_t^0$ and $\mem_t=\mem_t^t$. These trajectories
lie in $\bmaint{t}$.

Now
\begin{equation*}
\dist{\bar \mem_t}{\mem_t}=\dist{\mem_t^0}{\mem_t^t}\leq \sum_{s=0}^{t-1}
\dist{\mem_t^s}{\mem_t^{s+1}}.
\end{equation*}

Up to time $t=s$, both $(\mem^s_t)$ and $(\mem^{s+1}_t)$ use parameter
$\param_t$, therefore $\mem^s_s=\mem^{s+1}_s=\mem_s$. But at time
$t=s+1$ they separate:
\begin{equation*}
\mem^s_{s+1}=\Algo_{s+1}(\bar \param,\mem^s_s)=\Algo_{s+1}(\bar \param,\mem_s)
\end{equation*}
while
\begin{equation*}
\mem^{s+1}_{s+1}=\Algo_{s+1}(\param_s,\mem^{s+1}_s)=\Algo_{s+1}(\param_s,\mem_s).
\end{equation*}
Consequently, by \rehyp{paramlip},
\begin{equation*}
\dist{\mem^s_{s+1}}{\mem^{s+1}_{s+1}}\leq \clipalgoprm\,
\dist{\param_s}{\bar \param}.
\end{equation*}

Now, from time  $s+2$ onwards, both $(\mem^s_t)$ and $(\mem^{s+1}_t)$ use parameter
$\bar \param$. Therefore for $t\geq s+2$,
\begin{equation*}
\dist{\mem^s_{t}}{\mem^{s+1}_t}\leq \cst_1 (1-\alpha)^{t-(s+1)}
\dist{\mem^s_{s+1}}{\mem^{s+1}_{s+1}}
\end{equation*}
thanks to \rehyp{expforgetinit}.

Summing, we find 
\begin{equation*}
\dist{\bar \mem_t}{\mem_t}\leq \cst_1\clipalgoprm \sum_{s=0}^{t-1}
(1-\alpha)^{t-(s+1)} \dist{\param_s}{\bar\param}\leq
\frac{\cst_1\clipalgoprm}{\alpha} \,\sup_{0\leq s\leq t-1} \dist
{\param_s}{\bar \param}.
\end{equation*}
\end{proof}

\begin{corollaire}[Continuity of instantaneous gradients]
\label{cor:newcontgrad}
Let $(\param_t)$ be a sequence of parameters
included in $\boctopt$, and let $\bar\param\in \boctopt$. Let $\mem_0\in \bmaint{0}$. Let $(\mem_t)$ and
$(\bar\mem_t)$ be the trajectories starting at $\mem_0$ with parameters
$(\param_t)$ and $\bar \param$, respectively. Then for any $t\geq 1$,
\begin{equation*}
\dist{\sm{V}_t\paren{\param_{t-1},\,\mem_t}}{\sm{V}_{t}\paren{\bar\param,\,\bar
\mem_t}} = \go{\mlipgrad{t} \sup_{0\leq s <t} \, \dist{\param_s}{\bar\param}}.
\end{equation*}
\end{corollaire}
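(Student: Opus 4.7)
The plan is to apply the triangle inequality through Lipschitz continuity of the gradient computation operator in both its arguments, then bound each resulting piece using the previous results.

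First, I would invoke \rehyp{contgrad} at the two points $\cpl{\param_{t-1}}{\mem_t}$ and $\cpl{\bar\param}{\bar\mem_t}$, both of which lie in $\boctopt\times\bmaint{t}$ (by \recor{stablestates} applied to the two parameter sequences $(\param_t)$ and the constant sequence $\bar\param$, together with $\mem_0,\bar\mem_0=\mem_0\in\bmaint{0}$). This yields
\begin{equation*}
\dist{\sm{V}_t\paren{\param_{t-1},\mem_t}}{\sm{V}_t\paren{\bar\param,\bar\mem_t}}\leq \cst_5\,\mlipgrad{t}\paren{\dist{\param_{t-1}}{\bar\param}+\dist{\mem_t}{\bar\mem_t}}.
\end{equation*}

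Next, the first term is trivially bounded by $\sup_{0\leq s<t}\dist{\param_s}{\bar\param}$. For the second term, I would invoke \relem{newparamcont} (applied with the roles of the trajectories exchanged, which is symmetric, since both start from $\mem_0$): this gives
\begin{equation*}
\dist{\mem_t}{\bar\mem_t}\leq \frac{\cst_1\,\clipalgoprm}{\alpha}\,\sup_{0\leq s<t}\dist{\param_s}{\bar\param}.
\end{equation*}

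Combining, I get
\begin{equation*}
\dist{\sm{V}_t\paren{\param_{t-1},\mem_t}}{\sm{V}_t\paren{\bar\param,\bar\mem_t}}\leq \cst_5\,\mlipgrad{t}\paren{1+\frac{\cst_1\,\clipalgoprm}{\alpha}}\sup_{0\leq s<t}\dist{\param_s}{\bar\param},
\end{equation*}
which is the desired $\go{\mlipgrad{t}\sup_{0\leq s<t}\dist{\param_s}{\bar\param}}$ bound. There is no real obstacle here: the argument is a one-line triangle inequality plus a direct invocation of \relem{newparamcont}; the only care required is to verify that both $(\mem_t)$ and $(\bar\mem_t)$ lie in the stable tube so that the Lipschitz constant from \rehyp{contgrad} applies.
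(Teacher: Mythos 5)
Your proof is correct and is essentially the same argument the paper gives (the paper's proof is a one-liner invoking \rehyp{contgrad} and \relem{newparamcont}, which is exactly what you unpack). One minor note: you don't actually need to "exchange the roles" when applying \relem{newparamcont}—that lemma is already stated with $(\bar\mem_t)$ the constant-parameter trajectory and $(\mem_t)$ the variable-parameter one, both from the same $\mem_0$—but this is immaterial since the distance is symmetric.
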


\begin{proof}
This is a consequence of 
Assumption~\ref{hyp:contgrad} and \relem{newparamcont}.
\end{proof}

We now see the finite-time divergence between two trajectories initiated at the same parameter is controlled by the step-size sequence.

\begin{lemme}[Trajectories for a fixed parameter and different
initializations]
\label{lem:contecarttrajmprmqidiff}
Let $\mem_0,\mem'_0\in \bmaint{0}$ and let $\param_0\in\boctopt$ with
$
\dist{\param_0}{\pctrlopt} \leq \frac{\rayoptct}{3}
$.
Let $\sm{\pas}=(\pas_s)_{s\geq 1}$ be a stepsize sequence with $\cdvp\leq \brnp$.

Then for $t<\thorizo$,
\begin{equation*}
\dist{\paramupdate_{0:t}(\param_0,\mem_0,(\pas_s))}{\paramupdate_{0:t}(\param_0,\mem'_0,(\pas_s))}
= \go{\mlipgrad{t}\sup_{1 \leq s \leq t} \pas_s}.
\end{equation*}
\end{lemme}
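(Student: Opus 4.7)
The strategy is to exploit the crucial observation that both open-loop trajectories are computed with the \emph{same} fixed parameter $\param_0$, only the initial state differs. This means the exponential forgetting of initial conditions (\rehyp{expforgetinit}) applies directly to compare the two state trajectories, and consequently the gradient computations too (\recor{expforgetgrad}). The gap in parameters is then controlled by summing these exponentially small gradient differences against the stepsizes, using \relem{contordredeuxitprm2}.

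\textbf{Step 1: control of the state trajectories.} Let $(\mem_t)$ and $(\mem'_t)$ be the trajectories with fixed parameter $\param_0 \in \boctopt$ starting at $\mem_0$ and $\mem'_0$. By \rehyp{expforgetinit},
\begin{equation*}
\dist{\mem_t}{\mem'_t}\leq \cst_1\,(1-\alpha)^t\,\dist{\mem_0}{\mem'_0}
\end{equation*}
for all $t\geq 0$. Since $\mem_0,\mem'_0 \in \bmaint{0}$ which is bounded by \redef{stabletube}, $\dist{\mem_0}{\mem'_0}$ is bounded by a constant $D$ independent of the specific initializations.

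\textbf{Step 2: control of the gradients.} Let $\vtanc_s\deq \sm{V}_s(\param_0,\mem_s)$ and $\vtanc'_s\deq \sm{V}_s(\param_0,\mem'_s)$ be the gradients along the two trajectories. By \recor{expforgetgrad},
\begin{equation*}
\dist{\vtanc_s}{\vtanc'_s}\leq \cst_1\cst_5\,\mlipgrad{s}\,(1-\alpha)^s\,\dist{\mem_0}{\mem'_0} \leq \cst_1\cst_5\,D\,\mlipgrad{s}\,(1-\alpha)^s.
\end{equation*}

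\textbf{Step 3: control of the parameter trajectories.} The two open-loop updates start from the same parameter $\param_0$. Apply \relem{contordredeuxitprm2} with $\param'_0=\param_0$, which is valid since $\dist{\param_0}{\paramopt}\leq \rayoptct/3$ and $t<\thorizo$ by hypothesis and since $\cdvp\leq \brnp$. We obtain, for all $t<\thorizo$,
\begin{equation*}
\dist{\paramupdate_{0:t}(\param_0,\mem_0,(\pas_s))}{\paramupdate_{0:t}(\param_0,\mem'_0,(\pas_s))}\leq \cst_6\sum_{s=1}^{t}\pas_s\,\dist{\vtanc_s}{\vtanc'_s}.
\end{equation*}

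\textbf{Step 4: bounding the sum.} Using Step 2 and the monotonicity of the scale function $\mlipgrad{}$ (so that $\mlipgrad{s}\leq\mlipgrad{t}$ for $s\leq t$),
\begin{equation*}
\sum_{s=1}^{t}\pas_s\,\dist{\vtanc_s}{\vtanc'_s}\leq \cst_1\cst_5\,D\,\Bigl(\sup_{1\leq s\leq t}\pas_s\Bigr)\,\mlipgrad{t}\sum_{s=1}^{t}(1-\alpha)^s\leq \frac{\cst_1\cst_5\,D}{\alpha}\,\mlipgrad{t}\,\sup_{1\leq s\leq t}\pas_s.
\end{equation*}
Combining the last two displays yields the announced $\go{\mlipgrad{t}\sup_{1\leq s\leq t}\pas_s}$ bound, with an implicit constant depending only on $\cst_1$, $\cst_5$, $\cst_6$, $\alpha$ and the diameter of $\bmaint{0}$, as required by Remark~\ref{rem:O}. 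No real obstacle arises here; the only subtlety is leveraging the monotonicity of the scale function to dominate $\sum \mlipgrad{s}(1-\alpha)^s$ by $\mlipgrad{t}/\alpha$, which absorbs the absence of any quantitative growth bound on $\mlipgrad{}$.
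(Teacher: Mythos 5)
Your proof is correct and follows essentially the same route as the paper: apply \relem{contordredeuxitprm2} with $\param'_0=\param_0$, bound the gradient differences by \recor{expforgetgrad}, and dominate the weighted sum by $\mlipgrad{t}\sup_s\pas_s/\alpha$ via monotonicity of the scale function and the geometric series. The only cosmetic difference is that you explicitly restate \rehyp{expforgetinit} as a separate Step~1 before invoking \recor{expforgetgrad}, whereas the paper lets \recor{expforgetgrad} carry both; this is redundant but harmless.
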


\begin{proof}
By the definition of the open-loop updates $\paramupdate_{0:t}$, the
distance above is $\dist{\param_t}{\param'_t}$ where we define by
induction
\begin{equation*}
\left\lbrace \ba 
{\mem_{t}}&=\mathcal{A}_{t}\paren{\param_0,\,\mem_{t-1}} \\
\vtanc_t &=\sm{V}_{t}\paren{\param_0,\,\mem_t} \\ 
\param_{t} &= \paramupdate_t\paren{\param_{t-1},\,\pas_t\,\vtanc_t},
\ea \right.
\end{equation*}
and likewise with initialization $\mem'_0$
\begin{equation*}
\left\lbrace \ba 
{\mem'_{t}}&=\mathcal{A}_{t}\paren{\param_0,\,\mem'_{t-1}} \\
\vtanc'_t &=\sm{V}_{t}\paren{\param_0,\,\mem'_t} \\ 
\param'_{t} &=
\paramupdate_t\paren{\param'_{t-1},\,\pas_t\,\vtanc'_t},\qquad
\param'_0=\param_0.
\ea \right.
\end{equation*}

By \rehyp{boundedgrads} and \recor{stablestates}, for any $t\geq 0$,
$\mem_t$ and $\mem'_t$ belong to $\bmaint{t}$ and $\vtanc_t$ and
$\vtanc_t'$ to $\bvtg{t}$.

Thus, from \relem{contordredeuxitprm2} with $\param_0=\param'_0$, for $t<\thorizo$ we have
\begin{equation*}
\ba
\dist{\param_t}{\param_t'} 
&= \go{\sum_{1\leq s \leq t} \, \pas_s \, \dist{\vtanc_s}{\vtanc_s'}}.
\ea
\end{equation*}
But thanks to \recor{expforgetgrad}, for any $s\leq t$,
\begin{equation*}
\dist{\vtanc_s}{\vtanc_s'} = \go{\mlipgrad{s}\paren{1-\alpha}^s \,
\dist{\mem_0}{\mem'_0}}
\end{equation*}
and therefore, for $t<\thorizo$,
\begin{equation*}
\ba
\sum_{1\leq s \leq t} \, \pas_s \, \dist{\vtanc_s}{\vtanc_s'} 
&=
\go{
\sum_{1\leq s \leq t} \pas_s \,\mlipgrad{s}
\paren{1-\alpha}^s\dist{\mem_0}{\mem'_0}}
\\&=\go{
\frac{\dist{\mem_0}{\mem'_0}}{\alpha}\, \mlipgrad{t} \sup_{1\leq s \leq t} \pas_s}.
\ea
\end{equation*}
Since $\bmaint{0}$ has a finite diameter and $\alpha$ is fixed, the
conclusion follows.
\end{proof}



\subsection{Timescales and step sizes}
\label{sec:stutmsadpoptcrt}

Here we gather some properties that follow from \rehyp{spdes} on step
sizes and the various scale functions involved.

\subsubsection{Sums over intervals $(T;T+\flng{T}]$}

\newcommand{\itvT}{I}
\begin{corollaire}[Sums of stepsizes on an interval are negligible]
\label{cor:convzerosompasintervalles}
Let $T\geq 0$ and let $\itvT$ be the integer interval $\itvT=(T;T+\flng{T}]$.
Under \rehyp{spdes}:
\begin{enumerate}
\item $\smi{\itvT}
\pas_t\sim
\pas_{T}\,\flng{T}$ when $T\to\infty$, and this tends to
$0$.
\item $\smi{\itvT}
\pas_t\,\fmdper{t}\sim
\pas_{T}\,\flng{T}\,\fmdper{T}$ when $T\to\infty$,
and this tends to $0$.
\item $\smi{\itvT}
\pas_t\,\mlipgrad{t}\sim
\pas_{T}\,\flng{T}\,\mlipgrad{T}$ when $T\to\infty$,
and this tends to $0$.
\item $
\left(\smi{\itvT} \pas_t\,\fmdper{t}\right)
\left(\smi{\itvT}\pas_t \,\mlipgrad{t}\right)
=\po{\pas_{T}\,\flng{T}}$.
\item $\left(\smi{\itvT}\pas_t \,\fmdper{t}\right)\left(\smi{\itvT}
\pas_t\,\mlipgrad{t}\right)\sim
\pas_{T}^2\,\flng{T}^2\,\fmdper{T}\,\mlipgrad{T}$ when $T\to\infty$.
\item ${\smi{\itvT} \, \pas_t^2\,\fmdper{t}^2}\sim {\pas_T^2\,\fmdper{T}^2\,\flng{T}}$, when $T\to\infty$.
\item $\left(\sup_{\itvT}\mlipgrad{t}\right)
\left(\sup_{\itvT}\pas_t\right)=\po{\pas_{T}\,\flng{T}}$.
\item When $T\to\infty$,
\begin{equation*}
\frac{\sup_{T< s\leq T+\flng{T}} \pas_s}{\inf_{T < s\leq
T+\flng{T}} \pas_s}
=1+o(1/\fmdper{T})=1+o(1/\fmdper{T+\flng{T}}).
\end{equation*}
\item $T+\flng{T}\sim T$.
\end{enumerate}
\end{corollaire}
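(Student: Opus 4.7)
The plan is to show that on the integer interval $\itv_T=(T;T+\flng{T}]$, each of $\pas_t$, $\fmdper{t}$, $\mlipgrad{t}$ is asymptotically equivalent to its value at $T$, uniformly in $t\in\itv_T$. Once this uniform approximation is in place, items 1--6 reduce to sums of $\flng{T}$ asymptotically constant terms, and items 4, 5, 7 follow from conditions 2 and 3 of \rehyp{spdes} by elementary algebra.

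First I would dispose of items 8 and 9, which feed back into the main argument. Item 9, $T+\flng{T}\sim T$, is immediate from $\flng{T}=o(T)$, a consequence of the scale function property $\flng{T}\ll T$ stipulated in \rehyp{optimprm}. Item 8 is then a rewriting of condition 4 of \rehyp{spdes}: since scale functions preserve asymptotic equivalence (\redef{fechelle}.2), item 9 gives $\fmdper{T+\flng{T}}\sim\fmdper{T}$, and therefore $o(1/\fmdper{T})=o(1/\fmdper{T+\flng{T}})$. For the uniform approximations on $\itv_T$, I would use monotonicity of scale functions (\redef{fechelle}.3): for $t\in\itv_T$ we have $\fmdper{T}\leq\fmdper{t}\leq\fmdper{T+\flng{T}}\sim\fmdper{T}$, so $\fmdper{t}/\fmdper{T}\to 1$ uniformly on $\itv_T$, and likewise for $\mlipgrad{t}/\mlipgrad{T}$ (the case where either scale function is identically $1$ is trivial). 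For the step sizes, condition 4 of \rehyp{spdes} gives $\pas_t/\pas_s=1+o(1/\fmdper{T})$ uniformly in $t,s\in\itv_T$. To bridge these to $\pas_T$ itself, I would apply condition 4 at $T-1$: since $\flng{t}$ is a scale function tending to infinity, $\flng{T-1}\geq 2$ for $T$ large enough, so both $T$ and $T+1$ lie in $(T-1;T-1+\flng{T-1}]$, and the ratio condition gives $\pas_T\sim\pas_{T+1}$; combining yields $\pas_t\sim\pas_T$ uniformly on $\itv_T$.

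Items 1, 2, 3, 6 then follow by summing $\flng{T}$ asymptotically constant terms. For instance, $\sum_{t\in\itv_T}\pas_t\fmdper{t}=\flng{T}\pas_T\fmdper{T}(1+o(1))$, and this tends to $0$ because $\fmdper{T},\mlipgrad{T}\geq 1$ yields $\pas_T\flng{T}\fmdper{T}\leq\pas_T\flng{T}\fmdper{T}\mlipgrad{T}\to 0$ by condition 2 of \rehyp{spdes}. Item 5 is the product of the equivalents in items 2 and 3, and item 4 is obtained by rewriting that product as $(\pas_T\flng{T})\cdot(\pas_T\flng{T}\fmdper{T}\mlipgrad{T})=o(\pas_T\flng{T})$ via condition 2. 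Finally, for item 7, $\sup_{t\in\itv_T}\pas_t\sim\pas_T$ by the uniform approximation, and $\sup_{t\in\itv_T}\mlipgrad{t}=\mlipgrad{T+\flng{T}}\sim\mlipgrad{T}$ by monotonicity and item 9, so their product is $\sim\pas_T\mlipgrad{T}=(\pas_T\flng{T})(\mlipgrad{T}/\flng{T})=o(\pas_T\flng{T})$ by condition 3 of \rehyp{spdes}. No essential obstacle is expected; the only delicate point is the uniformity in $t\in\itv_T$ of the replacement by the value at $T$, which is addressed above via monotonicity of scale functions and the bridging of $\pas_T$ to $\pas_{T+1}$.
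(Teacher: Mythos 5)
Your proof is correct and follows essentially the same route as the paper: reduce everything to the uniform approximation $\pas_t\sim\pas_T$, $\fmdper{t}\sim\fmdper{T}$, $\mlipgrad{t}\sim\mlipgrad{T}$ on $\itvT$ via monotonicity of scale functions and the sup/inf homogeneity condition, then deduce the sum equivalences and use conditions 2--3 of \rehyp{spdes} for the $\po{}$ bounds. Your explicit bridging of $\pas_T$ to $\pas_{T+1}$ (needed because $T\notin\itvT$) by applying the sup/inf condition at $T-1$ is a genuine fine point that the paper's proof of this corollary passes over but makes rigorous separately in \relem{stronghomo}, by exactly the same device.
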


\begin{proof}
First, by assumption $\flng{T}\ll T$, so that $T+\flng{T}\sim T$.

By the sup/inf assumption in \rehyp{spdes}, we have
$\pas_t\sim \pas_{T}$ for $t\in\itvT$, so that $\sum_{\itvT}
\pas_t\sim \pas_{T}\,\flng{T}$.

Likewise, since $T+\flng{T} \sim T$ and since $\fmdper{}$ is a
scale function, we have $\fmdper{t}\sim \fmdper{T}$ for
$t\in\itvT$, so that $\smi{\itvT}
\pas_t\,\fmdper{t}\sim
\pas_{T}\,\flng{T}\,\fmdper{T}$. The argument is the
same with $\mlipgrad{t}$, and with ${\smi{\itvT} \, \pas_t^2\,\fmdper{t}^2}$.

These quantities all tend to $0$ by \rehyp{spdes}.

We have
$\left(\smi{\itvT}\pas_t \,\fmdper{t}\right)\left(\smi{\itvT}
\pas_t\,\mlipgrad{t}\right)\sim
\pas_{T}^2\,\flng{T}^2\,\fmdper{T}\,\mlipgrad{T}$
by the above. Since $\pas_t \flng{t}\fmdper{t}\mlipgrad{t}$ tends to $0$
by \rehyp{spdes}, this is $\po{\pas_{T} \flng{T}}$.

%
%

Since
$\mlipgrad{t}$ is a scale function, we have $\sup_{\itvT}
\mlipgrad{t}\sim \mlipgrad{T}$.
By the sup/inf assumption in \rehyp{spdes}, we have
$\mlipgrad{T}\sup_{\itvT}\pas_t\sim \mlipgrad{T}\,\pas_{T}$, which is
$\po{\pas_{T}\,\flng{T}}$ by \rehyp{spdes}.

The $\sup/\inf$ property follows directly from \rehyp{spdes} and from
$\fmdper{t}\sim \fmdper{T}$ for $t\in\itvT$.
\end{proof}

Remember that the sequence $\tpsk{k}$ is defined by
$\tpsk{k+1}=\tpsk{k}+\flng{\tpsk{k}}$ (\redef{echetps}).

\begin{remarque}
\label{rem:etaLdiv}
Since the $\pas_t$'s are nonnegative, and their series diverges according to \rehyp{spdes}, the first point of \recor{convzerosompasintervalles} implies that the series $\pas_{\tpsk{k}}\,\flng{\tpsk{k}}$ diverges as well.
\end{remarque}

\begin{corollaire}[Smallest safe interval $k$]
\label{cor:contmajitvk}
There exists an integer $k_0\geq 1$ such that, for any
$\cdvp\leq \brnp$,
for any $k\geq k_0$, one has
$\smi{(\tpsk{k};\tpsk{k+1}]} \, \pas_t \,\fmdper{t} \leq
\frac{\rayoptct}{3\majmpc}$ and $ \mvpmin \, \smi{(\tpsk{k};\tpsk{k+1}]} \, \pas_t \leq 1 $.
($\majmpc$ is defined in 
Cor.~\ref{cor:majecartsmaj2}.)
\end{corollaire}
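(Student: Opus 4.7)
The plan is to deduce this essentially directly from points 1 and 2 of Corollary~\ref{cor:convzerosompasintervalles}, applied with $T=\tpsk{k}$. That corollary (itself a consequence of \rehyp{spdes}) tells us that
\[
\smi{(\tpsk{k};\tpsk{k+1}]} \pas_t\,\fmdper{t} \sim \pas_{\tpsk{k}}\,\flng{\tpsk{k}}\,\fmdper{\tpsk{k}}
\quad\text{and}\quad
\smi{(\tpsk{k};\tpsk{k+1}]} \pas_t \sim \pas_{\tpsk{k}}\,\flng{\tpsk{k}},
\]
both of which tend to $0$ as $k\to\infty$. So for any positive threshold, both sums will eventually be below it, which immediately yields the existence of a $k_0$ for which the two desired bounds hold simultaneously.

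The only non-routine point is the required uniformity over $\cdvp\leq \brnp$. I would handle this by exploiting the fact that $\pas_t=\cdvp\,\sched_t$ depends linearly on $\cdvp$ (Def.~\ref{def:stepseqovlrstpsch}), so both sums are monotone increasing in $\cdvp$. Therefore it suffices to apply the asymptotic bounds above with the fixed choice $\cdvp=\brnp$: pick $k_0$ large enough that
\[
\brnp\,\smi{(\tpsk{k};\tpsk{k+1}]}\sched_t\,\fmdper{t}\leq \frac{\rayoptct}{3\majmpc}
\quad\text{and}\quad
\mvpmin\,\brnp\,\smi{(\tpsk{k};\tpsk{k+1}]}\sched_t\leq 1
\]
for every $k\geq k_0$; monotonicity in $\cdvp$ then extends the bounds to all $\cdvp\leq \brnp$.

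Overall, this is essentially a bookkeeping corollary — no genuine obstacle is expected. The only thing to be careful of is to invoke Corollary~\ref{cor:convzerosompasintervalles} with the stepsize schedule $(\sched_t)$ rather than with $(\pas_t)$, since Remark~\ref{rem:renormspas} guarantees that \rehyp{spdes} is invariant under rescaling by $\cdvp$, and then factor $\brnp$ back out to obtain uniformity.
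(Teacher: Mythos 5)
Your proposal is correct and follows essentially the same route as the paper's proof: invoke Corollary~\ref{cor:convzerosompasintervalles} to get both sums tending to $0$, take $k_0$ so that the bounds hold at $\cdvp=\brnp$, and use monotonicity of $\pas_t=\cdvp\,\sched_t$ in $\cdvp$ to extend to all smaller overall learning rates. The final remark about invoking the corollary at the schedule level and then rescaling by $\cdvp$ is an unnecessary detour (applying the corollary directly at $\cdvp=\brnp$ suffices), but it does not change the argument.
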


\begin{proof}
Using
\recor{convzerosompasintervalles}, take $k_0$ such that this holds
for $\cdvp=\brnp$.
Then the same will hold for smaller $\cdvp$.
\end{proof}

The next lemma justifies the construction of the timescale $\tpsk{k}$.

\begin{lemme}
\label{lem:minmthoriztpsk}
For any $\cdvp\leq \brnp$,
for any $k \geq k_0$,
the control time $\thoriz{\tpsk{k}}(\sm{\pas})$ is (strictly) larger
than $\tpsk{k+1}$.
\end{lemme}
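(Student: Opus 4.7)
The plan is to apply \recor{contmajitvk} essentially verbatim. Recall from \redef{tvalpc} that
\begin{equation*}
\thoriz{\tpsk{k}}\paren{\sm{\pas}}=\inf \enstq{t \geq \tpsk{k}+1}{ \som{s=\tpsk{k}+1}{t} \pas_s \,\fmdper{s}> \frac{\rayoptct}{3\majmpc}},
\end{equation*}
so proving $\thoriz{\tpsk{k}}(\sm{\pas}) > \tpsk{k+1}$ amounts to showing that no $t$ in the integer interval $[\tpsk{k}+1,\tpsk{k+1}]$ achieves the strict inequality $\som{s=\tpsk{k}+1}{t} \pas_s \,\fmdper{s}> \frac{\rayoptct}{3\majmpc}$.

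Since the $\pas_s\,\fmdper{s}$ are non-negative, the partial sums $\som{s=\tpsk{k}+1}{t} \pas_s\,\fmdper{s}$ are non-decreasing in $t$. Hence for any $\tpsk{k}+1\leq t\leq \tpsk{k+1}$,
\begin{equation*}
\som{s=\tpsk{k}+1}{t} \pas_s\,\fmdper{s}\;\leq\;\som{s=\tpsk{k}+1}{\tpsk{k+1}} \pas_s\,\fmdper{s}\;=\;\smi{(\tpsk{k};\tpsk{k+1}]}\pas_s\,\fmdper{s}.
\end{equation*}
By \recor{contmajitvk}, applied with our fixed $\cdvp\leq\brnp$ and $k\geq k_0$, the right-hand side is at most $\frac{\rayoptct}{3\majmpc}$. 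Therefore no $t\in[\tpsk{k}+1,\tpsk{k+1}]$ produces a sum strictly greater than $\frac{\rayoptct}{3\majmpc}$, which is exactly what we needed: the infimum defining $\thoriz{\tpsk{k}}(\sm{\pas})$ is taken over a set disjoint from $[\tpsk{k}+1,\tpsk{k+1}]$, and hence is strictly greater than $\tpsk{k+1}$.

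Since this argument only invokes \recor{contmajitvk} and the monotonicity of the partial sums in $t$, there is no real obstacle; the lemma is essentially a direct reformulation of the corollary in terms of the safe horizon $\thoriz{\tpsk{k}}$. The only subtle point worth spelling out is the strictness: because the bound from \recor{contmajitvk} is a non-strict inequality while the set in $\thoriz{\tpsk{k}}$ is defined by a strict one, no equality case can spoil the conclusion.
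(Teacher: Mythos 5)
Your proof is correct and matches the paper's own argument, which simply cites \redef{tvalpc} and \recor{contmajitvk}. You have spelled out the monotonicity and the strict/non-strict distinction that the paper leaves implicit, but the underlying approach is identical.
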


\begin{proof}
This follows from the \redef{tvalpc} of $\thoriz{t}$, and from
\recor{contmajitvk}.
\end{proof}

We now prove a slight technical strengthening of the sup/inf property on $\pas_t$,
involving intervals $[T;T+\flng{T}]$ instead of $(T;T+\flng{T}]$.

\begin{lemme}
\label{lem:stronghomo}
When $T\to\infty$,
\begin{equation*}
\frac{\sup_{T\leq t\leq T+\flng{T}} \pas_t
}{
\inf_{T\leq t\leq T+\flng{T}} \pas_t
}
=1+o(1/\fmdper{T})=1+o(1/\fmdper{T+\flng{T}})
\end{equation*}
and moreover for $T< t\leq T+\flng{T}$ we have
\begin{equation*}
\frac{\pas_T}{\pas_t}=1+o(1/\fmdper{T}).
\end{equation*}
\end{lemme}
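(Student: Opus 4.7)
The plan is to upgrade the open-interval sup/inf bound of \rehyp{spdes}(4) to a closed-interval one by patching in the single missing endpoint $\pas_T$, using a chaining argument that re-applies the same assumption at a slightly shifted base point.

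First, I would apply \rehyp{spdes}(4) at $T-1$ in place of $T$. Because $\flng{}$ is a scale function, $\flng{T-1}\to\infty$, so for $T$ large enough the interval $(T-1,\,T-1+\flng{T-1}]$ contains both $T$ and $T+1$. The sup/inf assumption at $T-1$ therefore yields
\begin{equation*}
\frac{\pas_T}{\pas_{T+1}} = 1 + o(1/\fmdper{T-1}).
\end{equation*}
Since $\fmdper{}$ is a scale function and $T-1\sim T$, we have $\fmdper{T-1}\sim \fmdper{T}$, hence $\pas_T/\pas_{T+1} = 1 + o(1/\fmdper{T})$.

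Next, for any $T<t\leq T+\flng{T}$ I would factor
\begin{equation*}
\frac{\pas_T}{\pas_t} \;=\; \frac{\pas_T}{\pas_{T+1}}\cdot\frac{\pas_{T+1}}{\pas_t}.
\end{equation*}
The first factor is $1+o(1/\fmdper{T})$ by the previous step. For the second factor, both $T+1$ and $t$ lie in $(T,\,T+\flng{T}]$, so \rehyp{spdes}(4) applied at $T$ directly gives $\pas_{T+1}/\pas_t = 1+o(1/\fmdper{T})$. Multiplying yields the second claim of the lemma. The closed-interval sup/inf statement then follows immediately, since every $\pas_s$ with $s\in[T,\,T+\flng{T}]$ is within a multiplicative factor $1+o(1/\fmdper{T})$ of $\pas_T$, so all pairwise ratios are $1+o(1/\fmdper{T})$. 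Finally, the equivalence of $o(1/\fmdper{T})$ with $o(1/\fmdper{T+\flng{T}})$ is automatic from $T+\flng{T}\sim T$ (already noted in \recor{convzerosompasintervalles}, using $\flng{T}=o(T)$) together with $\fmdper{}$ being a scale function, which preserves asymptotic equivalence.

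The main (and only mild) obstacle is purely bookkeeping: one must verify that for $T$ large the shifted interval $(T-1,\,T-1+\flng{T-1}]$ is large enough to include both $T$ and $T+1$, so that the shifted sup/inf bound meaningfully controls the ratio $\pas_T/\pas_{T+1}$; this is automatic since $\flng{T-1}\to\infty$. No other quantitative input is needed beyond \rehyp{spdes}(4) and Definition~\ref{def:fechelle}.
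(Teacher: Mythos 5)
Your proof is correct and takes essentially the same route as the paper's: both hinge on re‑applying \rehyp{spdes}(4) at the shifted base point $T-1$ so that both $T$ and $T+1$ lie inside the open interval $(T-1,\,T-1+\flng{T-1}]$, which pins down $\pas_T/\pas_{T+1}=1+o(1/\fmdper{T-1})=1+o(1/\fmdper{T})$, and then combine with the bound on $(T,\,T+\flng{T}]$. The paper organizes this by writing the closed-interval $\sup$ and $\inf$ as $\sup(\pas_T,\cdot)$ and $\inf(\pas_T,\cdot)$ respectively, while you chain $\pas_T/\pas_t=(\pas_T/\pas_{T+1})(\pas_{T+1}/\pas_t)$; these are equivalent presentations of the same patching argument, and the final step ($\fmdper{T+\flng{T}}\sim\fmdper{T}$ via $T+\flng{T}\sim T$) is identical.
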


\begin{proof}
The last statement follows from the first by specializing to $\pas_T$ in
the supremum.

For the first statement, write
\begin{equation*}
\sup_{T\leq s \leq
T+\flng{T}}\,\pas_s
=\sup\left(
\pas_T,\,
\sup_{T< s \leq
T+\flng{T}}\,\pas_s
\right)
\end{equation*}
and likewise for the infimum. By \rehyp{spdes} applied to time $t=T-1$,
one has $\pas_T\leq \left(1+o\left(\frac{1}{\fmdper{T-1}}\right)\right)\pas_{T+1}$ so that
\begin{equation*}
\sup_{T\leq s \leq
T+\flng{T}}\,\pas_s \leq
\left(1+o\left(\frac{1}{\fmdper{T-1}}\right)\right)\sup_{T< s \leq
T+\flng{T}}\,\pas_s
\end{equation*}
and likewise for the infimum. Thus,
\begin{equation*}
\frac{ \sup_{T\leq s \leq T+\flng{T}}\,\pas_s } {\inf_{T\leq s \leq T+\flng{T}}\,\pas_s }
\leq \left(1+o\left(\frac{1}{\fmdper{T-1}}\right)\right)^2\,
\frac{ \sup_{T < s \leq T+\flng{T}}\,\pas_s } {\inf_{T< s \leq T+\flng{T}}\,\pas_s }
\end{equation*}
and we can now apply \rehyp{spdes} to the rightmost term, yielding
\begin{equation*}
\frac{ \sup_{T\leq s \leq T+\flng{T}}\,\pas_s } {\inf_{T\leq s \leq
T+\flng{T}}\,\pas_s }
\leq \left(1+o\left(\frac{1}{\fmdper{T-1}}\right)\right)^2\,
\left(1+o\left(\frac{1}{\fmdper{T}}\right)\right).
\end{equation*}
Now, since $\fmdper{}$ is a scale function (or $1$), we have $\fmdper{T-1}\sim
\fmdper{T}$ when $T\to \infty$, so the above is $(1+o(1/\fmdper{T}))^3$
which is just $1+o(1/\fmdper{T})$.

Finally, as seen above, $T+\flng{T}\sim T$ so that
$\fmdper{T+\flng{T}}\sim \fmdper{T}$ as $\fmdper{}$ is a scale function.
\end{proof}

\subsubsection{Constant Stepsizes vs a Sequence of Stepsizes}

We now bound the difference between updates
$\paramupdate_{\tpsk{k}:\tpsk{k+1}}{\paren{\param,\paren{\pas_t\,\vtanc_t}_{\tpsk{k}<t
\leq \tpsk{k+1}}}}$ using a variable learning rate $\pas_t$, and using the
constant learning rate $\pas_{\tpsk{k}}$. This is a consequence of the
homogeneity of learning rates on intervals $(T;T+\flng{T}]$.

\begin{lemme}[Variable vs constant stepsizes]
\label{lem:changerates}
Let $\sm{\pas}$ be a sequence of stepsizes with $\cdvp\leq \brnp/2$.
Let $\lng$ be a scale function such that for $T$ large enough,
$T+\flng{T}< 
\thoriz{T}\paren{\sm{\pas}}$.

Let $(\vtanc_t)$ be a sequence of gradients with $\vtanc_t\in
\bvtg{t}$ for all $t$. Let $(\param_t)$ be a sequence of parameters with
$\dist{\param_{t}}{\paramopt}\leq \frac{\rayoptct}{3}$.
Then
\begin{equation*}
\dist{
\paramupdate_{T:T+\flng{T}}{\paren{\param_{T},\paren{ \pas_t\,\vtanc_t}}}
}{
\paramupdate_{T:T+\flng{T}}{\paren{\param_{T},\pas_{T}\paren{ \vtanc_t}}}
} = \po{\sum_{T<t
\leq T+\flng{T}} \pas_t}
\end{equation*}
when $T\to\infty$.


In particular, letting $(\vtanc_t)$ be the sequence of
gradients computed along a trajectory $(\mem_T)$ with
$\mem_{T}\in\bmaint{T}$, we find
\begin{equation*}
\dist{
\paramupdate_{T:T+\flng{T}}{\paren{\param_{T},\,\mem_{T},\paren{ \pas_t}}}
}{
\paramupdate_{T:T+\flng{T}}{\paren{\param_{T},\,\mem_{T},\,\pas_{T}}}
} = \po{\sum_{T<t
\leq T+\flng{T}} \pas_t}.
\end{equation*}
\end{lemme}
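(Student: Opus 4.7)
The plan is to compare, step by step, the parameter sequences produced by the two update schemes, using the near-homogeneity of stepsizes on the interval $(T;T+\flng{T}]$ established in \relem{stronghomo}. Concretely, denote by $\param_t^{(1)}$ and $\param_t^{(2)}$ the sequences defined for $T\leq t\leq T+\flng{T}$ by $\param_T^{(1)}=\param_T^{(2)}=\param_T$ and
\begin{equation*}
\param_t^{(1)}=\paramupdate_t\paren{\param_{t-1}^{(1)},\,\pas_t\,\vtanc_t},\qquad
\param_t^{(2)}=\paramupdate_t\paren{\param_{t-1}^{(2)},\,\pas_T\,\vtanc_t}.
\end{equation*}
The first sequence stays in $\boctopt$ by \relem{maintientpsfinibocont} applied at the shifted start time $T$, since $T+\flng{T}<\thoriz{T}(\sm{\pas})$ by assumption. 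The second uses the constant stepsize $\pas_T$ on the interval: since $\cdvp\leq\brnp/2$ and $\pas_T/\pas_t=1+o(1/\fmdper{T})$ by \relem{stronghomo}, for $T$ large enough the cumulated bound $\sum_{T<s\leq t}\pas_T\,\fmdper{s}$ is at most twice $\sum_{T<s\leq t}\pas_s\,\fmdper{s}\leq \rayoptct/(3\majmpc)$, so the constant-stepsize trajectory also stays in $\boctopt$.

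The key step is to iterate \rehyp{updateop} and derive a Gronwall-type bound. Applied at each time $t\in(T;T+\flng{T}]$, it yields
\begin{equation*}
\dist{\param_t^{(1)}}{\param_t^{(2)}}\leq
\paren{1+\cpliprmup(\pas_t+\pas_T)\nrm{\vtanc_t}}\,\dist{\param_{t-1}^{(1)}}{\param_{t-1}^{(2)}}
+\cpliprmup\,\abs{\pas_t-\pas_T}\,\nrm{\vtanc_t}.
\end{equation*}
Using $\nrm{\vtanc_t}\leq \fmdper{t}\,r_\Tangent/\brnp$ from \recor{normpasv}, the cumulated log of the multiplicative factors is bounded, as in the proof of \relem{contordredeuxitprm2}, by a constant times $\sum_{T<s\leq t}\max(\pas_s,\pas_T)\,\fmdper{s}$, which is itself bounded because of the $\thoriz{T}$ constraint and \relem{stronghomo}. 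Hence, for a suitable constant $C$,
\begin{equation*}
\dist{\param_{T+\flng{T}}^{(1)}}{\param_{T+\flng{T}}^{(2)}}
\leq C\,\sum_{T<s\leq T+\flng{T}}\abs{\pas_s-\pas_T}\,\nrm{\vtanc_s}.
\end{equation*}

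To finish, I will bound each term in this sum. By \relem{stronghomo}, $\abs{\pas_s-\pas_T}=\pas_s\,o(1/\fmdper{T})$ uniformly for $s\in(T;T+\flng{T}]$, and since $\fmdper{s}\sim\fmdper{T}$ on this interval (as $\fmdper{}$ is a scale function and $T+\flng{T}\sim T$), the bound $\nrm{\vtanc_s}\leq\fmdper{s}\,r_\Tangent/\brnp$ gives $\abs{\pas_s-\pas_T}\,\nrm{\vtanc_s}=\pas_s\,o(1)$ uniformly. Summing yields
\begin{equation*}
\sum_{T<s\leq T+\flng{T}}\abs{\pas_s-\pas_T}\,\nrm{\vtanc_s}
=\po{\sum_{T<s\leq T+\flng{T}}\pas_s},
\end{equation*}
which is precisely the bound claimed. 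The corollary for open-loop updates $\paramupdate_{T:T+\flng{T}}(\param_T,\mem_T,(\pas_t))$ versus $\paramupdate_{T:T+\flng{T}}(\param_T,\mem_T,\pas_T)$ follows immediately: in the open-loop case, the state trajectory $(\mem_t)$ depends only on the fixed parameter $\param_T$ and the initial state $\mem_T$, so the gradient sequence $\vtanc_t=\sm{V}_t(\param_T,\mem_t)$ is the same for both schemes. The hypothesis $\mem_T\in\bmaint{T}$ together with $\param_T\in\boctopt$ ensures via \recor{stablestates} that $\mem_t\in\bmaint{t}$ and hence $\vtanc_t\in\bvtg{t}$, so the previous argument applies verbatim. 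The main nuisance — not a deep obstacle — is checking carefully that both trajectories stay within the stable tube throughout $(T;T+\flng{T}]$, which is where the factor $\brnp/2$ in the overall learning rate is used to give a safety margin.
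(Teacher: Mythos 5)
Your proof is correct and reaches the same conclusion as the paper's, but by a slightly different route. The paper first sets $\vtanc'_t\deq(\pas_T/\pas_t)\,\vtanc_t$ so that the constant-stepsize update becomes a variable-stepsize update with modified gradients, and then invokes the already-proved finite-time divergence estimate \relem{contordredeuxitprm2}, which delivers $\cst_6\sum\pas_s\,\dist{\vtanc_s}{\vtanc'_s}=\cst_6\sum\abs{\pas_s-\pas_T}\,\nrm{\vtanc_s}$ directly. You instead re-derive the Gronwall iteration from \rehyp{updateop} step by step, arriving at the identical quantity. The two are substantively equivalent — your formula $\sum\abs{\pas_s-\pas_T}\,\nrm{\vtanc_s}$ and the paper's $\sum\pas_s\,\dist{\vtanc_s}{\vtanc'_s}$ are equal term by term, and the final homogeneity estimate via \relem{stronghomo} is carried out the same way — so your argument essentially re-proves a special case of \relem{contordredeuxitprm2} inline instead of citing it. The rescaling trick is what lets the paper avoid the bookkeeping on the multiplicative factors and the $\thoriz{T}$ bound that you have to redo; your version is more self-contained but longer. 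Both approaches do require (and you correctly flag) the safety factor $\cdvp\leq\brnp/2$ so that $\pas_T\vtanc_t$, which can exceed $\pas_t\vtanc_t$ by a factor up to $1+o(1)\leq 2$, still lands within the radius $r_\Tangent$ needed for \rehyp{updateop}/\recor{normpasv}.
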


\begin{proof}
Let $T<t
\leq T+\flng{T}$. Define $\vtanc'_t$ such that
\begin{equation*}
\pas_{t}\, \vtanc'_t=\pas_{T}\, \vtanc_t,
\end{equation*}
so that
\begin{equation*}
\paramupdate_{T:T+\flng{T}}{\paren{\param_{T},\paren{
\pas_{T}\,\vtanc_t}_{T<t\leq T+\flng{T}}}}
=
\paramupdate_{T:T+\flng{T}}{\paren{\param_{T},\paren{
\pas_t\,\vtanc'_t}_{T<t\leq T+\flng{T}}}}.
\end{equation*}

By \relem{stronghomo}, we have
$\pas_{T}/\pas_{t}=1+o(1)$.
Therefore, for $t$ large enough, we have
$\norm{\vtanc'_t}\leq 2\norm{\vtanc_t}$ so that if $\cdvp<\brnp/2$, then $\pas_t \,\vtanc'_t$ lies in the
control ball $\bvtg{t}$ thanks to \recor{normpasv}.

By \relem{contordredeuxitprm2}
the distance we want to bound is at most
\begin{equation*}
\cst_6 \sum_{T<t\leq T+\flng{T}} \pas_s\,
\dist{\vtanc_s}{\vtanc'_s}
\end{equation*}
but then
$\dist{\vtanc_s}{\vtanc'_s}=\dist{\vtanc_s}{\frac{\pas_{T}}{\pas_s}\vtanc_s}=\norm{\vtanc_s}\left|\frac{\pas_{T}}{\pas_s}-1
\right|=o(1)$ since $\vtanc_s=O(\fmdper{s})$ and
$\frac{\pas_{T}}{\pas_s}=1+o(1/\fmdper{T+\flng{T}})$ with
$\fmdper{T+\flng{T}}\geq \fmdper{s}$.
\end{proof}

\subsection{Finite-Time Divergence Between Trajectories}

In this section we consider increasingly easier-to-analyze trajectories. We
start with some parameters $\param_t$ computed along a ``noisy''
trajectory where the states $(\mem_t)$ are not necessarily given by applying the
algorithm $\Algo_t$. We then consider the ``regularized'' trajectory
$\bar\mem_{t}=\Algo_{t}(\param_{t-1},\bar \mem_{t-1})$ defined by $\Algo_t$, but still using the
parameters from the noisy trajectory, and the parameter updates
$\bar\param_t$ computed from $\bar\mem_t$. These differ by the
deviation $\bruit{t}{\param_0}{(\mem_t)}$ from \redef{algobruite}.

Next we consider the ``open-loop''
trajectory $\mem'_{t}=\Algo_{t}(\param_0,\mem'_{t-1})$ and the resulting
parameter updates $\param'_t$ computed from $\mem'_t$.  This open-loop
trajectory can be compared to the trajectory with optimal parameter
$\paramopt$.



\subsubsection{Divergence Between Open-Loop and Closed-Loop Trajectories}

\begin{lemme}[Noisy closed-loop vs open-loop divergence]
\label{lem:noisyopenloopdiv}
Let $\param_0\in\boctopt$ with $\dist{\param_0}{\paramopt}\leq
\rayoptct/3$. Let $(\mem_t)$ be any sequence of states such that
$\mem_t\in\bmaint{t}$.
Let $\sm{\pas}=(\pas_t)_{t\geq 1}$ be a stepsize sequence with $\cdvp\leq \brnp$.


Define the ``closed-loop'' trajectory by induction for $t\geq 1$
\begin{equation*}
\left\lbrace \ba 
\vtanc_t &=\sm{V}_{t}\paren{\param_{t-1},\,\mem_t} \\ 
\param_{t} &= \paramupdate_t\paren{\param_{t-1},\,\pas_t\,\vtanc_t},
\ea \right.
\end{equation*}
and let $\param'_t\deq \paramupdate_{0:t}(\param_0,\mem_0,(\pas_t))$ be
the corresponding open-loop value with parameter $\param_0$, namely, $\mem'_0=\mem_0$,
$\param'_0=\param_0$, and for $t\geq 1$,
\begin{equation*}
\left\lbrace \ba 
{\mem_{t}'}&=\mathcal{A}_{t}\paren{\param_0,\,\mem_{t-1}'} \\
\vtanc_t' &=\sm{V}_{t}\paren{\param_0,\,\mem_t'} \\ 
\param_{t}' &= \paramupdate_t\paren{\param_{t-1}',\,\pas_t\,\vtanc_t'}.
\ea \right.
\end{equation*}

Then for all $0 \leq t < \thorizo(\sm{\pas})$,
\begin{equation*}
\dist{\param_t}{\param_t'} = \go{
\paren{ \sum_{1\leq s \leq t} \pas_s\,\mlipgrad{s}}
\paren{ \sum_{1\leq s \leq t} \pas_s\,\fmdper{s}}
} +
\bruit{t}{\param_0}{(\mem_s)}.
\end{equation*}

In particular, if $(\mem_t)$ itself follows the trajectory
$\mem_{t}=\Algo_t(\param_{t-1},\mem_{t-1})$, we find
\begin{equation*}
\dist{\param_t}{\param_t'} = \go{
\paren{ \sum_{1\leq s \leq t} \pas_s\,\fmdper{s}}
\paren{ \sum_{1\leq s \leq t} \pas_s\,\mlipgrad{s}}
},
\end{equation*}
as $\bruit{t}{\param_0}{(\mem_s)}$ is $0$ by definition.
\end{lemme}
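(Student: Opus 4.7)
The plan is to introduce the intermediate ``regularized'' trajectory $(\bar\mem_t, \bar\param_t)$ from \redef{algobruite} and use the triangle inequality
\begin{equation*}
\dist{\param_t}{\param_t'} \leq \dist{\param_t}{\bar\param_t} + \dist{\bar\param_t}{\param_t'}.
\end{equation*}
By construction, the first term equals $\bruit{t}{\param_0}{(\mem_s)}$, so the task reduces to controlling $\dist{\bar\param_t}{\param_t'}$, i.e., the discrepancy between two genuine algorithm trajectories that both start at $\mem_0$ and $\param_0$, one driven by the evolving parameters $(\param_{s-1})$ and the other by the frozen parameter $\param_0$.

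Next I would invoke \relem{contordredeuxitprm2} with $\param_0' = \param_0$ and with gradient sequences $(\bar\vtanc_s) = (\sm V_s(\param_{s-1},\bar\mem_s))$ and $(\vtanc_s') = (\sm V_s(\param_0,\mem_s'))$, which both lie in $\bvtg{s}$ thanks to \recor{stablestates} (since the fact that $t<\thorizo(\sm{\pas})$ combined with \relem{maintientpsfinibocont} ensures $\param_{s-1},\bar\param_{s-1},\param_{s-1}'\in\boctopt$ and $\bar\mem_s,\mem_s'\in\bmaint{s}$). This gives
\begin{equation*}
\dist{\bar\param_t}{\param_t'} \leq \cst_6 \sum_{1\leq s\leq t} \pas_s\,\dist{\bar\vtanc_s}{\vtanc_s'}.
\end{equation*}

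The core estimate is then a Lipschitz bound on $\dist{\bar\vtanc_s}{\vtanc_s'}$. By \rehyp{contgrad},
\begin{equation*}
\dist{\bar\vtanc_s}{\vtanc_s'} \leq \cst_5\,\mlipgrad{s}\bigl(\dist{\param_{s-1}}{\param_0} + \dist{\bar\mem_s}{\mem_s'}\bigr),
\end{equation*}
and the state discrepancy is handled by \relem{newparamcont}, which yields
$\dist{\bar\mem_s}{\mem_s'}\leq (\cst_1\clipalgoprm/\alpha)\,\sup_{r\leq s-1}\dist{\param_r}{\param_0}$. Finally, the parameter drift $\dist{\param_r}{\param_0}$ is controlled by applying the last display of \relem{contordredeuxitprm2} (with $\vtanc_t'=0$ and $\param_0'=\param_0$) to the closed-loop trajectory, giving $\dist{\param_r}{\param_0}=O\bigl(\sum_{u\leq r}\pas_u\,\fmdper{u}\bigr)$. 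Chaining these bounds,
\begin{equation*}
\dist{\bar\vtanc_s}{\vtanc_s'} = \go{\mlipgrad{s}\,\sum_{u\leq s}\pas_u\,\fmdper{u}},
\end{equation*}
and substituting back and pulling the (monotone) inner sum out of the outer sum yields the product form
$\go{\bigl(\sum_s\pas_s\mlipgrad{s}\bigr)\bigl(\sum_u\pas_u\fmdper{u}\bigr)}$, completing the bound.

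The main obstacle is bookkeeping rather than insight: one must carefully track which trajectory is being fed which parameter (the asymmetric definition of $(\bar\mem_t)$ uses $\param_{t-1}$, not $\bar\param_{t-1}$, while $(\bar\param_t)$ is genuinely updated from $\bar\vtanc_t$), and verify that every configuration encountered — $(\param_{s-1},\bar\mem_s)$ and $(\param_0,\mem_s')$ — remains in $\boctopt\times\bmaint{s}$ so that \rehyp{contgrad}, \rehyp{boundedgrads}, and \relem{newparamcont} apply. All of these reduce to the hypothesis $t<\thorizo(\sm{\pas})$ via \relem{maintientpsfinibocont}, whose multi-role statement was designed precisely for this kind of mixed situation. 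The special case where $(\mem_t)$ itself follows the algorithm then follows immediately, since then $\bar\param_t=\param_t$ and the deviation term $\bruit{t}{\param_0}{(\mem_s)}$ vanishes identically.
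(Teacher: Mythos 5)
Your proof is correct and follows essentially the same route as the paper: the same regularized intermediate trajectory $(\bar\mem_t,\bar\param_t)$ from \redef{algobruite}, the same triangle inequality isolating the deviation term, and the same chaining of \relem{contordredeuxitprm2} (twice), \rehyp{contgrad}, and \relem{newparamcont}. The only cosmetic difference is that you re-derive the gradient-discrepancy bound from \rehyp{contgrad} and \relem{newparamcont} separately, whereas the paper invokes \recor{newcontgrad}, which packages exactly those two steps.
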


\begin{proof} We first consider the ``regularized'' trajectory lying between
the other two.
Define the following trajectory by
induction initialized with $\bar\mem_0=\mem_0$, $\bar\param_0=\param_0$, and
\begin{equation*}
\left\lbrace \ba 
{\bar\mem_{t}}&=\mathcal{A}_{t}\paren{\param_{t-1},\,\bar\mem_{t-1}} \\
\bar\vtanc_t &=\sm{V}_{t}\paren{\param_{t-1},\,\bar\mem_t} \\ 
\bar\param_{t} &= \paramupdate_t\paren{\bar\param_{t-1},\,\pas_t\,\bar\vtanc_t}.
\ea \right.
\end{equation*}
By \redef{algobruite}, for any $t\geq 0$,
\begin{equation*}
\dist{\param_t}{\bar\param_t}=\bruit{t}{\param_0}{(\mem_s)}.
\end{equation*}

Note that for all three trajectories, for $t< \thorizo$, by \relem{maintientpsfinibocont}, all objects
at time $t$
belong respectively to $\boctopt$, $\bvtg{t}$, and 
$\bmaint{t}$.

We now study the divergence $\dist{\bar\param_t}{\param'_t}$ between the regularized trajectory and the
open-loop trajectory.

Since $\bar\param_0=\param_0'$, from \relem{contordredeuxitprm2},
we have for $t<\thorizo$
\begin{equation*}
\dist{\bar \param_t}{\param_t'} \leq \go{\sum_{1\leq s \leq t} \, \pas_s \,
\dist{{\bar \vtanc_s}}{{\vtanc_s'}}}.
\end{equation*}
Now
 $\bar \vtanc_s$ is computed from the
trajectory with parameters $(\param_s)$ and $\vtanc'_s$ with constant
parameter $\param_0$, so by \recor{newcontgrad}, we have
\begin{equation*}
\dist{\bar\vtanc_s}{\vtanc_s'} = \go{\mlipgrad{s}\sup_{p < s} \,
\dist{\param_p}{\param_0}}.
\end{equation*}

But for $0 \leq p < \thorizo$, by \relem{contordredeuxitprm2} we
have
\begin{equation*}
\dist{\param_p}{\param_0} = \go{\sum_{p' \leq p}  \pas_{p'}\,\fmdper{p'}},
\end{equation*}
and therefore, for $s \geq 2$, we have
\begin{equation*}
\dist{\bar \vtanc_s}{\vtanc_s'} = \go{\mlipgrad{s}\sum_{p \leq s-1}
\pas_p\,\fmdper{p}}
=\go{\mlipgrad{s}\,\sum_{1 \leq p \leq t}\,\pas_p\,\fmdper{p}}.
\end{equation*}
The bound still holds for $s=1$, since $\dist{\bar \vtanc_1}{\vtanc_1'}=0$, as they are both computed from $\param_0$ and $\mem_0$.
Therefore, for $t<\thorizo$,
\begin{equation*}
\ba
\dist{\bar \param_t}{\param_t'} 
&=
\go{\sum_{1\leq s \leq t}
\pas_s \,\mlipgrad{s}\paren{\sum_{1\leq p \leq t} \pas_p\,\fmdper{p}}}
\\&=
\go{\paren{\sum_{1\leq s \leq t}
\pas_s\,\mlipgrad{s}}\paren{\sum_{1\leq s \leq t} \pas_s\,\fmdper{s}}},
\ea
\end{equation*}
from which the conclusion follows.
\end{proof}

\subsubsection{Deviation from the Optimal Parameter in Finite Time}

\begin{lemme}[Deviation from the optimal parameter in finite time]
\label{lem:propcentalgoptimtinitzero}
\label{lem:propcentalgoptim}
Let $\param_0\in\boctopt$ with $\dist{\param_0}{\paramopt}\leq
\rayoptct/3$, and let $\mem_0\in\bmaint{0}$.
Let $\sm{\pas}=(\pas_t)_{t\geq 1}$ be a stepsize sequence with $\cdvp\leq \brnp$.

Consider a trajectory such that for $t\geq 1$
\begin{equation*}
\left\lbrace \ba 
\mem_t&=\Algo_t(\param_{t-1},\mem_{t-1}) \qquad\text{or }\, \mem_t\in
\bmaint{t},\\
\vtanc_t &=\sm{V}_{t}\paren{\param_{t-1},\,\mem_t}, \\ 
\param_{t} &= \paramupdate_t\paren{\param_{t-1},\,\pas_t\,\vtanc_t}.
\ea \right.
\end{equation*}

Then for any $0 \leq t < \thorizo$,
\begin{equation*}
\ba
\dist{\param_t}{\pctrlopt} &\leq
\dist{\paramupdate_{0:t}(\param_0,\memopt_0,(\pas_s))}{
\paramupdate_{0:t}\paren{\pctrlopt,\memopt_0,(\pas_s)}
}
+ \dist{\paramupdate_{0:t}\paren{\pctrlopt,\memopt_0,(\pas_s)}}{\pctrlopt} \\
&+\bruit{t}{\param_0}{(\mem_s)} 
+ \go{\mlipgrad{t}\sup_{1\leq s \leq t} \pas_s}
\\&
+\go{
\paren{ \sum_{1\leq s \leq t} \pas_s\,\fmdper{s}}
\paren{ \sum_{1\leq s \leq t} \pas_s\,\mlipgrad{s}}
}.
\ea
\end{equation*}
\end{lemme}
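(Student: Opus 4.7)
The bound will follow from a single triangle inequality threaded through three intermediate parameter values, after which each piece is controlled by a previously proved lemma. Define
\begin{equation*}
\param'_t \deq \paramupdate_{0:t}(\param_0,\mem_0,(\pas_s)),\qquad
\param''_t \deq \paramupdate_{0:t}(\param_0,\memopt_0,(\pas_s)),\qquad
\param'''_t \deq \paramupdate_{0:t}(\paramopt,\memopt_0,(\pas_s)).
\end{equation*}
Thus $\param'_t$ is the open-loop update launched from the actual initial pair $(\param_0,\mem_0)$, $\param''_t$ replaces the initial state by the target $\memopt_0$, and $\param'''_t$ in addition replaces the initial parameter by $\paramopt$. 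The triangle inequality gives
\begin{equation*}
\dist{\param_t}{\pctrlopt}\leq \dist{\param_t}{\param'_t}+\dist{\param'_t}{\param''_t}+\dist{\param''_t}{\param'''_t}+\dist{\param'''_t}{\pctrlopt},
\end{equation*}
and the last two terms are exactly the first two terms on the right-hand side of the statement.

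For $\dist{\param_t}{\param'_t}$, I apply \relem{noisyopenloopdiv}: the hypotheses there are satisfied because $\dist{\param_0}{\paramopt}\leq \rayoptct/3$, $\mem_0\in \bmaint{0}$, $\cdvp\leq \brnp$, and the assumption on $(\mem_t)$ in the present statement matches either of the alternatives in \relem{noisyopenloopdiv}. The conclusion of that lemma is precisely
\begin{equation*}
\dist{\param_t}{\param'_t}=\go{\paren{\sum_{1\leq s\leq t}\pas_s\,\mlipgrad{s}}\paren{\sum_{1\leq s\leq t}\pas_s\,\fmdper{s}}}+\bruit{t}{\param_0}{(\mem_s)},
\end{equation*}
valid for $0\leq t<\thorizo(\sm{\pas})$.

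For $\dist{\param'_t}{\param''_t}$, which is the distance between two open-loop updates initialized at $\mem_0$ and $\memopt_0$ respectively with the same parameter $\param_0$, I apply \relem{contecarttrajmprmqidiff}. Its hypotheses ($\cdvp\leq\brnp$, $\dist{\param_0}{\paramopt}\leq \rayoptct/3$, initial states in $\bmaint{0}$) are all met, and its conclusion is exactly $\go{\mlipgrad{t}\sup_{1\leq s\leq t}\pas_s}$. Combining the three displayed bounds and rearranging yields the inequality in the statement. The only thing to verify is that every trajectory invoked here stays in the stable tube up to time $t<\thorizo(\sm{\pas})$; this follows from \relem{maintientpsfinibocont} and \recor{stablestates} applied to $(\param'_t)$, $(\param''_t)$ and $(\param'''_t)$, which is why the time restriction $t<\thorizo(\sm{\pas})$ is inherited from both lemmas. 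There is no genuine obstacle: the work has already been absorbed into the two preceding lemmas, and this statement is essentially a packaging of them via the triangle inequality.
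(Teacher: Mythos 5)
Your proof is correct and follows essentially the same route as the paper: introduce the open-loop trajectory $\param'_t=\paramupdate_{0:t}(\param_0,\mem_0,(\pas_s))$, bound $\dist{\param_t}{\param'_t}$ via \relem{noisyopenloopdiv} and $\dist{\param'_t}{\paramupdate_{0:t}(\param_0,\memopt_0,(\pas_s))}$ via \relem{contecarttrajmprmqidiff}, then apply the triangle inequality. Naming $\param''_t$ and $\param'''_t$ explicitly is a purely notational elaboration of the paper's argument.
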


\begin{proof}
%
Let us consider the open-loop trajectory initialized with $\param_0$
and $\mem_0$, namely
\begin{equation*}
\param'_t\deq \paramupdate_{0:t}(\param_0,\mem_0,(\pas_s)).
\end{equation*}

By \relem{contecarttrajmprmqidiff}, for $0\leq t<\thorizo$, we have
\begin{equation*}
\dist{\param'_t}{\paramupdate_{0:t}(\param_0,\memopt_0,(\pas_s))} =
\go{\mlipgrad{t}\sup_{1\leq s \leq t}
\pas_s}.
\end{equation*}


On the other hand, by \relem{noisyopenloopdiv}, for any $t<\thorizo$, we have
\begin{equation*}
\dist{\param_t}{\param_t'} = \go{
\paren{ \sum_{1\leq s \leq t} \pas_s\,\fmdper{s}}
\paren{ \sum_{1\leq s \leq t} \pas_s\,\mlipgrad{s}}
}+
\bruit{t}{\param_0}{(\mem_s)},
\end{equation*}
and the conclusion follows by the triangle inequality.
\end{proof}


\subsection{Convergence of Learning}

\subsubsection{Behavior Around the Local Minimum $\paramopt$}

\begin{lemme}[At first order, $\paramopt$ is not updated in $\itv_k$]
\label{lem:extloc} 
Assume that $\cdvp\leq \min(\cdvpop,\brnp/2)$. Then when $k\to\infty$,
\begin{equation*}
\dist{\paramupdate_{\tpsk{k}:\tpsk{k+1}}\paren{\pctrlopt,\,\memopt_{\tpsk{k}},\,\paren{\pas_t}}}{\pctrlopt} =
\po{\pas_{\tpsk{k}} \,\flng{\tpsk{k}}}.
\end{equation*}
\end{lemme}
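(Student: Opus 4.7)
The plan is to reduce the variable-stepsize open-loop update on the interval $(\tpsk{k};\tpsk{k+1}]$ to the constant-stepsize update $\pas_{\tpsk{k}}$, for which Assumption~\ref{hyp:optimprm}.1 (the first-order stability condition) directly applies. The scale function $\lng$ appearing in \rehyp{optimprm} is precisely the one used to define the timescale $(\tpsk{k})$, so applying \rehyp{optimprm}.1 at the time $t=\tpsk{k}$ with constant stepsize $\pas_{\tpsk{k}}$ yields
\begin{equation*}
\dist{\paramupdate_{\tpsk{k}:\tpsk{k+1}}\paren{\pctrlopt,\memopt_{\tpsk{k}},\pas_{\tpsk{k}}}}{\pctrlopt}=\po{\pas_{\tpsk{k}}\,\flng{\tpsk{k}}},
\end{equation*}
uniformly in $\cdvp\leq \cdvpop$.

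Next I would invoke \relem{changerates} at $T=\tpsk{k}$ with $\param_T=\paramopt$ and $\mem_T=\memopt_{\tpsk{k}}$, which gives
\begin{equation*}
\dist{\paramupdate_{\tpsk{k}:\tpsk{k+1}}\paren{\paramopt,\memopt_{\tpsk{k}},(\pas_t)}}{\paramupdate_{\tpsk{k}:\tpsk{k+1}}\paren{\paramopt,\memopt_{\tpsk{k}},\pas_{\tpsk{k}}}}=\po{\sum_{\tpsk{k}<t\leq \tpsk{k+1}}\pas_t}.
\end{equation*}
Combining the two bounds by the triangle inequality and using \recor{convzerosompasintervalles}.1, which states $\sum_{\tpsk{k}<t\leq \tpsk{k+1}}\pas_t\sim \pas_{\tpsk{k}}\flng{\tpsk{k}}$, yields the announced estimate.

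The only technical point is to verify the hypotheses of \relem{changerates}. The condition $\cdvp\leq \brnp/2$ is ensured by the assumption $\cdvp\leq \min(\cdvpop,\brnp/2)$. The horizon condition $\tpsk{k}+\flng{\tpsk{k}}=\tpsk{k+1}<\thoriz{\tpsk{k}}(\sm{\pas})$ holds for all $k\geq k_0$ by \relem{minmthoriztpsk}; finitely many small $k$ are harmless since the conclusion is asymptotic. The hypothesis $\dist{\param_t}{\paramopt}\leq \rayoptct/3$ for the parameters appearing in the open-loop trajectory starting at $\paramopt$ follows from \relem{maintientpsfinibocont} applied with initial parameter $\paramopt$: for $t<\thoriz{\tpsk{k}}(\sm{\pas})$ the generated parameters remain in $\boctopt$, and by the uniform bound from \recor{normpasv} combined with \recor{convzerosompasintervalles}.1 their distance to $\paramopt$ stays of order $o(1)$, hence eventually below $\rayoptct/3$. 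The boundedness $\vtanc_t\in \bvtg{t}$ is automatic from \rehyp{boundedgrads} since $\memopt_t\in\bmaint{t}$ (from \recor{stablestates} applied at $\paramopt$). No genuine obstacle is anticipated; the argument is essentially a one-line bookkeeping between the two forms of open-loop update.
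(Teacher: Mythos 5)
Your argument is correct and matches the paper's own proof step for step: invoke \rehyp{optimprm}.1 with the constant stepsize $\pas_{\tpsk{k}}$, transfer to the variable-stepsize update via \relem{changerates} at $T=\tpsk{k}$, convert $\po{\sum_{\itv_k}\pas_t}$ into $\po{\pas_{\tpsk{k}}\flng{\tpsk{k}}}$ via \recor{convzerosompasintervalles}, and absorb finitely many small $k$ in the $\po{}$. The only superfluous step is your appeal to \relem{maintientpsfinibocont} to bound $\dist{\param_t}{\paramopt}$; since the open-loop trajectory here is started at $\param_T=\paramopt$, the required condition $\dist{\param_T}{\paramopt}\leq \rayoptct/3$ is immediate.
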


\begin{proof}
By \rehyp{optimprm} and by the
\redef{echetps} of $\tpsk{k}$, this holds when using a constant learning rate
$\pas_{\tpsk{k}}$ instead of $\pas_t$ between $\tpsk{k}$ and
$\tpsk{k+1}$; namely, we have
\begin{equation*}
\dist{\paramupdate_{\tpsk{k}:\tpsk{k+1}}\paren{\pctrlopt,\,\memopt_{\tpsk{k}},\,\pas_{\tpsk{k}}}}{\pctrlopt} =
\po{\pas_{\tpsk{k}}\,\flng{\tpsk{k}}}.
\end{equation*}

\relem{changerates} can transfer this to non-constant step sizes $\pas_s$
instead of $\pas_{\tpsk{k}}$.
Let us check that all the assumptions of \relem{changerates} are
satisfied. Remember that $\tpsk{k+1}=\tpsk{k}+\flng{\tpsk{k}}$.
The condition
$\thoriz{\tpsk{k}}>\tpsk{k+1}$ is satisfied 
for $k\geq k_0$ by \relem{minmthoriztpsk}.
The condition on step sizes is satisfied by the last point of
\recor{convzerosompasintervalles}. 
Therefore,
for $k\geq k_0$ we can apply \relem{changerates} to $T=\tpsk{k}$. This provides the
conclusion, after observing that $\sum_{\itv_k}\pas_t\sim
\pas_{\tpsk{k}}\,\flng{\tpsk{k}}$
by \recor{convzerosompasintervalles}.
What happens for $k<k_0$ is absorbed in the $\po{}$ notation.
\end{proof}

\begin{lemme}[Contractivity of open-loop updates on each interval]
\label{lem:contractitvk}
Assume that $\cdvp\leq \min(\cdvpop,\brnp/2)$. Then for $k\geq k_0$,
for any $\param\in \boctopt$ with $\dist{\param}{\paramopt}\leq
\frac{\rayoptct}{3}$,
\begin{equation*}
\dist{\paramupdate_{\tpsk{k}:\tpsk{k+1}}\paren{\param,\,\memopt_{\tpsk{k}},\paren{\pas_t}}}
{\paramupdate_{\tpsk{k}:\tpsk{k+1}}\paren{\paramopt,\,\memopt_{\tpsk{k}},\paren{\pas_t}}}
\end{equation*}
is at most
\begin{equation*}
\paren{1-\mvpmin\,\pas_{\tpsk{k}}\,\flng{\tpsk{k}}}\,\dist{\prmctrl}{\pctrlopt}
+ \po{\pas_{\tpsk{k}}\,\flng{\tpsk{k}}}.
\end{equation*}
\end{lemme}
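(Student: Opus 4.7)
The plan is to combine the second-order contractivity Assumption~\ref{hyp:optimprm}.\ref{hyp:optimprm2} (which is stated with the constant stepsize $\pas_{\tpsk{k}}$ over the interval $(\tpsk{k};\tpsk{k+1}]$) with Lemma~\ref{lem:changerates}, which allows us to replace a constant stepsize by the variable sequence $(\pas_t)$ at the cost of an error of order $\po{\sum_{\itv_k}\pas_t}$.

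More concretely, applying \rehyp{optimprm}.\ref{hyp:optimprm2} at time $t=\tpsk{k}$, so that $t+\flng{t}=\tpsk{k+1}$ by \redef{echetps}, yields
\begin{equation*}
\dist{\paramupdate_{\tpsk{k}:\tpsk{k+1}}\paren{\param,\memopt_{\tpsk{k}},\pas_{\tpsk{k}}}}{\paramupdate_{\tpsk{k}:\tpsk{k+1}}\paren{\paramopt,\memopt_{\tpsk{k}},\pas_{\tpsk{k}}}}
\leq \paren{1-\mvpmin\,\pas_{\tpsk{k}}\,\flng{\tpsk{k}}}\dist{\param}{\paramopt}+\po{\pas_{\tpsk{k}}\,\flng{\tpsk{k}}}
\end{equation*}
for $\cdvp\leq\cdvpop$, uniformly in $\param\in\boctopt$. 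This is the desired estimate but with a \emph{constant} stepsize $\pas_{\tpsk{k}}$ in place of the variable $(\pas_t)$.

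Next I would invoke \relem{changerates} twice, once with initial parameter $\param$ and once with $\paramopt$, to compare the variable-stepsize updates with the constant-stepsize ones. The hypotheses of that lemma are in place: we take $\cdvp\leq\brnp/2$, and by \relem{minmthoriztpsk} we have $\thoriz{\tpsk{k}}(\sm{\pas})>\tpsk{k+1}=\tpsk{k}+\flng{\tpsk{k}}$ for $k\geq k_0$; the parameters $\param,\paramopt$ lie within $\rayoptct/3$ of $\paramopt$, and the gradients generated along the open-loop trajectories with initial state $\memopt_{\tpsk{k}}\in\bmaint{\tpsk{k}}$ stay in $\bvtg{t}$ by \relem{maintientpsfinibocont}. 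The lemma then gives
\begin{equation*}
\dist{\paramupdate_{\tpsk{k}:\tpsk{k+1}}\paren{\param,\memopt_{\tpsk{k}},\paren{\pas_t}}}{\paramupdate_{\tpsk{k}:\tpsk{k+1}}\paren{\param,\memopt_{\tpsk{k}},\pas_{\tpsk{k}}}}=\po{\smi{\itv_k}\pas_t},
\end{equation*}
and similarly with $\paramopt$ in place of $\param$.

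To conclude, I would add the three resulting bounds via the triangle inequality and use \recor{convzerosompasintervalles} (first point) to rewrite $\smi{\itv_k}\pas_t\sim\pas_{\tpsk{k}}\,\flng{\tpsk{k}}$, so that every error incurred by switching between constant and variable stepsizes is absorbed into the $\po{\pas_{\tpsk{k}}\,\flng{\tpsk{k}}}$ term. The resulting overall bound is exactly the claim. There is no hard step here; the only delicate point is verifying that the $\po{\cdot}$ in \relem{changerates} (which is uniform in $T\to\infty$) and the $\po{\cdot}$ in \rehyp{optimprm}.\ref{hyp:optimprm2} (which is uniform over $\param\in\boctopt$ and $0\leq\cdvp\leq\cdvpop$) combine into a bound that is uniform in $\param\in\boctopt$, as required, which is immediate from the uniformity clauses of both statements.
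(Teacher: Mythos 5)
Your proof is correct and follows the same route as the paper: apply \rehyp{optimprm}.\ref{hyp:optimprm2} over the interval $(\tpsk{k};\tpsk{k+1}]$ with the constant step $\pas_{\tpsk{k}}$, convert constant to variable stepsizes via \relem{changerates} (applied at both $\param$ and $\paramopt$), and close with the triangle inequality and $\smi{\itv_k}\pas_t\sim\pas_{\tpsk{k}}\flng{\tpsk{k}}$. The paper's proof is more terse but identical in substance.
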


\begin{proof}
\rehyp{optimprm} applied to the intervals $\tpsk{k}<t\leq
\tpsk{k}+\flng{\tpsk{k}}=\tpsk{k+1}$ 
provides the same statement but using constant
step size $\pas_{\tpsk{k}}$ instead of variable step size
$(\pas_t)$.

As in \relem{extloc}, we can use \relem{changerates} to bound the
distance between constant and variable step sizes.
This yields
\begin{equation*}
\dist{\paramupdate_{\tpsk{k}:\tpsk{k+1}}\paren{\param,\,\memopt_{\tpsk{k}},\paren{\pas_t}}}
{\paramupdate_{\tpsk{k}:\tpsk{k+1}}\paren{\param,\,\memopt_{\tpsk{k}},\pas_{\tpsk{k}}}}
=\po{\sum_{\itv_k}\pas_t}=\po{\pas_{\tpsk{k}}\,\flng{\tpsk{k}}}
\end{equation*}
and likewise for $\paramopt$. The conclusion follows by the triangle
inequality.
\end{proof}

\subsubsection{Contraction of Errors from $\tpsk{k}$ to $\tpsk{k+1}$}

\begin{lemme}[Contraction of errors from $\tpsk{k}$ to $\tpsk{k+1}$]
\label{lem:propcentechtpstpsk}
Let $\cdvp \leq \min(\brnp/2,\cdvpop)$.
Let $k\geq k_0$ where $k_0$ is defined in \recor{contmajitvk}.

Let $\param_\tpsk{k}$ be such that
${\dist{\param_\tpsk{k}}{\pctrlopt}\leq \frac{\rayoptct}{3}}$, and let
$\mem_\tpsk{k}\in \bmaint{\tpsk{k}}$.
Consider the learning trajectory from initial parameter
$\param_\tpsk{k}$ and initial state $\mem_\tpsk{k}$ and learning rates
$(\pas_t)$, namely,
\begin{equation*}
\left\lbrace \ba 
{\mem_{t}}&=\mathcal{A}_{t}\paren{\param_{t-1},\,\mem_{t-1}} \qquad
\text{or }\,\mem_t\in\bmaint{t}\\
\vtanc_t &=\sm{V}_{t}\paren{\param_{t-1},\,\mem_t} \\ 
\param_{t} &= \paramupdate_t\paren{\param_{t-1},\,\pas_t\,\vtanc_t}.
\ea \right.
\end{equation*}

Then for all $\tpsk{k} \leq t \leq \tpsk{k+1}$, we have
$\param_t\in\boctopt$ and $\mem_t\in \bmaint{t}$, and moreover,
\begin{equation*}
\ba
\dist{\param_{\tpsk{k+1}}}{\pctrlopt} &\leq \paren{1 - \mvpmin \,
\pas_{\tpsk{k}}\,\flng{\tpsk{k}}} \, \dist{\param_{\tpsk{k}}}{\pctrlopt}
\\&
+\bruitifcdvp{\tpsk{k}}{\tpsk{k+1}}{\param_\tpsk{k}}{(\mem_t)}{\sm{\pas}}
+ \po{\pas_{\tpsk{k}}
\,\flng{\tpsk{k}}}
\ea
\end{equation*}
where the $\po{}$ is uniform over $\param_{\tpsk{k}}$, $\mem_{\tpsk{k}}$
and $\cdvp$ satisfying the constraints above.

\end{lemme}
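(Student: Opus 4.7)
The plan is to apply Lemma~\ref{lem:propcentalgoptim} on the time interval $[\tpsk{k};\tpsk{k+1}]$, and then control each of the five terms in its conclusion using Lemmas~\ref{lem:contractitvk}, \ref{lem:extloc}, and the various estimates from Section~\ref{sec:stutmsadpoptcrt} comparing sums on $\itv_k$ to $\pas_{\tpsk{k}}\flng{\tpsk{k}}$.

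First, following Remark~\ref{rem:timeshift}, I would time-shift everything so that $\tpsk{k}$ becomes the initial time and $\flng{\tpsk{k}}=\tpsk{k+1}-\tpsk{k}$ becomes the length of the interval under consideration. Since $k\geq k_0$, Lemma~\ref{lem:minmthoriztpsk} gives $\thoriz{\tpsk{k}}(\sm{\pas})>\tpsk{k+1}$, so Lemma~\ref{lem:maintientpsfinibocont} (shifted) ensures that the trajectory satisfies $\param_t\in\boctopt$ and $\mem_t\in\bmaint{t}$ for all $\tpsk{k}\leq t\leq \tpsk{k+1}$. This handles the first part of the conclusion.

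Next I would apply the shifted version of Lemma~\ref{lem:propcentalgoptim} at time $t=\tpsk{k+1}$, with initial parameter $\param_{\tpsk{k}}$ and reference state $\memopt_{\tpsk{k}}$, yielding $\dist{\param_{\tpsk{k+1}}}{\paramopt}$ bounded by the sum of:
\begin{enumerate}
\item $\dist{\paramupdate_{\tpsk{k}:\tpsk{k+1}}(\param_{\tpsk{k}},\memopt_{\tpsk{k}},(\pas_s))}{\paramupdate_{\tpsk{k}:\tpsk{k+1}}(\paramopt,\memopt_{\tpsk{k}},(\pas_s))}$, bounded via Lemma~\ref{lem:contractitvk} by $(1-\mvpmin\pas_{\tpsk{k}}\flng{\tpsk{k}})\dist{\param_{\tpsk{k}}}{\paramopt}+\po{\pas_{\tpsk{k}}\flng{\tpsk{k}}}$;
\item $\dist{\paramupdate_{\tpsk{k}:\tpsk{k+1}}(\paramopt,\memopt_{\tpsk{k}},(\pas_s))}{\paramopt}$, bounded by $\po{\pas_{\tpsk{k}}\flng{\tpsk{k}}}$ by Lemma~\ref{lem:extloc};
\item $\bruitifcdvp{\tpsk{k}}{\tpsk{k+1}}{\param_{\tpsk{k}}}{(\mem_t)}{\sm{\pas}}$, kept as is;
\item a forgetting term $\go{\mlipgrad{\tpsk{k+1}}\sup_{\itv_k}\pas_s}$, which is $\po{\pas_{\tpsk{k}}\flng{\tpsk{k}}}$ by point~7 of Corollary~\ref{cor:convzerosompasintervalles};
\item a second-order term $\go{(\sum_{\itv_k}\pas_s\fmdper{s})(\sum_{\itv_k}\pas_s\mlipgrad{s})}$, which is $\po{\pas_{\tpsk{k}}\flng{\tpsk{k}}}$ by point~4 of Corollary~\ref{cor:convzerosompasintervalles}.
\end{enumerate}

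Combining these contributions by the triangle inequality gives the stated bound. The main obstacle is mostly bookkeeping: making sure that all the $\po{}$ estimates are uniform in $\param_{\tpsk{k}}$, $\mem_{\tpsk{k}}$, and $\cdvp\leq\min(\brnp/2,\cdvpop)$, which follows because the underlying assumptions (Assumption~\ref{hyp:optimprm} and Assumption~\ref{hyp:spdes}) already provide such uniformity, and because Corollary~\ref{cor:convzerosompasintervalles} and Lemma~\ref{lem:changerates} use only the stepsize schedule, not $\cdvp$ itself (cf.\ Remark~\ref{rem:renormspas}). A minor care point is that Lemma~\ref{lem:propcentalgoptim} is stated for $0\leq t<\thorizo$, so after time-shifting we must verify $\tpsk{k+1}<\thoriz{\tpsk{k}}$, which is exactly what Lemma~\ref{lem:minmthoriztpsk} provides for $k\geq k_0$.
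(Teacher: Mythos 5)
Your proof is correct and follows essentially the same route as the paper's: establish that the trajectory stays in the stable tube on $[\tpsk{k};\tpsk{k+1}]$ via Lemmas~\ref{lem:minmthoriztpsk} and \ref{lem:maintientpsfinibocont}, apply the (time-shifted) Lemma~\ref{lem:propcentalgoptim} at $t=\tpsk{k+1}$, and then bound the resulting five terms exactly as the paper does, via Lemma~\ref{lem:contractitvk}, Lemma~\ref{lem:extloc}, and Corollary~\ref{cor:convzerosompasintervalles}. The attributions to specific points of Corollary~\ref{cor:convzerosompasintervalles} and the uniformity discussion are also accurate.
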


\begin{proof}
From \relem{minmthoriztpsk} and since $k\geq k_0$, we have
$\thoriz{\tpsk{k}}>\tpsk{k+1}$. Therefore we can apply
\relem{maintientpsfinibocont} and, for $\tpsk{k} \leq t \leq \tpsk{k+1}$,
we have
\begin{equation*}
\cpl{\param_t}{\mem_t} \in \boctopt \times \bmaint{t}.
\end{equation*}

Thus we can apply \relem{propcentalgoptim} starting at time $\tpsk{k}$,
using again that $\thoriz{\tpsk{k}}>\tpsk{k+1}$.
This yields, for any $\tpsk{k} +1\leq t \leq \tpsk{k+1}$,
\begin{equation*}
\ba
\dist{\param_t}{\pctrlopt} &\leq
\dist{
\paramupdate_{\tpsk{k}:t}\paren{\param_{\tpsk{k}},\memopt_{\tpsk{k}},\paren{\pas_s}}}
{\paramupdate_{\tpsk{k}:t}\paren{\paramopt_{\tpsk{k}},\memopt_{\tpsk{k}},\paren{\pas_s}}}\\
&+
\dist{
\paramupdate_{\tpsk{k}:t}\paren{\paramopt,\memopt_{\tpsk{k}},\paren{\pas_s}}}
{\paramopt}\\
&+
\bruitifcdvp{\tpsk{k}}{t}{\param_\tpsk{k}}{(\mem_t)}{\sm{\pas}}
\\&
+ \go{\paren{\sum_{\tpsk{k}<s\leq t} \pas_s\,\fmdper{s}}\paren{\sum_{\tpsk{k}<s\leq
t} \pas_s\mlipgrad{s}}} + \go{\mlipgrad{t}\sup_{\tpsk{k} < s \leq t} \pas_s}.
\ea
\end{equation*}

Taking $t=\tpsk{k+1}$, by \relem{contractitvk}, the first term is at most
$\paren{1-\mvpmin\,\pas_{\tpsk{k}}\,\flng{\tpsk{k}}}\,\dist{\param_{\tpsk{k}}}{\paramopt}+\po{\pas_{\tpsk{k}}
\,\flng{\tpsk{k}}}$. 

By \relem{extloc}, the second term is $\po{\pas_{\tpsk{k}}
\,\flng{\tpsk{k}}}$.

By \recor{convzerosompasintervalles}, the last two terms are
$\po{\pas_{\tpsk{k}}
\,\flng{\tpsk{k}}}$.
%
%
\end{proof}

%

\subsubsection{Convergence of the Algorithm}

\begin{lemme}
\label{lem:arithgeomconv}
Let $\sm{\sbcfc}=\paren{\scfc_k}$ and $\sm{\sbcfa}=\paren{\scfa_k}$ be
two non-negative sequences such that
\begin{enumerate}
\item $\scfc_k\to 0$ and $\sum_k \scfc_k\to \infty$;
\item $\scfa_k=o(\scfc_k)$ when $k\to\infty$.
\end{enumerate}

Let $(x_k)$  be any non-negative sequence such that for $k\geq k_0$,
\begin{equation*}
x_{k+1} \leq \paren{1-\scfc_k}  x_k + \scfa_k.
\end{equation*}

Then $x_k\to 0$.

\end{lemme}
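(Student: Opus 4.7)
The plan is to use the standard arithmetic–geometric comparison argument: show that $\limsup x_k$ can be made arbitrarily small by exploiting the fact that $b_k$ is negligible compared to $c_k$, while the factors $(1-c_k)$ drive down the homogeneous part of the recursion.

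First, since $c_k \to 0$, I may assume without loss of generality that $c_k \in [0,1]$ for all $k \geq k_0$ (enlarging $k_0$ if necessary); in particular all factors $1-c_k$ are nonnegative, so iterating the recursion preserves $x_k \geq 0$ and keeps the sequence bounded by $x_{k_0} + \sum_{j\geq k_0} b_j$ restricted to any finite horizon (this boundedness will not be essential but reassures that the recursion is well-posed). Fix an arbitrary $\varepsilon > 0$. Since $b_k = o(c_k)$, pick $k_1 \geq k_0$ such that $b_k \leq (\varepsilon/2)\, c_k$ for all $k \geq k_1$. On this range the recursion implies
\begin{equation*}
x_{k+1} \leq (1-c_k)\, x_k + \tfrac{\varepsilon}{2}\, c_k.
\end{equation*}

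The key observation is that the interval $[0,\varepsilon/2]$ is stable: if $x_k \leq \varepsilon/2$ for some $k \geq k_1$, then $x_{k+1} \leq (1-c_k)(\varepsilon/2) + (\varepsilon/2)c_k = \varepsilon/2$, so by induction $x_j \leq \varepsilon/2$ for all $j \geq k$. It therefore suffices to show that some $k \geq k_1$ satisfies $x_k \leq \varepsilon/2$. Arguing by contradiction, suppose $x_k > \varepsilon/2$ for all $k \geq k_1$, and set $y_k \deq x_k - \varepsilon/2 > 0$. Substituting into the bound yields
\begin{equation*}
y_{k+1} + \tfrac{\varepsilon}{2} \leq (1-c_k)\bigl(y_k + \tfrac{\varepsilon}{2}\bigr) + \tfrac{\varepsilon}{2}\, c_k = (1-c_k)\, y_k + \tfrac{\varepsilon}{2},
\end{equation*}
hence $y_{k+1} \leq (1-c_k)\, y_k$. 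Iterating gives $y_k \leq y_{k_1} \prod_{j=k_1}^{k-1}(1-c_j)$, and using $\log(1-c_j) \leq -c_j$ together with $\sum c_j = \infty$, this product tends to $0$. Therefore $y_k \to 0$, contradicting $y_k > 0$ bounded away from $0$ by… wait, $y_k$ need not be bounded below, but the contradiction is simply that $y_k \to 0$ while we assumed $y_k > 0$; more precisely, $y_k \to 0$ means $x_k \to \varepsilon/2$, so for $k$ large enough $x_k < \varepsilon/2 + \varepsilon/4 < \varepsilon$, which is consistent but doesn't directly contradict $x_k > \varepsilon/2$. The clean way is to observe that $y_k \to 0$ means eventually $y_k < \varepsilon/4$, i.e.\ $x_k < 3\varepsilon/4$; this is not a contradiction to $x_k > \varepsilon/2$, so I need to refine.

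Refined step: drop the dichotomy and just combine the two estimates. From $x_{k+1} \leq (1-c_k)x_k + (\varepsilon/2)c_k$, write $x_{k+1} - \varepsilon/2 \leq (1-c_k)(x_k - \varepsilon/2)$. Now the right-hand side may be negative, but using $x_{k+1} \geq 0$ (hence $x_{k+1} - \varepsilon/2 \geq -\varepsilon/2$ trivially) and setting $z_k \deq \max(x_k - \varepsilon/2, 0)$, one checks $z_{k+1} \leq (1-c_k) z_k$ because: if $x_k \leq \varepsilon/2$ then $x_{k+1} \leq \varepsilon/2$ as shown and $z_{k+1} = 0$; if $x_k > \varepsilon/2$ then $z_{k+1} \leq \max((1-c_k)z_k, 0) = (1-c_k)z_k$. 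Iterating, $z_k \leq z_{k_1}\prod_{j=k_1}^{k-1}(1-c_j) \to 0$, so $\limsup x_k \leq \varepsilon/2 < \varepsilon$. Since $\varepsilon$ is arbitrary and $x_k \geq 0$, this gives $x_k \to 0$.

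The only mild subtlety is ensuring the factors $1-c_k$ are nonnegative and the product diverges to $0$; both follow from $c_k \to 0$ and $\sum c_k = \infty$ via the elementary inequality $\log(1-c_k) \leq -c_k$ valid for $c_k \in [0,1)$. There is no genuine obstacle; this is a textbook Robbins–Siegmund–style lemma.
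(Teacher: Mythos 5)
Your refined argument is correct and follows essentially the same route as the paper's proof: choose a threshold beyond which $b_k\leq (\varepsilon/2)\,c_k$, note the absorbing interval, and iterate the linear recursion using $\prod(1-c_j)\to 0$. The only difference is presentational — instead of the paper's case split (either $x_k$ enters $[0,\varepsilon]$ and stays, or it never does and then $x_k-\varepsilon\to 0$, giving $x_k<2\varepsilon$ eventually), you merge both cases into the single invariant $z_{k+1}\leq(1-c_k)z_k$ for $z_k=\max(x_k-\varepsilon/2,0)$, which you were right to introduce after noticing that your first contradiction attempt did not actually produce a contradiction.
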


\begin{proof}

Let us prove that $x_k\to 0$. Let $\eps>0$ and let us prove that
ultimately, $x_k\leq 2\eps$. 

Set $K\deq \inf\enstq{k\geq k_0}{\forall k'\geq k,
\,b_{k'}\leq\eps\,r_{k'}}$. For $k\geq K$, the interval $[0;\eps]$ is
stable by the map $x\mapsto (1-r_k)x+b_k$. Therefore, if
there exists $k\geq K$ such that $x_k\leq \eps$, then we have $x_{k'}\leq \eps$ for all $k'\geq k$.

If there exists no $k\geq K$ such that $x_k\leq \eps$, then we have for
all $k\geq K$,
$0\leq x_{k+1}-\eps \leq \paren{1-\pgs_k} \, x_k + \tasg_k - \eps
\leq \paren{1-\pgs_k} \, x_k + \eps \, \pgs_k - \eps
=\paren{1-\pgs_k} \, \paren{x_k-\eps}$. Therefore,
\begin{equation*}
0 \leq x_k - \eps \leq \paren{\prod_{k'=K}^{k-1} \paren{1-\pgs_{k'}}} \, \paren{x_K-\eps}.
\end{equation*}

Since $\sum_k \pgs_k$ diverges, the product $\prod (1-\pgs_k)$ tends to
$0$. Therefore, $x_k-\eps$ is less than $\eps$ for large enough $k$.

Thus in both cases, $x_k$ is ultimately less that $2\eps$, for any
$\eps>0$.
\end{proof}

\begin{lemme}[End of proof of \rethm{cvalgopti}]
\label{lem:pcvrtrlbruit}
There exists $\cdvpmaxconv>0$ such that,
for any $0\leq \cdvp\leq \cdvpmaxconv$,
the following convergence holds.

For any $\param_0$ with $\dist{\param_0}{\paramopt}\leq
\frac{\rayoptct}{4}$ and any $\mem_0\in\bmaint{0}$, consider a trajectory
given by
\begin{equation*}
\left\lbrace \ba 
\mem_t&=\Algo_{t}\paren{\param_{t-1},\,\mem_{t-1}}\\
\vtanc_t &=\sm{V}_{t}\paren{\param_{t-1},\,\mem_t} \\ 
\param_{t} &= \paramupdate_t\paren{\param_{t-1},\,\pas_t\,\vtanc_t},
\ea \right.
\end{equation*}
for $t\geq 1$.
Then $\param_t$ tends to $\pctrlopt$ as $t\to\infty$.
\end{lemme}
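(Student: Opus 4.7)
The plan is to reduce the statement to the scalar recursion already isolated in Lemma~\ref{lem:arithgeomconv}, using Lemma~\ref{lem:propcentechtpstpsk} to obtain the recursion along the timescale $(\tpsk{k})$ and then upgrading $\tpsk{k}$-convergence to $t$-convergence. Since we are in the non-noisy setting ($\mem_t=\Algo_t(\param_{t-1},\mem_{t-1})$), the deviation term $\bruitifcdvp{\tpsk{k}}{\tpsk{k+1}}{\param_{\tpsk{k}}}{(\mem_t)}{\sm{\pas}}$ in Lemma~\ref{lem:propcentechtpstpsk} vanishes, which is the key simplification.

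First I would fix $\cdvpmaxconv$ small enough to combine three constraints: $\cdvpmaxconv\leq \min(\cdvpop,\brnp/2)$ so that the contraction lemmas apply, and $\cdvpmaxconv\leq \cdvp^{\tpsk{k_0}}$ (from Lemma~\ref{lem:contempsfiniprmprodalgortrl}, with $k_0$ from Corollary~\ref{cor:contmajitvk}) so that any trajectory starting from $\dist{\param_0}{\pctrlopt}\leq \rayoptct/4$ and $\mem_0\in\bmaint{0}$ satisfies $\dist{\param_{\tpsk{k_0}}}{\pctrlopt}\leq \rayoptct/3$ and $\mem_{\tpsk{k_0}}\in\bmaint{\tpsk{k_0}}$. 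This handles the ``cold start'' before the contractive regime kicks in at index $k_0$.

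Next, I would set up the sub-sampled sequence $x_k\deq \dist{\param_{\tpsk{k}}}{\pctrlopt}$ for $k\geq k_0$. Lemma~\ref{lem:propcentechtpstpsk} gives, as long as $x_k\leq \rayoptct/3$, the inequality
\begin{equation*}
x_{k+1}\leq (1-\mvpmin\,\pas_{\tpsk{k}}\,\flng{\tpsk{k}})\,x_k + \alpha_k,
\end{equation*}
with $\alpha_k=\po{\pas_{\tpsk{k}}\,\flng{\tpsk{k}}}$. I would then verify the hypotheses of Lemma~\ref{lem:arithgeomconv} with $\scfc_k=\mvpmin\,\pas_{\tpsk{k}}\,\flng{\tpsk{k}}$ and $\scfa_k=\alpha_k$: $\scfc_k\to 0$ by the second item of \rehyp{spdes} (which forces $\pas_t\flng{t}\to 0$, transported to the subsequence via Corollary~\ref{cor:convzerosompasintervalles}); $\sum_k \scfc_k=\infty$ is exactly Remark~\ref{rem:etaLdiv}; and $\scfa_k=o(\scfc_k)$ is immediate from the little-$o$ form and the constant $\mvpmin>0$. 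However, Lemma~\ref{lem:arithgeomconv} requires an unconditional recursion, while ours holds only while $x_k$ stays in the safe ball. The main obstacle is therefore a bootstrap argument: I need to show that once $x_{k_0}\leq\rayoptct/3$, the iterate never escapes that ball. This I would obtain by taking $\cdvpmaxconv$ still smaller if necessary so that $\alpha_k \leq \tfrac{1}{2}\mvpmin\,\pas_{\tpsk{k}}\,\flng{\tpsk{k}}\cdot \rayoptct/3$ for all $k\geq k_0$ (uniformly in the starting point, which is possible since the $\po{}$ in Lemma~\ref{lem:propcentechtpstpsk} is uniform), so that the interval $[0,\rayoptct/3]$ is invariant under $x\mapsto(1-\scfc_k)x+\scfa_k$. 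Then Lemma~\ref{lem:arithgeomconv} applies and yields $x_k\to 0$.

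Finally I would lift the convergence along the sub-sampled times to every $t$. For $\tpsk{k}<t\leq\tpsk{k+1}$, Lemma~\ref{lem:contordredeuxitprm2} (the $\param'_0=\param_0,\ \vtanc'_t=0$ version) applied on the interval $(\tpsk{k};t]$ gives
\begin{equation*}
\dist{\param_t}{\param_{\tpsk{k}}}\leq \cst_6 \sum_{\tpsk{k}<s\leq t}\pas_s\fmdper{s}=\go{\pas_{\tpsk{k}}\,\flng{\tpsk{k}}\,\fmdper{\tpsk{k}}},
\end{equation*}
using Corollary~\ref{cor:convzerosompasintervalles}; this bound tends to $0$ by the second point of \rehyp{spdes}, so $\dist{\param_t}{\pctrlopt}\leq x_k + o(1)$ and convergence of the full sequence follows. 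Its application requires that the time $t$ lie before $\thoriz{\tpsk{k}}(\sm{\pas})$, which is guaranteed by Lemma~\ref{lem:minmthoriztpsk} for $k\geq k_0$.
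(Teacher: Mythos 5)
Your macro-structure is the same as the paper's—cold start via \relem{contempsfiniprmprodalgortrl}, contraction via \relem{propcentechtpstpsk} with the deviation term vanishing in the non-noisy case, ball-invariance to make the recursion unconditional, \relem{arithgeomconv}, and a lift from $\tpsk{k}$-times to all $t$ via \relem{contordredeuxitprm2}—but the bootstrap step has a genuine error. You want the safe ball $[0,\rayoptct/3]$ to be invariant from $k_0$ (from \recor{contmajitvk}) onwards, and claim you can achieve the needed smallness of $\alpha_k$ relative to $\mvpmin\,\pas_{\tpsk{k}}\flng{\tpsk{k}}$ ``by taking $\cdvpmaxconv$ still smaller if necessary.'' This is not achievable by shrinking $\cdvp$. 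The statement $\alpha_k=\po{\pas_{\tpsk{k}}\flng{\tpsk{k}}}$, uniform over $\cdvp$, means precisely that there is a function $\phi(k)\to 0$, \emph{independent of $\cdvp$}, such that $\alpha_k\leq\phi(k)\,\pas_{\tpsk{k}}\flng{\tpsk{k}}$; both $\alpha_k$ and $\pas_{\tpsk{k}}\flng{\tpsk{k}}$ scale linearly in $\cdvp$ (since $\pas_t=\cdvp\,\sched_t$), so their ratio is in general not driven down by shrinking $\cdvp$. Concretely, some of the sources of this $\po{}$—notably the averaging error $\lngav(T)/\flng{T}$ appearing in \relem{contractpctrlopt} through \rehyp{optimprm}—are rates that depend only on time and not on $\cdvp$ at all. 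So your invariance cannot be triggered at the index $k_0$ by a small learning rate.

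The fix, which is what the paper does: use the uniformity of the $\po{}$ to introduce a \emph{$\cdvp$-independent} index $k_1\geq k_0$ such that $\alpha_k\leq (\rayoptct/3)\,\mvpmin\,\pas_{\tpsk{k}}\flng{\tpsk{k}}$ for all $k\geq k_1$, and then target the cold-start at $T=\tpsk{k_1}$ rather than $\tpsk{k_0}$, i.e.\ set $\cdvpmaxconv\deq\min\paren{\cdvpop,\,\brnp/2,\,\cdvp^{\tpsk{k_1}}}$. With that, $\dist{\param_{\tpsk{k_1}}}{\pctrlopt}\leq\rayoptct/3$ and $\mem_{\tpsk{k_1}}\in\bmaint{\tpsk{k_1}}$, the interval $[0,\rayoptct/3]$ is invariant for $k\geq k_1$, \relem{arithgeomconv} applies unconditionally, and your lift to all $t$ (which is essentially correct as written) completes the proof.
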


\begin{proof}
Take $\cdvp\leq \min(\cdvpop,\,\brnp/2)$.
(This is not yet
$\cdvpmaxconv$: there will be an
additional constraint on $\cdvp$ below.)

By \relem{propcentechtpstpsk}, there exists $k_0\geq 0$, and a sequence
$b_k=\po{\pas_{\tpsk{k}}
\,\flng{\tpsk{k}}}$ such that
\begin{equation*}
\ba
\dist{\param_{\tpsk{k+1}}}{\pctrlopt} &\leq \paren{1 - \mvpmin \,
\pas_{\tpsk{k}}\,\flng{\tpsk{k}}} \, \dist{\param_{\tpsk{k}}}{\pctrlopt}
+ b_k
\ea
\end{equation*}
holds for those values of $k \geq k_0$ such that
$\dist{\param_{\tpsk{k}}}{\paramopt}\leq \frac{\rayoptct}{3}$ and
$\mem_{\tpsk{k}}\in \bmaint{\tpsk{k}}$. (Note that
$\bruitifcdvp{\tpsk{k}}{\tpsk{k+1}}{\param_\tpsk{k}}{(\mem_t)}{\sm{\pas}}$ 
vanishes by definition because,
for all $t\geq 1$,
we have $\mem_t=\Algo_{t}\paren{\param_{t-1},\,\mem_{t-1}}$.)
By \relem{propcentechtpstpsk}, the value of $b_k$ is uniform
over $\olr$ and the values of $\param$ and $\mem$ satisfying those
assumptions.

Since $b_k$ is $\po{\pas_{\tpsk{k}}
\,\flng{\tpsk{k}}}$, there exists
$k_1\geq {k_0}$ such that $b_k$ is less than
$(\rayoptct/3)(\mvpmin\,\pas_{\tpsk{k}}
\,\flng{\tpsk{k}})$
for $k\geq k_1$. (Such a $k_1$ is uniform in the values of $\param$, $\mem$ and $\olr$ satisfying the assumptions above, because $b_k$ is.) 

Define $\cdvpmaxconv \deq
\min\paren{\cdvpop,\,\brnp/2,\,\cdvp^{\tpsk{k_1}}}$, where
$\cdvp^{\tpsk{k_1}}$ is defined by \relem{contempsfiniprmprodalgortrl}
applied to $T=\tpsk{k_1}$.

The assumptions state that $\dist{\param_0}{\paramopt}\leq
\frac{\rayoptct}{4}$ and $\mem_0\in\bmaint{0}$.
Therefore, by \relem{contempsfiniprmprodalgortrl} applied to
$T=\tpsk{k_1}$, 
if
$\cdvp\leq \cdvp^{\tpsk{k_1}}$ then
$\dist{\param_\tpsk{k_1}}{\pctrlopt}\leq
\frac{\rayoptct}{3}$ and $\mem_\tpsk{k_1}\in \bmaint{\tpsk{k_1}}$.

Set $\scfc_k\deq \mvpmin \,\pas_{\tpsk{k}}\,\flng{\tpsk{k}}$. We have
$b_k=\po{\scfc_k}$.

By \relem{propcentechtpstpsk}, if $\dist{\param_\tpsk{k}}{\pctrlopt}\leq
\frac{\rayoptct}{3}$ and
$\mem_\tpsk{k}\in \bmaint{\tpsk{k}}$, then
\begin{equation*}
\dist{\param_\tpsk{k+1}}{\pctrlopt}\leq
(1-\scfc_k)\,\dist{\param_\tpsk{k}}{\pctrlopt}
+b_k
\end{equation*}
and
$\mem_\tpsk{k+1}\in \bmaint{\tpsk{k+1}}$.

By definition of $k_1$, if $k\geq k_1$ then
$(1-\scfc_k)\frac{\rayoptct}{3}+b_k\leq \frac{\rayoptct}{3}$.

Consequently, if $k\geq k_1$ and $\dist{\param_\tpsk{k}}{\pctrlopt}\leq
\frac{\rayoptct}{3}$ and $\mem_\tpsk{k}\in \bmaint{\tpsk{k}}$, then
$\dist{\param_\tpsk{k+1}}{\pctrlopt}\leq
\frac{\rayoptct}{3}$ and $\mem_\tpsk{k+1}\in \bmaint{\tpsk{k+1}}$.

Since this holds at time $\tpsk{k_1}$, by
induction
this holds for any $k\geq k_1$:
if $\cdvp\leq \cdvp^{\tpsk{k_1}}$, then
$\dist{\param_\tpsk{k}}{\pctrlopt}\leq
\frac{\rayoptct}{3}$ and $\mem_\tpsk{k}\in \bmaint{\tpsk{k}}$ for all
$k\geq k_1$. 

Therefore, for any $k\geq k_1$, we have
\begin{equation*}
\dist{\param_\tpsk{k+1}}{\pctrlopt}\leq
(1-\scfc_k)\,\dist{\param_\tpsk{k}}{\pctrlopt}
+b_k.
\end{equation*}

By \rerem{etaLdiv}, the series $\scfc_k= \mvpmin
\,\pas_{\tpsk{k}}\,\flng{\tpsk{k}}$ diverges.
Since $b_k=\po{\scfc_k}$, 
by \relem{arithgeomconv} this implies that $\param_{\tpsk{k}}$ tends to
$\paramopt$ when $k\to\infty$.

For the intermediate times 
$\tpsk{k}<t\leq \tpsk{k+1}$, by 
\relem{contordredeuxitprm2}, we have
\begin{equation*}
\dist{\param_t}{\param_{\tpsk{k}}} \leq \cst_6 \sum_{\tpsk{k}<s\leq
\tpsk{k+1}} \pas_s \,\fmdper{s}
\end{equation*}
(we can apply \relem{contordredeuxitprm2} because we stay in the
stable tube for $t \geq \tpsk{k_1}$).
By \recor{convzerosompasintervalles}, this proves that $\param_t$
tends to $\paramopt$ if $\param_{\tpsk{k}}$ does.
\end{proof}

\begin{lemme}[End of proof of \rethm{cvalgopti_noisy}]
\label{lem:pcvrtrlbruitnoisy}
Assume that, for any $\param_0$ with $\dist{\param_0}{\paramopt}\leq
\frac{\rayoptct}{4}$ and any $\mem_0\in\bmaint{0}$, we are given a random trajectory $(\mem_t,\vtanc_t,\param_t)_{t\geq 0}$ which respects the stable tube $\paren{\bmaint{t}}_{t\geq 0}$, in the sense of \redef{randomtraj}, that is, satisfies
\begin{equation*}
\left\lbrace \ba 
&\mem_{t-1}\in \bmaint{t-1}\text{ and } \param_{t-1}\in\boctopt \,\Longrightarrow\,
\mem_t\in\bmaint{t}\\
&\vtanc_t =\sm{V}_{t}\paren{\param_{t-1},\,\mem_t} \\ 
&\param_{t} = \paramupdate_t\paren{\param_{t-1},\,\pas_t\,\vtanc_t},
\ea \right.
\end{equation*}
for $t\geq 1$.
Let $\eps>0$. Assume there exists $K\geq 0$, a non-negative sequence
$\paren{\delta_k}$ which tends towards $0$, and $\cdvpnoise >0$ such
that, for all $\olr\leq\cdvpnoise$, with probability greater than
$1-\eps$, this trajectory has negligible noise starting at $K$, at speed
$\paren{\delta_k}$ (\redef{negnoise}).

Then there exists $\cdvpmaxconv>0$ such that for $\cdvp\leq
\cdvpmaxconv$, with probability greater than $1-\eps$,
the parameter $\param_t$ tends to $\pctrlopt$ as $t\to\infty$.
\end{lemme}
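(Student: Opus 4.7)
The plan is to mirror the proof of \relem{pcvrtrlbruit}, but performed conditionally on the good event where the noise is negligible. Let $\mathcal{G}$ denote the event on which the random trajectory has negligible noise from time $K$ onwards at speed $(\delta_k)$; by assumption, for $\cdvp \leq \cdvpnoise$, we have $\Pr(\mathcal{G}) \geq 1-\eps$. All subsequent arguments will be conducted on $\mathcal{G}$.

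First, I would apply \relem{propcentechtpstpsk}: as long as $\param_{\tpsk{k}} \in \boctopt$ with $\dist{\param_{\tpsk{k}}}{\paramopt} \leq \rayoptct/3$ and $\mem_{\tpsk{k}} \in \bmaint{\tpsk{k}}$, for $k \geq k_0$ we obtain
\begin{equation*}
\dist{\param_{\tpsk{k+1}}}{\paramopt} \leq (1 - \mvpmin \pas_{\tpsk{k}} \flng{\tpsk{k}}) \dist{\param_{\tpsk{k}}}{\paramopt} + \bruitifcdvp{\tpsk{k}}{\tpsk{k+1}}{\param_{\tpsk{k}}}{(\mem_t)}{\sm{\pas}} + b'_k,
\end{equation*}
with $b'_k = \po{\pas_{\tpsk{k}} \flng{\tpsk{k}}}$ uniform over admissible data. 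On $\mathcal{G}$ and for $k \geq \max(K,k_0)$, the deviation term is bounded by $\delta_k \pas_{\tpsk{k}} \flng{\tpsk{k}}$, so setting $b_k \deq (\delta_k + b'_k / (\pas_{\tpsk{k}} \flng{\tpsk{k}}))\, \pas_{\tpsk{k}} \flng{\tpsk{k}}$ and $r_k \deq \mvpmin \pas_{\tpsk{k}} \flng{\tpsk{k}}$, we get $b_k = \po{r_k}$.

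Next, I would pin down the initial phase. Choose $k_1 \geq \max(K, k_0)$ large enough that $b_k \leq (\rayoptct/3)\,r_k$ for all $k \geq k_1$ (possible because $\delta_k \to 0$ and $b'_k = \po{\pas_{\tpsk{k}} \flng{\tpsk{k}}}$ uniformly in $\olr$). Then define $\cdvpmaxconv \deq \min(\cdvpop, \brnp/2, \cdvpnoise, \cdvp^{\tpsk{k_1}})$, where $\cdvp^{\tpsk{k_1}}$ comes from \relem{contempsfiniprmprodalgortrlrandom} applied at $T = \tpsk{k_1}$. That lemma ensures that, for $\cdvp \leq \cdvpmaxconv$, with probability $1$ we have $\dist{\param_{\tpsk{k_1}}}{\paramopt} \leq \rayoptct/3$ and $\mem_{\tpsk{k_1}} \in \bmaint{\tpsk{k_1}}$. (This step uses the hypothesis that the random trajectory respects the stable tube.) By the standard invariance argument of \relem{pcvrtrlbruit}, the interval $[0, \rayoptct/3]$ is stable under $x \mapsto (1-r_k)x + b_k$ for $k \geq k_1$, so by induction on $\mathcal{G}$ the trajectory stays in the safe zone for all $k \geq k_1$, and the recurrence above is valid throughout.

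Finally, I would invoke \relem{arithgeomconv} with these $r_k$ and $b_k$: divergence of $\sum_k r_k$ follows from \rerem{etaLdiv}, and $b_k = \po{r_k}$ gives $\dist{\param_{\tpsk{k}}}{\paramopt} \to 0$ on $\mathcal{G}$. Convergence at intermediate times $\tpsk{k} < t \leq \tpsk{k+1}$ then follows from \relem{contordredeuxitprm2} together with \recor{convzerosompasintervalles}, exactly as in the non-noisy case, since on $\mathcal{G}$ we also have $\mem_t \in \bmaint{t}$ and $\param_t \in \boctopt$ for all $t \geq \tpsk{k_1}$ (using the stable-tube property of the random trajectory). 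The main subtlety is ensuring that $k_1$ can be chosen \emph{uniformly} in $\olr \leq \cdvpmaxconv$, which requires the uniformity of the $o(\cdot)$ in \relem{propcentechtpstpsk} and of the negligible-noise bound; once this uniformity is granted, the rest is a straightforward translation of the non-noisy proof restricted to $\mathcal{G}$.
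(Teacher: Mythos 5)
Your proposal follows essentially the same route as the paper: condition on the high-probability event where negligible noise holds, apply \relem{propcentechtpstpsk} on intervals $(\tpsk{k},\tpsk{k+1}]$ to get the contraction recurrence with a fused error term $b_k = \po{\mvpmin\,\pas_{\tpsk{k}}\,\flng{\tpsk{k}}}$, use \relem{contempsfiniprmprodalgortrlrandom} to choose $\cdvp^{\tpsk{k_1}}$ and reach the safe zone at time $\tpsk{k_1}$, propagate by induction, then invoke \relem{arithgeomconv} and handle intermediate times via \relem{contordredeuxitprm2}. The uniformity of $k_1$ in $\olr$ that you flag is indeed the key subtlety, and the paper addresses it exactly as you do, via uniformity of the $\po{}$ in \relem{propcentechtpstpsk}.
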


\begin{proof}
Take $\cdvp\leq \min(\cdvpnoise,\cdvpop,\,\brnp/2)$.
(This is not yet
$\cdvpmaxconv$: there will be an
additional constraint on $\cdvp$ below.)

Define $b^1_k\deq\delta_k\,\pas_{\tpsk{k}}\,\flng{\tpsk{k}}$, where
$(\delta_k)$ is the sequence controlling the negligible noise in the
assumptions.

Let $k\geq0$ such that $\mem_{\tpsk{k}}\in\bmaint{\tpsk{k}}$,
$\param_{\tpsk{k}}\in\boctopt$,
and
$\dist{\param_{\tpsk{k}}}{\paramopt}\leq\rayoptct/3$.
Since $\cdvp\leq \brnp$,
by \relem{minmthoriztpsk}, for any $k\geq 0$, we
have $\tpsk{k+1}<\thoriz{\tpsk{k}}(\sm{\pas})$. Therefore, by
\relem{maintientpsfinibocontrandomtraj}, we stay in the stable tube
between $\tpsk{k}$ and $\tpsk{k+1}$, namely,
we have
$\mem_t\in\bmaint{t}$ and $\param_t\in\boctopt$ for all $\tpsk{k}+1\leq t
\leq \tpsk{k+1}$.

Since the random trajectory is in  the stable tube for $\tpsk{k}\leq
t \leq \tpsk{k+1}$, we can apply
\relem{propcentechtpstpsk}. Thus, there exists $k_0\geq 0$, and a sequence
$b^2_k=\po{\pas_{\tpsk{k}}
\,\flng{\tpsk{k}}}$ such that
\begin{equation*}
\ba
\dist{\param_{\tpsk{k+1}}}{\pctrlopt} &\leq \paren{1 - \mvpmin \,
\pas_{\tpsk{k}}\,\flng{\tpsk{k}}} \, \dist{\param_{\tpsk{k}}}{\pctrlopt}
\\&
+\bruitifcdvp{\tpsk{k}}{\tpsk{k+1}}{\param_\tpsk{k}}{(\mem_t)}{\sm{\pas}}
+ b^2_k
\ea
\end{equation*}
holds for those values of $k \geq k_0$ such that
$\dist{\param_{\tpsk{k}}}{\paramopt}\leq \frac{\rayoptct}{3}$ and
$\mem_{\tpsk{k}}\in \bmaint{\tpsk{k}}$. By \relem{propcentechtpstpsk},
the value of $b^2_k$ is uniform over $\cdvp$ and the values of $\param$
and $\mem$
satisfying those assumptions.

Set $b_k\deq b^1_k+b^2_k$. Since $b_k$ is $\po{\pas_{\tpsk{k}}
\,\flng{\tpsk{k}}}$, there exists
$k_1\geq \max\cpl{K}{k_0}$ such that $b_k$ is less than
$(\rayoptct/3)(\mvpmin\,\pas_{\tpsk{k}}
\,\flng{\tpsk{k}})$
for $k\geq k_1$. Such a $k_1$ is uniform over the values of
$\param$, $\mem$ and $\cdvp$ satisfying the assumptions above, because $b_k$ is.

Define $\cdvpmaxconv \deq
\min\paren{\cdvpnoise,\,\cdvpop,\,\brnp/2,\,\cdvp^{\tpsk{k_1}}}$, where
$\cdvp^{\tpsk{k_1}}$ is the value provided by
\relem{contempsfiniprmprodalgortrlrandom} applied to $T=\tpsk{k_1}$.

For $k\geq 0$, let
$\mathcal{E}_k=\enstq{\prmctrl \in \epctrl}{\dist{\prmctrl}{\pctrlopt}
\leq \frac{\rayoptct}{3}}\times\bmaint{\tpsk{k}}$. Consider the event
that the random
trajectory has negligible noise; more precisely, define the event
\begin{equation*}
\mathfrak{S}\paren{\olr}\deq\acco{\mathbbm{1}_{\cpl{\param_{\tpsk{k}}}{\mem_{\tpsk{k}}}\in\mathcal{E}_k}\,\bruitif{\tpsk{k}}{\tpsk{k+1}}{\param_\tpsk{k}}{(\mem_t)}
\leq \delta_k\,\pas_{\tpsk{k}}\,\flng{\tpsk{k}},\quad \forall k\geq K}.
\end{equation*}
Thus, on this event, we have
$\bruitifcdvp{\tpsk{k}}{\tpsk{k+1}}{\param_\tpsk{k}}{(\mem_t)}{\sm{\pas}}\leq
b^1_k$
for any $k\geq K$ such that $\param_{\tpsk{k}}\in\boctopt$, $\dist{\param_{\tpsk{k}}}{\pctrlopt}\leq\rayoptct/3$ 
and $\mem_{\tpsk{k}}\in
\bmaint{\tpsk{k}}$. 
By assumption and by definition of negligible noise, for
$\cdvp<\cdvpnoise$ this event has probability at least $1-\eps$.
We now assume the trajectory is such that this event holds.

Set $\scfc_k\deq \mvpmin \,\pas_{\tpsk{k}}\,\flng{\tpsk{k}}$. We have
$b_k=\po{\scfc_k}$.

By \relem{propcentechtpstpsk}, if $\dist{\param_\tpsk{k}}{\pctrlopt}\leq
\frac{\rayoptct}{3}$ and
$\mem_\tpsk{k}\in \bmaint{\tpsk{k}}$, then
\begin{equation*}
\dist{\param_\tpsk{k+1}}{\pctrlopt}\leq
(1-\scfc_k)\,\dist{\param_\tpsk{k}}{\pctrlopt}
+b_k
\end{equation*}
and
$\mem_\tpsk{k+1}\in \bmaint{\tpsk{k+1}}$.

By definition of $k_1$, if $k\geq k_1$ then
$(1-\scfc_k)\frac{\rayoptct}{3}+b_k\leq \frac{\rayoptct}{3}$.

Consequently, if $k\geq k_1$ and $\dist{\param_\tpsk{k}}{\pctrlopt}\leq
\frac{\rayoptct}{3}$ and $\mem_\tpsk{k}\in \bmaint{\tpsk{k}}$, then
$\dist{\param_\tpsk{k+1}}{\pctrlopt}\leq
\frac{\rayoptct}{3}$ and $\mem_\tpsk{k+1}\in \bmaint{\tpsk{k+1}}$.

Therefore, by induction, if $\dist{\param_\tpsk{k_1}}{\pctrlopt}\leq
\frac{\rayoptct}{3}$ and $\mem_\tpsk{k_1}\in \bmaint{\tpsk{k_1}}$, then
this holds for any $k\geq k_1$.

%

By assumption, the random trajectory respects the stable tube.
Moreover,
we have assumed that $\dist{\param_0}{\paramopt}\leq
\frac{\rayoptct}{4}$ and $\mem_0\in\bmaint{0}$.
Therefore, we can apply
\relem{contempsfiniprmprodalgortrlrandom}: the value
$\cdvp^{\tpsk{k_1}}$ provided by
\relem{contempsfiniprmprodalgortrlrandom} (and used in the definition of
$\cdvpmaxconv$
above) is such that, for any $\cdvp\leq \cdvp^{\tpsk{k_1}}$,
we have
$\dist{\param_\tpsk{k_1}}{\pctrlopt}\leq
\frac{\rayoptct}{3}$ and $\mem_\tpsk{k_1}\in \bmaint{\tpsk{k_1}}$. We
have defined $\cdvpmaxconv$ above to be no greater than $\cdvp^{\tpsk{k_1}}$,
so the constraint $\cdvp\leq \cdvp^{\tpsk{k_1}}$ is satisfied for any $\cdvp\leq \cdvpmaxconv$.

Thus, if $\cdvp\leq \cdvp^{\tpsk{k_1}}$, then
$\dist{\param_\tpsk{k}}{\pctrlopt}\leq
\frac{\rayoptct}{3}$ and $\mem_\tpsk{k}\in \bmaint{\tpsk{k}}$ for all
$k\geq k_1$.

For any $\olr \leq \cdvpmaxconv$, conditionally on the event $\mathfrak{S}\paren{\olr}$,
all of the above applies. 
Therefore, for any $k\geq k_1$ we have
\begin{equation*}
\dist{\param_\tpsk{k+1}}{\pctrlopt}\leq
(1-\scfc_k)\,\dist{\param_\tpsk{k}}{\pctrlopt}
+b_k.
\end{equation*}

Since $b_k=\po{\scfc_k}$, by \relem{arithgeomconv} this implies that $\param_{\tpsk{k}}$ tends to
$\paramopt$ when $k\to\infty$.

For the intermediate times 
$\tpsk{k}<t\leq \tpsk{k+1}$, by 
\relem{contordredeuxitprm2}, we have
\begin{equation*}
\dist{\param_t}{\param_{\tpsk{k}}} \leq \cst_6 \sum_{\tpsk{k}<s\leq
\tpsk{k+1}} \pas_s \,\fmdper{s}
\end{equation*}
(we can apply \relem{contordredeuxitprm2} because we stay in the
stable tube for $t \geq \tpsk{k_1}$).
By \recor{convzerosompasintervalles}, this proves that $\param_t$
tends to $\paramopt$ if $\param_{\tpsk{k}}$ does.

Therefore, for each $\olr \leq \cdvpmaxconv$, convergence occurs for each
trajectory such that the event $\mathfrak{S}\paren{\olr}$ holds, which,
by assumption, happens with probability greater than $1-\eps$. We have thus proven our claim.
\end{proof}

\subsubsection{Convergence of the Open-Loop Algorithm}

We now prove convergence of the open-loop algorithm
as used in Theorem~\ref{thm:cvalgoboucleouverte}.
All results established so far still hold true,
except for \relem{propcentechtpstpsk} and
\relem{pcvrtrlbruit} (and \relem{pcvrtrlbruitnoisy}); we now prove the respective analogues of \relemdeux{propcentechtpstpsk}{pcvrtrlbruit} for the open-loop 
algorithm, \relem{propcentechtpstpskopenloop} and \relem{cvoploopalgo}.
The proofs are actually simpler, since the open-loop case on intervals
$(\tpsk{k};\tpsk{k+1}]$ is actually the
basis of the analysis of the previous case.

First, \relem{pascstmorceaux} deals with the piecewise constant
stepsizes of the open-loop algorithm.

\begin{lemme}[Using piecewise constant step-sizes]
\label{lem:pascstmorceaux}
Define a modified stepsize sequence $(\prn_t)$ by setting
\begin{equation*}
\prn_t\deq \pas_{\tpsk{k+1}}
\end{equation*}
for each $\tpsk{k}+1\leq t \leq\tpsk{k+1}$.
Then this new stepsize sequence still satisfies \rehyp{spdes}.

Consequently, all previous results also apply with this new stepsize
sequence.
\end{lemme}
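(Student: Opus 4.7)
The plan is to verify each of the four conditions of \rehyp{spdes} for the piecewise-constant sequence $(\prn_t)$, leveraging the homogeneity results already established for $(\pas_t)$ in Section~\ref{sec:stutmsadpoptcrt}, especially \relem{stronghomo} and \recor{convzerosompasintervalles}. The key observation is that on each interval $(\tpsk{k};\tpsk{k+1}]$, the sequence $\prn_t$ equals the constant $\pas_{\tpsk{k+1}}$, and by \relem{stronghomo} this constant is asymptotically equivalent to $\pas_t$ for all $t$ in the interval (with the stronger quantitative control $\pas_{\tpsk{k+1}}/\pas_t = 1+o(1/\fmdper{t})$).

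First I would handle positivity and divergence: positivity is immediate, and for divergence, summing $\prn_t$ over $(\tpsk{k};\tpsk{k+1}]$ yields $\pas_{\tpsk{k+1}}\,\flng{\tpsk{k}}$, which by \relem{stronghomo} is $\sim \pas_{\tpsk{k}}\,\flng{\tpsk{k}}$, and this series diverges by \rerem{etaLdiv}. Condition~3 of \rehyp{spdes} involves only $\mlipgrad{t}$ and $\flng{t}$ and does not depend on the stepsizes, so it carries over tautologically. For condition~2, on $(\tpsk{k};\tpsk{k+1}]$ we write
\begin{equation*}
\prn_t\,\flng{t}\,\fmdper{t}\,\mlipgrad{t} = \pas_{\tpsk{k+1}}\,\flng{t}\,\fmdper{t}\,\mlipgrad{t} \sim \pas_t\,\flng{t}\,\fmdper{t}\,\mlipgrad{t}
\end{equation*}
by \relem{stronghomo}, and the right-hand side tends to $0$ by \rehyp{spdes} applied to $(\pas_t)$.

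The main point requiring a little care is condition~4, the sup/inf ratio of $\prn$ over $(t;t+\flng{t}]$. For $t\in(\tpsk{k};\tpsk{k+1}]$, the interval $(t;t+\flng{t}]$ is contained in $(\tpsk{k};\tpsk{k+2}]$ for $t$ large enough (since $t+\flng{t}\leq \tpsk{k+1}+\flng{\tpsk{k+1}}(1+o(1))=\tpsk{k+2}(1+o(1))$, using $\flng{t}\sim\flng{\tpsk{k}}\sim\flng{\tpsk{k+1}}$), so $\prn_s$ takes at most the two values $\pas_{\tpsk{k+1}}$ and $\pas_{\tpsk{k+2}}$ on $(t;t+\flng{t}]$. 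Applying \relem{stronghomo} to the interval $(\tpsk{k+1};\tpsk{k+1}+\flng{\tpsk{k+1}}]=(\tpsk{k+1};\tpsk{k+2}]$ gives $\pas_{\tpsk{k+2}}/\pas_{\tpsk{k+1}}=1+o(1/\fmdper{\tpsk{k+1}})$, and since $t\sim\tpsk{k+1}$ and $\fmdper{}$ is a scale function, $\fmdper{\tpsk{k+1}}\sim\fmdper{t}$; this yields the required $1+o(1/\fmdper{t})$ bound.

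I do not anticipate any serious obstacle here: the statement is essentially a bookkeeping check, and every ingredient (equivalence $\tpsk{k+1}\sim\tpsk{k}$, scale-function preservation of equivalents, the refined sup/inf estimate of \relem{stronghomo}) is already in place. The only subtlety is to be careful that the interval $(t;t+\flng{t}]$ may straddle one boundary $\tpsk{k+1}$, so two piecewise-constant values of $\prn$ can occur; handling two values rather than one is what forces the use of \relem{stronghomo} on the single intermediate interval $(\tpsk{k+1};\tpsk{k+2}]$ rather than a direct transfer from condition~4 of \rehyp{spdes}.
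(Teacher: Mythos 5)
Your proof is correct and follows essentially the same route as the paper: check the four points of \rehyp{spdes} one by one, dispose of positivity/divergence and point~3 immediately, transfer point~2 via the equivalence $\prn_t \sim \pas_t$ (the paper instead re-anchors the scale functions at $\tpsk{k_t+1}$, but this is cosmetic), and for point~4 reduce the sup/inf ratio over $(t;\,t+\flng{t}]$ to \relem{stronghomo} applied on the single interval $(\tpsk{k+1};\,\tpsk{k+2}]$, observing that $\prn$ takes at most two distinct values there. One very minor remark: in your containment step, since $\lng$ is a scale function and hence non-decreasing, $t\leq \tpsk{k+1}$ gives $t+\flng{t}\leq \tpsk{k+1}+\flng{\tpsk{k+1}}=\tpsk{k+2}$ exactly, so no $1+o(1)$ factor or "for $t$ large enough" qualifier is needed.
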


Thus, for the rest of this section, we assume that the stepsize sequence
$(\pas_t)$ is constant on each time interval $(\tpsk{k};\tpsk{k+1}]$.


\begin{proof}
Fof the first point of \rehyp{spdes}, write
\begin{equation*}
\sum_{t\geq 0}\,\prn_t=\sum_{k\geq
0}\,\sum_{\tpsk{k}+1}^{\tpsk{k+1}}\,\pas_{\tpsk{k+1}}=\sum_{k\geq
0}\,\pas_{\tpsk{k+1}}\,\flng{\tpsk{k}}.
\end{equation*}
Now,
$\pas_{\tpsk{k+1}}\,\flng{\tpsk{k}}\sim\pas_{\tpsk{k}}\,\flng{\tpsk{k}}\sim
\sum_{t=\tpsk{k}+1}^{\tpsk{k+1}}\,\pas_t$ by
\recor{convzerosompasintervalles}. So $\sum \prn_t$ diverges if and only
if $\sum \pas_t$ does.
There is nothing to check for the third point of the assumption. Let us
check the second point. For every $t \geq 1$, write $k_t$ the unique
integer such that $\tpsk{k_t} < t \leq \tpsk{k_t+1}$. Then
\begin{equation*}
\prn_t\,\flng{t}\,\fmdper{t}\,\mlipgrad{t} = \pas_{\tpsk{k_t+1}}\,\flng{t}\,\fmdper{t}\,\mlipgrad{t}
\end{equation*}
Since $k_t\to\infty$, when $t\to\infty$, and since $\tpsk{k_t} < t \leq
\tpsk{k_t+1}$ with
$\tpsk{k}\sim\tpsk{k+1}$ when $k\to\infty$ (by \relem{pechtps}),
we have $\tpsk{k_t+1}\sim t$, when $t\to\infty$. As a result, since
$\lng$, $\sbfmdper$ and $\mlipgrad{\cdot}$ are scale functions, and
consequently preserve asymptotic equivalence at infinity, we have $\flng{t}\,\fmdper{t}\,\mlipgrad{t}\sim \flng{\tpsk{k_t+1}}\,\fmdper{\tpsk{k_t+1}}\,\mlipgrad{\tpsk{k_t+1}}$, as $t\to\infty$. Therefore,
\begin{equation*}
\prn_t\,\flng{t}\,\fmdper{t}\,\mlipgrad{t} \sim \pas_{\tpsk{k_t+1}}\,\flng{\tpsk{k_t+1}}\,\fmdper{\tpsk{k_t+1}}\,\mlipgrad{\tpsk{k_t+1}},
\end{equation*}
as $k\to\infty$. Now, since the sequence $\paren{\pas_t}$ satisfies \rehyp{spdes}, the right-hand side converges to $0$, as $t\to\infty$, so that the sequence $\paren{\prn_t}$ indeeds satisfies the second point of \rehyp{spdes}.

For the last point, 
let $t\geq 1$. We want to bound $(\sup_{t< s \leq
t+\flng{t}}\,\prn_s)/(\inf_{t< s \leq t+\flng{t}}\,\prn_s)$. We have
\begin{equation*}
1\leq \frac{\sup_{t< s \leq t+\flng{t}}\,\prn_s}{\inf_{t< s \leq t+\flng{t}}\,\prn_s}
=\frac{\sup_{t<s \leq t+\flng{t}}\,\pas_{\tpsk{k_s+1}}}{\inf_{t< s \leq
t+\flng{t}}\,\pas_{\tpsk{k_s+1}}}.
\end{equation*}
The maps $s\mapsto k_s$ is non-decreasing. Therefore, when $s$ ranges
from $t$ to $t+\flng{t}$, $k_s$ ranges at most from $k_t$ to
$k_{t+\flng{t}}$, so that $\tpsk{k_s+1}$ ranges at most from
$\tpsk{k_t+1}$ to $\tpsk{k_{t+\flng{t}}+1}$. Therefore,
\begin{equation*}
\frac{\sup_{t<s \leq t+\flng{t}}\,\pas_{\tpsk{k_s+1}}}{\inf_{t< s \leq t+\flng{t}}\,\pas_{\tpsk{k_s+1}}}
\leq
\frac{\sup_{\tpsk{k_t+1}\leq s \leq
\tpsk{k_{t+\flng{t}}+1}}\,\pas_s}{\inf_{\tpsk{k_t+1}\leq s \leq
\tpsk{k_{t+\flng{t}}+1}}\,\pas_s}.
\end{equation*}
Next, by definition of $k_{t+\flng{t}}$, we have $\tpsk{k_{t+\flng{t}}}<
t+\flng{t}$. Moreover, $\tpsk{k_t}< t\leq\tpsk{k_t+1}$, and $\lng$ is non-decreasing, so that we have 
$\tpsk{k_t+1}=\tpsk{k_t}+\flng{\tpsk{k_t}}< t+\flng{t} \leq
\tpsk{k_t+1}+\flng{\tpsk{k_t+1}}=\tpsk{k_t+2}$.
As a result, $k_{t+\flng{t}}=k_t+1$. Therefore,
\begin{equation*}
\frac{\sup_{t<s \leq t+\flng{t}}\,\pas_{\tpsk{k_s+1}}}{\inf_{t< s \leq t+\flng{t}}\,\pas_{\tpsk{k_s+1}}}
\leq
\frac{\sup_{\tpsk{k_t+1}\leq s \leq
\tpsk{k_{t}+2}}\,\pas_s}{\inf_{\tpsk{k_t+1}\leq s \leq
\tpsk{k_{t}+2}}\,\pas_s}.
\end{equation*}
By \relem{stronghomo}, this is $1+\po{1/\fmdper{\tpsk{k_t+1}}}$.

Finally, remember that
$\tpsk{k_t+1}\sim t$. So, since $\fmdper{}$ is
a scale function, we have $\fmdper{\tpsk{k_t+1}}\sim \fmdper{t}$ and
$1+o(1/\fmdper{\tpsk{k_t+1}})=1+o(1/\fmdper{t})$. Thus, we have proven that
\begin{equation*}
1\leq \frac{\sup_{t< s \leq t+\flng{t}}\,\prn_s}{\inf_{t< s \leq
t+\flng{t}}\,\prn_s}
\leq 1+o(1/\fmdper{t})
\end{equation*}
namely, $\prn_s$ satisfies the last point of \rehyp{spdes}.
\end{proof}

\begin{lemme}[Contraction of errors from $\tpsk{k}$ to $\tpsk{k+1}$ for the open-loop algorithm]
\label{lem:propcentechtpstpskopenloop}
Let $\cdvp \leq \min(\brnp/2,\cdvpop)$.
Let $k\geq k_0$ where $k_0$ is defined in \recor{contmajitvk}.

Let $\param_\tpsk{k}$ be such that
${\dist{\param_\tpsk{k}}{\pctrlopt}\leq \frac{\rayoptct}{3}}$, and let
$\mem_\tpsk{k}\in \bmaint{\tpsk{k}}$.
Consider the learning trajectory from initial parameter
$\param_\tpsk{k}$ and initial state $\mem_\tpsk{k}$ and learning rates
$(\pas_t)$, namely,
\begin{equation*}
\left\lbrace \ba 
{\mem_{t}}&=\mathcal{A}_{t}\paren{\param_{\tpsk{k}},\,\mem_{t-1}} \\
\vtanc_t &=\sm{V}_{t}\paren{\param_{\tpsk{k}},\,\mem_t} \\ 
\param_{t} &= \paramupdate_t\paren{\param_{t-1},\,\pas_t\,\vtanc_t}.
\ea \right.
\end{equation*}
for $\tpsk{k} < t \leq \tpsk{k+1}$.
Then for all $\tpsk{k} \leq t \leq \tpsk{k+1}$, we have
$\param_t\in\boctopt$ and $\mem_t\in \bmaint{t}$, and moreover,
\begin{equation*}
\ba
\dist{\param_{\tpsk{k+1}}}{\pctrlopt} &\leq \paren{1 - \mvpmin \,
\pas_{\tpsk{k}}\,\flng{\tpsk{k}}} \, \dist{\param_{\tpsk{k}}}{\pctrlopt}
+ \po{\pas_{\tpsk{k}}
\,\flng{\tpsk{k}}}
\ea
\end{equation*}
where the $\po{}$ is uniform over $\param_{\tpsk{k}}$, $\mem_{\tpsk{k}}$
and $\cdvp$ satisfying the constraints above.

\end{lemme}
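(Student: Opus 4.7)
The plan is to exploit the fact that, in the open-loop case, the states $\mem_t$ and gradients $\vtanc_t$ between $\tpsk{k}$ and $\tpsk{k+1}$ are all computed from the single fixed parameter $\param_{\tpsk{k}}$. Hence, by \redef{olupdate}, we have precisely
\begin{equation*}
\param_{\tpsk{k+1}} = \paramupdate_{\tpsk{k}:\tpsk{k+1}}\paren{\param_{\tpsk{k}},\,\mem_{\tpsk{k}},\,(\pas_t)},
\end{equation*}
so the deviation $\bruitifcdvp{\tpsk{k}}{\tpsk{k+1}}{\param_{\tpsk{k}}}{(\mem_t)}{\sm{\pas}}$ that appeared in the closed-loop analysis vanishes identically here. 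This is the essential simplification over \relem{propcentechtpstpsk}.

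First I would verify that the trajectory stays in the stable tube on the interval. Since $\cdvp\leq \brnp/2\leq \brnp$ and $k\geq k_0$, \relem{minmthoriztpsk} yields $\tpsk{k+1}<\thoriz{\tpsk{k}}(\sm{\pas})$, so \relem{maintientpsfinibocont} applied at the time-shifted starting point $\tpsk{k}$ (see \rerem{timeshift}) gives $\param_t\in\boctopt$, $\vtanc_t\in \bvtg{t}$ and $\mem_t\in \bmaint{t}$ for all $\tpsk{k}\leq t\leq \tpsk{k+1}$.

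For the contraction bound, I would decompose by the triangle inequality through the intermediate open-loop point $\paramupdate_{\tpsk{k}:\tpsk{k+1}}(\param_{\tpsk{k}},\memopt_{\tpsk{k}},(\pas_t))$ and then through $\paramupdate_{\tpsk{k}:\tpsk{k+1}}(\paramopt,\memopt_{\tpsk{k}},(\pas_t))$:
\begin{align*}
\dist{\param_{\tpsk{k+1}}}{\paramopt}
&\leq \dist{\paramupdate_{\tpsk{k}:\tpsk{k+1}}(\param_{\tpsk{k}},\mem_{\tpsk{k}},(\pas_t))}{\paramupdate_{\tpsk{k}:\tpsk{k+1}}(\param_{\tpsk{k}},\memopt_{\tpsk{k}},(\pas_t))}\\
&\quad+ \dist{\paramupdate_{\tpsk{k}:\tpsk{k+1}}(\param_{\tpsk{k}},\memopt_{\tpsk{k}},(\pas_t))}{\paramupdate_{\tpsk{k}:\tpsk{k+1}}(\paramopt,\memopt_{\tpsk{k}},(\pas_t))}\\
&\quad+ \dist{\paramupdate_{\tpsk{k}:\tpsk{k+1}}(\paramopt,\memopt_{\tpsk{k}},(\pas_t))}{\paramopt}.
\end{align*}
The first term, in which only the initialization of $\mem$ is changed while $\param_{\tpsk{k}}\in\boctopt$ is fixed, is controlled by \relem{contecarttrajmprmqidiff} as $O(\mlipgrad{\tpsk{k+1}}\sup_{\tpsk{k}<s\leq\tpsk{k+1}}\pas_s)$, which is $\po{\pas_{\tpsk{k}}\flng{\tpsk{k}}}$ by the seventh point of \recor{convzerosompasintervalles}. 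The middle term is exactly the contractivity statement of \relem{contractitvk}, giving $(1-\mvpmin\pas_{\tpsk{k}}\flng{\tpsk{k}})\dist{\param_{\tpsk{k}}}{\paramopt}+\po{\pas_{\tpsk{k}}\flng{\tpsk{k}}}$. The last term is $\po{\pas_{\tpsk{k}}\flng{\tpsk{k}}}$ by \relem{extloc}.

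Summing the three contributions yields the announced bound. The main (minor) obstacle is merely the bookkeeping of which lemma accepts a variable initial time: every previously proved result extends to an arbitrary starting time $\tinit$ by \rerem{timeshift}, so I would instantiate them at $\tinit=\tpsk{k}$ throughout. The uniformity in $\param_{\tpsk{k}}$, $\mem_{\tpsk{k}}$ and $\cdvp$ of the $\po{}$ terms is inherited from the uniformity statements in \relemdeux{contractitvk}{extloc} and from the fact that the bound in \relem{contecarttrajmprmqidiff} depends only on the (bounded) diameter of $\bmaint{\tpsk{k}}$ and on the stepsize schedule, not on the specific point $\param_{\tpsk{k}}$.
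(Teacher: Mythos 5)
Your proof is correct and follows essentially the same route as the paper's: after verifying the trajectory stays in the stable tube via \relem{minmthoriztpsk} and \relem{maintientpsfinibocont}, you use the open-loop identity $\param_{\tpsk{k+1}}=\paramupdate_{\tpsk{k}:\tpsk{k+1}}(\param_{\tpsk{k}},\mem_{\tpsk{k}},(\pas_t))$ and the same three-term triangle-inequality decomposition, bounded respectively by \relem{contecarttrajmprmqidiff} (plus \recor{convzerosompasintervalles}), \relem{contractitvk}, and \relem{extloc}. The paper merely folds the first distance directly into a $\go{}$ term before writing the remaining triangle-inequality split, but the argument is identical in substance.
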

\begin{proof}
The proof is similar to that of \relem{propcentechtpstpsk}.

From \relem{minmthoriztpsk} and since $k\geq k_0$, we have
$\thoriz{\tpsk{k}}>\tpsk{k+1}$. Therefore we can apply
\relem{maintientpsfinibocont} and, for $\tpsk{k} \leq t \leq \tpsk{k+1}$,
we have
\begin{equation*}
\cpl{\param_t}{\mem_t} \in \boctopt \times \bmaint{t}.
\end{equation*}

By construction of the sequence $\param_t$ and by definition of
$\paramupdate_{t_1:t_2}(\param_{t_1},\mem_{t_1},(\pas_s))$
(Def.~\ref{def:olupdate}),
we have
\begin{equation*}
\param_{t}=\paramupdate_{\tpsk{k}:t}(\param_{\tpsk{k}},\mem_{\tpsk{k}},(\pas_s)).
\end{equation*}
Then thanks to \relem{contecarttrajmprmqidiff} starting at time
$\tpsk{k}$, for any $\tpsk{k} +1\leq t \leq \tpsk{k+1}$
(using again that $\thoriz{\tpsk{k}}>\tpsk{k+1}$), we have
\begin{equation*}
\dist{\paramupdate_{\tpsk{k}:t}(\param_{\tpsk{k}},\mem_{\tpsk{k}},(\pas_s))}{\paramupdate_{\tpsk{k}:t}(\param_{\tpsk{k}},\memopt_{\tpsk{k}},(\pas_s))} =
\go{\mlipgrad{t}\sup_{\tpsk{k}+1\leq s \leq t}\,\pas_s}
\end{equation*}
and, thanks to the triangle inequality, we obtain, for any $\tpsk{k}+1 \leq t \leq \tpsk{k+1}$,
\begin{equation*}
\ba
\dist{\param_t}{\pctrlopt} &\leq
\dist{
\paramupdate_{\tpsk{k}:t}\paren{\param_{\tpsk{k}},\memopt_{\tpsk{k}},\paren{\pas_s}}}
{\paramupdate_{\tpsk{k}:t}\paren{\paramopt,\memopt_{\tpsk{k}},\paren{\pas_s}}}\\
&+
\dist{
\paramupdate_{\tpsk{k}:t}\paren{\paramopt,\memopt_{\tpsk{k}},\paren{\pas_s}}}
{\paramopt} + \go{\mlipgrad{t}\sup_{\tpsk{k} < s \leq t} \pas_s}.
\ea
\end{equation*}

Apply this to $t=\tpsk{k+1}$.
By \relem{contractitvk}, the first term is at most
$\paren{1-\mvpmin\,\pas_{\tpsk{k}}\,\flng{\tpsk{k}}}\,\dist{\param_{\tpsk{k}}}{\paramopt}+\po{\pas_{\tpsk{k}}
\,\flng{\tpsk{k}}}$.

By \relem{extloc}, the second term is $\po{\pas_{\tpsk{k}}
\,\flng{\tpsk{k}}}$.

By \recor{convzerosompasintervalles}, the last term is
$\po{\pas_{\tpsk{k}}
\,\flng{\tpsk{k}}}$.
%
%
\end{proof}


\begin{lemme}[Convergence of the open-loop algorithm: end of proof of \rethm{cvalgoboucleouverte}]
\label{lem:cvoploopalgo}
There exists $\cdvpmaxconv>0$ such that,
for any $0\leq \cdvp\leq \cdvpmaxconv$,
the following convergence holds.

Let $\param_0\in \Param$ with $\dist{\param_0}{\paramopt}\leq
\frac{\rayoptct}{4}$, and let $\mem'_{\tpsk{k}}\in\bmaint{\tpsk{k}}$ be
any sequence of ``reset states''.
Consider the trajectory $(\param_t)_{t\geq 0}$
computed for every $k\geq0$ and $\tpsk{k} < t \leq \tpsk{k+1}$ by
resetting
\begin{equation*}
\mem_{\tpsk{k}} \leftarrow \mem'_{\tpsk{k}}\in\bmaint{\tpsk{k}}
\end{equation*}
and then
\begin{equation*}
\left\lbrace \ba 
\mem_t&=\Algo_{t}\paren{\param_{\tpsk{k}},\,\mem_{t-1}} \\
\vtanc_t &=\sm{V}_{t}\paren{\param_{\tpsk{k}},\,\mem_t} \\ 
\param_{t} &= \paramupdate_t\paren{\param_{t-1},\,\pas_t\,\vtanc_t}.
\ea \right.
\end{equation*}
Then $\param_t$
tends to $\pctrlopt$ as $t\to\infty$.
\end{lemme}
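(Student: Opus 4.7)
The plan is to closely mirror the structure of the proof of \relem{pcvrtrlbruit}, substituting the open-loop per-interval contraction result \relem{propcentechtpstpskopenloop} for its closed-loop analogue \relem{propcentechtpstpsk}. The proof will split into: (i) a one-step contraction estimate on each timescale interval $(\tpsk{k};\tpsk{k+1}]$, (ii) a finite-time argument ensuring we enter the safe zone around $\paramopt$, (iii) an induction keeping us there, and (iv) a standard conclusion using the arithmetic-geometric lemma \relem{arithgeomconv}. A small preliminary remark is that, thanks to \relem{pascstmorceaux}, we may assume the stepsizes are piecewise-constant on each $(\tpsk{k};\tpsk{k+1}]$, so that the open-loop update matches the algorithm considered in \rethm{cvalgoboucleouverte}.

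First, I would restrict to $\cdvp\leq\min(\cdvpop,\brnp/2)$ and apply \relem{propcentechtpstpskopenloop} on each interval: whenever $\param_{\tpsk{k}}\in\boctopt$ with $\dist{\param_{\tpsk{k}}}{\paramopt}\leq\rayoptct/3$ and the reset state satisfies $\mem_{\tpsk{k}}=\mem'_{\tpsk{k}}\in\bmaint{\tpsk{k}}$ (which holds by hypothesis), the trajectory stays in $\boctopt\times\bmaint{t}$ over the whole interval, and
\begin{equation*}
\dist{\param_{\tpsk{k+1}}}{\paramopt}\leq(1-\mvpmin\pas_{\tpsk{k}}\flng{\tpsk{k}})\dist{\param_{\tpsk{k}}}{\paramopt}+b_k,
\end{equation*}
with $b_k=\po{\pas_{\tpsk{k}}\flng{\tpsk{k}}}$ uniform in the reset states $\mem'_{\tpsk{k}}$, in $\param_{\tpsk{k}}$, and in $\cdvp$ in the chosen range. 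Choose $k_1\geq k_0$ large enough that $b_k\leq(\rayoptct/3)\mvpmin\pas_{\tpsk{k}}\flng{\tpsk{k}}$ for $k\geq k_1$; this makes the interval $[0;\rayoptct/3]$ invariant under the scalar recursion above. The key observation is that the reset states $\mem'_{\tpsk{k}}$ always lie in $\bmaint{\tpsk{k}}$ by hypothesis, so the memory-tube condition required by \relem{propcentechtpstpskopenloop} at the start of every interval is automatic, independent of what happened before.

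Next, to reach that regime, I would apply \relem{contempsfiniprmprodalgortrl} with $T=\tpsk{k_1}$: the open-loop trajectory fits the hypothesis of that lemma (states $\mem_t$ are either of the form $\Algo_t(\param_s,\mem_{t-1})$ with $s=\tpsk{k}\leq t-1$, or equal to a reset value in $\bmaint{\tpsk{k}}$). Setting $\cdvpmaxconv\deq\min(\cdvpop,\brnp/2,\cdvp^{\tpsk{k_1}})$, we get, for $\cdvp\leq\cdvpmaxconv$, that $\dist{\param_{\tpsk{k_1}}}{\paramopt}\leq\rayoptct/3$ and $\mem_{\tpsk{k_1}}\in\bmaint{\tpsk{k_1}}$ (the latter being trivially true after the reset). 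A straightforward induction on $k\geq k_1$, using the invariance observed in the previous paragraph together with the one-step contraction, then shows that $\dist{\param_{\tpsk{k}}}{\paramopt}\leq\rayoptct/3$ for all $k\geq k_1$, so that the scalar recursion is valid indefinitely.

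Finally, applying \relem{arithgeomconv} with $r_k=\mvpmin\pas_{\tpsk{k}}\flng{\tpsk{k}}$ (whose series diverges by \rerem{etaLdiv}) and $b_k=\po{r_k}$, we deduce $\param_{\tpsk{k}}\to\paramopt$. For the intermediate times $\tpsk{k}<t\leq\tpsk{k+1}$, \relem{contordredeuxitprm2} (applicable because we remain in the stable tube for $k\geq k_1$, by \relem{minmthoriztpsk}) gives $\dist{\param_t}{\param_{\tpsk{k}}}\leq\cst_6\sum_{\tpsk{k}<s\leq\tpsk{k+1}}\pas_s\fmdper{s}$, which vanishes as $k\to\infty$ by \recor{convzerosompasintervalles}. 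The main obstacle, if any, is purely bookkeeping: making sure the uniformity of the $\po{}$ in \relem{propcentechtpstpskopenloop} over the reset states $\mem'_{\tpsk{k}}$ is correctly used, so that the threshold $k_1$ can be chosen once and for all, independently of the (arbitrary) reset sequence.
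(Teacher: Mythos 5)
Your proposal is correct and follows essentially the same route as the paper's proof: restrict $\cdvp$, use Lemma~\ref{lem:propcentechtpstpskopenloop} for the per-interval contraction, pick $k_1$ using the uniform $\po{}$, enter the safe zone via Lemma~\ref{lem:contempsfiniprmprodalgortrl} with $T=\tpsk{k_1}$, induct using the automatic membership of the reset states in $\bmaint{\tpsk{k}}$, conclude with Lemma~\ref{lem:arithgeomconv}, and handle intermediate times with Lemma~\ref{lem:contordredeuxitprm2}. Your preliminary remark about piecewise-constant stepsizes via Lemma~\ref{lem:pascstmorceaux} matches the convention the paper states just before the open-loop lemmas.
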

\begin{proof}
The proof unfolds much like that of \relem{pcvrtrlbruit}. It is simpler, in that the relations $\mem_{\tpsk{k}}\in\bmaint{\tpsk{k}}$, after the substitutions $\mem_{\tpsk{k}}\leftarrow\mem'_{\tpsk{k}}$, hold by assumption.

Take $\cdvp\leq \min(\cdvpop,\brnp/2)$. (This is not yet $\cdvpmaxconv$: there will be an additional constraint on $\cdvp$ below.)

By \relem{propcentechtpstpskopenloop}, there exists $k_0\geq 0$, and a sequence
$b_k=\po{\pas_{\tpsk{k}}
\,\flng{\tpsk{k}}}$ such that
\begin{equation}
\label{eq:maincontropenloop}
\ba
\dist{\param_{\tpsk{k+1}}}{\pctrlopt} &\leq \paren{1 - \mvpmin \,
\pas_{\tpsk{k}}\,\flng{\tpsk{k}}} \, \dist{\param_{\tpsk{k}}}{\pctrlopt}
+ b_k
\ea
\end{equation}
holds for those values of $k\geq k_0$ such that
$\dist{\param_{\tpsk{k}}}{\paramopt}\leq \frac{\rayoptct}{3}$ and
$\mem_{\tpsk{k}}\in \bmaint{\tpsk{k}}$. By \relem{propcentechtpstpskopenloop}, the value of $b_k$ is uniform over $\olr$ and the values of $\param$ and $\mem$ satisfying those assumptions.

Since $b_k$ is $\po{\pas_{\tpsk{k}}
\,\flng{\tpsk{k}}}$, there exists
$k_1\geq k_0$ such that $b_k$ is less than
$(\rayoptct/3)(\mvpmin\,\pas_{\tpsk{k}}
\,\flng{\tpsk{k}})$
for $k\geq k_1$. (Such a $k_1$ is uniform in the values of $\param$, $\mem$ and $\olr$ satisfying the assumptions above, because $b_k$ is.)

Define $\cdvpmaxconv \deq
\min\paren{\cdvpop,\,\brnp/2,\,\cdvp^{\tpsk{k_1}}}$, where
$\cdvp^{\tpsk{k_1}}$ is defined in \relem{contempsfiniprmprodalgortrl}.

Set $\scfc_k\deq \mvpmin \,\pas_{\tpsk{k}}\,\flng{\tpsk{k}}$. We have
$b_k=\po{\scfc_k}$.

By \relem{propcentechtpstpskopenloop}, if $\dist{\param_\tpsk{k}}{\pctrlopt}\leq
\frac{\rayoptct}{3}$ and
$\mem_\tpsk{k}\in \bmaint{\tpsk{k}}$, then
\begin{equation*}
\dist{\param_\tpsk{k+1}}{\pctrlopt}\leq
(1-\scfc_k)\,\dist{\param_\tpsk{k}}{\pctrlopt}
+b_k.
\end{equation*}

By definition of $k_1$, if $k\geq k_1$ then
$(1-\scfc_k)\frac{\rayoptct}{3}+b_k\leq \frac{\rayoptct}{3}$.

Consequently, if $k\geq k_1$ and $\dist{\param_\tpsk{k}}{\pctrlopt}\leq
\frac{\rayoptct}{3}$ and $\mem_\tpsk{k}\in \bmaint{\tpsk{k}}$, then
$\dist{\param_\tpsk{k+1}}{\pctrlopt}\leq
\frac{\rayoptct}{3}$.

By assumption, for every $k$, after the substitution $\mem_{\tpsk{k}}\leftarrow\mem'_{\tpsk{k}}$, we have $\mem_\tpsk{k}\in \bmaint{\tpsk{k}}$. 
Therefore, by induction, if $\dist{\param_\tpsk{k_1}}{\pctrlopt}\leq
\frac{\rayoptct}{3}$, 
this holds for any $k\geq k_1$.

Since we assume $\dist{\param_0}{\paramopt}\leq
\frac{\rayoptct}{4}$ and $\mem_0\in\bmaint{0}$,
by \relem{contempsfiniprmprodalgortrl} applied to $T=\tpsk{k_1}$,
for $\cdvp\leq \cdvp^{\tpsk{k_1}}$, we have
$\dist{\param_\tpsk{k_1}}{\pctrlopt}\leq
\frac{\rayoptct}{3}$.

Thus, if $\cdvp\leq \cdvp^{\tpsk{k_1}}$, then
$\dist{\param_\tpsk{k}}{\pctrlopt}\leq
\frac{\rayoptct}{3}$ for all
$k\geq k_1$.

Therefore, for any $k\geq k_1$, we have
\begin{equation*}
\dist{\param_\tpsk{k+1}}{\pctrlopt}\leq
(1-\scfc_k)\,\dist{\param_\tpsk{k}}{\pctrlopt}
+b_k.
\end{equation*}

Since $b_k=\po{\scfc_k}$, by \relem{arithgeomconv} this implies that $\param_{\tpsk{k}}$ tends to
$\paramopt$ when $k\to\infty$.

For the intermediate times 
$\tpsk{k}<t\leq \tpsk{k+1}$, by 
\relem{contordredeuxitprm2}, we have
\begin{equation*}
\dist{\param_t}{\param_{\tpsk{k}}} \leq \cst_6 \sum_{\tpsk{k}<s\leq
\tpsk{k+1}} \pas_s \,\fmdper{s}
\end{equation*}
(we can apply \relem{contordredeuxitprm2} because we stay in the
stable tube for $t \geq \tpsk{k_1}$).
By \recor{convzerosompasintervalles} this proves that $\param_t$
tends to $\paramopt$ if $\param_{\tpsk{k}}$ does.
\end{proof}

\section{Controlling \rtrl and \Algonoisy \rtrl Algorithms around the Target Trajectory}
\label{sec:contrtrlappxalgoart}

We now turn back to the setting of Section~\ref{sec:presentation}. We
proceed by making the connection with the more abstract setting of
Section~\ref{sec:absontadsys}, with a suitable abstract state
$\mem_t=(\state_t,\jope_t)$ where $\state_t$ is the state of the original
dynamical system, and $\jope_t$ is the quantity maintained by RTRL. Notably, we relate the assumptions of
Section~\ref{sec:presentation} to those of Section~\ref{sec:absontadsys}.
Convergence will then result from Theorems~\ref{thm:cvalgopti},
\ref{thm:cvalgopti_noisy}, or~\ref{thm:cvalgoboucleouverte}, depending on
the case.

Thus, we now work under the assumptions of
Section~\ref{sec:presentation}.
As before,
throughout the proof, the constants implied in
the $O()$ notation only depend on the constants and $O()$ directly appearing in
the assumptions (Remark~\ref{rem:O}).

\subsection{Applying the Abstract Convergence Theorem to RTRL}
\label{sec:rtrlasalgo}

To prove convergence of the RTRL algorithm, we will apply
\rethm{cvalgopti} or \rethm{cvalgopti_noisy} to the state of the algorithm. The latter is composed
not only of the state of the system $\state_t\in\State_t$, but also of the Jacobian
$\jope_t$, which is maintained by the algorithm. The Jacobian $\jope_t$
is an element of the space of linear maps $\epjopeins{t}$. Thus, the
state of the algorithm will be the pair $\mem_t=(\state_t,\jope_t)$. The
definition of RTRL provides the transition function on $\mem_t$, together
with the way to compute gradients.

This is gathered in the following definition.  The purpose here is
to bring the RTRL algorithm as defined in \redef{algortrl} into the
framework of \resec{abstractalgos}.

\begin{definition}[RTRL as an abstract gradient descent algorithm]
\label{def:rtrlasalgo}
Given a parameterized dynamical system (Defs.~\ref{def:prmdynsys}--\ref{def:fcpletaprm}) and an extended RTRL algorithm
(Def.~\ref{def:algortrl}),
the transition operators $(\Algo_t)$ (Def.~\ref{def:transop})
associated with this RTRL algorithm are defined as follows. The parameter space is $\Theta$ and
the state space is
\begin{equation*}
\Mem_t\deq \State_t\times \epjopeins{t}
\end{equation*}
equipped with the norm
$\norm{(\state,\jope)}=\max(\norm{\state},\norm{\jope})$. The
transition operators $\Algo_t\from \Param\times \Mem_{t-1}\to
\Mem_t$ are defined by
\begin{equation*}
\Algo_t(\param,(\state,\jope))\deq \left(\opevol_t(\state,\param),
\frac{\partial \opevol_t(\state,\param)}{\partial \state}\,\jope+\frac{\partial
\opevol_t(\state,\param)}{\partial \param}\right)
\end{equation*}
and the gradient computation operators $\sm{V}_t\from
\Param\times \Mem_t\to \Param$ (Def.~\ref{def:gradcomputop}) are set to
\begin{equation*}
\sm{V}_t(\param,(\state,\jope))\deq \furt{\frac{\partial
\perte_t(\state)}{\partial \state}\,\jope}{\state}{\param}.
\end{equation*}
Finally, the update operators
$\paramupdate_t$ of Def.~\ref{def:paramupdate} are those of
the RTRL algorithm (Def.~\ref{def:algortrl}).
\end{definition}

The rest of the text is devoted to proving that this abstract algorithm
satisfies all the assumptions of Section~\ref{sec:absontadsys}.

We have to prove that these assumptions hold for $\param$ in some ball
$\boctopt$ (Section~\ref{sec:absontadsys}).
We start
by setting $\boctopt$ to the
ball $B_\Param(\paramopt,r_\Param)$ where the assumptions of
Section~\ref{sec:presentation} hold. This ball $\boctopt$ will
be reduced several times in the course of the proof so that elements
$\param\in\boctopt$ satisfy further properties.

\begin{definition}[Notation for iterates]
\label{def:notiterates}
Let $0\leq t_1\leq t_2$.
Given $\state_{t_1}\in \State_{t_1}$ and a sequence of parameters
$(\param_t)_{t\geq 0}$ in $\Param$, we denote
\begin{equation*}
\opevol_{t_1:t_2}(\state_{t_1},(\param_t))\deq \state_{t_2}
\end{equation*}
where the sequence $(\state_t)$ is defined inductively via
$
\state_t=\opevol_t(\state_{t-1},\param_{t-1})
$
for $t>t_1$. If $(\param_t)\equiv \param$ is constant we just write
$\opevol_{t_1:t_2}(\state_{t_1},\param)$.
\end{definition}

Next, we define the norm on $\Param$ that will be used in the proof.
Indeed, convergence in Section~\ref{sec:absontadsys} is based on a
contractivity property in a certain distance
(Assumption~\ref{hyp:optimprm}.2). But the dynamics of learning is not
contractive for any distance on $\Param$, only for distances built from a
suitable Lyapunov function.

For non-extended RTRL (no $\fur_t$), the suitable norm on $\Param$ is
directly given by the Hessian of the average loss at $\paramopt$. For
extended RTRL algorithms, remember the notation from
\rehyp{critoptrtrlnbt}: the Jacobian of the update direction, over time,
averages to a matrix $\linalgmatrix$ whose eigenvalues have positive real
part, which plays the role of an extended Hessian of the average loss.
This matrix controls the asymptotic dynamics of learning around
$\paramopt$, which is equivalent to $(\param-\paramopt)'=-\linalgmatrix
(\param-\paramopt)$ in the continuous-time limit when $\param$ is close
to $\paramopt$ (see Section~\ref{sec:contaroundparamopt}).

We select a norm on $\Param$ based on $\linalgmatrix$, such that this
dynamics is contractive. This is based on a classical linear algebra
result.

\begin{lemme}[Existence of a suitable Lyapunov function]
\label{lem:suitposdefm}
There exists a positive definite matrix $B$ such that
$B\,\linalgmatrix+\linalgmatrix^T\,B$ is positive definite.
\end{lemme}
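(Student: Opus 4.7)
The plan is to construct $B$ explicitly via the classical Lyapunov integral formula, using that $\linalgmatrix$ is positive-stable (which was already the working hypothesis in Assumption~\ref{hyp:critoptrtrlnbt} and is the setting of Appendix~\ref{sec:positivestable} referenced earlier). Since every eigenvalue of $\linalgmatrix$ has strictly positive real part, $-\linalgmatrix$ is Hurwitz, so there exist constants $C,\mu>0$ with $\nrmop{e^{-t\linalgmatrix}}\leq Ce^{-\mu t}$ for all $t\geq 0$. Define
\begin{equation*}
B\deq \int_0^\infty e^{-t\linalgmatrix^T}\,e^{-t\linalgmatrix}\,\diff t.
\end{equation*}
The integrand is bounded in norm by $C^2 e^{-2\mu t}$, so the integral converges and $B$ is well defined. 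By construction $B$ is symmetric.

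Next I would check positive-definiteness: for any $x\neq 0$,
\begin{equation*}
x^T B x=\int_0^\infty \nrm{e^{-t\linalgmatrix}x}^2\,\diff t>0,
\end{equation*}
because $e^{-t\linalgmatrix}x$ is a non-zero continuous function of $t$ (it equals $x$ at $t=0$).

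The key computation is then to verify $B\linalgmatrix+\linalgmatrix^T B=I$. Differentiating under the integral,
\begin{equation*}
\frac{\diff}{\diff t}\bigl(e^{-t\linalgmatrix^T}\,e^{-t\linalgmatrix}\bigr)=-\linalgmatrix^T e^{-t\linalgmatrix^T}\,e^{-t\linalgmatrix}-e^{-t\linalgmatrix^T}\,e^{-t\linalgmatrix}\linalgmatrix,
\end{equation*}
and integrating from $0$ to $\infty$, the left-hand side is $\bigl[e^{-t\linalgmatrix^T}e^{-t\linalgmatrix}\bigr]_0^\infty=-I$ by the exponential decay, while the right-hand side is $-(\linalgmatrix^T B+B\linalgmatrix)$. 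Hence $B\linalgmatrix+\linalgmatrix^T B=I$, which is trivially positive definite.

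There is no real obstacle: this is exactly the statement of Lyapunov's theorem for positive-stable matrices, and an alternative route (should one wish to avoid the integral formula) is to apply the standard Lyapunov theorem to the Hurwitz matrix $-\linalgmatrix$ with any positive definite $Q$ (e.g.\ $Q=I$), which yields a unique positive definite solution $B$ of $(-\linalgmatrix)^T B+B(-\linalgmatrix)=-Q$, i.e.\ $\linalgmatrix^T B+B\linalgmatrix=Q$. Either presentation should suffice; the only minor care needed is to justify differentiation under the integral sign, which is immediate from the uniform exponential bound $\nrmop{e^{-t\linalgmatrix}}\leq Ce^{-\mu t}$.
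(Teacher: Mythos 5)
Your proof is correct and uses exactly the same construction as the paper: the Lyapunov integral $B=\int_0^\infty e^{-t\linalgmatrix^T}e^{-t\linalgmatrix}\,\diff t$, which the paper states explicitly in Appendix~\ref{sec:positivestable} along with the identity $B\linalgmatrix+\transp{\linalgmatrix}B=\id$. Nothing further is needed.
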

\begin{proof}
This is a consequence of the fact the eigenvalues of $\linalgmatrix$ have positive
real part. See Appendix~\ref{sec:positivestable}.
\end{proof}

From now on we endow $\Param$ with the norm given by
$B$, namely, we set
\begin{equation*}
\norm{\param}^2\deq \transp{\param}B\,\param
\end{equation*}
where $B$ is such that $B\linalgmatrix+\transp{\linalgmatrix}B$ is
positive definite and $\linalgmatrix$ is
given by \rehyp{critoptrtrlnbt}. This norm will be used as an approximate
Lyapunov function for the algorithm.

Note that the assumptions in Section~\ref{sec:presentation} have been expressed with
respect to an unspecified norm on $\Param$. Since $\Param$ is
finite-dimensional, all norms are equivalent; in particular, we can find
a ball for the new norm that is included in the original ball
$B_\Param(\paramopt,r_\Param)$
on which the assumptions hold. 
Assumptions~\ref{hyp:opt_simple}, \ref{hyp:critoptrtrlnbt},
\ref{hyp:updateop_first}, \ref{hyp:errorgauge}, \ref{hyp:regftransetats},
\ref{hyp:equicontH_simple}, and \ref{hyp:equicontH}
also involve a norm on $\Param$ via norms on objects such as derivatives
with respect to $\param$; a change to an equivalent norm only introduces
a constant factor which is absorbed in the $\go{}$ in these assumptions.
Therefore, up to restriction to the smaller ball,
all the assumptions of Section~\ref{sec:presentation} hold with respect
to the norm
we just defined.

\subsection{RTRL Computes the Correct Derivatives}

We first prove that RTRL indeed computes the correct derivatives of the
loss (this is actually how RTRL is built in the first place) when the
parameter is kept fixed. This implies that over a time interval, the
open-loop (fixed-parameter) algorithm computes a parameter update equal
to the derivative of the loss, summed over this interval. (When the
parameter is actually updated at every step, this will be true only up to
some higher-order terms, controlled in Section~\ref{sec:prcvalappxalgo}.)

\begin{lemme}[RTRL computes the correct derivatives for the open-loop
trajectory]
\label{lem:rtrlcorrectderivatives}
Call \emph{open-loop RTRL} the algorithm of \redef{algortrl} with
$\pas_t=0$ for all $t$ (i.e., $\param$ is kept fixed).

Then the quantities $\jope_t$ and $\vtanc_t$ computed by open-loop RTRL
starting at $\state_0$ and $\param_0=\param$,
are respectively equal to the Jacobian of the state with respect to the
parameter,
\begin{equation*}
\jope_t=\frac{\partial \fstate_t(\state_0,\param)}{\partial
\param}
\end{equation*}
and to the derivative of the loss with respect to the parameter fed to the extended update rule,
\begin{equation*}
\vtanc_t=\furt{\frac{\partial \sbpermc{t}(\state_0,\param)}{\partial
\param}}{\fstate_{t}\cpl{\state_0}{\param}}{\param}
\end{equation*}
for all $t\geq 1$.

In other words, the RTRL algorithm $\Algo_t$ (Definition~\ref{def:rtrlasalgo})
satisfies, for any $\param\in \Param$ and $\state_0\in \State_0$,
\begin{equation*}
\Algo_{0:t}(\param,(\state_0,0))=\left(\fstate_{t}(\state_0,\param),\,
\frac{\partial \fstate_t(\state_0,\param)}{\partial
\param}
\right)
\end{equation*}
and
\begin{equation*}
\sm{V}_t\left(\param,\,\Algo_{0:t}(\param,(\state_0,0))\right)=\furt{\frac{\partial
\sbpermc{t}(\state_0,\param)}{\partial
\param}}{\fstate_{t}\cpl{\state_0}{\param}}{\param}.
\end{equation*}
\end{lemme}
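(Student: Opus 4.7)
The plan is to proceed by induction on $t\geq 0$, establishing the Jacobian identity $\jope_t=\frac{\partial \fstate_t(\state_0,\param)}{\partial \param}$, from which the second identity about $\vtanc_t$ will follow by one application of the chain rule. Since the parameter is fixed at $\param$ throughout (as $\pas_t=0$), the state sequence produced by $\Algo_t$ coincides with $\fstate_t(\state_0,\param)$, which I will note first so that the partial derivatives $\partial_\state\opevol_t$ and $\partial_\param\opevol_t$ appearing in the RTRL recurrence are evaluated at the same point as in the Jacobian of $\fstate_t$.

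For the base case, $\jope_0=0$ by the initialization in \redef{algortrl}, and $\state_0$ does not depend on $\param$, so $\frac{\partial \fstate_0(\state_0,\param)}{\partial \param}=0$ as well. For the inductive step, assume the identity at time $t-1$. On one hand, the RTRL update gives
\begin{equation*}
\jope_t=\frac{\partial \opevol_t(\state_{t-1},\param)}{\partial \state}\,\jope_{t-1}+\frac{\partial \opevol_t(\state_{t-1},\param)}{\partial \param},
\end{equation*}
with $\state_{t-1}=\fstate_{t-1}(\state_0,\param)$ by the open-loop property. On the other hand, since $\fstate_t(\state_0,\param)=\opevol_t(\fstate_{t-1}(\state_0,\param),\param)$, the chain rule yields
\begin{equation*}
\frac{\partial \fstate_t(\state_0,\param)}{\partial \param}=\frac{\partial \opevol_t}{\partial \state}(\fstate_{t-1}(\state_0,\param),\param)\cdot \frac{\partial \fstate_{t-1}(\state_0,\param)}{\partial \param}+\frac{\partial \opevol_t}{\partial \param}(\fstate_{t-1}(\state_0,\param),\param).
\end{equation*}
The two expressions coincide thanks to the inductive hypothesis.

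Once the Jacobian identity is established, the formula for $\vtanc_t$ follows immediately: by the chain rule,
\begin{equation*}
\frac{\partial \sbpermc{t}(\state_0,\param)}{\partial \param}=\frac{\partial \perte_t(\fstate_t(\state_0,\param))}{\partial \state}\cdot \frac{\partial \fstate_t(\state_0,\param)}{\partial \param}=\frac{\partial \perte_t(\state_t)}{\partial \state}\cdot \jope_t,
\end{equation*}
and then applying $\fur_t$ to this quantity (together with $\state_t=\fstate_t(\state_0,\param)$ and the fixed parameter $\param$) produces exactly $\vtanc_t$ as defined in \redef{algortrl}. The reformulation in terms of $\Algo_{0:t}$ and $\sm{V}_t$ is then just a rewriting using \redef{rtrlasalgo}.

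There is no real obstacle here: the lemma is the standard correctness statement that underpins the design of RTRL, and all the smoothness required to differentiate $\fstate_t$ and $\sbpermc{t}$ is supplied by the $C^2$ assumption on $\opevol_t$ and $\perte_t$ in \redef{prmdynsys} and \redef{fcpletaprm}. The only point that requires mild care is to make sure, at each inductive step, that the partial derivatives appearing in the RTRL recurrence and those appearing in the chain rule for $\fstate_t$ are evaluated at the same point $(\fstate_{t-1}(\state_0,\param),\param)$, which is guaranteed by the open-loop hypothesis $\pas_t=0$.
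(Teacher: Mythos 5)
Your proof is correct and follows the same argument as the paper's: both establish that $\partial\fstate_t/\partial\param$ satisfies the same linear recursion and initial condition as $\jope_t$ (you make the induction explicit where the paper leaves it implicit), and then deduce the $\vtanc_t$ identity by applying the chain rule to the definition of $\sbpermc{t}$. No discrepancy.
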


\begin{proof}
RTRL is actually built to obtain this property, as explained before
\redef{algortrl}. Indeed, when $\pas_t=0$, the parameter $\param$ is
constant along the trajectory. Then
by definition the state $s_t$ in \redef{algortrl} is 
$s_t=\fstate_t(\state_0,\param)=\opevol_t(\fstate_{t-1}(\state_0,\param),\param)$.
By differentiation, we find that $\partial \fstate_t/\partial\param$
satisfies the linear
evolution equation
\begin{equation*}
\frac{\partial \fstate_t(\state_0,\param)}{\partial
\param}=\frac{\partial \opevol_t}{\partial \state} \frac{\partial
\fstate_{t-1}(\state_0,\param)}{\partial \param}+\frac{\partial \opevol_t}{\partial
\param}
\end{equation*}
where the derivatives of $\opevol_t$ are evaluated at
$(\state_{t-1},\param)$. This is the evolution equation for $\jope$ in
\redef{algortrl}, so $J$ is equal to this quantity. (The initialization
$J=0$ corresponds to $\partial \fstate_0(\state_0,\param)/\partial
\param=\partial \state_0/\partial\param=0$.)

Then the
expressions for $\vtanc_t$ and $\sm{V}_t$ follow from their definitions
in Defs.~\ref{def:algortrl} and~\ref{def:rtrlasalgo}, and the chain rule
applied to
the definition of
$\sbpermc{t}$ (Def.~\ref{def:fcpletaprm}).
\end{proof}

Recall that by \redef{paramupdate} and
\redef{olupdate},
the RTRL algorithm defines an iterated update operator
$\paramupdate_{t_1:t_2}(\param,(\vtanc_t))$ and an open-loop update
operator $\paramupdate_{t_1:t_2}(\param,\mem_{t_1},(\pas_t))$.

\begin{corollaire}
\label{cor:RTRLiter}
Set $\memopt_0\deq (\stateopt_0,0)$ (state of the RTRL algorithm
initialized at $\stateopt_0$ with $J_0=0$).

Then 
for any $\param\in\Param$, for any $1\leq t_1\leq t_2$,
for any sequence of learning rates $(\pas_{t;\,t_1,t_2})$ (not necessarily satisfying
\rehyp{scfcstepsize}), 
the open-loop
operator of RTRL updates $\param$ by the recurrent derivatives of the
loss, fed to the extended update rule:
\begin{equation*}
\paramupdate_{t_1:t_2}(\param,
\mem_{t_1}
,(\pas_{t;\,t_1,t_2})_t)=
\paramupdate_{t_1:t_2}\paren{\param,\paren{\pas_{t;\,t_1,t_2}\,\furt{\frac{\partial
\sbpermc{t}(\stateopt_0,\param)}{\partial
\param}}{\fstate_t\cpl{\stateopt_0}{\param}}{\param}}_{\!t\,}}
\end{equation*}
where $\mem_{t_1}=\Algo_{0:t_1}(\param,\memopt_0)$ is the RTRL state
obtained at time $t_1$ from parameter $\param$.
\end{corollaire}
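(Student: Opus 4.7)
The corollary is essentially a direct consequence of Lemma~\ref{lem:rtrlcorrectderivatives} combined with the definition of the open-loop update operator (Def.~\ref{def:olupdate}), so my plan is to unfold both definitions and match them term by term.

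First, I would unfold the left-hand side using Def.~\ref{def:olupdate}: by definition,
$\paramupdate_{t_1:t_2}(\param,\mem_{t_1},(\pas_t))$ equals
$\paramupdate_{t_1:t_2}(\param,(\pas_t\,\vtanc_t))$, where $\vtanc_t=\sm{V}_t(\param,\mem_t)$ and $\mem_t=\Algo_{t_1:t}(\param,\mem_{t_1})$ for $t_1<t\leq t_2$. So the main task reduces to identifying each $\vtanc_t$ with $\furt{\partial_{\param}\sbpermc{t}(\stateopt_0,\param)}{\fstate_t(\stateopt_0,\param)}{\param}$.

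Next, I would observe that since $\mem_{t_1}=\Algo_{0:t_1}(\param,\memopt_0)$ and, along the open-loop trajectory, the parameter remains fixed at $\param$, the inductive definition of $\Algo_{t_1:t}$ combined with associativity of composition of the transition operators $(\Algo_s)$ yields $\mem_t=\Algo_{t_1:t}(\param,\mem_{t_1})=\Algo_{0:t}(\param,\memopt_0)$ for all $t_1\leq t\leq t_2$. This is simply the fact that running the algorithm with fixed parameter from $\memopt_0$ to $\mem_{t_1}$ and then from $\mem_{t_1}$ to $\mem_t$ is the same as running it directly from $\memopt_0$ to $\mem_t$.

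Finally, I would apply Lemma~\ref{lem:rtrlcorrectderivatives} to $\memopt_0=(\stateopt_0,0)$: it gives, for every $t\geq 1$,
\begin{equation*}
\sm{V}_t\bigl(\param,\,\Algo_{0:t}(\param,\memopt_0)\bigr)=\furt{\tfrac{\partial \sbpermc{t}(\stateopt_0,\param)}{\partial \param}}{\fstate_t(\stateopt_0,\param)}{\param}.
\end{equation*}
Combining this with the identification $\mem_t=\Algo_{0:t}(\param,\memopt_0)$ from the previous step gives exactly the desired expression for $\vtanc_t$, and substituting into the unfolded form of the left-hand side completes the proof. There is no genuine obstacle here: the statement is a bookkeeping consequence, and the only point requiring any care is checking that the initial state $\mem_{t_1}$, which was reached by running the algorithm from $\memopt_0$, is compatible with restarting the open-loop trajectory from it, which is immediate from the inductive definition of $\Algo_{\cdot:\cdot}$.
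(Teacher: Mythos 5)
Your proof is correct and follows essentially the same route as the paper's: unfold Definition~\ref{def:olupdate}, use the semigroup property $\Algo_{t_1:t}(\param,\mem_{t_1})=\Algo_{0:t}(\param,\memopt_0)$, and then invoke the expression for $\sm{V}_t$ from Lemma~\ref{lem:rtrlcorrectderivatives}. No significant differences.
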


\begin{proof}
By \redef{olupdate}, for any family of numbers $(\pas_t)$, the quantity $\paramupdate_{t_1:t_2}(\param,
\mem_{t_1}
,(\pas_t))$ is equal to $\paramupdate_{t_1:t_2}(\param,(\pas_t
\,\vtanc_t))$ where
$
\vtanc_t=\sm{V}_t(\param,\Algo_{t_1:t}(\param,\mem_{t_1}))
$.
With $\mem_{t_1}=\Algo_{0:t_1}(\param,\memopt_0)$ we have
$\Algo_{t_1:t}(\param,\mem_{t_1})=\Algo_{0:t}(\param,\memopt_0)$.
The
result follows by the expression for $\sm{V}_t$ in
Lemma~\ref{lem:rtrlcorrectderivatives}.
\end{proof}

\subsection{On the Sequence of Step Sizes}
\label{sec:stepsizes}

Next, let us deal with the assumptions on the learning rate sequence. %
We start by building the scale function $\lng$ used in
Section~\ref{sec:absontadsys} (notably Assumption~\ref{hyp:optimprm}, and
the timescale of Definition~\ref{def:echetps}) from the assumptions in
Section~\ref{sec:presentation}.

\begin{lemme}[Intervals for averaging]
\label{lem:intvavrg}
Let $\exli$ and $\expem'$ be numbers such that $\max\paren{\expem,
\exmp} < \expem'<\exli < \expas-2\,\exmp$ for RTRL and extended RTRL algorithms,
and such that $\max\paren{\expem,\exmp+1/2} < \expem' < \exli<
\expas-2\,\exmp$ for imperfect RTRL algorithms.
In both cases, the set of such pairs $(\expem',\exli)$ is non-empty under Assumption~\ref{hyp:scfcstepsize}.

Define the scale functions
\begin{equation*}
\lng(t)\deq t^\exli , \qquad
\lngav(t)\deq t^{\expem'} , \qquad
\fmdper{t}\deq t^\exmp.
\end{equation*}

Then $\lngav(t)$ and $\fmdper{t}$ are negligible in front of $\lng$, $\lng$ is negligible in front of the identity function, and
$\pas_t\,\flng{t}\,\fmdper{t}^2\to 0$
as $t$ tends to infinity.

Moreover, \rehypdeux{opt_simple}{critoptrtrlnbt} are still satisfied with $\expem'$ instead of $\expem$.
%
%
%
\end{lemme}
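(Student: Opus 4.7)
My plan is to check each of the four conclusions of the lemma by elementary arithmetic on the exponents, using Assumption~\ref{hyp:scfcstepsize}.

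First, for non-emptiness of the admissible set of pairs $(\expem',\exli)$. In the (extended) RTRL case, Assumption~\ref{hyp:scfcstepsize} gives $\max(\expem,\exmp)+2\exmp<\expas\leq 1$, i.e.\ $\max(\expem,\exmp)<\expas-2\exmp$. Thus the open interval $(\max(\expem,\exmp),\,\expas-2\exmp)$ is nonempty, and I can pick any $\expem'$ in it and then any $\exli\in(\expem',\,\expas-2\exmp)$. In the \algonoisy case, the assumption reads $\max(\expem,1/2+\exmp)+2\exmp<\expas$, giving the nonempty interval $(\max(\expem,1/2+\exmp),\,\expas-2\exmp)$, and the same argument applies.

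Second, the negligibility relations are immediate from the strict inequalities among the exponents. Since $\expem'<\exli$, one has $\lngav(t)=t^{\expem'}\ll t^{\exli}=\flng{t}$; since $\exmp\le\max(\expem,\exmp)<\expem'<\exli$, one has $\fmdper{t}=t^{\exmp}\ll\flng{t}$; and since $\exli<\expas-2\exmp\le \expas\le 1$ (with $\exli<\expas-2\exmp<1$ whenever $\exmp>0$, and $\exli<1$ directly when $\exmp=0$ because then $\exli<\expas\le 1$ strictly, as $\expem'<\exli$ forces $\exli>0$ and one of the above strict inequalities is strict), $\flng{t}\ll t$.

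Third, the bound $\pas_t\,\flng{t}\,\fmdper{t}^2\to 0$. By Assumption~\ref{hyp:scfcstepsize}, $\pas_t=\cdvp\,t^{-\expas}(1+\po{1/t^{\exmp}})$, so
\begin{equation*}
\pas_t\,\flng{t}\,\fmdper{t}^2 = \cdvp\,t^{A+2\gamma-b}\bigl(1+\po{1/t^{\exmp}}\bigr),
\end{equation*}
which tends to $0$ since $\exli+2\exmp<\expas$ by choice of $\exli$.

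Finally, for Assumptions~\ref{hyp:opt_simple} and~\ref{hyp:critoptrtrlnbt} with $\expem'$ in place of $\expem$: any quantity that is $\go{T^{\expem}/T}$ is a fortiori $\go{T^{\expem'}/T}$ since $\expem<\expem'$, so both the first-order and second-order averaging bounds at $\paramopt$ transfer directly.

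No step is really a genuine obstacle; the only mild subtlety is checking that the parameter window $(\max(\expem,\exmp),\,\expas-2\exmp)$ (or its \algonoisy analogue) is nonempty, which is exactly what \rehyp{scfcstepsize} was stated to guarantee. Everything else is monotonicity and substitution.
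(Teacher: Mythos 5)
Your proof is correct and follows essentially the same route as the paper's: check non-emptiness of the exponent window directly from the strict inequality $\max(\expem,\exmp)+2\exmp<\expas$ (resp.\ $\max(\expem,\exmp+1/2)+2\exmp<\expas$ in the \algonoisy case), read off the negligibility relations and the limit $\pas_t\,\flng{t}\,\fmdper{t}^2\to0$ from the exponent orderings, and note that enlarging $\expem$ to $\expem'$ only weakens the $\go{T^{\expem}/T}$ bounds. The only difference is cosmetic: your parenthetical case analysis for $\flng{t}\ll t$ is more elaborate than needed, since $\exli<\expas-2\exmp\le 1$ with the first inequality strict already gives $\exli<1$ unconditionally.
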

\begin{remarque}
In Section~\ref{sec:presentation}, we have presented the assumptions
and results using rates $t^\expem$
and $t^\exmp$.
Actually all our results are valid
as long as these expressions are scale functions
(Definition~\ref{def:fechelle}). This is why we use the more
abstract notation with scale functions $\lng$, $\lngav$, and $\sbfmdper$
in the following.
\end{remarque}

\begin{proof}
First, note that the range of values for $\exli$ is non-empty: indeed,
by Assumption~\ref{hyp:scfcstepsize} we have $\max\paren{\expem,\exmp} + 2 \exmp<\expas$.
For \algonoisy RTRL algorithms, Assumption~\ref{hyp:scfcstepsize}
further states that $\max\paren{\expem,\exmp+1/2} + 2\exmp < \expas$. 
Therefore, the requirements that $\exli<\expas-2\,\exmp$ and
$\exli>\max\paren{\expem,\exmp+1/2}$ are mutually compatible and the range for $A$ is
non-empty. 

We know that $\lngav$ and $\fmdper{\cdot}$ are negligible in front of
$\lng$ since $\exmp<\expem'<\exli$. We have
$\pas_t\,\flng{t}\,\fmdper{t}^2\to 0$ when $t$ tends to infinity, since $-\expas+\exli+2\,\exmp<0$.


Finally, since $\expem' > \expem$,
\rehypdeux{opt_simple}{critoptrtrlnbt} are a fortiori satisfied with $\expem'$ instead of $\expem$.
\end{proof}

\begin{lemme}[Comparison relations for scale functions]
\label{lem:pasconcret}
%
Assume the overall learning rate $\cdvp$ is small enough so that
$\pas_t\leq 1$ for all $t$. Then
under Assumption~\ref{hyp:scfcstepsize}, 
the sequence $1/\pas_t$ is a
scale function.
Moreover,
$\pas_t\,\lngav(t)\,\fmdper{t}^2\to 0$ and
$\fmdper{t}=\po{\lngav(t)}$ as $t\to\infty$.
\end{lemme}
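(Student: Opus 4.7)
The plan is to verify the three conclusions by unpacking the explicit form of $\pas_t$ from Assumption~\ref{hyp:scfcstepsize}, namely $\pas_t=\cdvp\,t^{-\expas}(1+o(1/t^\exmp))$, and combining this with the exponent inequalities already fixed in Lemma~\ref{lem:intvavrg}.

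For the assertion $\fmdper{t}=\po{\lngav(t)}$, I would simply compute $\fmdper{t}/\lngav(t)=t^{\exmp-\expem'}$ and invoke $\exmp<\expem'$ (Lemma~\ref{lem:intvavrg}). For the assertion $\pas_t\,\lngav(t)\,\fmdper{t}^2\to 0$, I would write
\begin{equation*}
\pas_t\,\lngav(t)\,\fmdper{t}^2=\cdvp\,t^{-\expas+\expem'+2\exmp}\,(1+o(1))
\end{equation*}
and note that $\expem'<\exli<\expas-2\exmp$ by Lemma~\ref{lem:intvavrg}, so the exponent is strictly negative and the expression tends to $0$.

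The remaining task is to check that $1/\pas_t$ is a scale function in the sense of Definition~\ref{def:fechelle}. From $\pas_t=\cdvp\,t^{-\expas}(1+o(1))$ we get $1/\pas_t=(t^{\expas}/\cdvp)(1+o(1))$. Since $\expas>\max(\expem,\exmp)+2\exmp\geq 0$, this tends to infinity, giving the first condition. For asymptotic equivalence preservation: if $y_t\sim x_t\to\infty$ then $1/\pas_{y_t}\sim y_t^\expas/\cdvp\sim x_t^\expas/\cdvp\sim 1/\pas_{x_t}$, since $t\mapsto t^\expas$ preserves equivalence. Monotonicity of $1/\pas_t$ follows from the fact that $\pas_t$ is non-increasing under Assumption~\ref{hyp:scfcstepsize}. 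Finally, the standing hypothesis $\pas_t\leq 1$ for all $t$ gives $1/\pas_1\geq 1$. (To produce a scale function of a continuous variable rather than a sequence, one can interpolate $1/\pas_t$ piecewise linearly; none of the four axioms is affected.)

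There is no real obstacle here: the lemma is essentially a bookkeeping step that translates the exponent constraints of Assumption~\ref{hyp:scfcstepsize} and Lemma~\ref{lem:intvavrg} into the language of scale functions that the abstract convergence theorems of Section~\ref{sec:absontadsys} require. The only mild subtlety is handling the $(1+o(1/t^\exmp))$ correction when passing to $1/\pas_t$, which is harmless since the correction is $1+o(1)$ and therefore bounded away from $0$ and $\infty$ for large $t$.
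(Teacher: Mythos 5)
Your proof is correct and follows essentially the same route as the paper's: both verify the scale-function axioms by observing $1/\pas_t\sim t^\expas/\cdvp$ (monotonicity from $\pas_t$ non-increasing, $1/\pas_1\ge 1$ from $\pas_t\le 1$, equivalence-preservation inherited from $t\mapsto t^\expas$), and both reduce the last two claims to the exponent inequalities $\exmp<\expem'$ and $\expem'+2\exmp<\expas$ established in Lemma~\ref{lem:intvavrg}. Your remark about piecewise-linear interpolation to extend the sequence $1/\pas_t$ to a function on $\R_+$ is a small point of care that the paper leaves implicit.
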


\begin{proof}
By the choice of $\cdvp$, we have $1/\pas_t\geq 1$ for all $t$. Moreover,
$1/\pas_t$ is non-decreasing by assumption on $\pas_t$. Now, by
Assumption~\ref{hyp:scfcstepsize}, $1/\pas_t$ is equivalent to $t^b$
which is a scale function, and therefore, $1/\pas_t$ preserves asymptotic
equivalence at $\infty$.

The last  statements are rewritings of the conditions
$\expem'+2\exmp<\expas$ and $\exmp<\expem'$ from \relem{intvavrg}. 
\end{proof}

\begin{lemme}[Timescale for extended RTRL algorithms]
\label{lem:tmsexrtrlalgo}
With this choice of $\lng$,
the timescale $(\tpsk{k})$ of \redef{echetps} amounts to $\tpsk{0}=0$, $\tpsk{1}=1$ and, for $k\geq1$,
\begin{equation*}
\tpsk{k+1}=\tpsk{k}+\tpsk{k}^\exli.
\end{equation*}
Moreover, it satisfies $\tpsk{k}\sim c\,k^{1/\paren{1-\exli}}$ for some
$c>0$ as $k\to\infty$.
\end{lemme}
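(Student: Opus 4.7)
The first statement is immediate: plugging $\lng(t)=t^\exli$ into the defining recursion $\tpsk{k+1}=\tpsk{k}+\flng{\tpsk{k}}$ of Definition~\ref{def:echetps} gives the claimed recurrence. The initial conditions $\tpsk{0}=0$ and $\tpsk{1}=1$ are part of that definition.

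For the asymptotic behaviour, my plan is to linearise via the substitution $f(x)\deq x^{1-\exli}$, whose derivative is $f'(x)=(1-\exli)x^{-\exli}$, and then conclude by Cesàro averaging. Concretely, by the mean value theorem, for each $k\geq 1$ there exists $\xi_k\in(\tpsk{k},\tpsk{k+1})$ such that
\begin{equation*}
\tpsk{k+1}^{1-\exli}-\tpsk{k}^{1-\exli}
=f(\tpsk{k+1})-f(\tpsk{k})
=(1-\exli)\,\xi_k^{-\exli}\,(\tpsk{k+1}-\tpsk{k})
=(1-\exli)\left(\frac{\tpsk{k}}{\xi_k}\right)^{\!\exli}.
\end{equation*}
Now $\tpsk{k+1}\sim\tpsk{k}$ as $k\to\infty$ by \relem{pechtps}, so $\xi_k\sim\tpsk{k}$ and the right-hand side tends to $1-\exli$.

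Summing from $1$ to $k-1$ and using the Cesàro lemma yields $\tpsk{k}^{1-\exli}/k\to 1-\exli$, i.e.
\begin{equation*}
\tpsk{k}\sim\bigl((1-\exli)\,k\bigr)^{1/(1-\exli)},
\end{equation*}
which is the claimed equivalent with $c=(1-\exli)^{1/(1-\exli)}$. The only point that requires care is to justify that $\tpsk{k+1}\sim\tpsk{k}$: this has already been established in \relem{pechtps} from the fact that $\flng{T}=\po{T}$, which here reduces to $\exli<1$ (implied by $\exli<\expas-2\exmp\leq 1$ under \rehyp{scfcstepsize}). No other obstacle is expected; the argument is a standard discrete analogue of solving $\dot t=t^\exli$.
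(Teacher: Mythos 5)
Your proof is correct, and it follows essentially the same route as the paper: both reduce to showing that the increment $\tpsk{k+1}^{1-\exli}-\tpsk{k}^{1-\exli}$ tends to $1-\exli$, and then telescope/Cesàro. The paper obtains the increment via a binomial (Taylor) expansion of $(\tpsk{k}+\tpsk{k}^\exli)^\beta$ at $\beta=1-\exli$, whereas you appeal to the mean value theorem and squeeze $\xi_k$ using $\tpsk{k+1}\sim\tpsk{k}$; this is a minor stylistic variation of the same argument.
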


\begin{proof}
The first statement is by direct substitution of $\lng(t)=t^\exli$ in
\redef{echetps}. For
the second statement, 
let $\beta \geq 0$. We have
\begin{equation*}
\ba
\tpsk{k+1}^\beta&=\paren{\tpsk{k}+\tpsk{k}^\exli}^\beta
=\tpsk{k}^\beta\,\paren{1+\inv{\tpsk{k}^{1-\exli}}}^\beta
=\tpsk{k}^\beta\paren{1+\frac{\beta}{\tpsk{k}^{1-\exli}}+\po{\inv{\tpsk{k}^{1-\exli}}}}\\
&=\tpsk{k}^\beta + \frac{\beta}{\tpsk{k}^{1-\exli-\beta}}+\po{\inv{\tpsk{k}^{1-\exli-\beta}}},
\ea
\end{equation*}
as $k\to\infty$.
Taking $\beta=1-\exli>0$, we obtain that
$\tpsk{k+1}^{1-\exli}-\tpsk{k}^{1-\exli}\sim 1-\exli$, as $k\to\infty$,
so that $\tpsk{k}^{1-\exli}\sim (1-A)k$ as $k\to\infty$.
\end{proof}

\begin{lemme}[Homogeneity satisfied]
\label{lem:homsatis}
For $t\geq 0$, let $\itv_t$ be the segment 
$\itv_t=[t+1,\,t+\flng{t}]$. Then
\begin{equation*}
\frac{\sup_{s\in \itv_t}\,\pas_s}{\inf_{s\in \itv_t}\,\pas_s}=1+\po{\inv{\fmdper{t}}}
\end{equation*}
as $t$ tends to infinity.
\end{lemme}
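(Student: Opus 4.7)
The plan is to verify this concrete homogeneity property by combining the explicit form of $\pas_t$ given by Assumption~\ref{hyp:scfcstepsize} with the Taylor expansion of $s^{-b}$ over the (relatively short) interval $\itv_t$, and then checking that the resulting error term is indeed $\po{1/\fmdper{t}}$.

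First, I would observe that by Lemma~\ref{lem:intvavrg} we have $A < b - 2\gamma \leq 1 - 2\gamma \leq 1 - \gamma$ (using $\gamma \geq 0$ and $b\leq 1$), so in particular $A < 1$. Hence $\flng{t} = t^A = \po{t}$, and for every $s\in\itv_t = [t+1,\,t+t^A]$ we have $s = t\,(1 + O(t^{A-1}))$ uniformly in $s$. Raising to the power $-b$,
\[
s^{-b} = t^{-b}\bigl(1 + O(t^{A-1})\bigr)
\]
uniformly for $s\in\itv_t$.

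Second, I would use Assumption~\ref{hyp:scfcstepsize}, which gives $\pas_s = \cdvp\, s^{-b}(1 + \po{1/s^\gamma})$. Since $s \sim t$ for $s\in\itv_t$, we have $1/s^\gamma \sim 1/t^\gamma$, so $\po{1/s^\gamma} = \po{1/t^\gamma}$ uniformly in $s\in\itv_t$. Substituting the expansion of $s^{-b}$ yields, uniformly in $s\in\itv_t$,
\[
\pas_s = \cdvp\, t^{-b}\Bigl(1 + O(t^{A-1}) + \po{1/t^\gamma}\Bigr).
\]
Taking the supremum and infimum over $s\in\itv_t$ of this common expression, and forming the ratio, gives
\[
\frac{\sup_{s\in \itv_t}\,\pas_s}{\inf_{s\in \itv_t}\,\pas_s}
= 1 + O(t^{A-1}) + \po{1/t^\gamma}.
\]

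Finally, I would check that the right-hand side is $1 + \po{1/\fmdper{t}} = 1 + \po{1/t^\gamma}$. The $\po{1/t^\gamma}$ term is already of that order. For the $O(t^{A-1})$ term, we need $A - 1 < -\gamma$, i.e.\ $A < 1 - \gamma$, which is exactly the inequality deduced above from Lemma~\ref{lem:intvavrg}. There is no real obstacle here; the only thing to be careful about is making sure the exponent inequalities from Assumption~\ref{hyp:scfcstepsize} and Lemma~\ref{lem:intvavrg} are strong enough to absorb the Taylor remainder $O(t^{A-1})$ into $\po{1/t^\gamma}$, which they are.
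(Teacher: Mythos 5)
Your argument is correct and uses the same basic computation as the paper: expand $\pas_s = \cdvp\,s^{-b}(1+o(1/s^\gamma))$ uniformly for $s\in\itv_t$ around $t$, bound the Taylor remainder $O(t^{A-1})$, and absorb it into $o(1/t^\gamma)$ using $A < b-2\gamma \le 1-\gamma$. The only stylistic difference is that the paper exploits the monotonicity of $(\pas_t)$ to write $\sup/\inf = \pas_{t+1}/\pas_{t+\flng{t}}$ and then expands that explicit ratio, whereas you bound $\pas_s$ uniformly over the interval without identifying the extremizers; the exponent bookkeeping and the final conclusion are identical.
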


\begin{proof}
For every $t\geq 1$, by the definition of $\pas_t$ in
\rehyp{scfcstepsize}, we have
\begin{equation*}
\frac{\sup_{\itv_t}\,\pas_s}{\inf_{\itv_t}\,\pas_s}
=\frac{\pas_{t+1}}{\pas_{t+\flng{t}}}
=\paren{\frac{t+\flng{t}}{t+1}}^\expas\,\frac{\paren{1+\po{\inv{\fmdper{t+1}}}}}{\paren{1+\po{\inv{\fmdper{t+\flng{t}}}}}}.
\end{equation*}
For $t\to \infty$, we have
\begin{equation*}
\ba
\paren{\frac{t+\flng{t}}{t+1}}^\expas
&=\paren{1+\frac{\flng{t}}{t}}^\expas\,\paren{1+\po{\inv{t}}}^\expas
=\paren{1+\inv{t^{1-\exli}}}^\expas\,\paren{1+\po{\inv{t}}}\\
&=\paren{1+\go{\inv{t^{1-\exli}}}}\,\paren{1+\po{\inv{t}}}=1+\go{\inv{t^{1-\exli}}}
=1+\po{\inv{t^{\exmp}}},
\ea
\end{equation*}
since, according to \relem{intvavrg}, we have $0\leq \exli < \expas-2\,\exmp \leq 1-\exmp$, so that $1-\exli\leq 1$ and $1-\exli > \exmp$. Moreover, still when $t\to \infty$, we have
\begin{equation*}
\ba
\frac{\paren{1+\po{\inv{\fmdper{t+1}}}}}{\paren{1+\po{\inv{\fmdper{t+\flng{t}}}}}}
&=\frac{1+\po{\inv{\paren{t+1}^\exmp}}}{1+\po{\paren{\inv{t+t^\exli}}^\exmp}}
=\frac{1+\po{\inv{t^\exmp}}}{1+\po{\inv{t^\exmp}}}\\
&={1+\po{\inv{t^\exmp}}}.
\ea
\end{equation*}
As a result, when $t\to\infty$, we have
\begin{equation*}
\frac{\sup_{\itv_t}\,\pas_s}{\inf_{\itv_t}\,\pas_s}={1+\po{\inv{t^\exmp}}},
\end{equation*}
which ends the proof, since for every $t\geq 1$, $\fmdper{t}=t^\exmp$.
\end{proof}

\begin{corollaire}[Suitable stepsizes]
\label{cor:rtrlsuitstep}
The stepsize sequence $\sm{\pas}=\paren{\pas_t}$, together with the scale function $\lng$, satisfy \rehyp{spdes}, taking $\mlipgrad{t}=\fmdper{t}$.
\end{corollaire}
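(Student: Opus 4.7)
The plan is to check the four points of \rehyp{spdes} in turn, essentially assembling the ingredients already established in \relem{intvavrg}, \relem{pasconcret}, and \relem{homsatis}, with $\mlipgrad{t}=\fmdper{t}=t^\exmp$ and $\flng{t}=t^\exli$.

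First, for point 1, I would note that by \rehyp{scfcstepsize}, $\pas_t=\cdvp\,t^{-\expas}(1+o(1/t^\exmp))$ with $\cdvp>0$ and $0<\expas\leq 1$, so $\pas_t>0$ for all $t$ large enough (and can be assumed positive throughout by taking $\cdvp$ small if needed), and the series $\sum \pas_t$ diverges by comparison with $\sum t^{-\expas}$ since $\expas\leq 1$.

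Next, point 2 with $\mlipgrad{t}=\fmdper{t}$ amounts to $\pas_t\,\flng{t}\,\fmdper{t}^2\to 0$, which is precisely the conclusion already stated in \relem{intvavrg} (a direct consequence of the exponent inequality $\expem'+2\exmp<\expas$ in the \algonoisy case and $\exli+2\exmp<\expas$ in general, combined with $\exli$ chosen within the range given by \relem{intvavrg}). Point 3 with $\mlipgrad{t}=\fmdper{t}$ requires $\fmdper{t}\ll \flng{t}$, i.e., $t^\exmp\ll t^\exli$, which follows immediately from the strict inequality $\exmp<\exli$ built into the definition of $\exli$ in \relem{intvavrg}. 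Point 4 is exactly the content of \relem{homsatis}.

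Thus the proof is genuinely a one-line assembly: each of the four items of \rehyp{spdes} corresponds directly to a statement already proved in the preceding lemmas, once $\mlipgrad{t}$ is identified with $\fmdper{t}$. There is no real obstacle; the only subtlety worth mentioning explicitly is that the identification $\mlipgrad{t}=\fmdper{t}$ is legitimate because nothing in the setup forces these two scale functions to differ: both encode polynomial growth rates with exponent $\exmp$ coming from \rehyp{regpertes} and \rehyp{fupdrl}, and the abstract framework of \resec{absontadsys} only requires $\mlipgrad{}$ to be a scale function controlling Lipschitz constants of the gradient computation operator, which (as will be verified in the subsequent sections on RTRL viewed as an abstract algorithm) grows at rate at most $t^\exmp$ in our setting.
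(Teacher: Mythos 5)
Your proof is correct and follows essentially the same route as the paper's: the paper's own justification is a one-line appeal to \rehyp{scfcstepsize}, \relem{intvavrg}, \relem{homsatis}, and the identification $\mlipgrad{t}=\fmdper{t}$, which is precisely what you spell out point by point. One small aside: in your parenthetical for point 2, the relevant inequality is $\exli+2\exmp<\expas$ in both the exact and the \algonoisy cases (the bound $\expem'+\cdots$ is not what drives $\pas_t\,\flng{t}\,\fmdper{t}^2\to 0$), but this does not affect the correctness of the conclusion since \relem{intvavrg} states $\pas_t\,\flng{t}\,\fmdper{t}^2\to 0$ directly.
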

\begin{proof}
This is a direct consequence of \rehyp{scfcstepsize} and \relem{intvavrg}, and of the fact we use $\mlipgrad{t}=\fmdper{t}$. The homogeneity assumption is a consequence of \relem{homsatis}.
\end{proof}

\subsection{Local Boundedness of Derivatives, Short-Time Control}

\begin{lemme}[Controlling the derivatives of the transition operators
around $\paramopt$]
\label{lem:localopnorms}
Let $B$ be the bound on second derivatives appearing in
\rehyp{regftransetats}. Then for $\param\in
B_\Param(\paramopt,r_\Param)$ and $\state\in
B_{\State_{t-1}}(\stateopt_{t-1},\raysy)$, one has
\begin{equation*}
\nrmop{\frac{\partial
\opevol_t}{\partial
(\state,\param)}(\state,\param)-
\frac{\partial \opevol_t}{\partial
(\state,\param)}(\stateopt,\paramopt)}\leq
B\max(\norm{\state-\stateopt},\norm{\param-\paramopt})
\end{equation*}
and therefore
\begin{equation*}
\sup_{t\geq 1} \,\sup_{
\begin{subarray}{c}
\param\in B_\Param(\paramopt,r_\Param)\\
\state\in B_{\State_{t-1}}(\stateopt_{t-1},\raysy)
\end{subarray}
}
\nrmop{\frac{\partial \opevol_t}{\partial
(\state,\param)}(\state,\param)}
<\infty.
\end{equation*}
\end{lemme}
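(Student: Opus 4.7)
The plan is to prove both assertions as direct consequences of Assumption~\ref{hyp:regftransetats}, which bounds the first derivatives of $\opevol_t$ along the target trajectory and the second derivatives uniformly over the ball $B_\Param(\paramopt,r_\Param)\times B_{\State_{t-1}}(\stateopt_{t-1},\raysy)$.

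For the first inequality, I would apply the mean value inequality to the map $(\state,\param)\mapsto \frac{\partial \opevol_t}{\partial(\state,\param)}(\state,\param)$, viewed as a function from $\State_{t-1}\times\Param$ to the space of linear operators from $\State_{t-1}\times\Param$ to $\State_t$ (equipped with the operator norm). Its differential is precisely $\frac{\partial^2 \opevol_t}{\partial(\state,\param)^2}$, whose operator norm is bounded by $B$ on the product of balls by \rehyp{regftransetats}. Since the product of balls is convex, and since we equip the pair $(\state,\param)$ with the supremum norm (as specified in the linear algebra notation paragraph of \resec{formaldefs}), the mean value inequality along the segment from $(\stateopt_{t-1},\paramopt)$ to $(\state,\param)$ yields exactly the stated bound with the $\max$ of the two norms on the right-hand side.

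For the second assertion, I would combine this Lipschitz bound with the first part of \rehyp{regftransetats}, which states that $\sup_t \nrmop{\frac{\partial \opevol_t}{\partial(\state,\param)}(\stateopt_{t-1},\paramopt)}<\infty$. By the triangle inequality, for any $(\state,\param)$ in the product of balls,
\begin{equation*}
\nrmop{\frac{\partial \opevol_t}{\partial(\state,\param)}(\state,\param)}\leq \nrmop{\frac{\partial \opevol_t}{\partial(\state,\param)}(\stateopt_{t-1},\paramopt)}+B\max(\raysy,r_\Param),
\end{equation*}
and taking the supremum over $t$ and over the product of balls yields the finiteness.

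I do not anticipate any real obstacle here; the statement is essentially a bookkeeping consequence of \rehyp{regftransetats}, and the only minor care needed is to use the supremum norm on $(\state,\param)$ consistently with the convention fixed earlier in the paper, so that the Lipschitz constant coming from the operator norm bound on the second derivative matches the $\max$ appearing in the conclusion.
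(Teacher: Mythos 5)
Your proof is correct and follows essentially the same route as the paper's: the paper writes out the first-order Taylor expansion with integral remainder along the segment from $(\stateopt_{t-1},\paramopt)$ to $(\state,\param)$ and bounds the second-derivative integrand by $B$, while you invoke the mean value inequality, which is the same argument stated in a slightly more compact form. Your observation about the supremum norm on pairs matching the $\max$ in the bound, and the triangle-inequality derivation of the second claim, both coincide with the paper's reasoning.
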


\begin{proof}
This is a direct consequence of \rehyp{regftransetats}. Indeed, let
$\param\in B_\Param(\paramopt,r_\Param)$ and $\state\in
B_{\State_{t-1}}(\stateopt_{t-1},\raysy)$. For $0\leq u\leq 1$, set
\begin{equation*}
\state_u=(1-u)\stateopt+u\state, \qquad \param_u=(1-u)\paramopt+u\param
\end{equation*}
so that
\begin{equation*}
\frac{\partial \opevol_t}{\partial
(\state,\param)}(\state,\param)=
\frac{\partial \opevol_t}{\partial
(\state,\param)}(\stateopt,\paramopt)
+ \int_{u=0}^1 
\left(\frac{\partial^2 \opevol_t}{\partial
(\state,\param)^2}(\state_u,\param_u)
\right)\cdot \frac{\d (\state_u,\param_u)}{\d u} \d u
\end{equation*}
and now the operator norm of $\frac{\partial^2 \opevol_t}{\partial
(\state,\param)^2}$ is bounded by \rehyp{regftransetats}, and $\frac{\d
(\state_u,\param_u)}{\d u}=(\state-\stateopt,\param-\paramopt)$. This
proves the first claim.

The second claim follows since $\frac{\partial
\opevol_t}{\partial
(\state,\param)}(\stateopt,\paramopt)$ is bounded by assumption, and
$\state-\stateopt$ and $\param-\paramopt$ are bounded by definition in the balls
considered.
\end{proof}

\begin{lemme}
\label{lem:opevollip}
The operators $\opevol_t$ are uniformly Lipschitz on
$B_\Param(\paramopt,r_\Param)$ and
$B_{\State_{t-1}}(\stateopt_{t-1},\raysy)$.
Namely,
there exists a constant $\cst_7\geq 1$ such that for any $t\geq 1$,
for any $\param,\param'\in
B_\Param(\paramopt,r_\Param)$, for any $\state,\state'\in
B_{\State_{t-1}}(\stateopt_{t-1},\raysy)$, one has
\begin{equation*}
\norm{\opevol_t(\state,\param)-\opevol_t(\state',\param')}\leq \cst_7
\max(\norm{\state-\state'},\norm{\param-\param'}).
\end{equation*}
\end{lemme}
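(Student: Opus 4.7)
The plan is to deduce this Lipschitz bound directly from the uniform bound on first derivatives provided by Lemma~\ref{lem:localopnorms}, via the mean value inequality.

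First, I would observe that both balls $B_\Param(\paramopt,r_\Param)$ and $B_{\State_{t-1}}(\stateopt_{t-1},\raysy)$ are convex, so for any pair $(\state,\param),(\state',\param')$ in the product, the line segment joining them lies entirely in the product. Then, since $\opevol_t$ is $C^2$ by \redef{prmdynsys}, I would apply the fundamental theorem of calculus along this segment:
\begin{equation*}
\opevol_t(\state,\param)-\opevol_t(\state',\param')=\int_0^1 \frac{\partial \opevol_t}{\partial(\state,\param)}\bigl((1-u)(\state',\param')+u(\state,\param)\bigr)\cdot(\state-\state',\param-\param')\,\d u.
\end{equation*}

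Taking norms and using the uniform operator norm bound on the first derivatives of $\opevol_t$ given by \relem{localopnorms} (the second conclusion, which is uniform in $t\geq 1$, in $\param\in B_\Param(\paramopt,r_\Param)$ and in $\state\in B_{\State_{t-1}}(\stateopt_{t-1},\raysy)$), we find
\begin{equation*}
\nrm{\opevol_t(\state,\param)-\opevol_t(\state',\param')}\leq M\,\nrm{(\state-\state',\param-\param')}
\end{equation*}
where $M$ is the supremum from \relem{localopnorms}. By the convention chosen in \resec{formaldefs} that pairs are equipped with the supremum norm, $\nrm{(\state-\state',\param-\param')}=\max(\nrm{\state-\state'},\nrm{\param-\param'})$. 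Setting $\cst_7\deq \max(M,1)$ yields the desired bound with $\cst_7\geq 1$.

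There is no real obstacle: the only subtle point is to make sure that the uniform-in-$t$ bound from \relem{localopnorms} is indeed available, which it is, and that the operator norm on $\sblin(\State_{t-1}\times \Param,\State_t)$ used there is compatible with the supremum norm on pairs—this is exactly the convention fixed in \resec{formaldefs} when defining operator norms on compound spaces.
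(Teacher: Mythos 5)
Your proof is correct and follows essentially the same route as the paper: integrate $\partial\opevol_t/\partial(\state,\param)$ along the straight segment in the convex product ball and bound the integrand by the uniform operator-norm bound from Lemma~\ref{lem:localopnorms}. The remark about compatibility of the supremum norm on pairs with the operator norm on $\sblin(\State_{t-1}\times\Param,\State_t)$ is a sensible sanity check, though the paper leaves it implicit.
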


\begin{proof}
This is a consequence of \relem{localopnorms}. Indeed, 
for $0\leq u\leq 1$, set as above
\begin{equation*}
\state_u=(1-u)\state+u\state', \qquad \param_u=(1-u)\param+u\param'
\end{equation*}
so that
\begin{equation*}
\opevol_t(\state',\param')=
\opevol_t(\state,\param)
+ \int_{u=0}^1 
\left(\frac{\partial \opevol_t}{\partial
(\state,\param)}(\state_u,\param_u)
\right)\cdot \frac{\d (\state_u,\param_u)}{\d u} \d u
\end{equation*}
and now the operator norm of $\frac{\partial \opevol_t}{\partial
(\state,\param)}$ is bounded by \relem{localopnorms}, and $\frac{\d
(\state_u,\param_u)}{\d u}=(\state-\stateopt,\param-\paramopt)$. This
proves the claim.
\end{proof}

\begin{corollaire}
\label{cor:shorttermlip}
Let $0\leq t_1\leq t_2$. 
Let $(\param_t)$ and $(\param'_t)$ be two sequences of parameters with
$\sup_t \norm{\param_t-\paramopt}\leq \min(r_\Param,r_\State/\cst_7^{t_2-t_1})$
and likewise for $\param'_t$.
Let $\state, \state'\in \State_{t_1}$ with
$\norm{\state-\stateopt_{t_1}}\leq r_\State/\cst_7^{t_2-t_1}$ and
likewise for $\state'$.

Then for every $t_1\leq t\leq t_2$,
\begin{equation*}
\norm{\opevol_{t_1:t}(\state,(\param_t))-\opevol_{t_1:t}(\state',(\param'_t))}\leq
\cst_7^{t-t_1}\max(\norm{\state-\state'},\sup_{t'}
\norm{\param_{t'}-\param'_{t'}})
\end{equation*}
and both $\opevol_{t_1:t}(\state,(\param_t))$ and
$\opevol_{t_1:t}(\state',(\param'_t))$ lie in
$B_\State(\stateopt_t,r_\State)$.
\end{corollaire}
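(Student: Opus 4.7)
The plan is to argue by induction on $t$ that the claim holds, simultaneously controlling both the distance between the two iterates and their distance to the target trajectory $\stateopt_t$. The only tool needed is the uniform Lipschitz bound of Lemma~\ref{lem:opevollip}: each application of $\opevol_{t+1}$ multiplies the distance by at most $\cst_7$, which produces the geometric factor $\cst_7^{t-t_1}$. The slightly stricter initial bound $r_\State/\cst_7^{t_2-t_1}$ (rather than just $r_\State$) is exactly what is required to absorb this geometric growth and keep all iterates inside the ball of radius $r_\State$ in which the Lipschitz bound is valid.

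Concretely, I would set $M \deq \max(\norm{\state-\state'},\sup_{t'}\norm{\param_{t'}-\param'_{t'}})$ and prove by induction on $t\in[t_1,t_2]$ that
\begin{equation*}
\norm{\opevol_{t_1:t}(\state,(\param_t))-\opevol_{t_1:t}(\state',(\param'_t))}\leq \cst_7^{t-t_1}M,
\end{equation*}
and also
\begin{equation*}
\norm{\opevol_{t_1:t}(\state,(\param_t))-\stateopt_t}\leq \cst_7^{t-t_1}\,\frac{r_\State}{\cst_7^{t_2-t_1}}\leq r_\State
\end{equation*}
(and the same with primes). The base $t=t_1$ is immediate since $\cst_7\geq 1$. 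For the inductive step, first observe that the two control bounds at time $t$, together with the hypothesis $\sup_{t'}\norm{\param_{t'}-\paramopt}\leq r_\Param$, place the arguments of $\opevol_{t+1}$ inside the domain $B_{\State_t}(\stateopt_t,r_\State)\times B_\Param(\paramopt,r_\Param)$ where Lemma~\ref{lem:opevollip} applies.

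Writing $\opevol_{t_1:t+1}(\state,(\param_t))=\opevol_{t+1}(\opevol_{t_1:t}(\state,(\param_t)),\param_t)$ and similarly for the primed quantities, Lemma~\ref{lem:opevollip} gives
\begin{equation*}
\norm{\opevol_{t_1:t+1}(\state,(\param_t))-\opevol_{t_1:t+1}(\state',(\param'_t))}\leq \cst_7\max\bigl(\cst_7^{t-t_1}M,\,M\bigr)=\cst_7^{t+1-t_1}M,
\end{equation*}
which closes the first inductive bound. The control against the target trajectory is obtained by the same argument, applied with the comparison sequence $\state'\deq\stateopt_{t_1}$, $\param'_{t'}\equiv\paramopt$, which satisfies the hypotheses and yields $\opevol_{t_1:t}(\stateopt_{t_1},\paramopt)=\stateopt_t$; the corresponding $M$ is at most $r_\State/\cst_7^{t_2-t_1}$, so the resulting deviation from $\stateopt_{t+1}$ is at most $\cst_7^{t+1-t_1}\,r_\State/\cst_7^{t_2-t_1}\leq r_\State$ as long as $t+1\leq t_2$.

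No serious obstacle is expected: the statement is purely a short-time Lipschitz estimate and the argument is a direct induction. The only subtle bookkeeping point is the need to prove the containment in $B_\State(\stateopt_t,r_\State)$ \emph{as part of} the induction, since Lemma~\ref{lem:opevollip} is itself only local; this is exactly why the hypothesis starts with the tighter radius $r_\State/\cst_7^{t_2-t_1}$ on $\state$, $\state'$, $\param_t$ and $\param'_t$.
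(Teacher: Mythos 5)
Your proof is correct and takes essentially the same route as the paper: both rely on Lemma~\ref{lem:opevollip}, and both secure the ball containment needed to iterate it by comparing the trajectory to the target $(\stateopt_{t_1},\paramopt)$. The only cosmetic difference is that the paper runs the containment induction first and then the Lipschitz induction, whereas you bundle the two estimates into one simultaneous induction.
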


\begin{proof}
By induction from \relem{opevollip}. First consider the case
$\state'=\stateopt_{t_1}$ and $\param'_t=\paramopt$: by induction
from \relem{opevollip}, we obtain that
\begin{equation*}
\norm{\opevol_{t_1:t}(\state,(\param_t))-\opevol_{t_1:t}(\stateopt_{t_1},\paramopt)}\leq
\cst_7^{t-t_1}\max(\norm{\state-\stateopt_{t_1}},\sup_{t'}
\norm{\param_{t'}-\paramopt}) \leq r_\State
\end{equation*}
and therefore, since
$\opevol_{t_1:t}(\stateopt_{t_1},\paramopt)=\stateopt_t$ by definition,
we obtain that $\opevol_{t_1:t}(\state,(\param_t))\in
B_\state(\stateopt_t,r_\State)$. Thus \relem{opevollip} can be applied at
the next step of the induction.

Next, consider the case of general $\state'$. By the first step above, both
$\opevol_{t_1:t}(\state',(\param_t))\in
B_\state(\stateopt_t,r_\State)$ and
$\opevol_{t_1:t}(\state',(\param'_t))\in
B_\state(\stateopt_t,r_\State)$ lie in the ball
$B_\state(\stateopt_t,r_\State)$. So \relem{opevollip} can be
applied at all times $t\leq t_2$, which gives the result by induction.
\end{proof}

\subsection{Spectral Radius Close to $\paramopt$}

\begin{proposition}[Continuity of spectral radius for sequences]
\label{prop:contspecrad}
Let $(A_t)_{t\geq 0}$ be a sequence of linear operators over a normed
vector space, with bounded operator norm. Assume that $(A_t)$ has
spectral radius $\leq 1-\alpha$ at horizon $\hsr$. Then there exists $\eps>0$ such that if
$(A'_t)$ is a sequence of linear operators with
$\nrmop{A_t-A'_t}\leq \eps$ for all $t$, then the sequence $(A'_t)$ has spectral radius
$\leq 1-\alpha/2$ at horizon $\hsr$.
\end{proposition}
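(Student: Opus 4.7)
The plan is to reduce this to a straightforward telescoping estimate on the product of $\hsr$ consecutive operators. Denote $P_t \deq A_{t+\hsr-1}\cdots A_{t+1}A_t$ and $P'_t \deq A'_{t+\hsr-1}\cdots A'_{t+1}A'_t$. By the spectral radius assumption, $\nrmop{P_t}\leq 1-\alpha$ for every $t$. It suffices to choose $\eps$ so that $\nrmop{P'_t - P_t}\leq \alpha/2$ uniformly in $t$; then the triangle inequality yields $\nrmop{P'_t}\leq 1-\alpha/2$, which is the desired bound.

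First, fix a uniform bound $M\deq\sup_t \nrmop{A_t}<\infty$ for $(A_t)$. Assuming without loss of generality $\eps\leq 1$, one then has $\nrmop{A'_t}\leq M+1$ for all $t$. The telescoping identity
\begin{equation*}
P'_t - P_t = \sum_{j=0}^{\hsr-1} A'_{t+\hsr-1}\cdots A'_{t+j+1}\,\bigl(A'_{t+j}-A_{t+j}\bigr)\,A_{t+j-1}\cdots A_t
\end{equation*}
then gives, by submultiplicativity of the operator norm,
\begin{equation*}
\nrmop{P'_t - P_t}\leq \hsr\,(M+1)^{\hsr-1}\,\eps,
\end{equation*}
uniformly in $t$.

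It now suffices to take $\eps\deq \min\paren{1,\,\alpha/(2\hsr(M+1)^{\hsr-1})}$, which depends only on $M$, $\hsr$, and $\alpha$ (and hence on $(A_t)$ through these quantities). The main, and essentially only, subtlety is bookkeeping of the uniform operator-norm bound on the perturbed operators so that the telescoping estimate is genuinely uniform in $t$; once this is in place the result is immediate.
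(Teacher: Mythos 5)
Your proof is correct and follows the same overall strategy as the paper—controlling the perturbation of the $\hsr$-fold product via the triangle inequality—but with a more economical bookkeeping device: you use the standard $\hsr$-term telescoping identity for $P'_t-P_t$, whereas the paper writes $A'_s=A_s+r_s$, expands the product into $2^\hsr$ terms, and bounds the $2^\hsr-1$ terms containing at least one $r_s$. The telescoping version yields a threshold $\eps$ that is linear in $\hsr$ rather than exponential, though this sharper constant has no bearing on the stated conclusion.
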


\begin{proof}
Writing $A'_t\eqd A_t+r_t$ and
expanding the product $A'_{t+\hsr-1}\ldots...A'_{t+1}A'_t$, one finds
$2^{\hsr}$ terms, one of which is $A_{t+\hsr-1}\ldots...A_{t+1}A_t$ and all the
others involve at least one $r_s$ factor. Therefore, if $\nrmop{r_s}\leq
\frac{\alpha}{2^{\hsr+1}} \min(1,(1/\sup \nrmop{A_t})^\hsr)$, each of those terms has
operator norm $\leq \frac{\alpha}{2^{\hsr+1}}$. So the sum of all the terms
with at least one $r_s$ factor has operator norm
$\leq \alpha/2$ and the conclusion follows.
\end{proof}

\begin{corollaire}[Balls with spectral radius bounded away from $1$]
\label{cor:specrad}
Let $\hsr$ be the horizon for the spectral radius in \rehyp{specrad}.

There exist $r'_\Param>0$, $r'_\State>0$, and $M>0$ such that, for any sequence of
parameters $(\param_t)_{t\geq 0}$ with $\param_t\in
B_\Param(\paramopt,r'_\Param)$ and any sequence of states
$(\state_t)_{t\geq 0}$ with $\state_t\in
B_{\State_t}(\stateopt_t,r'_\State)$, the sequence of operators
\begin{equation*}
\frac{\partial
\opevol_{t}}{\partial
\state}(\state_{t-1},\param_{t-1})
\end{equation*}
has spectral radius at most $1-\alpha/2$ at horizon $\hsr$.
Moreover, any product of such consecutive operators has operator norm
bounded by
\begin{equation*}
\nrmop{
\prod_{t_1<t\leq t_2}
\frac{\partial
\opevol_{t}}{\partial \state}(\state_{t-1},\param_{t-1})
}\leq M(1-\alpha/2)^{(t_2-t_1)/\hsr}.
\end{equation*}
\end{corollaire}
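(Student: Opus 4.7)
The idea is to combine the perturbation bound for $\partial\opevol_t/\partial\state$ near the target trajectory (\relem{localopnorms}) with the stability of the spectral-radius-at-horizon-$\hsr$ condition under small perturbations (\reprop{contspecrad}).

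First, set $A_t\deq \frac{\partial\opevol_t}{\partial\state}(\stateopt_{t-1},\paramopt)$. By \rehyp{specrad}, the sequence $(A_t)$ has spectral radius at most $1-\alpha$ at horizon $\hsr$. By \relem{localopnorms}, $(A_t)$ is uniformly bounded in operator norm by some constant, say $C_0$, and moreover for any $\param\in B_\Param(\paramopt,r_\Param)$ and $\state\in B_{\State_{t-1}}(\stateopt_{t-1},\raysy)$,
\begin{equation*}
\nrmop{\tfrac{\partial\opevol_t}{\partial\state}(\state,\param)-A_t}\leq B\max(\norm{\state-\stateopt_{t-1}},\norm{\param-\paramopt}),
\end{equation*}
uniformly in $t$, since $\partial\opevol_t/\partial\state$ is a block of $\partial\opevol_t/\partial(\state,\param)$. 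Apply \reprop{contspecrad} to $(A_t)$ to obtain the tolerance $\eps>0$ such that any sequence $(A'_t)$ with $\sup_t\nrmop{A'_t-A_t}\leq \eps$ still has spectral radius at most $1-\alpha/2$ at horizon $\hsr$. Then I would set
\begin{equation*}
r'_\Param\deq \min(r_\Param,\eps/B),\qquad r'_\State\deq \min(\raysy,\eps/B),
\end{equation*}
so that, for any admissible sequences $(\param_t)$ and $(\state_t)$, the perturbed operators $A'_t\deq\frac{\partial\opevol_t}{\partial\state}(\state_{t-1},\param_{t-1})$ satisfy $\nrmop{A'_t-A_t}\leq \eps$ uniformly; in particular $\sup_t\nrmop{A'_t}\leq C_0+\eps\eqd C$, and $(A'_t)$ has spectral radius at most $1-\alpha/2$ at horizon $\hsr$ by the choice of $\eps$.

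For the second statement, write $t_2-t_1=q\hsr+r$ with $0\leq r<\hsr$, and split the product
\begin{equation*}
\prod_{t_1<t\leq t_2}A'_t
\end{equation*}
into $q$ groups of exactly $\hsr$ consecutive factors (each with operator norm $\leq 1-\alpha/2$ by the horizon-$\hsr$ bound), plus a leftover block of $r$ factors whose norm is at most $C^r$. Taking norms,
\begin{equation*}
\nrmop{\prod_{t_1<t\leq t_2}A'_t}\leq C^r\,(1-\alpha/2)^q\leq M\,(1-\alpha/2)^{(t_2-t_1)/\hsr},
\end{equation*}
where $M\deq \max_{0\leq r<\hsr}C^r(1-\alpha/2)^{-r/\hsr}<\infty$ absorbs the leftover $r<\hsr$ factors.

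None of the steps appears to be a real obstacle: the perturbation lemma and spectral-radius continuity do the heavy lifting, and only the block decomposition requires a small bookkeeping to absorb the remainder $r<\hsr$ factors into the constant $M$.
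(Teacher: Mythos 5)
Your proof is correct and follows essentially the same route as the paper's: it uses \relem{localopnorms} to bound the perturbation of $\partial\opevol_t/\partial\state$ away from the target trajectory (noting the $\state$-block is dominated by the full $(\state,\param)$-derivative), invokes \reprop{contspecrad} to propagate the spectral-radius-at-horizon condition, and then absorbs the remainder block of $r<\hsr$ factors into the constant $M$ exactly as the paper does. The only cosmetic difference is how $M$ is packaged (you take a max over $0\leq r<\hsr$ while the paper uses $M_0^\hsr/(1-\alpha/2)$), but these are equivalent bookkeeping choices.
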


\begin{proof}
One has 
\begin{equation*}
\nrmop{\frac{\partial
\opevol_t}{\partial
(\state,\param)}(\state,\param)-
\frac{\partial \opevol_t}{\partial
(\state,\param)}(\stateopt,\paramopt)}
\geq
\nrmop{\frac{\partial
\opevol_t}{\partial
\state}(\state,\param)-
\frac{\partial \opevol_t}{\partial
\state}(\stateopt,\paramopt)}
\end{equation*}
since any change of $\state$ can be seen as a change of
$(\state,\param)$ with no change on $\param$.

Therefore by \relem{localopnorms}, if $(\state_t,\param_t)$ is close
enough to $(\stateopt_t,\paramopt)$, then $\frac{\partial
\opevol_{t+1}}{\partial
\state}(\state_t,\param_t)$ is arbitrarily close to $\frac{\partial
\opevol_{t+1}}{\partial
\state}(\stateopt_t,\paramopt)$ in operator norm. The spectral radius
property follows
by \rehyp{specrad} and \reprop{contspecrad}.

For the last inequality, divide the time interval $(t_1;t_2]$ into blocks
of length $\hsr$, plus a remainder of length $<\hsr$. On each consecutive block
of length $\hsr$, by definition of the spectral radius of a sequence
(Def.~\ref{def:specrad}), the operator norm of the product is at most
$(1-\alpha/2)$. 
For the remaining interval of length $<\hsr$, define
\begin{equation*}
M_0=\max\cpl{1}{
\sup_{t\geq 1} \,\sup_{
\begin{subarray}{c}
\param\in B_\Param(\paramopt,r_\Param)\\
\state\in B_{\State_{t-1}}(\stateopt_{t-1},\raysy)
\end{subarray}
}
\nrmop{\dpartf{\state}{\opevolt}\paren{\state,\,\param}}},
\end{equation*}
which is finite thanks to \relem{localopnorms}. Thus, a product of $<\hsr$
consecutive operators has operator norm at most $M_0^\hsr$.
Defining
$M=M_0^\hsr/(1-\alpha/2)$ proves the claim (the $1/(1-\alpha/2)$
compensates for $t_2-t_1$ not being an exact multiple of $\hsr$).
\end{proof}

\begin{lemme}[Balls with contractivity at the horizon]
\label{lem:statecontr}
Let $\hsr$ be the horizon for the spectral radius in \rehyp{specrad}, and
$1-\alpha$ the corresponding operator norm.

Define $r''_\Param=
\min(r_\Param,r'_\Param,\,r_\State/\cst_7^\hsr,\,r'_\State/\cst_7^\hsr)$
and
$r''_\State=\min(r_\State,r'_\State)/\cst_7^\hsr$ with $\cst_7$ as in
\recor{shorttermlip}.

Let $\param\in
B_\Param(\paramopt,r''_\Param)$ and let $\state, \state' \in
B_{\State_t}(\stateopt_t,r''_\State)$.


Then
for all $t\leq t'\leq t+\hsr$, $\opevol_{t:t'}(\state,\param)$ and
$\opevol_{t:t'}(\state',\param)$ belong to the ball
$B_{\State_{t'}}(\stateopt_{t'},r'_\State)$, and moreover
\begin{equation*}
\norm{\opevol_{t:t+\hsr}(\state,\param)-\opevol_{t:t+\hsr}(\state',\param)}
\leq (1-\alpha/2)
\norm{\state-\state'}.
\end{equation*}

{In particular, taking $\state'=\stateopt_t$, we see that $\state_{t+\hsr}$ belongs to $B_{\State_{t+\hsr}}(\stateopt_{t+\hsr},r''_\State)$.}
\end{lemme}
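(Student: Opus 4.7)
The plan is to combine the short-time Lipschitz estimate of Corollary~\ref{cor:shorttermlip} with the spectral radius control of Corollary~\ref{cor:specrad}, linking the two via the mean value formula written as an integral along the segment $[\state,\state']$.

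First I would verify, by a direct induction using Corollary~\ref{cor:shorttermlip} applied with the constant parameter sequence $\param_t\equiv \param$, that for every $t\leq t'\leq t+\hsr$ and every $\state\in B_{\State_t}(\stateopt_t,r''_\State)$, the iterate $\opevol_{t:t'}(\state,\param)$ lies in $B_{\State_{t'}}(\stateopt_{t'},r'_\State)$. The choice $r''_\State=\min(r_\State,r'_\State)/\cst_7^\hsr$ and $r''_\Param\leq \min(r_\Param,r_\State/\cst_7^\hsr,r'_\State/\cst_7^\hsr)$ is exactly what makes the hypotheses of Corollary~\ref{cor:shorttermlip} applicable (since $r''_\State\leq r_\State/\cst_7^\hsr$ and $r''_\Param\leq r_\Param$, $r''_\Param\leq r_\State/\cst_7^\hsr$) and what makes its conclusion $\cst_7^{t'-t}\max(\norm{\state-\stateopt_t},\norm{\param-\paramopt})\leq \cst_7^\hsr\cdot\min(r_\State,r'_\State)/\cst_7^\hsr=\min(r_\State,r'_\State)\leq r'_\State$. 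The same reasoning applies to $\state'$.

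Next I would rewrite the difference at horizon $\hsr$ as
\begin{equation*}
\opevol_{t:t+\hsr}(\state,\param)-\opevol_{t:t+\hsr}(\state',\param)=\left(\int_0^1 \frac{\partial \opevol_{t:t+\hsr}}{\partial \state}(\state_u,\param)\,\diff u\right)\cdot (\state-\state'),
\end{equation*}
with $\state_u\deq (1-u)\state+u\state'$. The segment $\state_u$ lies in $B_{\State_t}(\stateopt_t,r''_\State)$ by convexity, so by the previous step the whole open-loop trajectory starting at $\state_u$ with parameter $\param$ remains in $B_{\State}(\stateopt,r'_\State)$ for $\hsr$ steps. By the chain rule, $\partial \opevol_{t:t+\hsr}/\partial \state$ at $(\state_u,\param)$ is a product of exactly $\hsr$ consecutive Jacobians $\partial \opevol_{t+i}/\partial \state$ evaluated along that trajectory. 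Corollary~\ref{cor:specrad} then applies verbatim to this product: since the evaluation points sit in the safe region, the $\hsr$-step product has operator norm at most $1-\alpha/2$ (this is precisely the content of Definition~\ref{def:specrad} applied at horizon $\hsr$, with no spurious $M$ factor because the interval length is an exact multiple of $\hsr$). Taking operator norms in the integral formula yields the contractivity bound.

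For the final assertion, specializing $\state'=\stateopt_t$ and using $\param=\paramopt$ gives $\opevol_{t:t+\hsr}(\stateopt_t,\paramopt)=\stateopt_{t+\hsr}$, so $\norm{\state_{t+\hsr}-\stateopt_{t+\hsr}}\leq (1-\alpha/2)\norm{\state-\stateopt_t}\leq (1-\alpha/2)r''_\State\leq r''_\State$; for general $\param\in B_\Param(\paramopt,r''_\Param)$ the claim is understood in that same inductive sense (the hypothesis $\param=\paramopt$ being what is actually needed when the lemma is iterated along the target trajectory). The only genuine obstacle is the careful bookkeeping in the first step: one must check that the constants $r''_\Param$ and $r''_\State$ are chosen small enough for \emph{every} intermediate state along the trajectories starting at $\state$ and at $\state_u$ to fall within the spectral-radius-safe ball $B(\stateopt,r'_\State)$, which is exactly why $\cst_7^\hsr$ appears in their definitions.
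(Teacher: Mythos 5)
Your proof is correct and follows the same approach as the paper: integrate $\partial\opevol_{t:t'}/\partial\state$ along the segment $\state_u=(1-u)\state'+u\state$, use \recor{shorttermlip} to keep the interpolated trajectories inside $B_{\State_{t'}}(\stateopt_{t'},r'_\State)$ (which is precisely why the definitions of $r''_\Param$ and $r''_\State$ carry the factor $\cst_7^\hsr$), and then apply the spectral-radius bound of \recor{specrad} to the exact-$\hsr$-length product of Jacobians. Your observation about the final \emph{in particular} clause is also accurate: the displacement of $\opevol_{t:t+\hsr}(\stateopt_t,\param)$ from $\stateopt_{t+\hsr}$ has to be handled separately when $\param\neq\paramopt$, which is indeed how the paper uses this lemma inside \relem{rtrlstabletubes}, splitting off a term $\cst_7^\hsr\,\eps_\param$.
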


\begin{proof}
For $0\leq u\leq 1$ set
$\state_u=(1-u)\state'+u\state$,
which belongs to $B_{\State_t}(\stateopt_t,r''_\State)$.

Then
\begin{equation*}
\opevol_{t:t'}(\state,\param)=\opevol_{t:t'}(\state',\param)+\int_{u=0}^1
\left(\frac{\partial \opevol_{t:t'}}{\partial
\state}(\state_u,\param)
\right)\cdot \frac{\d \state_u}{\d u} \d u.
\end{equation*}

Denote $s_{u,t'}=\opevol_{t:t'}(s_u,\param)$ for $t'\geq t$ the
trajectory starting at $s_u$ with parameter $\param$.
Since $\opevol_{t:t'+1}=\opevol_{t'+1}(\opevol_{t:t'})$, by induction
the derivative of $\opevol_{t:t'}$ is the product of derivatives along
the trajectory:
\begin{equation*}
\frac{\partial \opevol_{t:t'}}{\partial
s}(\state_u,\param)=
\frac{\partial \opevol_{t'}}{\partial s}(\state_{u,t'-1},\param)
\frac{\partial \opevol_{t'-1}}{\partial s}(\state_{u,t'-2},\param)
\cdots \frac{\partial \opevol_{t+1}}{\partial
s}(\state_{u},\param).
%
\end{equation*}

Since $s_u\in B_{\State_t}(\stateopt_t,r''_\State)$, by
\recor{shorttermlip}, for any $t\leq t'\leq t+\hsr$ we have
\begin{equation}
\label{eq:modifpotentielle}
\norm{\opevol_{t:t'}(\state_u,\param))-\opevol_{t:t'}(\stateopt_t,\paramopt)}\leq
\cst_7^{\hsr}\max(\norm{\state_u-\stateopt_t},
\norm{\param-\paramopt})\leq \cst_7^\hsr \max(r''_\State,r''_\Param)\leq
r'_\State
\end{equation}
by our definition of $r''_\Param$ and $r''_\State$.

Since $\opevol_{t:t'}(\stateopt_t,\paramopt)=\stateopt_{t'}$ and
$\opevol_{t:t'}(\state_u,\param)=s_{u,t'}$ by
definition, this means that $\state_{u,t'}$ belongs to
$B_{\State_{t'}}(\stateopt_{t'},r'_\State)$.

Therefore we can apply \recor{specrad}. We obtain that the sequence
$\frac{\partial
\opevol_{t'}}{\partial
\state}(\state_{u,t'-1},\param)
$ for $t\leq t'\leq t+\hsr$, has spectral radius at most $1-\alpha/2$ at
horizon $\hsr$. 
Therefore, taking $t'=t+\hsr$ we have
\begin{equation*}
\nrmop{\frac{\partial \opevol_{t:t+\hsr}}{\partial
s}(\state_u,\param)}\leq 1-\alpha/2.
\end{equation*}

Since $\norm{\frac{\d \state_u}{\d u} }=\norm{\state-\state'}$, the
conclusion follows.
\end{proof}

\subsection{Stable Tubes for RTRL and \Algonoisy RTRL}
\subsubsection{Existence of a Stable Tube for the States $\state_t$}
\label{sec:exstabletubestates}

We are now ready to construct stable tubes for $\opevol$. We cannot construct stable balls in a straightforward way, as contractivity needs $\hsr$ iterations to operate. As a result, we construct two sets of balls in the state spaces $\State_t$ around the target trajectory $\paren{\stateopt_t}$ (of course, all the balls are included in the balls where smoothness, and Lipschitz assumptions, are satisfied). The successive radii are smaller as the number of \gu{primes} increases.
\begin{enumerate}
\item The balls of radius $r_\State$ are those where the regularity \rehyp{regftransetats} is satisfied.
\item The balls of radius $r'_\State$ are those where the several differentials of the model are bounded, and were the $\dpartf{\state}{\opevol_t}$'s have spectral radius less than $1-\alpha$.
\item The balls of radius $r''_\State$ are those the states in which cannot escape from the balls of radius $r'_\State$ in $\hsr$ iterations.
\item The balls of radius $r''''_\State$ are stable by $\hsr$ successive iterations of the $\opevol_t$'s, provided parameters in $B_{\Param}\cpl{\pctrlopt}{r'''_\Param}$ are used.
\item In between times $t$ and $t+\hsr$, the states of trajectories issuing from a ball $B_{\State_t}(\stateopt_t,r''''_\State)$, and using parameters in $B_{\Param}\cpl{\pctrlopt}{r'''_\Param}$, may get out of balls of radius $r''''_\State$, but remain in balls of radius $r'''_\State$.
\end{enumerate}
As a result, every trajectory issuing from a ball $B_{\State_t}(\stateopt_t,r''''_\State)$ at some time $t$, and using parameters in $B_{\Param}\cpl{\pctrlopt}{r'''_\Param}$, will behave as follows.
\begin{enumerate}
\item At every time $t+n\,\hsr$, where $n\geq 0$ is an integer, $\state_{t+n\,\hsr}$ is in $B_{\State_{t+n\,\hsr}}(\stateopt_{t+n\,\hsr},r''''_\State)$.
\item At times $t+r+n\,\hsr$, with $r<\hsr$, $\state_{t+r+n\,\hsr}$ is in a ball $B_{\State_{t+r+n\,\hsr}}(\stateopt_{t+r+n\,\hsr},r'''_\State)$.
\end{enumerate}
Finally, at any time $t$, any state $\state_t\in\tube_t$ is guaranteed to stay in the balls where the $\opevol_t$'s are smooth, and \gu{have spectral radius less than $1-\alpha$} in $\hsr$ iterations: for every $t \leq t' \leq t+k$, for every sequence $\paren{\param_p}$ of parameters in $B_{\Param}\cpl{\pctrlopt}{r'''_\Param}$, we have $\opevol_{t:t'}(\state_t,\paren{\param_{p}})\in B_{\State_{t'}}\paren{\stateopt_{t'},\,r_\State}\cap B_{\State_{t'}}\paren{\stateopt_{t'},\,r'_\State}$ (this is a consequence of \recor{shorttermlip}).


\begin{lemme}[Existence of a stable tube for $s$]
\label{lem:rtrlstabletubes}
There exist a ball $\boctopt\deq B_\Param(\paramopt,r'''_\Param)$ with
positive radius,
and sets $\tube_t\subset
\State_t$ with the following properties:
\begin{enumerate}
\item Stability: for any $\param\in\boctopt$ and any $\state_t\in \tube_t$,
then $\opevol_{t+1}(\state_t,\param)\in \tube_{t+1}$;
\item The sets $\tube_t$
contain a neighborhood of $\stateopt_t$ and have bounded diameter; more
precisely, there exist $r'''_\State>0$ and $r''''_\State>0$ such that
$B_{\State_t}(\stateopt_t,r''''_\State)\subset \tube_t\subset
B_{\State_t}(\stateopt_t,r'''_\State)$ for all $t\geq 0$.
\item $r'''_\Param\leq \min(r_\Param,r'_\Param,r''_\Param)$ and likewise
for $r'''_\State$, so that inside $\boctopt$ and $\tube_t$, all
assumptions of Section~\ref{sec:syshyp} as well as all results
\ref{lem:localopnorms}--\ref{lem:statecontr} apply (with $t_2\leq t_1+\hsr$
for \recor{shorttermlip}).
\end{enumerate}
\end{lemme}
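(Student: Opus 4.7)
My plan follows the informal sketch preceding the statement. The key auxiliary fact, which I would establish first, is an extension of \relem{statecontr} to time-varying parameters: for any sequence $(\param_s)_{t < s \leq t+\hsr} \subset B_\Param(\paramopt, r''_\Param)$ and any $\state, \state' \in B_{\State_t}(\stateopt_t, r''_\State)$, the intermediate states $\opevol_{t:t'}(\state,(\param_s))$ lie in $B_{\State_{t'}}(\stateopt_{t'}, r'_\State)$ for all $t \leq t' \leq t+\hsr$, and
\begin{equation*}
\norm{\opevol_{t:t+\hsr}(\state,(\param_s)) - \opevol_{t:t+\hsr}(\state',(\param_s))} \leq (1-\alpha/2)\,\norm{\state-\state'}.
\end{equation*}
The proof of \relem{statecontr} transfers verbatim: the straight-line interpolation is still in $\state$ only, \recor{shorttermlip} bounds the intermediate states for a varying parameter sequence, and \recor{specrad} already handles time-varying parameter sequences, so the product of $\hsr$ consecutive Jacobians has operator norm $\leq 1-\alpha/2$. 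Separately, iterating \relem{opevollip} and keeping intermediate states in the Lipschitz domain via \recor{shorttermlip} gives the drift bound
\begin{equation*}
\norm{\opevol_{t:t+\hsr}(\stateopt_t,(\param_s)) - \stateopt_{t+\hsr}} \leq \cst_7^\hsr\,\sup_s \norm{\param_s - \paramopt},
\end{equation*}
and combining the two via the triangle inequality yields, for $\state \in B_{\State_t}(\stateopt_t, r'''')$ and $(\param_s) \subset B_\Param(\paramopt, r''')$,
\begin{equation*}
\norm{\opevol_{t:t+\hsr}(\state,(\param_s)) - \stateopt_{t+\hsr}} \leq (1-\alpha/2)\,r''''_\State + \cst_7^\hsr\,r'''_\Param.
\end{equation*}

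Next I choose the radii. First pick $r''''_\State > 0$ small enough that $\cst_7^\hsr\,r''''_\State \leq \min(r_\State, r'_\State, r''_\State)$; then pick $r'''_\Param > 0$ satisfying $r'''_\Param \leq \min(r_\Param, r'_\Param, r''_\Param)$, $\cst_7^\hsr\,r'''_\Param \leq \min(r_\State, r'_\State, r''_\State)$, and crucially $\cst_7^\hsr\,r'''_\Param \leq (\alpha/2)\,r''''_\State$, so that the combined bound above returns to $B_{\State_{t+\hsr}}(\stateopt_{t+\hsr}, r''''_\State)$. Set $r'''_\State \deq \cst_7^\hsr \max(r''''_\State, r'''_\Param)$, $\boctopt \deq B_\Param(\paramopt, r'''_\Param)$, and define
\begin{equation*}
\tube_t \deq \bigcup_{\tau = 0}^{\min(\hsr-1, t)} \; \bigcup_{(\param_s)_{t-\tau < s \leq t} \subset \boctopt}  \opevol_{t-\tau : t}\bigl(B_{\State_{t-\tau}}(\stateopt_{t-\tau}, r''''_\State), (\param_s)\bigr).
\end{equation*}
Taking $\tau = 0$ gives the lower containment $B_{\State_t}(\stateopt_t, r''''_\State) \subset \tube_t$, and \recor{shorttermlip} applied on the subintervals of length at most $\hsr$ gives the upper containment $\tube_t \subset B_{\State_t}(\stateopt_t, r'''_\State)$.

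For stability, let $\state_t \in \tube_t$, written as $\opevol_{t-\tau:t}(\state', (\param_s))$ with $\state' \in B_{\State_{t-\tau}}(\stateopt_{t-\tau}, r''''_\State)$, $\tau < \hsr$, and $(\param_s) \subset \boctopt$, and let $\param \in \boctopt$. If $\tau < \hsr - 1$, then $\opevol_{t+1}(\state_t, \param) = \opevol_{t-\tau:t+1}(\state', (\param_s, \param))$ fits into $\tube_{t+1}$ with index $\tau + 1 < \hsr$. If $\tau = \hsr - 1$, the result of one more step is $\opevol_{t-\hsr+1:t+1}(\state', (\param_s, \param))$, a full $\hsr$-fold iteration; by the combined bound above and the constraint on $r'''_\Param$, it lies in $B_{\State_{t+1}}(\stateopt_{t+1}, r''''_\State) \subset \tube_{t+1}$ via the $\tau = 0$ term. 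The main obstacle is the constant chase: $r'''_\Param$ must be chosen \emph{after} $r''''_\State$ so as to absorb the linear-in-parameter drift of the reference trajectory into the contractivity margin $\alpha/2$, while simultaneously respecting the three smaller radii $r_\Param, r'_\Param, r''_\Param$ imposed by the earlier lemmas; once this choice is made, everything else is bookkeeping.
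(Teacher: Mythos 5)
Your proof is correct and takes essentially the same approach as the paper: contract over windows of length $\hsr$ while absorbing the parameter-induced drift into the contraction margin $\alpha/2$, then stitch the intermediate steps with $\cst_7^\hsr$. The minor variations are cosmetic — you prove a time-varying-parameter version of \relem{statecontr} and split the triangle inequality as (contract in state, then drift the reference trajectory) rather than the paper's (drift in parameter with $\state$ fixed, then contract with fixed $\paramopt$), and you give an explicit union-of-short-iterates description of $\tube_t$ instead of the paper's recursive definition.
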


\begin{proof}
Let $\hsr$ be the horizon for the spectral radius in \rehyp{specrad}.


Let $\eps_\param$ and $\eps_\State$ be small enough, to be determined later.
Let $(\theta_t)$ be a sequence of parameters with
$\norm{\param_t-\paramopt}\leq \eps_\param$ and
let $\state\in \State_{t}$ with
$\norm{\state-\stateopt_{t}}\leq \eps_\State$.

By \recor{shorttermlip}, 
 for all $t\leq t'\leq t+\hsr$ one has
\begin{equation*}
\norm{\opevol_{t:t'}(\state,\paramopt)-\opevol_{t:t'}(\stateopt_{t},\paramopt)}\leq
\cst_7^{\hsr}\,\eps_\State
\end{equation*}
provided we take $\eps_\param \leq \eps_\State$ small enough so that the
assumption of \recor{shorttermlip} is met. 


Take $\eps_\State$ smaller than $r''_\State$ from \relem{statecontr}. 
Then we can apply 
\relem{statecontr} to obtain
\begin{equation*}
\norm{\opevol_{t:t+\hsr}(\state,\paramopt)-\opevol_{t:t+\hsr}(\stateopt_t,\paramopt)}\leq
(1-\alpha/2)\norm{\state-\stateopt_t}.
\end{equation*}

Now we have
\begin{multline*}
\norm{\opevol_{t:t+\hsr}(\state,(\param_t))-\stateopt_{t+\hsr}}
=
\norm{\opevol_{t:t+\hsr}(\state,(\param_t))-\opevol_{t:t+\hsr}(\stateopt_t,\paramopt)}
\\\leq
\norm{\opevol_{t:t+\hsr}(\state,(\param_t))-\opevol_{t:t+\hsr}(\state,\paramopt)}
+\norm{\opevol_{t:t+\hsr}(\state,\paramopt)-\opevol_{t:t+\hsr}(\stateopt_t,\paramopt)}
\\\leq \cst_7^\hsr \,\eps_\param+ (1-\alpha/2) \norm{\state-\stateopt_t},
\end{multline*}
where the last inequality follows by applying \recor{shorttermlip} to
$(\state,(\param_t))$ and $(\state,\paramopt)$.

Therefore, if $\norm{\state-\stateopt_t}\leq 2\cst_7^\hsr\,\eps_\param/\alpha$,
then 
\begin{equation*}
\norm{\opevol_{t:t+\hsr}(\state,(\param_t))-\stateopt_{t+\hsr}}\leq
\cst_7^\hsr \,\eps_\param+
(1-\alpha/2)2\cst_7^\hsr\,\eps_\param/\alpha=2\cst_7^\hsr\,\eps_\param/\alpha
\end{equation*}
again. This means that the balls of radius $2\cst_7^\hsr\,\eps_\param/\alpha$ around
$\stateopt_t$ are stable by the application of $k$ consecutive steps of
the transition operator $\opevol$, using any sequence of parameters
$(\param_t)$ such
that $\norm{\param_t-\paramopt}\leq \eps_\param$.

So if we define $\eps_\State=2\cst_7^\hsr\,\eps_\param/\alpha$ (still subject to the
constraints on $\eps_\param$ and $\eps_\State$ above), by induction we obtain
that if $(\theta_t)$ is any sequence of parameters with
$\norm{\param_t-\paramopt}\leq \eps_\param$, and
$\state\in \State_{t}$ with
$\norm{\state-\stateopt_{t}}\leq \eps_\State$, then
\begin{equation*}
\norm{\opevol_{t:t+n\hsr}(\state,(\theta_t))-\stateopt_{t+n\hsr}}\leq \eps_\State
\end{equation*}
for all $n\geq 0$.

This establishes that iterates of an element of a ball of radius
$\eps_\State$ around $\stateopt_t$, stay in such a ball at times that
are
multiples of $\hsr$.

For times in between multiples of $\hsr$, write $n\hsr\leq t< n\hsr+\hsr$ and assume
that $\norm{\state_{n\hsr}-\stateopt_{n\hsr}}\leq \eps_\State$. Then
by \recor{shorttermlip}, one has
\begin{equation*}
\norm{\opevol_{n\hsr:t}(\state_{n\hsr},(\param_t))-\opevol_{n\hsr:t}(\stateopt_{n\hsr},\paramopt)}\leq
\cst_7^{\hsr}\,\max(\eps_\State,\eps_\param)
\end{equation*}
which is bounded.

We can thus set $r'''_\Param\deq \eps_\param$, 
$r'''_\State=\cst_7^{\hsr}\,\max(\eps_\State,\eps_\param)$ and $r''''_\State=\eps_\State$. We then set
$\boctopt\deq B_\Param(\paramopt,r'''_\Param)$ and define, inductively
for $t\geq 1$,
\begin{equation*}
\tube_t \deq \opevol_t(\tube_{t-1},\boctopt)\cup
B_{\State_t}(\stateopt_t,\eps_\State),\qquad \tube_0\deq
B_{\State_0}(\stateopt_0,\eps_\State)
\end{equation*}
so that the sets $\tube_t$ are stable under $\opevol_t$ and contain
a neighborhood of $\stateopt_t$.

Then every element of $\tube_t$ is
an iterate of an element of $B_{\State_{t'}}(\stateopt_{t'},\eps_\State)$
for some $t'\leq t$. Therefore, by the above, $\tube_t$ is contained
in a ball of radius $r'''_\State$ around $\stateopt_t$.
\end{proof}

\begin{corollaire}[Forgetting of states with a fixed parameter]
\label{cor:stateforget}
Let $\hsr$ be the horizon for the spectral radius in \rehyp{specrad}, and
$1-\alpha$ the corresponding operator norm.

Let $\param\in \boctopt$
and let $\state, \state' \in \tube_t$.

Then there exists a constant $\cst_8\geq 0$ such that,
for any $t'\geq t$,
\begin{equation*}
\norm{\opevol_{t:t'}(\state,\param)-\opevol_{t:t'}(\state',\param)}
\leq \cst_8 \,(1-\alpha/2)^{(t'-t)/\hsr}
\norm{\state-\state'}.
\end{equation*}
\end{corollaire}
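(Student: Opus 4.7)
The plan is to iterate the $\hsr$-step contractivity of Lemma~\ref{lem:statecontr} across blocks of length $\hsr$, and absorb the fractional remainder into a multiplicative constant via the short-term Lipschitz bound of Corollary~\ref{cor:shorttermlip}.

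First, note that since $\param\in\boctopt$ and $\state,\state'\in\tube_t$, the stability of tubes (Lemma~\ref{lem:rtrlstabletubes}) gives $\opevol_{t:t''}(\state,\param),\opevol_{t:t''}(\state',\param)\in\tube_{t''}$ for every $t''\geq t$. Moreover, by point~3 of Lemma~\ref{lem:rtrlstabletubes}, $\tube_{t''}\subset B_{\State_{t''}}(\stateopt_{t''},r''_\State)$ and $\boctopt\subset B_\Param(\paramopt,r''_\Param)$, so all hypotheses of Lemma~\ref{lem:statecontr} are satisfied when started from any intermediate time $t''\geq t$.

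Now decompose $t'-t=n\hsr+r$ with $0\leq r<\hsr$. Iterating Lemma~\ref{lem:statecontr} over the $n$ consecutive blocks $[t+j\hsr,t+(j+1)\hsr]$ for $j=0,\dots,n-1$ (legitimate because the iterates at each intermediate time $t+j\hsr$ belong to $\tube_{t+j\hsr}$), we obtain
\begin{equation*}
\norm{\opevol_{t:t+n\hsr}(\state,\param)-\opevol_{t:t+n\hsr}(\state',\param)}\leq (1-\alpha/2)^{n}\,\norm{\state-\state'}.
\end{equation*}
For the trailing interval of length $r<\hsr$, we apply Corollary~\ref{cor:shorttermlip} starting from time $t+n\hsr$ (the hypotheses are met, since the iterates lie in $\tube_{t+n\hsr}$ and hence in the balls of radius $r_\State/\cst_7^\hsr$ and $r_\Param/\cst_7^\hsr$ required there, possibly up to shrinking $r'''_\State$ and $r'''_\Param$ in Lemma~\ref{lem:rtrlstabletubes} so that $r'''_\State\leq r_\State/\cst_7^\hsr$ and $r'''_\Param\leq r_\Param/\cst_7^\hsr$, which does not affect the previous arguments). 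This yields a factor of at most $\cst_7^{r}\leq \cst_7^{\hsr}$.

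Combining, we get
\begin{equation*}
\norm{\opevol_{t:t'}(\state,\param)-\opevol_{t:t'}(\state',\param)}\leq \cst_7^{\hsr}(1-\alpha/2)^{n}\,\norm{\state-\state'}.
\end{equation*}
Since $n=(t'-t-r)/\hsr\geq (t'-t)/\hsr -1$, we have $(1-\alpha/2)^{n}\leq (1-\alpha/2)^{-1}(1-\alpha/2)^{(t'-t)/\hsr}$. The conclusion follows by setting $\cst_8\deq \cst_7^{\hsr}/(1-\alpha/2)$.

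The main subtlety is purely a bookkeeping one: making sure all intermediate iterates stay in the tubes where Lemma~\ref{lem:statecontr} and Corollary~\ref{cor:shorttermlip} apply. This is guaranteed by the stability property of $\tube_t$ from Lemma~\ref{lem:rtrlstabletubes} together with the inclusions $\tube_t\subset B_{\State_t}(\stateopt_t,r''_\State)$ built into its statement.
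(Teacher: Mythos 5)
Your proof is correct and is essentially the same argument as the paper's, with one cosmetic reordering: you place the remainder block of length $r<\hsr$ at the \emph{end} (after iterating the contractivity of Lemma~\ref{lem:statecontr} over $n$ full blocks), whereas the paper places it at the \emph{beginning} (first $r$ Lipschitz steps via Corollary~\ref{cor:shorttermlip}, then $n$ contraction blocks). Either order works and yields the same constant $\cst_8=\cst_7^{\hsr}/(1-\alpha/2)$. One small note: the defensive clause about ``possibly shrinking $r'''_\State$ and $r'''_\Param$'' is unnecessary, since point 3 of Lemma~\ref{lem:rtrlstabletubes} already guarantees $r'''_\State\leq r''_\State$ and by its definition $r''_\State\leq r_\State/\cst_7^{\hsr}$ (and similarly $r'''_\Param\leq r''_\Param\leq r_\State/\cst_7^{\hsr}$), so the hypotheses of Corollary~\ref{cor:shorttermlip} at the trailing interval, and those of Lemma~\ref{lem:statecontr} at each intermediate time, are already met without further adjustment.
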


\begin{proof}
Write $t'-t=r+n\hsr$ with $r<\hsr$. By \relem{rtrlstabletubes}, \recor{shorttermlip} can
be applied inside $\tube_t$ provided $t_2\leq t+\hsr$. With $t_2=t+r$, this yields
\begin{equation*}
\norm{\opevol_{t:t+r}(\state,\param)-\opevol_{t:t+r}(\state',\param)}\leq
\cst_7^\hsr \norm{\state-\state'}.
\end{equation*}
Then
by induction from \relem{statecontr} (whose assumptions are satisfied in
$\tube_{t+r}$, since $r'''_\State \leq r''_\State$, according to \relem{rtrlstabletubes}), we
obtain
\begin{equation*}
\norm{\opevol_{t:t+r+n\hsr}(\state,\param)-\opevol_{t:t+r+n\hsr}(\state',\param)}
\leq \cst_7^\hsr (1-\alpha/2)^n \norm{\state-\state'},
\end{equation*}
from which the conclusion follows by setting
$\cst_8=\cst_7^\hsr/(1-\alpha/2)$ where the factor  $1/(1-\alpha/2)$
accounts for the rounding in the division $(t'-t)/\hsr$.
\end{proof}

\subsubsection{Existence of a Stable Tube for the Jacobians $\jope_t$ and $\tilde
\jope_t$}

\begin{remarque}
Let $A\from \State_t\to \State_{t+1}$ be a linear operator. Equip
$\epjopeins{t}$ with the operator norm. Then the operator norm of $A$
acting on $\epjopeins{t}$ via $J\in \epjopeins{t}\mapsto AJ\in
\epjopeins{t+1}$ is the same
as the operator norm of $A$ acting on $\State_t$.
\end{remarque}

\begin{lemme}
\label{lem:iterspecrad}
Let $(A_t)_{t\geq 1}$ be a sequence of linear operators on normed vector
spaces, with spectral radius at most $1-\alpha$ at horizon $\hsr$. Assume
the $A_t$'s have operator norm at most $\rho$.

Let $(\jope_t)_{t\geq 0}$ and $(\jope'_t)_{t\geq 0}$ be two sequences of
elements of the spaces on which the $A_t$'s act, and suppose that
\begin{equation*}
\jope_t=A_t\jope_{t-1}+B_t,\qquad \jope'_t=A_t\jope'_{t-1}+B'_t
\end{equation*}
for some $B_t$ and $B'_t$. Then for any $0\leq t_1\leq t_2$,
\begin{equation*}
\norm{\jope_{t_2}-\jope'_{t_2}}\leq  \frac{\max(1,\rho^\hsr)}{1-\alpha}\,
\max\left(
(1-\alpha)^{\frac{t_2-t_1}{\hsr}}
\norm{\jope_{t_1}-\jope'_{t_1}},\sup_{t_1\leq
t\leq t_2}(1-\alpha)^{\frac{t_2-t}{\hsr}}\norm{B_t-B'_t}
\right).
\end{equation*}
\end{lemme}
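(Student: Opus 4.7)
Set $\delta_t \deq \jope_t - \jope'_t$ and $C_t \deq B_t - B'_t$. Subtracting the two recurrences, $\delta_t$ satisfies the affine recurrence $\delta_t = A_t\, \delta_{t-1} + C_t$, which unrolls explicitly as
\begin{equation*}
\delta_{t_2} = \left(\prod_{t=t_1+1}^{t_2} A_t\right)\delta_{t_1} + \sum_{t=t_1+1}^{t_2} \left(\prod_{s=t+1}^{t_2} A_s\right) C_t,
\end{equation*}
where the products are taken in decreasing order of indices and an empty product equals the identity. The whole lemma then reduces to bounding the operator norm of a product of consecutive $A_s$'s of arbitrary length.

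The first key step is a decay estimate for such a product: for any $a<b$, writing the length $n=b-a$ as $n = q\hsr + r$ with $0\leq r < \hsr$, I group the operators from right to left into $q$ consecutive blocks of size $\hsr$, leaving a leftover block of length $r$. By \rehyp{specrad}, each full block has operator norm at most $1-\alpha$, while the residual block of length $r<\hsr$ has operator norm at most $\rho^r \leq \max(1,\rho^\hsr)$. Hence
\begin{equation*}
\nrmop{\prod_{s=a+1}^{b} A_s} \leq \max(1,\rho^\hsr)\,(1-\alpha)^{q}
\leq \frac{\max(1,\rho^\hsr)}{1-\alpha}\,(1-\alpha)^{n/\hsr},
\end{equation*}
using $(1-\alpha)^{q} \leq (1-\alpha)^{n/\hsr-1}$. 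This is the geometric decay at rate $(1-\alpha)^{1/\hsr}$ that matches the exponents appearing in the conclusion.

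The second step is to plug this into the unrolled recurrence and take norms: with $K \deq \max(1,\rho^\hsr)/(1-\alpha)$,
\begin{equation*}
\nrm{\delta_{t_2}} \leq K\,(1-\alpha)^{(t_2-t_1)/\hsr}\,\nrm{\delta_{t_1}} + K\sum_{t=t_1+1}^{t_2} (1-\alpha)^{(t_2-t)/\hsr}\,\nrm{C_t}.
\end{equation*}
The final step, converting this sum into the $\max$ appearing in the statement, is the part that requires the most care: since the sum $\sum_{t} (1-\alpha)^{(t_2-t)/\hsr}$ telescopes as a geometric series with ratio $(1-\alpha)^{1/\hsr}<1$, each summand is bounded by $\sup_{t_1\leq t\leq t_2}(1-\alpha)^{(t_2-t)/\hsr}\nrm{C_t}$ times a geometric factor, so the sum is bounded by the sup up to a constant absorbed in $K$. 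The only real obstacle is therefore accounting bookkeeping: tracking how the block length $\hsr$, the per-step operator norm $\rho$, and the geometric-series constant combine into the single prefactor $\max(1,\rho^\hsr)/(1-\alpha)$ claimed in the statement; everything else is routine unrolling of the linear recurrence followed by the triangle inequality.
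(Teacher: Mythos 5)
Your unrolling of the affine recurrence and the $\hsr$-block decomposition for the product bound are exactly the ingredients the paper's own proof uses, and your estimate $\nrmop{\prod_s A_s}\leq\frac{\max(1,\rho^\hsr)}{1-\alpha}(1-\alpha)^{n/\hsr}$ is correct. The gap is in your final step. The geometric-series reduction you sketch does not do what you claim: if you bound each summand $(1-\alpha)^{(t_2-t)/\hsr}\norm{C_t}$ by $\sup_t(1-\alpha)^{(t_2-t)/\hsr}\norm{C_t}$ you lose a factor of $t_2-t_1$, not a bounded geometric constant; if instead you factor out $\sup_t\norm{C_t}$ and sum the geometric weights you obtain the constant $\bigl(1-(1-\alpha)^{1/\hsr}\bigr)^{-1}$ (which for $\hsr=1$ equals $1/\alpha$ and is \emph{not} absorbable into $1/(1-\alpha)$ when $\alpha<1/2$) and also a different supremum than the one in the statement. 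Neither route yields the claimed prefactor.

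In fact the inequality as stated cannot be proved: take $\hsr=1$, $A_t=(1-\alpha)\mathrm{Id}$, $B_t\equiv c$, $B'_t\equiv 0$, $\jope_{t_1}=\jope'_{t_1}=0$. Then $\norm{\jope_{t_2}-\jope'_{t_2}}=c\sum_{k=0}^{t_2-t_1-1}(1-\alpha)^k\to c/\alpha$, whereas the right-hand side is $c/(1-\alpha)$, strictly smaller whenever $\alpha<1/2$. What your unrolling and product bound actually deliver is the sum form
\begin{equation*}
\norm{\jope_{t_2}-\jope'_{t_2}}\leq  \frac{\max(1,\rho^\hsr)}{1-\alpha}\left((1-\alpha)^{\frac{t_2-t_1}{\hsr}}\norm{\jope_{t_1}-\jope'_{t_1}}+\sum_{t=t_1+1}^{t_2}(1-\alpha)^{\frac{t_2-t}{\hsr}}\norm{B_t-B'_t}\right),
\end{equation*}
and passing from this to a single supremum bound necessarily costs an extra factor of order $\bigl(1-(1-\alpha)^{1/\hsr}\bigr)^{-1}$ or $t_2-t_1$. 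The paper's own proof in fact stops once it has established the product bound and never carries out the conversion to the $\max$/supremum form either; the step you flagged as routine bookkeeping is precisely where the argument, and the stated inequality, break down.
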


\begin{proof}
By induction we have
\begin{equation*}
\jope_{t_2}=A_{t_2}A_{t_2-1}\cdots
A_{t_1+1}\jope_{t_1}+\sum_{t=t_1+1}^{t_2}
A_{t_2}A_{t_2-1}\cdots A_{t+1} B_t
\end{equation*}
and likewise for $\jope'$ and thus also for $\jope-\jope'$.

Now, for any $t_1\leq t_2$ the product of the operators
$A_{t_2}A_{t_2-1}\cdots
A_{t_1+1}$ has operator norm at most
\begin{equation*}
\nrmop{A_{t_2}A_{t_2-1}\cdots
A_{t_1+1}}\leq {\max(1,\rho^\hsr)}
(1-\alpha)^{\frac{t_2-t_1}{\hsr}-1}.
\end{equation*}
Indeed, we can decompose $t_2-t_1=r+n\hsr$ with $r<\hsr$, and the product of
the first $r$ factors has operator norm at most $\rho^r$ (and $\rho^r\leq
\max(1,\rho)^\hsr$ because $r\leq \hsr$ and the max accounts for whether $\rho$
is larger than $1$ or not). Finally, the product of the remaining $n\hsr$
factors has operator norm at most $(1-\alpha)^n$, and $n\geq
(t_2-t_1)/\hsr-1$.
\end{proof}

\begin{proposition}
\label{prop:specraditer}
Let $0\leq \alpha\leq 1$, $\rho>0$, and $\hsr\geq 0$. Then there exists
$\eps>0$ with the following property.

Let $(A_t)_{t\geq 1}$ be any sequence of linear operators on normed vector
spaces, with operator norm bounded by $\rho$, and with spectral radius at most $1-\alpha$ at horizon $\hsr$.

Let $c \geq 0$. Let
$(\jope_t)_{t\geq 0}$ be any sequence of elements of the spaces on which
the $A_t$'s act, such that
\begin{equation*}
\jope_t=A_t\jope_{t-1}+E_t
\end{equation*}
with $\norm{E_t}\leq c+\eps \norm{\jope_{t-1}}$.

Then $\norm{\jope_t}$ is bounded when $t\to\infty$. More precisely, there
exist constants $a$ and $b$ such that for any $t\geq 0$ and any $t'\geq
t$, $\norm{\jope_{t'}}\leq
a\norm{\jope_t}+b$.

Moreover, the coefficient $a$ depends on $\rho$, $\alpha$ and $\hsr$,
while $b$ depends on $\rho$, $\alpha$, $\hsr$ and $c$. 



\end{proposition}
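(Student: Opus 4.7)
The plan is to control $\nrm{\jope_t}$ block by block, combining the contractivity of the homogeneous dynamics (from the spectral radius being at most $1-\alpha$ at horizon $\hsr$) with a perturbative argument that absorbs the part of $\nrm{E_t}$ proportional to $\eps\,\nrm{\jope_{t-1}}$. First I iterate the recursion over a single window of length $\hsr$:
\begin{equation*}
\jope_{t+\hsr} = \bigl(A_{t+\hsr}A_{t+\hsr-1}\cdots A_{t+1}\bigr)\jope_t + \sum_{s=t+1}^{t+\hsr} \bigl(A_{t+\hsr}\cdots A_{s+1}\bigr) E_s,
\end{equation*}
with the convention that the product in the sum is the identity for $s=t+\hsr$. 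The leading operator has norm at most $1-\alpha$ by the spectral radius assumption, while each operator appearing in the sum has norm at most $M_0 := \max(1,\rho^\hsr)$. Using $\nrm{E_s} \leq c + \eps\,\nrm{\jope_{s-1}}$ yields
\begin{equation*}
\nrm{\jope_{t+\hsr}} \leq (1-\alpha)\,\nrm{\jope_t} + \hsr M_0\, c + \eps\, M_0 \sum_{i=0}^{\hsr-1}\nrm{\jope_{t+i}}.
\end{equation*}

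Next I bound the intermediate terms by the crude pointwise inequality $\nrm{\jope_s} \leq (\rho+\eps)\,\nrm{\jope_{s-1}} + c$, which by induction gives, for $0 \leq i \leq \hsr$ and $\eps \leq 1$,
\begin{equation*}
\nrm{\jope_{t+i}} \leq (\rho+1)^\hsr\, \nrm{\jope_t} + \hsr(\rho+1)^\hsr\, c.
\end{equation*}
Substituting and collecting terms yields a window-level inequality of the form
\begin{equation*}
\nrm{\jope_{t+\hsr}} \leq \bigl[(1-\alpha) + C_1\, \eps\bigr]\,\nrm{\jope_t} + C_2\, c,
\end{equation*}
with $C_1$, $C_2$ depending only on $\rho$ and $\hsr$. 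The decisive step is then to choose $\eps \leq \alpha/(2C_1)$ (which in particular forces $\eps \leq 1$), producing the strict block contraction $\nrm{\jope_{t+\hsr}} \leq (1-\alpha/2)\,\nrm{\jope_t} + C_2\, c$.

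Finally I iterate this inequality over $n$ consecutive blocks starting at $t$, obtaining
\begin{equation*}
\nrm{\jope_{t+n\hsr}} \leq (1-\alpha/2)^n\, \nrm{\jope_t} + \frac{2 C_2\, c}{\alpha}.
\end{equation*}
For arbitrary $t' = t + n\hsr + r$ with $0 \leq r < \hsr$, applying the intermediate bound above once more on the last, partial window gives $\nrm{\jope_{t'}} \leq a\,\nrm{\jope_t} + b$ with $a := (\rho+1)^\hsr$ depending only on $\rho,\hsr$, and $b$ depending on $\rho,\alpha,\hsr,c$, as required.

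The main obstacle is the perturbative bookkeeping in the middle step: the error term $\eps\,\nrm{\jope_{s-1}}$ feeds back into itself within a window, so a global application of the crude factor $\rho+\eps$ would be useless (since $\rho$ may exceed $1$). What saves the argument is that this crude factor is only invoked over a window of \emph{fixed} length $\hsr$, so its cumulative effect is the finite constant $(\rho+1)^\hsr$; choosing $\eps$ small enough in terms of $\rho,\alpha,\hsr$ then restores strict contraction at each window boundary and closes the induction.
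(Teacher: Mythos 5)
Your proof is correct and follows essentially the same route as the paper's: iterate over windows of length $\hsr$, use the spectral radius bound on the leading product, control the cross terms via a crude short-time estimate, and choose $\eps$ small enough so that the perturbative term $\eps M_0\hsr(\rho+1)^\hsr$ is absorbed into the $\alpha$ margin, yielding block contraction. The only cosmetic difference is that the paper normalizes $\rho\geq 1$ at the outset and carries the slightly sharper intermediate bound $(\rho+\eps)^{t'-t}\norm{\jope_t}+(t'-t)\rho^{t'-t}c$, whereas you use the coarser $(\rho+1)^\hsr$; both close the same induction.
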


\begin{proof}
Up to increasing $\rho$, we can assume $\rho\geq 1$.

By induction, for $t'\geq t$ one finds
\begin{equation}
\label{eq:shorttermspecrad}
\norm{\jope_{t'}}\leq (\rho+\eps)^{t'-t}\norm{\jope_t}+(t'-t)\rho^{t'-t}c.
\end{equation}

Moreover, by induction, for $t'\geq t$,
\begin{equation*}
\jope_{t'}=A_{t'}A_{t'-1}\cdots A_{t+1}\jope_t+\sum_{s=1}^{t'-t}
A_{t'}A_{t'-1}\cdots A_{t+s+1}E_{t+s}.
\end{equation*}
Taking $t'=t+\hsr$, using the spectral radius property, then substituting
\eqref{eq:shorttermspecrad}, one finds
\begin{equation*}
\ba
\norm{\jope_{t+\hsr}} &\leq
(1-\alpha)\norm{\jope_t}+\sum_{s=1}^{\hsr}\rho^{\hsr-s}\norm{E_{t+s}}
\\&\leq
(1-\alpha)\norm{\jope_t}+\sum_{s=1}^{\hsr}\rho^{\hsr-s}(c+\eps \norm{\jope_{t+s-1}})
\\&\leq
(1-\alpha)\norm{\jope_t}+\sum_{s=1}^{\hsr}\rho^{\hsr-s}\left(c+\eps(\rho+\eps)^{s-1}\norm{\jope_t}+\eps
(s-1)\rho^{s-1}c\right)
\\&=
\left(1-\alpha+\eps \sum_{s=1}^\hsr\rho^{\hsr-s}(\rho+\eps)^{s-1}\right)\norm{\jope_t}+\sum_{s=1}^{\hsr}\rho^{\hsr-s}\left(c+\eps
(s-1)\rho^{s-1}c\right).
\ea
\end{equation*}

Since $\hsr$ and $\rho$ are fixed, by taking $\eps$ small enough one can
ensure that $1-\alpha+\eps \sum_{s=1}^\hsr\rho^{\hsr-s}(\rho+\eps)^{s-1}$ is
less than $1$. Moreover, the term $\sum_{s=1}^{\hsr}\rho^{\hsr-s}\left(c+\eps
(s-1)\rho^{s-1}c\right)$ does not depend on $t$, so is bounded when
$t\to\infty$.

It results that 
if $t'=t+n\hsr$ for some $n\geq
0$, then $\norm{\jope_{t'}}$ is bounded by $a\norm{\jope_t}+b$ for some
constants $a$ and $b$.

For $t'-t$ not a multiple of $\hsr$, write $t'=t+n\hsr+r$ with $r<\hsr$. Then by
\eqref{eq:shorttermspecrad}, we
have
\begin{equation*}
\norm{\jope_{t'}}\leq (\rho+\eps)^r \norm{\jope_{t+n\hsr}}+r\rho^r c
\end{equation*}
which is bounded as well, hence the conclusion.
\end{proof}

\begin{corollaire}[$\tilde\jope$ is bounded for \algonoisy RTRL
algorithms.]
\label{cor:boundedJ}
Let $(\param_t)$ and $(\state_t)$ be sequences of parameters and states
with $\param_t\in \boctopt$ and $\state_t\in \tube_t$ for all
$t\geq0$.

Consider a sequence $(\tilde \jope_t)_{t\geq \tinit}$ computed as in an \algonoisy RTRL
algorithm (\redef{approxrtrl}) starting at time $\tinit$, namely
\begin{equation*}
\tilde \jope_t = 
\frac{\partial \opevol_t(\state_{t-1},\param_{t-1})}{\partial
\state} \,\tilde \jope_{t-1}+\frac{\partial
\opevol_t(\state_{t-1},\param_{t-1})}{\partial
\param}+E_t ,\qquad \tilde J_\tinit\in \epjopeins{\tinit}
\end{equation*}
where $E_t\in \epjopeins{t}$ satisfies
\begin{equation*}
\nrmop{E_t}\leq \feg{\nrmop{\tilde J_{t-1}}}{\nrmop{\frac{\partial \opevol_t(\state_{t-1},\param_{t-1})}{\partial(\state,\param)}}}
\end{equation*}
for some gauge error $\sbeg$.


Then the sequence $(\tilde \jope_t)$ is bounded.
More precisely, there
exist constants $a$ and $b$ such that for any such sequence $(\tilde
\jope_t)$, for any $t\geq \tinit$ and any $t'\geq
t$, $\norm{\tilde\jope_{t'}}\leq
a\norm{\tilde \jope_t}+b$.
%
\end{corollaire}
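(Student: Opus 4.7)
}
The plan is to cast the recursion on $\tilde \jope_t$ in the form covered by Proposition~\ref{prop:specraditer} and invoke that proposition directly. Writing the \algonoisy\ RTRL update as
\begin{equation*}
\tilde \jope_t = A_t\,\tilde \jope_{t-1} + (B_t + E_t),\qquad A_t \deq \dpartf{\state}{\opevol_t}(\state_{t-1},\param_{t-1}),\quad B_t \deq \dpartf{\param}{\opevol_t}(\state_{t-1},\param_{t-1}),
\end{equation*}
the sequence $B_t+E_t$ will play the role of the inhomogeneity denoted ``$E_t$'' in Proposition~\ref{prop:specraditer}.

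First I verify the hypotheses on the multiplicative part $(A_t)$. Since $\param_t \in \boctopt = B_\Param(\paramopt, r'''_\Param)$ and $\state_t \in \tube_t \subset B_{\State_t}(\stateopt_t, r'''_\State)$, and since $r'''_\Param \leq r'_\Param$ and $r'''_\State \leq r'_\State$ by Lemma~\ref{lem:rtrlstabletubes}, Corollary~\ref{cor:specrad} applies: $(A_t)$ has spectral radius at most $1-\alpha/2$ at horizon $\hsr$. Moreover, Lemma~\ref{lem:localopnorms} gives a uniform operator-norm bound $\sup_t \nrmop{A_t} \leq \rho$ and also $\sup_t \nrmop{B_t} \leq M$ and $\sup_t \nrmop{\dpartf{(\state,\param)}{\opevol_t}(\state_{t-1},\param_{t-1})} \leq M$ for some constant $M$ depending only on the smoothness data of $\opevol_t$ at the target trajectory.

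Second I control the inhomogeneous term. By Assumption~\ref{hyp:errorgauge},
\begin{equation*}
\nrmop{E_t} \leq \feg{\nrmop{\tilde \jope_{t-1}}}{\nrmop{\dpartf{(\state,\param)}{\opevol_t}(\state_{t-1},\param_{t-1})}},
\end{equation*}
and the second argument lies in the compact set $[0,M]$. Combining the two clauses of Definition~\ref{def:errorgauge}, namely boundedness of $\sbeg$ on compacts and $\sup_{y\in[0,M]}\feg{x}{y} = \po{x}$ as $x\to\infty$, one obtains: for every $\eps>0$ there exists $C_\eps > 0$ such that $\feg{x}{y} \leq \eps\,x + C_\eps$ for all $x\geq 0$ and $y\in[0,M]$. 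Hence
\begin{equation*}
\nrmop{B_t+E_t} \leq (M + C_\eps) + \eps\,\nrmop{\tilde \jope_{t-1}}.
\end{equation*}

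Finally, I choose $\eps$ to be the value furnished by Proposition~\ref{prop:specraditer} when applied with operator-norm bound $\rho$, spectral-radius data $(\alpha/2,\hsr)$, and $c \deq M + C_\eps$. The proposition then produces constants $a,b$ depending only on $\rho$, $\alpha$, $\hsr$, and $c$, such that $\nrmop{\tilde \jope_{t'}} \leq a\,\nrmop{\tilde \jope_t} + b$ for every $\tinit \leq t \leq t'$; crucially these constants depend only on the structural data and not on the particular sequences $(\param_t)$, $(\state_t)$ nor on the starting time $\tinit$, which gives exactly the uniformity claimed in the corollary.

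The one delicate point is the passage from the asymptotic $\po{x}$-bound built into Definition~\ref{def:errorgauge} to the explicit affine majoration $\eps x + C_\eps$ demanded by Proposition~\ref{prop:specraditer}. This is immediate given the \emph{uniformity in the second argument on compacts} of the gauge; without that uniformity the argument would fail, which is why Definition~\ref{def:errorgauge} is phrased with a sup over $y$ in compact sets.
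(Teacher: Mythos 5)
Your proof is correct and follows essentially the same route as the paper's: cast the recursion with $A_t=\partial_\state\opevol_t$ and inhomogeneity $B_t+E_t$, bound the inhomogeneity affinely in $\nrmop{\tilde\jope_{t-1}}$ by extracting an $\eps x+C_\eps$ majoration from the two clauses of Definition~\ref{def:errorgauge} with the second argument confined to a compact via Lemma~\ref{lem:localopnorms}, and then feed this into Proposition~\ref{prop:specraditer} after noting the spectral-radius condition from Corollary~\ref{cor:specrad}. The paper absorbs $B_t$ into the bound via $\nrmop{B_t}\leq\rho$ instead of introducing a separate constant $M$, but this is purely cosmetic; the order of quantifiers (Prop.~\ref{prop:specraditer} furnishes $\eps$ first, then the gauge yields $C_\eps$ and hence $c$) is handled correctly in both.
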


\begin{proof}
By \relem{rtrlstabletubes}, the stable tubes $\boctopt$ and $\tube_t$
are included in balls on which all the results up to
\relem{rtrlstabletubes} hold.

So by \relem{localopnorms}, the operator norm $\nrmop{\frac{\partial
\opevol_t(\state_{t-1},\param_{t-1})}{\partial(\state,\param)}}$ is
bounded by some $\rho \geq 0$. Therefore, for all $t\geq 1$, we have 
\begin{equation*}
\nrmop{E_t}\leq \sup_{\abs{y}\leq\rho}\feg{\nrmop{\tilde
J_{t-1}}}{y},
\end{equation*}
and likewise
\begin{equation*}
\nrmop{
\frac{\partial
\opevol_t(\state_{t-1},\param_{t-1})}{\partial
\param}
+
E_t}\leq 
\rho+\sup_{\abs{y}\leq\rho}\feg{\nrmop{\tilde J_{t-1}}}{y}
\end{equation*}
since $\nrmop{\frac{\partial
\opevol_t(\state_{t-1},\param_{t-1})}{\partial
\param}}\leq \nrmop{\frac{\partial
\opevol_t(\state_{t-1},\param_{t-1})}{\partial(\state,\param)}}$.

By \recor{specrad}, the sequence of operators $\left(\frac{\partial
\opevol_t(\state_{t-1},\param_{t-1})}{\partial
\state}\right)_{t\geq 0}$ has spectral radius at most $1-\alpha/2$ at horizon $\hsr$.
Consider the value $\eps>0$ provided by \reprop{specraditer} for this
sequence of operators.

Since $\sbeg$ is an error gauge, thanks to the two properties of \redef{errorgauge}, we can find a constant $c=c\cpl{\rho}{\sbeg}\geq 0$ such that
$
\rho+\sup_{\abs{y}\leq\rho}\feg{\nrmop{\tilde J_{t-1}}}{y}
\leq c+\eps \nrmop{\tilde J_{t-1}}$.
Therefore we can apply \reprop{specraditer} which yields the conclusion.
\end{proof}

\begin{corollaire}[Stable tubes for RTRL and \algonoisy RTRL algorithms]
\label{cor:rtrlstabletubesJ}
The RTRL algorithm and all \algonoisy RTRL algorithms with errors
controlled by the same error gauge $\sbeg$ (\rehyp{errorgauge}) admit a
common stable tube
%
 on $(\state_t,\tilde \jope_t)$.
 
 More precisely, with $(\tube_t)_{t\geq 0}$ the stable
tube for $(\state_t)$ alone from \relem{rtrlstabletubes}, we can find
sets $(\tubejope_t)_{t\geq 0}$ (depending on the error gauge $\sbeg$)
such that 
$(\tube_t\times \tubejope_t)_{t\geq 0}$ is a stable tube 
for the pair $(\state_t,\jope_t)$ of RTRL, as well as for the pair
$(\state_t,\tilde
\jope_t)$ of any \algonoisy RTRL algorithm with error gauge $\sbeg$.
Namely, if
$\param_t\in \boctopt$ and $(\state_t,\jope_t)\in \tube_t\times
\tubejope_t$, then $(\state_{t+1},\jope_{t+1})$ in $\tube_{t+1}\times
\tubejope_{t+1}$ and likewise for $\tilde \jope$.

In particular, trajectories of the \algonoisy RTRL algorithms
respect this stable tube in the sense of
\redef{randomtraj}.

Moreover, the sets $(\tubejope_t)_{t\geq 0}$ are bounded, and for every
$t$, $\tubejope_t$ contains a ball around $0$, whose radius does not
depend on $t$.

Therefore, \rehyp{stableballs} is satisfied, with the same stable tube, for the RTRL algorithm and
any \algonoisy RTRL algorithm which admits $\sbeg$ as an error gauge.
\end{corollaire}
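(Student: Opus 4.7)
The plan is to exploit the uniform a priori bound of \recor{boundedJ} and to define $\tubejope_t$ as the set of Jacobian values that are reachable as $\tilde\jope_t$ by some valid imperfect RTRL trajectory starting from a bounded initial condition; from this, the full stable-tube property of \redef{stabletube} will follow by choosing the initial radius large enough.

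First I would fix $R>0$ (to be enlarged at the end) and invoke \recor{boundedJ}, applied to any sequence of parameters in $\boctopt$ and states in the state-tube $\tube_t$ of \relem{rtrlstabletubes}. This produces constants $a,b>0$ depending only on the error gauge $\sbeg$ and the ambient operator-norm bound from \relem{localopnorms}, such that every imperfect RTRL trajectory satisfying \redef{approxrtrl} and \rehyp{errorgauge}, started at time $t_0$ with $\nrmop{\tilde\jope_{t_0}}\le R$, remains in $\overline{B(0,aR+b)}$ for all $t\ge t_0$. I would then set
\[
\tubejope_t \deq \bigl\{\, J\in \epjopeins{t} \,:\, J=\tilde \jope_t \text{ for some valid such trajectory starting at some } t_0\le t\,\bigr\}.
\]
The trivial zero-step trajectory ($t_0=t$) gives $\overline{B(0,R)}\subset\tubejope_t$ with $R$ independent of $t$, while the uniform bound gives $\tubejope_t\subset\overline{B(0,aR+b)}$, so the bounded / fixed-radius-around-$0$ claims are immediate.

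Stability under one step of the algorithm is then by construction: given $\tilde\jope_{t-1}\in\tubejope_{t-1}$ witnessed by some valid trajectory, extending it by one step with the prescribed $\param_{t-1}\in\boctopt$, $\state_{t-1}\in\tube_{t-1}$, and the actual random error $E_t$ (whose gauge bound of \rehyp{errorgauge} holds almost surely), produces a trajectory certifying $\tilde\jope_t\in\tubejope_t$ almost surely. This is exactly the tube-respecting property of \redef{randomtraj} for imperfect RTRL. For exact RTRL the same argument works with $E_t\equiv 0$, which is controlled by any gauge, so a single choice of tube handles both cases simultaneously.

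The remaining step, which I expect to be the main (though routine) obstacle, is to match this construction to \redef{stabletube}'s requirement of containing a fixed-radius ball around the target trajectory $\memopt_t=(\stateopt_t,J^*_t)$, where by \relem{rtrlcorrectderivatives} one has $J^*_t=\partial \fstate_t(\stateopt_0,\paramopt)/\partial\param$. The key point is a uniform bound $\nrmop{J^*_t}\le M^*$: I would get this either by applying \recor{boundedJ} to the exact-RTRL (zero-noise) trajectory with $J^*_0=0$, yielding $\nrmop{J^*_t}\le b$, or directly from \reprop{specraditer} combined with the spectral-radius property of $(\partial\opevol_t/\partial\state(\stateopt_{t-1},\paramopt))$ from \rehyp{specrad} and the uniform bound on $(\partial\opevol_t/\partial\param(\stateopt_{t-1},\paramopt))$ from \relem{localopnorms}. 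Enlarging $R$ beyond $M^*$, the inclusion $B(J^*_t,R-M^*)\subset B(0,R)\subset\tubejope_t$, together with the inner radius of $\tube_t$ from \relem{rtrlstabletubes}, yields the lower boundedness of $\tube_t\times\tubejope_t$ around $\memopt_t$, while the upper boundedness follows from $\tube_t\subset B(\stateopt_t,r'''_\State)$ and $\tubejope_t\subset B(0,aR+b)$; both radii are $t$-independent. Finally $\memopt_0=(\stateopt_0,0)$ lies in $\tube_0\times\tubejope_0$ by construction, completing \rehyp{stableballs}.
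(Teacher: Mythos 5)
Your proposal is correct and follows essentially the same route as the paper: define $\tubejope_t$ inductively as the set of Jacobians reachable by a valid imperfect trajectory from a fixed-radius seed ball, invoke Corollary~\ref{cor:boundedJ} for the uniform outer bound, and finish by bounding $\nrmop{J^*_t}$ to obtain the ball-around-$\memopt_t$ condition. The only cosmetic difference is that you seed with balls centered at $0$ while the paper seeds with balls centered at $\jopeopt_t$; both choices require the same bound $\nrmop{\jopeopt_t}\le r^*_\jope$ to close the argument.
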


\begin{proof}
First, the RTRL algorithm is also an \algonoisy RTRL algorithm
where errors $E_t=0$ vanish, and therefore it satisfies
\rehyp{errorgauge} (for any error gauge); thus, the previous results of this section for
\algonoisy RTRL algorithms also apply to RTRL.

We have already established the existence of stable tubes for the state
$\state_t$, defined by $\boctopt$ and $\tube_t$ (for which $\jope$ and
$\tilde \jope$ play no role).

Let $\tilde\Algo_t$ be the transition function of the \algonoisy RTRL algorithm, which computes
$(\state_t,\tilde \jope_t)$ from $\param$ and $(\state_{t-1},\tilde
\jope_{t-1})$. Let $\tilde \Algo_t^{\tilde \jope}$ be the part of this function that
just returns $\tilde \jope_t$.

Let $\jopeopt_t$ be the value of $\jope_t$
computed by RTRL
along the target trajectory $\stateopt_t$ defined by $\paramopt$,
namely, $\jopeopt_t=
\frac{\partial \opevol_t(\stateopt_{t-1},\paramopt)}{\partial
\state} \,\jopeopt_{t-1}+\frac{\partial
\opevol_t(\stateopt_{t-1},\paramopt)}{\partial
\param}$
initialized with $\jopeopt_0=0$.  By
\recor{boundedJ}, $\nrmop{\jopeopt_t}$ is bounded by some value
$r_\jope^*$.

Let $r_\jope>0$ be any positive value. Define the stable tubes over
$\jope$ by taking, at each step, the image of the previous values under
any \algonoisy RTRL algorithm and adjoining a ball of fixed radius around $\jopeopt_t$,
namely, define the set
\begin{equation*}
\tubejope_0\deq \{\tilde J_0 \colon \nrmop{\tilde J_0-\jopeopt_0}\leq
r_\jope\}
\end{equation*}
and then, by induction over $t\geq 1$, define $\tubejope_t$ to be the
union of a ball around $\jopeopt_t$ and of all possible values $\tilde
\jope_t$ obtained from a value $\tilde\jope_{t-1}$ in $\tubejope_{t-1}$ by a transition that
respects the assumptions for \algonoisy RTRL algorithms; namely,
\begin{multline*}
\tubejope_t\deq 
\{\tilde J_t \colon \nrmop{\tilde
J_t-\jopeopt_t}\leq
r_\jope\}
\\\cup
\left\{
\frac{\partial \opevol_t(\state,\param)}{\partial
\state} \,\tilde \jope_{t-1}+\frac{\partial
\opevol_t(\state,\param)}{\partial
\param}+E_t ,
\qquad
\state\in \tube_{t-1},\;\param\in\boctopt,\;
\tilde J_{t-1}\in \tubejope_{t-1},
\right.
\\
\left.
\nrmop{E_t}\leq \feg{\nrmop{\tilde J_{t-1}}}{\nrmop{\frac{\partial
\opevol_t(\state,\param)}{\partial(\state,\param)}}}
\right\}
.
\end{multline*}

By
construction of $\tubejope_t$, any element of $\tubejope_t$ is the iterate by
some \algonoisy RTRL algorithm, of an element of the ball $\{\tilde J_{\tinit} \colon
\nrmop{\tilde
J_{\tinit}-\jopeopt_{\tinit}}\leq
r_\jope\}$ for some $\tinit\leq t$, where the algorithm is used at a sequence
of states
$s_t$ and parameters $\param_t$ in the stable tube.

Since $\nrmop{\jopeopt_t}$ is bounded by
$r_\jope^*$,
the elements of the ball $\{\tilde J_{\tinit} \colon
\nrmop{\tilde
J_{\tinit}-\jopeopt_{\tinit}}\leq
r_\jope\}$ are bounded by $r_\jope^*+r_\jope$. 
By construction of $\tubejope_t$, any sequence $\tilde
\jope_t\in \tubejope_t$ respects the assumptions of \recor{boundedJ}.
Therefore, by
\recor{boundedJ}, any such sequence $\tilde \jope_t$ is bounded by
$a(r_\jope^*+r_\jope)+b$. So the sets $\tubejope_t$ are bounded.

Besides, $\tubejope_t$ contains a ball around $\jopeopt_t$ by definition.
Therefore,
the sets $\tube_t\times \tubejope_t$ constitute a stable tube for
the pair $(\state_t,\tilde \jope_t)$.

Finally, since $\tubejope_t$ contains a ball of radius $r_\jope$ around $\jopeopt_t$, and
since $\norm{\jopeopt_t}\leq r_\jope^*$, if we choose $r_\jope>r_\jope^*$,
we ensure that $\tubejope_t$ contains a ball around $0$.
\end{proof}

\begin{remarque}[Taylor expansions in the stable tube] When we say ''in
the stable tube'' in the following, it means that both the state $\state$
and the Jacobian $\jope$ considered belong to their respective stable
tubes, and that the parameter $\param$ considered belongs to $\boctopt$.
Therefore, all the assumptions of Section~\ref{sec:presentation} hold
``on the stable tube''; more precisely, on balls containing the stable
tube.

In the following, we shall repeatedly use the following argument: ``Since
$\state$ and $\state'$ are in the stable tube, and since $\partial_\state
f$ is bounded on the stable tube by one of the assumptions of
Section~\ref{sec:presentation}, then
$\norm{f(\state)-f(\state')}=\go{\norm{\state-\state'}}$.'' However, the stable
tube is not convex, so integrating $\partial_\state f$ on the segment from
$\state$ to $\state'$ may exit the stable tube. Still, every assumption of
Section~\ref{sec:presentation} holds on a ball containing the stable tube;
these balls are convex so the argument is valid. We will implicitly use
this argument and just say ``the derivative of $f$ is bounded on the
stable tube.''
\end{remarque}

\subsection{Lipschitz-Type Properties of the Transition Operator of RTRL}

Remember that $\tube_t$ and $\tubejope_t$ are the stable tubes for
$\state_t$ and $\jope_t$ introduced in \relem{rtrlstabletubes} and
\recor{rtrlstabletubesJ}.

\begin{lemme}[Exponential forgetting of $(\state,\jope)$ for 
RTRL with fixed $\param$]
\label{lem:expforgstjrtrlfxprm}
Let $\Algo$ be the RTRL algorithm on $(\state,\jope)$ from \redef{rtrlasalgo}.

Then there exists a constant $\cst_9$ such that the following holds. For any
$t_0\geq 0$, for any $\param\in\boctopt$, for any $\state,\state'\in
\tube_{t_0}$ and
$\jope,\jope' \in\tubejope_{t_0}$, for any $t\geq t_0$,
\begin{equation*}
\norm{\Algo_{t_0:t}(\param,(\state,\jope))-\Algo_{t_0:t}(\param,(\state',\jope'))}\leq
\cst_9 \,(1-\alpha/2)^{\frac{t-t_0}{\hsr}}\,
\max(\norm{\state-\state'},\norm{\jope-\jope'})
\end{equation*}
and in particular, \rehyp{expforgetinit} is satisfied for the RTRL
algorithm.
\end{lemme}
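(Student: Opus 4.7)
The plan is to decouple the argument into two parts, treating the state trajectory $(\state_t)$ and the Jacobian trajectory $(\jope_t)$ separately, since the RTRL transition acts on $(\state,\jope)$ in a triangular fashion: $\state_t$ depends only on $\state_{t-1}$ and $\param$, while $\jope_t$ depends on $\jope_{t-1}$ through a linear recurrence whose coefficients depend on $\state_{t-1}$. The max-norm chosen on $\Mem_t$ in Definition~\ref{def:rtrlasalgo} then lets us assemble the final bound straightforwardly.

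For the state part, the two trajectories $(\state_t)$ and $(\state'_t)$ stay in $\tube_t$ (by Corollary~\ref{cor:rtrlstabletubesJ}), are driven by the same fixed parameter $\param\in\boctopt$, and differ only in their initial state. Corollary~\ref{cor:stateforget} applies directly and yields
\begin{equation*}
\norm{\state_t-\state'_t}\leq \cst_8\,(1-\alpha/2)^{(t-t_0)/\hsr}\,\norm{\state-\state'}.
\end{equation*}

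For the Jacobian part, write $A_t \deq \tfrac{\partial\opevol_t}{\partial\state}(\state_{t-1},\param)$ and $A'_t$ analogously using $\state'_{t-1}$, and similarly $B_t, B'_t$ for $\tfrac{\partial\opevol_t}{\partial\param}$. Then
\begin{equation*}
\jope_t-\jope'_t = A_t\,(\jope_{t-1}-\jope'_{t-1}) + E_t,\qquad E_t \deq (A_t-A'_t)\,\jope'_{t-1}+(B_t-B'_t).
\end{equation*}
Since $\state_{t-1},\state'_{t-1}\in \tube_{t-1}$ and the second derivatives of $\opevol_t$ are bounded on the stable tube (Assumption~\ref{hyp:regftransetats}, via Lemma~\ref{lem:localopnorms} applied to the partial derivatives of $\opevol_t$), the maps $\state\mapsto \partial_\state\opevol_t$ and $\state\mapsto\partial_\param\opevol_t$ are Lipschitz, uniformly in $t$ and $\param\in\boctopt$. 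Combined with the boundedness of $\jope'_{t-1}$ in $\tubejope_{t-1}$ (Corollary~\ref{cor:rtrlstabletubesJ}), this gives $\norm{E_t}=\go{\norm{\state_{t-1}-\state'_{t-1}}}$, and by the state-forgetting bound above,
\begin{equation*}
\norm{E_t}=\go{(1-\alpha/2)^{(t-1-t_0)/\hsr}\,\norm{\state-\state'}}.
\end{equation*}
Meanwhile, the sequence $(A_t)_{t\geq t_0+1}$ has spectral radius at most $1-\alpha/2$ at horizon $\hsr$ by Corollary~\ref{cor:specrad} (applicable since $\state_t\in\tube_t\subset B_{\State_t}(\stateopt_t,r'_\State)$ and $\param\in\boctopt\subset B_\Param(\paramopt,r'_\Param)$), and has bounded operator norm by Lemma~\ref{lem:localopnorms}. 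Lemma~\ref{lem:iterspecrad}, applied to $(\jope_t-\jope'_t)$ against the zero sequence with driving term $E_t$, then yields
\begin{equation*}
\norm{\jope_t-\jope'_t}=\go{(1-\alpha/2)^{(t-t_0)/\hsr}\,\max(\norm{\jope-\jope'},\norm{\state-\state'})}.
\end{equation*}

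Taking the maximum of the state bound and the Jacobian bound, and recalling that $\norm{(\state,\jope)}=\max(\norm{\state},\norm{\jope})$ in $\Mem_t$, gives the claim with a suitable constant $\cst_9$. Assumption~\ref{hyp:expforgetinit} then follows by absorbing the exponential base $(1-\alpha/2)^{1/\hsr}$ into a new constant $1-\alpha''$ with $\alpha''>0$. The main technical care is in controlling $E_t$: we need the Lipschitzness of both $\partial_\state \opevol_t$ and $\partial_\param \opevol_t$ as functions of $\state$ (supplied by the uniform bound on $\partial^2\opevol_t$) together with the uniform boundedness of $\jope'_{t-1}$ coming from the stable tube on the Jacobian component, which was precisely why $\tubejope_t$ was engineered to be bounded in Corollary~\ref{cor:rtrlstabletubesJ}.
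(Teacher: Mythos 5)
Your proof is correct and follows essentially the same strategy as the paper's: decouple the state and Jacobian components via the max-norm, handle the state by Corollary~\ref{cor:stateforget}, rewrite the Jacobian recurrence as a linear recurrence in $(\jope_t-\jope'_t)$ with a driving error term bounded through the Lipschitzness of $\partial_\state\opevol_t$ and $\partial_\param\opevol_t$ (from the bounded second derivatives) together with the uniform boundedness of $\jope'_{t-1}$ on the stable tube, and then apply Lemma~\ref{lem:iterspecrad}. The only cosmetic difference is that you package the error term as $E_t=(A_t-A'_t)\jope'_{t-1}+(B_t-B'_t)$ directly, while the paper folds the same term into $B'_t$ before taking differences.
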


\begin{proof}
Define the trajectory $(\state_{t},\jope_{t})=\Algo_{t_0:t}(\param,(\state,\jope))$ and
likewise for $(\state',\jope')$. Thus, we have to bound
$\norm{(\state_t,\jope_t)-(\state_t',\jope_t')}$.

The norm on pairs $(\state,\jope)$ is the max-norm (\redef{rtrlasalgo}),
so it is enough to prove the statement separately for
$\norm{\state_t-\state'_t}$ and
$\norm{\jope_t-\jope'_t}$.

By \redef{rtrlasalgo} of the RTRL algorithm, we have
$\state_{t}=\opevol_{t_0:t}(\state,\param)$, so the conclusion for
$\state$ is
exactly \recor{stateforget}.

By \redef{rtrlasalgo}, the sequence $(\jope_{t})$ satisfies
\begin{equation*}
\jope_t = \frac{\partial \opevol_t(\state_{t-1},\param)}{\partial
\state} \,\jope_{t-1}+\frac{\partial
\opevol_t(\state_{t-1},\param)}{\partial
\param}
\end{equation*}
for $t\geq t_0+1$, and likewise for $\jope'$. Denote
\begin{equation*}
A_t\deq \frac{\partial \opevol_t(\state_{t-1},\param)}{\partial
\state},\qquad B_t\deq \frac{\partial
\opevol_t(\state_{t-1},\param)}{\partial
\param}
\end{equation*}
and likewise for $\jope'$. Therefore, we have $\jope_t=A_t\jope_{t-1}+B_t$
and
\begin{equation*}
\jope'_t=A'_t\jope'_{t-1}+B'=A_t\jope'_{t-1}+B'_t+(A'_t-A_t)\jope'_{t-1}.
\end{equation*}

Since the trajectories lie in the stable tube, by \recor{specrad}, the
sequence $(A_t)$ has spectral radius at most $1-\alpha/2$ at horizon $\hsr$.

In order to apply \relem{iterspecrad} to $\jope_t$ and
$\jope'_t$, we have to bound
\begin{equation*}
B_t-B'_t-(A'_t-A_t)\jope'_{t-1}
\end{equation*}
for each $t$.

By definition $B_t= \frac{\partial
\opevol_t(\state_{t-1},\param)}{\partial
\param}$. By \rehyp{regftransetats}, the second derivative
$\frac{\partial^2
\opevol_t(\state_{t-1},\param)}{\partial
(\state,\param)^2}$ is bounded on the stable tube.
This implies that
$\frac{\partial
\opevol_t(\state_{t-1},\param)}{\partial
\param}$ is a Lipschitz function of $\state_{t-1}$ on the stable tube,
and in particular
\begin{equation*}
B_t-B'_t=O(\norm{\state_t-\state'_t}).
\end{equation*}

The same reasoning applies to $A_t=\frac{\partial
\opevol_t(\state_{t-1},\param}{\partial \state})$, so that
\begin{equation*}
A_t-A'_t=O(\norm{\state_t-\state'_t})
\end{equation*}
and the sequence $(\jope'_t)$ belongs to the stable tube, so by
\recor{rtrlstabletubesJ}, $(\jope'_t)$ is bounded. Consequently,
\begin{equation*}
(A'_t-A_t)\jope'_t=O(\norm{\state_t-\state'_t})
\end{equation*}
as well, and therefore
\begin{equation*}
B_t-B'_t-(A'_t-A_t)\jope'_{t-1}=O(\norm{\state_t-\state'_t}).
\end{equation*}

So \relem{iterspecrad} applied to the sequences $\jope_t$ and $\jope'_t$ provides
\begin{equation*}
\norm{\jope_t-\jope'_t}\leq
\go{\max\left((1-\alpha/2)^{\frac{t-t_0}{\hsr}}\norm{\jope_{t_0}-\jope'_{t_0}},
\sup_{t_0\leq t'\leq t} (1-\alpha/2)^{\frac{t-t'}{\hsr}}\norm{\state_{t'}-\state'_{t'}}\right)
}.
\end{equation*}

But by \recor{stateforget},
\begin{equation*}
\norm{\state_t-\state'_t}\leq
\cst_8\,(1-\alpha/2)^{\frac{t-t_0}{\hsr}}\norm{\state_{t_0}-\state'_{t_0}}
\end{equation*}
so that for $t_0\leq t'\leq t$, one has
\begin{equation*}
\ba
(1-\alpha/2)^{\frac{t-t'}{\hsr}}\norm{\state_{t'}-\state'_{t'}}
&\leq \cst_8\,
(1-\alpha/2)^{\frac{t-t'}{\hsr}}\,(1-\alpha/2)^{\frac{t'-t_0}{\hsr}}
\norm{\state_{t_0}-\state'_{t_0}}
\\&=\cst_8 \,(1-\alpha/2)^{\frac{t-t_0}{\hsr}}
\norm{\state_{t_0}-\state'_{t_0}}
\ea
\end{equation*}
and therefore
\begin{equation*}
\norm{\jope_t-\jope'_t}\leq
\go{(1-\alpha/2)^{\frac{t-t_0}{\hsr}}\max\left(\norm{\jope_{t_0}-\jope'_{t_0}},
\norm{\state_{t_0}-\state'_{t_0}}
\right)
}
\end{equation*}
as needed.

\end{proof}

\begin{corollaire}[RTRL is Lipschitz wrt $\param$]
\label{cor:rtrllipswrtprm}
Let $\Algo$ be the RTRL algorithm on $(\state,\jope)$
from \redef{rtrlasalgo}. Then $\Algo$ satisfies \rehyp{paramlip}. Namely,
for any $(\state_t,\jope_t)\in\tube_t\times \tubejope_t$ and any
$\param,\param'\in \boctopt$, one has
\begin{equation*}
\norm{\Algo_{t+1}(\param,(\state_t,\jope_t))-\Algo_{t+1}(\param',(\state_t,\jope_t))}\leq
\cliprtrlprm\norm{\param-\param'}
\end{equation*}
for some $\cliprtrlprm\geq 0$.
\end{corollaire}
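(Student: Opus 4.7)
The plan is to bound the two components of $\Algo_{t+1}(\param,(\state_t,\jope_t))$ separately, using the max-norm on $\Mem_{t+1}=\State_{t+1}\times \epjopeins{t+1}$ that was fixed in \redef{rtrlasalgo}. Recall that
\[
\Algo_{t+1}(\param,(\state_t,\jope_t))=\left(\opevol_{t+1}(\state_t,\param),\ \frac{\partial \opevol_{t+1}(\state_t,\param)}{\partial \state}\,\jope_t+\frac{\partial \opevol_{t+1}(\state_t,\param)}{\partial \param}\right).
\]
It suffices to bound each component by a constant multiple of $\norm{\param-\param'}$, uniformly in $t$, in $(\state_t,\jope_t)\in\tube_t\times\tubejope_t$, and in $\param,\param'\in\boctopt$.

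For the state component, \relem{rtrlstabletubes} ensures that $\tube_t\subset B_{\State_t}(\stateopt_t,r_\State)$ and $\boctopt\subset B_\Param(\paramopt,r_\Param)$, so that \relem{opevollip} applies and yields
\[
\norm{\opevol_{t+1}(\state_t,\param)-\opevol_{t+1}(\state_t,\param')}\leq \cst_7\,\norm{\param-\param'}.
\]
For the Jacobian component, I would apply the triangle inequality to split it as
\[
\left(\tfrac{\partial \opevol_{t+1}}{\partial \state}(\state_t,\param)-\tfrac{\partial \opevol_{t+1}}{\partial \state}(\state_t,\param')\right)\jope_t+\left(\tfrac{\partial \opevol_{t+1}}{\partial \param}(\state_t,\param)-\tfrac{\partial \opevol_{t+1}}{\partial \param}(\state_t,\param')\right),
\]
and bound each piece using that $\partial_{(\state,\param)}^2 \opevol_{t+1}$ is uniformly bounded on $\boctopt\times\tube_t$ by \rehyp{regftransetats} (which applies inside the stable tube thanks to the radius control in \relem{rtrlstabletubes}). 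Integrating this bound along the segment from $\param'$ to $\param$ (which lies in $\boctopt$ by convexity of the ball) shows that both $\partial_\state \opevol_{t+1}(\state_t,\cdot)$ and $\partial_\param \opevol_{t+1}(\state_t,\cdot)$ are Lipschitz in $\param$, with a constant independent of $t$ and $\state_t\in\tube_t$.

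The remaining ingredient is uniform boundedness of $\jope_t$: since $\jope_t\in\tubejope_t$ and the sets $\tubejope_t$ were shown to be uniformly bounded in \recor{rtrlstabletubesJ}, there exists $R_\jope$ with $\nrmop{\jope_t}\leq R_\jope$ for all $t$. Combining everything produces a constant $\cliprtrlprm$ depending only on $\cst_7$, on the bound on $\partial^2 \opevol$ from \rehyp{regftransetats}, and on $R_\jope$, such that both components are bounded by $\cliprtrlprm\norm{\param-\param'}$. Since we use the max-norm, this yields the desired Lipschitz bound. I do not anticipate any real obstacle here: the argument is a straightforward combination of the smoothness hypothesis with the already-established uniform boundedness of the stable tube $\tubejope_t$ (which is the one step that required nontrivial work, done in \recor{rtrlstabletubesJ}).
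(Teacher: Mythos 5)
Your proof is correct and follows essentially the same two-step decomposition as the paper's: bound the state component via the Lipschitz property of $\opevol_{t+1}$ (you invoke \relem{opevollip}, the paper invokes \relem{localopnorms}, but these are the same ingredient), and bound the Jacobian component using the uniform bound on $\partial^2\opevol_{t+1}$ from \rehyp{regftransetats} together with the uniform boundedness of $\jope_t$ in $\tubejope_t$ from \recor{rtrlstabletubesJ}. No substantive difference.
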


\begin{proof}
By definition of $\Algo$, we have
$\Algo_{t+1}(\param,(\state_t,\jope_t))=(\state_{t+1},\jope_{t+1})$ where
$\state_{t+1}=\opevol_{t+1}(\state_t,\param)$ and
\begin{equation*}
\jope_{t+1}=\frac{\partial \opevol_{t+1}(\state_t,\param)}{\partial
\state}\,\jope_t+\frac{\partial
\opevol_{t+1}(\state_t,\param)}{\partial \param}.
\end{equation*}

Now, by \relem{localopnorms}, $\frac{\partial
\opevol_{t+1}(\state_t,\param)}{\partial \param}$ is bounded on the
stable tube, so that $\opevol_{t+1}$ is Lipschitz with respect to
$\param$ on the stable tube.
This shows that $\state_{t+1}$ depends on $\param$ in
a Lipschitz way. Since the bound in \relem{localopnorms} is uniform in
$t$, the Lipschitz constant is uniform in $t$.

For $\jope_{t+1}$, by
\rehyp{regftransetats}, the second derivative
$\nrmop{\frac{\partial^2 \opevol_{t+1}}{\partial
(\state,\param)^2}(\state,\param)}$ is bounded on the stable tube. This
implies that both $\frac{\partial
\opevol_{t+1}(\state_t,\param)}{\partial \param}$ and $\frac{\partial
\opevol_{t+1}(\state_t,\param)}{\partial \state}$ are Lipschitz with
respect to $\param$ in the stable tube. Since $\jope_t$ is bounded in the
stable tube, this implies that $\jope_{t+1}=\frac{\partial
\opevol_{t+1}(\state_t,\param)}{\partial
\state}\,\jope_t+\frac{\partial
\opevol_{t+1}(\state_t,\param)}{\partial \param}
$ is Lipschitz with respect to $\param$.
Since the bound in \rehyp{regftransetats} is uniform in $t$, the
Lipschitz constant is uniform in $t$.
\end{proof}

\subsection{Boundedness of Gradients for RTRL}

\begin{lemme}[Boundedness of gradients for RTRL]
\label{lem:boundgradrtrl}
The gradient computation operators $\sm{V}_t$ from \redef{rtrlasalgo} satisfy \rehyp{boundedgrads}
and \rehyp{contgrad}; the scale functions $\fmdper{t}$ and $\mlipgrad{t}$
appearing in these assumptions are both equal to the scale function 
$\fmdper{t}$ appearing in \rehyp{regpertes}.
\end{lemme}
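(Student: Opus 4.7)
The plan is to write $\sm{V}_t(\param,(\state,\jope)) = \fur_t(v_t(\state,\jope),\state,\param)$ with $v_t(\state,\jope) \deq \frac{\partial \perte_t(\state)}{\partial\state}\cdot \jope$, and to bound each factor on the stable tube using the assumptions of Section~\ref{sec:syshyp}. Throughout, $(\state,\jope)\in \tube_t\times\tubejope_t$ and $\param\in\boctopt$.

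First, for \rehyp{boundedgrads}, I would bound $\norm{v_t}$ on the stable tube. By \rehyp{regpertes} we have $\norm{\partial_\state\perte_t(\stateopt_t)}=\go{t^\exmp}$ and $\sup_{\state\in B_{\State_t}(\stateopt_t,\raysy)}\norm{\partial^2_\state\perte_t(\state)}=\go{t^\exmp}$. Since $\tube_t\subset B_{\State_t}(\stateopt_t,r'''_\State)$ with $r'''_\State\leq r_\State$ (\relem{rtrlstabletubes}), integrating the second derivative along the segment from $\stateopt_t$ to $\state$ yields $\norm{\partial_\state\perte_t(\state)}=\go{t^\exmp}$ uniformly on $\tube_t$. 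Combined with the uniform bound $\sup_t\sup_{\jope\in\tubejope_t}\norm{\jope}<\infty$ from \recor{rtrlstabletubesJ}, this gives $\norm{v_t(\state,\jope)}=\go{t^\exmp}$. Then I would apply \rehyp{fupdrl}: writing $\fur_t(v,\state,\param)$ as $\fur_t(0,\stateopt_t,\paramopt)$ plus path integrals of $\partial_v\fur_t$ and $\partial_{(\state,\param)}\fur_t$, using $\fur_t(0,\stateopt_t,\paramopt)=\go{t^\exmp}$, the uniform bound $\kappa_\fur$ on $\partial_v\fur_t$, and the bound $\kappa_\fur(1+\norm{v})$ on $\partial_{(\state,\param)}\fur_t$, yields
\begin{equation*}
\norm{\sm{V}_t(\param,(\state,\jope))}\leq \go{t^\exmp}+\kappa_\fur\norm{v_t}+\kappa_\fur(1+\norm{v_t})(\norm{\state-\stateopt_t}+\norm{\param-\paramopt})=\go{t^\exmp},
\end{equation*}
using that $\norm{\state-\stateopt_t}$ and $\norm{\param-\paramopt}$ are bounded on the stable tube. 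Hence \rehyp{boundedgrads} holds with $\fmdper{t}=t^\exmp$.

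Second, for \rehyp{contgrad}, I would show that $\sm{V}_t$ is Lipschitz with constant $\go{t^\exmp}$ in $(\param,\state,\jope)$ on $\boctopt\times\tube_t\times\tubejope_t$. I compute each partial derivative: $\partial_\jope v_t=\partial_\state\perte_t(\state)=\go{t^\exmp}$ as above; $\partial_\state v_t = \partial^2_\state\perte_t(\state)\cdot\jope=\go{t^\exmp}$ using \rehyp{regpertes} and the boundedness of $\jope$ in $\tubejope_t$. For $\fur_t$, the partial derivatives are bounded by $\kappa_\fur$ (in $v$) and by $\kappa_\fur(1+\norm{v_t})=\go{t^\exmp}$ (in $(\state,\param)$) from \rehyp{fupdrl}. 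By the chain rule the partial derivatives of $\sm{V}_t$ with respect to $\state$, $\jope$ and $\param$ are all $\go{t^\exmp}$. Since the balls in which these bounds hold are convex (the stable tube is contained in such balls, as noted in the remark after \recor{rtrlstabletubesJ}), integrating along segments gives
\begin{equation*}
\dist{\sm{V}_t(\param,\mem)}{\sm{V}_t(\param',\mem')}\leq \cst_5\,(\dist{\param}{\param'}+\dist{\mem}{\mem'})\,t^\exmp
\end{equation*}
for some $\cst_5$, i.e.\ \rehyp{contgrad} holds with $\mlipgrad{t}=t^\exmp=\fmdper{t}$.

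The only mildly delicate points are (i) ensuring that the path integrals used to go from $\stateopt_t$ to an arbitrary point of $\tube_t$ stay inside the balls where \rehyp{regftransetats}, \rehyp{regpertes} and \rehyp{fupdrl} apply, which is handled by the last item of \relem{rtrlstabletubes} and the convexity of those balls, and (ii) tracking the $(1+\norm{v})$ factor in the derivative bound of $\fur_t$, which combines cleanly with the already-established $\norm{v_t}=\go{t^\exmp}$. No new estimate beyond those assumptions and \recor{rtrlstabletubesJ} is needed.
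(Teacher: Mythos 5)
Your proposal is correct and uses essentially the same ingredients and the same decomposition as the paper's proof: bound $v_t\deq\partial_\state\perte_t\cdot\jope$ on the stable tube via \rehyp{regpertes} together with the uniform bound on $\jope$ from \recor{rtrlstabletubesJ}, then propagate through $\fur_t$ using the derivative bounds of \rehyp{fupdrl} and the anchor $\furt{0}{\stateopt_t}{\paramopt}=\go{t^\exmp}$, with the Taylor/segment arguments justified on convex balls containing the (non-convex) stable tube. The only superficial difference is presentation order: you bound $\norm{\sm{V}_t}$ directly by a path integral from $(0,\stateopt_t,\paramopt)$ and then establish the Lipschitz estimate, whereas the paper establishes the Lipschitz estimate for $\sm{V}_t$ first and then deduces the uniform bound by comparing to the value at $(\paramopt,\stateopt_t,0)$.
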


Consistently with \rehyp{boundedgrads}, we denote $\bvtg{t}$ the ball of $\Tangent_t$ with radius $\sup_{\param\in
\boctopt}\,\sup_{\mem\in \bmaint{t}}
\norm{\sm{V}_t(\param,\mem)}$.

\begin{proof}
From \redef{rtrlasalgo},
\begin{equation*}
\sm{V}_t(\param,(\state,\jope))= \furt{\frac{\partial
\perte_t(\state)}{\partial \state}\cdot\jope}{\state}{\param}
\end{equation*}
and we have to prove that this quantity
is bounded by $O(\fmdper{t})$ on the stable tube
(\rehyp{boundedgrads})
and 
is
$O(\fmdper{t})$-Lipschitz with
respect to $(\param,(\state,\jope))$
on the stable tube (\rehyp{contgrad}).

Let us first study the auxiliary quantity
\begin{equation*}
\sm{V}^1_t(\state,\,\jope)= \frac{\partial
\perte_t(\state)}{\partial \state}\cdot\jope.
\end{equation*}

By \rehyp{regpertes}, the second derivative of $\perte_t$
with respect to $\state$ is $\go{\fmdper{t}}$ on the stable tube. This proves
that $\frac{\partial
\perte_t(\state)}{\partial \state}$ is $\go{\fmdper{t}}$--Lipschitz with respect to
$\state$ on the stable tube. Since $\jope$ is bounded on the
stable tube, it follows that $\sm{V}^1_t(\state,\jope)$ is 
$\go{\fmdper{t}}$--Lipschitz with respect to $\state$ on the stable tube.

Since $\frac{\partial
\perte_t(\state)}{\partial \state}$ is Lipschitz with respect to
$\state$ on the stable tube, and its values at
$(\stateopt_t)$ are bounded by $O(\fmdper{t})$ by
\rehyp{regpertes}, and since the stable tube is bounded around
$(\stateopt_t)$, it follows that $\frac{\partial
\perte_t(\state)}{\partial \state}$ is bounded by $O(\fmdper{t})$ on
the stable tube.

This proves, first, that $\sm{V}^1_t(\state,\jope))$ is
$O(\fmdper{t})$-Lipschitz with respect to $\jope$ on the stable tube. Second, since $\jope$ is
bounded on the stable tube, this proves that
$\sm{V}^1_t(\state,\jope)$ is bounded by $O(\fmdper{t})$ on
the stable tube.

Thus, 
$\sm{V}^1_t(\state,\jope)$ is $O(\fmdper{t})$-Lipschitz with respect to ${\cpl{\state}{\jope}}$ on the stable tube.

Let us now consider $\cpl{\param}{\cpl{\state}{\jope}}$ and $\cpl{\param'}{\cpl{\state'}{\jope'}}$ on the stable tube. Let $t\geq 0$. Then, the difference $\sm{V}_t\cpl{\param}{\cpl{\state}{\jope}}-\sm{V}_t\cpl{\param'}{\cpl{\state'}{\jope'}}$ equals
\begin{multline*}
\furt{\frac{\partial \perte_t(\state)}{\partial \state}\cdot\jope}{\state}{\param}
-\furt{\dpartf{\state}{\perte_t}\paren{\state'}\cdot\jope'}{\state'}{\param'}\\
=\furt{\frac{\partial \perte_t(\state)}{\partial \state}\cdot\jope}{\state}{\param}
-\furt{\dpartf{\state}{\perte_t}\paren{\state}\cdot\jope}{\state'}{\param'}\\
+\furt{\dpartf{\state}{\perte_t}\paren{\state}\cdot\jope}{\state'}{\param'}
-\furt{\dpartf{\state}{\perte_t}\paren{\state'}\cdot\jope'}{\state'}{\param'}.
\end{multline*}
As a result, the difference $\sm{V}_t\cpl{\param}{\cpl{\state}{\jope}}-\sm{V}_t\cpl{\param'}{\cpl{\state'}{\jope'}}$ is bounded by (writing $B_t^\jope$ a ball containing $\tubejope_t$, which radius we can chose independant of $t$ since the stable tube is bounded)
\begin{multline*}
\sup_{\boctopt\times B_{\State_t}(\stateopt_{t},\raysy)\times B_t^\jope} \paren{\nrmop{\dpartf{\cpl{\state}{\param}}{\fur_t}\tpl{\sm{V}^1_t{\cpl{\state''}{\jope''}}}{\state''}{\param''}}}\,
\nrm{\cpl{\state}{\param}-\cpl{\state'}{\param'}}\\
+\sup_{\boctopt\times B_{\State_t}(\stateopt_{t},\raysy)\times B_t^\jope}\,\paren{\nrmop{\dpartf{\vtanc}{\fur_t}\tpl{\sm{V}^1_t{\cpl{\state''}{\jope''}}}{\state''}{\param''}}}\,
\nrm{\sm{V}^1_t{\cpl{\state}{\jope}}-\sm{V}^1_t{\cpl{\state'}{\jope'}}}.
\end{multline*}
Thanks to \rehyp{fupdrl}, we therefore know that the difference is bounded by 
\begin{multline*}
\sup_{\boctopt\times B_{\State_t}(\stateopt_{t},\raysy)\times  B_t^\jope} \go{1+\nrm{\sm{V}^1_t{\cpl{\state''}{\jope''}}}}\,
\nrm{\cpl{\state}{\param}-\cpl{\state'}{\param'}}\\
+\sup_{\boctopt\times B_{\State_t}(\stateopt_{t},\raysy)\times B_t^\jope}\,\paren{\nrmop{\dpartf{\vtanc}{\fur_t}\tpl{\sm{V}^1_t{\cpl{\state''}{\jope''}}}{\state''}{\param''}}}\,
\nrm{\sm{V}^1_t{\cpl{\state}{\jope}}-\sm{V}^1_t{\cpl{\state'}{\jope'}}}\\
=\go{\fmdper{t}}\,\nrm{\cpl{\state}{\param}-\cpl{\state'}{\param'}}
+\sup_{\boctopt\times B_{\State_t}(\stateopt_{t},\raysy)\times B_t^\jope}\,\paren{\nrmop{\dpartf{\vtanc}{\fur_t}\tpl{\sm{V}^1_t{\cpl{\state''}{\jope''}}}{\state''}{\param''}}}\times\\
\go{\fmdper{t}}\,\nrm{{\cpl{\state}{\jope}}-{\cpl{\state'}{\jope'}}}.
\end{multline*}
Thanks to \rehyp{fupdrl}, the derivative $\dpartf{\vtanc}{\fur_t}$ is
bounded on a ball which contains the stable tube. This shows that $\sm{V}_t$ is
$\go{\fmdper{t}}$-Lipschitz with respect to $\param$, $\state$ and
$\jope$ on the stable tube.

Now, for every $t\geq 0$, for every $(\param,(\state,\jope))$ in the stable tube, we have
\begin{equation*}
\ba
\sm{V}_t\cpl{\param}{\cpl{\state}{\jope}}=
\sm{V}_t\cpl{\param}{\cpl{\state}{\jope}}-\sm{V}_t\cpl{\paramopt}{\cpl{\stateopt_t}{0}}+\sm{V}_t\cpl{\paramopt}{\cpl{\stateopt_t}{0}}
\ea
\end{equation*}
and the last term is
$\sm{V}_t\cpl{\paramopt}{\cpl{\stateopt_t}{0}}=\furt{0}{\stateopt_t}{\paramopt}$
by definition.

Thanks to \rehyp{fupdrl}, this last term is $\go{\fmdper{t}}$. 
Finally, since $\norm{\param-\paramopt}$, $\norm{\state-\stateopt_t}$ and
$\norm{\jope}$ are bounded on the stable tube, and since $\sm{V}_t$ is
$\go{\fmdper{t}}$-Lipschitz with respect to $\param$, $\state$ and
$\jope$ on the stable tube, the difference
$\sm{V}_t\cpl{\param}{\cpl{\state}{\jope}}-\sm{V}_t\cpl{\paramopt}{\cpl{\stateopt_t}{0}}$
is $\go{\fmdper{t}}$ on the stable tube.
As a result, $\sm{V}_t\cpl{\param}{\cpl{\state}{\jope}}$ is $\go{\fmdper{t}}$ on the stable tube.
\end{proof}

\section{Proving Convergence of the \rtrl Algorithm and of \Algonoisy \rtrl Algorithms}
\label{sec:prcvalappxalgo}

Let us go on with applying the results of Section~\ref{sec:absontadsys}
under the assumptions of Section~\ref{sec:presentation}.
So far, we have shown that \algonoisy RTRL algorithms admit a stable
tube, thus satisfying \rehyp{stableballs}, that the exponential
forgetting and Lipschitz \rehypdeux{expforgetinit}{paramlip} are
satisfied, that the gradient computation operators are bounded on the
stable tube, and Lipschitz, so that \rehypdeux{boundedgrads}{contgrad}
are satisfied,
and that the sequence of learning rates satisfies
\rehyp{spdes}. This leaves \rehyp{updateop} and \rehyp{optimprm}.

We now check that the local optimality \rehyp{optimprm} is valid for the RTRL
algorithm. This requires to check that the update operator leaves
$\paramopt$ almost unchanged over time intervals $[t;t+\flng{t}]$, and
brings other parameters closer to $\paramopt$ over these intervals.
This follows from
\rehyp{critoptrtrlnbt} which states that $\paramopt$ is a critical point
of the average loss function, asymptotically; for this we have to
transfer this asymptotic property to finite but long enough intervals
$[t;t+\flng{t}]$. This is the object of \resecdeux{stabprmitv}{contaroundparamopt}.

We also show that the parameter update operator satisfies \rehyp{updateop} in
\resec{prmupdfopas}, and that \algonoisy RTRL algorithms have negligible noise, in the sense of \redef{negnoise}, in \resec{noisectrl}.

\subsection{Parameter Updates at First Order in $\pas$}
\label{sec:prmupdfopas}
Here we study the parameter update operator and its iterates at first
order in the step size.

\begin{proposition}[Checking conformity of the parameter update operator]
\label{prop:prmupdatesfirstorderstepsize}
\rehyp{updateop_first} implies \rehyp{updateop}, over balls of the same radius
$r_\Tangent=\tilde r_\Tangent$.

We then define $\brnp$ as in \redef{bornepas}.
\end{proposition}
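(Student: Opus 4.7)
The first clause of Assumption~\ref{hyp:updateop} is immediate from the representation in Assumption~\ref{hyp:updateop_first}: setting $\vtanc=0$ yields $\paramupdate_t(\param,0)=\param-0+\norm{0}^2\paramupdate_t^{(2)}(\param,0)=\param$. So the work lies entirely in the second clause, and the plan is to set $r_\Tangent\deq \tilde r_\Tangent$ and verify the quantitative inequality directly from the second-order expansion.

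The key step is to take $\param,\param'\in\boctopt$ and $\vtanc,\vtanc'$ of norm at most $r_\Tangent$, subtract the two expansions and isolate the three natural contributions
\begin{equation*}
\paramupdate_t(\param,\vtanc)-\paramupdate_t(\param',\vtanc')=(\param-\param')-(\vtanc-\vtanc')+R,
\end{equation*}
where
\begin{equation*}
R=\norm{\vtanc}^2\,\paramupdate_t^{(2)}(\param,\vtanc)-\norm{\vtanc'}^2\,\paramupdate_t^{(2)}(\param',\vtanc').
\end{equation*}
A plain triangle inequality then gives $\dist{\paramupdate_t(\param,\vtanc)}{\paramupdate_t(\param',\vtanc')}\leq \norm{\param-\param'}+\norm{\vtanc-\vtanc'}+\norm{R}$, so all that remains is to bound $\norm{R}$ in a form whose $\norm{\vtanc-\vtanc'}$-coefficient can be absorbed into $\cpliprmup\,\dist{\vtanc}{\vtanc'}$ and whose $\norm{\param-\param'}$-coefficient carries a factor $\norm{\vtanc}+\norm{\vtanc'}$.

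To produce those two shapes, I would split $R$ as
\begin{equation*}
R=\bigl(\norm{\vtanc}^2-\norm{\vtanc'}^2\bigr)\,\paramupdate_t^{(2)}(\param,\vtanc)+\norm{\vtanc'}^2\,\bigl(\paramupdate_t^{(2)}(\param,\vtanc)-\paramupdate_t^{(2)}(\param',\vtanc')\bigr),
\end{equation*}
combine this with the uniform sup-norm bound $M$ and Lipschitz constant $L$ of $\paramupdate_t^{(2)}$ provided by Assumption~\ref{hyp:updateop_first}, the polarization identity $\bigl|\norm{\vtanc}^2-\norm{\vtanc'}^2\bigr|\leq (\norm{\vtanc}+\norm{\vtanc'})\,\norm{\vtanc-\vtanc'}$, and the bound $\norm{\vtanc'}^2\leq r_\Tangent\,\norm{\vtanc'}\leq r_\Tangent(\norm{\vtanc}+\norm{\vtanc'})$ that is available once both vectors lie in the admissible ball.

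Putting these together produces an estimate of the shape $\norm{R}\leq (2Mr_\Tangent+Lr_\Tangent^2)\,\norm{\vtanc-\vtanc'}+Lr_\Tangent\,(\norm{\vtanc}+\norm{\vtanc'})\,\norm{\param-\param'}$, and choosing $\cpliprmup\deq\max(1+2Mr_\Tangent+Lr_\Tangent^2,\,Lr_\Tangent)$ reassembles the desired inequality. None of these steps is delicate; the only point requiring any care is the use of $\norm{\vtanc'}\leq r_\Tangent$ to convert the $\norm{\vtanc'}^2\,\norm{\param-\param'}$ factor into the $(\norm{\vtanc}+\norm{\vtanc'})\,\norm{\param-\param'}$ form prescribed by Assumption~\ref{hyp:updateop}, which is precisely why the radii have to match ($r_\Tangent=\tilde r_\Tangent$).
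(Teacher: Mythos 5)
Your proof is correct and takes essentially the same approach as the paper's: both decompose the remainder additively into a difference-of-squares piece and a bounded-square-times-Lipschitz-difference piece, then use the radius bound $\norm{\vtanc'}\leq r_\Tangent$ to convert $\norm{\vtanc'}^2$ into the required $(\norm{\vtanc}+\norm{\vtanc'})$ factor on $\dist{\param}{\param'}$. Your decomposition anchors $\paramupdate_t^{(2)}$ at $(\param,\vtanc)$ where the paper anchors it at $(\param',\vtanc')$, an immaterial symmetry; the constant you extract for $\cpliprmup$ is perfectly serviceable.
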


\begin{proof}
Let us check the conclusions of \rehyp{updateop}. The first point is
trivial under \rehyp{updateop_first}. For the second point,
under \rehyp{updateop_first} we have
\begin{align*}
\paramupdate_t(\param,\vtanc)-\paramupdate_t(\param',\vtanc') &=
\param-\param'-\vtanc+\vtanc'+\norm{\vtanc}^2\paramupdate_t^{(2)}(\param,\vtanc)-\norm{\vtanc'}^2\paramupdate_t^{(2)}(\param',\vtanc')
\\&=\param-\param'-(\vtanc-\vtanc')+
\norm{v}^2(\paramupdate_t^{(2)}(\param,\vtanc)-\paramupdate_t^{(2)}(\param',\vtanc'))\\
&+\paren{\norm{\vtanc}^2-\norm{\vtanc'}^2}\paramupdate_t^{(2)}(\param',\vtanc')
\end{align*}
and let us control each term separately. Since $\norm{\vtanc}^2$ is bounded
for $\vtanc \in B_{\Tangent_t}(0,\tilde r_\Tangent)$, and since
$\paramupdate_t^{(2)}$ is Lipschitz, the norm of
$\norm{v}^2(\paramupdate_t^{(2)}(\param,\vtanc)-\paramupdate_t^{(2)}(\param',\vtanc'))$
is controlled by $\norm{\vtanc-\vtanc'}+\norm{\param-\param'}$. Finally,
writing
$\norm{\vtanc}^2-\norm{\vtanc'}^2=(\norm{\vtanc}-\norm{\vtanc'})(\norm{\vtanc}+\norm{\vtanc'})$,
we see that the norm of
$\paren{\norm{\vtanc}^2-\norm{\vtanc'}^2}\paramupdate_t^{(2)}(\param',\vtanc')$
is controlled by $\norm{\vtanc}-\norm{\vtanc'}$ times the supremum of
$(\norm{\vtanc}+\norm{\vtanc'})\paramupdate_t^{(2)}(\param',\vtanc')$
over the ball; since both $\norm{\vtanc}+\norm{\vtanc'}$ and
$\paramupdate_t^{(2)}$ are uniformly bounded in the ball, the conclusion
follows.
\end{proof}

\begin{lemme}[Intervals of lengths $\flng{t}$]
\label{lem:itvlgtflng}
For any $\cdvp\leq\brnp$, for $t$ large enough, we have
$\flng{t}<T_t^{r_\Theta^*}$ (Def.~\ref{def:tvalpc}).
\end{lemme}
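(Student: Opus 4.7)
The plan is to unfold the definition of $\thoriz{t}(\sm{\pas})$ from Def.~\ref{def:tvalpc} and show that the sum appearing in that definition does not cross the threshold $\rayoptct/(3\majmpc)$ until well past time $t+\flng{t}$. By definition, $\thoriz{t}(\sm{\pas})$ is the first index $t' \geq t+1$ at which $\sum_{s=t+1}^{t'} \pas_s\,\fmdper{s}$ exceeds $\rayoptct/(3\majmpc)$, so it suffices to check that
\begin{equation*}
\sum_{s=t+1}^{t+\flng{t}} \pas_s\,\fmdper{s} \leq \frac{\rayoptct}{3\majmpc}
\end{equation*}
for all $t$ large enough.

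First I would invoke Corollary~\ref{cor:rtrlsuitstep}, which verifies that the RTRL stepsize sequence $\sm{\pas}$, together with the scale functions $\flng{}$ and $\fmdper{}$ built in \relem{intvavrg}, satisfies \rehyp{spdes} (with $\mlipgrad{t}=\fmdper{t}$). This ensures the asymptotic estimates of \recor{convzerosompasintervalles} apply. In particular, the second point of that corollary gives
\begin{equation*}
\sum_{s=t+1}^{t+\flng{t}} \pas_s\,\fmdper{s} \sim \pas_t\,\flng{t}\,\fmdper{t} \qquad (t\to\infty),
\end{equation*}
and moreover this tends to $0$ as $t\to\infty$ (it is in fact $\po{1}$, and even $\po{1/\fmdper{t}}$ by point 2 of \rehyp{spdes} combined with $\mlipgrad{t}=\fmdper{t}$). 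Note that this estimate is uniform in $\cdvp\leq \brnp$ up to the constant factor $\cdvp$ in front of the schedule $\sched_t$, and in any case shrinks as $\cdvp$ shrinks.

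Therefore there exists $t_0$ (depending on $\cdvp\leq\brnp$) such that for all $t\geq t_0$ the displayed inequality holds. By the defining infimum in Def.~\ref{def:tvalpc}, this forces $\thoriz{t}(\sm{\pas}) > t+\flng{t}$, and a fortiori $\flng{t} < \thoriz{t}(\sm{\pas})$, which is the statement of the lemma. No subtle obstacle is expected; the whole content is packaged into \recor{convzerosompasintervalles}, which has already handled the interplay between the ``sup/inf'' homogeneity of $\pas_s$ and the asymptotic equivalence of scale functions on the intervals $(t,t+\flng{t}]$.
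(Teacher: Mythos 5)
Your proof is correct and takes essentially the same route as the paper's: the paper simply remarks that the argument of \relem{minmthoriztpsk} (which chains \recor{contmajitvk} and Def.~\ref{def:tvalpc}) applies verbatim to arbitrary $t$ rather than just to $\tpsk{k}$, and both that chain and your proof reduce to the second point of \recor{convzerosompasintervalles}. One small refinement worth noting: the paper (via the proof of \recor{contmajitvk}) chooses the threshold time for $\cdvp=\brnp$ and observes it then works for all smaller $\cdvp$, so $t_0$ can be taken uniform in $\cdvp$ — your phrasing ``$t_0$ (depending on $\cdvp$)'' is formally allowed by the lemma's statement but slightly weaker than what the mechanism delivers.
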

As a consequence, we can apply the abstract results to intervals of length $\flng{t}$.
\begin{proof}
This is the same argument as in Lemma~\ref{lem:minmthoriztpsk}, which
actually
applies to any interval $(t,t+\flng{t}]$, not only to intervals
$(\tpsk{k},\,\tpsk{k}+\flng{\tpsk{k}}]$.
\end{proof}

\begin{lemme}[Parameter updates at first order]
\label{lem:citersbopexp}
Let $t_0\geq 0$.
Let $\param_{t_0}\in \boctopt$ with $\dist{\param_{t_0}}{\paramopt}$
at most $\rayoptct/3$, and let $\paren{\vtanc_t}$
be a gradient sequence with $\vtanc_t\in
\bvtg{t}$
for all $t\geq t_0$ (where $\bvtg{t}$ is defined just after \relem{boundgradrtrl}). Let $\paren{\pas_t}$ be a stepsize sequence
with $\cdvp\leq \brnp$. Define by induction for $t>t_0$,
\begin{equation*}
\param_{t} =
\paramupdate_t\paren{\param_{t-1},\,\pas_t\,\vtanc_t}=\paramupdate_{t_0:t}(\param_{t_0},(\pas_t\,\vtanc_t)_t)
\end{equation*}
	Then for any $t_0 \leq t < t_0+\thoriz{\tinit}\paren{\sm{\pas}}$, 
\begin{equation*}
\param_t=\param_{t_0}-\sum_{s=t_0+1}^t \pas_s \vtanc_s+
\go{\sum_{s=t_0+1}^t \pas_s^2 \,\fmdper{s}^2}.
\end{equation*}
\end{lemme}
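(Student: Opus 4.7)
The plan is to directly iterate the second-order Taylor expansion of $\paramupdate_t$ provided by Assumption~\ref{hyp:updateop_first}, which states that $\paramupdate_t(\param,\vtanc)=\param-\vtanc+\norm{\vtanc}^2\paramupdate_t^{(2)}(\param,\vtanc)$ with $\paramupdate_t^{(2)}$ uniformly bounded on $\boctopt\times B_\Param(0,\tilde r_\Tangent)$. The result will essentially drop out once we verify that each step of the iteration stays in the domain where this expansion is valid and where $\paramupdate_t^{(2)}$ is bounded.

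First, I would check that the arguments $(\param_{s-1},\pas_s\vtanc_s)$ fed to $\paramupdate_s$ lie in the relevant ball at every step. Since $\cdvp\leq \brnp$ and $\vtanc_s\in\bvtg{s}$, Corollary~\ref{cor:normpasv} gives $\pas_s\vtanc_s\in\bvtg{s}$ with $\norm{\pas_s\vtanc_s}\leq r_\Tangent=\tilde r_\Tangent$; and Lemma~\ref{lem:maintientpsfinibocont} (applied with the trajectory defined by $\param_{t_0}$, the given $\vtanc_s$, and $\bar\param_t\equiv \param_t$) yields $\param_s\in\boctopt$ for all $t_0\leq s<t_0+\thoriz{t_0}(\sm{\pas})$. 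Hence the expansion applies at each step in this range, giving
\begin{equation*}
\param_s = \param_{s-1} - \pas_s\vtanc_s + \pas_s^2\,\norm{\vtanc_s}^2\,\paramupdate_s^{(2)}(\param_{s-1},\pas_s\vtanc_s).
\end{equation*}

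Summing this telescoping identity from $s=t_0+1$ to $t$ yields
\begin{equation*}
\param_t = \param_{t_0} - \sum_{s=t_0+1}^{t}\pas_s\vtanc_s + \sum_{s=t_0+1}^{t}\pas_s^2\,\norm{\vtanc_s}^2\,\paramupdate_s^{(2)}(\param_{s-1},\pas_s\vtanc_s).
\end{equation*}
To finish, I need to bound the remainder termwise. By definition of $\bvtg{s}$ and Lemma~\ref{lem:boundgradrtrl} (which establishes the scale-function bound $\sup_{\bvtg{s}}\norm{\cdot}=O(\fmdper{s})$ for the RTRL setting), we have $\norm{\vtanc_s}=O(\fmdper{s})$, uniformly in $s$. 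Combined with the uniform bound on $\paramupdate_s^{(2)}$ from Assumption~\ref{hyp:updateop_first}, each remainder term is $O(\pas_s^2\,\fmdper{s}^2)$, so the total remainder is $O\bigl(\sum_{s=t_0+1}^{t}\pas_s^2\,\fmdper{s}^2\bigr)$ as claimed.

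There is no serious obstacle here; the only slightly delicate point is making sure the horizon condition $t<t_0+\thoriz{t_0}(\sm{\pas})$ is used correctly so that the trajectory remains in the safe zone where the constants in the $O(\cdot)$ bounds (which depend only on the assumptions, per Remark~\ref{rem:O}) are uniform. Note also that the statement implicitly uses the time-shift of Remark~\ref{rem:timeshift} so that Lemma~\ref{lem:maintientpsfinibocont}, originally stated starting at time $0$, applies starting at $t_0$.
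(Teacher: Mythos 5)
Your proof is correct and follows essentially the same route as the paper's: both iterate the second-order expansion from Assumption~\ref{hyp:updateop_first}, telescope, invoke \relem{maintientpsfinibocont} (time-shifted) to keep $\param_{s-1}\in\boctopt$, use \recor{normpasv} together with $r_\Tangent=\tilde r_\Tangent$ (from \reprop{prmupdatesfirstorderstepsize}) to place $\pas_s\vtanc_s$ in the domain of the bounded remainder, and bound $\norm{\vtanc_s}=O(\fmdper{s})$ to conclude. The only addition relative to the paper is your explicit mention of \rerem{timeshift}, which the paper leaves implicit; no substantive difference.
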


\begin{proof}

By \relem{maintientpsfinibocont}, the trajectory stays in the control
ball for $t<\thoriz{\tinit}\paren{\sm{\pas}}$.

By induction 
from \rehyp{updateop_first} we find
\begin{equation*}
\param_t=\param_0-\sum_{s=\tinit+1}^t \pas_s
\,\vtanc_s+\sum_{s=\tinit+1}^t\pas_s^2
\norm{\vtanc_s}^2\paramupdate_s^{(2)}(\param_{s-1},\pas_s\vtanc_s).
\end{equation*}

Let us bound the last term. We can apply 
\recor{normpasv}, because all assumptions used for proving this corollary
have been checked.
\recor{normpasv} yields
$\norm{\pas_s \vtanc_s}\leq r_\Tangent$, and in \reprop{prmupdatesfirstorderstepsize} we ensured $r_\Tangent \leq \tilde{r}_\Tangent$. Since $\param_{s-1}$ is in the
control ball, the term
$\paramupdate_s^{(2)}(\param_{s-1},\pas_s\vtanc_s)$ is bounded by
\rehyp{updateop_first}. By \recor{normpasv}, $\norm{\vtanc_s}^2$ is
$O(\fmdper{s}^2)$. Therefore, the last term is $O(\sum \pas_s^2\,
\fmdper{s}^2)$, so that, for any $t_0 \leq t < t_0+\thorizo\paren{\sm{\pas}}$, we have
\begin{equation*}
\param_t=\param_{t_0}-\sum_{s=t_0+1}^t \pas_s \vtanc_s+
\go{\sum_{s=t_0+1}^t \pas_s^2 \,\fmdper{s}^2}.
\end{equation*}
\end{proof}

We now prove a similar property using constant step sizes.

\begin{lemme}[Parameter updates at first order with fixed step size]
\label{lem:firstorderupdate}
Let $T\geq 0$. Let $\param_{T}\in \boctopt$ with $\dist{\param_T}{\paramopt}$
at most $\rayoptct/3$, and let $\paren{\vtanc_t}$
be a gradient sequence with $\vtanc_t\in
\bvtg{t}$
for all $t\geq T$ ($\bvtg{t}$ is defined just after \relem{boundgradrtrl}). Let $\paren{\pas_t}$ be a stepsize sequence
with $\cdvp\leq \brnp/2$. Define by induction for $t>T$,
\begin{equation*}
\param_{t} =
\paramupdate_t\paren{\param_{t-1},\,\pas_T\,\vtanc_t}=\paramupdate_{T:t}(\param_{T},(\pas_T\,\vtanc_t)_t).
\end{equation*}
Then for any $T \leq t \leq T+\flng{T}$,
\begin{equation*}
\param_t=\param_{T}-\pas_T \sum_{s=T+1}^t \vtanc_s+
\go{\flng{T}\,\pas_T^2\,\fmdper{T}^2}.
\end{equation*}
\end{lemme}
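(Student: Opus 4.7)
The strategy is to reduce this to the argument already used for Lemma~\ref{lem:citersbopexp}, the key point being that we are now using a single constant stepsize $\pas_T$ on the interval $(T;T+\flng{T}]$ in place of the variable $(\pas_t)$. The factor $\brnp/2$ (rather than $\brnp$) in the hypothesis on $\cdvp$ is what will buy us the room to substitute $\pas_T$ for $\pas_t$ without leaving the stable tube.

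First I would introduce the auxiliary stepsize sequence $\prn_t\deq \pas_T$ for $T< t\leq T+\flng{T}$ (and $\prn_t\deq \pas_t$ otherwise) and verify the trajectory stays in the control ball. Since $(\pas_t)$ is non-increasing and $\pas_T\geq \pas_t$ for $t\geq T$, and by the homogeneity from \rehyp{spdes} / \relem{homsatis} we have $\pas_T=(1+o(1))\inf_{T<t\leq T+\flng{T}} \pas_t$, so for $T$ large enough $\pas_T\leq 2\,\inf_{T<t\leq T+\flng{T}}\pas_t$. Combined with $\cdvp\leq \brnp/2$, this ensures $\prn_t\leq \brnp\,\sched_t$ in an effective sense. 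More concretely, by \recor{convzerosompasintervalles} we have $\sum_{T<s\leq T+\flng{T}}\pas_T\,\fmdper{s}\sim \pas_T\,\flng{T}\,\fmdper{T}\to 0$, so that for $T$ large enough this sum is at most $\rayoptct/(3\majmpc)$; hence by the argument of \relem{maintientpsfinibocont} (re-applied with the constant stepsize $\prn$), the iterates $(\param_t)_{T\leq t\leq T+\flng{T}}$ remain in $\boctopt$ and every $\vtanc_t$ lies in $\bvtg{t}$ (which is already assumed here). For a finite range of small $T$, this is ensured by taking $\cdvp$ small enough as in \relem{contempsfiniprmprodalgortrl}.

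Next, with this safety in hand I would iterate \rehyp{updateop_first}, exactly as in the proof of \relem{citersbopexp}: by induction
\begin{equation*}
\param_t=\param_T-\pas_T\sum_{s=T+1}^t \vtanc_s+\sum_{s=T+1}^t \pas_T^2\,\nrm{\vtanc_s}^2\,\paramupdate_s^{(2)}(\param_{s-1},\pas_T\vtanc_s).
\end{equation*}
It remains to bound the residual. By \recor{normpasv} (whose assumptions have all been verified in Section~\ref{sec:contrtrlappxalgoart}) and by the definition of $\bvtg{s}$ just after \relem{boundgradrtrl}, we have $\nrm{\vtanc_s}=\go{\fmdper{s}}$; moreover $\norm{\pas_T\vtanc_s}\leq r_\Tangent$ for $\cdvp\leq \brnp/2$, and $\param_{s-1}\in \boctopt$, so $\paramupdate_s^{(2)}(\param_{s-1},\pas_T\vtanc_s)$ is uniformly bounded by \rehyp{updateop_first}. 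Hence each term of the residual sum is $\go{\pas_T^2\,\fmdper{s}^2}$.

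Finally I would collapse the bound to the stated form. Since $\fmdper{}$ is a scale function and $T+\flng{T}\sim T$, we have $\fmdper{s}\sim \fmdper{T}$ uniformly for $T\leq s\leq T+\flng{T}$, so the residual is $\go{\flng{T}\,\pas_T^2\,\fmdper{T}^2}$, giving the desired expansion. The main (and only nontrivial) obstacle in the plan is the first step: justifying that the constant-stepsize iterates do not leave the stable tube over the whole interval $[T;T+\flng{T}]$, which is where the stronger hypothesis $\cdvp\leq \brnp/2$ and the sup/inf homogeneity of $(\pas_t)$ from \rehyp{spdes} are used.
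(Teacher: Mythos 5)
Your strategy is essentially correct and closely parallels the paper's: both proofs reduce the constant-stepsize update to the variable-stepsize case via the sup/inf homogeneity of $(\pas_t)$ (Lemma~\ref{lem:stronghomo}), so that $\pas_T/\pas_t = 1+o(1)$, and the hypothesis $\cdvp\le\brnp/2$ buys an extra factor of $2$ needed for the bound $\norm{\pas_T\vtanc_t}\le r_\Tangent$. The packaging differs: the paper defines rescaled gradients $\vtanc'_t=(\pas_T/\pas_t)\vtanc_t$, observes that $\paramupdate_{T:t}(\param_T,(\pas_T\vtanc_t))=\paramupdate_{T:t}(\param_T,(\pas_t\vtanc'_t))$, and then reuses the proof of Lemma~\ref{lem:citersbopexp} verbatim with the variable stepsize $(\pas_t)$ and the rescaled $\vtanc'_t$; you instead re-run the induction from \rehyp{updateop_first} from scratch with the constant stepsize $\pas_T$, re-verifying stability along the way. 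Once unpacked these are the same computation, and your final collapse of the residual via $\fmdper{s}\sim\fmdper{T}$ matches the paper's appeal to \recor{convzerosompasintervalles}.

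One step in your plan does not work as written. You say that for the finite range of small $T$ the stability ``is ensured by taking $\cdvp$ small enough as in \relem{contempsfiniprmprodalgortrl}.'' But the conclusion of Lemma~\ref{lem:firstorderupdate} is asserted for \emph{every} $\cdvp\le\brnp/2$; you are not free to impose an additional, $T$-dependent smallness constraint on $\cdvp$ inside the proof (that would weaken the statement and break its downstream uses where $\cdvp$ is fixed before $T$ is varied). The correct handling, which is what the paper's parenthetical ``(What happens for small $T$ is absorbed in the $\go{}$ notation.)'' is doing, is that Lemma~\ref{lem:stronghomo}'s bound $\pas_T/\pas_t<2$ only needs to hold for $T$ large enough: for the finitely many small $T$, the error on either side is controlled for every $\cdvp\le\brnp/2$ by what is essentially a $T$-dependent constant, which can be absorbed into the (single, $\cdvp$-uniform) constant hidden in the $\go{}$. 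The rest of your argument is sound.
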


\begin{proof}
For $T \leq t \leq T+\flng{T}$,
define $\vtanc'_t\deq (\pas_T/\pas_t)\vtanc_t$, so that
\begin{equation*}
\paramupdate_{T:t}(\param_{T},(\pas_T\,\vtanc_t))=\paramupdate_{T:t}(\param_{T},(\pas_t\,\vtanc'_t)).
\end{equation*}

Since we have assumed
$\cdvp\leq \brnp/2$, we can apply \recor{normpasv} with stepsize sequence
$2\pas_t$: this proves that $\norm{2\pas_t\vtanc_t}\leq r_\Tangent$.
By \relem{stronghomo}, $\pas_T/\pas_t$ is $1+o(1)$. Therefore, for $T$
large enough, we have $\pas_T/\pas_t<2$.  
Therefore,
$\norm{\pas_t\vtanc'_t}=\norm{(\pas_T/\pas_t)\,\pas_t\vtanc_t}\leq
r_\Tangent$ for $T$ large enough.
(What happens for small $T$ is absorbed in the
$\go{}$ notation.)

Since $\norm{\pas_t\vtanc'_t}\leq r_\Tangent$, we can reason exactly as
in \relem{citersbopexp} using $\vtanc'_t$ instead of $\vtanc_t$. 
This
yields 
\begin{equation*}
\param_t=\param_{T}-\sum_{s=T+1}^t \pas_s \vtanc'_s+
\go{\sum_{s=T+1}^t \pas_s^2 \,\fmdper{s}^2}
\end{equation*}
valid at least up to time $t=T+\flng{T}$ thanks to \relem{itvlgtflng}.
This yields the conclusion after substituting
$\vtanc'_s=(\pas_T/\pas_s)\vtanc_s$, and after observing that
\begin{equation*}
\sum_{s=T+1}^{T+\flng{T}}\pas_s^2\,\fmdper{s}^2=\go{\flng{T}\,\pas_T^2\,\fmdper{T}^2}
\end{equation*}
	because for $T<s\leq T+\flng{T}$ we have $\pas_s/\pas_T=1+o(1)$
	(by \relem{stronghomo}) and because
$\fmdper{s}\sim \fmdper{T}$ as $\fmdper{}$ is a scale function.
%
%
\end{proof}

\subsection{Stability of $\paramopt$ on Intervals $(T;T+\flng{T}]$}
\label{sec:stabprmitv}

\begin{lemme}[Averages over time intervals]
\label{lem:somneglint}
Let $\paren{\scsp_t}$ be a sequence with values in some normed vector
space. Assume that the average of $\scsp_t$ tends to $0$ fast enough,
namely,
\begin{equation*}
\frac{1}{T} \sum_{t=1}^T \scsp_t =O(\lngav(T)/T)
\end{equation*}
for some scale function $\lngav(T)\ll T$ when $T\to \infty$.

Let $\flng{T}$ be any scale function with $\lngav(T)\ll \flng{T}\ll T$
when $T\to\infty$. Then the averages of $\scsp$ over intervals
$[T;T+\flng{T}]$ tend to $0$, and more precisely
\begin{equation*}
\frac{1}{\flng{T}}\sum_{t=T+1}^{T+\flng{T}}\scsp_t =
O(\lngav(T)/\flng{T})
\end{equation*}
when $T\to\infty$.
\end{lemme}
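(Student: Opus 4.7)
The plan is to prove this by writing the partial sum over $(T;T+\flng{T}]$ as a difference of two partial sums from time $1$, and then using the hypothesis together with the asymptotic-preservation property of scale functions.

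Concretely, I would start from the telescoping identity
\begin{equation*}
\sum_{t=T+1}^{T+\flng{T}} \scsp_t = \sum_{t=1}^{T+\flng{T}} \scsp_t - \sum_{t=1}^{T} \scsp_t.
\end{equation*}
Applying the hypothesis to each sum on the right, we get $\sum_{t=1}^{T} \scsp_t = O(\lngav(T))$ and $\sum_{t=1}^{T+\flng{T}} \scsp_t = O(\lngav(T+\flng{T}))$, so the difference is bounded by $O(\lngav(T) + \lngav(T+\flng{T}))$.

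The next step is to reduce $\lngav(T+\flng{T})$ to $\lngav(T)$. Since $\flng{T} \ll T$, we have $T + \flng{T} \sim T$ as $T \to \infty$. Because $\lngav$ is a scale function (Def.~\ref{def:fechelle}, in particular property 2 about preservation of asymptotic equivalence at infinity), this yields $\lngav(T + \flng{T}) \sim \lngav(T)$. Hence $\sum_{t=T+1}^{T+\flng{T}} \scsp_t = O(\lngav(T))$, and dividing by $\flng{T}$ gives the claimed $O(\lngav(T)/\flng{T})$.

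There is no real obstacle here: the only thing to check carefully is that the scale-function hypothesis allows the substitution $\lngav(T+\flng{T}) \sim \lngav(T)$, which requires both $\flng{T} \ll T$ (so $T+\flng{T} \sim T$) and property 2 of Def.~\ref{def:fechelle}. The additional assumption $\lngav(T) \ll \flng{T}$ is not needed for the bound itself but ensures the bound is non-trivial, since it guarantees $\lngav(T)/\flng{T} \to 0$.
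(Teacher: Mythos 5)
Your proof is correct and follows exactly the same route as the paper's: telescope the partial sums, apply the hypothesis to each, and use $T+\flng{T}\sim T$ together with property 2 of Definition~\ref{def:fechelle} to replace $\lngav(T+\flng{T})$ by $\lngav(T)$. Your parenthetical observation that $\lngav(T)\ll\flng{T}$ is not needed for the bound itself, only to make it informative, is also correct.
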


\begin{proof}
For any $T\geq 1$,
\begin{equation*}
\sum_{t=T+1}^{T+\flng{T}}\scsp_t =
\sum_{t=1}^{T+\flng{T}}\scsp_t-\sum_{t=1}^{T}\scsp_t
=O(\lngav(T+\flng{T}))+O(\lngav(T))
\end{equation*}
by assumption.

When $T\to\infty$ we have $T+\flng{T}\sim T$
because $\flng{T}\ll T$. Since $\lngav$ is a scale function, we thus
have $\lngav(T+\flng{T})\sim \lngav(T)$.

Therefore
\begin{equation*}
\sum_{t=T+1}^{T+\flng{T}}\scsp_t =
O(\lngav(T))
\end{equation*}
hence the conclusion.
\end{proof}

Remember the scale function $\lngav(t)=t^{\expem'}$ was defined in \relem{intvavrg}, 
together with the scale function $\lng$. The constraint $\lngav\ll \lng$ is satisfied thanks to the same Lemma.

\begin{lemme}[Average of $ \frac{\partial
\sbpermc{t}(\stateopt_0,\paramopt)}{\partial
\param}$ on intervals]
\label{lem:averdloss}
When $T\to\infty$, we have
\begin{equation*}
\frac{1}{\flng{T}}\sum_{t=T+1}^{T+\flng{T}} \furt{\frac{\partial
\sbpermc{t}(\stateopt_0,\paramopt)}{\partial
\param}}{\fstate_t\cpl{\stateopt_0}{\pctrlopt}}{\pctrlopt}=
O(\lngav(T)/\flng{T}).
\end{equation*}
\end{lemme}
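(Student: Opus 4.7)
The plan is to apply the interval averaging lemma \relem{somneglint} directly to the sequence
\begin{equation*}
\scsp_t \deq \furt{\frac{\partial \sbpermc{t}(\stateopt_0,\paramopt)}{\partial \param}}{\fstate_t\cpl{\stateopt_0}{\pctrlopt}}{\pctrlopt}.
\end{equation*}
To do so, I need to check the hypothesis of \relem{somneglint}, namely that $\frac{1}{T}\sum_{t=1}^T \scsp_t = O(\lngav(T)/T)$, and that $\lngav(T) \ll \flng{T} \ll T$.

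First, the bound on the partial sum is essentially the content of \rehyp{critoptrtrlnbt}.1, which states that this average is $O(T^\expem/T)$. The mismatch between $\expem$ (from the assumption) and $\expem'$ (the exponent underlying $\lngav(T) = T^{\expem'}$) is handled by \relem{intvavrg}: the lemma explicitly notes that since $\expem' > \expem$, \rehyp{critoptrtrlnbt} remains valid with $\expem'$ in place of $\expem$. Hence $\frac{1}{T}\sum_{t=1}^T \scsp_t = O(T^{\expem'}/T) = O(\lngav(T)/T)$.

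Second, the comparison $\lngav(T) \ll \flng{T} \ll T$ (when $T\to\infty$) is exactly what \relem{intvavrg} establishes for the two scale functions $\lngav(T) = T^{\expem'}$ and $\flng{T} = T^\exli$, via the strict inequality $\expem' < \exli < 1$ built into their definitions.

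With both hypotheses verified, \relem{somneglint} applies directly and yields the claimed bound
\begin{equation*}
\frac{1}{\flng{T}}\sum_{t=T+1}^{T+\flng{T}} \scsp_t = O(\lngav(T)/\flng{T}).
\end{equation*}
The proof is therefore essentially a bookkeeping exercise: no new analytic obstacle arises, and the only subtle point to flag is the exponent substitution $\expem \to \expem'$ permitted by \relem{intvavrg}. Everything else is immediate unpacking of definitions.
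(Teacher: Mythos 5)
Your proof is correct and follows the same route as the paper: apply \relem{somneglint} to the sequence of extended updates at $\paramopt$, with the hypothesis $\frac{1}{T}\sum_{t=1}^T \scsp_t = O(\lngav(T)/T)$ furnished by \rehyp{critoptrtrlnbt} after the exponent substitution $\expem \to \expem'$ justified by \relem{intvavrg}. The extra step you include---explicitly verifying $\lngav(T) \ll \flng{T} \ll T$---is a sound piece of bookkeeping that the paper leaves implicit.
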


\begin{proof}
This is a direct consequence of \relem{somneglint} and
the definition of $\expem'$ in \relem{intvavrg}. Indeed, by the latter we know that \rehyp{critoptrtrlnbt} is satisfied with $\expem'$ instead of $\expem$, so that we have
\begin{equation*}
\frac{1}{T}\sum_{t=1}^T
\furt{\frac{\partial}{\partial \param}\,\sbpermc{t}(\stateopt_0,\paramopt)}{\fstate_t\cpl{\stateopt_0}{\pctrlopt}}{\pctrlopt}
=O(\lngav(T)/T).
\end{equation*}
Therefore we can apply \relem{somneglint} to the average of
$
\furt{\frac{\partial\,\sbpermc{t}(\stateopt_0,\paramopt)}{\partial
\param}}{\fstate_t\cpl{\stateopt_0}{\pctrlopt}}{\pctrlopt}$.
\end{proof}

\begin{lemme}[Deviation from the optimal parameter for updates computed along the optimal trajectory]
\label{lem:rtrlstableopt}
Set $\memopt_0\deq (\stateopt_0,0)$ (RTRL 
initialized at $\stateopt_0$ with $J_0=0$) and
$\memopt_{t}=\Algo_{0:t}(\paramopt,\memopt_0)$, the RTRL state at time $t$
using the optimal parameter.
Assume that 
$\cdvp\leq \brnp/2$. 
Then
\begin{equation*}
\paramupdate_{T:T+\flng{T}}(\paramopt,\memopt_T,\pas_T)
=\paramopt+
\po{\pas_T\,\flng{T}}.
\end{equation*}

Consequently, the first part of \rehyp{optimprm} (first-order stability of $\paramopt$)
is satisfied for extended RTRL algorithms.
\end{lemme}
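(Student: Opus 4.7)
The plan is to assemble the result from pieces already established: (i) the open-loop iterate of RTRL is expressible in terms of the actual loss derivatives, (ii) the iterated parameter update at fixed stepsize admits a first-order expansion, and (iii) the time-averaged gradient at $\paramopt$ decays fast enough.

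First I would invoke Corollary \ref{cor:RTRLiter} applied with $t_1=T$, $t_2=T+\flng{T}$, $\param=\paramopt$, and constant rate $\pas_T$. Since $\memopt_T = \Algo_{0:T}(\paramopt,\memopt_0)$, the iterated open-loop operator is rewritten as
\begin{equation*}
\paramupdate_{T:T+\flng{T}}(\paramopt,\memopt_T,\pas_T)
=
\paramupdate_{T:T+\flng{T}}\!\left(\paramopt,\left(\pas_T\,\vtanc_t^*\right)_t\right),
\qquad
\vtanc_t^*\deq \furt{\tfrac{\partial \sbpermc{t}(\stateopt_0,\paramopt)}{\partial\param}}{\fstate_t(\stateopt_0,\paramopt)}{\paramopt}.
\end{equation*}
By Lemma \ref{lem:boundgradrtrl} (applied along the target trajectory, which lies in the stable tube by Corollary \ref{cor:rtrlstabletubesJ}), each $\vtanc_t^*$ is bounded in norm by $O(\fmdper{t})$, so $\vtanc_t^* \in \bvtg{t}$.

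Next I would apply Lemma \ref{lem:firstorderupdate} with initial parameter $\param_T=\paramopt$ and gradient sequence $(\vtanc_t^*)$; the hypotheses are met because $\paramopt\in\boctopt$ trivially with $\dist{\paramopt}{\paramopt}=0<\rayoptct/3$, and the interval $(T,T+\flng{T}]$ is contained in $[T,T+\thoriz{T}(\sm{\pas}))$ by Lemma \ref{lem:itvlgtflng} (valid once $T$ is large enough and $\cdvp\leq\brnp/2$; small $T$ is absorbed in the $\po{}$). This yields
\begin{equation*}
\paramupdate_{T:T+\flng{T}}(\paramopt,\memopt_T,\pas_T)=\paramopt-\pas_T\sum_{t=T+1}^{T+\flng{T}}\vtanc_t^* + O\!\left(\flng{T}\,\pas_T^2\,\fmdper{T}^2\right).
\end{equation*}

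Finally, I would control the two remaining terms. The remainder $O(\flng{T}\,\pas_T^2\,\fmdper{T}^2)$ equals $\pas_T\,\flng{T}\cdot O(\pas_T\,\fmdper{T}^2)$, and \rehyp{spdes}.2 (with $\mlipgrad{t}=\fmdper{t}$ by Lemma \ref{lem:boundgradrtrl}) gives $\pas_T\,\flng{T}\,\fmdper{T}^2\to 0$, hence $\pas_T\,\fmdper{T}^2\to 0$ (since $\flng{T}\geq 1$), so this remainder is $\po{\pas_T\,\flng{T}}$. For the main sum, Lemma \ref{lem:averdloss} gives
\begin{equation*}
\tfrac{1}{\flng{T}}\sum_{t=T+1}^{T+\flng{T}}\vtanc_t^* = O(\lngav(T)/\flng{T}),
\end{equation*}
and since $\lngav\ll\lng$ by Lemma \ref{lem:intvavrg}, we obtain $\pas_T\sum_{t=T+1}^{T+\flng{T}}\vtanc_t^* = \pas_T\,\flng{T}\cdot o(1) = \po{\pas_T\,\flng{T}}$. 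Combining yields the desired bound. Uniformity over $\cdvp\leq\cdvpop$ follows because all $O$/$o$ constants depend only on the bounds in the assumptions and on $\sched_t$, not on $\cdvp$ itself (Remark \ref{rem:O}); the first-order part of \rehyp{optimprm} is then immediate.

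The argument is essentially a bookkeeping step; the only mild subtlety is verifying that the second-order error from Lemma \ref{lem:firstorderupdate} is $\po{\pas_T\flng{T}}$ rather than merely $O(\pas_T\flng{T})$, which is precisely where the strengthening $\pas_t\flng{t}\fmdper{t}\mlipgrad{t}\to 0$ in \rehyp{spdes} (as opposed to just boundedness) is used. No deep obstacle arises here; the harder work—Assumption \ref{hyp:optimprm}.2 (contractivity), handled separately in Section \ref{sec:contaroundparamopt}—is postponed.
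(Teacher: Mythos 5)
Your proof is correct and follows essentially the same route as the paper's: rewrite the open-loop update via \recor{RTRLiter}, apply the first-order expansion of \relem{firstorderupdate}, and then absorb both the Hessian error term and the averaged-gradient term into $\po{\pas_T\flng{T}}$ via \relem{averdloss} and the exponent relations of \relem{intvavrg}. The extra verification you add (that $\vtanc_t^*\in\bvtg{t}$ and that $\flng{T}<\thoriz{T}$ via \relem{itvlgtflng}) is implicit in the paper's version and correct.
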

\begin{proof}
%
For any $T\geq 0$,
by \recor{RTRLiter} applied to $\paramopt$ with the stepsize
sequence $\pas_{t;\,t_1,t_2}\deq \pas_{t_1}$, we have
\begin{equation*}
\paramupdate_{T:T+\flng{T}}(\paramopt,
\memopt_{T}
,\pas_T)=
\paramupdate_{T:T+\flng{T}}\paren{\paramopt,\paren{\pas_T\,\vtanc_t}_{t\,}}
\end{equation*}
where
\begin{equation*}
\vtanc_t=
\furt{\frac{\partial
\sbpermc{t}(\stateopt_0,\paramopt)}{\partial
\param}}{\fstate_t\cpl{\stateopt_0}{\paramopt}}{\paramopt}.
\end{equation*}

Since the optimal trajectory stays in the stable tube, we have
$\vtanc_t\in \bvtg{t}$ by definition of $\bvtg{t}$.
Then by
\relem{firstorderupdate}, 
\begin{multline*}
\paramupdate_{T:T+\flng{T}}\paren{\paramopt,\paren{\pas_T\,\vtanc_t}_{t\,}}=
\\
\paramopt-\,\pas_T \sum_{t=T+1}^{T+\flng{T}} 
\furt{\frac{\partial
\sbpermc{t}(\stateopt_0,\paramopt)}{\partial
\param}}{\fstate_t\cpl{\stateopt_0}{\paramopt}}{\paramopt}
+\go{ \pas_T^2\,\flng{T}\,\fmdper{T}^2}
\end{multline*}
and by \relem{averdloss}, 
\begin{equation*}
\sum_{t=T+1}^{T+\flng{T}} 
\furt{\frac{\partial
\sbpermc{t}(\stateopt_0,\paramopt)}{\partial
\param}}{\fstate_t\cpl{\stateopt_0}{\paramopt}}{\paramopt}
=O(\lngav(T)).
\end{equation*}

Finally, by Lemma
\ref{lem:intvavrg}, both
$\pas_T \,\lngav(T)$ and $\pas_T^2\,\flng{T}\,\fmdper{T}^2$
are $\po{\pas_T\,\flng{T}}$.
\end{proof}

\subsection{Contractivity Around $\paramopt$}
\label{sec:contaroundparamopt}

We now turn to the second part of \rehyp{optimprm}, contractivity around
$\paramopt$: we have to prove that 
\begin{multline*}
\dist{\paramupdate_{t:t+\flng{t}}\paren{\param,\memopt_t,\pas_t}
}{\paramupdate_{t:t+\flng{t}}\paren{\paramopt,\memopt_t,\pas_t}}
\\\leq
\paren{1-\mvpmin\,\pas_t\,\flng{t}}\,\dist{\prmctrl}{\paramopt} +
\po{\pas_t\,\flng{t}}.
\end{multline*}
We will use a suitable Lyapunov function to define a suitable Euclidean distance
$\dist{\param}{\paramopt}$ for which this holds.

Remember the notation from \rehyp{critoptrtrlnbt}, and in particular, the
matrices $\fht{\param}$ (Jacobian of the parameter update) and
$\linalgmatrix$ (time average of $\fht{\paramopt}$).
%
%
%
Notably, remember that we have endowed $\Param$ with the norm
\begin{equation*}
\norm{\param}^2\deq \transp{\param}B\,\param
\end{equation*}
where $B$, defined in \relem{suitposdefm}, is such that $B\linalgmatrix+\transp{\linalgmatrix}B$ is
positive definite and $\linalgmatrix$ is
given by \rehyp{critoptrtrlnbt}. This norm is used as an approximate
Lyapunov function for the algorithm.

\begin{lemme}[Controlling different initialisations for open-loop trajectories]
As before, let $\memopt_0\deq (\stateopt_0,0)$. Let
$\param\in\boctopt$ with $\dist{\param}{\paramopt}\leq\rayoptct/3$.
For $T\geq 0$, let $\mem_{T}=\Algo_{0:T}(\param,\memopt_0)$.
Assume $\cdvp\leq \brnp/2$.
Then 
\begin{equation*}
\paramupdate_{T:T+\flng{T}}(\param, \mem_{T} ,\pas_T)
-
\paramupdate_{T:T+\flng{T}}(\param, \mem_{T}^* ,\pas_T)
=\po{\pas_T\,\flng{T}}.
\end{equation*}
\end{lemme}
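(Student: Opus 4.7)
The plan is a direct application of Lemma~\ref{lem:contecarttrajmprmqidiff} (which controls the divergence between two open-loop trajectories sharing a fixed parameter but starting from different states) in the time-shifted form justified by Remark~\ref{rem:timeshift}. The two quantities whose difference we must bound are precisely open-loop updates on the interval $(T;T+\flng{T}]$ using the common parameter $\param$ and the constant stepsize $\pas_T$, starting respectively from $\mem_T$ and $\memopt_T$. So the strategy is to verify the hypotheses of that lemma at time $T$ and read off the resulting bound.

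First I would check all hypotheses. The parameter satisfies $\param\in\boctopt$ with $\dist{\param}{\paramopt}\leq \rayoptct/3$ by assumption. For the initial states: $\memopt_0=(\stateopt_0,0)$ lies in the stable tube $\bmaint{0}=\tube_0\times\tubejope_0$ (since $\tube_0$ contains a neighborhood of $\stateopt_0$ by Lemma~\ref{lem:rtrlstabletubes}, and $\tubejope_0$ contains a ball around $0$ by Corollary~\ref{cor:rtrlstabletubesJ}); iterating $\Algo_t$ with parameters $\param\in\boctopt$ (resp.\ $\paramopt\in\boctopt$) keeps these in $\bmaint{t}$ by Corollary~\ref{cor:stablestates}, so both $\mem_T$ and $\memopt_T$ lie in $\bmaint{T}$. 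The stepsize sequence (constant equal to $\pas_T$ on $(T;T+\flng{T}]$) satisfies $\cdvp\leq\brnp/2\leq\brnp$ by assumption. Finally, the lemma requires $t<\thoriz{T}(\sm{\pas})$ on the interval considered, which holds for all $t\leq T+\flng{T}$ when $T$ is large enough by Lemma~\ref{lem:itvlgtflng}; what happens for finitely many small $T$ is absorbed into the $\po{}$.

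Applying Lemma~\ref{lem:contecarttrajmprmqidiff} in the form shifted to start at time $T$ (Remark~\ref{rem:timeshift}) to the constant stepsize sequence $\pas_t\equiv \pas_T$ on $(T;T+\flng{T}]$, I get
\begin{equation*}
\dist{\paramupdate_{T:T+\flng{T}}(\param,\mem_T,\pas_T)}{\paramupdate_{T:T+\flng{T}}(\param,\memopt_T,\pas_T)}
=\go{\mlipgrad{T+\flng{T}}\sup_{T<s\leq T+\flng{T}}\pas_s}
=\go{\mlipgrad{T+\flng{T}}\,\pas_T}.
\end{equation*}
Since $\flng{T}\ll T$, we have $T+\flng{T}\sim T$, and because $\mlipgrad{}$ is a scale function this gives $\mlipgrad{T+\flng{T}}\sim\mlipgrad{T}$. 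The conclusion $\po{\pas_T\,\flng{T}}$ then follows from $\mlipgrad{t}\ll\flng{t}$, which is part of \rehyp{spdes} (verified in Corollary~\ref{cor:rtrlsuitstep} for our choice $\mlipgrad{t}=\fmdper{t}$ and $\flng{t}=t^A$).

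There is no real obstacle here: the lemma was designed precisely for this purpose, and the only tasks are (i) justifying the time-shift — which the paper has already committed to in Remark~\ref{rem:timeshift} — and (ii) the asymptotic comparison $\mlipgrad{T}\ll\flng{T}$. The more delicate step conceptually is that the two initializations $\mem_T$ and $\memopt_T$ can be arbitrarily far apart in $\bmaint{T}$ (since the stable tube is only bounded, not shrinking), but this is already absorbed in the constant of the $\go{}$ in Lemma~\ref{lem:contecarttrajmprmqidiff} via the finite diameter of $\bmaint{T}$ used at the end of its proof.
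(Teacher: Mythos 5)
Your strategy is sound in spirit — Lemma~\ref{lem:contecarttrajmprmqidiff} packages exactly the two ingredients (\relem{contordredeuxitprm2} and the exponential forgetting in \recor{expforgetgrad}) that the paper's own proof assembles by hand via \relem{firstorderupdate}, \rehyp{contgrad} and \rehyp{expforgetinit}. However, there is a genuine gap in how you invoke it: you apply \relem{contecarttrajmprmqidiff} with the \emph{constant} stepsize $\pas_T$ on $(T;T+\flng{T}]$, and claim the hypothesis ``$\cdvp\leq\brnp$'' holds ``by assumption.'' But $\cdvp$ and $\brnp$ in that lemma are tied to a fixed stepsize \emph{schedule} (Def.~\ref{def:stepseqovlrstpsch}, Def.~\ref{def:bornepas}): the bound $\brnp$ depends on $\sup_s\sched_s\,\fmdper{s}$; passing the constant sequence $\pas_s\equiv\pas_T$ changes the schedule, and for an unbounded $\fmdper{\cdot}$ the corresponding $\brnp$ is $0$. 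In other words, ``$\cdvp\leq\brnp/2$'' is an assumption about the original varying rates, not about the constant $\pas_T$-sequence, and the lemma as stated does not apply to the latter without more work.

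The paper anticipates exactly this difficulty: constant-stepsize updates are handled by the rescaling argument in \relem{firstorderupdate} (it writes $\vtanc'_t=(\pas_T/\pas_t)\vtanc_t$ and uses $\pas_T/\pas_t=1+o(1)$ together with $\cdvp\leq\brnp/2$ so that $\norm{\pas_t\vtanc'_t}\leq r_\Tangent$), which is precisely how the paper's own proof of this lemma begins. You could salvage your route as follows: first apply \relem{contecarttrajmprmqidiff} with the \emph{varying} rates $(\pas_t)_t$, giving
\begin{equation*}
\dist{\paramupdate_{T:T+\flng{T}}(\param,\mem_T,(\pas_s))}{\paramupdate_{T:T+\flng{T}}(\param,\memopt_T,(\pas_s))}
=\go{\mlipgrad{T}\,\pas_T}=\po{\pas_T\,\flng{T}};
\end{equation*}
then use \relem{changerates} twice (once for $\mem_T$, once for $\memopt_T$) to replace $(\pas_s)$ by the constant $\pas_T$ at an extra cost of $\po{\pas_T\,\flng{T}}$; and finish by the triangle inequality. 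That is the same pattern the paper uses in \relem{extloc} and \relem{contractitvk}.
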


\begin{proof}
For $t\geq T$, let us write
\begin{equation*}
\vtanc_t\deq \sm{V}_t(\param,\Algo_{T:t}(\param,\mem_T)).
\end{equation*}
($\sm{V}_t$ and $\Algo$ for RTRL are given by Def.~\ref{def:rtrlasalgo}). Since
$\param$ belongs to $\boctopt$, $\mem_T$ and
$\Algo_{T:t}(\param,\mem_T)$ belong to the stable tube, so
that $\vtanc_t\in \bvtg{t}$ by definition of $\bvtg{t}$.

By definition of the open-loop updates $\paramupdate$
(Def.~\ref{def:olupdate}), we have
\begin{equation*}
\paramupdate_{T:T+\flng{T}}(\param, \mem_{T} ,\pas_T)=
\paramupdate_{T:T+\flng{T}}(\param,(\pas_T \,\vtanc_t)_t).
\end{equation*}
Likewise with $\memopt_T$ instead of $\mem_T$, set $\vtanc'_t\deq
\sm{V}_t(\param,\Algo_{T:t}(\param,\memopt_T))$ so that
$\paramupdate_{T:T+\flng{T}}(\param, \memopt_{T} ,\pas_T)=
\paramupdate_{T:T+\flng{T}}(\param,(\pas_T \,\vtanc'_t)_t)$.

By \relem{firstorderupdate}, we have
\begin{multline*}
\paramupdate_{T:T+\flng{T}}\paren{\param,\paren{\pas_T\,\vtanc_t}_t}
-
\paramupdate_{T:T+\flng{T}}\paren{\param,\paren{\pas_T\,\vtanc_t'}_t}
=\\
\param-\param-\pas_T \sum_{t=T+1}^{T+\flng{T}} 
(\vtanc_t-\vtanc_t')
+\go{\pas_T^2\,\flng{T}\,\fmdper{T}^2
}.
\end{multline*}

By \rehyp{contgrad} (which has been checked for RTRL in the previous
section) we have
\begin{equation*}
\norm{\vtanc_t-\vtanc_t'}=\go{\fmdper{t}\norm{\Algo_{T:t}(\param,\mem_T)-\Algo_{T:t}(\param,\memopt_T)}}.
\end{equation*}
By \rehyp{expforgetinit} (which has been checked for RTRL in the previous
section, with constant $(1-\alpha/2)$),
\begin{equation*}
\norm{\Algo_{T:t}(\param,\mem_T)-\Algo_{T:t}(\param,\memopt_T)}=\go{(1-\alpha/2)^{\frac{t-T}{k}}\,\norm{\mem_T-\memopt_T}}
\end{equation*}
and $\norm{\mem_T-\memopt_T}$ is bounded because both belong to the
stable tube.
%
Therefore,
\begin{equation*}
\norm{\vtanc_t-\vtanc_t'}=
\go{\fmdper{t}\paren{1-\alpha/2}^{\frac{t-T}{k}}}.
\end{equation*}
As a result,
\begin{multline*}
\paramupdate_{T:T+\flng{T}}\paren{\param,\paren{\pas_T\,\vtanc_t}_t}
-
\paramupdate_{T:T+\flng{T}}\paren{\param,\paren{\pas_T\,\vtanc_t'}_t}
=\\
\go{\pas_T\,\sum_{t=T+1}^{T+\flng{T}}\,\fmdper{t}\,\paren{1-\alpha/2}^{\frac{t-T}{k}}}
+\go{\pas_T^2\,\flng{T}\,\fmdper{T}^2}
=\\
\go{\pas_T\,\fmdper{T}}
+\go{\pas_T^2\,\flng{T}\,\fmdper{T}^2}
=\go{\pas_T\,\fmdper{T}}
=\po{\pas_T\,\flng{T}},
\end{multline*}
thanks to \recor{convzerosompasintervalles} and the fact $\fmdper{T}=\po{\flng{T}}$.
\end{proof}

\begin{lemme}[Difference between open-loop trajectories]
\label{lem:exprediffbolt}
Let $\memopt_0\deq (\stateopt_0,0)$. Let
$\param\in\boctopt$ with $\dist{\param}{\paramopt}\leq\rayoptct/3$.
For $T\geq 0$, let $\mem_{T}=\Algo_{0:T}(\param,\memopt_0)$ be the RTRL state
obtained at time $T$ from parameter $\param$, and
$\memopt_T=\Algo_{0:T}(\paramopt,\memopt_0)$.
Assume $\cdvp\leq \brnp/2$.
For $u \in [0,1]$, denote $\param^u\deq \prmctrl+u \,
\paren{\pctrlopt-\prmctrl}$.
Then, 
\begin{multline*}
\paramupdate_{T:T+\flng{T}}(\param, \mem_{T} ,\pas_T)
-
\paramupdate_{T:T+\flng{T}}(\paramopt,\memopt_T,\pas_T)
=\\
\param-\paramopt - \,\pas_T \, \int_0^1 \,
\sum_{t=T+1}^{T+\flng{T}}\,\fht{\param^u} \cdot
\paren{\param-\paramopt} \, du  
 +\go{
 \pas_T^2\,\flng{T}\,\fmdper{T}^2
 }.
\end{multline*}
\end{lemme}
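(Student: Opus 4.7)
The approach is to rewrite both open-loop updates via \recor{RTRLiter} in terms of the true recurrent gradients of the loss, expand each to first order in $\pas_T$ using \relem{firstorderupdate}, subtract, and conclude by the fundamental theorem of calculus applied to the Jacobians $\sbfh_t$. Because $\memopt_0 = \cpl{\stateopt_0}{0}$ together with $\mem_T = \Algo_{0:T}\paren{\param,\memopt_0}$ and $\memopt_T = \Algo_{0:T}\paren{\paramopt,\memopt_0}$, each open-loop update starts from the correct RTRL state for its own parameter; the preceding lemma (on replacing $\mem_T$ by $\memopt_T$) is therefore not needed here, and the matching of initial states to parameters is automatic.

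Concretely, write $g_t\paren{\param'} \deq \furt{\partial_{\param} \sbpermc{t}\paren{\stateopt_0,\param'}}{\fstate_t\paren{\stateopt_0,\param'}}{\param'}$. Applying \recor{RTRLiter} with the constant stepsize sequence $\pas_{t;T,T+\flng{T}} \equiv \pas_T$ gives
\begin{equation*}
\paramupdate_{T:T+\flng{T}}\paren{\param,\mem_T,\pas_T} = \paramupdate_{T:T+\flng{T}}\paren{\param,\paren{\pas_T\,g_t\paren{\param}}_{t}},
\end{equation*}
and the analogous identity with $\paramopt$, $\memopt_T$, and $g_t\paren{\paramopt}$. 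Since $\param,\paramopt \in \boctopt$ and $\Algo_{0:t}\paren{\cdot,\memopt_0}$ stays in the stable tube by \recor{rtrlstabletubesJ}, the vectors $g_t\paren{\param}$ and $g_t\paren{\paramopt}$ belong to $\bvtg{t}$. The hypotheses of \relem{firstorderupdate} ($\cdvp\leq\brnp/2$, starting parameters within $\rayoptct/3$ of $\paramopt$, gradients in $\bvtg{t}$) are all satisfied, so it yields
\begin{equation*}
\paramupdate_{T:T+\flng{T}}\paren{\param,\mem_T,\pas_T} = \param - \pas_T\sum_{t=T+1}^{T+\flng{T}} g_t\paren{\param} + \go{\pas_T^2\,\flng{T}\,\fmdper{T}^2},
\end{equation*}
and the parallel expansion at $\paramopt$ with $g_t\paren{\paramopt}$.

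Subtracting the two expansions produces
\begin{equation*}
\paramupdate_{T:T+\flng{T}}\paren{\param,\mem_T,\pas_T} - \paramupdate_{T:T+\flng{T}}\paren{\paramopt,\memopt_T,\pas_T} = \paren{\param-\paramopt} - \pas_T\sum_{t=T+1}^{T+\flng{T}} \paren{g_t\paren{\param}-g_t\paren{\paramopt}} + \go{\pas_T^2\,\flng{T}\,\fmdper{T}^2}.
\end{equation*}
By \eqref{eq:jacobians} one has $\sbfh_t\paren{\param'} = \partial_{\param} g_t\paren{\param'}$. Since the segment $\acco{\param^u : u \in [0,1]}$ lies in $\boctopt$ by convexity and $g_t$ is $C^1$ on $\boctopt$ (combining \rehyp{regftransetats}, \rehyp{regpertes}, and \rehyp{fupdrl}), the fundamental theorem of calculus along this segment gives
\begin{equation*}
g_t\paren{\param} - g_t\paren{\paramopt} = \int_0^1 \fht{\param^u}\cdot\paren{\param-\paramopt}\,du.
\end{equation*}
Substituting and interchanging the finite sum and the integral yields the announced identity. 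The proof is essentially bookkeeping; there is no genuine obstacle, the only point to verify being the derivative identity $\partial_{\param} g_t = \sbfh_t$, which is exactly the defining property of $\sbfh_t$ in \eqref{eq:jacobians}.
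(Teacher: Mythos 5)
Your proof is correct and follows essentially the same path as the paper's: apply \recor{RTRLiter} with constant stepsize to rewrite each open-loop update in terms of the recurrent gradients, expand both via \relem{firstorderupdate}, subtract, and conclude by the fundamental theorem of calculus using $\partial_\param g_t = \sbfh_t$ from \eqref{eq:jacobians}. Your observation that the preceding ``switching initialisations'' lemma is not needed here (only in \relem{contractpctrlopt}) is also accurate.
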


\begin{proof}
By \recor{RTRLiter} with stepsize
sequence $\pas_{t;\,t_1,t_2}=\pas_{t_1}$, we have
\begin{equation*}
\paramupdate_{T:T+\flng{T}}(\param,
\mem_{T}
,\pas_T)=
\paramupdate_{T:T+\flng{T}}\paren{\param,\pas_T\paren{\furt{\frac{\partial
\sbpermc{t}(\stateopt_0,\param)}{\partial
\param}}{\fstate_t\cpl{\stateopt_0}{\param}}{\param}}_{\!t\,}}.
\end{equation*}

Let us abbreviate
\begin{equation*}
\vtanc_t(\param)\deq \furt{\frac{\partial
\sbpermc{t}(\stateopt_0,\param)}{\partial
\param}}{\fstate_t\cpl{\stateopt_0}{\param}}{\param}.
\end{equation*}

Since $\param\in\boctopt$
and $\stateopt_0\in \tube_0$, $\mem_T$ belongs to the stable tube at time
$T$. Consequently, $\vtanc_t(\param)\in \bvtg{t}$ by definition of
$\bvtg{t}$.

By
\relem{firstorderupdate},
\begin{equation*}
\paramupdate_{T:T+\flng{T}}\paren{\param,\paren{\pas_T\,\vtanc_t(\param)}_t}=
\param-\pas_T \sum_{t=T+1}^{T+\flng{T}} 
\vtanc_t(\param)
+\go{\pas_T^2 \,\flng{T}\, \fmdper{T}^2}.
\end{equation*}

Therefore, by writing the same result at $\param=\paramopt$ and taking
differences, we find
\begin{multline*}
\paramupdate_{T:T+\flng{T}}\paren{\param,\paren{\pas_T\,\vtanc_t(\param)}_t}
-
\paramupdate_{T:T+\flng{T}}(\paramopt,\memopt_T,\pas_T)
=\\
\param-\paramopt-\pas_T \sum_{t=T+1}^{T+\flng{T}} 
(\vtanc_t(\param)-\vtanc_t(\paramopt))
+\go{
\pas_T^2\,\flng{T}\,\fmdper{T}^2
}.
\end{multline*}
Now, by the definitions of $\fht{\param}$ (\rehyp{critoptrtrlnbt}) and of
$\vtanc_t(\param)$ above, we
have
\begin{equation*}
\vtanc_t(\param)=
\vtanc_t(\paramopt)+\int_0^1 \, \fht{\param^u} \cdot
\paren{\param-\paramopt} \, du
\end{equation*}
hence the conclusion.
\end{proof}

\begin{lemme}[Average of Hessians over time intervals]
\label{lemme:hessapprox}
For $0\leq u \leq 1$ and $\theta\in\boctopt$, one has
\begin{equation*}
\sum_{t=T+1}^{T+\flng{T}}\,\fht{\param^u}=\flng{T}\left(
\linalgmatrix
+
\go{\rho\paren{\nrm{\param-\paramopt}}}
+
O(\lngav(T)/\flng{T})
\right).
\end{equation*}
Moreover, the $\go{\modct{\nrm{\param-\pctrlopt}}}$ term is uniform over $0\leq \cdvp\leq \brnp$ and over $\param\in\boctopt$.
\end{lemme}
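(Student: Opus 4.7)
The plan is to split $\fht{\param^u}$ into the constant (in $u$) reference value $\fht{\paramopt}$ plus a remainder controlled by the equicontinuity Assumption~\ref{hyp:equicontH}, and then to reduce the sum over the interval $(T;T+\flng{T}]$ to an asymptotic ergodic average via Lemma~\ref{lem:somneglint}. Concretely, write
\begin{equation*}
\sum_{t=T+1}^{T+\flng{T}} \fht{\param^u}
=\sum_{t=T+1}^{T+\flng{T}} \fht{\paramopt}
+\sum_{t=T+1}^{T+\flng{T}} \paren{\fht{\param^u}-\fht{\paramopt}}.
\end{equation*}

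For the first sum, I would invoke the second part of Assumption~\ref{hyp:critoptrtrlnbt} (upgraded to the stronger exponent $\expem'$ via Lemma~\ref{lem:intvavrg}), namely $\frac{1}{T}\sum_{t=1}^T \fht{\paramopt}=\linalgmatrix+O(\lngav(T)/T)$. Applying Lemma~\ref{lem:somneglint} to the sequence $\scsp_t\deq \fht{\paramopt}-\linalgmatrix$ (using $\lngav\ll \flng{}\ll \mathrm{id}$, which holds by Lemma~\ref{lem:intvavrg}) gives
\begin{equation*}
\frac{1}{\flng{T}}\sum_{t=T+1}^{T+\flng{T}} \fht{\paramopt}
=\linalgmatrix+O(\lngav(T)/\flng{T}).
\end{equation*}

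For the second sum, each term is controlled by Assumption~\ref{hyp:equicontH}: since $\param^u=\param+u(\paramopt-\param)$ with $u\in[0,1]$, we have $\param^u\in\boctopt$ (convexity of the ball, after having possibly shrunk $\boctopt$ to lie inside $B_\Param(\paramopt,r_\Param)$), and $\nrm{\param^u-\paramopt}=(1-u)\nrm{\param-\paramopt}\leq \nrm{\param-\paramopt}$. Hence
\begin{equation*}
\nrmop{\fht{\param^u}-\fht{\paramopt}}\leq \rho\paren{\nrm{\param^u-\paramopt}}\leq \tilde\rho\paren{\nrm{\param-\paramopt}},
\end{equation*}
where $\tilde\rho(x)\deq \sup_{0\leq y\leq x}\rho(y)$ is still continuous with $\tilde\rho(0)=0$, so $\tilde\rho(\nrm{\param-\paramopt})=\go{\rho(\nrm{\param-\paramopt})}$ up to adjusting the implied constant. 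This bound is uniform in $t$ (Assumption~\ref{hyp:equicontH} is stated uniformly in $t$) and independent of $\cdvp$, yielding
\begin{equation*}
\sum_{t=T+1}^{T+\flng{T}} \paren{\fht{\param^u}-\fht{\paramopt}}
=\flng{T}\cdot \go{\rho\paren{\nrm{\param-\paramopt}}},
\end{equation*}
uniformly over $u\in[0,1]$, $\param\in\boctopt$, and $\cdvp\leq \brnp$. Adding the two contributions and factoring out $\flng{T}$ yields the claimed expansion.

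I expect no substantial obstacle here: everything reduces to the two relevant assumptions (ergodic averaging of Jacobians and equicontinuity of Jacobians), together with the already-established Lemma~\ref{lem:somneglint}. The only minor point requiring care is ensuring that $\rho(\nrm{\param^u-\paramopt})$ can be bounded by a quantity of the form $\go{\rho(\nrm{\param-\paramopt})}$, which is handled by replacing $\rho$ with its non-decreasing envelope; and checking that the two error terms and the uniformity claims do indeed not depend on $\cdvp$ (they don't, since none of the ingredients do).
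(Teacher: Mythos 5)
Your proof is correct and follows essentially the same route as the paper: split $\fht{\param^u}$ into $\fht{\paramopt}$ plus a remainder controlled by \rehyp{equicontH}, then handle the sum of $\fht{\paramopt}$ over $(T;T+\flng{T}]$ via the telescoping / interval-averaging argument (you invoke \relem{somneglint}; the paper inlines the same two-term telescoping that underlies that lemma). Your extra care in replacing $\rho$ by its non-decreasing envelope is a small robustness improvement that the paper glosses over, but the substance is identical.
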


\begin{proof}
Thanks to \rehyp{equicontH}, we know that, for all $0 \leq u \leq 1$, we have
\begin{equation*}
\ba
\sum_{t=T+1}^{T+\flng{T}}\fht{\param^u}
&=\sum_{t=T+1}^{T+\flng{T}}\paren{\fht{\pctrlopt}+\go{\rho\paren{\nrm{\param^u-\paramopt}}}}\\
&=\sum_{t=T+1}^{T+\flng{T}}\paren{\fht{\pctrlopt}+\go{\rho\paren{\nrm{\param-\paramopt}}}}.
\ea
\end{equation*}
Now, thanks to the definition of $\expem'$ in \relem{intvavrg}, we know that \rehyp{critoptrtrlnbt} is satisfied with $\lngav(t) = t^{\expem'}$, so that we have
\begin{align*}
\sum_{t=T+1}^{T+\flng{T}}\fht{\pctrlopt}
&=\sum_{t=1}^{T+\flng{T}}\fht{\pctrlopt}-\sum_{t=1}^{T}\fht{\pctrlopt}
\\&=
(T+\flng{T})\linalgmatrix+O(\lngav(T))-T\linalgmatrix+O(\lngav(T))
\\&=\flng{T}\left(
\linalgmatrix+O(\lngav(T)/\flng{T})
\right).
\end{align*}
Combining these results yields the statement.
\end{proof}

We now prove the second part of \rehyp{optimprm} (contractivity around
$\paramopt$). Unfortunately, this does not necessarily hold in the ball
$\boctopt$ that we have used so far, but in a smaller ball $B'_\Param$.
This smaller ball depends on the various quantities involved in the
assumptions (such as the constants in the $\go{}$ notation appearing in
the various assumptions, or the function $\modct{}$ in
\rehyp{equicontH}). Thus, we will have proved all assumptions of
Section~\ref{sec:absontadsys}, but over this smaller ball $B'_\Param$
instead of $\boctopt$. We will thus get the convergence of
Theorems~\ref{thm:cvalgopti}, \ref{thm:cvalgopti_noisy} and~\ref{thm:cvalgoboucleouverte}  for $\param$ in this smaller ball.

\begin{lemme}[Contractivity around $\paramopt$]
\label{lem:contractpctrlopt}
There exists a ball $B'_\Param\subset \boctopt$ centered at $\paramopt$
with positive radius, and $\lambda>0$, such that the following holds.

Let $\memopt_0\deq (\stateopt_0,0)$ (RTRL state
initialized at $\stateopt_0$ with $J_0=0$) and
$\memopt_{t}=\Algo_{0:t}(\paramopt,\memopt_0)$ (RTRL state at time $t$
using the optimal parameter).
Assume $\cdvp\leq \brnp/2$.

For every $\param \in B'_\Param$,
\begin{equation*}
\nrm{
\paramupdate_{T:T+\flng{T}}(\param,\memopt_T,\pas_T)
-
\paramupdate_{T:T+\flng{T}}(\paramopt,\memopt_T,\pas_T)}
\end{equation*}
is at most
\begin{equation*}
\paren{1-\lambda\,\pas_T\,\flng{T}}\norm{\param-\paramopt} +
\po{\pas_T \,\flng{T}}
\end{equation*}
and the $\po{}$ term is uniform over $0\leq \cdvp\leq \brnp/2$ and
over $\param\in B'_\Param$.

Therefore, the second part of \rehyp{optimprm} is satisfied by an extended
RTRL algorithm for $\param$ in the ball $B'_\Param$, for the distance
$\dist{\param}{\param'}^2\deq
\transp{(\param-\param')}B(\param-\param')$.
\end{lemme}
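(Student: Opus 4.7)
The strategy is to linearize the iterated update around $\paramopt$, apply the Lyapunov-type property of the norm $\|\cdot\|_B$, and then choose $B'_\Param$ small enough to absorb the non-linear remainders. First, I combine the preceding ``different initialisations'' lemma (which bounds the difference between using $\mem_T$ and $\memopt_T$ by $\po{\pas_T\,\flng{T}}$) with \relem{exprediffbolt} via the triangle inequality to obtain
\begin{equation*}
\paramupdate_{T:T+\flng{T}}(\param,\memopt_T,\pas_T)-\paramupdate_{T:T+\flng{T}}(\paramopt,\memopt_T,\pas_T)=(\param-\paramopt)-\pas_T\int_0^1\sum_{t=T+1}^{T+\flng{T}}\fht{\param^u}\cdot(\param-\paramopt)\,du+R_T,
\end{equation*}
where $R_T=\po{\pas_T\flng{T}}+\go{\pas_T^2\flng{T}\fmdper{T}^2}$. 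The second-order term is absorbed into the $\po{}$ because $\pas_T\fmdper{T}^2\to 0$ (a consequence of \rehyp{spdes}, itself derived from \rehyp{scfcstepsize}).

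Next, I apply \relem{hessapprox} to each $u\in[0,1]$, yielding $\sum_t\fht{\param^u}=\flng{T}\bigl(\linalgmatrix+\go{\rho(\|\param-\paramopt\|)}+\go{\lngav(T)/\flng{T}}\bigr)$ uniformly in $u$. Thus the difference rewrites as
\begin{equation*}
\bigl(\idth-\pas_T\flng{T}\linalgmatrix\bigr)(\param-\paramopt)-\pas_T\flng{T}\,E_T(\param)(\param-\paramopt)+\po{\pas_T\flng{T}},
\end{equation*}
where $\|E_T(\param)\|_{\mathrm{op}}=\go{\rho(\|\param-\paramopt\|)}+\go{\lngav(T)/\flng{T}}$. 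Since $\lngav(T)=\po{\flng{T}}$ by \relem{intvavrg}, the second piece contributes $\pas_T\,\lngav(T)\cdot\|\param-\paramopt\|=\po{\pas_T\flng{T}}$ uniformly on the bounded set $\boctopt$.

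The main step is the Lyapunov estimate. By \relem{suitposdefm}, there exists $\lambda_0>0$ such that $B\linalgmatrix+\tra{\linalgmatrix}B\succeq 2\lambda_0 B$. Expanding in the $B$-norm,
\begin{equation*}
\|(\idth-\pas_T\flng{T}\linalgmatrix)x\|^2=\|x\|^2-\pas_T\flng{T}\,\tra{x}(B\linalgmatrix+\tra{\linalgmatrix}B)x+\go{(\pas_T\flng{T})^2}\|x\|^2\leq\bigl(1-2\lambda_0\pas_T\flng{T}+\go{(\pas_T\flng{T})^2}\bigr)\|x\|^2,
\end{equation*}
so taking square roots and using $\pas_T\flng{T}\to 0$ (from \recor{convzerosompasintervalles}) gives, for $T$ large, $\|(\idth-\pas_T\flng{T}\linalgmatrix)x\|\leq(1-\tfrac{3\lambda_0}{4}\pas_T\flng{T})\|x\|$. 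The residual operator $\pas_T\flng{T}E_T(\param)$ is then controlled as follows: because $\rho$ is continuous with $\rho(0)=0$ (\rehyp{equicontH}), I shrink $B'_\Param\subset\boctopt$ around $\paramopt$ to a ball so small that the constant in $\go{\rho(\|\param-\paramopt\|)}$ is bounded by $\lambda_0/(4\|B\|^{1/2}\|B^{-1}\|^{1/2})$ throughout $B'_\Param$; this makes the $E_T$-contribution no larger than $\tfrac{\lambda_0}{4}\pas_T\flng{T}\|\param-\paramopt\|$ in the $B$-norm. Setting $\lambda=\lambda_0/2$ gives the claimed bound, and uniformity in $\param\in B'_\Param$ and in $\cdvp\leq\brnp/2$ follows from the uniformity statements already built into \relem{exprediffbolt} and \relem{hessapprox}.

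\paragraph{Main obstacle.} The delicate point is the interplay between the two sources of non-uniformity: the term $\go{\rho(\|\param-\paramopt\|)}$ must be absorbed by \emph{shrinking the ball} (this is where $B'_\Param$ is forced to be strictly smaller than $\boctopt$), while the time-dependent term $\go{\lngav(T)/\flng{T}}$ must be absorbed into the $\po{\pas_T\flng{T}}$ residual. Keeping these two mechanisms separated, and ensuring the final contractivity factor $\lambda$ depends neither on $T$ nor on $\cdvp$, is the real content of the argument; once this is done, the Lyapunov algebra is a straightforward consequence of $B\linalgmatrix+\tra{\linalgmatrix}B\succ 0$.
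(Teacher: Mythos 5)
Your proof is correct and takes essentially the same approach as the paper: combine the preceding ``different initialisations'' lemma with Lemma~\ref{lem:exprediffbolt} and Lemma~\ref{lemme:hessapprox}, absorb the $\go{\pas_T^2\flng{T}\fmdper{T}^2}$ and $\go{\pas_T\lngav(T)}$ terms into $\po{\pas_T\flng{T}}$, apply the Lyapunov inequality for $B\linalgmatrix+\transp{\linalgmatrix}B$, take square roots, and shrink the ball so that the equicontinuity modulus $\rho$ is small enough to be eaten by the spare half of the contractivity coefficient. The only cosmetic difference is that you track the conversion factor $\|B\|^{1/2}\|B^{-1}\|^{1/2}$ between the Euclidean and $B$-norms explicitly, whereas the paper folds it into the unnamed constant $\cst$ of the $\go{}$ term; the resulting constant $\lambda$ and the logic of the ball-shrinking step are the same.
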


\begin{proof}
Let $\param\in\boctopt$ with $\dist{\param}{\paramopt}\leq\rayoptct/3$.
(We will have further constraints below to define the smaller ball
$B'_\Param$.)

By combining the last three lemmas, we obtain
\begin{multline*}
\paramupdate_{T:T+\flng{T}}(\param,\memopt_T,\pas_T)
-
\paramupdate_{T:T+\flng{T}}(\paramopt,\memopt_T,\pas_T)
=\\
\paramupdate_{T:T+\flng{T}}(\param,\mem_T,\pas_T)
-
\paramupdate_{T:T+\flng{T}}(\paramopt,\memopt_T,\pas_T)
+
\paramupdate_{T:T+\flng{T}}(\param,\memopt_T,\pas_T)
-\paramupdate_{T:T+\flng{T}}(\param,\mem_T,\pas_T)
\\=
(\param-\paramopt)
- \pas_T \flng{T} 
\left(
\linalgmatrix+\go{\modct{\nrm{\param-\paramopt}}}+O(\lngav(T)/\flng{T})\right)\cdot
(\param-\paramopt)
\\+\go{\pas_T^2\,\flng{T}\, \fmdper{T}^2}
+\po{\pas_T\,\flng{T}}
\end{multline*}
which equals
\begin{multline*}
(\id_\Param
- \pas_T \flng{T} \linalgmatrix)\cdot (\param-\paramopt)
+
O\left(
\pas_T\flng{T}\,{\modct{\norm{\param-\paramopt}}}\nrm{\param-\paramopt}\right)
\\
+O\left(\pas_T\,\lngav(T)\nrm{\param-\paramopt})\right)
+\go{\pas_T^2\,\flng{T}\, \fmdper{T}^2}
+\po{\pas_T\,\flng{T}}.
\end{multline*}


Since $\norm{\param-\paramopt}$ is bounded on $\boctopt$, the
term $\pas_T\,\lngav(T)\,\norm{\param-\paramopt}$ is
$O(\pas_T\lngav(T))$.
By Lemma
\ref{lem:intvavrg}, both
$\pas_T \,\lngav(T)$ and $\pas_T^2\,\flng{T}\,\fmdper{T}^2$
are $\po{\pas_T\,\flng{T}}$. So the last two $\go{}$ terms above are absorbed in
the $\po{\pas_T\,\flng{T}}$ term.

Remember that the norm we use on $\Param$ is defined by
$\norm{\theta}^2=\transp{\theta}B\,\theta$. Therefore, we have
\begin{multline*}
\norm{(\id_\Param
- \pas_T \flng{T} \linalgmatrix)\cdot (\param-\paramopt)}^2=
\transp{(\param-\paramopt)}B\,(\param-\paramopt)
\\-\pas_T \flng{T}
\transp{(\param-\paramopt)}\paren{B\,\linalgmatrix+\linalgmatrix^T\,B}(\param-\paramopt)
+\go{\pas_T^2 \flng{T}^2\norm{\param-\paramopt}^2}.
\end{multline*}

Thanks to \rehyp{critoptrtrlnbt} and to \relem{suitposdefm}, we know 
that ${B\,\linalgmatrix+\linalgmatrix^T\,B}$ is positive definite, so that for
some $\lambda>0$ we have
\begin{equation*}
\transp{(\param-\paramopt)}\paren{B\,\linalgmatrix+\linalgmatrix^T\,B}(\param-\paramopt)\geq 4\,\lambda
\norm{\param-\paramopt}^2,
\end{equation*}
and we obtain
\begin{multline*}
\norm{(\id_\Param
- \pas_T \flng{T} \linalgmatrix)\cdot
  (\param-\paramopt)}^2\leq\norm{\param-\paramopt}^2\left(1-4\,\lambda\,\pas_T\flng{T}+\go{\pas_T^2\flng{T}^2}\right).
\end{multline*}

Therefore, the quantity we want to compute is
\begin{multline*}
\norm{
\paramupdate_{T:T+\flng{T}}(\param,\memopt_T,\pas_T)
-
\paramupdate_{T:T+\flng{T}}(\paramopt,\memopt_T,\pas_T)}
\leq\\
\norm{\param-\paramopt}\left(1-4\,\lambda\,\pas_T\flng{T}+\go{\pas_T^2\flng{T}^2}\right)^{\inv{2}}
\\+
\go{\pas_T\flng{T}\,\modct{\norm{\param-\paramopt}}\,\norm{\param-\paramopt}}
+\po{\pas_T\,\flng{T}}
\leq\\
\norm{\param-\paramopt}\left(1-2\,\lambda\,\pas_T\flng{T}+\go{\pas_T^2\flng{T}^2}\right)
\\+
\go{\pas_T\flng{T}\,\modct{\norm{\param-\paramopt}}\,\norm{\param-\paramopt}}
+\po{\pas_T\,\flng{T}},
\end{multline*}
as $\sqrt{1-x}\leq 1-x/2$.

Since $\pas_T\flng{T}\to 0$, the term $\go{\pas_T^2\flng{T}^2}$ is
ultimately smaller than $(\lambda/2) \pas_T\,\flng{T}$.

Consider the term $\go{\pas_T\flng{T}\,\modct{\norm{\param-\paramopt}}\norm{\param-\paramopt}}$. The
constant in the $\go{}$ notation is independent of $T$ or
$\param\in\boctopt$. Let $\cst$ be that constant.
Since $\rho\to 0$ at $0$, there is a ball $B'_\Param$ of some fixed
radius around $\paramopt$, in which $\modct{\norm{\param-\paramopt}}$ is
smaller than $\lambda/2\cst$. Therefore, in that ball,
$\go{\pas_T\flng{T}\modct{\norm{\param-\paramopt}}\,\norm{\param-\paramopt}}\leq
(\lambda/2)\,\pas_T\,\flng{T}\norm{\param-\paramopt}$. On this smaller
ball $B'_\Param$,
one has
\begin{multline*}
\norm{
\paramupdate_{T:T+\flng{T}}(\param,\mem_T^*,\pas_T)
-
\paramupdate_{T:T+\flng{T}}(\paramopt,\memopt_T,\pas_T)}
\leq\\
\norm{\param-\paramopt}\left(1-2\lambda\pas_T\,\flng{T}
+(\lambda/2)\pas_T\,\flng{T}+(\lambda/2)\pas_T\,\flng{T}
\right)
+\po{\pas_T\,\flng{T}},
\end{multline*}
as needed.
\end{proof}

\subsection{Noise Control for \Algonoisy RTRL Algorithms}
\label{sec:noisectrl}

In this section we bound the divergence between \algonoisy RTRL
algorithms and exact RTRL. We consider an \algonoisy RTRL algorithm
(Def.~\ref{def:approxrtrl}) whose errors $E_t$ satisfy the unbiasedness
Assumption~\ref{hyp:wcorrnoiseapprtrl} and the error control
Assumption~\ref{hyp:errorgauge}
with some error gauge $\sbeg$
(Def.~\ref{def:errorgauge}). (Actually the
results presented here will be valid simultaneously for all \algonoisy
RTRL algorithms sharing the same error gauge.) Moreover, we assume
that the extended update rules $\fur_t$ are linear with respect to their first
argument (\rehyp{linearfur}).

We compare such \algonoisy
RTRL algorithms to the corresponding RTRL algorithm with error $E_t=0$
but the same underlying system.


\begin{lemme}[\Algonoisy Jacobians expressed in terms of exact Jacobians plus noise]
\label{lem:expdiffalgotbruitefemajrtrl}
Let
$\cpl{\state_0}{\jope_0} \in \tube_0\times \tubejope_0$,
and let $\sm{\prmctrl}=\paren{\param_t}$ be a sequence of parameters included in $\boctopt$.
Let $(\state_t)$ be
the trajectory of states starting at $\state_0$ computed from
$(\param_t)$, namely, via
$
\state_{t}=\opevolt\cpl{\state_{t-1}}{\param_{t-1}}
$ for $t\geq 1$.

We now compare Jacobians computed with the exact RTRL updates, and with
\algonoisy updates. Precisely, consider
\begin{enumerate}
\item the Jacobians $\paren{\tilde \jope_t}$ starting at $\jope_0$ and
following the \algonoisy RTRL updates
\begin{equation*}
\tilde \jope_t = \frac{\partial \opevol_t(\state_{t-1},\param_{t-1})}{\partial
\state} \,\tilde \jope_{t-1}+\frac{\partial
\opevol_t(\state_{t-1},\param_{t-1})}{\partial
\param}+E_t
\end{equation*}
for $t \geq 1$, where the errors $E_t$ satisfy
Assumption~\ref{hyp:errorgauge};
\item the Jacobians $\paren{\jope_t'}$ also starting at $\jope_0$ and following the exact RTRL updates that is, for $t \geq 1$,
\begin{equation*}
\jope_{t}'={\dpartf{\state}{\opevolt}\paren{\state_{t-1},\,\param_{t-1}} \cdot \jope_{t-1}' + \dpartf{\prmctrl}{\opevolt}\paren{\state_{t-1},\,\param_{t-1}}}.
\end{equation*}
\end{enumerate}
Then, for every $t \geq 1$, 
\begin{equation*}
\tilde \jope_t=\jope_t'+\sum_{s \leq t} \, \paren{\prod_{p=s+1}^{t} \,
\dpartf{\state}{\opevol_p}\cpl{\opevol_{0:p-1}\cpl{\state_0}{\param_0}}{\param_0}}\, E_s + \go{\sup_{s \leq t-1} \, \dist{\param_s}{\param_0}},
\end{equation*}
where the constant in the $\go{}$ term only depends on the constants
appearing in the assumptions and on the error gauge. (By convention, for
$s=t$
the empty product $\prod_{p=s+1}^{t}$ is equal to $\id$.)
\end{lemme}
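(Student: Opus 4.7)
The plan is to first obtain a clean closed-form for $\tilde\jope_t-\jope'_t$ by exploiting the fact that both sequences satisfy the same affine recursion along the trajectory $(\state_t)$ generated by $(\param_t)$, and then to estimate the error made when replacing the Jacobian products along $(\state_t,\param_{t-1})$ by the Jacobian products along the open-loop trajectory at parameter $\param_0$. Write $A_p\deq \dpartf{\state}{\opevol_p}(\state_{p-1},\param_{p-1})$, $B_p\deq \dpartf{\param}{\opevol_p}(\state_{p-1},\param_{p-1})$, $\state^*_p\deq \opevol_{0:p}(\state_0,\param_0)$, and $A^*_p\deq \dpartf{\state}{\opevol_p}(\state^*_{p-1},\param_0)$. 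Since $\tilde\jope_p=A_p\tilde\jope_{p-1}+B_p+E_p$ and $\jope'_p=A_p\jope'_{p-1}+B_p$ with $\tilde\jope_0=\jope'_0=\jope_0$, subtracting and iterating gives
\begin{equation*}
\tilde\jope_t-\jope'_t=\sum_{s=1}^{t}\left(\prod_{p=s+1}^{t}A_p\right)E_s.
\end{equation*}
(The product is interpreted as $A_t A_{t-1}\cdots A_{s+1}$, reducing to $\id$ when $s=t$.)

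Next I would replace each $\prod_{p=s+1}^{t}A_p$ by $\prod_{p=s+1}^{t}A^*_p$, using the telescoping identity
\begin{equation*}
\prod_{p=s+1}^{t}A_p-\prod_{p=s+1}^{t}A^*_p=\sum_{j=s+1}^{t}\left(\prod_{p=j+1}^{t}A_p\right)(A_j-A^*_j)\left(\prod_{p=s+1}^{j-1}A^*_p\right).
\end{equation*}
Assumption~\ref{hyp:regftransetats} bounds the second derivatives of $\opevol_p$, which makes $\dpartf{\state}{\opevol_p}$ Lipschitz in $(\state,\param)$ on the stable tube, so $\nrmop{A_j-A^*_j}=\go{\nrm{\state_{j-1}-\state^*_{j-1}}+\nrm{\param_{j-1}-\param_0}}$; combining with \relem{newparamcont} (or \recor{shorttermlip}) to control $\nrm{\state_{j-1}-\state^*_{j-1}}$ in terms of $\sup_{s\leq j-2}\dist{\param_s}{\param_0}$, we get $\nrmop{A_j-A^*_j}=\go{\sup_{s\leq j-1}\dist{\param_s}{\param_0}}$.

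The main step is then to control the double sum using contractivity. Because $(\param_t)\subset\boctopt$ and the trajectory lies in $\tube$, \recor{specrad} ensures both sequences $(A_p)$ and $(A^*_p)$ have spectral radius at most $1-\alpha/2$ at horizon $\hsr$, so the products satisfy $\nrmop{\prod_{p=a+1}^{b}A_p}\leq M(1-\alpha/2)^{(b-a)/\hsr}$ and similarly for $A^*$ (\recor{specrad}). Substituting into the telescoping identity and using $\nrmop{A_j-A^*_j}=\go{\sup_s\dist{\param_s}{\param_0}}$ yields, for $s\leq t$,
\begin{equation*}
\nrmop{\prod_{p=s+1}^{t}A_p-\prod_{p=s+1}^{t}A^*_p}=\go{(t-s)\,(1-\alpha/2)^{(t-s-1)/\hsr}\sup_{r}\dist{\param_r}{\param_0}}.
\end{equation*}
Finally, since $\tilde\jope_{s-1}\in\tubejope_{s-1}$ (\recor{rtrlstabletubesJ}) and $\dpartf{(\state,\param)}{\opevol_s}$ is uniformly bounded on the stable tube (\relem{localopnorms}), \rehyp{errorgauge} gives $\nrmop{E_s}\leq \feg{\nrmop{\tilde\jope_{s-1}}}{O(1)}$, a quantity uniformly bounded in $s$. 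Multiplying the product-difference estimate by this bound and summing the convergent series $\sum_{s\geq 0}(s+1)(1-\alpha/2)^{s/\hsr}$ yields an $\go{\sup_{r\leq t-1}\dist{\param_r}{\param_0}}$ error, which is exactly the claimed remainder.

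The main obstacle is the telescoping/contraction bookkeeping in Step~3: we need both $(A_p)$ and $(A^*_p)$ to satisfy the contractive product bound, which is why it is essential that $\param_0\in\boctopt$ and that the trajectory $(\state_t)$ stays in $\tube$ (so that \recor{specrad} applies to both sequences simultaneously). The constants absorbed in $\go{}$ only depend on the assumption constants and on the uniform bound for $\nrmop{E_s}$ coming from the error gauge, as required.
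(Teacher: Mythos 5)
Your proof is correct, and it reaches the same conclusion by a genuinely different decomposition than the paper's. The paper first replaces $A_t\deq\partial_\state\opevol_t(\state_{t-1},\param_{t-1})$ by $A^*_t\deq\partial_\state\opevol_t(\state^*_{t-1},\param_0)$ \emph{inside the recursion}, writing $\tilde\jope_t-\jope'_t=A^*_t(\tilde\jope_{t-1}-\jope'_{t-1})+E_t+r_t$ with $r_t=\go{S_t}$ (using boundedness of $\tilde\jope_{t-1}-\jope'_{t-1}$ on the stable tube), and then iterates, needing only that products of the $A^*_p$'s are contracting. You instead close the recursion first, getting $\tilde\jope_t-\jope'_t=\sum_s(\prod A_p)E_s$ with the true $A_p$'s, and then telescope each product $\prod A_p$ against $\prod A^*_p$. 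This requires contractivity of \emph{both} families of products, not just the open-loop one — but that is indeed available, because \recor{specrad} is stated for sequences with varying parameter in $B_\Param(\paramopt,r'_\Param)$ and states in $B_{\State_t}(\stateopt_t,r'_\State)$, and by \relem{rtrlstabletubes} the stable tube radii $r'''_\Param$, $r'''_\State$ are chosen smaller than $r'_\Param$, $r'_\State$. Your route also leans on the error-gauge bound $\nrmop{E_s}=\go{1}$ on the stable tube, where the paper only needs boundedness of $\tilde\jope_{t-1}-\jope'_{t-1}$; both facts are available. The double sum $\sum_s(t-s)(1-\alpha/2)^{(t-s-1)/\hsr}$ converges, so the bookkeeping closes as you claim. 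Net effect: both proofs carry the same content — reduce to the open-loop product via Lipschitz-in-$(\state,\param)$ and absorb the remainder by exponential contraction — just with the substitution performed either before or after the iteration.
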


\begin{proof}
For every $t \geq 1$,
\begin{equation*}
\tilde\jope_{t}-\jope_{t}'=\dpartf{\state}{\opevolt}\cpl{\state_{t-1}}{\param_{t-1}}
\, \paren{\tilde\jope_{t-1}-\jope_{t-1}'} + E_t.
\end{equation*}

Now, we have seen above that the non-\algonoisy RTRL operator on
$(s,J)$ (\redef{rtrlasalgo}) satisfies
\rehyp{paramlip} and \rehyp{expforgetinit}. Setting
$(\state''_t,\jope''_t)\deq \Algo_{0:t}(\param_0,(\state_0,\jope_0))$,
we may therefore apply \relem{newparamcont} to compare the
(non-\algonoisy) RTRL
trajectories with fixed parameter $\param_0$ and with variable parameter
$(\param_t)$: this yields, 
for all $t \geq 1$, 
\begin{equation*}
\dist{(\state_t,\jope'_t)}{(\state''_t,\jope''_t)}=\go{\sup_{s \leq t-1} \, \dist{\param_s}{\param_0}}.
\end{equation*}
Setting $S_t\deq \sup_{s \leq t-1} \, \dist{\param_s}{\param_0}$, we have a
fortiori
\begin{equation*}
\dist{\state_t}{\state''_t}=\go{S_t}.
\end{equation*}
Also note that $\state''_t=\opevol_{0:t}(\state_0,\param_0)$ by definition of
the RTRL operator $\Algo_{0:t}$.

Moreover, for all $t\geq 0$, $\param_t$ and $\state_t$ belong to the stable tube for RTRL. Thanks to 
\rehyp{regftransetats} the second derivatives of the transition
operator on the states are bounded. As a result, for all $t \geq 1$,
\begin{equation*}
\ba
\dpartf{\state}{\opevolt}\paren{\state_{t-1},\,\param_{t-1}}
&=\dpartf{\state}{\opevolt}\paren{\state''_{t-1},\,\param_0}+\go{\dist{\state_{t-1}}{\state''_{t-1}}+\dist{\param_{t-1}}{\param_0}}
\\&=\dpartf{\state}{\opevolt}\paren{\state''_{t-1},\,\param_0}+\go{S_t}.
\ea
\end{equation*} 
As a consequence, for all $t \geq 1$,
\begin{equation*}
\ba
\tilde \jope_{t}-\jope_{t}'&=\dpartf{\state}{\opevolt}\paren{\state''_{t-1},\,\param_0} \, \paren{\tilde \jope_{t-1}-\jope_{t-1}'} \\
&+ \go{S_t\, \norm{\tilde \jope_{t-1}-\jope_{t-1}'}} + E_t.
\ea
\end{equation*}
Now, since the sequences $\paren{\tilde \jope_t}$ and $\paren{\jope_t'}$ 
are computed by an \algonoisy and exact RTRL algorithm from a sequence
of parameters $(\param_t)$ in the control ball and an initialization
$(\state_0,\jope_0)$ in the stable tube, they both belong to the stable
tube, and are therefore bounded by \recor{rtrlstabletubesJ}. Thus
$\jope_t-\jope'_t$ is bounded and
\begin{equation*}
\go{
{S_t}\, \norm{\tilde
\jope_{t-1}-\jope_{t-1}'}}=\go{ S_t}
\end{equation*}
so that
\begin{equation*}
\ba
\tilde
\jope_{t}-\jope_{t}'&=\dpartf{\state}{\opevolt}\paren{\state''_{t-1},\,\param_0} \, \paren{\tilde \jope_{t-1}-\jope_{t-1}'} 
+ E_t+
r_t
\ea
\end{equation*}
for some remainder $r_t=\go{S_t}$.
From this we obtain, by induction,
\begin{equation*}
\ba
\tilde\jope_t-\jope_t'&=\sum_{s \leq t} \paren{\prod_{p=s+1}^{t} \,
\dpartf{\state}{\opevol_p}\paren{\state''_{p-1},\,\param_0}}
E_s
\\&+
\sum_{s \leq t} \paren{\prod_{p=s+1}^{t} \,
\dpartf{\state}{\opevol_p}\paren{\state''_{p-1},\,\param_0}}
r_s.
\ea
\end{equation*}

Since $\state''_t=\opevol_{0:t}(\state_0,\param_0)$, the claim will be proved
if we prove that the remainder term is $\go{S_t}$.
The norm of the remainder term is
\begin{equation*}
\norm{\sum_{s \leq t} \paren{\prod_{p=s+1}^{t} \,
\dpartf{\state}{\opevol_p}\paren{\state''_{p-1},\,\param_0}}
r_s
}\leq \sum_{s\leq t} \nrmop{\prod_{p=s+1}^{t} \,
\dpartf{\state}{\opevol_p}\paren{\state''_{p-1},\,\param_0}}\norm{r_s}.
\end{equation*}

Since $\state''_{t-1}$ belongs to the stable tube, we can apply
\recor{specrad}: 
the product $\prod_{p=s+1}^{t} \,
\dpartf{\state}{\opevol_p}\paren{\state''_{p-1},\,\param_0}$ has operator
norm at most $M(1-\alpha/2)^{{(t-s)/\hsr}}$ for some constant
$M>0$.

As a result,
\begin{equation*}
\ba
\sum_{s\leq t} \nrmop{\prod_{p=s+1}^{t} \,
\dpartf{\state}{\opevol_p}\paren{\state''_{p-1},\,\param_0}}\norm{r_s}
&\leq \left(\sup_{s\leq t}\norm{r_s}\right)\left(\sum_{s\leq t}
M(1-\alpha/2)^{{(t-s)/\hsr}}\right)
\\&\leq{(2\hsr M/\alpha)}\sup_{s\leq t}\norm{r_s}=\go{S_t}
\ea
\end{equation*}
since $r_s=\go{S_t}$.
\end{proof}

Remember that, for \algonoisy RTRL algorithms, we assume that $\fur_t$
has the form
$\fur_t(\vtanc,\state,\param)=P_t(\state,\param)\cdot \vtanc$ for some linear
operator
$P_t$ (\rehyp{linearfur}, in addition to \rehyp{fupdrl}).

\begin{lemme}[Bounds on $\fur_t$ in the linear case]
\label{lem:boundingextupdateslinear}
Under \rehypdeux{fupdrl}{linearfur},
the operator $P_t$ and its derivative with respect to $(\state,\param)$
are bounded on a ball containing the stable tube. Namely,
\begin{equation*}
\sup_{t\geq 1}
\sup_{B_{\State_t}(\stateopt_{t},\raysy)\times
B_\Param(\paramopt,r_\Param)} \,
\nrmop{P_t(\state,\param)} < \infty
\end{equation*}
and
\begin{equation*}
\sup_{t\geq 1}
\sup_{B_{\State_t}(\stateopt_{t},\raysy)\times
B_\Param(\paramopt,r_\Param)} \,
\nrmop{\partial_{(\state,\param)}P_t(\state,\param)}< \infty,
\end{equation*}
where the balls are those appearing in \rehyp{fupdrl}.
\end{lemme}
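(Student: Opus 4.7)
The plan is to exploit the linearity assumption \rehyp{linearfur} directly, identifying $P_t$ and $\partial_{(\state,\param)}P_t$ with derivatives of $\fur_t$ for which bounds are already provided by \rehyp{fupdrl}.

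First I would establish the bound on $P_t$. By \rehyp{linearfur}, $\fur_t(\vtanc,\state,\param)=P_t(\state,\param)\cdot\vtanc$, so differentiating in $\vtanc$ gives $\dpartf{\vtanc}{\fur_t}(\vtanc,\state,\param)=P_t(\state,\param)$ independently of $\vtanc$. The first inequality of \rehyp{fupdrl} then reads $\nrmop{P_t(\state,\param)}<\kappa_{\fur}$, uniformly in $t$ and in $(\state,\param)\in B_{\State_t}(\stateopt_t,\raysy)\times B_\Param(\paramopt,r_\Param)$, which is exactly the first conclusion.

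Next I would handle $\partial_{(\state,\param)}P_t$. Differentiating $\fur_t$ in its second and third arguments, and interpreting $\partial_{(\state,\param)}P_t(\state,\param)$ as a linear map from $\State_t\times\Param$ to $\sblin(\linform,\Param)$, the chain rule gives
\[
\dpartf{(\state,\param)}{\fur_t}(\vtanc,\state,\param)\cdot(d\state,d\param)
=\bigl[(\partial_{(\state,\param)}P_t)(\state,\param)\cdot(d\state,d\param)\bigr]\cdot\vtanc.
\]
The second inequality in \rehyp{fupdrl} tells us that for every unit vector $(d\state,d\param)$ and every $\vtanc\in\linform$,
\[
\bigl\Vert[(\partial_{(\state,\param)}P_t)(\state,\param)\cdot(d\state,d\param)]\cdot\vtanc\bigr\Vert\leq\kappa_{\fur}(1+\nrm{\vtanc}).
\]
To extract the desired operator bound I would apply this with $\lambda\vtanc$ for arbitrary $\lambda>0$ and use linearity in $\vtanc$: dividing by $\lambda$ yields a bound $\kappa_{\fur}(1/\lambda+\nrm{\vtanc})$; letting $\lambda\to\infty$ gives $\nrmop{(\partial_{(\state,\param)}P_t)(\state,\param)\cdot(d\state,d\param)}\leq\kappa_{\fur}$. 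Taking the supremum over unit $(d\state,d\param)$ yields $\nrmop{\partial_{(\state,\param)}P_t(\state,\param)}\leq\kappa_{\fur}$, uniformly in $t$ and over the stated balls.

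There is no real obstacle: the lemma is essentially a restatement of \rehyp{fupdrl} under the additional linearity assumption \rehyp{linearfur}. The only mildly non-trivial point is the homogeneity trick needed to absorb the affine ``$1+\nrm{\vtanc}$'' term into a pure operator-norm bound, which relies crucially on the linearity of $(\partial_{(\state,\param)}P_t)\cdot(d\state,d\param)$ in $\vtanc$.
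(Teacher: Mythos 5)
Your argument is correct and follows essentially the same route as the paper's own proof: in both cases $P_t$ is identified with $\partial_\vtanc\fur_t$ (giving the first bound directly from \rehyp{fupdrl}), and $\partial_{(\state,\param)}P_t$ is related to $\partial_{(\state,\param)}\fur_t$ via the same chain-rule identity. The only difference is cosmetic: the paper evaluates the $\kappa_\fur(1+\nrm{\vtanc})$ bound at a unit vector $\vtanc$ realizing the operator norm, while you use a scaling argument $\vtanc\mapsto\lambda\vtanc$, $\lambda\to\infty$, to discard the constant term — both are one-line ways to extract a linear bound from an affine one.
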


\begin{proof}
This is a direct rewriting of \rehyp{fupdrl} for the particular case
$\fur_t(\vtanc,\state,\param)=P_t(\state,\param)\cdot \vtanc$. Indeed,
$\partial_v \,\fur_t=P_t(\state,\param)$ so that the first point of
\rehyp{fupdrl} gives the first statement of the lemma.

For the second statement, consider the second point of \rehyp{fupdrl}.
Here $\partial_{(\state,\param)}\,\fur_t\tpl{\vtanc}{\state}{\param}$ is
$\partial_{(\state,\param)}(P_t(\state,\param)\cdot \vtanc)$. Its
operator norm is
$O(1+\norm{\vtanc})$ by \rehyp{fupdrl}.
Remember that $P_t(\state,\param)\in \sblin(\linform,\Param)$.
Let us now compute the operator norm of $\partial_{(\state,\param)}
P_t(\state,\param)$.  Let $(u_\state,u_\param)\in \State\times \Param$
be a unit vector that realizes this operator norm, namely,
$\nrmop{\partial_{(\state,\param)}
P_t(\state,\param)}=\nrmop{\partial_{(\state,\param)}
P_t(\state,\param)\cdot (u_\state,u_\param)}$; here the second operator
norm is as an operator on $\vtanc\in \linform$, since $\partial_{(\state,\param)}
P_t(\state,\param)\cdot (u_\state,u_\param)\in \sblin(\linform,\Param)$.
Let now $\vtanc\in \linform$ be a unit vector that realizes the operator
norm of $\partial_{(\state,\param)}
P_t(\state,\param)\cdot (u_\state,u_\param)$, so that
$\nrmop{\partial_{(\state,\param)}
P_t(\state,\param)}=\nrm{\left(\partial_{(\state,\param)}P_t(\state,\param)\cdot
(u_\state,u_\param)\right)\cdot \vtanc}$ (the last norm is in $\Param$).
Since $P_t(\state,\param)\cdot \vtanc$ is linear in $\vtanc$,
we have $\left(\partial_{(\state,\param)}P_t(\state,\param)\cdot
(u_\state,u_\param)\right)\cdot
v=\partial_{(\state,\param)}\left(P_t(\state,\param)\cdot\vtanc\right)\cdot
(u_\state,u_\param)$ (indeed, both are the limit of $\frac1\eps
(P_t(\state+\eps u_\state,\param+\eps u_\param)\cdot \vtanc-
P_t(\state,\param)\cdot \vtanc)$ when $\eps\to 0$). Therefore,
$\nrmop{\partial_{(\state,\param)}
P_t(\state,\param)}=\nrm{\partial_{(\state,\param)}\left(P_t(\state,\param)\cdot\vtanc\right)\cdot
(u_\state,u_\param)}$. However, since $(u_\state,u_\param)$ is a unit
vector, the latter is at most
$\nrmop{\partial_{(\state,\param)}\left(P_t(\state,\param)\cdot\vtanc\right)}$.
This is $O(1+\norm{\vtanc})$ by \rehyp{fupdrl}, 
but $\vtanc$ is a unit vector
so this is $O(1)$.

Finally, by 
\relem{rtrlstabletubes}, the stable tube for $\state$ and $\param$ is included in the
balls of \rehyp{fupdrl}.
\end{proof}

We now introduce an operator
that represents the first-order change in the computed gradients
$\vtanc_t$, with respect to a change of state at a previous time $s$; this
linearization is computed along a trajectory defined by some $\state_\tinit$
and $\param$. \todo{perhaps
poor choice for notation, index $s$ wrt states $\state_t$}

\begin{definition}[Product of the differentials with respect to the states of the transition operators]
\label{def:prodiffopevolpretats}
Let $\tinit \geq 1$. Let $\param \in \boctopt$, and $\state_\tinit \in \bosyinsopt{\tinit}$. 
For $t \geq \tinit$, abbreviate
$\state_{t}=\opevol_{\tinit:t}\cpl{\state_{\tinit}}{\param}$,
using the notation from \redef{notiterates}. 
We define, for $t \geq s \geq \tinit$, the linear operator
$\prdope{\tinit}{s}{t}(\state_\tinit,\param)$ 
from $\sblin(\Param,\State_s)$ to $\Param$, which sends
$E\in \sblin(\Param,\State_s)$ to
\begin{equation*}
\prdope{\tinit}{s}{t}(\state_\tinit,\param)\,E\deq
P_t(\state_{t},\param)\cdot \left(
\dpartf{\state}{\perte_t}\paren{\state_{t}} \cdot
\paren{\prod_{p=s+1}^{t} \,
\dpartf{\state}{\opevol_p}\paren{\state_{p-1},\,\param}}
E\right)
.
\end{equation*}
(Note that $E\in \sblin(\Param,\State_s)$ so that
$\paren{\prod_{p=s+1}^{t} \,
\dpartf{\state}{\opevol_p}\paren{\state_{p-1},\,\param}}
E$ belongs to $\sblin(\Param,\State_t)$; multiplying this by $\partial_\state
\perte_t(\state_t)\in \sblin(\State_t,\mathbb{R})$ produces an element of $\sblin(\Param,\R)$, from
which $P_t$ produces an element of $\Param$.)
\end{definition}

\begin{lemme}[Expressing the \algonoisy tangent vectors]
\label{lem:expvectanbruitefoncvtanbo}
Under the exact same assumptions and notations as in
\relem{expdiffalgotbruitefemajrtrl}, set
\begin{equation*}
\vtanc_t = \furt{\frac{\partial \perte_t(\state_t)}{\partial
\state}\cdot
\tilde \jope_t
}{\state_t}{\param_{t-1}}
,\qquad
\vtanc'_t = \furt{\frac{\partial \perte_t(\state_t)}{\partial
\state}\cdot
\jope'_t
}{\state_t}{\param_{t-1}}
\end{equation*}
where $\fur_t$ satisfies \rehyp{linearfur}, namely,
$\fur_t(\vtanc,\state,\param)=P_t(\state,\param)\cdot \vtanc$ for some linear
operator
$P_t$.

Then, for all $t \geq 1$, 
\begin{equation*}
\vtanc_t-\vtanc_t'=\sum_{s \leq t} \,
\prdope{0}{s}{t}(\state_0,\param_0) \, E_s + \go{\fmdper{t} \, \sup_{s \leq t} \, \dist{\param_s}{\param_0}}.
\end{equation*}
\end{lemme}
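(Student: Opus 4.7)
The plan is to combine the linearity assumption on $\fur_t$ with the formula for $\tilde\jope_t-\jope'_t$ from \relem{expdiffalgotbruitefemajrtrl}, then replace every coefficient evaluated along the closed-loop trajectory by its counterpart along the open-loop trajectory at $\param_0$, paying for each replacement by a Lipschitz-continuity estimate.

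First, by \rehyp{linearfur}, $\fur_t$ is linear in its first argument, with $P_t(\state,\param)\deq \partial_\vtanc \fur_t(\cdot,\state,\param)$. Hence
\begin{equation*}
\vtanc_t-\vtanc'_t
=P_t(\state_t,\param_{t-1})\cdot\left(\dpartf{\state}{\perte_t}(\state_t)\cdot(\tilde\jope_t-\jope'_t)\right).
\end{equation*}
Write $S_t\deq \sup_{s\leq t-1}\dist{\param_s}{\param_0}$ (and set $\state^0_t\deq \opevol_{0:t}(\state_0,\param_0)$). By \relem{expdiffalgotbruitefemajrtrl},
\begin{equation*}
\tilde\jope_t-\jope'_t=\sum_{s\leq t}\left(\prod_{p=s+1}^{t}\dpartf{\state}{\opevol_p}(\state^0_{p-1},\param_0)\right)E_s+R_t,\qquad \nrm{R_t}=\go{S_t}.
\end{equation*}

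Plugging this into the first display yields, for the remainder, a term of the form $P_t(\state_t,\param_{t-1})\cdot(\partial_\state\perte_t(\state_t)\cdot R_t)$. By \relem{boundingextupdateslinear}, $\nrmop{P_t}$ is bounded uniformly on the stable tube, and by \rehyp{regpertes}, $\nrmop{\partial_\state\perte_t(\state_t)}=\go{\fmdper{t}}$ (after noting $\state_t$ lies in the stable tube thanks to \recor{stablestates}, so \relem{newparamcont} gives $\dist{\state_t}{\state^0_t}=\go{S_t}$ and the second-derivative bound transfers). Thus this remainder is $\go{\fmdper{t}\,S_t}$, matching the target error.

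It remains to replace $P_t(\state_t,\param_{t-1})$ by $P_t(\state^0_t,\param_0)$ and $\partial_\state\perte_t(\state_t)$ by $\partial_\state\perte_t(\state^0_t)$ inside the leading sum. Both $P_t$ and $\partial_\state\perte_t$ are Lipschitz in $(\state,\param)$ on the stable tube, with respective Lipschitz constants $\go{1}$ (\relem{boundingextupdateslinear}) and $\go{\fmdper{t}}$ (\rehyp{regpertes}); and $\dist{\state_t}{\state^0_t}+\dist{\param_{t-1}}{\param_0}=\go{S_t}$. Hence each substitution costs $\go{\fmdper{t}\,S_t}$ times the operator norm of $\sum_{s\leq t}(\prod_{p=s+1}^t\partial_\state\opevol_p(\state^0_{p-1},\param_0))E_s=\tilde\jope_t-\jope'_t-R_t$, which is bounded because both $\tilde\jope_t$ and $\jope'_t$ remain in the stable tube by \recor{rtrlstabletubesJ}. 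After these replacements, the sum becomes exactly $\sum_{s\leq t}\prdope{0}{s}{t}(\state_0,\param_0)\,E_s$ by \redef{prodiffopevolpretats}, and all accumulated errors are $\go{\fmdper{t}\,S_t}$, giving the claim.

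The only subtle point, and the one requiring the most bookkeeping, is checking that the Lipschitz-substitution errors do not accumulate a geometric factor over the sum in $s$: this is handled because after substitution the two sums $\sum_s(\cdots)E_s$ differ only through the single $t$-dependent prefactor $P_t(\state_t,\param_{t-1})\partial_\state\perte_t(\state_t)$ applied to a uniformly bounded object, so only one Lipschitz cost is paid.
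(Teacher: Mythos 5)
Your proof is correct and follows essentially the same route as the paper's: factor out the prefactor $P_t\cdot\partial_\state\perte_t$ via \rehyp{linearfur}, plug in the expression for $\tilde\jope_t-\jope'_t$ from \relem{expdiffalgotbruitefemajrtrl}, bound the $\go{S_t}$ remainder through the $\go{1}\times\go{\fmdper{t}}$ prefactor norms, and pay a single Lipschitz cost $\go{\fmdper{t}\,S_t}$ for swapping $(\state_t,\param_{t-1})\mapsto(\state^0_t,\param_0)$ against the uniformly bounded object $\tilde\jope_t-\jope'_t$. The only cosmetic difference is ordering: the paper substitutes the prefactor first and then the expansion of $\tilde\jope_t-\jope'_t$, while you do them in the opposite order, but both rest on exactly the same two estimates and the same observation about the prefactor being $s$-independent.
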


\begin{proof}
Let
$\state''_t\deq
\opevol_{0:t}\cpl{\state_{0}}{\param_0}$ as in
\relem{expdiffalgotbruitefemajrtrl}.
Thanks to the assumption on $\fur_t$,
\begin{equation*}
\ba
\vtanc_t-\vtanc_t'&=P_t(\state_t,\param_{t-1})\cdot\dpartf{\state}{\perte_t}\paren{\state_t} \cdot \paren{\tilde{\jope}_t-\jope_t'}
\\&=P_t(\state''_t,\param_0)\cdot\dpartf{\state}{\perte_t}\paren{\state''_t}
\cdot \paren{\tilde{\jope}_t-\jope_t'}
\\&
+\go{\left(P_t(\state_t,\param_{t-1})\cdot\dpartf{\state}{\perte_t}\paren{\state_t}-P_t(\state''_t,\param_0)\cdot\dpartf{\state}{\perte_t}\paren{\state''_t}\right)
\norm{\tilde{\jope}_t-\jope_t'}}.
\ea
\end{equation*}

By the expression for $\tilde J_t-J'_t$ in
\relem{expdiffalgotbruitefemajrtrl}, and by definition of $
\prdope{0}{s}{t}$ and of $\state''_t$, we have
\begin{multline*}
P_t(\state''_t,\param_0)\cdot\dpartf{\state}{\perte_t}\paren{\state''_t}
\cdot \paren{\tilde{\jope}_t-\jope_t'}
=
\sum_{s\leq t}\prdope{0}{s}{t}(\state_0,\param_0) \, E_s+
\go{\nrmop{P_t(\state''_t,\param_0)}\norm{\dpartf{\state}{\perte_t}\paren{\state''_t}}S_t}
\end{multline*}
where $S_t\deq \sup_{s
\leq t-1} \, \dist{\param_s}{\param_0}$ as in
\relem{expdiffalgotbruitefemajrtrl}.

As in \relem{expdiffalgotbruitefemajrtrl}, $\state''_t$ belongs to the
stable tube.
By \rehyp{regpertes},
$
\nrm{\dpartf{\state}{\perte_t}\paren{\state''_t}}
=\go{\fmdper{t}}$. \relem{boundingextupdateslinear} shows that
$P_t$ is bounded on the table tube. So
$\norm{P_t(\state''_t,\param_0)}\norm{\dpartf{\state}{\perte_t}\paren{\state''_t}}S_t=\go{\fmdper{t}\,S_t}$.

Thus, to reach our expression for $\vtanc_t-\vtanc'_t$, we only have to
prove that
\begin{equation*}
\go{\left(P_t(\state_t,\param_{t-1})\cdot\dpartf{\state}{\perte_t}\paren{\state_t}-P_t(\state''_t,\param_0)\cdot\dpartf{\state}{\perte_t}\paren{\state''_t}\right)
\norm{\tilde{\jope}_t-\jope_t'}}
=\go{\fmdper{t}\,S_t}.
\end{equation*}

As in \relem{expdiffalgotbruitefemajrtrl},
$\norm{\tilde{\jope}_t-\jope_t'}$ is uniformly bounded because both
belong to the stable tube. So we have to prove that
$P_t(\state_t,\param_{t-1})\cdot\dpartf{\state}{\perte_t}\paren{\state_t}-P_t(\state''_t,\param_0)\cdot\dpartf{\state}{\perte_t}\paren{\state''_t}=\go{\fmdper{t}\,S_t}$.

By \rehyp{fupdrl}, $\fur_t$ is $C^1$ so that $P_t$ is $C^1$. 
By \redef{fcpletaprm}, $\perte_t$ is $C^2$. Therefore,
\begin{multline*}
P_t(\state_t,\param_{t-1})\cdot\dpartf{\state}{\perte_t}\paren{\state_t}-P_t(\state''_t,\param_0)\cdot\dpartf{\state}{\perte_t}\paren{\state''_t}=
\\
\go{
\left(\dist{\state_t}{\state''_t}+\dist{\param_{t-1}}{\param_0}\right)
\sup_{(\state,\param)} \norm{\partial_{(\state,\param)} \left(
P_t(\state,\param)\cdot\dpartf{\state}{\perte_t}\paren{\state}
\right)}
}
\end{multline*}
where the supremum over $(\state,\param)$ is on a ball where all assumptions hold, since
$\state_t$, $\state''_t$, $\param_{t-1}$ and $\param_0$ all belong to the
stable tube.

We want to prove that this quantity is $\go{\fmdper{t}\,S_t}$.
We proved in \relem{expdiffalgotbruitefemajrtrl} that
$\dist{\state_t}{\state''_t}$ is $\go{S_t}$, and
$\dist{\param_{t-1}}{\param_0}$ is $\go{S_t}$ by definition of $S_t$. 

So we have to prove that the derivative of
$P_t(\state,\param)\cdot\dpartf{\state}{\perte_t}\paren{\state}$ with
respect to $(\state,\param)$ is $\go{\fmdper{t}}$. By differentiating the product, we have to
bound $P_t(\state,\param)$ and its derivative, as well as
the first and second derivatives of $\perte_t$ with respect to $\state$.

Thanks to \relem{boundingextupdateslinear}, $P_t(\state,\param)$ is bounded on the stable tube, together with its derivative $\partial_{(\state,\param)}P_t(\state,\param)$.

By \rehyp{regpertes}, the first and second derivatives of $\perte_t$ are
controlled by $\go{\fmdper{t}}$ on the stable tube.

This proves that $\partial_{(\state,\param)} \left(
P_t(\state,\param)\cdot\partial_\state \perte_t(\state)
\right)$ is $\go{\fmdper{t}}$ on the stable tube.
%
This
ends the proof.
\end{proof}

\begin{lemme}[Operator norm of $\prdope{\tinit}{s}{t}$]
\label{lem:Pinorm}
There is a constant $M>0$ such that, for any $\tinit>1$, for any 
$\param_\tinit \in \boctopt$ and $\state_\tinit \in \tube_{\tinit}$, for
any $t\geq s\geq \tinit$, the operator 
$\prdope{\tinit}{s}{t}(\state_\tinit,\param_\tinit)$ has operator norm at most
\begin{equation*}
\nrmop{\prdope{\tinit}{s}{t}(\state_\tinit,\param_\tinit)}\leq M\,\fmdper{t}\,(1-\alpha/2)^{(t-s)/\hsr},
\end{equation*}
where $\alpha$ and $\hsr$ are
the spectral radius constants of \rehyp{specrad}, and where the operator norm
of $\prdope{\tinit}{s}{t}(\state_\tinit,\param_\tinit)\in \sblin(\sblin(\Param,\State_s),\Param)$ is defined with respect to the
operator norm on $\sblin(\Param,\State_s)$ and the usual norm on $\Param$.
\end{lemme}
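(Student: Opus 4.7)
The plan is to bound the three factors that compose the operator $\prdope{\tinit}{s}{t}$ separately and then multiply, since the operator norm is submultiplicative.

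First, I would observe that since $\state_\tinit \in \tube_\tinit$ and $\param_\tinit \in \boctopt$, the stability of the tube (\relem{rtrlstabletubes}) gives that $\state_p = \opevol_{\tinit:p}(\state_\tinit, \param_\tinit)$ belongs to $\tube_p$ for all $p \geq \tinit$. Hence every term appearing in the definition of $\prdope{\tinit}{s}{t}$ is evaluated on the stable tube, where all the assumptions of Section~\ref{sec:presentation} hold.

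Next, I would decompose the bound into three pieces, by submultiplicativity of operator norms, writing
\begin{equation*}
\nrmop{\prdope{\tinit}{s}{t}(\state_\tinit,\param_\tinit)}
\leq
\nrmop{P_t(\state_t,\param_\tinit)}\cdot
\nrm{\dpartf{\state}{\perte_t}\paren{\state_t}}\cdot
\nrmop{\prod_{p=s+1}^{t}\dpartf{\state}{\opevol_p}\paren{\state_{p-1},\param_\tinit}}.
\end{equation*}
Then I would bound each factor in turn. For the first factor, \relem{boundingextupdateslinear} (which gathers the consequences of \rehypdeux{fupdrl}{linearfur}) gives a uniform-in-$t$ bound on $\nrmop{P_t(\state,\param)}$ over the stable tube. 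For the second factor, \rehyp{regpertes} gives $\nrm{\dpartf{\state}{\perte_t}\paren{\state_t}} = \go{\fmdper{t}}$ along the target trajectory; since $\state_t \in \tube_t$ and the first derivative of $\perte_t$ is itself $O(\fmdper{t})$-Lipschitz on the tube (by the second-derivative bound in \rehyp{regpertes} and boundedness of $\tube_t$), this extends to $\nrm{\dpartf{\state}{\perte_t}\paren{\state_t}} = \go{\fmdper{t}}$ uniformly for $\state_t \in \tube_t$. For the third factor, which is the main point of the lemma, I would invoke \recor{specrad}: since $(\state_p,\param_\tinit)$ stays in the stable tube, the sequence $A_p \deq \dpartf{\state}{\opevol_p}(\state_{p-1},\param_\tinit)$ has spectral radius at most $1-\alpha/2$ at horizon $\hsr$, and any product of $t-s$ consecutive such operators has operator norm at most $M(1-\alpha/2)^{(t-s)/\hsr}$.

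Multiplying these three bounds yields the claimed estimate with a new constant $M$ absorbing the bounds on $P_t$, on $\partial_\state \perte_t$, and the constant from \recor{specrad}. The only subtlety I anticipate is making sure that the constants really are uniform in $\tinit$, $s$, $t$, $\state_\tinit$ and $\param_\tinit$; this follows because all the cited assumptions are uniform over the stable tube, which does not depend on the starting time $\tinit$.
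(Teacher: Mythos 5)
Your proof follows essentially the same route as the paper: decompose $\prdope{\tinit}{s}{t}$ by submultiplicativity of the operator norm into the bound on $P_t$ (\relem{boundingextupdateslinear}), the bound on $\partial_\state\perte_t$ (\rehyp{regpertes}), and the bound on the product of $\partial_\state\opevol_p$'s (\recor{specrad}). If anything you are slightly more explicit than the paper in spelling out how the $O(\fmdper{t})$ bound on $\partial_\state\perte_t$ propagates from the target trajectory to the whole tube via the Lipschitz estimate, which is a correct and legitimate filling-in of a step the paper leaves implicit.
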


\begin{proof}
Let $E\in \sblin(\Param,\State_s)$. Then
by 
\redef{prodiffopevolpretats},
\begin{equation*}
\prdope{\tinit}{s}{t}(\state_\tinit,\param_\tinit)\,E=P_t(\state_{t},\param_\tinit)\cdot\left(
\dpartf{\state}{\perte_t}\paren{\state_{t}} \cdot
\paren{\prod_{p=s+1}^{t} \,
\dpartf{\state}{\opevol_p}\paren{\state_{p-1},\,\param_\tinit}}E\right)
\end{equation*}
where $\state_t\deq \opevol_{\tinit:t}(\state_\tinit,\param_\tinit)$ for
$t\geq \tinit$.
Since $\state_{\tinit}$ belongs to the stable tube and
$\param_\tinit\in\boctopt$, these states belong to the stable tube.
Therefore, we can apply
\recor{specrad}: 
the product $\prod_{p=s+1}^{t} \,
\dpartf{\state}{\opevol_p}\paren{\state_{p-1},\,\param_\tinit}$ has operator
norm at most $M(1-\alpha/2)^{{(t-s)/\hsr}}$ for some constant
$M>0$.

By \rehyp{regpertes}, the first derivative of $\perte_t$ has operator norm
$\go{\fmdper{t}}$ on the stable tube.
Finally, by \relem{boundingextupdateslinear}, the operator $P_t$ has bounded operator norm on the stable
tube. This proves the claim.
\end{proof}

Let $\paren{\mem_t}_{t\geq\tinit}$ be the sequence produced by the
\algonoisy RTRL algorithm starting at $\mem_{\tinit}$, and with a
parameter $\param_\tinit$. 
Remember the deviation
$\bruitif{\tinit}{t}{\param_\tinit}{\paren{\mem_t}_{t\geq\tinit}}$
introduced in \redef{algobruite}, that measures the effect on $\param_t$
of the difference between the \algonoisy RTRL trajectory $\mem_t$ and
a trajectory closer to RTRL.

\begin{lemme}[Expression of the noise]
\label{lem:expbruit}
Let $\param_\tinit\in \Param$ with $\dist{\prmctrl}{\pctrlopt} \leq
\frac{\rayoptct}{3}$. Let $
\cpl{\state_\tinit}{\jope_\tinit} \in
\tube_{\tinit}\times\tubejope_{\tinit}$.
Assume $\cdvp\leq \brnp$.

Let $\param_t$ and $\mem_t=(\state_t,\tilde \jope_t)$ be the trajectory computed by an
\algonoisy RTRL algorithm (Def.~\ref{def:approxrtrl}) starting at 
$\mem_\tinit=\cpl{\state_\tinit}{\jope_\tinit}$ at time $\tinit$.

Then, for all $\tinit+1 \leq t < \thoriz{\tinit}$,
\begin{equation*}
\bruitif{\tinit}{t}{\param_\tinit}{(\mem_t)}\leq \nrm{\som{s=\tinit+1}{t}
\, \paren{\som{p = s}{t} \,
\pas_p\,\prdope{\tinit}{s}{p}(\state_\tinit,\param_\tinit)}\, E_s} + \go{\paren{\som{s=\tinit+1}{t} \pas_s\,\fmdper{s}}^2}.
\end{equation*}
\end{lemme}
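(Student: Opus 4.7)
The plan is to combine the first-order parameter expansion of \relem{citersbopexp} with the noise expansion of \relem{expvectanbruitefoncvtanbo}. By \redef{algobruite}, one has $\bar\param_\tinit=\param_\tinit$, and the regularized states $\bar\mem_t=\Algo_t(\param_{t-1},\bar\mem_{t-1})$ are exactly those produced by exact RTRL driven by the noisy parameter sequence $(\param_t)$ starting at $\mem_\tinit$. For $\tinit+1\leq t<\thoriz{\tinit}$, \relem{maintientpsfinibocont} ensures $\param_t\in\boctopt$, and \recor{rtrlstabletubesJ} then keeps both $(\mem_t)$ and $(\bar\mem_t)$ in the common stable tube $\tube\times\tubejope$, so that both gradient sequences $(\vtanc_p)$ and $(\bar\vtanc_p)$ lie in $\bvtg{p}$.

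Applying \relem{citersbopexp} separately to the two trajectories (shifted to start at $\tinit$ via \rerem{timeshift}) and subtracting yields
\begin{equation*}
\param_t-\bar\param_t=-\sum_{p=\tinit+1}^t \pas_p\,(\vtanc_p-\bar\vtanc_p)+\go{\sum_{p=\tinit+1}^t \pas_p^2\,\fmdper{p}^2}.
\end{equation*}
Since the noisy and exact Jacobians agree at time $\tinit$ (both equal to $\jope_\tinit$), \relem{expvectanbruitefoncvtanbo}, again shifted by $\tinit$, gives for each $p$
\begin{equation*}
\vtanc_p-\bar\vtanc_p=\sum_{s=\tinit+1}^p \prdope{\tinit}{s}{p}(\state_\tinit,\param_\tinit)\,E_s+\go{\fmdper{p}\,\sup_{r\leq p}\dist{\param_r}{\param_\tinit}}.
\end{equation*}
Substituting, interchanging the order of summation, and taking norms produces the announced main term $\nrm{\sum_{s=\tinit+1}^t\paren{\sum_{p=s}^t \pas_p\,\prdope{\tinit}{s}{p}(\state_\tinit,\param_\tinit)}E_s}$.

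It remains to absorb both remainders into $\go{(\sum_{s=\tinit+1}^t\pas_s\fmdper{s})^2}$. The sum $\sum_p \pas_p^2\fmdper{p}^2$ is at most $(\sum_p \pas_p\fmdper{p})^2$ since $\sum a_p^2\leq(\sum a_p)^2$ for $a_p\geq 0$. For the second remainder, I would invoke the last statement of \relem{contordredeuxitprm2} (with $\vtanc'\equiv 0$ and $\param'_\tinit=\param_\tinit$, valid because $r<\thoriz{\tinit}$) to write $\dist{\param_r}{\param_\tinit}\leq \cst_6\sum_{u=\tinit+1}^r \pas_u\fmdper{u}$, so that $\sum_p \pas_p\fmdper{p}\sup_{r\leq p}\dist{\param_r}{\param_\tinit}\leq \cst_6\paren{\sum_p \pas_p\fmdper{p}}^2$. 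The argument is essentially bookkeeping once the three intermediate lemmas are available; the only point requiring care is verifying that the regularized trajectory $(\bar\mem_t)$ stays in the stable tube, which is true because it is an exact RTRL trajectory driven by parameters in $\boctopt$ and initialized at $\mem_\tinit\in\tube_\tinit\times\tubejope_\tinit$.
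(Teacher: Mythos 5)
Your proposal is correct and follows essentially the same route as the paper's own proof: identify $\bruitif{\tinit}{t}{\param_\tinit}{(\mem_t)}$ with $\dist{\param_t}{\bar\param_t}$, use \relem{maintientpsfinibocont} and \recor{rtrlstabletubesJ} to keep both the noisy and regularized trajectories in the stable tube, apply \relem{citersbopexp} to expand $\param_t-\bar\param_t$ at first order, substitute the noise expansion of \relem{expvectanbruitefoncvtanbo}, swap sums, and absorb the two remainders via \relem{contordredeuxitprm2} and the elementary inequality $\sum a_p^2\le(\sum a_p)^2$. The only cosmetic difference is that you apply \relem{citersbopexp} twice and subtract rather than once to the difference, which is equivalent.
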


Note that, since we assume $\olr \leq \brnp$, the $O$ term is uniform with respect to $\olr \leq \brnp$.

Moreover, thanks to \relem{itvlgtflng}, we know that, for $t$ large
enough, we have $T_t^{r_\Theta^*}>\flng{t}$. As a result, by definition of
$\tpsk{k+1}$, for $k$ large enough, all results apply to times $t$ in the
interval $[\tpsk{k};\tpsk{k+1}]$.

\begin{proof}
Without loss of generality, we conduct the proof with $\tinit=0$.

\paragraph{First expression of the noise.}
From Definition~\ref{def:approxrtrl}, 
the \algonoisy RTRL trajectory $(\mem_t)=(\state_t,\tilde \jope_t)$ and $(\param_t)$ satisfies
\begin{equation*}
\left\lbrace \ba
\state_{t}&=\opevolt\cpl{\state_{t-1}}{\param_{t-1}} \\
\tilde \jope_{t}&=\frac{\partial \opevol_t(\state_{t-1},\param_{t-1})}{\partial
\state} \,\tilde \jope_{t-1}+\frac{\partial
\opevol_t(\state_{t-1},\param_{t-1})}{\partial
\param}+E_t\\
\vtanc_t &= \furt{\frac{\partial \perte_t(\state_t)}{\partial
\state}\cdot
\tilde \jope_t
}{\state_t}{\param_{t-1}}
\\
\param_{t} &= \opdepct{\param_{t-1}}{\pas_t\,\vtanc_t}.
\ea \right.
\end{equation*}

Remember the definition \ref{def:algobruite} of the deviation
$\bruitif{\tinit}{t}{\param_\tinit}{\paren{\mem_t}_{t\geq\tinit}}$.
To compute the
deviation, we have to compare this to 
%
the regularized trajectory initialized likewise, but satisfying
the recurrence equations 
$\bar\mem_t=\Algo_t(\param_{t-1},\bar \mem_{t-1})$,
$\bar\vtanc_t =\sm{V}_{t}\paren{\param_{t-1},\,\bar\mem_t}$, and
$\bar\param_t=\paramupdate_t(\bar\param_{t-1},\,\pas_t\bar\vtanc_t)$ with
$\Algo_t$ and $\sm{V}_t$ the operators of the RTRL algorithm
(Def.~\ref{def:rtrlasalgo}). The equation on $\bar\mem_t$ amounts to
$\bar\mem_t=(\bar s_t,\bar J_t)$ with
\begin{equation*}
\left\lbrace \ba
\bar\state_{t}&=\opevolt\cpl{\bar\state_{t-1}}{\param_{t-1}}
\\
\bar{\jope}_{t}&={\dpartf{\state}{\opevolt}\paren{\bar\state_{t-1},\,\param_{t-1}}
\cdot \bar{\jope}_{t-1} + \dpartf{\prmctrl}{\opevolt}\paren{\bar\state_{t-1},\,\param_{t-1}}} \\
\ea\right.
\end{equation*}
but the equation for $\bar \state_t$ is the same as for $\state_t$, so
$\bar \state_t=\state_t$ for all $t\geq 0$, and thus
\begin{equation*}
\bar{\jope}_{t}={\dpartf{\state}{\opevolt}\paren{\state_{t-1},\,\param_{t-1}}
\cdot \bar{\jope}_{t-1} +
\dpartf{\prmctrl}{\opevolt}\paren{\state_{t-1},\,\param_{t-1}}}
\end{equation*}
and the evolution equations of $\bar \vtanc_t$ and $\bar\param_t$ become
\begin{equation*}
\bar{\vtanc}_t=
\furt{\frac{\partial \perte_t(\state_t)}{\partial
\state}\cdot
\bar \jope_t
}{\state_t}{\param_{t-1}}
,\qquad
\bar{\param}_{t} = \opdepct{\bar{\param}_{t-1}}{\pas_t\,\bar{\vtanc_t}},
\end{equation*}
by the definition of $\sm{V}_t$ (Def.~\ref{def:rtrlasalgo}).
Then, thanks to \redef{algobruite}, for all $t \geq 1$,
\begin{equation*}
\bruitif{0}{t}{\param_0}{(\mem_t)}=\dist{\param_t}{\bar{\param}_t}.
\end{equation*}

\paragraph{Expressing the noise through a Taylor expansion.}
Thanks to \recor{rtrlstabletubesJ}, we may find a stable tube
$\paren{\tube_t\times\tubejope_t}$ on $\cpl{\state_t}{\tilde J_t}$
suitable for both the RTRL algorithm, and any \algonoisy RTRL algorithm admitting $\sbeg$ as error gauge. As a result, thanks to \relem{maintientpsfinibocont}, for $0 \leq t < \thorizo$, $\param_t$ belongs to $\boctopt$, and $\cpl{\state_t}{\tilde \jope_t}$ belongs to $\tube_t\times\tubejope_t$.

Since all $\prmctrl_t$'s belong to $\boctopt$, 
and since $\tube_t\times\tubejope_t$ is also a stable tube for
(non-\algonoisy)
RTRL,
for all $t \geq 0$,
$\cpl{\state_t}{\bar{\jope}_t}$ also belongs to
$\tube_t\times\tubejope_t$.

As a consequence, 
for every $t\geq 1$, $\vtanc_t$ and $\bar{\vtanc}_t$ belong to $B_{\Tangent_t}$, which was introduced just below the statement of \relem{boundgradrtrl}. 
As a consequence, thanks to \relem{citersbopexp} we have, for all $1 \leq t < \thorizo$,
\begin{equation*}
\param_t-\bar{\param}_t = \sum_{s=1}^{t} \pas_s \,
\paren{\vtanc_s-\bar{\vtanc}_s} + \go{\sum_{s=1}^{t}
\pas_s^2\,\fmdper{s}^2},
\end{equation*}
so that
\begin{equation}
\label{eq:contnormediffpourbruit}
\dist{\param_t}{\bar{\param}_t} \leq \nrm{\sum_{s=1}^{t} \pas_s \,
\paren{\vtanc_s-\bar{\vtanc}_s}} + \go{\sum_{s=1}^{t}
\pas_s^2\,\fmdper{s}^2}.
\end{equation}

\paragraph{Control of the noise.}
The sequences $\state_t$, $\tilde \jope_t$, $\bar \jope_t$, $\vtanc_t$
and $\bar \vtanc_t$ exactly satisfy the recurrence equations appearing in
Lemmas~\ref{lem:expdiffalgotbruitefemajrtrl}
and~\ref{lem:expvectanbruitefoncvtanbo}\todo{ il faudrait probablement harmoniser les $\vtanc_t'$ et les $\bar{\vtanc}_t$}. Therefore,
thanks to \relem{expvectanbruitefoncvtanbo}, for all $1 \leq t < \thorizo$,
\begin{equation*}
\vtanc_t-\bar{\vtanc}_t=\sum_{s=1}^{t} \,
\prdope{0}{s}{t}(\state_0,\param_0) \, E_s + \go{\fmdper{t} \, \sup_{s \leq t} \, \dist{\param_s}{\param_0}}.
\end{equation*}
Now, thanks to the last assertion of \relem{contordredeuxitprm2} (which
holds both for the exact RTRL algorithm and for an \algonoisy RTRL
algorithm with the same stable tube), we have
\begin{equation*}
\sup_{s \leq t} \, \dist{\param_s}{\param_0} \leq \cst_6 \, \sum_{s=1}^{t} \, \pas_s\,\fmdper{s},
\end{equation*}
so that
\begin{equation*}
\vtanc_t-\bar{\vtanc}_t=\sum_{s=1}^{t} \,
\prdope{0}{s}{t}(\state_0,\param_0) \, E_s + \go{\fmdper{t} \, \sum_{s=1}^{t} \pas_s\,\fmdper{s}}.
\end{equation*}
As a consequence, for all $1 \leq t < \thorizo$,
\begin{equation}
\sum_{s=1}^{t} \, \pas_s \, \paren{\vtanc_s-\bar{\vtanc}_s} =
\sum_{s=1}^{t} \, \pas_s  \, \sum_{p=1}^{s} \,
\prdope{0}{p}{s}(\state_0,\param_0) \, E_p + \go{\sum_{s=1}^{t} \, \pas_s \, \fmdper{s} \, \sum_{p=1}^{s} \pas_p\,\fmdper{p}}.
\end{equation}
For the $O$ term, we can bound the sum for $p$ up to $s$
by the sum for $p$ up to $t$,
so that the $O$ term is $\go{\paren{\sum_{s=1}^{t}
\, \pas_s\,\fmdper{s}}^2}$.
Finally, for all $1 \leq t < \thorizo$, 
\begin{equation*}
\sum_{s=1}^{t} \, \pas_s \, \sum_{p=1}^{s} \,
\prdope{0}{p}{s}(\state_0,\param_0) \, E_p=\sum_{p=1}^{t} \,
\paren{\som{s=p}{t} \,\pas_s \, \prdope{0}{p}{s}(\state_0,\param_0)}\, E_p.
\end{equation*}
This concludes the proof.
\end{proof}

The next two
statements concern the measurability of algorithm variables along
\algonoisy RTRL trajectories; namely, we track their dependencies with respect to the noise terms $E_t$.

\begin{lemme}[Measurability for \algonoisy RTRL
algorithms]
\label{lem:measapproxrtrlalgo}
For $t\geq 1$, we denote by $\trib{t}$ the $\sigma$--algebra generated by
$\tribo$ and the $(E_s)_{1\leq s\leq t}$, where $\tribo$ is defined
in \rehyp{wcorrnoiseapprtrl}.

Then
$\acco{\trib{t}}_{t\geq 0}$ is a filtration, and
\rehyp{wcorrnoiseapprtrl} rewrites as $\econd{E_t}{\trib{t-1}}=0$, for every $t\geq 1$.

Moreover, for all $t\geq 1$, $\trib{t}$ contains all the variables $\state_t$, $\param_t$, $\tilde{\jope}_t$ and $E_t$, as well as all the operators $\paren{\opevol_s}_{s\geq 1}$, $\paren{\perte_s}_{s\geq 1}$, $\paren{\fur_s}_{s\geq 1}$ and $\paren{\paramupdate_s}_{s\geq 1}$.
\end{lemme}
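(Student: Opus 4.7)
The plan is to prove the three claims in order, all by straightforward set-theoretic and measurability arguments, since this is essentially a bookkeeping lemma about how the noise $E_t$ propagates through the algorithm.

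First, I would establish the filtration property. By construction, $\trib{t}$ is generated by $\tribo$ and $\{E_s : 1 \leq s \leq t\}$, while $\trib{t+1}$ is generated by the same collection together with the extra variable $E_{t+1}$. Since adjoining generators only enlarges a $\sigma$-algebra, $\trib{t} \subset \trib{t+1}$, which is exactly the definition of a filtration.

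Next, I would rewrite the unbiasedness assumption. Assumption~\ref{hyp:wcorrnoiseapprtrl} states $\econd{E_t}{E_1,\ldots,E_{t-1},\tribo}=0$. But by definition the $\sigma$-algebra generated by $\tribo$ and $E_1,\ldots,E_{t-1}$ is precisely $\trib{t-1}$, so the two conditional expectations coincide. This step is purely a matter of unpacking notation.

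Finally, I would prove the measurability claims by induction on $t$. The operators $\paren{\opevol_s}_{s\geq 1}$, $\paren{\perte_s}_{s\geq 1}$, $\paren{\fur_s}_{s\geq 1}$ and $\paren{\paramupdate_s}_{s\geq 1}$ all lie in $\tribo \subset \trib{t}$ by the very definition of $\tribo$ in \rehyp{wcorrnoiseapprtrl}. The base case $t=0$ gives $\state_0$, $\param_0$ and $\tilde{\jope}_0$ in $\tribo$, again by definition. For the inductive step, assume the claim holds at time $t-1$. Then reading off Def.~\ref{def:approxrtrl}:
\begin{itemize}
\item $\state_t = \opevol_t(\state_{t-1},\param_{t-1})$ is a measurable function of $\opevol_t$, $\state_{t-1}$, $\param_{t-1}$, all in $\trib{t-1} \subset \trib{t}$;
\item $\tilde{\jope}_t$ is a measurable function of $\opevol_t$, $\state_{t-1}$, $\param_{t-1}$, $\tilde{\jope}_{t-1}$ (all in $\trib{t-1}$) and $E_t$ (in $\trib{t}$);
\item $\vtanc_t$ is a measurable function of $\fur_t$, $\perte_t$, $\state_t$, $\tilde{\jope}_t$ and $\param_{t-1}$, all in $\trib{t}$;
\item $\param_t = \paramupdate_t(\param_{t-1},\pas_t\,\vtanc_t)$ is a measurable function of $\paramupdate_t$, $\param_{t-1}$ and $\vtanc_t$, all in $\trib{t}$.
\end{itemize}
Thus $\state_t$, $\tilde{\jope}_t$, $\param_t$ and $E_t$ all belong to $\trib{t}$, closing the induction. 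There is no real obstacle here; the statement is almost a tautology once one observes that the only source of ``new'' randomness at step $t$ beyond $\trib{t-1}$ is $E_t$, which is precisely adjoined to form $\trib{t}$.
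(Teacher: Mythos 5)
Your proof is correct and follows essentially the same route as the paper's: note the filtration property is immediate from the nested generator sets, observe that the unbiasedness assumption is literally a rewriting in the new notation, and induct on $t$ using Definition~\ref{def:approxrtrl} together with the fact that $\tribo$ contains the initial data and all algorithm operators. The only difference is stylistic: you lay out the inductive step variable-by-variable, whereas the paper does the base case $t=1$ explicitly and then invokes induction more tersely.
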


\begin{proof}
$\acco{\trib{t}}_{t\geq 0}$ is a filtration by its construction as an
increasing sequence of $\sigma$-algebras.

Moreover, by \rehyp{wcorrnoiseapprtrl}, $\tribo$ contains $\param_0$, $\state_0$, $\tilde{\jope}_0$, as well as all the operators $\paren{\opevol_t}_{t\geq 1}$, $\paren{\perte_t}_{t\geq 1}$, $\paren{\fur_t}_{t\geq 1}$ and $\paren{\paramupdate_t}_{t\geq 1}$.

Then by \redef{approxrtrl}, $\state_1$ is \mesl{0}, and therefore,
\mesl{1}. Moreover, by
\redef{approxrtrl} and
\rehyp{wcorrnoiseapprtrl}, $\tilde{\jope}_1$ is \mesl{1} since $E_1$ is
\mesl{1}. Finally, $\vtanc_1$ and $\param_1$ are also \mesl{1} by
\redef{approxrtrl}.

By induction on $t\geq 1$, the property then holds for all $t\geq 1$.
\end{proof}

\begin{corollaire}[Measurability along \algonoisy trajectories]
\label{cor:measapproxtraj}
Let $k\geq 0$ and let $\state_{\tpsk{k}}$ and $\param_{\tpsk{k}}$ be the
state and parameter obtained by the \algonoisy RTRL algorithm at time
$\tpsk{k}$.

For each $\tpsk{k}+1 \leq t \leq \tpsk{k+1}$, define
\begin{equation*}
c_t\deq\som{p = t}{\tpsk{k+1}} \, \pas_p\,\prdope{\tpsk{k}}{t}{p}(\state_{\tpsk{k}},\param_{\tpsk{k}}).
\end{equation*}
Then, 
for all $\tpsk{k}+1
\leq t \leq \tpsk{k+1}$,
$c_t$ is \mesl{\tpsk{k}} and \mesl{t-1}, where $\acco{\trib{t}}_{t\geq
0}$ is the filtration from \rehyp{wcorrnoiseapprtrl}.
\end{corollaire}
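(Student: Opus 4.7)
The strategy is to unfold the definition of $c_t$ and verify that every ingredient entering it belongs to $\trib{\tpsk{k}}$; the $\trib{t-1}$ measurability then follows for free, since $\acco{\trib{t}}$ is a filtration and $\tpsk{k}\leq t-1$ (by construction of the timescale $\tpsk{k}$, we have $\tpsk{k}<\tpsk{k+1}$, hence $\tpsk{k}\leq t-1$ whenever $\tpsk{k}+1\leq t$).

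First, I would recall from \relem{measapproxrtrlalgo} that $\state_{\tpsk{k}}$ and $\param_{\tpsk{k}}$ are $\trib{\tpsk{k}}$--measurable, and that $\tribo\subset \trib{\tpsk{k}}$ contains the whole families of operators $\paren{\opevol_s}$, $\paren{\perte_s}$, $\paren{\fur_s}$ and $\paren{\paramupdate_s}$ (and thereby the linear operators $P_s$ appearing in $\fur_s$ via \rehyp{linearfur}). The step sizes $\pas_p$ are deterministic, hence $\tribo$--measurable.

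Second, I would examine \redef{prodiffopevolpretats}. The operator $\prdope{\tpsk{k}}{t}{p}(\state_{\tpsk{k}},\param_{\tpsk{k}})$ is a fixed, deterministic composition of $P_p$, $\dpartf{\state}{\perte_p}$, and the partial derivatives $\dpartf{\state}{\opevol_q}$ evaluated along the open-loop trajectory $\state_q=\opevol_{\tpsk{k}:q}(\state_{\tpsk{k}},\param_{\tpsk{k}})$ for $\tpsk{k}\leq q\leq p$. This open-loop trajectory is obtained by iteratively composing the (fixed) operators $\opevol_q$ starting from the $\trib{\tpsk{k}}$--measurable input $(\state_{\tpsk{k}},\param_{\tpsk{k}})$; thus each $\state_q$ is $\trib{\tpsk{k}}$--measurable, and so is each derivative evaluated at $(\state_{q-1},\param_{\tpsk{k}})$. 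Therefore $\prdope{\tpsk{k}}{t}{p}(\state_{\tpsk{k}},\param_{\tpsk{k}})$ is a $\trib{\tpsk{k}}$--measurable (operator-valued) random variable for every $\tpsk{k}+1\leq t\leq p\leq \tpsk{k+1}$.

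Third, the finite sum
\begin{equation*}
c_t=\som{p=t}{\tpsk{k+1}}\pas_p\,\prdope{\tpsk{k}}{t}{p}(\state_{\tpsk{k}},\param_{\tpsk{k}})
\end{equation*}
is then a finite linear combination of $\trib{\tpsk{k}}$--measurable quantities with deterministic coefficients $\pas_p$, hence $\trib{\tpsk{k}}$--measurable. Since $\trib{\tpsk{k}}\subset \trib{t-1}$ (filtration property, using $\tpsk{k}\leq t-1$), the second measurability claim follows. There is no real obstacle here: the statement is a bookkeeping verification, and the only mild subtlety is noticing that $\prdope{\tpsk{k}}{t}{p}$ is evaluated along the \emph{open-loop} trajectory driven by the frozen pair $(\state_{\tpsk{k}},\param_{\tpsk{k}})$, so that no later noise terms $E_s$ ($s>\tpsk{k}$) ever enter its definition.
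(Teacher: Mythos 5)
Your proof is correct and follows essentially the same route as the paper: unfold the definition of $\prdope{\tpsk{k}}{t}{p}$, observe that the open-loop states $\state_l=\opevol_{\tpsk{k}:l}(\state_{\tpsk{k}},\param_{\tpsk{k}})$ and the operators $\opevol_l$, $\perte_p$, $P_p$ all live in $\trib{\tpsk{k}}$, and then pass to $\trib{t-1}$ by the filtration property. The only cosmetic difference is that you also spell out that the $\pas_p$ are deterministic, which the paper leaves implicit.
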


\begin{proof}
Let $\tpsk{k}+1 \leq t \leq \tpsk{k+1}$.
As we saw in Lemma~\ref{lem:measapproxrtrlalgo}, $\state_{\tpsk{k}}$ and
$\param_{\tpsk{k}}$ are \mesl{\tpsk{k}}.

By \redef{prodiffopevolpretats}, for each $\tpsk{k}\leq t \leq p$, the
operator $\prdope{\tpsk{k}}{t}{p}(\state_{\tpsk{k}},\param_{\tpsk{k}})$
is computed from $\state_{\tpsk{k}}$, $\param_{\tpsk{k}}$, $\perte_p$,
$P_p$, the states $\state_l =
\opevol_{\tpsk{k}:l}\cpl{\state_{\tpsk{k}}}{\param_{\tpsk{k}}}$,  and the family of operators $\paren{\opevol_u}_{u\geq 1}$. 

Again thanks to the Lemma~\ref{lem:measapproxrtrlalgo},
the operators $\perte_p$,
$P_p$ (defined by $\fur_p$), and all the operators
$\paren{\opevol_u}_{u\geq 1}$,
are \mesl{\tpsk{k}}. The compound operator
$\opevol_{\tpsk{k}:l}$ is \mesl{\tpsk{k}} too, as a composition of
operators $\paren{\opevol_u}$.

Since $\state_{\tpsk{k}}$, $\param_{\tpsk{k}}$, and $\opevol_{\tpsk{k}:l}$
are \mesl{\tpsk{k}}, 
the states $\state_l =
\opevol_{\tpsk{k}:l}\cpl{\state_{\tpsk{k}}}{\param_{\tpsk{k}}}$ are
\mesl{\tpsk{k}}. This proves that all objects defining
$c_t$ are \mesl{\tpsk{k}}, hence \mesl{t-1} since $t-1 \geq \tpsk{k}$.
\end{proof}

Here we prove that, under the unbiased noise of
\rehyp{wcorrnoiseapprtrl}, the (random) trajectory of an \algonoisy RTRL
algorithm has negligible noise in the sense of
Definition~\ref{def:negnoise}, with arbitrarily high probability.

\begin{lemme}[Noise control for the \algonoisy RTRL algorithm]
\label{lem:nsectrlapprtrl}
For all $k\geq 0$, denote $\mathcal{E}_k=\enstq{\prmctrl \in \epctrl}{\dist{\prmctrl}{\pctrlopt} \leq \frac{\rayoptct}{3}}\times \tube_{\tpsk{k}}\times\tubejope_{\tpsk{k}}$.

Any \algonoisy RTRL algorithm (under the unbiasedness and error
gauge
assumptions of
Section~\ref{sec:approxrtrl}) satisfies the following.
There exists a sequence $\paren{\delta_k}$ tending to $0$,
such that, for every $\eps>0$, there exists $K\geq 0$ such that, for
every $\olr \leq \brnp$, for any trajectory $\paren{\param_t}_{t\geq 0}$, $(\mem_t)_{t\geq 0}$, with $\mem_t=\cpl{\state_t}{\tilde{\jope_t}}$ for all $t\geq 0$, of the \algonoisy
RTRL algorithm, we have
\begin{equation*}
\prob{\mathbbm{1}_{\tpl{\param_{\tpsk{k}}}{\state_{\tpsk{k}}}{\tilde{\jope}_{\tpsk{k}}}\in\mathcal{E}_k}\,\bruitif{\tpsk{k}}{\tpsk{k+1}}{\param_\tpsk{k}}{(\mem_t)}
\leq \delta_k\,\pas_{\tpsk{k}}\,\flng{\tpsk{k}},\quad \forall k\geq K}\geq 1-\eps.
\end{equation*}

Therefore, for any $\eps >0$, for any $\olr \leq \brnp$, with probability
at least $1-\eps$, trajectories of the \algonoisy RTRL algorithm have
negligible noise starting at $K=K(\eps)$, at speed
$\paren{\delta_k}$ (Def.~\ref{def:negnoise}).
\end{lemme}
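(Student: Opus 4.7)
The plan is to apply \relem{expbruit} on each interval $[\tpsk{k};\tpsk{k+1}]$ to split $\bruitif{\tpsk{k}}{\tpsk{k+1}}{\param_{\tpsk{k}}}{(\mem_t)}$ into a deterministic $O$--term and a martingale sum $M_k\deq \sum_{s=\tpsk{k}+1}^{\tpsk{k+1}}c_s E_s$ (with $c_s\deq\sum_{p=s}^{\tpsk{k+1}}\pas_p\,\prdope{\tpsk{k}}{s}{p}(\state_{\tpsk{k}},\param_{\tpsk{k}})$), then to control each term separately and conclude by Markov's inequality and a Borel--Cantelli argument in order to obtain a bound valid for all $k\geq K$ simultaneously. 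The indicator $\mathbbm{1}_{\mathcal{E}_k}$ lets one assume $\param_{\tpsk{k}}\in\boctopt$ with $\dist{\param_{\tpsk{k}}}{\paramopt}\leq\rayoptct/3$ and $(\state_{\tpsk{k}},\tilde\jope_{\tpsk{k}})\in\tube_{\tpsk{k}}\times\tubejope_{\tpsk{k}}$; combined with \relem{maintientpsfinibocontrandomtraj} and \relem{itvlgtflng}, this guarantees (for $k$ large enough, uniformly in $\cdvp\leq\brnp$) that the whole trajectory on $[\tpsk{k};\tpsk{k+1}]$ stays in the stable tube, so that every bound from Section~\ref{sec:contrtrlappxalgoart} is available.

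The deterministic term from \relem{expbruit} is $\go{(\sum_{s\in(\tpsk{k};\tpsk{k+1}]}\pas_s\,\fmdper{s})^2}=\go{\pas_{\tpsk{k}}^2\,\flng{\tpsk{k}}^2\,\fmdper{\tpsk{k}}^2}$ by \recor{convzerosompasintervalles}, and equals $\pas_{\tpsk{k}}\,\flng{\tpsk{k}}$ times $\go{\pas_{\tpsk{k}}\,\flng{\tpsk{k}}\,\fmdper{\tpsk{k}}^2}$, which tends to $0$ by \rehyp{spdes}.2 together with $\mlipgrad{t}=\fmdper{t}$ (\relem{boundgradrtrl}). For the stochastic term, \recor{measapproxtraj} asserts that each $c_s$ is $\trib{s-1}$--measurable; combined with \rehyp{wcorrnoiseapprtrl} this makes $(c_s E_s)_s$ a sequence of orthogonal martingale differences relative to $(\trib{s})$, so that
\begin{equation*}
\E\bigl[\mathbbm{1}_{\mathcal{E}_k}\,\nrm{M_k}^2\bigm|\trib{\tpsk{k}}\bigr]=\mathbbm{1}_{\mathcal{E}_k}\sum_{s=\tpsk{k}+1}^{\tpsk{k+1}}\E\bigl[\nrm{c_s E_s}^2\bigm|\trib{\tpsk{k}}\bigr].
\end{equation*}
Summing the geometric bound on $\nrmop{\prdope{\tpsk{k}}{s}{p}}$ from \relem{Pinorm} (using $\pas_p\,\fmdper{p}\sim \pas_{\tpsk{k}}\,\fmdper{\tpsk{k}}$ on the interval, via \relem{stronghomo}) gives $\nrm{c_s}=\go{\pas_{\tpsk{k}}\,\fmdper{\tpsk{k}}}$; and \rehyp{errorgauge} combined with the boundedness of $\tilde\jope_{s-1}\in\tubejope_{s-1}$ and of $\partial\opevol_s/\partial(\state,\param)$ on the stable tube (\relem{localopnorms}) yields an almost sure bound $\nrm{E_s}=\go{1}$ on $\mathcal{E}_k$. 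This yields $\E[\mathbbm{1}_{\mathcal{E}_k}\,\nrm{M_k}^2]=\go{\flng{\tpsk{k}}\,\pas_{\tpsk{k}}^2\,\fmdper{\tpsk{k}}^2}$ uniformly in $\cdvp\leq\brnp$.

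Finally, since \relem{intvavrg} for \algonoisy RTRL enforces $\exli>\exmp+1/2$, one can pick $\beta$ in the non-empty range $\bigl((1-\exli)/\exli,\,1-2\exmp/\exli\bigr)$ and set $\delta_k\deq \fmdper{\tpsk{k}}/\flng{\tpsk{k}}^{(1-\beta)/2}+\pas_{\tpsk{k}}\,\flng{\tpsk{k}}\,\fmdper{\tpsk{k}}^2$. By construction $\delta_k\to 0$, while Markov's inequality applied to $M_k$, together with $\tpsk{k}\sim c\,k^{1/(1-\exli)}$ from \relem{tmsexrtrlalgo}, gives $\prob{\mathbbm{1}_{\mathcal{E}_k}\,\nrm{M_k}>\tfrac12\delta_k\,\pas_{\tpsk{k}}\,\flng{\tpsk{k}}}=\go{k^{-\beta\exli/(1-\exli)}}$, summable by the choice of $\beta$. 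The union bound then converts summability into a deterministic $K(\eps)$ beyond which the statement holds with probability at least $1-\eps$, uniformly in $\cdvp\leq\brnp$. The main obstacle is the martingale step: the orthogonality relies crucially on the $\trib{s-1}$--measurability of $c_s$ granted by \recor{measapproxtraj}, and the $\go{1}$ almost sure bound on $\nrm{E_s}$ is precisely what the variance-reduction rescalings of NoBackTrack and UORO were designed to produce through the sublinearity of the error gauge; the fine-tuning of $\delta_k$ is then only bookkeeping, with the range of admissible $\beta$ reflecting exactly the constraint $\exli>\exmp+1/2$ that appears in \rehyp{scfcstepsize}.
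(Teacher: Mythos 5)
Your proposal is correct and follows essentially the same route as the paper's proof: decompose the noise via \relem{expbruit}, kill the cross terms by the martingale orthogonality coming from \recor{measapproxtraj} and \rehyp{wcorrnoiseapprtrl}, bound $\nrmop{c_s}$ via \relem{Pinorm} and $\nrmop{E_s}$ via the error gauge on the stable tube, and finish with a Chebyshev/Borel--Cantelli argument producing a uniform $K(\eps)$. The only distinction is cosmetic: you parametrize $\delta_k$ via an exponent $\beta$ in the range $\bigl((1-\exli)/\exli,\,1-2\exmp/\exli\bigr)$, whereas the paper fixes $\delta_k=k^{-(\exli-\exmp-1/2)/(2(1-\exli))}$ outright and folds the deterministic term into a separate $\tilde\delta_k$; both choices rely on $\exmp<\exli-1/2$ from \relem{intvavrg} in an equivalent way.
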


\begin{proof}
Let us fix some $\bar{\pas}\leq\brnp$. Let $(\param_t)$ and $(\mem_t)$ be
a learning trajectory of the \algonoisy RTRL algorithm, with
$m_t=(\state_t,\tilde{\jope}_t)$.

Thanks to \recor{rtrlsuitstep}, we may apply \relem{minmthoriztpsk} to
obtain that, for all $k \geq 0$, we have $\tpsk{k+1}\leq
\thoriz{\tpsk{k}}(\sm{\pas})$.
Therefore, thanks to \relem{expbruit}, for any $k \geq 0$ such that
\begin{equation*}
\tpl{\param_\tpsk{k}}{\state_\tpsk{k}}{\tilde{\jope}_\tpsk{k}} \in \enstq{\prmctrl \in \epctrl}{\dist{\prmctrl}{\pctrlopt} \leq \frac{\rayoptct}{3}}\times \tube_{\tpsk{k}}\times\tubejope_{\tpsk{k}},
\end{equation*}
we have
\begin{equation*}
\ba
\bruitif{\tpsk{k}}{\tpsk{k+1}}{\param_\tpsk{k}}{(\mem_t)} 
&\leq \nrm{\som{s=\tpsk{k}+1}{\tpsk{k+1}} \, \paren{\som{p =
s}{\tpsk{k+1}} \,
\pas_p\,\prdope{\tpsk{k}}{s}{p}(\state_{\tpsk{k}},\param_{\tpsk{k}})}\, E_s} + \go{\paren{\som{s=\tpsk{k}+1}{\tpsk{k+1}} \pas_s\,\fmdper{s}}^2}
\\&=N_k+\go{\pas_{\tpsk{k}}^2\,\flng{\tpsk{k}}^2\,\fmdper{\tpsk{k}}^2}
\ea
\end{equation*}
by the fifth point of \recor{convzerosompasintervalles} (remembering we
use $\mlipgrad{\cdot}=\fmdper{\cdot}$), and where we have
introduced
\begin{equation*}
N_k
\deq \nrm{\som{t=\tpsk{k}+1}{\tpsk{k+1}} \, \paren{\som{p =
t}{\tpsk{k+1}} \, \pas_p\,\prdope{\tpsk{k}}{t}{p}(\state_{\tpsk{k}},\param_{\tpsk{k}})}\, E_t}.
\end{equation*}
We will now bound $N_k$ in probability via its second moment, conditioned on the event that
$\tpl{\param_\tpsk{k}}{\state_\tpsk{k}}{\tilde{\jope}_\tpsk{k}} \in \mathcal{E}_k$.

\paragraph{Expressing the noise term of the upper bound.}
For $\tpsk{k}+1 \leq t \leq \tpsk{k+1}$, define
\begin{equation*}
c_t\deq\som{p = t}{\tpsk{k+1}} \, \pas_p\,\prdope{\tpsk{k}}{t}{p}(\state_{\tpsk{k}},\param_{\tpsk{k}})
\end{equation*}
so that $N_k=\norm{\som{t=\tpsk{k}+1}{\tpsk{k+1}} \, c_t \, E_t}$. Note
that $c_t\,E_t\in \Param$ by definition of $c_t$ and of the operators
$\prdope{\tpsk{k}}{t}{p}$ (Def.~\ref{def:prodiffopevolpretats}).
Then
\begin{equation*}
\ba
N_k^2
&=\nrm{\som{t=\tpsk{k}+1}{\tpsk{k+1}} \, c_t \, E_t}^2
=\som{t=\tpsk{k}+1}{\tpsk{k+1}}\,\som{s=\tpsk{k}+1}{\tpsk{k+1}} \, \psv{c_t\,E_t}{c_s\,E_{s}}.
\ea
\end{equation*}

As a result, 
\begin{equation*}
\econd{N_k^2
}{\trib{\tpsk{k}}} = \som{t=\tpsk{k}+1}{\tpsk{k+1}}\,\som{s=\tpsk{k}+1}{\tpsk{k+1}} \, \econd{\psv{c_t\,E_t}{c_s\,E_{s}}}{\trib{\tpsk{k}}}.
\end{equation*}
Now, thanks to the unbiasedness assumption for $E_t$, the cross-terms $s\neq t$ vanish: indeed, for every $\tpsk{k} +1 \leq s < t \leq \tpsk{k+1}$,
$c_t$, $c_s$ and $E_s$ are
\mesl{t-1} by \recor{measapproxtraj} and because $t-1\geq s$, and
therefore,
\begin{equation*}
\ba
\econd{\psv{c_t\,E_t}{c_s\,E_{s}}}{\trib{\tpsk{k}}}&=\econd{\econd{\psv{c_t\,E_t}{c_s\,E_{s}}}{\trib{t-1}}}{\trib{\tpsk{k}}} \\
&=\econd{\psv{c_t\,\econd{E_t}{\trib{t-1}}}{c_s\,E_{s}}}{\trib{\tpsk{k}}}\\
&=0,
\ea
\end{equation*}
thanks to \rehyp{wcorrnoiseapprtrl}.
As a consequence,
\begin{equation*}
\ba
\econd{
N_k^2
}{\trib{\tpsk{k}}}&=\som{t=\tpsk{k}+1}{\tpsk{k+1}} \,
\econd{\nrm{c_t\,E_t}^2}{\trib{\tpsk{k}}}
\leq \som{t=\tpsk{k}+1}{\tpsk{k+1}} \, \nrmop{c_t}^2\,\econd{\nrmop{E_t}^2}{\trib{\tpsk{k}}},
\ea
\end{equation*}
again because $c_t$ is $\trib{\tpsk{k}}$-measurable
(\recor{measapproxtraj}), and because
$\norm{c_t\,E_t}\leq \nrmop{c_t}\nrmop{E_t}$ for $E_t\in
\sblin(\Param,\State_t)$ and $c_t\in
\sblin(\sblin(\Param,\State_t),\Param)$.

\paragraph{Control of the $\nrmop{E_t}$'s.}
By \recor{rtrlstabletubesJ}, trajectories of the \algonoisy RTRL
algorithm preserve the stable tube for $\state$ and $\tilde \jope$.
Therefore, if $\tpl{\param_\tpsk{k}}{\state_\tpsk{k}}{\tilde{\jope}_\tpsk{k}} \in
\mathcal{E}_k$, then 
by \relemdeux{maintientpsfinibocontrandomtraj}{minmthoriztpsk},
$\param_t$, $\state_t$ and $\tilde \jope_t$ stay in the stable tube at
least up to time $\tpsk{k+1}$. Since the stable tube is uniformly bounded, the
sequence $\tilde \jope_t$ is bounded for $\tpsk{k} \leq t
\leq \tpsk{k+1}$.
%
%
As a result, thanks to 
\rehyp{errorgauge} and
\relem{localopnorms}, the sequence
$\paren{E_t}$ is bounded in this interval (again, conditioned on
$\tpl{\param_\tpsk{k}}{\state_\tpsk{k}}{\tilde{\jope}_\tpsk{k}} \in
\mathcal{E}_k$).
Therefore,
\begin{equation*}
\econd{
N_k^2}{\trib{\tpsk{k}}}
=\go{ \som{t=\tpsk{k}+1}{\tpsk{k+1}} \, \nrmop{c_t}^2}.
\end{equation*}

\paragraph{Control of the $c_t$'s.}
Now, for all $\tpsk{k}+1 \leq t \leq \tpsk{k+1}$, by definition of $c_t$,
\begin{equation*}
\nrmop{c_t}\leq \som{p = t}{\tpsk{k+1}} \,
\pas_p\,\nrmop{\prdope{\tpsk{k}}{t}{p}(\state_{\tpsk{k}},\param_{\tpsk{k}})},
\end{equation*}
so that
\begin{equation*}
\nrmop{c_t}^2\leq \paren{\som{p = t}{\tpsk{k+1}} \,
\pas_p\,\nrmop{\prdope{\tpsk{k}}{t}{p}(\state_\tpsk{k},\param_\tpsk{k})}}^2.
\end{equation*}

Thanks to \relem{Pinorm},
\begin{equation*}
\nrmop{\prdope{\tpsk{k}}{t}{p}(\state_\tpsk{k},\param_\tpsk{k})}\leq M\,\fmdper{t}\,(1-\alpha/2)^{(p-t)/\hsr}
\end{equation*}
for some constant $M>0$,
where $\alpha$ and $\hsr$ are
the spectral radius constants of \rehyp{specrad}.
Therefore,
\begin{equation*}
\nrmop{c_t}^2 \leq M^2\,\paren{\som{p = t}{\tpsk{k+1}} \, \pas_p\,\fmdper{p} \, \paren{1-\alpha/2}^{{\frac{p-t}{\hsr}}}}^2.
\end{equation*}
Thanks to Schwarz's inequality, writing $\cst_\alpha=\sum_{m\geq
0}\,\paren{1-\alpha/2}^{{\frac{m}{\hsr}}}<\infty$, we have
\begin{equation*}
\ba
\nrmop{c_t}^2 &\leq M^2\,{\som{n=t}{\tpsk{k+1}}\paren{\pas_n\,\fmdper{n}}^2 \, \paren{1-\alpha/2}^{{\frac{n-t}{\hsr}}}\,\som{m=t}{\tpsk{k+1}}\, \paren{1-\alpha/2}^{{\frac{m-t}{\hsr}}}}\\
&\leq M^2\,\cst_\alpha\,{\som{n=t}{\tpsk{k+1}}\paren{\pas_n\,\fmdper{n}}^2 \, \paren{1-\alpha/2}^{{\frac{n-t}{\hsr}}}},
\ea
\end{equation*}
so that
\begin{equation*}
\ba
\sum_{t=\tpsk{k}+1}^{\tpsk{k+1}}\,\nrmop{c_t}^2 &\leq M^2\,\cst_\alpha \, \sum_{t=\tpsk{k}+1}^{\tpsk{k+1}}\,{\som{n=t}{\tpsk{k+1}}\paren{\pas_n\,\fmdper{n}}^2 \, \paren{1-\alpha/2}^{{\frac{n-t}{\hsr}}}}\\
&=M^2\,\cst_\alpha \, \sum_{n=\tpsk{k}+1}^{\tpsk{k+1}}\,\paren{\pas_n\,\fmdper{n}}^2\,{\som{t=\tpsk{k}+1}{n} \,\paren{1-\alpha/2}^{{\frac{n-t}{\hsr}}}}\\
&\leq M^2\,\cst_\alpha^2 \, \sum_{n=\tpsk{k}+1}^{\tpsk{k+1}}\,\paren{\pas_n\,\fmdper{n}}^2.
\ea
\end{equation*}

\paragraph{Upper-bounding the noise term, and applying Borel-Cantelli's lemma.}
As a consequence,
\begin{equation*}
\ba
\econd{N_k
^2}{\trib{\tpsk{k}}}
&= \go{\som{t=\tpsk{k}+1}{\tpsk{k+1}} \, \pas_t^2\,\fmdper{t}^2}
=\go{\pas_{\tpsk{k}}^2\,\fmdper{\tpsk{k}}^2\,\flng{\tpsk{k}}},
\ea
\end{equation*}
as $k$ tends to infinity, thanks to the sixth point of
\recor{convzerosompasintervalles}. Moreover, this bound is uniform over
$\enstq{\prmctrl \in \epctrl}{\dist{\prmctrl}{\pctrlopt} \leq
\frac{\rayoptct}{3}}\times\tube_{\tpsk{k}}\times\tubejope_{\tpsk{k}}$
and over $\cdvp\leq \brnp$, because all of the intermediate results we
used are.

Let us now define a sequence $(\delta_k)$ by
\begin{equation*}
\delta_k\deq k^{-{\frac{{\exli-\exmp}-1/2}{2\paren{1-\exli}}}}
\end{equation*}
where the exponents are those appearing in 
\relem{intvavrg}.
By this lemma,
$\exli>\exmp+1/2$, so that $\delta_k$ tends to $0$.

Then, for any $k \geq 0$, 
thanks to Bienaymé--Chebyshev's inequality, we have 

\begin{multline*}
\prob{\mathbbm{1}_{\tpl{\param_{\tpsk{k}}}{\state_{\tpsk{k}}}{\tilde{\jope}_{\tpsk{k}}}\in\mathcal{E}_k}\,N_k
\geq
\delta_k\,\pas_{\tpsk{k}}\,\flng{\tpsk{k}}}\\
\leq \inv{\delta_k^2\,\pas_{\tpsk{k}}^2\,\flng{\tpsk{k}}^2} \,
\espe{\mathbbm{1}_{\tpl{\param_{\tpsk{k}}}{\state_{\tpsk{k}}}{\tilde{\jope}_{\tpsk{k}}}\in\mathcal{E}_k}\,N_k
^2} \\
= \inv{\delta_k^2\,\pas_{\tpsk{k}}^2\,\flng{\tpsk{k}}^2} \,
\espe{\mathbbm{1}_{\tpl{\param_{\tpsk{k}}}{\state_{\tpsk{k}}}{\tilde{\jope}_{\tpsk{k}}}\in\mathcal{E}_k}\,\econd{N_k
^2}{\trib{\tpsk{k}}}} \\
\leq \go{\frac{\fmdper{\tpsk{k}}^2}{\delta_k^2\,\flng{\tpsk{k}}}}
=\go{\frac{\tpsk{k}^{2\exmp}}{\delta_k^2\,\tpsk{k}^{\exli}}},
\end{multline*}
since $\tpl{\param_{\tpsk{k}}}{\state_{\tpsk{k}}}{\tilde{\jope}_{\tpsk{k}}}$ is
\mesl{\tpsk{k}} as we saw in Lemma~\ref{lem:measapproxrtrlalgo}, and by the definitions of $\fmdper{T}$ and $\flng{T}$
(Section~\ref{sec:stepsizes}).

Now, thanks to \relem{tmsexrtrlalgo}, we know that $\tpsk{k}\sim c\,k^{1/\paren{1-\exli}}$, for some $c>0$, as $k$ tends to infinity. As a result, we have
\begin{equation*}
p_k\deq 
\frac{\fmdper{\tpsk{k}}^2}{\delta_k^2\,\flng{\tpsk{k}}}
=
\delta_k^{-2}\,\tpsk{k}^{2\exmp-\exli}\sim 
c^{2\exmp-\exli}\,k^{\frac{\exmp-1/2}{1-\exli}}
\end{equation*}
by our choice of $\delta_k$.
By \relem{intvavrg}, $\exmp<\exli-1/2$ so that $(\exmp-1/2)/(1-A)<-1$.
Therefore, the series $\sum p_k$ converges.

As a result, for all $K\geq 0$, we have
\begin{equation*}
\prob{\exists k\geq K \text{ such that } \mathbbm{1}_{\tpl{\param_{\tpsk{k}}}{\state_{\tpsk{k}}}{\tilde{\jope}_{\tpsk{k}}}\in\mathcal{E}_k}\,N_k
\geq \delta_k\,\pas_{\tpsk{k}}\,\flng{\tpsk{k}}}\leq \go{\sum_{k\geq K}\,
p_k
}.
\end{equation*}
Let then $\eps > 0$, and let $K_\eps$ such that the upper bound
$\go{\sum_{k\geq K_\eps} p_k}$ is less than
$\eps$. Since all the bounds are uniform over $\olr\leq \brnp$, 
$K_\eps$ is independent of $\olr$. Thus, for any $\olr\leq \brnp$,
\begin{equation*}
\prob{\mathbbm{1}_{\tpl{\param_{\tpsk{k}}}{\state_{\tpsk{k}}}{\tilde{\jope}_{\tpsk{k}}}\in\mathcal{E}_k}\,N_k
\leq \delta_k\,\pas_{\tpsk{k}}\,\flng{\tpsk{k}},\quad \forall k\geq K_\eps}\geq 1-\eps.
\end{equation*}

Remember that
$\bruitif{\tpsk{k}}{\tpsk{k+1}}{\param_\tpsk{k}}{(\mem_t)} 
=N_k+\go{\pas_{\tpsk{k}}^2\,\flng{\tpsk{k}}^2\,\fmdper{\tpsk{k}}^2}$.
Thus, on the set $\acco{\mathbbm{1}_{\tpl{\param_{\tpsk{k}}}{\state_{\tpsk{k}}}{\tilde{\jope}_{\tpsk{k}}}\in\mathcal{E}_k}\,N_k
\leq \delta_k\,\pas_{\tpsk{k}}\,\flng{\tpsk{k}},\; \forall k\geq
K_\eps}$ we have, for every $k\geq K_\eps$,
\begin{multline*}
\mathbbm{1}_{\tpl{\param_{\tpsk{k}}}{\state_{\tpsk{k}}}{\tilde{\jope}_{\tpsk{k}}}\in\mathcal{E}_k}\,\bruitif{\tpsk{k}}{\tpsk{k+1}}{\param_\tpsk{k}}{(\mem_t)}\\
\leq
\paren{\delta_k+\go{\pas_{\tpsk{k}}\,\flng{\tpsk{k}}\,\fmdper{\tpsk{k}}^2}}\,\pas_{\tpsk{k}}\,\flng{\tpsk{k}}.
\end{multline*}

\relem{intvavrg} shows that $\pas_{\tpsk{k}}\,\flng{\tpsk{k}}\,\fmdper{\tpsk{k}}^2$ converges to $0$, as $k$ tends to infinity.
Moreover, as before, the constants of the $O$ term only depend on the constants of the problem, and are uniform with respect to $\olr \leq \brnp$.
Let us write, for $k\geq 0$, $\tilde{\delta}_k\deq {\delta_k+\go{\pas_{\tpsk{k}}\,\flng{\tpsk{k}}\,\fmdper{\tpsk{k}}^2}}$.
Therefore, for every $\olr \leq \brnp$, we have
\begin{equation*}
\prob{\mathbbm{1}_{\tpl{\param_{\tpsk{k}}}{\state_{\tpsk{k}}}{\tilde{\jope}_{\tpsk{k}}}\in\mathcal{E}_k}\,\bruitif{\tpsk{k}}{\tpsk{k+1}}{\param_\tpsk{k}}{(\mem_t)}
\leq {\tilde{\delta}_k}\,\pas_{\tpsk{k}}\,\flng{\tpsk{k}},\quad \forall
k\geq K_\eps}\geq 1-\eps.
\end{equation*}
We have therefore established the claim.
\end{proof}


\subsection{Convergence of the \rtrl Algorithm, \Algonoisy \rtrl Algorithms, and of the TBPTT Algorithm}
\label{sec:proofscvrtrlapprox}

\begin{corollaire}[Convergence of \rtrl, extended \rtrl algorithms, and \algonoisy \rtrl algorithms]
\label{cor:cvrtrlapproxrtrlpreuve}
\rethm{cvapproxrtrlalgo} holds.
\end{corollaire}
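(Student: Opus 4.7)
The plan is to deduce this corollary by applying the abstract convergence results of Section \ref{sec:absontadsys} (Theorems \ref{thm:cvalgopti} and \ref{thm:cvalgopti_noisy}) to the RTRL, extended RTRL, and \algonoisy RTRL algorithms, cast as abstract gradient descent algorithms via Definition~\ref{def:rtrlasalgo}. Concretely, I take $\Mem_t \deq \State_t \times \epjopeins{t}$ with the max-norm, and equip $\Param$ with the Lyapunov norm $\norm{\param}^2 \deq \transp{\param} B\, \param$ of Lemma~\ref{lem:suitposdefm}, where $B$ is such that $B\linalgmatrix + \transp{\linalgmatrix} B$ is positive definite. Since all norms on $\Param$ are equivalent, this only amounts to shrinking $r_\Param$ in the assumptions of Section~\ref{sec:syshyp}, which is harmless. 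Distinguishing the exact and noisy cases is done through Corollary~\ref{cor:rtrlstabletubesJ}, which provides a common stable tube $\tube_t \times \tubejope_t$.

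The body of the proof is then a verification that \mainass{} from Section~\ref{sec:absontadsys} holds for this abstract setup, by collecting the work of Sections~\ref{sec:contrtrlappxalgoart} and~\ref{sec:prcvalappxalgo}. Specifically: \rehyp{stableballs} holds by Corollary~\ref{cor:rtrlstabletubesJ} (the stable tube is common to RTRL and to every \algonoisy RTRL algorithm with error gauge $\sbeg$); \rehyp{paramlip} holds by Corollary~\ref{cor:rtrllipswrtprm}; \rehyp{expforgetinit} holds by Lemma~\ref{lem:expforgstjrtrlfxprm} (with constant $1-\alpha/2$ and horizon $\hsr$); \rehypdeux{boundedgrads}{contgrad} hold by Lemma~\ref{lem:boundgradrtrl}, with $\fmdper{t} = \mlipgrad{t} = t^\exmp$; \rehyp{updateop} follows from \rehyp{updateop_first} by Proposition~\ref{prop:prmupdatesfirstorderstepsize}; and \rehyp{spdes} for the stepsize sequence holds by Corollary~\ref{cor:rtrlsuitstep}, using the scale function $\flng{t} = t^\exli$ built in Lemma~\ref{lem:intvavrg}. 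The two parts of \rehyp{optimprm} are proved respectively in Lemma~\ref{lem:rtrlstableopt} (first-order stability of $\paramopt$) and Lemma~\ref{lem:contractpctrlopt} (contractivity around $\paramopt$, at rate $1 - \lambda\, \pas_T\, \flng{T}$ for the Lyapunov distance, valid on a smaller ball $B'_\Param \subset \boctopt$). We therefore replace $\boctopt$ by $B'_\Param$ throughout: all previously verified assumptions remain valid on this smaller ball because they are already valid on the larger one.

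In the exact and extended RTRL cases, Theorem~\ref{thm:cvalgopti} applies directly, yielding $\cdvpmaxconv > 0$ such that for any $\cdvp < \cdvpmaxconv$ and any $(\param_0, \state_0, J_0)$ with $\param_0$ close enough to $\paramopt$ and $(\state_0, J_0) \in \tube_0 \times \tubejope_0$, the parameter $\param_t$ tends to $\paramopt$. Since $\tube_0$ contains a neighbourhood of $\stateopt_0$ and $\tubejope_0$ contains a ball around $0$ (Corollary~\ref{cor:rtrlstabletubesJ}), this provides the neighbourhoods $\mathcal{N}_{\paramopt}$, $\mathcal{N}_{\stateopt_0}$, $\mathcal{N}^\jope_0$ of the theorem statement. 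For \algonoisy RTRL algorithms, the trajectory $(\state_t, \tilde J_t, \param_t)$ is a random trajectory that respects the stable tube (again by Corollary~\ref{cor:rtrlstabletubesJ} applied pointwise), and Lemma~\ref{lem:nsectrlapprtrl} shows exactly that for any $\eps > 0$ there exist $K$ and a deterministic sequence $\delta_k \to 0$ such that, for every $\cdvp \leq \brnp$, the trajectory has negligible noise from time $K$ onwards at speed $(\delta_k)$ with probability at least $1-\eps$. This is precisely the hypothesis of Theorem~\ref{thm:cvalgopti_noisy}, which then delivers the claimed convergence with probability at least $1-\eps$ provided $\cdvp \leq \cdvpmaxconv(\eps)$.

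The main bookkeeping difficulty is not any single estimate but the consistency of quantifiers: ensuring that the same stable tube works simultaneously for the exact RTRL trajectory used to define $\linalgmatrix$ and the Lyapunov norm, for any deterministic perturbed trajectory appearing in the openloop/closedloop comparisons of Section~\ref{sec:procvabstalgo}, and for any random trajectory of the \algonoisy RTRL algorithm; that the successive shrinkings of $\boctopt$ (to $r'_\Param$ in Corollary~\ref{cor:specrad}, to $r''_\Param$ in Lemma~\ref{lem:statecontr}, to $r'''_\Param$ in Lemma~\ref{lem:rtrlstabletubes}, and finally to $B'_\Param$ in Lemma~\ref{lem:contractpctrlopt}) remain compatible with every previously established bound; and that the neighbourhood of $0$ for $\tilde J_0$ in the conclusion is obtained by choosing $r_\jope > r_\jope^*$ in the construction of $\tubejope_0$ (Corollary~\ref{cor:rtrlstabletubesJ}). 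Once this bookkeeping is done, the corollary is an immediate restatement of the abstract convergence theorems.
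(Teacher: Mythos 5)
Your proposal is correct and follows essentially the same route as the paper: it casts RTRL/extended/imperfect RTRL into the abstract framework of Definition~\ref{def:rtrlasalgo}, verifies each hypothesis of \mainass{} via the same chain of lemmas (Corollary~\ref{cor:rtrlstabletubesJ}, Corollary~\ref{cor:rtrllipswrtprm}, Lemma~\ref{lem:expforgstjrtrlfxprm}, Lemma~\ref{lem:boundgradrtrl}, Proposition~\ref{prop:prmupdatesfirstorderstepsize}, Corollary~\ref{cor:rtrlsuitstep}, Lemmas~\ref{lem:rtrlstableopt} and~\ref{lem:contractpctrlopt}, Lemma~\ref{lem:nsectrlapprtrl}), then invokes Theorems~\ref{thm:cvalgopti} and~\ref{thm:cvalgopti_noisy}. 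Your explicit discussion of the Lyapunov norm, the successive shrinkings of $\boctopt$, and the quantifier bookkeeping is slightly more detailed than the paper's, but it is the same argument.
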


\begin{proof}
Remember that \redef{rtrlasalgo} casts the operators associated with
any extended RTRL algorithm in the abstract framework of
\resec{absontadsys}. Thus, 
the proof consists in showing that extended \rtrl algorithms and \algonoisy \rtrl
algorithms satisfy all the assumptions of \rethm{cvalgopti} and \rethm{cvalgopti_noisy} respectively, albeit on a
smaller ball $\boctopt\subset\Param$ than the ball on which the
assumptions of Section~\ref{sec:presentation} hold (because the ball
$\boctopt$ has been reduced several times in the proofs above).

In \resec{absontadsys}, 
we have a (non-\algonoisy) algorithm $\Algo$ that has to satisfy
a series of assumptions. For extended \rtrl
algorithms, we check these assumptions directly.

\Algonoisy \rtrl algorithms are treated somewhat differently.
For every \algonoisy \rtrl algorithm, there is a corresponding
(non-\algonoisy) extended
\rtrl algorithm obtained by setting $E_t=0$ in the definition
(namely,
Defs.~\ref{def:algortrl} and~\ref{def:approxrtrl} with the same system
and update operators, respectively without or with noise $E_t$).
\Algonoisy RTRL
algorithms are random trajectories in the sense of
\redef{randomtraj}; then we apply 
\rethm{cvalgopti_noisy}. For this,
\algonoisy RTRL algorithms do not need to satisfy all the assumptions of
Section~\ref{sec:absontadsys}, only to
respect the stable tube
(Def.~\ref{def:randomtraj}), and to
have negligible noise (Def.~\ref{def:negnoise}) with respect to the
corresponding non-\algonoisy RTRL algorithm. The corresponding non-\algonoisy algorithm
does have to satisfy the assumptions of Section~\ref{sec:absontadsys}.

We now turn to each of the assumptions of Section~\ref{sec:absontadsys}.

Thanks to \recor{rtrlstabletubesJ}, both extended \rtrl algorithms and
\algonoisy \rtrl algorithms admit a stable tube, thus satisfying
\rehyp{stableballs}. In particular, \algonoisy RTRL algorithms with
random noise produce random trajectories which respect the stable tube in
the sense of \redef{randomtraj}.

Thanks to \relem{expforgstjrtrlfxprm} and \recor{rtrllipswrtprm}, the
exponential forgetting and Lipschitz \rehypdeux{expforgetinit}{paramlip}
are satisfied for extended RTRL algorithms.

\relem{boundgradrtrl} proves that the gradient computation operators are bounded on the stable tube, and Lipschitz, so that \rehypdeux{boundedgrads}{contgrad} are satisfied.

The parameter update operators are Lipschitz, thanks to \reprop{prmupdatesfirstorderstepsize}, so that \rehyp{updateop} is satisfied.

Thanks to \rehyp{scfcstepsize}, \relem{intvavrg} and \relem{homsatis}, \rehyp{spdes} on the stepsize sequence is satisfied.

Thanks to \relemdeux{rtrlstableopt}{contractpctrlopt}, 
$\pctrlopt$ satisfies the local optimality conditions of \rehyp{optimprm}.

Thanks to \relem{nsectrlapprtrl}, there exists a sequence
$\paren{\delta_k}$ tending to $0$ such that, for any $\eps > 0$, there exists $K\paren{\eps}$ such that, for any $\bar{\pas}\leq\brnp$ ($\brnp>0$ is introduced in \reprop{prmupdatesfirstorderstepsize}), with probability greater than $1-\eps$, \algonoisy \rtrl algorithms have negligible noise starting at $K\paren{\eps}$, at speed $\paren{\delta_k}$.

Set $\mathcal{N}_{\paramopt}\deq \enstq{\prmctrl \in
\epctrl}{\dist{\prmctrl}{\pctrlopt} \leq \frac{\rayoptct}{4}}$ where
$\rayoptct$ is the radius of $\boctopt$, 
$\mathcal{N}_{\stateopt_0}\deq \tube_0$, and $\mathcal{N}^\jope_{0}\deq\tubejope_0$.
Then, thanks to \rethm{cvalgopti}, there exists $\cdvpmaxconv>0$ such
that an extended RTRL algorithm initialized 
any parameter $\param_0\in \mathcal{N}_{\paramopt}$, any
state $\state_0\in \mathcal{N}_{\stateopt_0}$ and
any differential $\tilde\jope_0\in \mathcal{N}^\jope_{0}$,
produces a sequence of parameters $\param_t$ that converges to
$\paramopt$.
Moreover, by \rethm{cvalgopti_noisy},
for any $\eps>0$, there exists $\cdvpmaxconv>0$ such that, 
for any overall learning rate $\cdvp\leq\cdvpmaxconv$, any initial parameter $\param_0\in \mathcal{N}_{\paramopt}$, any
initial state $\state_0\in \mathcal{N}_{\stateopt_0}$ and
any initial differential $\tilde\jope_0\in \mathcal{N}^\jope_{0}$, with
probability at least $1-\eps$, an \algonoisy \rtrl algorithm produces a sequence of parameters $\param_t$ converging to $\pctrlopt$.
\end{proof}

We now turn to truncated backpropagation through time (TBPTT), which uses
essentially the same approach using the open-loop algorithm.

We start with the following classical result: on a finite interval
$[\tpsk{k};\tpsk{k+1}]$, TBPTT is equivalent to an RTRL algorithm that
updates the parameter only at the end of the interval and initializes
$\jope$ to $0$ at the beginning of the interval. 
However, 
technically we cannot define $\jope_{\tpsk{k}}$ to $0$ at the start of
each interval, because its value
is used to compute the gradient at the end of the previous interval.  So we just
define the next value of $\jope$ as if $\jope_{\tpsk{k}}=0$ in the
formula below.

\begin{proposition}[TBPTT as RTRL on intervals]
\label{prop:tbpttasrtrl}
The TBPTT algorithm (Def.~\ref{def:tbtt}) is equivalent to RTRL with the parameter updated
on steps $\tpsk{k}$, and the influence of
$\jope_t$ on $\jope_{t+1}$ cut at time $\tpsk{k}$, namely:
\begin{equation*}
\left\lbrace
\ba
\state_{t} &= \opevol_t(\state_{t-1},\param_{\tpsk{k}}),
\qquad  \tpsk{k}+1\leq t\leq \tpsk{k+1}
\\
\jope_{\tpsk{k}+1}&=\frac{\partial
\opevol_{\tpsk{k}+1}(\state_{\tpsk{k}},\param_{\tpsk{k}})}{\partial
\param},
\\
\jope_t &= \frac{\partial \opevol_t(\state_{t-1},\param_{\tpsk{k}})}{\partial
\state} \,\jope_{t-1}+\frac{\partial
\opevol_t(\state_{t-1},\param_{\tpsk{k}})}{\partial
\param},\qquad \tpsk{k}+2\leq t\leq \tpsk{k+1}
\\
\vtanc_t &= {\frac{\partial
\perte_t(\state_t)}{\partial \state}\cdot
\jope_t},
\qquad \tpsk{k}+1\leq t\leq \tpsk{k+1},
\ea \right.
\end{equation*}
and parameter update
\begin{equation*}
\param_{\tpsk{k+1}}=\param_{\tpsk{k}}-\pas_{\tpsk{k+1}}\sum_{t=\tpsk{k}+1}^{\tpsk{k+1}}\vtanc_t.
\end{equation*}
\end{proposition}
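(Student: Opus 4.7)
The plan is to reduce the claim, on each interval $(\tpsk{k},\tpsk{k+1}]$, to the standard open-loop RTRL identity already proved in Lemma~\ref{lem:rtrlcorrectderivatives}. Fix $k\geq 0$ and abbreviate $\param\deq\param_{\tpsk{k}}$. First I would note that both algorithms drive the state with the \emph{constant} parameter $\param$ over the interval starting at $\state_{\tpsk{k}}$, so they produce the same trajectory $\state_t=\fstate_{\tpsk{k}:t}(\state_{\tpsk{k}},\param)$ for $\tpsk{k}\leq t\leq \tpsk{k+1}$.

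Next, I would observe that the recurrence for $\jope_t$ in the proposition is exactly the RTRL Jacobian recursion of Definition~\ref{def:simplertrl} with constant parameter $\param$, initialized by $\jope_{\tpsk{k}}=0$: the special base case at $t=\tpsk{k}+1$ is only the rewriting of the general recursion with this zero initialization, and it is stated separately solely because $\jope_{\tpsk{k}}$ itself is still needed to compute $\vtanc_{\tpsk{k}}$ at the end of the previous interval and must not be overwritten before that use. The induction argument of Lemma~\ref{lem:rtrlcorrectderivatives} is a direct differentiation of the evolution equation and is manifestly invariant under a time shift, so applying it on the interval $(\tpsk{k},\tpsk{k+1}]$ with initial state $\state_{\tpsk{k}}$ and parameter $\param$ gives
\begin{equation*}
\jope_t \;=\; \frac{\partial \fstate_{\tpsk{k}:t}(\state_{\tpsk{k}},\param)}{\partial \param}, \qquad \tpsk{k}+1\leq t\leq \tpsk{k+1}.
\end{equation*}

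By the definition $\permctt{\tpsk{k}}{t}(\state_{\tpsk{k}},\param)=\perte_t(\fstate_{\tpsk{k}:t}(\state_{\tpsk{k}},\param))$ and the chain rule, this identifies the tangent vectors in the proposition with the per-step gradients used in TBPTT:
\begin{equation*}
\vtanc_t \;=\; \frac{\partial \perte_t(\state_t)}{\partial \state}\cdot \jope_t \;=\; \frac{\partial \permctt{\tpsk{k}}{t}(\state_{\tpsk{k}},\param)}{\partial \param}.
\end{equation*}
Summing over $\tpsk{k}+1\leq t\leq \tpsk{k+1}$ and substituting into the parameter update at the end of the interval reproduces exactly the TBPTT update of Definition~\ref{def:tbtt}, and a trivial induction on $k$ then shows that the two algorithms generate the same sequences $(\state_t,\param_t)$ for all $t$.

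There is no serious obstacle here: the content is the classical fact that RTRL run at fixed parameter accumulates the true Jacobian of the composed dynamics, which is already encapsulated in Lemma~\ref{lem:rtrlcorrectderivatives}. The only bookkeeping point worth being careful about is the role of $\jope_{\tpsk{k}}$ at the boundary between intervals, namely that cutting the backpropagation amounts to resetting the Jacobian to zero at $t=\tpsk{k}$ while still using the pre-reset value for $\vtanc_{\tpsk{k}}$; this is precisely what the split statement (with a separate base case for $\jope_{\tpsk{k}+1}$) achieves.
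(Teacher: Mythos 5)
Your proof is correct and simply makes explicit the ``classical'' argument that the paper deliberately omits (it says only ``The proof is classical \ldots and we omit it''): reduce to open-loop RTRL on each truncation interval via Lemma~\ref{lem:rtrlcorrectderivatives} (time-shifted to start at $\tpsk{k}$ with $\jope$ reset to $0$), identify $\vtanc_t$ with $\partial_\param \permctt{\tpsk{k}}{t}$ by the chain rule, and sum. The bookkeeping point you single out --- $\jope_{\tpsk{k}}$ must still be available to compute $\vtanc_{\tpsk{k}}$ at the close of the previous interval, so the reset is encoded as a separate base case for $\jope_{\tpsk{k}+1}$ rather than an overwrite of $\jope_{\tpsk{k}}$ --- is exactly the subtlety the split statement in the proposition is designed to handle.
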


\begin{proof}
The proof is classical
(as long as the parameter is not updated, RTRL and backpropagation
through time compute the same gradient, by equivalence of forward and
backward gradient computations), and we omit it.
\end{proof}

\begin{corollaire}[Convergence of the TPBTT algorithm]
\rethm{cvtbtt} holds.
\end{corollaire}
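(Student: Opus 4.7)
The plan is to view TBPTT as an instance of the open-loop abstract algorithm of \resec{absontadsys} and then directly invoke \rethm{cvalgoboucleouverte}. Concretely, I would combine \reprop{tbpttasrtrl} with \redef{rtrlasalgo}: on each truncation interval $(\tpsk{k};\tpsk{k+1}]$, TBPTT runs RTRL forward with frozen parameter $\param_{\tpsk{k}}$, and at time $\tpsk{k+1}$ performs a single update with the cumulated gradient. In the abstract formalism on $\mem_t=(\state_t,\jope_t)$, this is precisely the open-loop algorithm of \rethm{cvalgoboucleouverte} with reset states $\mem'_{\tpsk{k}}=(\state_{\tpsk{k}},0)$ (keep the running state, set the Jacobian to zero). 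The only subtlety is that, by \reprop{tbpttasrtrl}, $\jope$ is reset to $0$ at the beginning of each interval; this fits the abstract theorem provided $0\in\tubejope_{\tpsk{k}}$, which is guaranteed by \recor{rtrlstabletubesJ}, which explicitly ensures that $\tubejope_t$ contains a ball around $0$ of radius independent of $t$.

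Next, I would verify that the hypothesis \mainass{} has already been established for this abstract system in Sections~\ref{sec:contrtrlappxalgoart} and~\ref{sec:prcvalappxalgo}, exactly as in the proof of \recor{cvrtrlapproxrtrlpreuve}: stable tube (\recor{rtrlstabletubesJ}), exponential forgetting and $\param$-Lipschitzness (\relem{expforgstjrtrlfxprm}, \recor{rtrllipswrtprm}), bounded and Lipschitz gradient operators (\relem{boundgradrtrl}), Lipschitz parameter updates (\reprop{prmupdatesfirstorderstepsize}), and local optimality of $\paramopt$ (\relemdeux{rtrlstableopt}{contractpctrlopt}). These hold with $\fur_t=\id$ and the standard update $\paramupdate_t(\param,v)=\param-v$, so \rehyp{updateop_first} is trivially satisfied.

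The point requiring a small amount of care is matching the TBPTT timescale with that of \redef{echetps}. In the TBPTT statement of \rethm{cvtbtt}, the truncation lengths satisfy $\tpsk{k+1}-\tpsk{k}=\tpsk{k}^\exli$ with $\max(\expem,\exmp)<\exli<\expas-2\exmp$. By \relem{intvavrg} this is exactly a valid choice of the scale function $\lng$ used in \rehyp{optimprm}, and by \relem{tmsexrtrlalgo} the resulting sequence $\tpsk{k}$ is the timescale of \redef{echetps}. So the $(\tpsk{k})$ appearing in TBPTT coincides with the $(\tpsk{k})$ used by \rethm{cvalgoboucleouverte}, and all of \resecdeux{stabprmitv}{contaroundparamopt} apply verbatim (they never required that $\jope_{\tpsk{k}}$ be the one produced by the full RTRL recursion, only that the state $\mem_{\tpsk{k}}$ lie in the stable tube, which is preserved by the reset).

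The main obstacle, and thus the step I would spell out most carefully, is a clean statement that the reset $\jope_{\tpsk{k}}\leftarrow 0$ is exactly the ``$\mem'_{\tpsk{k}}\leftarrow \mem'_{\tpsk{k}}\in\bmaint{\tpsk{k}}$'' substitution allowed by \rethm{cvalgoboucleouverte}: this is where \reprop{tbpttasrtrl} is essential, since without its gradient-computation equivalence one could not identify TBPTT with a single application of open-loop RTRL on each interval. Once this identification is made and $0\in\tubejope_{\tpsk{k}}$ is invoked, \rethm{cvalgoboucleouverte} yields $\cdvpmaxconv>0$ and, for $\cdvp<\cdvpmaxconv$ and any $\param_0$ with $\dist{\param_0}{\paramopt}\leq \rayoptct/4$ and $\state_0\in\tube_0$, local convergence $\param_t\to\paramopt$, completing the proof of \rethm{cvtbtt}.
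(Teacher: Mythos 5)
Your proof proposal is correct and follows essentially the same route as the paper: identify TBPTT (via \reprop{tbpttasrtrl}) with the open-loop abstract algorithm of \rethm{cvalgoboucleouverte} using reset states $\mem'_{\tpsk{k}}=(\state_{\tpsk{k}},0)$, check $0\in\tubejope_{\tpsk{k}}$ via \recor{rtrlstabletubesJ}, match the timescale using \relem{intvavrg} and \relem{tmsexrtrlalgo}, and reuse the verification of \mainass{} already carried out for RTRL in \recor{cvrtrlapproxrtrlpreuve}. The paper's proof is structured the same way and cites the same lemmas.
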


\begin{proof}
The proof consists in showing the TBPTT algorithm satisfies the
assumptions of \rethm{cvalgoboucleouverte}.

Remember that \redef{rtrlasalgo} defines the abstract operators $\Algo_t$
and $\sm{V}_t$ for the RTRL algorithm.

Then one checks that the algorithm in
\reprop{tbpttasrtrl} is equivalent to
the open-loop algorithm studied in \rethm{cvalgoboucleouverte},
with $\Algo_t$ the RTRL update on $\mem=(\state,J)$ from
\redef{rtrlasalgo}, with $\sm{V}_t$ as in \redef{rtrlasalgo} using
$\fur_t(\vtanc,\state,\param)=\vtanc$, with
$\paramupdate_t\cpl{\param}{\vtanc}=\param-\vtanc$, and with
\begin{equation*}
\mem'_{\tpsk{k}}\deq \cpl{\state_{\tpsk{k}}}{0}.
\end{equation*}
Indeed, in that case, the
update
\begin{equation*}
\mem_{\tpsk{k}} \leftarrow \mem'_{\tpsk{k}}\in\bmaint{\tpsk{k}}
\end{equation*}
at the beginning of each interval, is equivalent to resetting $J$ to $0$
at the beginning of each interval.
(Note that
resetting the value of $\jope$ to $0$ stays in the stable tube, thanks to
\recor{rtrlstabletubesJ}.) 
 Moreover, since
$\paramupdate_t\cpl{\param}{\vtanc}=\param-\vtanc$, the iterated update
$\param_{t} =
\paramupdate_t\paren{\param_{t-1},\,\pas_{\tpsk{k+1}}\,\vtanc_t}$ in
\rethm{cvalgoboucleouverte} is
equivalent to
\begin{equation*}
\param_{\tpsk{k+1}}=\param_{\tpsk{k}}-\sum_{t=\tpsk{k}+1}^{\tpsk{k+1}}\,\pas_{\tpsk{k+1}} \vtanc_t,
\end{equation*}
which is indeed the parameter update of \reprop{tbpttasrtrl}.

Moreover, the interval lengths $\tpsk{k+1}-\tpsk{k}=\tpsk{k}^\exli$
in the assumptions of \rethm{cvtbtt} match the definition of $\tpsk{k}$
in \relem{tmsexrtrlalgo}, with the constraint on $\exli$ from
\relem{intvavrg}.

The assumptions of \rethm{cvalgoboucleouverte} are the same as those of
\rethm{cvalgopti}; we have checked above in the proof of
\recor{cvrtrlapproxrtrlpreuve} that these assumptions are satisfied for
RTRL. 

Thus, by \rethm{cvalgoboucleouverte} and \reprop{tbpttasrtrl}, the TBPTT algorithm produces a sequence of parameters $\param_{\tpsk{k}}$ converging to $\paramopt$.
\end{proof}

\subsection{NoBackTrack and UORO as \Algonoisy RTRL Algorithms}
\label{sec:nbtuoroasapproxrtrlalgo}

We now prove \relem{nbtuoroapproxrtrl}: NoBackTrack and UORO satisfy the
unbiasedness and bounded noise assumptions.

The notation for NoBackTrack and UORO is introduced in
Section~\ref{sec:defnbtuoro}.
Remember that the noise $E_t$ is defined via
random signs $\veber{t}$ at each time $t$. 
For every $t\geq 1$, we write $\tribd{t}$ the $\sigma$-algebra generated
by $\tribo$ (defined in \rehyp{wcorrnoiseapprtrl}) together with the $\veber{s}$ for $1\leq s \leq t$. Since $E_t$ is
computed from the $\veber{t}$'s, the $\sigma$-algebra $\trib{t}$ generated
by the $(E_s)_{s\leq t}$ is contained in $\tribd{t}$: $\trib{t}\subset
\tribd{t}$. For \rehyp{wcorrnoiseapprtrl}, we want to prove that $\econd{E_t}{\trib{t-1}}=0$; we will
prove the stronger result that $\econd{E_t}{\tribd{t-1}}=0$.

Computing the conditional expectation with respect
to $\tribd{t}$ means integrating with respect to the laws of the
$\veber{s}$'s, for $s > t$.

We are going to study the error $E_t$ at some time $t\geq
1$.
In this section, we fix the following abbreviations.
For $t \geq 1$, let the values of the NoBackTrack objects at time
$t-1$ be $\prmctrl =\param_{t-1}\in \epctrl$, $\state
=\state_{t-1}\in \State_{t-1}$ and
$\jope =\tilde \jope_{t-1}\in \sepjope{t-1}$ with $\jope =
\tens{\vsyst}{\vctrl}$,
and further abbreviate
\begin{equation*}
\sbopmulb\deq\dpartf{\state}{\opevolt}\paren{\state,\,\prmctrl} \quad
\text{and} \quad \lopaddb{i}\deq \dpartf{\prmctrl}{\opevolt^i}\paren{\state,\,\prmctrl}, \quad 1 \leq i \leq \dim \State_{t}
\end{equation*}
where $\opevolt^i$ is the $i$-th component of $\opevolt$ in the basis of
$\State_t$
used to define NoBackTrack.
Finally, abbreviate  $\sigi{i}$ for $\beri{t}$.

\subsubsection{NoBackTrack as an \Algonoisy RTRL Algorithm}

Let us first express the NoBackTrack update.
Let
\begin{equation*}
\cvnbtaxsi\deq \opredt{\vsyst}{\vctrl}{\state}{\prmctrl},
\end{equation*}
where $\sbopred_t$ is the reduction operator for NoBackTrack defined in
Section~\ref{sec:defnbtuoro}.
Then abbreviate
\begin{equation*}
\rho\deq \fegn{\opmulb{\vsyst}}{\vctrl} \quad \text{and} \quad \rho_i\deq \fegn{\vcans_i}{\lopaddb{i}}, \quad 1 \leq i \leq \dim \State_{t},
\end{equation*}
with $\vcans_i$ the basis vectors used to define NoBackTrack.
If ${\opmulb{\vsyst}}=0$, we define $\rho\deq 1$.

Then by definition of the NoBackTrack reduction operator $\sbopred_t$, we
have
\begin{equation*}
\left\lbrace \ba
\vsystax&=\rho \, \opmulb{\vsyst} + \sum_{i=1}^{\dim \State_{t}} \, \sigi{i} \, \rho_i \, \vcans_i \\
\vctrlax&=\rho^{-1} \, \vctrl+\sum_{i=1}^{\dim \State_{t}} \sigi{i} \, \rho_i^{-1} \, \lopaddb{i}.
\ea \right.
\end{equation*}

By induction and by \relem{measapproxrtrlalgo}, 
$\param=\param_{t-1}$, $\state=\state_{t-1}$,
$\jope=\tilde \jope_{t-1}$ and thus
$\sbopmulb$, $\sbopaddb$, $\rho$, and $\rho_i$
are
$\trib{t-1}$-measurable hence $\tribd{t-1}$-measurable. (We assume the
basis $\vcans_i$ is deterministic.) Thus $\vsystax$
and $\vctrlax$ are $\tribd{t}$-measurable, since they also use the
$\sigi{i}$'s at time $t$.
Note that in NoBackTrack, $\vsyst$ and $\vctrl$ at time $t$ 
are $\vsystax$
and $\vctrlax$ from the previous step $t-1$, so that $\vsyst$ and
$\vctrl$ are $\tribd{t-1}$-measurable.

As a result, from the definition of NoBackTrack (Def.~\ref{def:nbtuoro}),
the \algonoisy Jacobian computed by the NoBackTrack update
is
\begin{equation*}
\ba
\tilde\jope_t=
\tens{\vsystax}{\vctrlax} &= \tens{\opmulb{\vsyst}}{\vctrl} + \som{i=1}{\dim \State_{t}} \, \underbrace{\sigi{i}^2}_{=1} \, \tens{\vcans_i}{\lopaddb{i}}\\
&+\rho \, \tens{\opmulb{\vsyst}}{\sum_{i=1}^{\dim \State_{t}} \sigi{i} \, \rho_i^{-1} \, \lopaddb{i}}
+ \rho^{-1} \tens{\vctrl}{\sum_{i=1}^{\dim \State_{t}} \, \sigi{i} \, \rho_i \, \vcans_i}\\
&+ \som{i,j=1,\,i\neq j}{\dim \State_{t}} \tens{\sigi{i} \, \rho_i \, \vcans_i}{\sigi{j} \, \rho_j^{-1} \, \lopaddb{j}},
\ea
\end{equation*}
and
the error term $E_t$ is
\begin{equation*}
\ba
E_t&=\tilde \jope_t
-\paren{\dpartf{\state}{\opevolt}\paren{\state,\,\prmctrl} \cdot \jope + \dpartf{\prmctrl}{\opevolt}\paren{\state,\,\prmctrl}} 
\\&=
\tens{\vsystax}{\vctrlax} - \paren{\tens{\opmulb{\vsyst}}{\vctrl} +
{\sbopaddb}}
\\&=
\tens{\vsystax}{\vctrlax} - \tens{\opmulb{\vsyst}}{\vctrl} -
\som{i=1}{\dim \State_{t}} \, \tens{\vcans_i}{\lopaddb{i}},
\ea
\end{equation*}
namely
\begin{equation}
\ba
E_t&=\rho \, \tens{\opmulb{\vsyst}}{\sum_{i=1}^{\dim \State_{t}} \sigi{i} \, \rho_i^{-1} \, \lopaddb{i}}
+ \rho^{-1} \tens{\vctrl}{\sum_{i=1}^{\dim \State_{t}} \, \sigi{i} \, \rho_i \, \vcans_i}\\
&+ \som{i,j=1,\,i\neq j}{\dim \State_{t}} \tens{\sigi{i} \, \rho_i \, \vcans_i}{\sigi{j} \, \rho_j^{-1} \, \lopaddb{j}}.
\ea
\label{eq:expression-errors-nbt}
\end{equation}

\begin{proof}[NoBackTrack -- unbiasedness of the Jacobian update rule]
Let us show that \rehyp{wcorrnoiseapprtrl} is satisfied with the
filtration $\acco{\tribd{t}}$, hence a fortiori with $\acco{\trib{t}}$.

By construction, at each time $t$ the $\sigi{i}$'s are random
Bernoulli variables that are independent from $\tribd{t-1}$. 
Thus, $\econd{\sigi{i}}{\tribd{t-1}}=0$ and
$\econd{\sigi{i}\sigi{j}}{\tribd{t-1}}=0$ for $i\neq j$.

On the other
hand, we have seen that all the other variables in the expression
\eqref{eq:expression-errors-nbt} for
$E_t$ are
$\tribd{t-1}$-measurable.
Then, taking the conditional expectation with respect to $\tribd{t-1}$ in
\eqref{eq:expression-errors-nbt}, we obtain
$\econd{E_t}{\tribd{t-1}}=0$ as needed.
\end{proof}

\begin{proof}[NoBackTrack -- size of the error]
Next, we deal
with \rehyp{errorgauge} (bounded errors) for NoBackTrack.

From the expression \eqref{eq:expression-errors-nbt} for $E_t$,
since for vectors $v_1$, $v_2$, we have $\nrmop{\tens{v_1}{v_2}}=\nrm{v_1}\nrm{v_2}$, it holds that
\begin{equation*}
\ba
\nrmop{E_t}&\leq \rho \, \nrm{\opmulb{\vsyst}} \, {\sum_{i=1}^{\dim \State_{t}} \, \rho_i^{-1} \, \nrm{\lopaddb{i}}}
+ \rho^{-1} \nrm{\vctrl} \, {\sum_{i=1}^{\dim \State_{t}} \, \rho_i \, \nrm{\vcans_i}}\\
&+ \som{i,j=1,\,i\neq j}{\dim \State_{t+1}} {\rho_i \, \nrm{\vcans_i}}{\rho_j^{-1} \, \nrm{\lopaddb{j}}}\\
&=\sqrt{\nrm{\opmulb{\vsyst}}\nrm{\vctrl}} \, {\sum_{i=1}^{\dim \State_{t}} \, \sqrt{\nrm{\vcans_i}\, \nrm{\lopaddb{i}}}}
+ \sqrt{\nrm{\opmulb{\vsyst}}\nrm{\vctrl}} \, {\sum_{i=1}^{\dim \State_{t}} \, \sqrt{\nrm{\vcans_i}\, \nrm{\lopaddb{i}}}}\\
&+\som{i,j=1,\,i\neq j}{\dim \State_{t+1}} {\sqrt{\nrm{\vcans_i}\, \nrm{\lopaddb{i}}}}{\sqrt{\nrm{\vcans_j}\, \nrm{\lopaddb{j}}}}\\
&\leq 2 \, \sqrt{\nrmop{\sbopmulb}} \, \sqrt{\nrm{\vsyst}\nrm{\vctrl}} \, {\sum_{i=1}^{\dim \State_{t}} \, \sqrt{\nrm{\vcans_i}\, \nrm{\lopaddb{i}}}}
+\som{i,j=1,\,i\neq j}{\dim \State_{t}} {\sqrt{\nrm{\vcans_i}\, \nrm{\lopaddb{i}}}}{\sqrt{\nrm{\vcans_j}\, \nrm{\lopaddb{j}}}}.
\ea
\end{equation*}

Now, $\jope=\tens{\vsyst}{\vctrl}$, so $\nrmop{\jope}=\nrm{\vsyst}\nrm{\vctrl}$. As a result,
\begin{equation*}
\sqrt{\nrm{\vsyst}\nrm{\vctrl}}=\nrmop{\jope}^{1/2}.
\end{equation*}
Next, 
since the $\vcans_i$'s
form and orthonormal basis of vectors of $\epsystins{t}$ according to
\redef{opredinstantt}, we have $\nrm{\vcans_i}=1$ and $\nrm{\lopaddb{i}}\leq
\nrmop{\sbopaddb}\nrm{\vcans_i}=\nrmop{\sbopaddb}$.
Finally, by definition of $\sbopmulb$ and $\sbopaddb$,
\begin{equation*}
\nrmop{\sbopmulb}, \, \nrmop{\sbopaddb} \leq \nrmop{\frac{\partial \opevol_t(\state_{t-1},\param_{t-1})}{\partial(\state,\param)}}.
\end{equation*}

Plugging this into the bound for $\nrmop{E_t}$, we find
\begin{equation*}
\nrmop{E_t}\leq (2\dim \State_t) \,y\,  \nrmop{\jope}^{1/2}+ (\dim
\State_t)^2\, y
\end{equation*}
where $y={\nrmop{\frac{\partial
\opevol_t(\state_{t-1},\param_{t-1})}{\partial(\state,\param)}}}$.

Since $\dim \State_t$ is bounded by
\rehyp{bndimstatespaces},
this shows the size of the error in NoBackTrack is compliant with the
requirements of \redef{errorgauge} and \rehyp{errorgauge}, with
$\feg{x}{y}=C\,(1+x^{1/2})(1+y)$.

Note that we have only proved that $\nrm{E_t}$ is controlled by
$\sqrt{\nrmop{\jope}}$ provided $\jope$ is rank-one. This is true by
construction at all times along the NoBackTrack trajectory, so the bound
above holds at all times on any NoBackTrack trajectory (with probability
$1$); this is all that
is needed for \rehyp{errorgauge}.
\end{proof}

\subsubsection{UORO as an \Algonoisy RTRL Algorithm}

The analysis of UORO is very similar to NoBackTrack. Let us first express the UORO update.
Let
\begin{equation*}
\cvnbtaxsi\deq \opredt{\vsyst}{\vctrl}{\state}{\prmctrl},
\end{equation*}
where $\sbopred_t$ is the reduction operator for UORO defined in \resec{defnbtuoro}.
Then, abbreviate 
\begin{equation*}
\rho_0\deq \fegn{\opmulb{\vsyst}}{\vctrl} \quad \text{and} \quad \rho_1\deq \fegn{\som{i=1}{\dim \epsystins{t}} \sigi{i} \, {\vcans_i}}{\som{i=1}{\dim \epsystins{t}}\,\sigi{i}\,\lopaddb{i}},
\end{equation*}
where the $\vcans_i$'s are the basis vectors used to define UORO. If ${\opmulb{\vsyst}}=0$, we define $\rho\deq 1$.

Then, by definition of the UORO reduction operator $\sbopred_t$, we have
\begin{equation*}
\left\lbrace \ba
\vsystax&=\rho_0 \, \opmulb{\vsyst} + \rho_1\,\sum_{i=1}^{\dim \State_{t}} \, \sigi{i} \, \vcans_i \\
\vctrlax&=\rho_0^{-1} \, \vctrl+\rho_1^{-1} \, \sum_{i=1}^{\dim \State_{t}} \sigi{i} \, \lopaddb{i}.
\ea \right.
\end{equation*}
By induction and by \relem{measapproxrtrlalgo}, 
$\param=\param_{t-1}$, $\state=\state_{t-1}$,
$\jope=\tilde \jope_{t-1}$ and thus
$\sbopmulb$, $\sbopaddb$, $\rho_0$, and $\rho_1$
are
$\trib{t-1}$-measurable hence $\tribd{t-1}$-measurable. (We assume the
basis $\vcans_i$ is deterministic.) Thus $\vsystax$
and $\vctrlax$ are $\tribd{t}$-measurable, since they also use the
$\sigi{i}$'s at time $t$.
Note that in UORO, $\vsyst$ and $\vctrl$ at time $t$ 
are $\vsystax$
and $\vctrlax$ from the previous step $t-1$, so that $\vsyst$ and
$\vctrl$ are $\tribd{t-1}$-measurable.

As a result, from the definition of UORO (Def.~\ref{def:nbtuoro}),
the \algonoisy Jacobian computed by the NoBackTrack update
is
\begin{equation*}
\ba
\tilde{\jope}_t=\tens{\vsystax}{\vctrlax} &= \tens{\opmulb{\vsyst}}{\vctrl} + \som{i=1}{\dim \State_{t}} \, \underbrace{\sigi{i}^2}_{=1} \, \tens{\vcans_i}{\lopaddb{i}}\\
&+\rho_0 \, \tens{\opmulb{\vsyst}}{\rho_1^{-1}\, \sum_{i=1}^{\dim \State_{t}} \sigi{i} \, \lopaddb{i}}
+ \rho_0^{-1} \, \tens{\vctrl}{\rho_1 \, \sum_{i=1}^{\dim \State_{t}} \, \sigi{i} \, \vcans_i}\\
&+ \som{i,j=1,\,i\neq j}{\dim \State_{t}} \tens{\sigi{i} \, \vcans_i}{\sigi{j} \, \lopaddb{j}},
\ea
\end{equation*}
and the error term $E_t$ is
\begin{equation*}
\ba
E_t&=\tilde \jope_t
-\paren{\dpartf{\state}{\opevolt}\paren{\state,\,\prmctrl} \cdot \jope + \dpartf{\prmctrl}{\opevolt}\paren{\state,\,\prmctrl}} 
\\&=
\tens{\vsystax}{\vctrlax} - \paren{\tens{\opmulb{\vsyst}}{\vctrl} +
{\sbopaddb}}
\\&=
\tens{\vsystax}{\vctrlax} - \tens{\opmulb{\vsyst}}{\vctrl} -
\som{i=1}{\dim \State_{t}} \, \tens{\vcans_i}{\lopaddb{i}},
\ea
\end{equation*}
namely
\begin{equation}
\label{eq:expression-errors-uoro}
\ba
E_t&=\rho_0 \, \tens{\opmulb{\vsyst}}{\rho_1^{-1}\, \sum_{i=1}^{\dim \State_{t}} \sigi{i} \, \lopaddb{i}}
+ \rho_0^{-1} \, \tens{\vctrl}{\rho_1 \, \sum_{i=1}^{\dim \State_{t}} \, \sigi{i} \, \vcans_i}\\
&+ \som{i,j=1,\,i\neq j}{\dim \State_{t}} \tens{\sigi{i} \, \vcans_i}{\sigi{j} \, \lopaddb{j}}.
\ea
\end{equation}

\begin{proof}[UORO -- unbiasedness of the Jacobian update rule]
Let us show that \rehyp{wcorrnoiseapprtrl} is satisfied with the
filtration $\acco{\tribd{t}}$, hence a fortiori with $\acco{\trib{t}}$.

By construction, at each time $t$ the $\sigi{i}$'s are random
Bernoulli variables that are independent from $\tribd{t-1}$. 
Thus, $\econd{\sigi{i}}{\tribd{t-1}}=0$ and
$\econd{\sigi{i}\sigi{j}}{\tribd{t-1}}=0$ for $i\neq j$.

On the other
hand, we have seen that all the other variables in the expression
\eqref{eq:expression-errors-uoro} for
$E_t$ are
$\tribd{t-1}$-measurable.
Then, taking the conditional expectation with respect to $\tribd{t-1}$ in
\eqref{eq:expression-errors-uoro}, we obtain
$\econd{E_t}{\tribd{t-1}}=0$ as needed.
\end{proof}

\begin{proof}[UORO -- size of the error]
Next, we deal
with \rehyp{errorgauge} (bounded errors) for UORO.

From the expression \eqref{eq:expression-errors-uoro} for $E_t$,
since for vectors $v_1$, $v_2$, we have $\nrmop{\tens{v_1}{v_2}}=\nrm{v_1}\nrm{v_2}$, it holds that
\begin{equation*}
\ba
\nrmop{E_t}&\leq \rho_0 \, \nrm{\opmulb{\vsyst}} \rho_1^{-1}\, \nrm{\sum_{i=1}^{\dim \State_{t+1}} \, \sigi{i} \, \lopaddb{i}}
+ \rho_0^{-1} \, \nrm{\vctrl} \rho_1 \, \nrm{\sum_{i=1}^{\dim \State_{t}} \, \sigi{i} \, \vcans_i}\\
&+ \som{i,j=1,\,i\neq j}{\dim \State_{t}} \nrm{\vcans_i}\nrm{\lopaddb{j}}\\
&=2\,\sqrt{\nrm{\opmulb{\vsyst}}\nrm{\vctrl}}\, \sqrt{\nrm{\som{i=1}{\dim \epsystins{t+1}} \sigi{i} \, {\vcans_i}}\nrm{\som{i=1}{\dim \epsystins{t}} \sigi{i} \, {\lopaddb{i}}}}\\
&+ \som{i,j=1,\,i\neq j}{\dim \State_{t+1}} \nrm{\vcans_i}\nrm{\lopaddb{j}}\\
&\leq2\,\sqrt{\nrmop{\sbopmulb}}\sqrt{\nrm{\vsyst}\nrm{\vctrl}}\, \sqrt{\nrm{\som{i=1}{\dim \epsystins{t}} \sigi{i} \, {\vcans_i}}\nrm{\som{i=1}{\dim \epsystins{t}} \sigi{i} \, {\lopaddb{i}}}}\\
&+ \som{i,j=1,\,i\neq j}{\dim \State_{t}} \nrm{\vcans_i}\nrm{\lopaddb{j}}.
\ea
\end{equation*}
Now, $\jope=\tens{\vsyst}{\vctrl}$, so $\nrmop{\jope}=\nrm{\vsyst}\nrm{\vctrl}$. As a result,
\begin{equation*}
\sqrt{\nrm{\vsyst}\nrm{\vctrl}}=\nrmop{\jope}^{1/2}.
\end{equation*}
Next, 
since the $\vcans_i$'s
form and orthonormal basis of vectors of $\epsystins{t}$ according to
\redef{opredinstantt}, we have $\nrm{\vcans_i}=1$ and $\nrm{\lopaddb{i}}\leq
\nrmop{\sbopaddb}\nrm{\vcans_i}=\nrmop{\sbopaddb}$.
Finally, by definition of $\sbopmulb$ and $\sbopaddb$,
\begin{equation*}
\nrmop{\sbopmulb}, \, \nrmop{\sbopaddb} \leq \nrmop{\frac{\partial \opevol_t(\state_{t-1},\param_{t-1})}{\partial(\state,\param)}}.
\end{equation*}

Plugging this into the bound for $\nrmop{E_t}$, we find
\begin{equation*}
\nrmop{E_t}\leq (2\dim \State_t) \,y\,  \nrmop{\jope}^{1/2}+ (\dim
\State_t)^2\, y
\end{equation*}
where $y={\nrmop{\frac{\partial
\opevol_t(\state_{t-1},\param_{t-1})}{\partial(\state,\param)}}}$.

Since $\dim \State_t$ is bounded by
\rehyp{bndimstatespaces},
this shows the size of the error in UORO is compliant with the
requirements of \redef{errorgauge} and \rehyp{errorgauge}, with
$\feg{x}{y}=C\,(1+x^{1/2})(1+y)$.

Note that we have only proved that $\nrm{E_t}$ is controlled by
$\sqrt{\nrmop{\jope}}$ provided $\jope$ is rank-one. This is true by
construction at all times along the UORO trajectory, so the bound
above holds at all times on any UORO trajectory (with probability
$1$); this is all that
is needed for \rehyp{errorgauge}.
\end{proof}

\appendix

\section{Positive-Stable Matrices}
\label{sec:positivestable}


We recall several equivalent definitions of positive-stable
matrices (also known, with signs reversed, as Hurwitz matrices).

\begin{definition}[Positive-stable matrix]
A real matrix $A$ is \emph{positive-stable} if one of the following equivalent
conditions is satisfied:
\begin{enumerate}
\item All the eigenvalues of $A$ have positive real part.
\item The solution of the differential equation $\theta'=-A\theta$ converges to $0$ for
any initial value.
\item There exists a symmetric, positive definite matrix $B$ (Lyapunov
function) such that $\transp{\theta}B\theta$ is decreasing along the
solutions of the differential equation $\theta'=-A\theta$.
\item There exists a symmetric positive definite matrix $B$ such that
\begin{equation*}
\transp{\theta} B A \theta>0
\end{equation*}
for all $\theta\neq 0$. (This is the same $B$ as in the previous
condition.)
\item There exists a symmetric positive definite matrix $B$ such that
$B A +\transp{A} B$
is positive definite.
\end{enumerate}
\end{definition}

Stability is invariant by matrix similarity $A\gets C^{-1}A C$, since
this preserves eigenvalues.

Since the
solution of $\theta_t'=-A\theta_t$ is $\theta_t=e^{-tA}\theta_0$, a
Lyapunov function that works is $\transp{\theta_0}B\theta_0\deq \int_{t \geq 0}
\norm{\theta_t}^2$. Indeed this is decreasing, because
$\transp{\theta_t}B\theta_t$ is just the same integral starting at $t$
instead of $0$. Explicitly this is
$B=\int_{t\geq 0} \transp{(e^{-t A})}
e^{-tA}$. It satisfies $BA+\transp{A}B=\id$.

The proofs are classical, and therefore we omit them.

\begin{proposition}
\begin{enumerate}
\label{prop:positivestablecriteria}
\item A symmetric positive definite matrix is positive-stable. (Take $B=\id$
above.)
\item If $A+\transp{A}$ is positive definite and $H$ is symmetric positive
definite, then $AH$ is positive-stable. (Take $B=H$ above.)
\end{enumerate}
\end{proposition}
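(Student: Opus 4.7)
The plan is to apply criterion~5 of the definition of positive-stable matrix: a real matrix $M$ is positive-stable provided there exists a symmetric positive definite matrix $B$ such that $BM + \transp{M}B$ is positive definite. The hints in the statement already suggest the right choice of $B$ in each case, so I would simply check that this choice works.

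For part~1, I would take $B = \id$ and $M = A$. Then $BM + \transp{M}B = A + \transp{A} = 2A$, which is positive definite because $A$ itself is symmetric positive definite by hypothesis. Hence criterion~5 applies. (Alternatively, one can argue directly: a symmetric positive definite matrix has real positive eigenvalues, so in particular all its eigenvalues have positive real part, which is criterion~1.)

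For part~2, I would take $B = H$ and $M = AH$. Then
\begin{equation*}
BM + \transp{M}B \;=\; H(AH) + \transp{(AH)}H \;=\; HAH + \transp{H}\transp{A}H \;=\; H(A+\transp{A})H,
\end{equation*}
using that $H$ is symmetric. To see that $H(A+\transp{A})H$ is positive definite, let $x \neq 0$ and set $y = Hx$. Since $H$ is positive definite, it is invertible, so $y \neq 0$. Then
\begin{equation*}
\transp{x}\,H(A+\transp{A})H\,x \;=\; \transp{y}(A+\transp{A})y \;>\; 0,
\end{equation*}
by the assumption that $A+\transp{A}$ is positive definite. Hence $BM + \transp{M}B$ is positive definite, and criterion~5 yields that $AH$ is positive-stable.

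There is no real obstacle here: both parts reduce to a one-line verification of criterion~5 once the appropriate Lyapunov matrix $B$ is guessed. The only mild subtlety is remembering to use that $H$ is invertible (as a positive definite matrix) in order to conclude that $Hx \neq 0$ when $x \neq 0$, so that the positive definiteness of $A+\transp{A}$ can be invoked.
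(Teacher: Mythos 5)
Your proof is correct and follows exactly the approach the paper intends: verifying criterion~5 of the definition of positive-stable matrix with $B=\id$ in part~1 and $B=H$ in part~2, which is precisely what the parenthetical hints in the statement indicate.
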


\section{Equicontinuity of the Extended Hessians in the $C^3$ Case}
\label{sec:eqehbcase}
In this section, which has no bearing on the rest of the proof, we note
that equicontinuity of the
extended Hessians around $\paramopt$ (\rehyp{equicontH}), may be deduced
in a straightforward way under simpler additional assumptions.

\begin{lemme}[Controlling the derivatives of $\fstate_t$ and $\sbperte_t$]
\label{lem:contderivfstate}
Assume that all first, second and third order derivatives of the
$\opevol_t$'s and of the $\sbperte_t$'s exist, and are bounded on a ball
around the target trajectory $(\paramopt,\stateopt_t)$. (In particular,
we can take $\exmp=0$.)

Then all derivatives up to third order of the
$\fstate_t\cpl{\stateopt_0}{\cdot}$'s are bounded on $\boctopt$, 
where $\boctopt$ is defined in \relem{rtrlstabletubes}. Likewise,
all derivatives up to third order of the
$\sbpermc{t}\cpl{\stateopt_0}{\cdot}$'s are bounded as well, so that the
family of Hessians
$\ddpartf{\param}{\sbpermc{t}}\cpl{\stateopt_0}{\cdot}$ is equicontinuous
on $\boctopt$. In
other words, \rehyp{equicontH_simple} is satisfied on the smaller ball
$\boctopt$.
\end{lemme}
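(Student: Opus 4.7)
The plan is to bound the derivatives of $\fstate_t(\stateopt_0,\cdot)$ up to order three uniformly in $t$ by induction on the order of differentiation, using the RTRL-style recurrence equations obtained by successively differentiating $\fstate_t=\opevol_t(\fstate_{t-1},\param)$, together with the spectral radius control already established in \recor{specrad} and \reprop{specraditer}. Throughout, I restrict to $\param\in\boctopt$ (from \relem{rtrlstabletubes}); by the stable-tube argument the iterates $\fstate_t(\stateopt_0,\param)$ stay inside the ball on which the boundedness of the first, second, and third derivatives of $\opevol_t$ and $\perte_t$ is assumed.

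First I would handle the Jacobian $J_t\deq \partial \fstate_t/\partial \param$, which satisfies
\begin{equation*}
J_t = A_t(\param)\,J_{t-1} + B_t(\param), \qquad A_t\deq \frac{\partial \opevol_t}{\partial \state}, \quad B_t\deq \frac{\partial \opevol_t}{\partial \param},
\end{equation*}
evaluated along the trajectory defined by $\param$. Since this trajectory lies in the stable tube, \recor{specrad} gives that $(A_t)_{t\geq 1}$ has spectral radius at most $1-\alpha/2$ at horizon $\hsr$, and $B_t$ is uniformly bounded by the hypothesis. Applying \reprop{specraditer} (with $\eps=0$) to the affine recurrence yields a bound on $\nrmop{J_t}$ uniform in $t$ and in $\param\in\boctopt$.

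Then I would bootstrap to the second and third derivatives. Differentiating the recurrence once more gives
\begin{equation*}
\frac{\partial^2 \fstate_t}{\partial \param^2} = A_t(\param)\,\frac{\partial^2 \fstate_{t-1}}{\partial \param^2} + R_t^{(2)}(\param),
\end{equation*}
where $R_t^{(2)}$ is a polynomial expression in $J_{t-1}$ and in the first and second derivatives of $\opevol_t$ at $(\fstate_{t-1},\param)$. By the previous step and the hypothesis, $R_t^{(2)}$ is uniformly bounded in $t$ on $\boctopt$, and the same spectral-radius argument via \reprop{specraditer} gives a uniform bound on $\partial^2 \fstate_t/\partial\param^2$. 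Differentiating once more yields an analogous recurrence for $\partial^3 \fstate_t/\partial\param^3$, whose forcing term is now a polynomial in the already-bounded first and second derivatives of $\fstate_{t-1}$ and in the first, second, and third derivatives of $\opevol_t$; a third application of \reprop{specraditer} gives uniform boundedness of the third derivative.

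Finally, I would deduce the same for $\sbpermc{t}(\stateopt_0,\cdot) = \perte_t\circ \fstate_t(\stateopt_0,\cdot)$ by the chain rule and the Faà di Bruno formula: its derivatives up to order three are polynomial expressions in the derivatives of $\perte_t$ at $\fstate_t(\stateopt_0,\param)$ (bounded by assumption, since $\fstate_t(\stateopt_0,\param)$ stays in the stable tube) and in the derivatives of $\fstate_t$ up to order three (bounded by the previous steps). A uniform bound on the third derivative of $\sbpermc{t}(\stateopt_0,\cdot)$ on $\boctopt$ implies that the family of Hessians $\ddpartf{\param}{\sbpermc{t}}(\stateopt_0,\cdot)$ is uniformly Lipschitz, hence equicontinuous on $\boctopt$, which is exactly \rehyp{equicontH_simple} with $\rho(r)=C r$ for some constant $C$ independent of $t$.

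The only real obstacle is ensuring that every bound obtained is uniform in $t$, which is exactly what the spectral-radius machinery of \reprop{specraditer} is designed to provide; the rest is bookkeeping via the chain rule. Note that $\exmp=0$ is automatic here because the derivatives of $\perte_t$ along the target trajectory are uniformly bounded, so \rehyp{regpertes} is satisfied with a constant scale function.
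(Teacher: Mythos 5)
Your proposal is correct and follows essentially the same route as the paper's own proof: an induction on the order of differentiation using the affine recurrences $\partial^k_\param \fstate_t = \partial_\state\opevol_t \cdot \partial^k_\param \fstate_{t-1} + (\text{bounded remainder})$ in the stable tube, killing each level with the spectral-radius statement \reprop{specraditer} (the paper invokes \recor{boundedJ} for the first order, which is a direct consequence of \reprop{specraditer}), and then composing with $\perte_t$ by the chain rule. The only cosmetic difference is that the paper writes the Faà di Bruno expansions out explicitly rather than citing the formula.
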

\begin{proof}
Let us first bound the derivatives of the $\fstate_t$'s.
\paragraph{Bounding the derivatives of the $\fstate_t$'s.}
For all $t\geq 1$ and $\param\in\Param$, we have
\begin{equation*}
\fstate_t\cpl{\stateopt_0}{\param}=\opevol_t\cpl{\fstate_{t-1}\cpl{\stateopt_0}{\param}}{\param}.
\end{equation*}
Let us write, for all $t\geq 1$ and $\param\in\boctopt$,
$\state_t=\fstate_t\cpl{\stateopt_0}{\param}$. Then, for all $t\geq 1$
and $\param\in\boctopt$, $\state_t$ is in the stable tube $\tube_t$ of
\relem{rtrlstabletubes}. (Note that \rehyp{equicontH} is not used for the
proof of \relem{rtrlstabletubes}.)

Let $t\geq 1$, and $\param\in\boctopt$. Then, we have 
\begin{equation*}
\ba
\dpartf{\param}{\fstate_t}\cpl{\stateopt_0}{\param}
&=\dpartf{\state}{\opevol_t}\cpl{\state_{t-1}}{\param}\cdot\dpartf{\param}{\fstate_{t-1}}\cpl{\stateopt_0}{\param}+\dpartf{\param}{\opevol_t}\cpl{\state_{t-1}}{\param}\\
\ddpartf{\param}{\fstate_t}\cpl{\stateopt_0}{\param}
&=\dpartf{\state}{\opevol_t}\cpl{\state_{t-1}}{\param}\cdot\ddpartf{\param}{\fstate_{t-1}}\cpl{\stateopt_0}{\param}
+\ddpartf{\state}{\opevol_t}\cpl{\state_{t-1}}{\param}\cdot\paren{\dpartf{\param}{\fstate_{t-1}}\cpl{\stateopt_{0}}{\param}}^{\otimes 2}\\
&+2\,\frac{\partial^2\,\opevol_t}{\partial\param\partial\state}\cpl{\state_{t-1}}{\param}\cdot\cpl{\dpartf{\param}{\fstate_{t-1}}\cpl{\stateopt_{0}}{\param}}{\id}
+\ddpartf{\param}{\opevol_t}\cpl{\state_{t-1}}{\param}
\ea
\end{equation*}
\begin{equation*}
\ba
\dtpartf{\param}{\fstate_t}\cpl{\stateopt_0}{\param}
&=\dpartf{\state}{\opevol_t}\cpl{\state_{t-1}}{\param}\cdot\dtpartf{\param}{\fstate_{t-1}}\cpl{\stateopt_0}{\param}\\
&+\ddpartf{\state}{\opevol_t}\cpl{\state_{t-1}}{\param}\cdot\cpl{\ddpartf{\param}{\fstate_{t-1}}\cpl{\stateopt_{0}}{\param}}{\dpartf{\param}{\fstate_{t-1}}\cpl{\stateopt_{0}}{\param}}\\
&+\frac{\partial^2\,\opevol_t}{\partial\param\partial\state}\cpl{\state_{t-1}}{\param}\cdot\cpl{\ddpartf{\param}{\fstate_{t-1}}\cpl{\stateopt_{0}}{\param}}{\id}\\
&+\dtpartf{\state}{\opevol_t}\cpl{\state_{t-1}}{\param}\cdot\paren{\dpartf{\param}{\fstate_{t-1}}\cpl{\stateopt_{0}}{\param}}^{\otimes 3}\\
&+\frac{\partial^3\,\opevol_t}{\partial\param\partial\state^2}\cpl{\state_{t-1}}{\param}\cdot\cpl{\paren{\dpartf{\param}{\fstate_{t-1}}\cpl{\stateopt_{0}}{\param}}^{\otimes 2}}{\id}\\
&+2\,\ddpartf{\state}{\opevol_t}\cpl{\state_{t-1}}{\param}\cdot\cpl{\ddpartf{\param}{\fstate_{t-1}}\cpl{\stateopt_{0}}{\param}}{\dpartf{\param}{\fstate_{t-1}}\cpl{\stateopt_{0}}{\param}}\\
&+2\,\frac{\partial^3\,\opevol_t}{\partial\param\partial\state^2}\cpl{\state_{t-1}}{\param}\cdot\cpl{\paren{\dpartf{\param}{\fstate_{t-1}}\cpl{\stateopt_{0}}{\param}}^{\otimes 2}}{\id}\\
&+3\,\frac{\partial^3\,\opevol_t}{\partial\param^2\partial\state}\cpl{\state_{t-1}}{\param}\cdot\cpl{\dpartf{\param}{\fstate_{t-1}}\cpl{\stateopt_{0}}{\param}}{\id^{\otimes 2}}\\
&+2\,\frac{\partial^3\,\opevol_t}{\partial\param^2\partial\state}\cpl{\state_{t-1}}{\param}\cdot\cpl{\ddpartf{\param}{\fstate_{t-1}}\cpl{\stateopt_{0}}{\param}}{\id}\\
&+\dtpartf{\param}{\opevol_t}\cpl{\state_{t-1}}{\param}.
\ea
\end{equation*}
By \recor{boundedJ} with $E_t=0$ for all $t\geq 1$, the $\dpartf{\param}{\fstate_t}\cpl{\stateopt_0}{\cdot}$'s are bounded uniformly in $t$ over $\boctopt$.

Now, we see the update equation on the $\ddpartf{\param}{\fstate_t}\cpl{\stateopt_0}{\cdot}$'s has the form:
\begin{equation*}
\ddpartf{\param}{\fstate_t}\cpl{\stateopt_0}{\param}
=\dpartf{\state}{\opevol_t}\cpl{\state_{t-1}}{\param}\cdot\ddpartf{\param}{\fstate_{t-1}}\cpl{\stateopt_0}{\param}
+B_t\paren{\param},
\end{equation*}
where $B_t\paren{\param}$ is made up of terms bounded uniformly in $t$ over $\boctopt$. Moreover, the $\dpartf{\state}{\opevol_t}\cpl{\state_{t-1}}{\param}$'s have spectral radius $1-\alpha$. Then, thanks to \reprop{specraditer}, we obtain that the $\ddpartf{\param}{\fstate_t}\cpl{\stateopt_0}{\cdot}$'s are uniformly bounded in $t$ on $\boctopt$.

Finally, the same reasoning applies to the $\dtpartf{\param}{\fstate_t}\cpl{\stateopt_0}{\cdot}$'s.
\paragraph{Bounding the derivatives of the $\sbpermc{t}$'s.}
For all $t\geq 1$ and $\param\in\Param$, we have
\begin{equation*}
{\sbpermc{t}}\cpl{\stateopt_0}{\param}
={\perte_t}\paren{\fstate_t\cpl{\stateopt_0}{\param}}.
\end{equation*}
Let us write as before, for all $t\geq 1$, $\state_t=\fstate_t\cpl{\stateopt_0}{\param}$.
Let $t\geq 1$, and $\param\in\boctopt$. Then
\begin{equation*}
\ba
\dpartf{\param}{\sbpermc{t}}\cpl{\stateopt_0}{\param}
&=\dpartf{\state}{\perte_t}\paren{\state_t}\cdot\dpartf{\param}{\fstate_t}\cpl{\stateopt_0}{\param}
\\
\ddpartf{\param}{\sbpermc{t}}\cpl{\stateopt_0}{\param}
&=\dpartf{\state}{\perte_t}\paren{\state_t}\cdot\ddpartf{\param}{\fstate_t}\cpl{\stateopt_0}{\param}
+\ddpartf{\state}{\perte_t}\paren{\state_t}\cdot\paren{\dpartf{\param}{\fstate_t}\cpl{\stateopt_0}{\param}}^{\otimes 2}\\
\dtpartf{\param}{\sbpermc{t}}\cpl{\stateopt_0}{\param}
&=\dpartf{\state}{\perte_t}\paren{\state_t}\cdot\dtpartf{\param}{\fstate_t}\cpl{\stateopt_0}{\param}\\
&+3\,\ddpartf{\state}{\perte_t}\cpl{\state_t}{\param}\cdot\cpl{\ddpartf{\param}{\fstate_t}\cpl{\stateopt_0}{\param}}{\dpartf{\param}{\fstate_t}\cpl{\stateopt_0}{\param}}\\
&+\dtpartf{\state}{\perte_t}\cpl{\state_t}{\param}\cdot\paren{\dpartf{\param}{\fstate_t}\cpl{\stateopt_0}{\param}}^{\otimes 3}.
\ea
\end{equation*}
Thanks to the first part of the proof, all the derivatives of the
$\fstate_t\cpl{\stateopt_0}{\cdot}$'s are uniformly bounded in $t$ on
$\boctopt$. By assumption, all the derivatives up to third order of the
$\perte_t$'s are bounded on the stable tube. Therefore, all the derivatives up to third order of the $\sbpermc{t}\cpl{\stateopt_0}{\cdot}$'s are uniformly bounded in $t$ on $\boctopt$. Finally, this shows the family of functions $\ddpartf{\param}{\sbpermc{t}}\cpl{\stateopt_0}{\cdot}$ is equicontinuous on $\boctopt$. 
\end{proof}

\begin{lemme}[Equicontinuity satisfied for the extended Hessians with all derivatives bounded]
\label{lem:eqfhwdb}
Assume that all first, second and third order derivatives of the
$\opevol_t$'s and of the $\sbperte_t$'s exist, and are bounded on a ball
around the target trajectory $(\paramopt,\stateopt_t)$. (In particular,
we can take $\exmp=0$.)

Assume moreover that the second derivatives of the extended Hessians
$\fur_t$
are controlled as follows on the balls of \rehyp{fupdrl}. Namely,
we assume that there is a constant $\kappa_\fur>0$ such that for all
$t\geq 1$, 
for all $\vtanc\in\linform$, $\state\in B_{\State_t}$, and $\param \in
B_\Param$, we have
\begin{equation*}
\nrmop{\ddpartf{\vtanc}{\fur_t}\tpl{\vtanc}{\state}{\param}} <
\kappa_\fur,
\qquad
\nrmop{\frac{\partial^2\,\fur_t}{\partial\vtanc\,\partial\cpl{\state}{\param}}\tpl{\vtanc}{\state}{\param}}<\kappa_\fur,
\end{equation*}
and
\begin{equation*}
\nrmop{\ddpartf{\cpl{\state}{\param}}{\fur_t}\tpl{\vtanc}{\state}{\param}}
\leq \kappa_\fur \paren{1+\nrm{\vtanc}}.
\end{equation*}
Then, \rehyp{equicontH} is satisfied on $\boctopt$.
\end{lemme}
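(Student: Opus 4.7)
The plan is to differentiate the composition defining $\sbfh_t(\param)$ via the chain rule and show that every factor appearing in the resulting expression is bounded and Lipschitz with respect to $\param$, uniformly in $t$, for $\param \in \boctopt$. Uniform Lipschitz continuity of $\sbfh_t$ then yields \rehyp{equicontH} by taking $\rho(u) = C u$ for a suitable constant $C$.

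First, I would invoke \relem{contderivfstate}: under the present $C^3$ assumptions on $\opevol_t$ and $\perte_t$, all derivatives up to third order of $\fstate_t(\stateopt_0,\cdot)$ and $\sbpermc{t}(\stateopt_0,\cdot)$ are bounded on $\boctopt$, uniformly in $t$. Consequently, the following quantities, seen as functions of $\param \in \boctopt$, are bounded and Lipschitz uniformly in $t$: the state $\fstate_t(\stateopt_0,\param)$, the Jacobian $\partial_\param \fstate_t(\stateopt_0,\param)$, the gradient $\partial_\param \sbpermc{t}(\stateopt_0,\param)$, and the Hessian $\partial^2_\param \sbpermc{t}(\stateopt_0,\param)$ (the latter being Lipschitz by the bound on third derivatives).

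Next, writing $\vtanc_t(\param) \deq \partial_\param \sbpermc{t}(\stateopt_0,\param)$ and $s_t(\param) \deq \fstate_t(\stateopt_0,\param)$, the chain rule yields
\begin{equation*}
\sbfh_t(\param) = \partial_\vtanc \fur_t(\vtanc_t(\param), s_t(\param), \param) \cdot \partial^2_\param \sbpermc{t}(\stateopt_0,\param) + \partial_{(\state,\param)} \fur_t(\vtanc_t(\param), s_t(\param), \param) \cdot \bigl( \partial_\param s_t(\param), \idth \bigr).
\end{equation*}
I would then check that each factor is bounded and Lipschitz in $\param$, uniformly in $t$. The factors $\partial^2_\param \sbpermc{t}$, $\partial_\param s_t$, $\vtanc_t$, and $s_t$ are handled by step~1. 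The factors $\partial_\vtanc \fur_t$ and $\partial_{(\state,\param)} \fur_t$, evaluated at $(\vtanc_t(\param), s_t(\param), \param)$, are bounded by \rehyp{fupdrl} (using that $\vtanc_t$ is bounded); moreover, by the additional assumption on second derivatives of $\fur_t$ in the statement, both $\partial^2_\vtanc \fur_t$, $\partial^2_{\vtanc,(\state,\param)} \fur_t$, and $\partial^2_{(\state,\param)} \fur_t$ are bounded on the relevant balls (the last one bounded by $\kappa_\fur(1+\norm{\vtanc})$, which is uniformly bounded since $\vtanc_t$ is). Combined with the uniform Lipschitz behavior of $\vtanc_t$, $s_t$ and $\param$ in $\param$, this yields uniform Lipschitz continuity of $\param \mapsto \partial_\vtanc \fur_t(\vtanc_t(\param), s_t(\param), \param)$ and $\param \mapsto \partial_{(\state,\param)} \fur_t(\vtanc_t(\param), s_t(\param), \param)$.

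Finally, a product of uniformly bounded and uniformly Lipschitz maps is uniformly Lipschitz, so $\sbfh_t(\param)$ is Lipschitz on $\boctopt$ with a Lipschitz constant independent of $t$. This gives the existence of a continuous $\rho$ with $\rho(0)=0$ satisfying \rehyp{equicontH}. The only mildly delicate part is the bookkeeping needed to verify that the ball on which \rehyp{fupdrl} provides bounds for the first and second derivatives of $\fur_t$ contains the image of $\boctopt$ under $\param \mapsto (\vtanc_t(\param), s_t(\param), \param)$ uniformly in $t$; this follows by possibly shrinking $\boctopt$ further, using the uniform bounds on $\vtanc_t$ and $s_t - \stateopt_t$ provided by step~1 together with \relem{rtrlstabletubes}.
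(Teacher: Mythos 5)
Your proof is correct and follows essentially the same route as the paper's: invoke Lemma~\ref{lem:contderivfstate} for uniform bounds and Lipschitz control of the derivatives of $\sbpermc{t}$ and $\fstate_t$, write $\sbfh_t = \partial_\param(\fur_t \circ g_t)$ via the chain rule with $g_t(\param) = (\partial_\param\sbpermc{t}, \fstate_t, \param)$, and use the first- and second-derivative bounds on $\fur_t$ to conclude that each factor in the chain-rule expression is uniformly bounded and Lipschitz. The paper realizes the same "product of Lipschitz factors" argument slightly more explicitly by telescoping the difference $\sbfh_t(\param) - \sbfh_t(\paramopt)$ into two terms, but the content is identical.
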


Notably, these assumptions on $\fur_t$ cover the preconditioned case
$\fur_t(\vtanc,\state,\param)=P_t(\state,\param)\cdot \vtanc$ with smooth
enough $P_t$.

\begin{proof}
Thanks to \relem{contderivfstate} above, we know the $\ddpartf{\param}{\sbperte_{\leadsto t}}\cpl{\stateopt_0}{\cdot}$'s are equicontinuous on $\boctopt$.

For $t\geq 0$ and $\param\in\Param$, let us write $g_t\paren{\param}=\tpl{\frac{\partial}{\partial \param}\sbpermc{t}(\stateopt_0,\param)}{\fstate_t(\stateopt_0,\param)}{\param}$. Then, for $\param\in\Param$, we have $\fht{\param}=\dpartf{\param}{\fur_t\circ g_t}\paren{\param}$, and (writing $g=\tpl{\vtanc}{\state}{\param}$)
\begin{equation}
\ba
{\sbfh_t}\paren{\param}-\sbfh_t\paren{\paramopt}
&={\dpartf{g}{\fur_t}\paren{g_t\paren{\param}}}\cdot\dpartf{\param}{g_t}\paren{\param}
-{\dpartf{g}{\fur_t}\paren{g_t\paren{\paramopt}}}\cdot\dpartf{\param}{g_t}\paren{\paramopt}\\
&=\paren{{\dpartf{g}{\fur_t}\paren{g_t\paren{\param}}}-{\dpartf{g}{\fur_t}\paren{g_t\paren{\paramopt}}}}\cdot\dpartf{\param}{g_t}\paren{\param}\\
&-{\dpartf{g}{\fur_t}\paren{g_t\paren{\paramopt}}}\cdot\paren{\dpartf{\param}{g_t}\paren{\paramopt}-\dpartf{\param}{g_t}\paren{\param}}.
\ea
\label{eq:fcyhesstwoterms}
\end{equation}
Moreover, for all $\param\in\Param$, we have
\begin{equation}
\label{eq:gradgt}
\dpartf{\param}{g_t}\paren{\param}=
\tpl{\frac{\partial^2}{\partial \param^2}\sbpermc{t}(\stateopt_0,\param)}{\dpartf{\param}{\fstate_t}(\stateopt_0,\param)}{\id}.
\end{equation}
Let us first control the second term of \eqref{eq:fcyhesstwoterms}. We have
\begin{multline*}
\nrm{{\dpartf{g}{\fur_t}\paren{g_t\paren{\paramopt}}}\cdot\paren{\dpartf{\param}{g_t}\paren{\paramopt}-\dpartf{\param}{g_t}\paren{\param}}}
\leq \nrm{{\dpartf{\vtanc}{\fur_t}\paren{g_t\paren{\paramopt}}}\cdot\paren{\frac{\partial^2}{\partial \param^2}\sbpermc{t}(\stateopt_0,\param)-\frac{\partial^2}{\partial \param^2}\sbpermc{t}(\stateopt_0,\paramopt)}}\\
+\nrm{{\dpartf{\cpl{\state}{\param}}{\fur_t}\paren{g_t\paren{\paramopt}}}\cdot\cpl{\dpartf{\param}{\fstate_t}(\stateopt_0,\param)-\dpartf{\param}{\fstate_t}(\stateopt_0,\paramopt)}{0}}\\
\leq \nrmop{\dpartf{\vtanc}{\fur_t}\paren{g_t\paren{\paramopt}}}\,\nrm{\frac{\partial^2}{\partial \param^2}\sbpermc{t}(\stateopt_0,\param)-\frac{\partial^2}{\partial \param^2}\sbpermc{t}(\stateopt_0,\paramopt)}
\\
+\go{1+\nrm{\dpartf{\param}{\sbpermc{t}}\cpl{\stateopt_0}{\paramopt}}}\,\nrm{\dpartf{\param}{\fstate_t}(\stateopt_0,\param)-\dpartf{\param}{\fstate_t}(\stateopt_0,\paramopt)}.
\end{multline*}
by the control of $\partial\fur_t/\partial (\state,\param)$ in
\rehyp{fupdrl}.
Now, the operator norm of $\dpartf{\vtanc}{\fur_t}$ is bounded on the
stable tube thanks to
\rehyp{fupdrl}. Moreover, thanks to \relem{contderivfstate}, the third
derivatives of $\sbpermc{t}$ and the second derivatives of $\fstate_t$
are bounded, so that the differences between $\param$ and $\paramopt$ in
this expression are
$\go{\param-\paramopt}$.
As a result, on the stable tube, the second term of \reeq{fcyhesstwoterms} is bounded by some $\rho_2\paren{\nrm{\param-\paramopt}}$.

Let us now control the first term of \reeq{fcyhesstwoterms}.
We have
\begin{multline*}
\nrm{\paren{{\dpartf{g}{\fur_t}\paren{g_t\paren{\param}}}-{\dpartf{g}{\fur_t}\paren{g_t\paren{\paramopt}}}}\cdot\dpartf{\param}{g_t}\paren{\param}}
\leq \nrmop{{\dpartf{g}{\fur_t}\paren{g_t\paren{\param}}}-{\dpartf{g}{\fur_t}\paren{g_t\paren{\paramopt}}}}
\,\nrm{\dpartf{\param}{g_t}\paren{\param}}.
\end{multline*}
Now, \reeq{gradgt} together with \relem{contderivfstate} show that the gradients of the $g_t$'s are bounded on $\boctopt$. 
Next, thanks to our assumptions on the second derivatives of $\fur_t$, by
decomposing $g_t$, we have
(with suprema taken on the stable tube)
\begin{multline*}
\nrmop{{\dpartf{g}{\fur_t}\paren{g_t\paren{\param}}}-{\dpartf{g}{\fur_t}\paren{g_t\paren{\paramopt}}}}\\
\leq \paren{\sup\nrmop{\ddpartf{\vtanc}{\fur_t}}}\,\nrm{g_t\paren{\param}-g_t\paren{\paramopt}}
+\paren{\sup_{\param'\in\boctopt}\,\nrmop{\ddpartf{\cpl{\state}{\param}}{\fur_t}\paren{g_t\paren{\param'}}}}\,\nrm{g_t\paren{\param}-g_t\paren{\paramopt}}\\
+2\,{\sup_{\param'\in\boctopt}\,\nrmop{\frac{\partial^2\,\fur_t}{\partial\vtanc\,\partial\cpl{\state}{\param}}\paren{g_t\paren{\param'}}}}\,\nrm{g_t\paren{\param}-g_t\paren{\paramopt}}\\
\leq \paren{\sup\,\nrmop{\ddpartf{\vtanc}{\fur_t}}}\,\nrm{g_t\paren{\param}-g_t\paren{\paramopt}}
+\go{1+\sup_{\boctopt}\nrm{\dpartf{\param}{\sbpermc{t}\cpl{\stateopt_0}{\cdot}}}}\,\nrm{g_t\paren{\param}-g_t\paren{\paramopt}}\\
+2\,{\sup_{\param'\in\boctopt}\,\nrmop{\frac{\partial^2\,\fur_t}{\partial\vtanc\,\partial\cpl{\state}{\param}}\paren{g_t\paren{\param'}}}}\,\nrm{g_t\paren{\param}-g_t\paren{\paramopt}}.
\end{multline*}
Now, by assumption, the second derivative of $\fur_t$ with respect to
$\vtanc$, and its cross-derivative with respect to $\vtanc$ and
$(\state,\param)$, are both bounded on the stable tube. Moreover, by \relem{contderivfstate}, the first derivative of $\sbpermc{t}\cpl{\stateopt_0}{\cdot}$ is bounded on $\boctopt$, while the $g_t$'s are equicontinuous on the same ball. As a result, on the stable tube, the first term of \reeq{fcyhesstwoterms} is bounded by some $\rho_1\paren{\nrm{\param-\paramopt}}$.

Gathering the controls of the two terms of \reeq{fcyhesstwoterms} we have obtained, we see that the extended Hessians are indeed equicontinuous on $\boctopt$, so that \rehyp{equicontH} is indeed satisfied on this ball.
\end{proof}

\bibliography{./biblio.bib}

\begin{thebibliography}{29}
\providecommand{\natexlab}[1]{#1}
\providecommand{\url}[1]{\texttt{#1}}
\expandafter\ifx\csname urlstyle\endcsname\relax
  \providecommand{\doi}[1]{doi: #1}\else
  \providecommand{\doi}{doi: \begingroup \urlstyle{rm}\Url}\fi

\bibitem[Baum and Katz(1965)]{baumkatz65_lln}
Leonard~E Baum and Melvin Katz.
\newblock Convergence rates in the law of large numbers.
\newblock \emph{Transactions of the American Mathematical Society},
  120\penalty0 (1):\penalty0 108--123, 1965.

\bibitem[Benveniste et~al.(1990)Benveniste, Metivier, and
  Priouret]{ben-met-pri1990}
Albert Benveniste, Michel Metivier, and Pierre Priouret.
\newblock \emph{Adaptive Algorithms and Stochastic Approximations}.
\newblock Springer-Verlag Berlin Heidelberg, 1990.

\bibitem[Bertsekas and Tsitsiklis(2000)]{bertsekastsitsiklis2000}
Dimitri~P Bertsekas and John~N Tsitsiklis.
\newblock Gradient convergence in gradient methods with errors.
\newblock \emph{SIAM Journal on Optimization}, 10\penalty0 (3):\penalty0
  627--642, 2000.

\bibitem[Borkar(2009)]{borkar2009stochastic}
Vivek~S Borkar.
\newblock \emph{Stochastic approximation: a dynamical systems viewpoint},
  volume~48.
\newblock Springer, 2009.

\bibitem[Borkar and Meyn(2000)]{borkar2000ode}
Vivek~S Borkar and Sean~P Meyn.
\newblock The ode method for convergence of stochastic approximation and
  reinforcement learning.
\newblock \emph{SIAM Journal on Control and Optimization}, 38\penalty0
  (2):\penalty0 447--469, 2000.

\bibitem[Bottou(2009)]{bottou2009curiously}
L{\'e}on Bottou.
\newblock Curiously fast convergence of some stochastic gradient descent
  algorithms.
\newblock In \emph{Proceedings of the symposium on learning and data science,
  Paris}, 2009.

\bibitem[D{\'e}fossez et~al.(2020)D{\'e}fossez, Bottou, Bach, and
  Usunier]{defossez2020convergence}
Alexandre D{\'e}fossez, L{\'e}on Bottou, Francis Bach, and Nicolas Usunier.
\newblock On the convergence of {A}dam and {A}dagrad.
\newblock \emph{arXiv preprint arXiv:2003.02395}, 2020.

\bibitem[G{\"u}rb{\"u}zbalaban et~al.(2015)G{\"u}rb{\"u}zbalaban, Ozdaglar, and
  Parrilo]{gurbuzbalaban2015random}
Mert G{\"u}rb{\"u}zbalaban, Asu Ozdaglar, and Pablo Parrilo.
\newblock Why random reshuffling beats stochastic gradient descent.
\newblock \emph{arXiv preprint arXiv:1510.08560}, 2015.

\bibitem[Hardt et~al.(2016)Hardt, Ma, and Recht]{hardt2016gradient}
Moritz Hardt, Tengyu Ma, and Benjamin Recht.
\newblock Gradient descent learns linear dynamical systems.
\newblock \emph{arXiv preprint arXiv:1609.05191}, 2016.

\bibitem[Jaeger(2002)]{jaeg}
Herbert Jaeger.
\newblock A tutorial on training recurrent neural networks, covering bppt,
  rtrl, ekf and the ``echo state network'' approach.
\newblock Technical Report 159, German National Research Center for Information
  Technology, GMD, 2002.

\bibitem[Kingma and Ba(2014)]{Adam}
Diederik~P. Kingma and Jimmy Ba.
\newblock Adam: {A} method for stochastic optimization.
\newblock \emph{CoRR}, abs/1412.6980, 2014.
\newblock URL \url{http://arxiv.org/abs/1412.6980}.

\bibitem[Kushner and Yin(2003)]{kyi2003}
Harold~J. Kushner and George Yin.
\newblock \emph{Stochastic Approximation and Recursive Algorithms and
  Applications}.
\newblock Springer-Verlag New York, 2003.

\bibitem[Ljung(1977)]{ljung77}
Lennart Ljung.
\newblock Analysis of recursive stochastic algorithms.
\newblock \emph{IEEE Transactions on Automatic Control}, 22:\penalty0 551--575,
  1977.

\bibitem[Ljung and Söderström(1984)]{ljung84}
Lennart Ljung and Torsten Söderström.
\newblock \emph{Theory and Practice of Recursive Identification}.
\newblock MIT Press, 1984.

\bibitem[Martens(2014)]{martens2014new}
James Martens.
\newblock New insights and perspectives on the natural gradient method.
\newblock \emph{arXiv preprint arXiv:1412.1193}, 2014.

\bibitem[Mujika et~al.(2018)Mujika, Meier, and Steger]{NIPS2018_7894}
Asier Mujika, Florian Meier, and Angelika Steger.
\newblock Approximating real-time recurrent learning with random kronecker
  factors.
\newblock In S.~Bengio, H.~Wallach, H.~Larochelle, K.~Grauman, N.~Cesa-Bianchi,
  and R.~Garnett, editors, \emph{Advances in Neural Information Processing
  Systems 31}, pages 6594--6603. Curran Associates, Inc., 2018.
\newblock URL
  \url{http://papers.nips.cc/paper/7894-approximating-real-time-recurrent-learning-with-random-kronecker-factors.pdf}.

\bibitem[Ollivier(2015)]{oll15r1}
Yann Ollivier.
\newblock Riemannian metrics for neural networks i: feedforward networks.
\newblock \emph{Information and Inference: A journal of the IMA}, 4:\penalty0
  108--153, 2015.

\bibitem[Ollivier(2018)]{ollivier2018online}
Yann Ollivier.
\newblock Online natural gradient as a {K}alman filter.
\newblock \emph{Electronic Journal of Statistics}, 12\penalty0 (2):\penalty0
  2930--2961, 2018.

\bibitem[Ollivier et~al.(2015)Ollivier, Tallec, and Charpiat]{oll16}
Yann Ollivier, Corentin Tallec, and Guillaume Charpiat.
\newblock Training recurrent networks online without backtracking.
\newblock \emph{arXiv preprint arXiv:1507.07680}, 2015.

\bibitem[Pearlmutter(1995)]{pearl}
B.A. Pearlmutter.
\newblock Gradient calculations for dynamic recurrent neural networks: a
  survey.
\newblock \emph{IEEE Transactions on Neural Networks}, 6:\penalty0 1212--1228,
  September 1995.
\newblock \doi{10.1109/72.410363}.

\bibitem[Polyak and Juditsky(1992)]{polyak-judi1992}
Boris Polyak and Anatoli Juditsky.
\newblock Acceleration of stochastic approximation by averaging.
\newblock \emph{SIAM Journal on Control and Optimization}, 30:\penalty0
  838--855, 07 1992.
\newblock \doi{10.1137/0330046}.

\bibitem[Reddi et~al.(2018)Reddi, Kale, and Kumar]{j.2018on}
Sashank~J. Reddi, Satyen Kale, and Sanjiv Kumar.
\newblock On the convergence of {A}dam and beyond.
\newblock In \emph{International Conference on Learning Representations}, 2018.
\newblock URL \url{https://openreview.net/forum?id=ryQu7f-RZ}.

\bibitem[Robbins and Monro(1951)]{rm}
Herbert Robbins and Sutton Monro.
\newblock A stochastic approximation method.
\newblock \emph{The Annals of Mathematical Statistics}, 22\penalty0
  (3):\penalty0 400--407, 1951.

\bibitem[Tadic(2004)]{tadic2004}
Vladislav~B Tadic.
\newblock Almost sure convergence of two time-scale stochastic approximation
  algorithms.
\newblock In \emph{Proceedings of the 2004 American Control Conference},
  volume~4, pages 3802--3807. IEEE, 2004.

\bibitem[Tallec and Ollivier(2018)]{uoro}
Corentin Tallec and Yann Ollivier.
\newblock {Unbiased Online Recurrent Optimization}.
\newblock In \emph{International Conference on Learning Representations}, 2018.

\bibitem[Willems(1970)]{willems1970stability}
Jacques~Leopold Willems.
\newblock \emph{Stability Theory of Dynamical Systems}.
\newblock Wiley Interscience Division, 1970.

\bibitem[Williams and Peng(1990)]{williams1990efficient}
Ronald~J Williams and Jing Peng.
\newblock An efficient gradient-based algorithm for on-line training of
  recurrent network trajectories.
\newblock \emph{Neural computation}, 2\penalty0 (4):\penalty0 490--501, 1990.

\bibitem[Williams and Zipser(1995)]{williams1995gradient}
Ronald~J Williams and David Zipser.
\newblock Gradient-based learning algorithms for recurrent.
\newblock \emph{Backpropagation: Theory, architectures, and applications}, 433,
  1995.

\bibitem[Zou et~al.(2019)Zou, Shen, Jie, Zhang, and Liu]{zou2019sufficient}
Fangyu Zou, Li~Shen, Zequn Jie, Weizhong Zhang, and Wei Liu.
\newblock A sufficient condition for convergences of {A}dam and {R}msprop.
\newblock In \emph{Proceedings of the IEEE conference on computer vision and
  pattern recognition}, pages 11127--11135, 2019.

\end{thebibliography}

\end{document}